\definecolor{citation}{rgb}{0,.40,.80}
\definecolor{reference}{rgb}{.80,0,.40}
\numberwithin{equation}{section}
\theoremstyle{plain}
\newtheorem{theorem}{Theorem}[section]
\newtheorem{lemma}[theorem]{Lemma}
\newtheorem{proposition}[theorem]{Proposition}
\newtheorem{corollary}[theorem]{Corollary}
\newtheorem{conjecture}[theorem]{Conjecture}
\newtheorem{question}[theorem]{Question}
\theoremstyle{definition}
\newtheorem{definition}[theorem]{Definition}
\newtheorem{example}[theorem]{Example}
\newtheorem{remark}[theorem]{Remark}
\newtheorem{notation}[theorem]{Notation}
\newtheorem{warning}[theorem]{Warning}
\newtheorem{construction}[theorem]{Construction}
\newtheorem{convention}[theorem]{Convention}
\newtheorem{situation}[theorem]{Situation}
\newtheoremstyle{italicsname}% <name>
 {3pt}% <Space above>
 {3pt}% <Space below>
 {\itshape}% <Body font>
 {}% <Indent amount>
{\bf}% <Theorem head font>
\theoremstyle{italicsname}
\newenvironment{step}[1]
 {\innerstep}
 {\endinnerstep}
\setlist[itemize]{topsep=5pt,itemsep=3pt}
\setlist[enumerate]{topsep=5pt,itemsep=3pt}
\newcommand{\st}{\mid} 
\newcommand{\set}[1]{\left\{ \, #1 \, \right\}}
\newcommand{\Db}{\mathrm{D^b}}
\newcommand{\Dperf}{\mathrm{D}_{\mathrm{perf}}}
\newcommand{\Dqc}{\mathrm{D}_{\mathrm{qc}}}
\newcommand{\llangle}{\left \langle}
\newcommand{\rrangle}{\right \rangle}
\DeclareMathOperator{\Hdg}{Hdg}
\DeclareMathOperator{\Sp}{Sp}
\DeclareMathOperator{\colim}{colim}
\newcommand{\cl}{{\mathrm{cl}}}
\newcommand{\Mod}{\mathrm{Mod}}
\newcommand{\Fun}{\mathrm{Fun}}
\newcommand{\Ind}{\mathrm{Ind}}
\newcommand{\Aff}{\mathrm{Aff}}
\newcommand{\Sch}{\mathrm{Sch}}
\newcommand{\Grpd}{\mathrm{Grpd}}
\newcommand{\dSt}{\mathrm{dStk}}
\newcommand{\St}{\mathrm{Stk}}
\newcommand{\one}{\mathbf{1}}
\newcommand{\FM}{\mathrm{FM}}
\newcommand{\Gr}{\mathrm{Gr}}
\DeclareMathOperator{\Pic}{Pic}
\DeclareMathOperator{\cPic}{\mathcal{P}{\it ic}}
\DeclareMathOperator{\Spec}{Spec}
\newcommand{\op}{\mathrm{op}}
\newcommand{\dRing}{\mathrm{dRing}}
\newcommand{\dAff}{\mathrm{dAff}}
\DeclareMathOperator{\coev}{coev}
\DeclareMathOperator{\eva}{ev}
\DeclareMathOperator{\Tr}{Tr}
\newcommand{\cHom}{\mathcal{H}\!{\it om}}
\DeclareMathOperator{\Hom}{Hom}
\DeclareMathOperator{\End}{End}
\DeclareMathOperator{\Ext}{Ext}
\DeclareMathOperator{\Aut}{Aut}
\DeclareMathOperator{\cAut}{\mathcal{A}{\it ut}}
\DeclareMathOperator{\fAut}{\mathfrak{A}{\it ut}}
\DeclareMathOperator{\HH}{HH}
\DeclareMathOperator{\cHH}{\mathcal{HH}}
\DeclareMathOperator{\Map}{Map}
\DeclareMathOperator\coker{coker}
\newcommand{\svee}{\scriptscriptstyle\vee}
\newcommand{\id}{\mathrm{id}}
\newcommand{\pr}{\mathrm{pr}}
\newcommand{\ind}{\mathrm{ind}}
\newcommand{\gl}{\mathrm{gl}}
\newcommand{\Coh}{\mathrm{Coh}}
\newcommand{\Cat}{\mathrm{Cat}}
\newcommand{\PrCat}{\mathrm{PrCat}}
\newcommand{\Ku}{\mathcal{K}u}
\newcommand{\num}{\mathrm{num}}
\DeclareMathOperator{\Knum}{\rK_{\num}}
\DeclareMathOperator{\Stab}{Stab}
\DeclareMathOperator{\Br}{Br}
\DeclareMathOperator{\BrAz}{Br_{Az}}
\DeclareMathOperator{\Div}{Div}
\DeclareMathOperator{\CH}{CH}
\newcommand{\rtop}{\mathrm{top}}
\newcommand{\Ktop}[1][]{\rK_{#1}^{\rtop}}
\newcommand{\vir}{\mathrm{vir}}
\newcommand{\per}{\mathrm{per}}
\DeclareMathOperator{\DT}{DT}
\DeclareMathOperator{\Pf}{Pf}
\newcommand{\class}{u}
\newcommand{\init}{1}
\newcommand{\fin}{0}
\DeclareMathOperator{\NS}{NS}
\newcommand{\bF}{\mathbf{F}}
\DeclareMathOperator{\Spin}{Spin}
\newcommand{\bv}{\mathbf{v}}
\newcommand{\et}{\mathrm{\acute{e}t}}
\newcommand{\tors}{\mathrm{tors}}
\newcommand{\pt}{\mathrm{pt}}
\newcommand{\muk}{\widetilde{\rH}}
\newcommand{\topo}{\mathrm{top}}
\newcommand{\Cliff}{\mathrm{Cl}}
\newcommand{\medwedge}{\textstyle{\bigwedge}}
\newcommand{\ev}{\mathrm{ev}}
\newcommand{\length}{\ell}
\newcommand{\gr}{\mathrm{gr}}
\newcommand{\bmu}{\bm{\mu}}
\newcommand{\SL}{\mathrm{SL}}
\DeclareMathOperator{\cone}{cone}
\DeclareMathOperator{\cofib}{cofib}
\DeclareMathOperator{\fib}{fib}
\DeclareMathOperator{\ch}{{ch}}
\DeclareMathOperator{\rk}{{rk}}
\newcommand{\td}{\mathrm{td}}
\newcommand{\cO}{\mathcal{O}}
\newcommand{\cA}{\mathcal{A}}
\newcommand{\cC}{\mathscr{C}}
\newcommand{\cD}{\mathscr{D}}
\newcommand{\cE}{\mathcal{E}}
\newcommand{\cF}{\mathcal{F}}
\newcommand{\cG}{\mathcal{G}}
\newcommand{\cH}{\mathcal{H}}
\newcommand{\cL}{\mathcal{L}}
\newcommand{\cM}{\mathcal{M}}
\newcommand{\cP}{\mathcal{P}}
\newcommand{\cT}{\mathcal{T}}
\newcommand{\cU}{\mathcal{U}}
\newcommand{\cX}{\mathcal{X}}
\newcommand{\cY}{\mathcal{Y}}
\newcommand{\ccA}{\mathscr{A}}
\newcommand{\ccE}{\mathscr{E}}
\newcommand{\rB}{\mathrm{B}}
\newcommand{\rC}{\mathrm{C}}
\newcommand{\rD}{\mathrm{D}}
\newcommand{\rE}{\mathrm{E}}
\newcommand{\rF}{\mathrm{F}}
\newcommand{\rG}{\mathrm{G}}
\newcommand{\rH}{\mathrm{H}}
\newcommand{\rK}{\mathrm{K}}
\newcommand{\rS}{\mathrm{S}}
\newcommand{\rM}{\mathrm{M}}
\newcommand{\rR}{\mathrm{R}}
\newcommand{\rO}{\mathrm{O}}
\newcommand{\rP}{\mathrm{P}}
\newcommand{\rT}{\mathrm{T}}
\newcommand{\rV}{\mathrm{V}}
\newcommand{\fG}{\mathfrak{G}}
\newcommand{\fm}{\mathfrak{m}}
\newcommand{\fM}{\mathfrak{M}}
\newcommand{\fX}{\mathfrak{X}}
\newcommand{\bC}{\mathbf{C}}
\newcommand{\bG}{\mathbf{G}}
\newcommand{\bH}{\mathbf{H}}
\newcommand{\bZ}{\mathbf{Z}}
\newcommand{\bP}{\mathbf{P}}
\newcommand{\bQ}{\mathbf{Q}}
\newcommand{\bR}{\mathbf{R}}
\renewcommand\part{%
   \if@noskipsec \leavevmode \fi
   \par
   \addvspace{4ex}%
   \@afterindentfalse
   \secdef\@part\@spart}
\def\@part[#1]#2{%
    \ifnum \c@secnumdepth >\m@ne
      \refstepcounter{part}%
      \addcontentsline{toc}{part}{Part \thepart.\hspace{1em}#1}%
%      \addcontentsline{toc}{part}{\thepart.\hspace{1em}#1}%
    \else
      \addcontentsline{toc}{part}{#1}%
    \fi
    {\parindent \z@ \raggedright
     \interlinepenalty \@M
     \normalfont
     \ifnum \c@secnumdepth >\m@ne
     \centering 
     \Large\bfseries \partname\nobreakspace\thepart     
       \nobreak. 
     \fi
     \Large \bfseries { #2}%
     %%%\markboth{}{}\par}% removing redefinition of headings
     \par}%
    \nobreak
    \vskip 3ex
    \@afterheading}
\def\@spart#1{%
    {\parindent \z@ \raggedright
     \interlinepenalty \@M
     \normalfont
     \huge \bfseries #1\par}%
     \nobreak
     \vskip 3ex
     \@afterheading}
\renewcommand{\thepart}{\Roman{part}}
\begin{document}

\title[The period-index conjecture for abelian threefolds and DT theory]{The period-index conjecture for abelian threefolds and Donaldson--Thomas theory} 

\author{James Hotchkiss}
\address{Department of Mathematics, Columbia University, New York, NY 10027 \smallskip}
\email{james.hotchkiss@columbia.edu}

\author{Alexander Perry}
\address{Department of Mathematics, University of Michigan, Ann Arbor, MI 48109 \smallskip}
\email{arper@umich.edu}

%\date{\today}

\begin{abstract}
We prove the period-index conjecture for unramified Brauer classes on abelian threefolds. 
To do so, 
we develop a theory of reduced Donaldson--Thomas invariants for 3-dimensional Calabi--Yau categories, 
with the feature that the noncommutative variational integral Hodge conjecture holds for classes with nonvanishing invariant. 
The period-index result is then proved by interpreting it as the algebraicity of a Hodge class on the twisted derived category, and specializing within the Hodge locus to an untwisted abelian threefold with nonvanishing invariant. 
As a consequence, we also deduce the integral Hodge conjecture for generically twisted abelian threefolds. 
\end{abstract}

\maketitle

\setcounter{tocdepth}{1}
\tableofcontents

%%%%%%%%%%%%%%%%%%%%%%%%%%%%%%%%%%%%%%%%%%%%%%%%%%%%%%

\section{Introduction}
\label{section-intro} 

The goal of this paper is to introduce a new approach to the period-index 
and integral Hodge conjectures based on Donaldson--Thomas theory, 
and to use this perspective to prove the unramified case of the period-index conjecture for abelian threefolds. 
As an important ingredient, we develop a general  
theory of reduced Donaldson--Thomas invariants for $3$-dimensional Calabi--Yau categories.

\subsection{The period-index conjecture} 

Let $K$ be a field. 
The Brauer group $\Br(K)$ is a fundamental invariant of $K$ with numerous applications in geometry and arithmetic.  
The most elementary definition is in terms of central simple algebras over $K$, which are unital, associative $K$-algebras with finite dimension over $K$, center $K$, and no nontrivial two-sided ideals. 
Two such algebras $A$ and $B$ are called Morita equivalent if there exists an isomorphism of matrix algebras $\rM_r(A) \cong \rM_s(B)$ for some positive integers $r$ and $s$. 
Then $\Br(K)$ is the abelian group of central simple algebras over $K$ modulo Morita equivalence, with group operation induced by tensor product over $K$. 
Any central simple algebra over $K$ is Morita equivalent to a central division algebra over $K$ which is unique up to isomorphism, 
so $\Br(K)$ can be thought of as a group classifying central division algebras. 

The complexity of a Brauer class $\alpha \in \Br(K)$ can be measured by two integer invariants. 
The first is the \emph{period} $\per(\alpha)$, equal to the order of $\alpha$ in $\Br(K)$; 
this is a positive integer because $\Br(K)$ is a torsion group.
The second is the \emph{index} $\ind(\alpha)$, equal to the $\sqrt{\dim_K D}$ where $D$ is the unique central division algebra of class $\alpha$; 
this is a positive integer because, more generally, any central simple algebra $A$ satisfies $A \otimes_K \overline{K} \cong \rM_n(\overline{K})$ and hence has square dimension over $K$. 

It is an elementary result that $\per(\alpha) \mid \ind(\alpha)$ (where for integers $a,b$ we use the standard notation $a \mid b$ to mean $a$ divides $b$), and that $\per(\alpha)$ and $\ind(\alpha)$ share the same prime factors. 
The \emph{period-index problem} is to determine an integer $e$ such that $\ind(\alpha) \mid \per(\alpha)^e$. 
This problem plays a central role in the study of Brauer groups and has drawn much attention over the 
past century; see \cite[\S4]{open-problems-Br} or below for a partial survey of results. 
In particular, the following folklore conjecture has emerged. 

\begin{conjecture}[Period-index conjecture] 
\label{conjecture-period-index} 
Let $K$ be a field of transcendence degree $d$ over an algebraically closed field $k$. 
For any $\alpha \in \Br(K)$, we have 
\begin{equation*}
\ind(\alpha) \mid \per(\alpha)^{d-1}. 
\end{equation*} 
\end{conjecture}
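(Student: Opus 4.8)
Since Conjecture~\ref{conjecture-period-index} is open in general, the plan is to establish it in the case the paper resolves: $K = k(X)$ for an abelian threefold $X$ over an algebraically closed field $k$ (one may take $k = \bC$ and reduce the general statement to it by spreading out and specializing), and $\alpha \in \Br(K)$ \emph{unramified}, i.e.\ in the image of $\Br(X) \to \Br(K)$; fix such an extension, still denoted $\alpha$. Write $n = \per(\alpha)$ and let $\Db(X,\alpha)$ be the bounded derived category of $\alpha$-twisted coherent sheaves; since $\omega_X \cong \cO_X$ and $\dim X = 3$, this is a $3$-dimensional Calabi--Yau category. By the theory of twisted sheaves, restriction to the generic point of $X$ sends a $K$-theory class of rank $r$ on $\Db(X,\alpha)$ to a class in the $K$-theory of modules over the division algebra of $\alpha$, and every such module has rank divisible by $\ind(\alpha)$; hence $\ind(\alpha) \mid r$. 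So the inequality $\ind(\alpha)\mid n^{2}$ reduces to exhibiting an \emph{algebraic} class $v$ of rank $n^{2}$ in the twisted topological $K$-theory of $\Db(X,\alpha)$, that is, one lifting to $K_0(\Db(X,\alpha))$. A topological and Hodge-theoretic computation provides a class $v$ of rank $n^{2}$ of Hodge type — that $n^{2}$ rather than a smaller power of $n$ is needed is the origin of the exponent $d-1 = 2$ — so the entire difficulty is to promote ``Hodge'' to ``algebraic.''

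The heart of the matter is a theory of \emph{reduced Donaldson--Thomas invariants} $\DT^{\mathrm{red}}(\cC, v)$ for an arbitrary $3$-dimensional Calabi--Yau category $\cC$ and numerical class $v$. The starting point is the derived moduli stack of objects of $\cC$, which by shifted symplectic geometry (Pantev--To\"en--Vaqui\'e--Vezzosi) and Joyce's theory of d-critical loci carries a symmetric obstruction theory and hence a Behrend-weighted virtual count. For categories of ``abelian type'' — such as $\Db(X,\alpha)$ with $X$ abelian — this count vanishes, since translations make the relevant moduli spaces fibre freely over $X$ and $\chi(X) = 0$; the remedy, in analogy with reduced Gromov--Witten theory for K3 and abelian surfaces, is to trivialize the distinguished summand of the obstruction (a specified class in $\HH^{2}$) coming from these extra deformations and take the resulting reduced virtual count. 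The structural consequence I would extract is a \emph{noncommutative variational integral Hodge conjecture for classes with nonvanishing invariant}: given a family $\{\cC_t\}_{t\in S}$ of $3$-dimensional Calabi--Yau categories with a flat section $v$ of the Mukai local system that is fibrewise of Hodge type, if $\DT^{\mathrm{red}}(\cC_{t_0}, v_{t_0}) \neq 0$ for some $t_0\in S$, then $v_t$ is algebraic on $\cC_t$ for every $t\in S$. This combines two inputs: $\DT^{\mathrm{red}}$ is a deformation invariant along such families, so nonvanishing propagates; and a nonvanishing invariant forces the moduli space $M_{\cC_t}(v_t)$ to be nonempty, so any object it parametrizes realizes $v_t$ in $K_0(\cC_t)$.

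It then remains to supply the geometric input on the untwisted side. Inside the period domain parametrizing polarized (twisted) abelian threefolds, the Hodge locus $\mathrm{HL}(v)$ of the class $v$ is positive-dimensional, and I would show that it contains a point corresponding to an \emph{untwisted} abelian threefold $X'$, by a dimension count showing that the constraints ``$v$ is of Hodge type'' still allow the twist to be deformed to zero. On such $X'$ the class $v' = v|_{X'}$ is an ordinary Mukai vector on $\Db(X')$, and for $X'$ sufficiently general in $\mathrm{HL}(v)$ I would prove $\DT^{\mathrm{red}}(\Db(X'), v') \neq 0$, using derived autoequivalences of $X'$ (twists by line bundles and Fourier--Mukai transforms along the Poincar\'e bundle) to transport $v'$ to a class — ultimately a rank-one, Hilbert-scheme class — for which the reduced Donaldson--Thomas theory of abelian threefolds is already known to be nontrivial. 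Feeding $(t_0 = [X'],\, v')$ into the variational statement of the previous paragraph, along a path in $\mathrm{HL}(v)$ joining $[X']$ to $[(X,\alpha)]$, yields the algebraicity of $v$ on $\Db(X,\alpha)$ and hence Conjecture~\ref{conjecture-period-index} for $(X,\alpha)$; the integral Hodge conjecture for generically twisted abelian threefolds drops out of the same mechanism.

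The main obstacle is the first step of the second paragraph: carrying out the reduction of the Donaldson--Thomas obstruction theory in the categorical, twisted setting so that deformation invariance genuinely holds along the families needed — which requires identifying the correct distinguished class in $\HH^{2}$ to trivialize, showing it behaves well in families, and reconciling the reduced virtual count with the d-critical/Behrend-function formalism. The second difficulty is the nonvanishing computation on the untwisted abelian threefold, for which one must control the behavior of $\DT^{\mathrm{red}}$ under derived equivalences and reduce to a case already in the literature. The Hodge-theoretic step that $\mathrm{HL}(v)$ meets the untwisted locus, while essential, I expect to be comparatively routine — although it is precisely what determines the admissible $v$ and therefore the exponent $n^{2}$.
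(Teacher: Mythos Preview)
The statement is Conjecture~\ref{conjecture-period-index}, which the paper does not prove in general; you correctly restrict to the unramified abelian-threefold case (the paper's Theorem~\ref{theorem-main}), and your outline matches the paper's strategy: reduce $\ind(\alpha)\mid n^2$ to algebraicity of a rank-$n^2$ Hodge class in $\Ktop[0](X,\alpha)$, build reduced DT invariants for CY3 categories whose deformation invariance yields a variational integral Hodge criterion, specialize inside the Hodge locus to an \emph{untwisted} abelian threefold, and compute the invariant there by transporting $v$ via autoequivalences to a curve-counting class.

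Two implementation points differ from what you sketch. First, the paper does not construct the reduced theory via PTVV shifted symplectic geometry or d-critical loci, nor by ``trivializing a class in $\HH^2$''; instead it \emph{quotients} the moduli stack by the identity component $\Aut^0(\cC)$ and builds a symmetric perfect obstruction theory on this quotient directly, as the ``cohomology'' of an explicit three-term sequence involving $\cHH^*$, $\cHom(\cE,\cE)$, and $\cHH_*$, with Pridham's semiregularity supplying the crucial vanishing. Second, the paper uses Bridgeland stability conditions \emph{over the base} $S$ rather than Gieseker stability, precisely because the deformation theorem for relative stability conditions lets one arrange that there are no strictly semistable objects at the special fibre --- a point your outline omits but which is essential for the DT invariant to be defined.

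Finally, you underestimate the last step. The specialization you call ``comparatively routine'' is the most delicate part of the argument: beyond landing in the untwisted locus, one must simultaneously arrange that the special abelian threefold is simple, that Igusa's discriminant of $v$ is positive (so the quotient is Deligne--Mumford and the DT invariant is autoequivalence- and wall-crossing-invariant), and --- hardest of all --- that after Fourier--Mukai transform the resulting curve class has type $(1,1,d)$, so that the Oberdieck--Shen generating series applies. This last condition is engineered through a careful choice of the lift of the $\bmu_n$-class and a prime-avoidance argument occupying the paper's Appendix.
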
 

To our knowledge, Conjecture~\ref{conjecture-period-index} was first raised in print by Colliot-Th\'{e}l\`{e}ne \cite{CT-PI} 
(see also \cite{CT-bourbaki, lieblich-period-index}). 
Despite an abundance of work, 
the number of known cases is still quite sparse: 
\begin{itemize}
\item For $d \leq 1$ the conjecture is vacuous, as $\Br(K) = 0$ by an easy argument when $d = 0$ and by Tsen's theorem when $d =1$. 
\item For $d = 2$ the conjecture is true by work of de Jong \cite{dJ-period-index} when $\per(\alpha)$ is prime to the characteristic of $k$, and building on this, by Lieblich \cite{lieblich-period-index} and de Jong--Starr \cite{dJ-starr} in general. 
\item For $d \geq 3$ the conjecture is not known for any field $K$ whatsoever. 
\end{itemize} 
For arbitrary $d$, Matzri \cite{matzri} proved $\ind(\alpha) \mid \per(\alpha)^e$ for an exponent $e$ that can be bounded by a polynomial in $\per(\alpha)$ (depending on $d$); 
however, the bound falls far short of the conjectural one in that $e$ it is not uniform in $\per(\alpha)$ and is very large even for small $\per(\alpha)$. 
Recently, Huybrechts and Mattei \cite{huybrechts-mattei} proved the existence of a bound $\ind(\alpha) \mid \per(\alpha)^{e}$ for all unramified (in the sense described below) classes $\alpha \in \Br(K)$, where $e$ depends on $K$ but not on $\per(\alpha)$. 
Finally, we note that  Conjecture~\ref{conjecture-period-index} is sharp, with examples achieving the bound going back to 1935 \cite{nakayama} (see also \cite{CT-examples, hotchkiss-pi, dJP-pi}).

It is natural to consider Conjecture~\ref{conjecture-period-index} for classes $\alpha \in \Br(K)$ that come from a global model of $K$, 
where more structure is available. 
Namely, by Grothendieck \cite{grothendieck-brauer} there is an extension $\Br(X)$ of the Brauer group to any scheme $X$, 
classifying Azumaya algebras on $X$ up to Morita equivalence. 
If $X$ is a smooth projective variety over $k$ with function field $K$, 
then the restriction map $\Br(X) \to \Br(K)$ is injective, with image independent of 
the chosen model $X$. 
The classes in $\Br(K)$ arising in this way are said to be \emph{unramified}. 
They are the crucial classes to consider for the period-index problem, as work of de Jong--Starr \cite{dJ-starr} implies that for a fixed $d$, Conjecture~\ref{conjecture-period-index} for every $K$ is equivalent to the following a priori weaker conjecture for every $X$: 

\begin{conjecture}[Unramified period-index conjecture] 
Let $X$ be a smooth projective variety of dimension $d$ over an algebraically closed field $k$. 
For any $\alpha \in \Br(X)$, we have 
\begin{equation*}
\ind(\alpha) \mid \per(\alpha)^{d-1} ,
\end{equation*} 
where the period and index are defined as those of the class over the generic point. 
\end{conjecture}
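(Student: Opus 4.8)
The plan is to treat the case that is the focus of this paper --- $k = \bC$ and $X$ an abelian threefold, so that the bound to prove is $\ind(\alpha) \mid \per(\alpha)^{2}$ --- by converting it into the algebraicity of a Hodge class in a twisted derived category and then certifying that algebraicity with a theory of reduced Donaldson--Thomas invariants. For the first step, recall that the index of $\alpha \in \Br(X)$ is governed by the ranks of $\alpha$-twisted coherent sheaves: a standard argument reduces $\ind(\alpha) \mid \per(\alpha)^{2}$ to the existence of $\alpha$-twisted sheaves on $X$ of suitably small rank, e.g.\ a single one of rank $\per(\alpha)^{2}$. Over $\bC$, the Mukai vector $\bv$ of such a sheaf lives in the twisted cohomology $\widetilde{\rH}(X,\alpha)$, which carries a natural Hodge structure, and the class $\bv$ one wants is manifestly of Hodge type; so the period-index bound for $(X,\alpha)$ becomes an instance of the integral Hodge conjecture for the $3$-Calabi--Yau category $\Db(X,\alpha)$, namely that $\bv$ is the Mukai vector of an object.

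Next I would develop a theory of reduced Donaldson--Thomas invariants $\DT^{\mathrm{red}}(\cC,\bv)$ for a $3$-dimensional Calabi--Yau category $\cC$ and a numerical class $\bv$, designed so that on abelian threefolds this invariant can be nonzero even though the naive DT invariant vanishes --- the vanishing being forced by the translation action on moduli of objects, which one divides out to form the reduced invariant. The essential property to prove is a noncommutative variational integral Hodge conjecture: if $\cC$ varies in a family $\cC_s$ over a connected base $S$, if the numerical classes $\bv_s$ form a flat family that stays of Hodge type, and if $\DT^{\mathrm{red}}(\cC_{s_0},\bv_{s_0}) \neq 0$ for some $s_0 \in S$, then $\bv_s$ is the Mukai vector of an object of $\cC_s$ for every $s \in S$. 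This rests on deformation invariance of the reduced invariant, together with the fact that a nonzero invariant forces the corresponding moduli space of objects to be non-empty.

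With these tools I would run a specialization argument. Given the twisted abelian threefold $(X,\alpha)$ and the Hodge class $\bv$ from the first step, I would place $\Db(X,\alpha)$ in a family over the Hodge locus of $\bv$; by construction this locus also contains an \emph{untwisted} abelian threefold $X_0$, where the twisted derived category degenerates to $\Db(X_0)$ and $\bv$ specializes to a class realized by a semistable sheaf on $X_0$. Using that moduli of semistable sheaves on abelian threefolds are well behaved, one computes that the reduced DT invariant of this specialized class is nonzero; deformation invariance then gives $\DT^{\mathrm{red}}(\Db(X,\alpha),\bv) \neq 0$, the noncommutative variational integral Hodge conjecture produces an object of $\Db(X,\alpha)$ with Mukai vector $\bv$, and hence $\ind(\alpha) \mid \per(\alpha)^{2}$. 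Specializing instead to a very general point of the Hodge locus, where the twisting stays non-trivial, yields the integral Hodge conjecture for generically twisted abelian threefolds.

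The main obstacle is the construction in the second step: endowing moduli of objects of a general $3$-Calabi--Yau category with a $(-1)$-shifted symplectic (equivalently $d$-critical) structure, producing a canonical reduction of the obstruction theory that accounts for the abelian-variety symmetry, and showing that the resulting invariant is deformation invariant and compatible with the degeneration from twisted sheaves on $X$ to untwisted sheaves on $X_0$. A secondary difficulty is the base-case nonvanishing on $X_0$, which requires enough understanding of moduli of (twisted) complexes on abelian threefolds to evaluate the reduced invariant explicitly; and threaded through the specialization step is the geometric problem of arranging the family so that the Hodge locus genuinely connects $(X,\alpha)$ to a tractable untwisted model while keeping $\bv$ of Hodge type along the way.
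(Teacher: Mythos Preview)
Your outline is correct and follows essentially the same strategy as the paper: reinterpret the period-index bound as the algebraicity of a rank-$\per(\alpha)^2$ Hodge class in $\Ktop[0](X,\alpha)$, develop reduced DT invariants for CY3 categories with a deformation-invariance/variational-IHC criterion, and specialize within a Hodge locus to an untwisted abelian threefold where the invariant is computed to be nonzero. Two refinements worth noting: the paper uses Bridgeland stability conditions over the base (not Gieseker) precisely to arrange the ``no strictly semistables'' condition at the special point, and the base-case nonvanishing is not a direct moduli computation but rather proceeds by using autoequivalences and wall-crossing (via the Igusa discriminant invariance from \cite{OPT}) to transform $v_0$ into a curve class of type $(1,1,d)$, whose DT invariant is then read off from the Oberdieck--Shen generating series.
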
 

In this paper, we prove the conjecture for abelian threefolds, under a mild assumption on the characteristic. 

 \begin{theorem}
 \label{theorem-main} 
Let $X$ be an abelian threefold over an algebraically closed field $k$. 
For any $\alpha \in \Br(X)$ with $\per(\alpha)$ prime to the characteristic of $k$, we have   
\begin{equation*}
\ind(\alpha) \mid \per(\alpha)^2. 
\end{equation*} 
 \end{theorem}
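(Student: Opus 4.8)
The plan is to recast the divisibility $\ind(\alpha)\mid\per(\alpha)^2$ as the algebraicity of a distinguished Hodge class on the twisted derived category $\Db(X,\alpha)$, and then to prove that algebraicity by propagating it along a Hodge locus from a point where the reduced Donaldson--Thomas invariant can be evaluated. First I would reduce to the case $k=\mathbf{C}$ with $\per(\alpha)=\ell$ prime: since $\per$ and $\ind$ have the same prime divisors and behave multiplicatively on the primary components of $\alpha$, it suffices to treat $\per(\alpha)=\ell^{m}$, and after replacing $\alpha$ by an appropriate power one may assume $\per(\alpha)=\ell$; a spreading-out and specialization argument --- using that abelian varieties and Brauer classes of order prime to $\characteristic(k)$ lift to characteristic zero --- then reduces to $k=\mathbf{C}$.

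Over $\mathbf{C}$, the index of $\alpha$ is the greatest common divisor of the ranks of $\alpha$-twisted perfect complexes, so $\ind(\alpha)\mid\per(\alpha)^2$ holds exactly when some class of rank $\per(\alpha)^2$ in $\Ktop(\Db(X,\alpha))$ lies in the image of $\rK_0(\Db(X,\alpha))$; because $\dim X=3$, a topological argument produces such a class, and one may choose it to be a Hodge class $\bv$ for the Mukai-type weight-zero Hodge structure. Now $\Db(X,\alpha)$ with its Mukai Hodge structure is a smooth proper $3$-dimensional Calabi--Yau category, $(X,\alpha)$ varies in a family of twisted abelian threefolds with a well-understood period map, and $\bv$ cuts out a Hodge locus in the base. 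Using the structure of $\Ktop$ of twisted abelian threefolds --- which by a K\"{u}nneth-type analysis is governed by the $\rK$-theory of abelian varieties --- and the monodromy acting on it, I would connect $(X,\alpha)$ inside this Hodge locus to an \emph{untwisted} abelian threefold $Y$ at which $\bv$ specializes to a Mukai vector $\bw\in\rK_0(\Db(Y))$ with $\DT^{\mathrm{red}}_Y(\bw)$ defined and nonzero.

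The ordinary Donaldson--Thomas invariants of an abelian threefold vanish for dimension reasons, so the crucial input is that the reduced theory of this paper yields a deformation-invariant count $\DT^{\mathrm{red}}$ along the family, together with an explicit computation on $Y$ showing $\DT^{\mathrm{red}}_Y(\bw)\neq0$ for a well-chosen $\bw$: quotienting out the trivial deformations coming from the $Y\times\widehat{Y}$-action and Serre duality --- the very mechanism responsible for the vanishing of the unreduced invariant --- leaves a residual count that one checks is nonzero. Granting this, the noncommutative variational integral Hodge conjecture for classes with nonvanishing invariant applies: $\DT^{\mathrm{red}}$ is locally constant on the Hodge locus, hence nonzero on every fiber, so the moduli space of objects of class $\bv$ is nonempty on every fiber --- in particular over $(X,\alpha)$, so $\bv$ is algebraic there and $\ind(\alpha)\mid\per(\alpha)^2$. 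Running the same argument at a generic point of the Hodge locus simultaneously yields the integral Hodge conjecture for generically twisted abelian threefolds.

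The main obstacle lies in the last two steps: producing a nonvanishing reduced Donaldson--Thomas invariant on an abelian threefold, and arranging that the associated Mukai vector is reachable inside the Hodge locus of $\bv$. The first is delicate precisely because every naive count on an abelian threefold is zero, so the reduced obstruction theory must be engineered to strip away exactly the trivial directions (the translation and Fourier--Mukai symmetries, and the $h^{3,0}=1$ part of the deformation--obstruction complex) while leaving something both computable and nonzero --- this is the technical core of the paper. The second requires a concrete handle on Hodge classes in the twisted topological $\rK$-theory of abelian threefolds and the monodromy acting on them, in order to guarantee that the Hodge locus through $(X,\alpha)$ meets the untwisted locus at a point carrying a suitable Mukai vector.
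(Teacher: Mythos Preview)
Your overall architecture matches the paper's: reduce to $\bC$ by lifting, recast the bound as algebraicity of a rank-$\per(\alpha)^2$ Hodge class in the twisted Mukai structure, deform inside a Hodge locus to an untwisted abelian threefold, and use deformation-invariance of reduced DT invariants together with a nonvanishing at the special fiber to conclude.

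Two corrections. The reduction to \emph{prime} period is invalid: knowing $\ind(\alpha^{\ell^{m-1}})\mid\ell^2$ does not control $\ind(\alpha)$ when $\per(\alpha)=\ell^m$, and there is no cheap isogeny trick on abelian threefolds that drops the period fast enough relative to the degree. The paper keeps $n=\per(\alpha)$ arbitrary but reduces instead (via carefully chosen isogenies, Lemma~\ref{lem:period_index_reduction}) to classes $\theta\in\rH^2(X,\bmu_n)$ of type $(1,1,1)$ admitting a positive integral lift.

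More substantively, you correctly flag the nonvanishing $\DT^{\mathrm{red}}_Y(\bw)\neq 0$ as the main obstacle, but ``one checks is nonzero'' is exactly where the paper's work lies, and the mechanism is missing from your sketch. Higher-rank reduced invariants on abelian threefolds are not computed directly. The key input is a theorem of Oberdieck--Piyaratne--Toda: when Igusa's quartic discriminant satisfies $\Delta(\bw)\geq 0$, the reduced DT invariant is unchanged under wall-crossing and under the action of autoequivalences on $\bw$. The paper engineers the initial Hodge class $v$ so that, after Fourier--Mukai transforms and line-bundle twists on the untwisted fiber $Y$, the class becomes $(1,0,-\beta,-n)$ with $\beta$ a $\bQ$-effective curve class of type $(1,1,d)$; the invariant is then a curve count, and its nonvanishing is read off from the explicit Oberdieck--Shen generating series. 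Arranging all constraints simultaneously --- $\Delta(v)>0$ (which also makes the quotient Deligne--Mumford via Lemma~\ref{lem:discr_no_stabilizer}), $\beta$ landing in type $(1,1,d)$, and the Hodge locus of the lift of $\theta$ hitting a \emph{simple} untwisted abelian threefold --- forces an intricate choice of both the lift of $\theta$ and the parameters in $v$, which is the content of \S\ref{section-proof-main-result-abelian-3folds} and Appendix~\ref{appendix-abelian-3folds}.

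A minor imprecision: the ``$h^{3,0}=1$ part'' is not quite what is stripped away. The reduction is via the quotient by $\Aut^0(\Dperf(X))\cong X\times X^\vee$; the Hochschild-homology term in the obstruction theory of Theorem~\ref{theorem-obs-theory} is what restores \emph{symmetry} of the obstruction theory after that quotient, rather than a further cosection-style reduction of the virtual dimension.
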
 

This appears to be the first nontrivial case of the unramified period-index conjecture established in dimension greater than $2$. 
Note that the Brauer group of an abelian threefold $X$ is indeed far from trivial; 
over the complex numbers, $\Br(X) \cong (\bQ/\bZ)^{15-\rho}$ where 
$\rho$ is the Picard number of $X$ (Lemma~\ref{lemma-Br-ses}).  
By an elementary argument one can show that $\ind(\alpha) \mid \per(\alpha)^3$ holds in the situation of the theorem (Lemma~\ref{lem:symbol_length_bounds}), but the improvement to exponent $2$ seems to lie much deeper. 
In fact, in the analytic setting this improvement fails: by \cite{hotchkiss-tori} for a general complex torus of dimension $3$ there exist Brauer classes with $\ind(\alpha) = \per(\alpha)^3$. 

\begin{remark}
We expect the hypothesis that $\per(\alpha)$ is prime to the characteristic of $k$ can be removed in Theorem~\ref{theorem-main}. 
It would be enough to show that when the characteristic of $k$ is positive, then every twisted abelian threefold lifts to characteristic $0$ (see Remark~\ref{remark-lifting-AV}). 
\end{remark} 

Our proof of Theorem~\ref{theorem-main} places it in a larger framework that we explain in the rest of the introduction. 

\subsection{Hodge theory of categories} 
\label{section-intro-hodge-theory}
Let $\cC \subset \Dperf(X)$ be a semiorthogonal component of the category of perfect complexes on a smooth proper complex variety $X$. 
By \cite{IHC-CY2}, there is a canonically associated finitely generated abelian group $\Ktop[0](\cC)$ (equal to $\pi_0$ of Blanc's topological K-theory \cite{blanc}) equipped with a weight $0$ Hodge structure. 
There is a natural map $\rK_0(\cC) \to \Ktop[0](\cC)$ from the Grothendieck group of $\cC$; classes in the image are called \emph{algebraic}, and they lie in the subgroup $\Hdg(\cC, \bZ) \subset \Ktop[0](\cC)$ of integral Hodge classes. 
This motivates the statement of the \emph{integral Hodge conjecture} for $\cC$, which asserts that the map $\rK_0(\cC) \to \Hdg(\cC, \bZ)$ is surjective.
Similarly, the \emph{Hodge conjecture} for $\cC$ asserts surjectivity after tensoring with $\bQ$. 

In the geometric case when $\cC = \Dperf(X)$, the construction $\rK_0(\cC) \to \Hdg(\cC, \bZ)$ rationally recovers the usual cycle class map $\CH^*(X) \otimes \bQ \to \Hdg^*(X, \bQ)$, where the target denotes the group of rational Hodge classes on $X$; in particular, the Hodge conjecture for $\Dperf(X)$ is equivalent to the usual Hodge conjecture for $X$ in all degrees. 
There is also a close relationship between the integral Hodge conjectures for $\Dperf(X)$ and $X$, explained in~\cite{IHC-CY2}. 

The connection to the period-index conjecture arises when 
$\cC = \Dperf(X, \alpha)$ is the derived category of $\alpha$-twisted sheaves for a Brauer class $\alpha \in \Br(X)$,   
in which case 
we write $\Ktop[0](X, \alpha)$, $\Hdg(X, \alpha, \bZ)$, and $\rK_0(X, \alpha)$ for the above invariants applied to $\Dperf(X, \alpha)$. 
Indeed, since the index of $\alpha$ can be computed as the minimal positive rank of an element of $\rK_0(X, \alpha)$ (Lemma~\ref{lemma-ind-as-rk}), 
the period-index conjecture for $\alpha$ may be divided into two steps: 

\begin{step}{1}
\label{PI-step1}
Construct a Hodge class $v \in \Hdg(X, \alpha, \bZ)$ of rank $\per(\alpha)^{\dim X-1}$. 
\end{step} 

\begin{step}{2}
\label{PI-step2}
Show that $v$ is algebraic. 
\end{step} 

In \cite{hotchkiss-pi}, the Hodge structure $\Ktop(X, \alpha)$ was described explicitly when $\alpha$ is topologically trivial and used to solve Step~\ref{PI-step1} for $\per(\alpha)$ prime to $(\dim X - 1)!$. 
In some cases, like when $X$ is an abelian variety, this method refines to a solution to Step~\ref{PI-step1} for all $\alpha$. 

In this paper, we introduce a method to solve Step~\ref{PI-step2} above, or more generally to show that a given Hodge class on a category is algebraic. 
The method is variational in nature, and depends on the notion of an $S$-linear category $\cC$ over a base space $S$ (discussed in \S\ref{section-linear-cats}), which formalizes the idea of a family of categories parameterized by $S$. 
There is a base change operation for such categories, giving rise to a fiber category $\cC_s$ for every $s \in S$. 
When $\cC$ is smooth and proper of geometric origin over $S$, meaning  
it arises as an $S$-linear semiorthogonal component of $\Dperf(X)$ for a smooth proper morphism $X \to S$, 
then by \cite{IHC-CY2} there is a local system $\Ktop[0](\cC/S)$ on $S$ underlying a weight $0$ variation of Hodge structures, with fibers $\Ktop[0](\cC_s)$ for $s \in S(\bC)$.

\begin{conjecture}[Noncommutative variational integral Hodge conjecture]
\label{conjecture-VHC}
Let $\cC$ be a smooth proper $S$-linear category of geometric origin, where $S$ is a complex variety. 
Let $v$ be a section of the local system $\Ktop[0](\cC/S)$ on $S$. 
Assume that there exists a point $0 \in S(\bC)$ such that the fiber $v_0 \in \Ktop[0](\cC_0)$ is algebraic. 
Then $v_s \in \Ktop[0](\cC_s)$ is algebraic for every $s \in S(\bC)$.  
\end{conjecture}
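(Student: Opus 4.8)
First, I would restrict to the Hodge locus, so that the statement is not vacuous: the conclusion can hold only if $v_s$ is a Hodge class for every $s$, so let $S^{\Hdg}_v \subseteq S$ be the locus on which the flat section $v$ stays of Hodge type. Since $\Ktop[0](\cC/S)$ underlies a polarizable weight-$0$ variation of Hodge structures, $S^{\Hdg}_v$ is a countable union of closed algebraic subvarieties of $S$, by the noncommutative analogue of the Cattani--Deligne--Kaplan theorem, which follows from the variation of Hodge structures on $\Ktop[0](\cC/S)$ of \cite{IHC-CY2} via the classical argument. As $v_0$ is algebraic, hence Hodge, we have $0 \in S^{\Hdg}_v$; replacing $S$ by a resolution of the irreducible component containing $0$ — which affects neither the set of complex points over which $v$ is algebraic nor the property of $\cC$ being of geometric origin — I may assume $S$ smooth and irreducible and $v$ a Hodge class in every fiber.

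\textbf{Step 2: The unconditional input.} The one genuinely new and unconditional ingredient is the reduced Donaldson--Thomas invariant of this paper. When $\cC/S$ has $3$-Calabi--Yau fibers, $\DT(v)$ is deformation-invariant along $\cC/S$ and is a virtual count of stable objects of class $v$ (for a suitable stability condition); its nonvanishing forces the relevant moduli to be nonempty in every fiber, so $v_s$ lies in the image of $\rK_0(\cC_s) \to \Ktop[0](\cC_s)$ for all $s$. Thus the conjecture holds outright whenever $\DT(v_s) \neq 0$ for one (equivalently, every) $s$ — and, decisively, this sidesteps the classical obstruction that an algebraic object $E_0 \in \cC_0$ with $[E_0] = v_0$ need not deform to nearby fibers, the obstruction lying in $\Ext^2_{\cC_0}(E_0, E_0)$ and being in general nonzero. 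The plan for the general conjecture is to reduce to this nonvanishing case.

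\textbf{Step 3: Two reductions toward the general case.} Rationally: write $\cC$ as an $S$-linear semiorthogonal component of $\Dperf(\cX)$ for $f \colon \cX \to S$ smooth and proper, cut out by an idempotent Fourier--Mukai kernel; the Chern character identifies $\Ktop[0](\cC/S) \otimes \bQ$ with the summand of $\bigoplus_p R^{2p} f_* \bQ$ cut out by this (algebraic) projector, under which $v$ becomes a tuple of flat sections $\ch_p(v)$, each a Hodge class in every fiber. Passing to a smooth projective model $\overline{\cX} \to \overline{S}$, Deligne's global invariant cycle theorem lifts $\ch_p(v_s)$ along $H^{2p}(\overline{\cX}, \bQ) \twoheadrightarrow H^{2p}(\cX_s, \bQ)^{\pi_1(S,s)}$, and by semisimplicity of polarizable Hodge structures the lift may be taken Hodge; the Hodge conjecture for $\overline{\cX}$ then shows $N v_s$ is algebraic in $\Ktop[0](\cC_s)$ for a uniform $N$. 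This proves the rational form of the conjecture conditionally on the Hodge conjecture. For the unconditional integral statement one wants instead to manufacture nonvanishing of a deformation-invariant virtual count; since $\DT(v_0)$ is intrinsic to the pair $(\cC_0, v_0)$, no choice of family helps when it vanishes, so one would need refined or higher-rank Donaldson--Thomas-type invariants, or a derived equivalence of $\cC_0$ with a Calabi--Yau category on which the image of $v_0$ has nonvanishing reduced invariant.

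\textbf{The main obstacle.} The crux is precisely this: attaching to an arbitrary algebraic Hodge class a deformation-invariant count that certifies its algebraicity in every fiber. This carries the full force of the conjecture — subsuming the open Hodge conjecture and the pathologies of the integral Hodge conjecture — and no present technique reaches it in general. What the paper does establish is the case $\DT(v) \neq 0$; combined with the rational reduction above, adapted to twisted derived categories, and with a specialization within the Hodge locus to a family with nonvanishing invariant, this already suffices to prove the period-index conjecture for abelian threefolds (Theorem~\ref{theorem-main}).
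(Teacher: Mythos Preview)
The statement you are asked to prove is labeled a \emph{conjecture}, and the paper explicitly says immediately after stating it: ``Like the (variational) integral Hodge conjecture for varieties, this statement is false in general, but in keeping with tradition we still refer to it as a `conjecture'.'' There is therefore no proof in the paper to compare against, because the statement is not a theorem---it is in fact known to fail. Your proposal treats the general case as merely out of reach (``no present technique reaches it in general''), but the situation is stronger: counterexamples exist, inherited from the failure of the integral Hodge conjecture for varieties. So Step~3 and your ``main obstacle'' paragraph are aiming at a target that does not exist.

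That said, your Step~2 accurately captures what the paper \emph{does} prove: Theorem~\ref{theorem-intro-VIHC-sigma} (and the more general Theorem~\ref{theorem-VIHC-general}) establish exactly the criterion you describe---Conjecture~\ref{conjecture-VHC} holds for a CY3 category over $S$ whenever one can verify, at a single point $0 \in S(\bC)$, that a suitable moduli quotient is Deligne--Mumford and the reduced DT invariant $\DT_{\sigma_0}(v_0)$ is nonzero. Your summary of how this bypasses the $\Ext^2$ obstruction to deforming a single object is correct, and your closing sentence about specializing within the Hodge locus to reach nonvanishing invariants is precisely the strategy of Part~\ref{part-PI-abelian-3folds}. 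A minor point on Step~1: since $\Ktop[0](\cC/S)$ already underlies a polarizable VHS by Theorem~\ref{theorem-Ktop}, the classical Cattani--Deligne--Kaplan theorem applies directly; no noncommutative analogue is needed.
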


Like the (variational) integral Hodge conjecture for varieties, this statement is false in general, but in keeping with tradition we still refer to it as a ``conjecture''.
Nonetheless, one can hope to prove that Conjecture~\ref{conjecture-VHC} holds in special situations. 

For instance, by \cite[Proposition 8.1]{IHC-CY2} it holds when $v_0$ is not only algebraic, but can be realized as class of an object $E_0 \in \cC_0$ at which the morphism $\cM_{\gl}(\cC/S) \to S$ is smooth, where $\cM_{\gl}(\cC/S)$ is the relative moduli space of gluable objects in $\cC$ (reviewed in \S\ref{section-classical-moduli}). 
As shown in \cite{IHC-CY2}, this criterion can be effectively applied when $\cC$ is a CY$2$ category over $S$, which roughly means that it has the same homological properties as the derived category of a family of Calabi--Yau surfaces 
(see Definition~\ref{definition-CYn} for the precise meaning of a CY$n$ category in general). 
There are two key ingredients: 
\begin{enumerate}
\item \label{smoothness-moduli}
A general version of Mukai's smoothness theorem, asserting that the moduli space 
$s\cM_{\gl}(\cC/S, v)$ of simple gluable objects of class $v$ is smooth over $S$.  
\item \label{nonemptiness-moduli} 
The existence of many simple gluable objects on K3 and abelian surfaces, which follows from nonemptiness of moduli spaces of Bridgeland stable objects. 
\end{enumerate} 
The upshot is that Conjecture~\ref{conjecture-VHC} holds essentially whenever $\cC$ is a CY2 category over $S$ and $\cC_0 \simeq \Db(T)$ for a K3 or abelian surface $T$ \cite[Theorem 1.1]{IHC-CY2}. 
In particular, this leads to a proof of the integral Hodge conjecture for CY2 categories that can be suitably specialized to the derived category of a K3 or abelian surface. 

The above method fails badly in higher dimensions. 
Namely, for CY$n$ categories of dimension $n \geq 3$, 
moduli spaces of objects are rarely smooth and nonemptiness of moduli spaces of stable objects is far from understood, even when $\cC = \Dperf(X)$ is geometric. 
We develop a new approach in the CY3 case, which is roughly to \emph{count} 
the number solutions to $v_0 = [E_0]$ with $E_0 \in \cC_0$ a stable object, 
and to show that Conjecture~\ref{conjecture-VHC} holds when it is nonzero. 
The precise meaning of this count depends on new foundations for Donaldson--Thomas theory. 

\subsection{Donaldson--Thomas theory} 
Let $X$ be a smooth projective complex threefold which is Calabi--Yau in the sense that $\omega_X \cong \cO_X$. 
Let $v \in \Ktop[0](X)$ be a topological class and $H$ a polarization on $X$ such that Gieseker $H$-semistability and $H$-stability coincide for sheaves on $X$ of class $v$. 
The expected dimension of the moduli space $M_H(v)$ of such semistable sheaves at a point $E$ is $\dim \Ext^1(E,E) - \dim \Ext^2(E,E)$, which vanishes by Serre duality and the Calabi--Yau assumption. 
The actual dimension of $M_H(v)$, however, is often much larger. 
To remedy this, Thomas \cite{thomas-DT} showed that $M_H(v)$ carries a symmetric perfect obstruction theory in the sense of Behrend--Fantechi \cite{BF-normal-cone, BF-symmetric}, which gives rise to a cycle of the expected dimension, the \emph{virtual fundamental class} 
\begin{equation*} 
[M_H(v)]^{\vir} \in \CH_0(M_H(v)). 
\end{equation*} 
The degree of this $0$-cycle is the \emph{Donaldson--Thomas (DT)} invariant of $M_H(v)$, which plays a central role in modern enumerative geometry. 
There are also important extensions of these invariants to other settings, for instance to moduli spaces $M_{\sigma}(v)$ of stable objects with respect to a stability condition $\sigma$ on $\Db(X)$ \cite{joyce-song, kontsevich-soibelman}. 

DT invariants are typically studied on Calabi--Yau threefolds which are strict in the sense that $h^1(\cO_X) = 0$. 
Indeed, when $h^1(\cO_X) > 0$ then there are extra symmetries which often force the DT invariants to vanish. 
In such cases, one can instead hope to construct interesting \emph{reduced DT invariants} by suitably modifying the space $M_H(v)$ and its obstruction theory. 
This was done by Gulbrandsen \cite{gulbrandsen} for abelian threefolds and by Oberdieck \cite{obPT} for the product of a K3 surface and elliptic curve; their constructions, however, are ad hoc and depend on the geometry of the specific situation. 

In this paper, we develop a general theory of reduced DT invariants for CY3 categories. 
The main construction is summarized in the following theorem. 

\begin{theorem}
\label{theorem-intro-DT-definition}
Let $\cC$ be a CY3 category of geometric origin over $\bC$.  
Let $\sigma$ be a stability condition on $\cC$ over $\bC$ with respect to a topological Mukai homomorphism. 
Let $v \in \Ktop[0](\cC)$ be a class for which no strictly $\sigma$-semistable objects of class $v$ exist. 
Let $\Aut^0(\cC)$ denote the identity component of the group of autoequivalences of $\cC$.

Then if the quotient stack $M_\sigma(v)/ \Aut^0(\cC)$ is Deligne--Mumford, 
it carries a canonical 
symmetric perfect obstruction theory and hence virtual fundamental class 
\begin{equation*}
[M_\sigma(v)/ \Aut^0(\cC)]^{\vir} \in \CH_0(M_\sigma(v)/ \Aut^0(\cC)). 
\end{equation*} 
If moreover $\Aut^0(\cC)$ is proper, then so is $M_\sigma(v)/ \Aut^0(\cC)$, and thus the virtual class can be integrated to define a \emph{reduced DT invariant} 
\begin{equation*}
\DT_\sigma(v) \coloneqq \int_{[M_\sigma(v)/ \Aut^0(\cC)]^{\vir}} 1 .  
\end{equation*} 
\end{theorem}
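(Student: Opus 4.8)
The plan is to exhibit $[M_\sigma(v)/\Aut^0(\cC)]$ as the classical truncation of a $(-1)$-shifted symplectic derived stack, obtained by a shifted symplectic reduction of the derived moduli stack of objects of $\cC$ along the $\Aut^0(\cC)$-action, and then to invoke the standard passage from a $(-1)$-shifted symplectic structure on a derived Deligne--Mumford stack to a symmetric perfect obstruction theory and a virtual class on its truncation. First I would recall that, since $\cC$ is of geometric origin, the derived moduli stack $\cM(\cC)$ of objects of $\cC$ is locally geometric and locally of finite type; the locus $\cM_\sigma(v) \subseteq \cM(\cC)$ of $\sigma$-stable objects of class $v$ is open by openness of stability, and, because there are no strictly $\sigma$-semistable objects of class $v$, each such object is simple, so the classical truncation $M_\sigma(v)$ of $\cM_\sigma(v)$ is a $\bG_m$-gerbe over an algebraic space $\underline{M}_\sigma(v)$ — it is this rigidification that one must quotient by $\Aut^0(\cC)$ to land among Deligne--Mumford stacks, and I will implicitly do so.

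\textbf{Shifted symplectic structure and the action.} Next, the CY$3$ structure on $\cC$ induces, by Pantev--To\"en--Vaqui\'e--Vezzosi and Brav--Dyckerhoff, a canonical $(-1)$-shifted symplectic structure on $\cM(\cC)$, whose underlying self-duality at a point $E$ is Serre duality $\mathrm{RHom}(E,E)[1] \simeq \mathrm{RHom}(E,E)^\vee[-1][1]$; it restricts to $\cM_\sigma(v)$. The derived group $\Aut(\cC)$ acts on $\cM(\cC)$ by pushforward, and its identity component $\Aut^0(\cC)$, being connected, acts trivially on the discrete lattice $\Ktop[0](\cC)$ and hence preserves $\cM_\sigma(v)$; it also preserves the $(-1)$-shifted symplectic form, since the latter is induced by the CY$3$ structure, which autoequivalences respect up to the relevant scaling. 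The infinitesimal action at $E$ is the differential of the orbit map, $\mathfrak{g} \coloneqq \mathrm{Lie}\,\Aut^0(\cC) \to \Ext^1(E,E) = h^0(\mathrm{RHom}(E,E)[1])$, and the Deligne--Mumford hypothesis is precisely that this map is injective for every $\sigma$-stable $E$ of class $v$ (finite stabilizers).

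\textbf{Reduction: the crux.} The heart of the argument is to promote $[\underline{M}_\sigma(v)/\Aut^0(\cC)]$ to the truncation of a $(-1)$-shifted symplectic derived stack. The plan is to show that the $\Aut^0(\cC)$-action on $\cM_\sigma(v)$ is Hamiltonian, with a moment map $\cM_\sigma(v) \to \mathfrak{g}^\vee[1]$ built from the CY$3$ pairing and the canonical Hochschild action map $\HH^\bullet(\cC) \to \mathrm{RHom}(E,E)$, and that this moment map can be taken to be zero — the geometric reason being that one acts by genuine autoequivalences of $\cC$, so that $\cM_\sigma(v)$ with its action already constitutes a shifted symplectic reduction datum. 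The reduction $\cM_\sigma(v)/\!\!/\Aut^0(\cC)$ is then $(-1)$-shifted symplectic, and the Deligne--Mumford hypothesis identifies its classical truncation with $[\underline{M}_\sigma(v)/\Aut^0(\cC)]$; at $[E]$ the induced obstruction theory is built from $\cofib\big(\mathfrak{g} \to \mathrm{RHom}(E,E)[1]\big)$ corrected in obstruction degree by $\mathfrak{g}^\vee$ (the Serre-dual of the action), a complex manifestly self-dual under $(\blank)^\vee[-1]$ and, using injectivity of the action map, of amplitude $[0,1]$. A $(-1)$-shifted symplectic structure on a derived Deligne--Mumford stack yields a symmetric perfect obstruction theory on its truncation (Behrend--Fantechi), and since such a complex has rank $0$ the virtual class lies in $\CH_0$, giving $[M_\sigma(v)/\Aut^0(\cC)]^{\vir}$. (If one prefers, the same obstruction theory can be assembled by hand from the universal object and relative $\mathrm{RHom}$, categorifying the ad hoc reduced obstruction theories of Gulbrandsen and Oberdieck; the shifted symplectic route is what makes it canonical.) I expect this reduction step — pinning down the moment map and its vanishing, equivalently the isotropic structure correcting the naive quotient obstruction theory — to be the main obstacle.

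\textbf{Properness and the invariant.} Finally, to integrate, I would check that $[M_\sigma(v)/\Aut^0(\cC)]$ is proper whenever $\Aut^0(\cC)$ is: finite type-ness follows from boundedness of $\sigma$-semistable objects of class $v$; universal closedness follows if every family of $\sigma$-stable objects of class $v$ over a punctured curve extends over the curve after applying an autoequivalence in $\Aut^0(\cC)$; and separatedness follows from uniqueness of such extensions modulo $\Aut^0(\cC)$ together with properness of $\Aut^0(\cC)$. Granting properness, $\DT_\sigma(v) = \int_{[M_\sigma(v)/\Aut^0(\cC)]^{\vir}} 1$ is well defined, as claimed.
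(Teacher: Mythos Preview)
Your strategy via $(-1)$-shifted symplectic reduction is a natural one, and the paper itself flags it (Remark~\ref{remark-derived-enhancement-MG}) as a direction for \emph{future} work --- but it is not how the paper proceeds, and the step you yourself identify as the main obstacle is a genuine gap. The assertion that the moment map ``can be taken to be zero'' because ``one acts by genuine autoequivalences'' is not a proof: you have not constructed a Hamiltonian structure on the action at all. Up to the CY3 duality $\cHH^* \simeq \cHH_*^\vee \otimes L[-3]$, the candidate moment map is built from the Chern character $\ch_{\cE} \colon \cHom(\cE,\cE) \to \cHH_*(\cC)$, and what one actually needs in order to land the resulting obstruction theory on the classical quotient is the vanishing of the composite
\[
\Ext^1(L_{\cM},\bC) \longrightarrow \Ext^2(E,E) \xrightarrow{\ \ch_E\ } \HH_{-2}(\cC)
\]
(obstruction classes followed by the Chern character). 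This is a semiregularity statement; the paper proves it by invoking Pridham's results together with the hypothesis that $v$ is a Hodge class --- an ingredient your proposal never surfaces.

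The paper's actual route (Theorem~\ref{theorem-obs-theory}) bypasses the full reduction machinery. It starts from the non-symmetric obstruction theory on $\cM/\cG$ induced by the derived quotient enhancement $\fM/\cG$ (Proposition~\ref{proposition-quotient-enhancement}), whose complex is $\fib\bigl(\cHom(\cE,\cE)\otimes L[2] \xrightarrow{\beta} \tau^{\geq 2}\cHH_*\otimes L[2]\bigr)$ with $\beta$ the truncated Chern character, and then symmetrizes by hand: take the further cofiber along the CY-dual map $\alpha \colon \tau^{\leq 1}\cHH^*\otimes L[2] \to \cHom(\cE,\cE)\otimes L[2]$ given by the Hochschild action. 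The ``cohomology'' of this self-dual three-term sequence is tautologically symmetric and matches your pointwise description of the complex; the crux (Step~3 of the proof) is verifying that it still maps to $\tau^{\geq -1}L_{(\cM/\cG)/S}$, and this is exactly where the semiregularity input lands. Perfectness is then formal from the Deligne--Mumford hypothesis (Lemma~\ref{lemma-DM-spos}). For properness, the paper's argument (Lemma~\ref{lemma-MG-proper}) is also more direct than your valuative-criterion sketch: $M_\sigma(v)$ is already proper by the theory of stability conditions over a base, and a quotient of a proper algebraic space by a proper group is proper by an elementary diagonal argument.
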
 

\begin{remark}
We have chosen to denote the reduced DT invariant by $\DT_{\sigma}(v)$, without any adornment, as we believe it is the ``true'' DT invariant in general. 
Indeed, for $\cC = \Dperf(X)$ it recovers the classical DT invariant when $X$ is a strict Calabi--Yau threefold and the previously studied reduced DT invariant when $X$ has an abelian factor. 
\end{remark}

In the formulation of Theorem~\ref{theorem-intro-DT-definition}, we use the notion of a stability condition relative to a base developed in \cite{stability-families}; in particular, this guarantees that a proper moduli space $M_\sigma(v)$ of $\sigma$-semistable objects of class $v$ does indeed exist. 
The Mukai homomorphism of $\sigma$ being topological is always satisfied in practice, and means that its value on any class in $\rK_0(\cC)$ is determined by its image in $\Ktop[0](\cC)$ (Definition~\ref{definition-topological-v}). 
The construction and properties of the group algebraic space of autoequivalences $\Aut(\cC)$ and its identity component $\Aut^0(\cC)$ are discussed in \S\ref{section-autoequivalences}; 
in particular, the assumption that $\Aut^0(\cC)$ is proper is mild, and holds automatically for twisted derived categories of Calabi--Yau varieties (Lemma~\ref{lemma-Aut0-CYn-proper}).

In the body of the text, we prove a more general version of Theorem~\ref{theorem-intro-DT-definition} that applies to suitable quotients of substacks of the moduli stack of all objects in $\cC$ (Theorem~\ref{theorem-obs-theory} and Definition~\ref{definition-DT}). 
In particular, we also obtain an analog of Theorem~\ref{theorem-intro-DT-definition} for moduli spaces of Gieseker semistable sheaves (Definition~\ref{definition-DT-twisted}).  
Our general result also works in families, leading to the constancy of our DT invariants under deformations of $\cC$.  

\begin{theorem}
\label{theorem-intro-DT-defo-invariant}
Let $\cC$ be a CY3 category of geometric origin over a smooth complex variety $S$.  
Let $\sigma$ be a stability condition on $\cC$ over $S$ with respect to a topological Mukai homomorphism. 
Let $v$ be a section of the local system $\Ktop[0](\cC/S)$ such that for every $s \in S(\bC)$, 
the fiber $v_s \in \Ktop[0](\cC_s)$ is a Hodge class and there are no strictly $\sigma_s$-semistable objects in $\cC_s$ of class~$v_s$.  
Let $\Aut^0(\cC/S)$ denote the identity component of the group of $S$-linear autoequivalences of $\cC$.

Then if the quotient stack $M_\sigma(v)/ \Aut^0(\cC/S)$ is Deligne--Mumford, 
it carries a canonical 
symmetric perfect obstruction theory over $S$. 
If moreover $\Aut^0(\cC/S) \to S$ is proper, then the reduced DT invariants 
$\DT_{\sigma_s}(v_s)$ of the fibers are independent of $s \in S(\bC)$. 
\end{theorem}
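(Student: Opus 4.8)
The plan is to reduce the statement to the deformation invariance of Behrend--Fantechi virtual fundamental classes for a symmetric perfect obstruction theory that exists relative to the base $S$. The first step is to produce the relative object. By the body's general construction (Theorem~\ref{theorem-obs-theory} and Definition~\ref{definition-DT}), the relative moduli space $M_\sigma(v) \to S$ of $\sigma$-semistable objects of class $v$ exists as a proper algebraic space over $S$ — here one uses the properness output of the relative stability condition formalism of \cite{stability-families} together with the fact that $v_s$ has no strictly semistable objects for each $s$. The group algebraic space $\Aut^0(\cC/S) \to S$ acts on $M_\sigma(v)$, and by hypothesis the quotient stack $\cM \coloneqq M_\sigma(v)/\Aut^0(\cC/S)$ is Deligne--Mumford; by the single-fiber version (proved just before, i.e., the $S$-point case of Theorem~\ref{theorem-intro-DT-definition} applied fiberwise, combined with the relative construction) $\cM$ carries a symmetric perfect obstruction theory $\mathbf{E}^\bullet \to \mathbf{L}_{\cM/S}$ relative to $S$. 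The key point to check here is that this obstruction theory is compatible with base change along any $s \to S$, which amounts to the standard fact that the relative cotangent complex and the relevant $\Ext$-sheaves of the universal object base-change correctly; this is where the ``geometric origin'' hypothesis does real work, since it is what allows the deformation/obstruction spaces to be identified with $\Ext^1$, $\Ext^2$ of a genuine universal complex on a scheme over $S$.

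The second step is purely formal given the first: with a perfect obstruction theory relative to a base $S$, whose virtual dimension is $0$ in every fiber (by the CY3 Serre duality symmetry, exactly as in the absolute case), one gets for each $s \in S(\bC)$ a virtual fundamental class $[\cM_s]^{\vir} \in \CH_0(\cM_s)$, and when $\Aut^0(\cC/S) \to S$ is proper the fiber $\cM_s$ is proper (the properness of $\cM_s$ follows from properness of $M_{\sigma_s}(v_s)$ and of $\Aut^0(\cC_s)$, via Lemma~\ref{lemma-Aut0-CYn-proper} or its relative form). Then $\DT_{\sigma_s}(v_s) = \int_{[\cM_s]^{\vir}} 1$ is a well-defined integer. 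Deformation invariance now follows from the conservation-of-number principle for virtual classes in a proper family: if $S$ is connected (one reduces to this case by working on connected components, or along a path), the function $s \mapsto \int_{[\cM_s]^{\vir}} 1$ is constant because the virtual classes in the fibers all arise by restriction from a single relative virtual class, and pushforward to $S$ of a relative $0$-cycle class in a proper family is locally constant. Concretely one can invoke \cite{BF-normal-cone} (the functoriality of virtual pullback) or argue via the cosection/specialization formalism: the Behrend function weighted count, or equivalently the degree of the virtual class, is a deformation invariant whenever the moduli space is proper and the obstruction theory is globally defined over the base.

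The main obstacle I expect is \emph{Step~1}, specifically verifying that the symmetric perfect obstruction theory genuinely exists and behaves well \emph{over the base $S$} rather than just fiberwise. Two subtleties must be handled: first, the quotient by the non-finite group $\Aut^0(\cC/S)$ must be taken in a way that is compatible with the obstruction theory — one needs the deformation theory of an object $E_s$ in $\cM_s = M_{\sigma_s}(v_s)/\Aut^0(\cC_s)$ to be governed by a ``reduced'' tangent-obstruction complex obtained by quotienting out the infinitesimal $\Aut^0$-action, and one must check this reduced complex is still self-dual of the right shift (this is exactly the mechanism that, in the abelian-threefold case, recovers Gulbrandsen's construction and kills the naive DT invariant). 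Second, one needs the quotient stack to be not just Deligne--Mumford but to admit the obstruction theory as a \emph{perfect} complex in degrees $[-1,0]$ relative to $S$; because $\Aut^0(\cC/S)$ may be positive-dimensional, the naive moduli stack has negative-degree tangent cohomology, and the content of the construction is that passing to the quotient precisely cancels this. Once this relative symmetric perfect obstruction theory is in hand, the deformation invariance of the integral is a standard consequence of the theory of virtual classes in proper families and requires no new ideas.
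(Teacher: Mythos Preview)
Your proposal is correct and follows essentially the same approach as the paper: construct a relative symmetric perfect obstruction theory on the quotient stack over $S$ (the paper's Theorem~\ref{theorem-obs-theory}, via the setup of Example~\ref{example-moduli-objects-proper}), then invoke the standard deformation invariance of virtual counts for proper families (the paper's Corollary~\ref{corollary-vnumber-constant}, which is exactly the Behrend--Fantechi argument you cite). Your identification of Step~1 as the substantive part is accurate; the paper carries it out by first taking the derived enhancement of the quotient $\fM/\cG$ from Proposition~\ref{proposition-quotient-enhancement}, which yields an obstruction theory that is not yet symmetric, and then symmetrizing it by hand using Pridham's semiregularity results \cite{pridham} together with the Hodge-class assumption on $v$ --- a step whose mechanism you only gesture at but correctly anticipate.
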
 

We construct the obstruction theory in Theorem~\ref{theorem-intro-DT-defo-invariant} in terms of the ``cohomology'' of a natural $3$-term ``complex of complexes''~\eqref{complex-obs-theory} on the moduli stack of $\sigma$-semistable objects, involving the action of Hochschild cohomology on the endomorphisms of the universal object. 
To do so, we use derived algebraic geometry --- in particular a derived enhancement of the quotient stack $M_\sigma(v)/ \Aut^0(\cC/S)$ --- combined with Pridham's semiregularity results \cite{pridham}. 
Our method should also be useful for constructing reduced obstruction theories in other settings; see \S\ref{section-DT4} for an upcoming example. 

Now we can state our criterion in terms of DT invariants for the validity of the noncommutative variational integral Hodge conjecture. 

\begin{theorem}
\label{theorem-intro-VIHC-sigma}
Let $\cC$ be a CY3 category of geometric origin over a smooth complex variety $S$. 
Let $\sigma$ be a stability condition on $\cC$ over $S$ with respect to a topological Mukai homomorphism. 
Let $v$ be a section of $\Ktop[0](\cC/S)$ whose fibers $v_s \in \Ktop[0](\cC_s)$ are Hodge classes for all $s \in S(\bC)$. 
Assume there exists a point $0 \in S(\bC)$ such that: 
\begin{enumerate}
\item  There do not exist strictly $\sigma_0$-semistable objects of class $v_0$. 
\item $M_{\sigma_0}(v_0)/\Aut^0(\cC_0)$ is Deligne--Mumford. 
\item $\DT_{\sigma_0}(v_0) \neq 0$. 
\end{enumerate} 
Then for every $s \in S(\bC)$ the moduli space $M_{\sigma_s}(v_s)$ is nonempty, and in particular, the class 
$v_s \in \Ktop[0](\cC_s)$ is algebraic. 
\end{theorem}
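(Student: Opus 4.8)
The plan is to reduce the statement to the deformation invariance of reduced DT invariants (Theorem~\ref{theorem-intro-DT-defo-invariant}) over a suitable open neighborhood of $0$, and then to propagate nonemptiness of moduli spaces from that neighborhood to all of $S$ using properness of the relative moduli space. Since the conclusion only concerns points in the connected component of $0$, I would assume $S$ connected, hence irreducible as it is smooth.

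First I would carve out the open locus over which Theorem~\ref{theorem-intro-DT-defo-invariant} applies. Let $U \subseteq S$ be the set of points $s$ at which (i) there are no strictly $\sigma_s$-semistable objects of class $v_s$, (ii) $M_{\sigma_s}(v_s)/\Aut^0(\cC_s)$ is Deligne--Mumford, and (iii) $\Aut^0(\cC_s)$ is proper. Hypotheses (1)--(3) give $0 \in U$: for $\DT_{\sigma_0}(v_0)$ to be defined the stack $M_{\sigma_0}(v_0)/\Aut^0(\cC_0)$ must be proper and Deligne--Mumford, and because $M_{\sigma_0}(v_0)$ is itself proper (Theorem~\ref{theorem-intro-DT-definition} and \cite{stability-families}), this forces $\Aut^0(\cC_0)$ to be proper --- the action groupoid $M_{\sigma_0}(v_0) \times \Aut^0(\cC_0) \rightrightarrows M_{\sigma_0}(v_0)$ identifies $\Aut^0(\cC_0)$, after pulling back along a $\bC$-point of $M_{\sigma_0}(v_0)$, with a closed subspace of a proper algebraic space. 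I would then check that (i), (ii), (iii) are each open: (i) holds because the locus where strictly $\sigma_s$-semistable objects of class $v_s$ exist is closed, by the theory of stability conditions in families together with boundedness \cite{stability-families}; (ii) holds because the non-Deligne--Mumford locus of a quotient of a proper space is the (closed) locus where automorphism groups of the semistable objects jump; and (iii) holds because $\Aut^0(\cC/S) \to S$ is a family of smooth connected group algebraic spaces, so the dimension of the affine part of the fibers is upper semicontinuous and properness is the vanishing of that dimension. After shrinking, $U$ is a nonempty Zariski-open subset of $S$ containing $0$ over which all hypotheses of Theorem~\ref{theorem-intro-DT-defo-invariant} are satisfied.

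Next I would run the DT machinery over $U$. By Theorem~\ref{theorem-intro-DT-defo-invariant}, the reduced DT invariant $\DT_{\sigma_s}(v_s)$ is independent of $s \in U$, hence equal to $\DT_{\sigma_0}(v_0) \neq 0$ for all $s \in U$. As $\DT_{\sigma_s}(v_s)$ is the degree of the virtual fundamental class on the proper Deligne--Mumford stack $M_{\sigma_s}(v_s)/\Aut^0(\cC_s)$, its nonvanishing forces that stack, and therefore $M_{\sigma_s}(v_s)$, to be nonempty for every $s \in U$. To extend this to all of $S$, I would use that $M_\sigma(v) \to S$ is proper by the construction of relative moduli spaces of semistable objects \cite{stability-families}, so that its image is Zariski-closed in $S$; this image contains the nonempty Zariski-open subset $U$, which is dense since $S$ is irreducible, so the image is all of $S$. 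Hence $M_{\sigma_s}(v_s) \neq \emptyset$ for every $s \in S(\bC)$, and a $\bC$-point of $M_{\sigma_s}(v_s)$ is a $\sigma_s$-semistable object $E_s \in \cC_s$ whose class in $\rK_0(\cC_s)$ maps to $v_s$ in $\Ktop[0](\cC_s)$ (using that the Mukai homomorphism is topological, so that the moduli problem is phrased in terms of topological classes); thus $v_s$ lies in the image of $\rK_0(\cC_s) \to \Ktop[0](\cC_s)$, i.e.\ $v_s$ is algebraic.

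I expect the main obstacle to lie in the second paragraph: establishing openness of the locus of $s$ with no strictly $\sigma_s$-semistable objects of class $v_s$, and propagating properness of $\Aut^0(\cC_s)$ --- equivalently, of the quotient stack, and hence the very existence of the reduced DT invariant --- away from the single point $0$, so that the family version Theorem~\ref{theorem-intro-DT-defo-invariant} genuinely applies on a neighborhood. Once these openness facts are secured, the remainder is a formal combination of deformation invariance of the reduced DT invariant with properness of the relative moduli space over $S$.
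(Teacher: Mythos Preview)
Your approach is essentially the paper's: restrict to an open neighborhood $U \ni 0$ where the hypotheses of the deformation-invariance theorem hold, deduce nonemptiness over $U$ from $\DT \neq 0$, then extend to all of $S$ via closedness of the image of the relative moduli stack. A few points of execution deserve correction. For the extension step, the relevant input is that $\cM_\sigma(v,\phi) \to S$ is \emph{universally closed} (Example~\ref{example-relative-stability}); properness is only known where strictly semistables are absent, but universal closedness suffices to conclude the image is all of $S$. Your argument for openness of (ii) is not right as stated: the paper instead proves a clean lemma (Lemma~\ref{lemma-fiber-DM-open}) that for any universally closed $\cX \to S$, the base locus where the fiber is Deligne--Mumford is open, by pushing down the open locus where the relative diagonal is unramified---note this already presupposes universal closedness of the quotient over $U$, hence implicitly (iii). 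Finally, properness of $\Aut^0(\cC_0)$ need not be extracted from separatedness of the quotient: it is built directly into the definition of $\DT_{\sigma_0}(v_0)$ (Definition~\ref{definition-DT-CY3}), so hypothesis (3) contains it outright. Propagating this to a neighborhood is, as you correctly flag, the genuine subtlety; the paper's general Theorem~\ref{theorem-VIHC-general} simply assumes properness of $\cM_U/\cG_U \to U$, and in the applications this is supplied by Lemma~\ref{lemma-Aut0-CYn-proper} (which gives global properness of $\Aut^0(\cC/S) \to S$ for twisted Calabi--Yau varieties) rather than by a propagation argument.
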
 

Again, in the body of the text we prove a more general version of Theorem~\ref{theorem-intro-VIHC-sigma} that applies to suitable quotients of substacks of the moduli stack of all objects in $\cC$ (Theorem~\ref{theorem-VIHC-general}).
In particular, we also obtain an analog of Theorem~\ref{theorem-intro-VIHC-sigma} for moduli spaces of Gieseker semistable sheaves (Corollary~\ref{corollary-VIHC-Gieseker}). 
The key input for the proof of the above criterion is the deformation invariance of reduced DT invariants from Theorem~\ref{theorem-intro-DT-defo-invariant}. 

\subsection{The period-index conjecture for abelian threefolds} 
\label{section-intro-proof-sketch}
Now we can complete the sketch of our proof of Theorem~\ref{theorem-main}, the  
period-index conjecture for abelian threefolds. 
By a lifting argument, the theorem reduces to the case where the base field is the complex numbers. 
As explained in \S\ref{section-intro-hodge-theory}, 
given any complex abelian threefold $X_1$ and Brauer class $\alpha_1 \in \Br(X_1)$, 
it suffices to construct a Hodge class $v_1 \in \Hdg(X_1, \alpha_1, \bZ)$ of rank $\per(\alpha_1)^2$ such that $v_1$ is algebraic. 
To do so, we construct: a family $X \to S$ of abelian threefolds with a Brauer class $\alpha \in \Br(X)$; a stability condition $\sigma$ on $\cC = \Dperf(X, \alpha)$ over $S$, a section $v$ of $\Ktop[0](\cC/S)$, and a point $0 \in S(\bC)$ satisfying the hypotheses of Theorem~\ref{theorem-intro-VIHC-sigma}; and a point $1 \in S(\bC)$ such that $(X_1, \alpha_1)$ is the fiber of $(X, \alpha)$ over $1$ and $v_1$ has rank $\per(\alpha_1)^2$. 
Then by~Theorem~\ref{theorem-intro-VIHC-sigma} the class $v_1$ is algebraic.  

The construction of the data $(X, \alpha, S, \sigma, v, 0, 1)$ is quite intricate, but the main idea is that by specializing $X_1$ into a suitable Hodge locus in the moduli space of polarized abelian threefolds, 
we are able to choose the family $(X, \alpha) \to S$ so that $\alpha_0 \in \Br(X_0)$ vanishes, and 
thus reduce to computing a classical invariant $\DT_{\sigma_0}(v_0)$ on an abelian threefold $X_0$. 
There is still the issue that $v_0$ is a higher rank class, whose DT invariant is difficult to access directly, but by using results from \cite{OPT} on the behavior of DT invariants under the action of derived equivalences and wall-crossing, 
we are able to further reduce to the case of a curve class. 
For a very careful choice of $v$, we are able to ensure the DT invariant of the resulting curve class is computable and nonzero, using results from \cite{obshen, BOPR}. 
This argument illustrates a powerful, and somewhat surprising, feature of our theory of DT invariants for CY3 categories: it allows one to deduce algebraicity results for a priori inaccessible ``noncommutative'' objects from purely geometric arguments, like curve counting. 

The above summary elided a number of subtleties in the proof of Theorem~\ref{theorem-main}. 
For instance, in order to construct $\sigma$ we generalize the known constructions of stability conditions on varieties, and in particular on abelian threefolds \cite{BMS}, to the twisted setting. 
To ensure that strictly $\sigma_0$-semistable objects of class $v_0$ do not exist, 
we use the deformation theorem for relative stability conditions from \cite{stability-families}; 
in fact, the ability to make such an argument dictated our use of stability conditions, as it is not possible to arrange the analogous condition only using a relative polarization and Gieseker stability. 
For a more detailed discussion of Theorem~\ref{theorem-main} and its proof, we refer to \S\ref{section-main-result-abelian-3folds}. 

\subsection{The integral Hodge conjecture for twisted Calabi--Yau threefolds}
In \S\ref{section-intro-hodge-theory} we explained that for a smooth proper complex variety $X$ and a Brauer class $\alpha \in \Br(X)$, 
the period-index conjecture can be regarded as an instance of the integral Hodge conjecture for $(X, \alpha)$. 
As a complement to the above results, when $X$ is a Calabi--Yau threefold we show that conversely the integral Hodge conjecture for $(X, \alpha)$ is completely controlled by the index of~$\alpha$. 

More precisely, consider the \emph{Voisin group} 
\begin{equation*}
\rV(X,\alpha) = \coker( \rK_0(X, \alpha) \to \Hdg(X, \alpha, \bZ)), 
\end{equation*} 
which measures the failure of the integral Hodge conjecture for $(X, \alpha)$, and 
the \emph{Hodge-theoretic index} 
\begin{equation*}
   \ind_{\Hdg}(\alpha) = \min \set{\rk v \st v \in \Hdg(X, \alpha, \bZ), ~ \rk v > 0 }, 
\end{equation*}  
which is the Hodge-theoretic incarnation of the formula for $\ind(\alpha)$ in terms of the ranks of objects in $\Dperf(X, \alpha)$. 
The Hodge-theoretic index was introduced in \cite{hotchkiss-pi}, 
where it is observed that in general 
$\ind_{\Hdg}(\alpha) \mid \ind(\alpha)$  
and the integral Hodge conjecture for $(X, \alpha)$ implies equality.
When $X$ is a Calabi--Yau threefold, we show that conversely the only possible obstruction to the integral Hodge conjecture for $(X, \alpha)$ is whether $\ind_{\Hdg}(\alpha) = \ind(\alpha)$. 

\begin{theorem}
\label{theorem-voisin-group-twisted-CY3}
    Let $X$ be a complex Calabi--Yau threefold and let $\alpha \in \Br(X)$. 
    Then 
    \begin{equation*}
            \#\rV(X, \alpha) = \frac{\ind(\alpha)}{\ind_{\Hdg}(\alpha)} \cdot 
    \end{equation*} 
\end{theorem}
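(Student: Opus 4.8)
The plan is to compute $\#\rV(X,\alpha)$ by relating both it and $\ind(\alpha)/\ind_{\Hdg}(\alpha)$ to the structure of the rank homomorphism on $\Ktop[0](X,\alpha)$. The first step is to recall that, since $X$ is a Calabi--Yau threefold, the weight $0$ Hodge structure on $\Ktop[0](X,\alpha)$ is concentrated in a narrow range: the Mukai pairing is symmetric and unimodular, and the Hodge decomposition is governed by $\rH^0(\cO_X)$, $\rH^3(\cO_X)$ together with the twisted $(p,q)$-pieces. Concretely, I would show that the subgroup $\Hdg(X,\alpha,\bZ)$ sits in an exact sequence in which the rank map $\rk \colon \Hdg(X,\alpha,\bZ) \to \bZ$ has image exactly $\ind_{\Hdg}(\alpha)\bZ$ (this is essentially the definition of $\ind_{\Hdg}$), and the kernel $\Hdg(X,\alpha,\bZ)_{\rk = 0}$ consists of classes supported in positive ``codimension''. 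The key geometric input is that rank-zero integral Hodge classes on a Calabi--Yau threefold — curve classes and point classes — are all algebraic: $\rH^4(X,\bZ)$ is generated by algebraic curve classes (there is no obstruction in $\rH^{2,2}$ for a CY3 of this type, or one cites that $\rH^6$ and the relevant torsion are handled by known results), so $\Hdg(X,\alpha,\bZ)_{\rk=0}$ lies entirely in the image of $\rK_0(X,\alpha)$.

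Granting that, the cokernel $\rV(X,\alpha) = \coker(\rK_0(X,\alpha) \to \Hdg(X,\alpha,\bZ))$ is detected purely by ranks: a class $v \in \Hdg(X,\alpha,\bZ)$ is in the image of $\rK_0$ if and only if its rank is a multiple of $\ind(\alpha)$ (using that $\ind(\alpha)$ is the minimal positive rank of an object of $\Dperf(X,\alpha)$, Lemma~\ref{lemma-ind-as-rk}, and that once a class of rank $\ind(\alpha)$ is hit, any rank-zero Hodge class can be added back in since those are algebraic). Therefore the rank map induces an isomorphism
\begin{equation*}
\rV(X,\alpha) \;\cong\; \ind_{\Hdg}(\alpha)\bZ \,\big/\, \ind(\alpha)\bZ,
\end{equation*}
whose order is $\ind(\alpha)/\ind_{\Hdg}(\alpha)$ (recall $\ind_{\Hdg}(\alpha) \mid \ind(\alpha)$ always). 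This gives the claimed formula.

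To make this rigorous I would proceed in the following order: (i) set up the Hodge structure $\Ktop[0](X,\alpha)$ for $X$ a CY3, identify $\rH^{\text{even}}$-style grading pieces and verify the rank map is a morphism of Hodge structures landing in $\bZ$ (weight $0$, Tate); (ii) prove surjectivity of $\rK_0(X,\alpha) \to \Hdg(X,\alpha,\bZ)_{\rk=0}$, i.e.\ the ``integral Hodge conjecture in rank $0$'' for twisted sheaves on a CY3, reducing via the twisted Chern character to the untwisted statement about algebraicity of curve and point classes on $X$; (iii) assemble the five-term / snake-lemma diagram comparing $\rK_0 \to \Hdg$ with the rank filtration and read off the cokernel; (iv) verify there are no subtleties with torsion in $\rH^6(X,\bZ)$ or with the precise definition of $\ind$ versus minimal rank in the twisted setting.

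The main obstacle I expect is step (ii): showing that every \emph{integral} rank-zero Hodge class in $\Ktop[0](X,\alpha)$ is algebraic. For an arbitrary smooth projective threefold the integral Hodge conjecture for one-cycles can fail, so one genuinely uses the Calabi--Yau hypothesis here — either via the triviality of $\rH^3(X,\cO_X)$-type obstructions in the relevant degree, or by invoking known unconditional results on the integral Hodge conjecture for curves on Calabi--Yau threefolds (and their twisted analogues). Handling the twist correctly — i.e.\ identifying the rank-zero part of the twisted topological K-theory with genuinely geometric cycle-type data on $X$ itself, since a rank-zero $\alpha$-twisted class carries no Azumaya information — is the technical heart, but once that identification is in place the rest is bookkeeping with the rank map.
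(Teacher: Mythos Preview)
Your reduction is correct and matches the paper: the rank map induces a surjection $\rV(X,\alpha) \to \ind_{\Hdg}(\alpha)\bZ/\ind(\alpha)\bZ$ whose kernel is the group of rank-zero Hodge classes modulo algebraic ones, so the theorem is equivalent to showing every rank-zero Hodge class in $\Ktop[0](X,\alpha)$ is algebraic. But your step (ii) has a real gap. A rank-zero class in $\Ktop[0](X,\alpha)$ is \emph{not} just a curve or point class: its leading term with respect to the twisted Atiyah--Hirzebruch filtration can land in $\gr^1_{\rF} \subset \rH^2(X,\bZ)$, i.e.\ be a divisor class. (Think of $[\cO_D]$ minus a multiple of a rank-one object in the untwisted picture.) You never address this case, and it is by far the hardest part of the proof. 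The curve and point cases go as you say --- skyscrapers for points, and for curves one uses the untwisted integral Hodge conjecture for one-cycles on Calabi--Yau threefolds (Voisin, Grabowski, Totaro) plus the observation that $\alpha$ restricted to a curve is trivial --- but the divisor case requires genuinely new input.

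What the paper does for the divisor case: one must produce, for a given $x \in \gr^1_{\rF}$ which is the leading term of a Hodge class, an $\alpha$-twisted sheaf supported on a surface $S \subset X$ with the correct rank and class. This uses (a) de Jong's period-index theorem for surfaces to get twisted sheaves of rank $n = \per(\alpha)$ on hyperplane sections, handling the case $x = n[H]$; (b) a delicate Noether--Lefschetz argument (Totaro's Proposition 5.3) to move within the linear system $|mH|$ so that a suitable vanishing-cohomology class becomes Hodge on some $S_t$; and (c) the observation that $\alpha|_{S_0}$ is topologically trivial because the leading term lies in $\ker(-\cup\bar\alpha)$, which is needed to compare $\Ktop[0](S_0,\alpha_0)$ with $\rH^{\ev}(S_0,\bZ)$. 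None of this is visible from ``reducing via the twisted Chern character to the untwisted statement,'' and your aside about ``triviality of $\rH^3(X,\cO_X)$-type obstructions'' is off --- $\rH^3(X,\cO_X)$ is one-dimensional for a Calabi--Yau threefold, not zero. The obstruction-theoretic flavour you are reaching for is not what makes the argument work; it is really a surface period-index result combined with a deformation of the surface.
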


The proof builds on the fact that the usual integral Hodge conjecture holds for Calabi--Yau threefolds \cite{voisin-IHC, grabowski-IHC, totaro}. 
Since in general 
$\per(\alpha) \mid \ind_{\Hdg}(\alpha) \mid \ind(\alpha)$ \cite[Lemma~5.8]{hotchkiss-pi}, by combining Theorem~\ref{theorem-main} and Theorem~\ref{theorem-voisin-group-twisted-CY3} we deduce the integral Hodge conjecture for many twisted abelian threefolds. 

\begin{corollary}        
\label{corollary-voisin-group-abelian3fold}
    Let $X$ be a complex abelian threefold and let $\alpha \in \Br(X)$. Then
    \[
            \# \rV(X, \alpha) \mid \per(\alpha).
    \]
    Moreover, if $\ind_{\Hdg}(\alpha) = \per(\alpha)^2$, then the integral Hodge conjecture holds for $(X, \alpha)$. 
\end{corollary}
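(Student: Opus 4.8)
The plan is to deduce the corollary formally from Theorem~\ref{theorem-main} together with Theorem~\ref{theorem-voisin-group-twisted-CY3}; essentially all of the content is already contained in those two results, and what remains is an elementary manipulation of divisibilities. First I would observe that an abelian threefold is a Calabi--Yau threefold in the relevant sense (namely $\omega_X \cong \cO_X$), so Theorem~\ref{theorem-voisin-group-twisted-CY3} applies and yields
\[
\#\rV(X,\alpha) = \frac{\ind(\alpha)}{\ind_{\Hdg}(\alpha)}.
\]
The remaining inputs are the general chain of divisibilities $\per(\alpha) \mid \ind_{\Hdg}(\alpha) \mid \ind(\alpha)$ from \cite[Lemma~5.8]{hotchkiss-pi}, and the bound $\ind(\alpha) \mid \per(\alpha)^2$ supplied by Theorem~\ref{theorem-main} (the characteristic hypothesis there is vacuous since the ground field is $\bC$).

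For the first assertion, abbreviate $p = \per(\alpha)$, $h = \ind_{\Hdg}(\alpha)$, $i = \ind(\alpha)$, so that $p \mid h \mid i \mid p^2$. Then $i/h$ divides $i/p$, since $i/p = (i/h)\cdot(h/p)$ with $h/p$ a positive integer; and $i/p$ divides $p$, since dividing the relation $i \mid p^2$ by the common factor $p$ gives $i/p \mid p$. Hence $\#\rV(X,\alpha) = i/h$ divides $p = \per(\alpha)$, as claimed.

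For the second assertion, suppose $\ind_{\Hdg}(\alpha) = \per(\alpha)^2$. Then the chain $\ind_{\Hdg}(\alpha) \mid \ind(\alpha) \mid \per(\alpha)^2 = \ind_{\Hdg}(\alpha)$ forces $\ind(\alpha) = \ind_{\Hdg}(\alpha)$, whence $\#\rV(X,\alpha) = 1$ by Theorem~\ref{theorem-voisin-group-twisted-CY3}; since $\rV(X,\alpha)$ is by definition the cokernel measuring the failure of the integral Hodge conjecture for $(X,\alpha)$, this is precisely the assertion that the integral Hodge conjecture holds for $(X,\alpha)$. There is no serious obstacle in this argument — it is bookkeeping on top of the two main theorems — and the only points deserving a moment's care are confirming that abelian threefolds are admissible input for Theorem~\ref{theorem-voisin-group-twisted-CY3} and that the divisibility computation above is carried out correctly.
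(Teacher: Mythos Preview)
The proposal is correct and follows exactly the approach indicated in the paper: the corollary is deduced by combining Theorem~\ref{theorem-main} (giving $\ind(\alpha) \mid \per(\alpha)^2$) with Theorem~\ref{theorem-voisin-group-twisted-CY3} (giving $\#\rV(X,\alpha) = \ind(\alpha)/\ind_{\Hdg}(\alpha)$) and the divisibility chain $\per(\alpha) \mid \ind_{\Hdg}(\alpha) \mid \ind(\alpha)$, and your elementary divisibility manipulation is carried out correctly.
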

 
When $(X, \alpha)$ is sufficiently generic, the equality $\ind_{\Hdg}(\alpha) = \per(\alpha)^2$ holds (see Lemma~\ref{lem:computing_the_hodge_theoretic_index} and Example~\ref{ex:gabber}); the only remaining case of the integral Hodge conjecture for twisted abelian threefolds is thus along the special loci where $\ind_{\Hdg}(\alpha) = \per(\alpha)$.  
As an interesting consequence of Corollary~\ref{corollary-voisin-group-abelian3fold}, we give an example of a Severi--Brauer variety $P$ for which the classical integral Hodge conjecture fails, but the integral Hodge conjecture for $\Dperf(P)$ holds (Corollary~\ref{cor:existence}).

\subsection{Further directions} 
Our work suggests several further directions. 

\subsubsection{Derived enhancements} 
Many obstruction theories are known to arise from derived algebraic geometry in the following sense: given a derived enhancement $i \colon \cX \to \fX$ of an algebraic stack $\cX$, the induced morphism between cotangent complexes provides an obstruction theory for $\cX$ (Remark~\ref{remark-enhancement-obs}). 
A natural question is whether the obstruction theory from Theorem~\ref{theorem-intro-DT-defo-invariant} arises from a derived enhancement of $M_\sigma(v)/ \Aut^0(\cC/S)$. 

In a sequel to this paper, we will provide a positive answer to this question. 
In fact, our proof of Theorem~\ref{theorem-intro-DT-defo-invariant} already provides some of the ingredients for the construction of the desired derived enhancement (see Remark~\ref{remark-derived-enhancement-MG}). 

\subsubsection{Dimension $4$} 
\label{section-DT4}
It is natural to wonder whether our approach to the period-index conjecture 
can be extended to higher dimensions. 
The main missing ingredient is a suitable version of DT theory. 
In dimension $4$, a theory of DT invariants for Calabi--Yau varieties 
was recently developed by Borisov and Joyce \cite{BJ} from a differential geometric perspective, 
and subsequently by Oh and Thomas \cite{OT} from an algebraic one. 
More recently, Bae, Kool, and Park \cite{BKP} observed that these DT invariants 
typically vanish for classes not supported on curves, and constructed a reduced 
theory to obtain nontrivial invariants.  

In a sequel to this paper, we will extend our theory of reduced DT invariants 
to the setting of CY4 categories; in fact, some of the necessary ingredients are already contained in this paper. 
In particular, we will construct a generalization of the DT theory from \cite{BKP} 
which 
allows a CY4 category $\cC$ in place of a Calabi--Yau fourfold and incorporates an additional reduction when $\Aut^0(\cC)$ is nontrivial. 
This opens a potential route to proving the unramified period-index conjecture for abelian fourfolds, 
along the lines of our proof of Theorem~\ref{theorem-main}.  

\subsubsection{Other threefolds} 
Theorem~\ref{theorem-main} can be regarded as a proof of concept for the use of DT theory in the study of the period-index problem for threefolds. 
In principle, our method could be applied directly to the period-index conjecture for unramified topologically trivial Brauer classes on a threefold $X$ with the following properties: 
\begin{enumerate}
\item $X$ is Calabi--Yau. 
\item $X$ admits stability conditions. 
\item $X$ admits deformations along which any topologically trivial Brauer class can be killed. 
\item $X$ has computable reduced DT invariants for sufficiently many higher rank classes. 
\end{enumerate} 
We have formulated the last two properties loosely, since the precise form in which they are needed could depend subtly on the geometry of the situation, as indicated in \S\ref{section-intro-proof-sketch}. 

The most restrictive condition above is that $X$ is Calabi--Yau. 
Without it, we do not know how to define DT invariants for higher rank classes, so our approach to the period-index conjecture cannot get off the ground. 

\begin{question}
Does there exist a theory of DT invariants for higher rank classes on smooth projective threefolds which are not necessarily Calabi--Yau? 
\end{question} 

More precisely, one might hope for the existence of a natural virtual fundamental class (not necessarily of degree $0$) on moduli spaces of stable objects on any smooth projective threefold $X$, which could then be used to define enumerative invariants. Another possibility is to pass to the local Calabi--Yau fourfold $Y = \mathrm{Tot}(K_X)$, and then to study DT invariants on $Y$.  

\subsubsection{The integral Hodge conjecture for CY3 categories}
\label{section-IHC-CY3}
If $X$ is a complex Calabi--Yau threefold, then the usual integral Hodge conjecture holds for $X$ \cite{voisin-IHC, grabowski-IHC, totaro}. 
It follows that the integral Hodge conjecture holds for $\Dperf(X)$. 
This is the special case of Theorem~\ref{theorem-voisin-group-twisted-CY3} when $\alpha = 0$.
This suggests: 

\begin{question}
\label{question-IHC-CY3}
Does the integral Hodge conjecture hold for any CY3 category of geometric origin over $\bC$? 
\end{question} 

Categories of the form $\Dperf(X, \alpha)$ for a Calabi--Yau threefold $X$ and Brauer class $\alpha \in \Br(X)$ provide an interesting source of test cases for Question~\ref{question-IHC-CY3}. 
As mentioned above, in this paper we reduce the question to the equality $\ind_{\Hdg}(\alpha) = \ind(\alpha)$, and provide a positive answer for generically twisted abelian threefolds. 

Another interesting source of test cases for Question~\ref{question-IHC-CY3} comes from Kuznetsov components of certain Fano varieties \cite{kuznetsov-CY}. 
For these examples, it seems that one can often provide a positive answer by elementary geometric means. 
For instance, in Lemma~\ref{lemma-cubic-sevenfold} we explain that the integral Hodge conjecture holds for the Kuznetsov component of any cubic sevenfold (as well as for the cubic sevenfold itself). 

\subsection{Organization of the paper}
Part~\ref{part-NAG}, consisting of \S\ref{section-twisted-sheaves}-\S\ref{section-CY-cats}, concerns preliminaries on noncommutative algebraic geometry. 
We develop the theory in the correct generality for the rest of the paper, and prove some auxiliary results for which we could not find a suitable reference, like Lemma~\ref{lemma-ch-dual} describing the dual of a Chern character valued in Hochschild homology, or Lemmas~\ref{lemma-hochschild-homology-twisted-variety} and~\ref{lemma-hochschild-cohomology-twisted-variety} computing the Hochschild (co)homology of a Brauer-twisted scheme. 

Part~\ref{part-moduli}, consisting of~\S\ref{section-moduli}-\S\ref{section-moduli-objects-modulo-autoequivalence}, concerns moduli spaces of objects in smooth proper categories.  
In~\S\ref{section-moduli} we recall general existence results for such moduli spaces and their derived enhancements. 
In~\S\ref{section-autoequivalences} we study the structure of the group of autoequivalences of smooth proper categories. 
In~\S\ref{section-moduli-objects-modulo-autoequivalence} we study quotients of  (open subspaces of) the moduli space of objects by (open subgroups of) the group of autoequivalences; in particular, we describe the cotangent complex of a derived version of this quotient. 

Part~\ref{part-stability}, consisting of~\S\ref{section-stability-twisted-sheaves}-\S\ref{section-construction-stability}, concerns notions of stability. 
Except for notation, it is independent from Part~\ref{part-moduli}. 
In~\S\ref{section-stability-twisted-sheaves} we review the theory of stability for twisted sheaves. 
In~\S\ref{section-stability} we discuss the theory of stability conditions on linear categories; along the way, we introduce the notion of a topological Mukai homomorphism and study the action of the identity component of the group of autoequivalences on stability conditions. 
In \S\ref{section-construction-stability} we extend the previously known constructions of stability conditions to the twisted setting. 

Part~\ref{part-DT-theory}, consisting of \S\ref{section-obstruction-theories-and-vfc}-\S\ref{section-VIHC-CY3}, develops reduced Donaldson--Thomas theory for CY3 categories, combining ingredients from Parts~\ref{part-moduli} and~\ref{part-stability}. 
In~\S\ref{section-obstruction-theories-and-vfc} we cover preliminaries on obstruction theories and virtual fundamental classes. 
In~\S\ref{section-obstruction-theory-CY3} we construct a reduced symmetric perfect obstruction theory on a suitable quotient of the moduli space of objects in a CY3 category. 
We use this in \S\ref{section-DT-CY3} to define reduced DT invariants that are preserved under deformations, completing in particular the proofs of Theorems~\ref{theorem-intro-DT-definition} and \ref{theorem-intro-DT-defo-invariant}. 
In \S\ref{section-VIHC-CY3} we deduce the criterion of Theorem~\ref{theorem-intro-VIHC-sigma} for the validity of the noncommutative variational integral Hodge conjecture. 

Part~\ref{part-PI-abelian-3folds}, consisting of \S\ref{section-main-result-abelian-3folds}-\S\ref{section-proof-main-result-abelian-3folds} and Appendix~\ref{appendix-abelian-3folds}, is devoted to the proof of Theorem~\ref{theorem-main}. 
A reader primarily interested in the period-index conjecture may wish to skip directly to Part~\ref{part-PI-abelian-3folds}, 
referring back to the material from Parts~\ref{part-NAG}-\ref{part-DT-theory} as necessary. 
In \S\ref{section-main-result-abelian-3folds} we explain how Theorem~\ref{theorem-main} reduces to the case of complex abelian threefolds, and outline the proof in this case. 
The rest of Part~\ref{part-PI-abelian-3folds} consists of assembling the ingredients to complete this outline. In \S\ref{sec:hodge_classes_on_twisted_abelian_varieties} we discuss the Hodge structure $\Ktop[0](X, \alpha)$ associated to a twisted abelian variety and construct Hodge classes of the rank predicted by the period-index conjecture, compatibly in families. 
In \S\ref{section-discriminant} we discuss a quartic form, known as Igusa's discriminant, on the rational even cohomology of an abelian threefold, which is invariant under autoequivalences and plays an important technical role in our analysis of DT invariants. 
In \S\ref{section-nonvanishing-curve-invariants} we prove the nonvanishing of many curve class DT invariants on abelian threefolds. 
Finally, in \S\ref{section-proof-main-result-abelian-3folds} we prove Theorem~\ref{theorem-main} along the lines sketched in \S\ref{section-intro-proof-sketch} above; 
this relies on some technical auxiliary results about abelian threefolds, gathered in Appendix~\ref{appendix-abelian-3folds}. 

Part~\ref{part-complements}, consisting of \S\ref{section-applications-IHC}-\S\ref{sec:fourier_mukai_partners}, contains several complements to Theorem~\ref{theorem-main}. 
In \S\ref{section-applications-IHC} we discuss applications to the integral Hodge conjecture, and in particular prove Theorem~\ref{theorem-voisin-group-twisted-CY3} concerning the conjecture for twisted Calabi--Yau threefolds. 
In \S\ref{sec:symbol_length}-\S\ref{sec:fourier_mukai_partners} we explain why naive approaches based on symbol length or twisted Fourier--Mukai partners do not suffice to prove Theorem~\ref{theorem-main}. 

\subsection{Conventions} 
\label{conventions} 
A variety over a field $k$ is an integral scheme which is separated and of finite type over $k$. 
A variety is Calabi--Yau if it is smooth, proper, and $\omega_X \cong \cO_X$. 
We follow \cite{stacks-project} for our conventions on algebraic stacks. 

We suppress the analytification of varieties over $\bC$. For instance, given a variety $X$ over $\bC$ and a sheaf of abelian groups $A$, $\rH^*(X, A)$ refers to the sheaf cohomology of $A$ in the analytic topology. Similarly, if $f:X \to S$ is a morphism, then the higher pushforwards $\rR^* f_* A$ are taken in the analytic topology. 

Our conventions on derived algebraic geometry are laid out in \S\ref{section-DAG}; 
in particular, our fiber products are derived (Convention~\ref{convention-derived-fiber-product}). 
All functors between derived categories are also derived by convention; in particular, for a morphism 
$f \colon X \to Y$ we write simply $f_*$ and $f^*$ for derived pushforward and pullback, and we write $\otimes$ for the derived tensor product of complexes.  
For a morphism $\alpha \colon E \to F$ in a stable $\infty$-category $\cC$, we write $\fib(\alpha)$ and $\cofib(\alpha)$ for its fiber and cofiber; in particular, we apply this convention when $\cC$ is a derived category, in which case $\fib(\alpha)$ and $\cofib(\alpha)$ are classically denoted $\cone(\alpha)[-1]$ and $\cone(\alpha)$.  
Given a morphism $X \to Y$ and a complex $E$ on $Y$, we sometimes denote its pullback to $X$ by $E \otimes \cO_X$, to avoid a proliferation of names of morphisms. 

\subsection{Acknowledgements} 
We thank Nick Addington, Arend Bayer, 
Andrei C\u{a}ld\u{a}raru, Yunfeng Jiang, 
Johan de Jong, Bruno Klingler, Max Lieblich, Jacob Lurie, Emanuele Macr\`{i}, Davesh Maulik, Georg Oberdieck, Michel Van den Bergh, and Vadim Vologodsky for helpful discussions related to this work. 

During the preparation of this paper, the first author was partially supported by NSF grant DMS-2052750, and the second author was partially supported by NSF grants DMS-2112747, DMS-2052750, and DMS-2143271, and a Sloan Research Fellowship. 
Part of this work was also completed while the authors were in residence at the Simons Laufer Mathematical Sciences Institute in Spring 2024 under the support of NSF grant DMS-1928930 and Sloan Foundation grant G-2021-16778. 

%%%%%%%%%%%%%%%%%%%%%%%%%%%%%%%%%%%%%%%%%%%%%%%%%%%%%%
%%%%%%%%%%%%%%%%%%%%%%%%%%%%%%%%%%%%%%%%%%%%%%%%%%%%%%

\newpage 
\part{Noncommutative algebraic geometry}
\label{part-NAG}  

\section{Twisted sheaves} 
\label{section-twisted-sheaves} 

The derived category of twisted sheaves is a motivating example for noncommutative algebraic geometry, which plays a central role in our results on the period-index conjecture later in the paper.   
In this section, we give a concise account of the theory tailored to our applications. 
This material has been extensively developed by Lieblich \cite{lieblich-moduli-twisted}, to which we refer for further details.  

\subsection{Brauer groups} 
For a scheme $X$, the \emph{Brauer group} 
\begin{equation*} 
\Br(X) \coloneqq \rH^2_{\et}(X, \bG_m)_{\tors}
\end{equation*} 
is the torsion subgroup of the \'{e}tale cohomology group $\rH^2_{\et}(X, \bG_m)$. 
In the literature $\Br(X)$ is sometimes called the cohomological Brauer group, to distinguish it from the \emph{Brauer--Azumaya group} $\Br_{\mathrm{Az}}(X)$ defined as the group of Azumaya algebras on $X$ modulo Morita equivalence. 
However, when $X$ admits an ample line bundle, as will be the case for the examples of interest in this paper, the natural map $\Br_{\textrm{Az}}(X) \to \Br(X)$ is an isomorphism \cite{dJ-gabber}. 

The $n$-torsion in the Brauer group can be described by \'{e}tale cohomology with finite coefficients. Namely, if $n$ is an integer invertible on $X$, then there is an exact sequence
\begin{equation}
\label{Br-kummer-seq}
0 \to \Pic(X)/n \to \rH^2_{\et}(X, \bmu_n) \to \Br(X)[n] \to 0, 
\end{equation}
obtained by taking cohomology of the Kummer sequence. 
Taking limits and comparing to usual cohomology, we obtain the following topological description of the Brauer group for complex varieties. 

\begin{lemma}
\label{lemma-Br-ses}
Let $X$ be a complex variety. Then there is an exact sequence 
\begin{equation*}
0 \to \frac{\rH^2(X, \bZ)}{\Pic(X)} \otimes \bQ/\bZ \to \Br(X) \to \rH^3(X, \bZ)_{\tors} \to 0. 
\end{equation*} 
\end{lemma}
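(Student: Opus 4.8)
The plan is to obtain the sequence by passing to the colimit over $n$ in the Kummer-sequence description~\eqref{Br-kummer-seq} of $\Br(X)[n]$, and then to analyze the resulting group $\rH^2(X, \bQ/\bZ)$ via the universal coefficient theorem. Ordering the positive integers by divisibility, the short exact sequences~\eqref{Br-kummer-seq} form a filtered system, with transition maps induced by the inclusions $\bmu_n \hookrightarrow \bmu_{nm}$, so applying the exact functor $\colim_n$ yields a short exact sequence whose terms I identify as follows. On the right, $\colim_n \Br(X)[n] = \Br(X)$ since $\Br(X)$ is torsion. On the left, $\colim_n \Pic(X)/n\Pic(X) = \Pic(X) \otimes_\bZ \bQ/\bZ$. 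In the middle, Artin's comparison theorem identifies $\rH^\bullet_{\et}(X, \bmu_n)$ with the singular cohomology $\rH^\bullet(X, \bmu_n)$ of the analytification, compatibly in $n$; since $\colim_n \bmu_n \cong \bQ/\bZ$ after choosing compatible primitive roots of unity, and cohomology of a finite-type scheme commutes with filtered colimits of coefficient sheaves, the middle term becomes $\rH^2(X, \bQ/\bZ)$. This produces the exact sequence
\begin{equation*}
0 \to \Pic(X) \otimes \bQ/\bZ \xrightarrow{\overline{c}_1} \rH^2(X, \bQ/\bZ) \to \Br(X) \to 0 ,
\end{equation*}
with first map induced by the first Chern class $c_1 \colon \Pic(X) \to \rH^2(X, \bZ)$.

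Next, since the analytification of $X$ has the homotopy type of a finite CW complex, $\rH^\bullet(X, \bZ)$ is finitely generated, so the long exact sequence attached to $0 \to \bZ \to \bQ \to \bQ/\bZ \to 0$, together with $\mathrm{Tor}(M, \bQ/\bZ) = M_{\tors}$ for finitely generated $M$, gives
\begin{equation*}
0 \to \rH^2(X, \bZ) \otimes \bQ/\bZ \to \rH^2(X, \bQ/\bZ) \to \rH^3(X, \bZ)_{\tors} \to 0 .
\end{equation*}
The map $\overline{c}_1$ factors through the subgroup $\rH^2(X, \bZ) \otimes \bQ/\bZ$, because already for each $n$ the mod-$n$ cycle class $\Pic(X)/n \to \rH^2_{\et}(X, \bmu_n) \cong \rH^2(X, \bZ/n)$ factors through $\rH^2(X, \bZ)/n$. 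Combining this with the previous display and the third isomorphism theorem, I get a short exact sequence
\begin{equation*}
0 \to \coker\bigl(\Pic(X) \otimes \bQ/\bZ \to \rH^2(X, \bZ) \otimes \bQ/\bZ\bigr) \to \Br(X) \to \rH^3(X, \bZ)_{\tors} \to 0 ,
\end{equation*}
and right-exactness of $-\otimes \bQ/\bZ$ applied to $\Pic(X) \xrightarrow{c_1} \rH^2(X, \bZ) \to \rH^2(X, \bZ)/\Pic(X) \to 0$ rewrites the left-hand term as $\frac{\rH^2(X, \bZ)}{\Pic(X)} \otimes \bQ/\bZ$. This is the desired sequence.

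I do not expect a single deep obstacle; the work is in the bookkeeping. The points requiring care are the compatibility of the Artin comparison isomorphisms with the Kummer connecting maps across all $n$, so that one may legitimately pass to the colimit, and the claim that the mod-$n$ cycle class factors through integral cohomology, so that the two short exact sequences for $\rH^2(X, \bQ/\bZ)$ splice as asserted. Note that no smoothness or properness hypothesis on $X$ is needed anywhere in this argument.
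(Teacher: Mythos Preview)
Your proposal is correct and follows essentially the same approach as the paper: take the colimit of the Kummer sequences~\eqref{Br-kummer-seq} to obtain $0 \to \Pic(X) \otimes \bQ/\bZ \to \rH^2(X, \bQ/\bZ) \to \Br(X) \to 0$, compare with the sequence $0 \to \rH^2(X, \bZ) \otimes \bQ/\bZ \to \rH^2(X, \bQ/\bZ) \to \rH^3(X, \bZ)_{\tors} \to 0$ coming from $0 \to \bZ \to \bQ \to \bQ/\bZ \to 0$, and splice via the factorization of $c_1$. The paper packages the final step as a commutative diagram rather than invoking the third isomorphism theorem, but the content is identical; your writeup simply supplies more detail on the comparison and colimit steps.
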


\begin{proof}
Using $\bmu_n \cong \bZ/n$ and the comparison with singular cohomology, by taking the colimit of the sequences~\eqref{Br-kummer-seq} we obtain the exact sequence 
\begin{equation*}
0 \to \Pic(X) \otimes \bQ/\bZ \to \rH^2(X, \bQ/\bZ) \to \Br(X) \to 0. 
\end{equation*} 
On the other hand, the exact sequence $0 \to \bZ \to \bQ \to \bQ/\bZ \to 0$ gives an exact sequence on cohomology 
\begin{equation*}
0 \to \rH^2(X, \bZ) \otimes \bQ/\bZ \to \rH^2(X, \bQ/\bZ) \to \rH^3(X, \bZ)_{\tors} \to 0. 
\end{equation*} 
The two sequences fit together in a commutative diagram 
\begin{equation*}
\xymatrix{
0 \ar[r]  & \Pic(X) \otimes \bQ/\bZ \ar[r] \ar[d]_{c_1} & \rH^2(X, \bQ/\bZ) \ar[r] \ar@{=}[d] & \Br(X) \ar[r] \ar[d] & 0 \\ 
0 \ar[r] & \rH^2(X, \bZ) \otimes \bQ/\bZ \ar[r] & \rH^2(X, \bQ/\bZ) \ar[r] & \rH^3(X, \bZ)_{\tors} \ar[r] & 0. 
}
\end{equation*} 
which gives the result. 
\end{proof} 

\begin{definition}
\label{def:topologically_trivial}
    Let $X$ be a complex variety, and let $\alpha \in \Br(X)$. Then $\alpha$ is \emph{topologically trivial} if it lies in the kernel of the morphism to $\rH^3(X, \bZ)_{\tors}$.
\end{definition}

\begin{remark}
When $X$ is a scheme and $\ell$ is a prime invertible on $X$, then there is a similar sequence describing the $\ell$-power torsion in $\Br(X)$: 
\begin{equation*}
0 \to \frac{\rH^2_{\et}(X, \bZ_{\ell}(1))}{\Pic(X) \otimes \bZ_{\ell}} \otimes \bQ_{\ell}/\bZ_{\ell} 
\to \Br(X)[\ell^{\infty}] \to \rH^3_{\et}(X, \bZ_{\ell}(1))_{\tors} \to 0. 
\end{equation*} 
\end{remark}

The following useful observation says that Brauer classes can be killed after passage to a finite cover. 

\begin{lemma}
\label{lemma-finite-cover-kill}
Let $X$ be a smooth projective variety and $\alpha \in \BrAz(X)$. 
Then there exists a smooth projective variety $Y$ and a surjective finite flat morphism $f \colon Y \to X$ such that $f^*(\alpha) = 0$. 
\end{lemma}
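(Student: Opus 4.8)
The standard approach is via an Azumaya algebra representing $\alpha$. Since $X$ admits an ample line bundle, the Brauer--Azumaya group agrees with the cohomological Brauer group, so we may choose an Azumaya algebra $\cA$ on $X$ with $[\cA] = \alpha$, say of degree $n$ (so $\cA$ is \'etale-locally isomorphic to $\rM_n(\cO_X)$). Associated to $\cA$ there is a Severi--Brauer scheme $\pi \colon P \to X$, a smooth projective morphism whose fibers are projective spaces $\bP^{n-1}$, with the property that $\pi^*(\alpha) = 0$ on $P$ (the twisted sheaf of relative dimension-one line bundles trivializes the pullback); equivalently, $P$ carries a tautological twisted quotient bundle. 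The total space $P$ is a smooth projective variety, and $\pi$ is smooth and projective of relative dimension $n-1$, so $\dim P = \dim X + (n-1)$.

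The problem with $P$ itself is that $\pi$ is not finite. To fix this, I would intersect with a complete intersection of relative hyperplanes. Concretely, $\pi \colon P \to X$ comes with a relative $\cO_P(1)$ (the twisted line bundle, or its $n$-th power which is an honest line bundle $\cL$ relatively very ample), so after twisting by a sufficiently positive line bundle pulled back from $X$ we may assume $\cO_P(1) \otimes \pi^*\cO_X(m)$ is very ample on $P$ for $m \gg 0$. Choosing $n-1$ general global sections and taking their common zero locus $Y \subset P$, Bertini gives that $Y$ is smooth (over the algebraically closed ground field), and by construction $Y \to X$ is finite: each fiber is the intersection of $n-1$ general hyperplanes in $\bP^{n-1}$, hence a finite set of points (in fact of length $\deg \cL$ counted with multiplicity). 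Flatness of $Y \to X$ follows from the fact that it is finite with fibers of constant length over the integral base $X$ together with miracle flatness ($Y$ is Cohen--Macaulay, being smooth, $X$ is regular, and the fiber dimension is constant equal to $0$), or directly from constancy of Hilbert polynomials. Surjectivity is clear since $Y \to X$ is finite and dominant (nonempty fibers everywhere). Finally $f^*(\alpha) = 0$ because $f$ factors through $P$, on which $\pi^*\alpha$ already vanishes.

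One technical point to handle carefully: one must ensure the general complete intersection $Y$ meets every fiber of $\pi$ properly and is nonempty over every point of $X$ --- this is automatic since intersecting $\bP^{n-1}$ with $n-1$ hyperplanes always leaves a nonempty set (of expected dimension $0$), and the genericity is only needed to get reducedness/smoothness of $Y$. If one is worried about smoothness of $Y$ near the (finitely many, if any) bad fibers, one may either invoke a relative Bertini theorem over $X$ or simply note that the complete intersection of a regular sequence of sections of a very ample bundle on the smooth variety $P$ is smooth for a general choice, by the usual Bertini argument applied to the embedding of $P$ by $\cO_P(1)\otimes\pi^*\cO_X(m)$.

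The main obstacle is the \textbf{simultaneous achievement of smoothness and finiteness of $Y \to X$}: smoothness wants $Y$ cut out generically (Bertini), while finiteness wants exactly the right number of hyperplane sections so that each fiber becomes zero-dimensional. The resolution --- take exactly $n-1 = \dim_{\bP} P_x$ general relative hyperplanes --- makes both work at once, but verifying that $Y \to X$ is then honestly flat (not merely quasi-finite and dominant) requires either miracle flatness or a Hilbert-polynomial argument, and this is the step where one must be slightly careful about what is being claimed over non-generic points of $X$.
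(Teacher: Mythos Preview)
Your proof is correct and follows essentially the same approach as the paper: take a Severi--Brauer variety $P \to X$ (on which $\alpha$ pulls back to zero), then cut down by a generic complete intersection of very ample divisors to obtain a smooth $Y \subset P$ finite and flat over $X$. The paper is terser, simply citing \cite[Lemma~3.2.2.1]{lieblich-moduli-twisted} for the construction of $Y$, whereas you spell out the Bertini and miracle-flatness details.
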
 

\begin{proof}
Let $p \colon P \to X$ be a Severi--Brauer variety of class $\alpha$. 
Then $p^*(\alpha) = 0$, so it suffices to construct a smooth subvariety $Y \subset P$ such that the map $Y \to X$ is surjective and finite. 
This can be done by taking $Y$ to be a sufficiently generic complete intersection of very ample divisors in $P$ (see \cite[Lemma 3.2.2.1]{lieblich-moduli-twisted}). 
\end{proof} 

\subsection{Categories of twisted sheaves} 
\label{section-categories-of-twisted-sheaves}
Given a scheme $X$ and a class $\alpha \in \rH^2_{\et}(X, \bG_m)$, there are several models for the category of $\alpha$-twisted sheaves, each depending on an auxiliary choice: 
\begin{enumerate}
\item One can choose a $\bG_m$-gerbe $\cX \to X$ of class $\alpha$, 
and consider sheaves on $\cX$ for which the inertial action is given by the standard character \cite{lieblich-moduli-twisted}. 
When $\alpha$ is $n$-torsion, one can instead choose a $\bmu_n$-gerbe $\cX \to X$ whose class maps to $\alpha$ under the map $\rH^2_{\et}(X, \bmu_n) \to \rH^2_{\et}(X, \bG_m)$, and consider sheaves on $\cX$ for which the inertial action is given by the standard character. 
\item One can choose a (hyper)covering and a cocycle $a$ representing $\alpha$, and consider 
sheaves on the cover satisfying an $a$-twisted cocycle condition \cite{caldararu}. 
\item When $\alpha$ is represented by an Azumaya algebra $\cA$ (which is automatic, for instance, if $X$ is a smooth quasi-projective variety), one can consider sheaves of $\cA$-modules. 
\end{enumerate} 
All of these models result in equivalent theories of $\alpha$-twisted sheaves, which are independent of the choices involved (see \cite{lieblich-moduli-twisted}). 
For concreteness, in this paper we adopt the first approach. 

\begin{definition}
Given a $\bG_m$-gerbe or $\bmu_n$-gerbe $\pi \colon \cX \to X$ over a scheme and an integer $k \in \bZ$,  
we define $\Dperf^k(\cX)$ as the full subcategory of $\Dperf(\cX)$ spanned by complexes whose cohomology sheaves have inertial action given by $\chi^k$, where $\chi$ is the standard character of $\bG_m$ or $\bmu_n$. 
We refer to an object of $\Dperf^k(\cX)$ as a \emph{$k$-twisted} perfect complex, or simply a \emph{twisted} perfect complex when $k = 1$. 
We similarly define $\Dqc^k(\cX)$ and, when $X$ is locally noetherian, 
$\Coh^k(\cX)$ and $\mathrm{D}^{\mathrm{b},k}(\cX)$. 

If $\alpha \in \Br(X)$, then we may choose a $\bG_m$-gerbe or $\bmu_n$-gerbe $\pi \colon \cX \to X$ of class $\alpha$, 
and consider the twisted categories defined above. 
By abuse of notation, we shall often suppress the choice of $\cX$ in our notation and write 
simply $\Dperf(X, \alpha)$ for $\Dperf^1(\cX)$, and use similar abbreviations for 
$\Dqc^k(\cX)$, $\Coh^k(\cX)$, and $\mathrm{D}^{\mathrm{b},k}(\cX)$. 
\end{definition} 

\begin{remark}
We will typically use the model for $\alpha$-twisted sheaves in terms of a $\bmu_n$-gerbe $\pi \colon \cX \to X$, instead of a $\bG_m$-gerbe. 
The advantage is that $\cX$ is then a Deligne--Mumford stack. 
Note, however, that while the representation of $\alpha$ as a $\bG_m$-gerbe is unique, 
the representation as a $\bmu_n$-gerbe is not. 
The nonuniqueness is measured by those $\bmu_n$-gerbes $\pi \colon \cX \to X$ with 
class $[\cX] \in \ker( \rH^2_{\et}(X, \bmu_n) \to \rH^2_{\et}(X, \bG_m) )$. 
Such a $\bmu_n$-gerbe is called \emph{essentially trivial}, 
and can be geometrically characterized as follows \cite[Lemma 2.3.4.2]{lieblich-moduli-twisted}. 
\end{remark} 

\begin{lemma}
\label{lemma-essentially-trivial}
A $\bmu_n$-gerbe $\cX \to X$ over a scheme $X$ is essentially trivial if and only if there exists an invertible twisted sheaf on $\cX$. 
\end{lemma}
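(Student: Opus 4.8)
The plan is to deduce the equivalence from the Kummer sequence~\eqref{Br-kummer-seq} together with the modular interpretation of $n$-th root gerbes. First I would recall the relevant structure: the connecting homomorphism $\partial \colon \Pic(X) \to \rH^2_{\et}(X, \bmu_n)$ of~\eqref{Br-kummer-seq}, coming from $1 \to \bmu_n \to \bG_m \xrightarrow{n} \bG_m \to 1$, sends a line bundle $L$ to the class of the $\bmu_n$-gerbe $\sqrt[n]{L/X}$ whose $T$-points are pairs $(N, \psi)$ with $N$ a line bundle on $T$ and $\psi \colon N^{\otimes n} \xrightarrow{\ \sim\ } L_T$; by exactness its image is exactly $\ker(\rH^2_{\et}(X, \bmu_n) \to \rH^2_{\et}(X, \bG_m))$, that is, the set of classes of essentially trivial $\bmu_n$-gerbes. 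I would also use that $\bmu_n$-gerbes on $X$ are classified up to $X$-isomorphism by $\rH^2_{\et}(X, \bmu_n)$ (Giraud), and that $\sqrt[n]{L/X}$ carries a tautological invertible $1$-twisted sheaf, namely the universal line bundle $N$, which satisfies $N^{\otimes n} \cong \pi^* L$.

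For the ``only if'' direction: if $\pi \colon \cX \to X$ is essentially trivial, then $[\cX] = \partial(L)$ for some $L \in \Pic(X)$, so by the classification there is an isomorphism of $\bmu_n$-gerbes $\cX \cong \sqrt[n]{L/X}$ over $X$, and transporting the tautological $1$-twisted invertible sheaf along it yields an invertible twisted sheaf on $\cX$.

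For the ``if'' direction: given an invertible $1$-twisted sheaf $M$ on $\cX$, the sheaf $M^{\otimes n}$ is $n$-twisted, so its inertial action is trivial and it descends to a line bundle $L \coloneqq \pi_*(M^{\otimes n})$ on $X$ together with an isomorphism $\psi \colon M^{\otimes n} \xrightarrow{\ \sim\ } \pi^* L$ --- this is checked \'etale-locally on $X$, where $\pi$ becomes $B\bmu_n$ and the claim is the isotypic decomposition of a line bundle. The pair $(M, \psi)$ defines a morphism $\cX \to \sqrt[n]{L/X}$ over $X$ via the modular description of the target; since $M$ is $1$-twisted, the inertial $\bmu_n$ acts on it through $\chi$, which matches the scaling action on the universal bundle of $\sqrt[n]{L/X}$, so this morphism is $\bmu_n$-equivariant, hence a morphism of $\bmu_n$-gerbes over $X$ and therefore an isomorphism. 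Consequently $[\cX] = \partial(L)$ maps to $0$ in $\rH^2_{\et}(X, \bG_m)$, i.e.\ $\cX$ is essentially trivial.

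I expect the main obstacle to be the bookkeeping in the ``if'' direction: carefully justifying that $M^{\otimes n}$ genuinely \emph{descends} to $X$ (not merely that its inertial action is trivial) by unwinding that $\pi$ is fppf-locally $X \times B\bmu_n$ and that $\pi_*$ extracts the $\chi^0$-isotypic part, and then checking that the resulting map to $\sqrt[n]{L/X}$ respects the banding rather than being merely a morphism of stacks over $X$ --- this requires keeping the character conventions straight. Everything else is a formal consequence of~\eqref{Br-kummer-seq} and Giraud's classification of gerbes.
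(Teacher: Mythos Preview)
Your argument is correct and is essentially the standard proof. Note, however, that the paper does not supply its own proof of this lemma: it simply cites \cite[Lemma~2.3.4.2]{lieblich-moduli-twisted} immediately before the statement. So there is nothing to compare against in the paper itself, but your write-up matches the approach one finds in Lieblich --- identify essentially trivial gerbes with root gerbes $\sqrt[n]{L/X}$ via the Kummer boundary, and observe that the universal line bundle on the root gerbe is the sought invertible twisted sheaf, while conversely an invertible $1$-twisted sheaf $M$ yields $L = \pi_*(M^{\otimes n})$ and a tautological identification $\cX \simeq \sqrt[n]{L/X}$.

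One small wrinkle: you invoke the exact sequence~\eqref{Br-kummer-seq}, which in the paper is stated under the hypothesis that $n$ is invertible on $X$, whereas the lemma as stated carries no such hypothesis. This is harmless --- the Kummer sequence $1 \to \bmu_n \to \bG_m \xrightarrow{n} \bG_m \to 1$ is exact on the fppf site in general, and $\bmu_n$-gerbes are classified by fppf $\rH^2$; your argument goes through verbatim with fppf in place of \'etale. Since all applications in the paper have $n$ invertible anyway, this is purely cosmetic.
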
 

Twisted sheaves satisfy the expected functoriality. 
Concretely, let $f \colon X \to Y$ be a morphism of schemes, 
let $\alpha \in \Br(X)$ be a Brauer class represented by a $\bmu_n$-gerbe $\pi \colon \cX \to X$, and form the pullback diagram 
\begin{equation*}
\xymatrix{
\cY \ar[r]^{f'} \ar[d]_{\pi'} & \cX \ar[d]^{\pi} \\ 
Y \ar[r]^{f} & X
}
\end{equation*} 
The class $\beta = f^*(\alpha) \in \Br(Y)$ is represented by the $\bmu_n$-gerbe $\pi' \colon \cY \to Y$. 
The pullback and pushforward functors
\begin{equation*}
f'^* \colon \Dqc(\cX) \to \Dqc(\cY) \quad \text{and} \quad 
f'_* \colon \Dqc(\cY) \to \Dqc(\cX) 
\end{equation*} 
respect $k$-twisted complexes, and hence induce functors between 
$\Dqc(X, \alpha)$ and $\Dqc(Y, \beta)$. 
The functor $f'^*$ restricts to a functor on categories of perfect complexes, 
while $f'_*$ restricts to a functor on bounded derived categories of coherent sheaves when $f$ is proper. 
By a slight abuse of notation, we often simply write $f^*$ or $f_*$ instead of $f'^*$ or $f'_*$ for the pullback or pushforward functors between twisted derived categories. 

\begin{remark}
\label{remark:pullback_killing_brauer_class}
    We shall frequently encounter the following situation: Let $f\colon Y \to X$ be a morphism of schemes, 
    and let $\alpha \in \Br(X)$ be a Brauer class such that $f^* \alpha = 0$. Choose an $f^*\alpha$-twisted line bundle $L$ (on a representative gerbe $\cY$ as above), and consider the functors
    \begin{align*}
        f^*_L &\colon \Dqc(X, \alpha) \to \Dqc(Y), \quad E \mapsto f^*E \otimes L^\vee \\
        f_*^L &\colon \Dqc(Y) \to \Dqc(X, \alpha), \quad F \mapsto f_* (L \otimes F).
    \end{align*}
    Clearly, these functors depend on the choice of $L$, but any two $f^*\alpha$-twisted line bundles differ by an element of $\Pic(Y)$, so the dependence is only up to the usual action of $\Pic(Y)$ on $\Dqc(Y)$. 

    In the literature, these functors are often denoted simply by $f^*$ and $f_*$, eliding the dependence on $L$.
\end{remark}

\subsection{K-theory}
Given a Brauer class $\alpha \in \Br(X)$ on a scheme, we use the notation 
\begin{equation*}
\rK_0(X, \alpha) \coloneqq \rK_0(\Dperf(X,\alpha)) \qquad \text{and} \qquad 
\rG_0(X, \alpha) \coloneqq \rK_0(\Db(X, \alpha)) 
\end{equation*} 
for the Grothendieck groups of the categories of $\alpha$-twisted perfect and bounded coherent complexes. 
When $X$ is connected, taking the rank of a coherent sheaf on our chosen gerbe $\cX \to X$ induces a rank homomorphism 
\begin{equation*}
\rk \colon \rG_0(X, \alpha) \to \bZ, 
\end{equation*} 
in terms of which we can give a global description of the index of $\alpha$. 
Recall that the index of $\alpha$ is defined by restriction to the generic point, i.e. $\ind(\alpha) \coloneqq \ind(\alpha_{k(X)})$ where $k(X)$ denotes the function field of $X$. 

\begin{lemma}
\label{lemma-ind-as-rk}
Let $X$ be an integral noetherian scheme and let $\alpha \in \Br(X)$. 
Then $\ind(\alpha)$ equals the positive generator of the image of the 
rank homomorphism $\rk \colon \rG_0(X, \alpha) \to \bZ$. 
\end{lemma}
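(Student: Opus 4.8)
The plan is to establish the two containments $\operatorname{Im}(\rk) \subseteq \ind(\alpha)\bZ$ and $\ind(\alpha) \in \operatorname{Im}(\rk)$ separately, in each case reducing to the generic point $\eta = \Spec K$ with $K = k(X)$, where the assertion is classical. Throughout I work with a fixed gerbe $\cX \to X$ of class $\alpha$ (say a $\bG_m$-gerbe, so that it is determined by $\alpha$), and recall that the rank of a coherent twisted sheaf is computed at the generic point of $\cX$, which lies over $\eta$.

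For the first containment, note that every object of $\Db(X,\alpha)$ admits a finite filtration by shifts of its cohomology sheaves and that $\rk$ is additive, so it suffices to show $\ind(\alpha) \mid \rk(\cF)$ for every coherent $\alpha$-twisted sheaf $\cF$. Since rank is read off at the generic point, $\rk(\cF) = \rk(\cF|_{\cX_\eta})$, where $\cF|_{\cX_\eta}$ is an $\alpha_K$-twisted sheaf on $\Spec K$. Over a field every coherent twisted sheaf is a twisted vector bundle; if $r = \rk(\cF|_{\cX_\eta}) > 0$, then $\mathcal{E}nd(\cF|_{\cX_\eta})$ is an Azumaya $K$-algebra of degree $r$ whose class in $\Br(K)$ is $\pm\alpha_K$, whence $\ind(\alpha) = \ind(\alpha_K)$ divides $r$; if $r = 0$ then $\cF|_{\cX_\eta} = 0$ and the divisibility is trivial.

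For the second containment, I would first produce a twisted sheaf of rank exactly $\ind(\alpha)$ over $\eta$, and then globalize. Over $\Spec K$, the central division algebra $D$ of class $\alpha_K$ has degree $\ind(\alpha)$, and the associated gerbe of Morita trivializations of $D$ — a gerbe of class $\alpha_K$, which we may take to be $\cX_\eta$ — carries a tautological twisted vector bundle of rank $\ind(\alpha)$; equivalently, one may simply invoke the standard fact (Lieblich \cite{lieblich-moduli-twisted}) that the index of a Brauer class over a field is the minimal rank of a nonzero twisted sheaf. Next, by a standard spreading-out argument this twisted sheaf extends to a coherent $\alpha$-twisted sheaf $\cE_U$ on $\cX|_U$ over some dense open $U \subseteq X$, and since $\cX$ is a noetherian stack, $\cE_U$ extends further to a coherent $\alpha$-twisted sheaf $\cE$ on $\cX$ — for instance, as a coherent subsheaf of the pushforward of $\cE_U$ along the open immersion $\cX|_U \hookrightarrow \cX$ that restricts to $\cE_U$ on $\cX|_U$, the pushforward preserving the twisting. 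As the rank is computed at the generic point, which lies in $U$, we get $\rk([\cE]) = \rk(\cE_U) = \ind(\alpha)$. Combining the two containments gives $\operatorname{Im}(\rk) = \ind(\alpha)\bZ$, with positive generator $\ind(\alpha)$.

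There is no essential obstacle: the lemma is the globalization of the classical computation of the index at the function field, and the only points needing care are the dictionary over $\Spec K$ between twisted sheaves and modules over central simple algebras — in particular the identification of the rank of a twisted vector bundle with the degree of its endomorphism algebra — and the fact that coherent twisted sheaves extend across open substacks of the noetherian stack $\cX$ while remaining twisted. Both are standard; see \cite{lieblich-moduli-twisted}.
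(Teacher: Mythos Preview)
Your proposal is correct and follows essentially the same route as the paper: both arguments reduce to the generic point, invoke the classical fact that over a field the index equals the minimal positive rank of a twisted sheaf, and then extend a twisted sheaf from the generic point to all of $X$ using noetherianness. The paper simply cites Lieblich for both of these steps, whereas you spell out the field case via the endomorphism Azumaya algebra and the extension via spreading out and pushing forward along the open immersion; the content is the same.
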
 

\begin{proof}
Equivalently, we must show that 
\begin{equation*}
\ind(\alpha) = \gcd \set{ \rk(F) \st F \text{ is a coherent $\alpha$-twisted sheaf of positive rank} }. 
\end{equation*} 
When $X = \Spec(K)$ is the spectrum of a field, this holds by \cite[Proposition 3.1.2.1]{lieblich-period-index}. 
The general case follows since any $\alpha_{k(X)}$-twisted coherent sheaf over the generic point can be lifted to a coherent $\alpha$-twisted sheaf by \cite[Lemma 3.1.3.1]{lieblich-period-index}. 
\end{proof}

When $X$ is a proper variety over a field $k$, 
for $E \in \Dperf(X, \alpha)$ and $F \in \Db(X, \alpha)$ the Euler characteristic 
\begin{equation*}
\chi(E,F) = \sum_i (-1)^i \dim_k \Hom(E, F[i]) \in \bZ
\end{equation*} 
induces a pairing 
\begin{equation*}
\chi \colon \rK_0(X, \alpha) \times \rG_0(X, \alpha) \to \bZ. 
\end{equation*} 
Using this, we can define numerical versions of the above Grothendieck groups.  

\begin{definition}
If $X$ is a proper variety and $\alpha \in \Br(X)$, 
then $\rK_{\num}(X, \alpha)$ is the quotient of $\rK_0(X, \alpha)$ by the kernel of $\chi$ on the left, and $\rG_{\num}(X, \alpha)$ is the quotient of $\rG_0(X, \alpha)$ by the kernel of $\chi$ on the right. 
\end{definition} 

\begin{remark} 
Let us record two simple properties. 
\begin{enumerate}
\item When $X$ is Gorenstein, Serre duality shows that the 
natural map $\rK_0(X, \alpha) \to \rG_0(X, \alpha)$ descends to a map 
\begin{equation*}
\rK_{\num}(X, \alpha) \to \rG_{\num}(X, \alpha), 
\end{equation*} 
which is an isomorphism if $X$ is smooth. 

\item $\Knum$ is functorial with respect to pullback and 
$\rG_{\num}$ is functorial with respect to pushforward. 
More precisely, if $f \colon Y \to X$ is a morphism between proper varieties and $\beta = f^*(\alpha)$, then pullback and pushforward induce homomorphisms 
\begin{align*}
f^* & \colon  \Knum(X, \alpha) \to \Knum(Y, \beta)  \\ 
f_* & \colon \rG_{\num}(Y, \beta) \to \rG_{\num}(X, \alpha). 
\end{align*} 
Indeed, this follows easily from the adjunction between pullback and pushforward. 
\end{enumerate}
\end{remark}

%%%%%%%%%%%%%%%%%%%%%%%%%%%%%%%%%%%%%%%%%%%%%%%%%%%%%%

\section{Derived algebraic geometry}  
\label{section-DAG}

We will occasionally need the language of derived algebraic geometry, as developed by Lurie \cite{DAG, HA, SAG} and
To\"{e}n--Vezzosi \cite{HAG1, HAG2}; for surveys, see for instance \cite[Part I]{DAG-gaitsgory-1} and \cite{toen-higher-derived, toen-simplicial, toen-DAG}. 
Here we briefly lay out our conventions, and refer to the preceding references for more details. 

\subsection{Derived and higher stacks} 
Let $\dRing$ denote the $\infty$-category of animated rings, defined as the animation (in the sense of \cite[\S5.1]{KS-purity}) of the category of commutative rings, or equivalently as the $\infty$-category obtained from the category of simplicial commutative rings by inverting weak equivalences. 
Let $\dAff$ denote the $\infty$-category of derived affine schemes, defined as the opposite category of $\dRing$. 
Let $\Grpd_{\infty}$ denote the $\infty$-category of $\infty$-groupoids.
A \emph{derived stack} is a functor $X \colon \dAff^{\op} \to \Grpd_{\infty}$ which satisfies descent with respect to the \'{e}tale topology. 
A \emph{derived algebraic stack} is a derived stack $X$ for which there exists (in an appropriate sense) a smooth surjection from a disjoint union of derived affine schemes.\footnote{The complete definition is of a slightly complicated inductive nature; see e.g. \cite[\S5]{toen-simplicial} or \cite[Part I, Chapter 2, \S4]{DAG-gaitsgory-1}. We warn the reader that the terminology is not consistent in the literature; for instance, what we call a derived algebraic stack corresponds to a ``$D^{-}$-stack which is $n$-geometric for some $n$'' in \cite{toen-moduli, HAG2}, and to a ``Artin stack'' in \cite[Part I]{DAG-gaitsgory-1}.} 
Slightly more general still is the notion of a \emph{derived locally algebraic stack} $\fX$, which is a derived stack that can be written as a filtered colimit of derived algebraic stacks $\fX_i$ such that each $\fX_i \to \fX$ is a monomorphism.\footnote{A derived locally algebraic stack is a ``$D^-$-stack which is locally geometric'' in the terminology of \cite{toen-moduli}.} 
A derived locally algebraic stack is called \emph{locally of finite presentation }if each $\fX_i$ can be chosen so; in this case, the $\fX_i \to \fX$ are in fact Zariski open immersions (see the discussion following \cite[Definition 2.17]{toen-moduli}), 
so $\fX$ is a union of open derived algebraic stacks which are locally of finite presentation. 

To relate derived stacks to classical ones, it is useful to consider the intermediate notion of a \emph{higher stack}, which is a functor $X \colon \Aff^{\op} \to \Grpd_{\infty}$ which satisfies descent with respect to the \'{e}tale topology, where $\Aff$ is the category of classical affine schemes. Note that a stack in the classical sense is simply a higher stack which is 1-truncated, i.e. takes values in the subcategory $\Grpd \subset \Grpd_{\infty}$ of $1$-groupoids. 
Similarly to the case of derived algebraic stacks, one can define the notion of higher algebraic stacks and higher locally algebraic stacks; an algebraic stack in the classical sense can then be regarded as a $1$-truncated higher algebraic stack. 

Let $\dSt$ and $\St$ denote the $\infty$-categories of derived stacks and higher stacks. 
Then restriction along the inclusion $\Aff \hookrightarrow \dAff$ induces the functor 
\begin{equation*}
(-)_{\cl} \colon \dSt \to \St
\end{equation*} 
called \emph{classical truncation}. 
For example, for $A \in \dRing$ we have $\Spec(A)_{\cl} = \Spec(\pi_0(A))$. 
The functor $(-)_{\cl}$ admits a fully faithful left adjoint 
\begin{equation*} 
\iota \colon \St \to \dSt 
\end{equation*}
called \emph{derived extension}. 
The classical truncation and derived extension functors preserve the property of being (locally) algebraic. 
By abuse of notation, we often simply write $X$ instead of $\iota(X)$ when we think of a higher stack as a derived stack; in particular, any classical scheme or algebraic stack may be regarded as a derived algebraic stack. 

\begin{remark}[Derived (co)limits of stacks] 
\label{remark-derived-colimits} 
The categories $\St$ and $\dSt$ admit all colimits and limits. 
Classical truncation $(-)_{\cl} \colon \dSt \to \St$ commutes with colimits and limits \cite[Lemma 2.2.4.2]{HAG2}. 
Derived extension commutes with colimits (being a left adjoint), and with pullbacks along flat morphisms of higher algebraic stacks \cite[Proposition 2.2.4.4(3)]{HAG2}, but in general it does not commute with limits. 
\end{remark} 

\begin{convention}[Derived fiber products] 
\label{convention-derived-fiber-product}
Given a diagram $X \to S \leftarrow Y$ of schemes, algebraic stacks, or in general higher stacks, 
we write $X \times_S Y$ for the fiber product taken in the category $\dSt$ of derived stacks. 
In particular, $X \times_S Y$ may no longer be an object of $\St$, even when $X$ and $Y$ are schemes. 
The truncation $(X \times_SY)_{\cl}$ recovers the classical fiber product in view of Remark~\ref{remark-derived-colimits}. 
\end{convention}

\begin{remark}[Derived enhancements]
\label{remark-derived-enhancements} 
Given $X \in \St$, a \emph{derived enhancement} is a derived stack $\fX \in \dSt$ equipped with an equivalence $\fX_{\cl} \simeq X$. 
In this case, by adjunction there is a canonical morphism $X \to \fX$.  
If $\fX$ is a derived algebraic stack, then the morphism $X \to \fX$ is in fact a closed immersion that induces an equivalence between the Zariski sites of $X$ and $\fX$ (see \cite[Proposition 2.2.4.7 and Lemma 2.2.2.10]{HAG2}). 
In this sense, derived algebraic geometry can be thought of as a framework for formally thickening classical algebro-geometric objects in a derived direction. 
\end{remark} 

\subsection{Derived categories} 
For an affine derived scheme $X = \Spec(A)$ corresponding to an animated ring $A \in \dRing$, 
the derived category of quasi-coherent sheaves is defined to be derived category of $A$-modules $\Dqc(X) = \rD(A)$, and the category of perfect complexes is the full subcategory $\Dperf(X) \subset \Dqc(X)$ generated by $A$ under finite colimits and retracts. 
For a general derived stack $X$, the derived category of quasi-coherent sheaves is defined by right Kan extension from the case of derived affine schemes, namely 
\begin{equation*}
\Dqc(X) = \lim_{U \in  \dAff/X} \Dqc(U)
\end{equation*} 
where the limit (taken in the $\infty$-category of stable $\infty$-categories) is over the category of derived affine schemes over $X$. 
The category of perfect complexes is the full subcategory 
\begin{equation*}
\Dperf(X) \subset \Dqc(X) 
\end{equation*} 
consisting of objects whose restriction along any morphism $U \to X$ from a derived affine scheme is perfect; in other words, 
\begin{equation*}
\Dperf(X) = \lim_{U \in \dAff/X} \Dperf(U) . 
\end{equation*} 

The following definition, introduced in \cite{bzfn}, isolates a class of derived stacks with well-behaved derived categories, which contains 
most examples of interest. 

\begin{definition}
\label{definition-perfect-stack} 
A derived algebraic stack $X$ is \emph{perfect} if the following conditions hold: 
\begin{enumerate}
\item The diagonal of $X$ is affine. 
\item \label{das-2} $\Dqc(X)$ is compactly generated. 
\item \label{das-3} The compact and perfect objects of $\Dqc(X)$ coincide. 
\end{enumerate} 
\end{definition} 

The results of \cite{bzfn} show that the category of perfect stacks is stable under fiber products, and that derived pullback and pushforward along morphisms between perfect stacks satisfy the base change and projection formulas. 
To state a useful criterion for perfectness, we use the terminology that a group scheme is \emph{nice} if it is an extension of a finite \'{e}tale tame group scheme by a group scheme of multiplicative type. 

\begin{theorem}[{\cite[Theorem A.3.2]{milnor-squares}}]
A quasi-compact derived algebraic stack with affine diagonal and nice stabilizers is perfect. 
\end{theorem}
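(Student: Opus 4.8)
The plan is to verify the three conditions of Definition~\ref{definition-perfect-stack}. Condition~(1), affineness of the diagonal, is the hypothesis, so what remains is to show that $\Dqc(X)$ is compactly generated (condition~(2)) and that a quasi-coherent complex on $X$ is compact if and only if it is perfect (condition~(3)). I would first treat the case of a quotient stack $[\Spec A/\mathrm{GL}_n]$, and then reduce the general case to this one by a Mayer--Vietoris (Thomason--Neeman) descent, using the structure theory of Hall and Rydh to present $X$ Nisnevich-locally by such quotient-stack charts.

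\textbf{The case of a quotient stack.} Let $X = [\Spec A/G]$ with $G = \mathrm{GL}_n$ and $A$ an animated ring, and let $p \colon X \to BG$ be the natural affine morphism. By Kempf's vanishing theorem (equivalently, by properness of the flag variety $G/B$), the functor $R\Gamma(BG, -)$ has finite cohomological dimension; since $p$ is affine, so does $R\Gamma(X, -)$, and therefore $\cO_X$ is a compact object of $\Dqc(X)$. The vector bundles $\cO_X \otimes \rho$, for $\rho$ a finite-dimensional representation of $G$, are perfect; they are compact because $\Hom(\cO_X \otimes \rho, -) \simeq R\Gamma(X, \rho^\vee \otimes -)$ commutes with colimits; and they generate $\Dqc(X)$, since their restrictions along the faithfully flat affine cover $\Spec A \to X$ generate $\Dqc(\Spec A)$. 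This gives condition~(2). For condition~(3): every perfect complex $E$ is dualizable, hence $\Hom(E, -) \simeq \Hom(\cO_X, E^\vee \otimes -)$ commutes with colimits and $E$ is compact; conversely, by Neeman's theorem the compact objects of $\Dqc(X)$ form the thick subcategory generated by the $\cO_X \otimes \rho$, which consists of perfect complexes since the latter are perfect and perfect complexes form a thick subcategory. The diagonal of $X$ is affine because $G$ is, so $X$ is perfect. (The same argument applies with $G$ any nice group scheme, using that $R\Gamma(BG,-)$ is then exact.)

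\textbf{Reduction to the quotient-stack case.} By the structure theorem of Hall and Rydh, a quasi-compact algebraic stack with affine diagonal and nice stabilizers admits a scallop decomposition by quotient-stack charts: a chain of quasi-compact open substacks $\varnothing = X_0 \subset X_1 \subset \dots \subset X_N = X$ such that each $X_i$ is the union of $X_{i-1}$ with a chart $[\Spec A_i/\mathrm{GL}_{m_i}]$, glued along a quasi-compact open that is itself an open of such a chart, with all these gluings Nisnevich squares. One then induces on $N$, the case $N = 1$ --- in which $X$ is itself a chart --- being the previous paragraph. For the inductive step, write $X = U \cup V$ with $U = X_{N-1}$, $V$ a chart, and $U \cap V$ a quasi-compact open of $V$; then $U$ is perfect by induction, $V$ by the previous paragraph, and $U \cap V$ because quasi-compact opens of perfect stacks are perfect. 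Running the Thomason--Neeman machine --- the Verdier localization relating $\Dqc(X)$, $\Dqc(U)$, and the subcategory of $\Dqc(X)$ supported on $Z = X \setminus U$; excision identifying the latter with the corresponding subcategory of $\Dqc(V)$, compactly generated since $V$ is perfect; and Thomason's lemma on extending perfect complexes from $U$ to $X$ --- produces a set of compact generators of $\Dqc(X)$ and shows that restriction functors preserve compact objects, giving condition~(2). Moreover $\cO_X$ is compact, since $U$ and $V$, hence $X$ (by Mayer--Vietoris for cohomology), have bounded cohomological dimension, so every perfect complex on $X$ is compact as above; and conversely, since perfectness of a complex is Zariski-local, a compact object of $\Dqc(X)$ restricts to a perfect complex on $U$ and on $V$ and is therefore perfect. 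This gives condition~(3), completing the induction.

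\textbf{Main obstacle.} The essential input is the structure theorem invoked in the reduction: the existence of a Nisnevich-local presentation of a stack with nice stabilizers by quotient-stack charts $[\Spec A_i/\mathrm{GL}_{m_i}]$. This is a substantial theorem of Hall and Rydh, and it is the only place where the hypothesis of nice stabilizers is used; it rests on noetherian approximation, the local structure of stacks with nice stabilizers near a point, and a careful study of the resolution property. A secondary, more routine difficulty is the bookkeeping in the Mayer--Vietoris step: one must check that the categories of complexes supported on closed substacks are compactly generated and that excision holds in the derived stacky setting, which again reduces to the good behaviour of the quotient-stack charts.
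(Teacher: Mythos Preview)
The paper does not prove this statement at all: it is stated with a citation to \cite[Theorem~A.3.2]{milnor-squares} and used as a black box, so there is no proof in the paper to compare against. Your sketch is a reasonable outline of how the result is established in the literature you are implicitly drawing on (Hall--Rydh structure theory plus Thomason--Neeman descent), but since the paper itself supplies nothing beyond the reference, the appropriate response here is simply to point to \cite{milnor-squares} rather than to reproduce an argument.
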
 

\begin{example} 
\label{example-gerbes-perfect} 
For instance, if $X$ is a quasi-compact scheme with affine diagonal and $\cX \to X$ is a $\bG_m$-gerbe or a $\bmu_n$-gerbe with $n$ invertible on $X$, then $\cX$ is perfect. 
\end{example}

%%%%%%%%%%%%%%%%%%%%%%%%%%%%%%%%%%%%%%%%%%%%%%%%%%%%%%

\section{Linear categories}
\label{section-linear-cats} 

This section concerns a formalism for ``noncommutative algebraic geometry'' in terms of linear categories. 
We largely follow \cite{NCHPD}, which is based on Lurie's work \cite{HA}, but we include various auxiliary 
results required later in the paper. 

Fix a perfect derived algebraic stack\footnote{In \cite{NCHPD} the base $S$ is assumed to be a quasi-compact and separated scheme, but everything goes through verbatim for perfect derived algebraic stacks.} $S$, which will serve as the base space for our discussion.
For most purposes in this paper, the case where $S$ is a complex variety suffices. 

\subsection{Small linear categories} 
\label{section-small-linear-cat}
The category $\Dperf(S)$, equipped with the operation of tensor product, may be regarded as a commutative algebra object in the category $\Cat_{\mathrm{st}}$ of small idempotent-complete stable $\infty$-categories. 
An \emph{$S$-linear category} is a $\Dperf(S)$-module object of $\Cat_{\mathrm{st}}$. 
In particular, an $S$-linear category $\cC$ is equipped with an action functor $\cC \times \Dperf(S) \to \cC$ whose action on objects is denoted by $(E, F) \mapsto E \otimes F$.  

The the collection of all $S$-linear categories is organized into an $\infty$-category 
\begin{equation*}
\Cat_S = \Mod_{\Dperf(S)}(\Cat_{\mathrm{st}}),
\end{equation*} 
and admits a symmetric monoidal structure with unit $\Dperf(S)$ and tensor product denoted
\begin{equation*}
\cC \otimes_{\Dperf(S)} \cD. 
\end{equation*}  
A morphism $\cC \to \cD$ in $\Cat_S$ is called an \emph{$S$-linear functor}; the collection of all such form the objects of an $S$-linear category $\Fun_S(\cC,\cD)$, which is the internal mapping object in $\Cat_S$.

For any morphism $T \to S$ of perfect derived algebraic stacks, the tensor product 
\begin{equation}
\label{equation-CT}
\cC_T = \cC \otimes_{\Dperf(S)} \Dperf(T)
\end{equation}
is naturally a $T$-linear category, called the \emph{base change} of $\cC$ along $T \to S$; similarly, 
for any $S$-linear functor $\Phi \colon \cC \to \cD$, by base change we obtain a $T$-linear functor $\Phi_T \colon \cC_T \to \cD_T$. 
There is a natural functor $\cC \to \cC_T$, whose action on objects we often denote by $E \mapsto E_T$. 
For any point $s \in S$, the base change along $\Spec(\kappa(s)) \to S$ gives a $\kappa(s)$-linear category $\cC_s$ called the 
\emph{fiber} of $\cC$ over~$s$. 
In this way, an $S$-linear category can be thought of as a family of categories parameterized by $S$, 
or as a ``noncommutative scheme'' over $S$. 

\begin{example}
\label{example-geometric-lincat}
Let $f \colon X \to S$ be a morphism of perfect derived algebraic stacks. 
Then $\Dperf(X)$ is naturally an $S$-linear category, with the action of $F \in \Dperf(S)$ 
given by $- \otimes f^*F \colon \Dperf(X) \to \Dperf(X)$. 
For any morphism $T \to S$ from a perfect derived algebraic stack $T$, by \cite{bzfn} there is an equivalence 
\begin{equation*}
\Dperf(X)_T \simeq \Dperf(X_T) 
\end{equation*} 
of $T$-linear categories.  
\end{example} 

An $S$-linear category $\cC$ is enriched over $\Dqc(S)$: for objects $E, F \in \cC$, there is a \emph{mapping object} 
\begin{equation*}
\cHom_S(E, F) \in \Dqc(S)
\end{equation*} 
characterized by equivalences 
\begin{equation}
\label{equation-cHom} 
\Map_{\Dqc(S)}(G, \cHom_S(E, F)) \simeq \Map_{\cC}(E \otimes G, F) 
\end{equation} 
for $G \in \Dperf(S)$, where $\Map(-,-)$ denotes the space of maps in an $\infty$-category. 

\begin{example}
In the situation of Example~\ref{example-geometric-lincat}, for $E, F \in \Dperf(X)$ we have 
\begin{equation*}
\cHom_S(E, F) \simeq f_* \cHom_X(E, F) , 
\end{equation*} 
where $\cHom_X(E, F) \in \Dqc(X)$ is the derived sheaf Hom on $X$. 
\end{example} 

\subsection{Semiorthogonal decompositions} 
One of the most fundamental examples of an $S$-linear category is an $S$-linear semiorthogonal 
component $\cC \subset \Dperf(X)$ where $X \to S$ is a morphism of perfect derived algebraic stacks. 
Recall that for $\cC \in \Cat_S$, a semiorthogonal decomposition 
\begin{equation*}
\cC = \langle \cC_1, \dots, \cC_m \rangle
\end{equation*} 
is called \emph{$S$-linear} if each component $\cC_i$ is preserved by the $\Dperf(S)$-action on $\cC$, in which case each $\cC_i$ inherits the structure of an $S$-linear category. 
Base change of linear categories is compatible with semiorthogonal decompositions, i.e. given $T \to S$ there is an induced $T$-linear semiorthogonal decomoposition 
$\cC_T = \langle (\cC_1)_T, \dots, (\cC_m)_T \rangle$. 

\begin{lemma}[{\cite{bernardara-BS, bergh-BS}}]
\label{lemma-D-SB}
Let $X$ be a scheme, let $\alpha \in \Br_{\mathrm{Az}}(X)$, and let $\pi \colon P \to X$ be a Severi--Brauer scheme for $\alpha$ of relative dimension $n$ over $X$. 
Then there is an $X$-linear semiorthogonal decomposition 
\begin{equation*}
\Dperf(P) = \langle \cD_0, \dots, \cD_{n-1} \rangle 
\end{equation*} 
where for each $i$ there is an $X$-linear equivalence $\cD_i \simeq \Dperf(X, \alpha^i)$. 
\end{lemma}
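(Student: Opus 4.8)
The plan is to reduce the statement to the known semiorthogonal decomposition for an \emph{untwisted} projective bundle, using \'{e}tale descent along a cover that trivializes $\alpha$. First I would observe that the problem is \'{e}tale-local on $X$: both the projective bundle $\pi \colon P \to X$ and the categories $\Dperf(X, \alpha^i)$ glue along an \'{e}tale cover, and a semiorthogonal decomposition of $\Dperf(P)$ whose components are base-changed from $X$ can be checked after pullback along a faithfully flat morphism (here I would invoke the base-change compatibility of $S$-linear semiorthogonal decompositions recalled just before the lemma, together with the fact that $\Dperf$ satisfies descent for the relevant topology). Choosing an \'{e}tale cover $U \to X$ over which $\alpha$ becomes trivial, $P_U \to U$ becomes an ordinary projective bundle $\bP(\cV) \to U$ for a rank-$(n+1)$ vector bundle $\cV$, and one has Be\u{\i}linson's decomposition $\Dperf(\bP(\cV)) = \langle \cD_0', \dots, \cD_{n-1}' \rangle$ with $\cD_i' \simeq \Dperf(U)$, realized by the Fourier--Mukai kernels $\cO_P(i)$.

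Next I would address the key point that makes the decomposition \emph{twisted} rather than trivial on $X$ itself: the line bundles $\cO_P(i)$ on the total space $P$ do not descend to genuine line bundles but to $\alpha^i$-twisted sheaves. Concretely, the $\bG_m$-gerbe (or $\bmu_n$-gerbe) $\cP \to P$ pulled back from a representing gerbe $\cX \to X$ of $\alpha$ carries a tautological twisted line bundle, and $\pi_* $ of its $i$-th power, paired against the universal object, gives the functor $\Dperf(X, \alpha^i) \to \Dperf(P)$ cutting out $\cD_i$. In other words, I would set $\cD_i$ to be the essential image of the $X$-linear Fourier--Mukai functor with kernel the relative twisted $\cO(i)$, and verify: (a) each such functor is fully faithful with image an admissible subcategory, and (b) the collection is semiorthogonal in the correct order and generates $\Dperf(P)$. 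Both (a) and (b) are statements that become the classical Be\u{\i}linson computation after the base change $U \to X$ above, because semiorthogonality, generation, and full faithfulness of an $S$-linear Fourier--Mukai functor can all be detected fppf- or \'{e}tale-locally on the base. The identification of the $i$-th component with $\Dperf(X, \alpha^i)$ likewise follows: the kernel intertwines the $\Dperf(X)$-action with the twist by $\alpha^i$, which one sees from the inertial weight of the tautological twisted bundle.

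I expect the main obstacle to be the bookkeeping of twists and the descent argument for the functors, rather than any deep new idea. Specifically, one must be careful that the ``twisted $\cO_P(i)$'' is an $\alpha^i$-twisted object in the correct normalization (the inertial action of $\bG_m$ or $\bmu_n$ on the fibers of $\cO_P(i)$ is the character $\chi^i$), and that the descent of ``is a semiorthogonal decomposition with these components'' genuinely only requires a faithfully flat cover and not something finer; this is where I would lean on the cited references \cite{bernardara-BS, bergh-BS}, which carry out exactly this analysis, so in the write-up I would sketch the reduction and then cite them for the verification. A secondary subtlety is that $X$ is only assumed to be a scheme (with $\alpha \in \Br_{\mathrm{Az}}(X)$, so representable by an Azumaya algebra), not necessarily admitting globally the gerbe-theoretic setup with $n$ invertible; but since $P$ is a Severi--Brauer scheme this is harmless, and one can work directly with the $\bG_m$-gerbe or with sheaves of modules over the relevant Azumaya algebra, as in \cite{lieblich-moduli-twisted}.
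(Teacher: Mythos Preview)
Your proposal is correct and outlines precisely the standard argument, which is the content of the cited references \cite{bernardara-BS, bergh-BS}. The paper itself gives no proof of this lemma: it is stated with the citation and used as a black box, so there is nothing further to compare. Your sketch---reduce \'etale-locally to Be\u{\i}linson's decomposition of a projective bundle, then observe that the tautological $\cO(1)$ exists on $P$ only as a $\pi^*\alpha$-twisted line bundle, whence the $i$-th component is $\Dperf(X,\alpha^i)$---is exactly what those references do.

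One small phrasing correction: it is not that $\cO_P(i)$ ``descends to an $\alpha^i$-twisted sheaf on $X$,'' but rather that $\cO_P(i)$ exists on $P$ only as a $\pi^*\alpha^i$-twisted line bundle (equivalently, as a genuine line bundle on the gerbe over $P$ with inertial weight $i$); the functor $\Dperf(X,\alpha^i) \to \Dperf(P)$ is then $E \mapsto \pi^*E \otimes \cO_P(i)$, which lands in untwisted sheaves since the twists cancel. You clearly have the right picture, but the wording could mislead.
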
 

\begin{lemma}[{\cite[Theorem 5.4]{bergh-BS}}]
\label{lemma-mun-gerbe}
Let $X$ be a scheme, let $\alpha \in \Br(X)[n]$ with $n$ invertible on $X$, and let $\pi \colon \cX \to X$ a $\bmu_n$-gerbe of class $\alpha$. 
Then there is an $X$-linear orthogonal decomposition 
\begin{equation*}
\Dperf(\cX) = \llangle \Dperf^k(\cX) \rrangle_{k \in \bZ/n}. 
\end{equation*} 
\end{lemma}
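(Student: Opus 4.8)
The plan is to establish the orthogonal decomposition $\Dperf(\cX) = \llangle \Dperf^k(\cX) \rrangle_{k \in \bZ/n}$ by exploiting the representation-theoretic decomposition of sheaves on a $\bmu_n$-gerbe according to the characters of $\bmu_n$. First I would recall that for any $\bmu_n$-gerbe $\pi \colon \cX \to X$ with $n$ invertible on $X$, the group $\bmu_n$ (viewed as the inertia) acts on every quasi-coherent sheaf on $\cX$, and since $\bmu_n$ is linearly reductive (as $n$ is invertible), the category $\Dqc(\cX)$ decomposes as a direct sum over the character group $\widehat{\bmu_n} \cong \bZ/n$ of the subcategories $\Dqc^k(\cX)$ on which the inertial action is by $\chi^k$. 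This is the eigenspace decomposition for a linearly reductive group action, and it is orthogonal because morphisms must respect the inertial action, so $\Hom(E, F) = 0$ whenever $E \in \Dqc^{k}(\cX)$ and $F \in \Dqc^{k'}(\cX)$ with $k \neq k'$. Restricting to perfect complexes, the same orthogonal decomposition holds for $\Dperf(\cX)$ since perfectness and the twist grading are both local and compatible.

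Next I would verify $X$-linearity: the pullback $\pi^*G$ of any $G \in \Dperf(X)$ has trivial inertial action (it is pulled back from the coarse space), so tensoring by $\pi^*G$ preserves each $\Dperf^k(\cX)$, which is exactly the statement that the decomposition is $X$-linear. One should also check the decomposition is finite, i.e., that $\Dperf^k(\cX)$ depends only on $k \bmod n$ — this is immediate since $\chi$ has order $n$. At this point the cited reference \cite[Theorem 5.4]{bergh-BS} does the job, so the proof can essentially be a citation together with the identification of the summands with the $k$-twisted categories in our notation; alternatively one can deduce it from Lemma~\ref{lemma-D-SB} by passing to the Severi--Brauer scheme, but the direct gerbe argument is cleaner and matches the setup of Example~\ref{example-gerbes-perfect}.

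The main obstacle — though it is a mild one — is being careful about the precise sense in which an \emph{orthogonal} (as opposed to merely semiorthogonal) decomposition holds: one must confirm that $\Hom$ vanishes in \emph{both} directions between distinct summands, which uses that $\bmu_n$ is reductive and not just that it is a finite group, hence the hypothesis that $n$ is invertible on $X$ is essential. A secondary point requiring attention is that $\cX$ is a perfect stack (Example~\ref{example-gerbes-perfect}) so that $\Dperf(\cX)$ is well-behaved and the decomposition of $\Dqc(\cX)$ restricts correctly to perfect objects; this also ensures compatibility with base change, though base change is not needed for the statement itself. Beyond these bookkeeping issues the result is essentially formal, being the categorical shadow of the character decomposition of $\bmu_n$-equivariant sheaves.
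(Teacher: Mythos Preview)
Your proposal is correct, and indeed supplies more than the paper does: the paper gives no proof at all for this lemma, simply citing \cite[Theorem 5.4]{bergh-BS} in the statement header. Your sketch via the character decomposition of $\bmu_n$-equivariant sheaves (using linear reductivity of $\bmu_n$ when $n$ is invertible) is the standard argument and is exactly what the cited reference does; your remarks on $X$-linearity and two-sided orthogonality are also correct and to the point.
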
 

Using semiorthogonal decompositions, we can prove the compatibility of tensor products of categories of twisted sheaves with geometric fiber products, by reducing to the case of Example~\ref{example-geometric-lincat}. 
\begin{lemma}
\label{lemma-tensor-twisted-schemes} 
Let $X \to S \leftarrow Y$ be flat morphisms of perfect schemes, 
and let $\alpha \in \Br(X)[m]$ and $\beta \in \Br(Y)[n]$ where $m$ and $n$ are invertible on $S$. 
Then there is an $X \times_S Y$-linear equivalence 
\begin{equation*}
\Dperf(X, \alpha) \otimes_{\Dperf(S)} \Dperf(Y, \beta) \to \Dperf(X \times_S Y, \pr_X^*(\alpha) + \pr_Y^*(\beta)). 
\end{equation*} 
\end{lemma}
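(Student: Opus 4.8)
The plan is to deduce this from the known untwisted case in Example~\ref{example-geometric-lincat} by applying the Severi--Brauer semiorthogonal decomposition of Lemma~\ref{lemma-D-SB} as a ``scissors'' to cut twisted categories out of untwisted ones. The basic principle is that, while $\Dperf(X,\alpha)$ is genuinely noncommutative, the powers $\Dperf(X,\alpha^i)$ assemble into an honest geometric object $\Dperf(P)$, and tensor products of geometric objects are controlled by Example~\ref{example-geometric-lincat}.

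First I would set up the Severi--Brauer schemes: let $p \colon P \to X$ be a Severi--Brauer scheme of class $\alpha$ (of relative dimension $m-1$, say, so that $\alpha$ is killed by pullback) and $q \colon Q \to Y$ one for $\beta$. By Lemma~\ref{lemma-D-SB} we get $X$-linear and $Y$-linear semiorthogonal decompositions
\begin{equation*}
\Dperf(P) = \langle \Dperf(X,\alpha^0), \dots, \Dperf(X,\alpha^{m-1}) \rangle, \qquad
\Dperf(Q) = \langle \Dperf(Y,\beta^0), \dots, \Dperf(Y,\beta^{n-1}) \rangle.
\end{equation*}
Now tensor these over $\Dperf(S)$. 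Since $-\otimes_{\Dperf(S)}-$ is exact in each variable and compatible with semiorthogonal decompositions, the left-hand side $\Dperf(P)\otimes_{\Dperf(S)}\Dperf(Q)$ acquires a semiorthogonal decomposition into the $mn$ pieces $\Dperf(X,\alpha^i)\otimes_{\Dperf(S)}\Dperf(Y,\beta^j)$. On the other hand, $P\times_S Q$ is a scheme, flat and proper over $X\times_S Y$, and one can identify it: fibrewise it is a product of projective spaces, and in fact $P\times_S Q = P\times_S Y \times_S X \times_S Q$ sits as a relative Severi--Brauer-type object over $P\times_S Q$'s base; more usefully, the two projections $P\times_S Q \to P\times_S Y$ and $\to X\times_S Q$ exhibit it as an iterated Severi--Brauer scheme, first for $\pr_X^*\alpha$ over $X\times_S Y$ (base-changed to a Severi--Brauer scheme over $X\times_S Q$) and then for $\pr_Y^*\beta$. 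Applying Lemma~\ref{lemma-D-SB} twice (and using base change of Severi--Brauer schemes along the flat maps, which is where flatness of $X\to S$ and $Y\to S$ enters) yields an $X\times_S Y$-linear semiorthogonal decomposition of $\Dperf(P\times_S Q)$ whose pieces are $\Dperf(X\times_S Y,\, i\,\pr_X^*\alpha + j\,\pr_Y^*\beta)$ for $0\le i < m$, $0\le j < n$.

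Finally, by Example~\ref{example-geometric-lincat} applied to the geometric input $P\times_S Q = P\times_S Y$ fibre-producted with $X\times_S Q$ over $X\times_S Y$ --- concretely, $P \times_S Q \cong (P\times_S Y)\times_{X\times_S Y}(X\times_S Q)$ --- there is an $X\times_S Y$-linear equivalence $\Dperf(P)\otimes_{\Dperf(S)}\Dperf(Q)\simeq \Dperf(P\times_S Q)$. This equivalence matches up the two semiorthogonal decompositions constructed above, component by component (the matching of the $(i,j)$ piece on each side being induced by the obvious projections and twists), and restricting to the $(1,1)$-component gives the desired equivalence
\begin{equation*}
\Dperf(X,\alpha)\otimes_{\Dperf(S)}\Dperf(Y,\beta)\ \xrightarrow{\ \sim\ }\ \Dperf\bigl(X\times_S Y,\ \pr_X^*\alpha + \pr_Y^*\beta\bigr).
\end{equation*}
One should check that the component equivalence extracted this way is $X\times_S Y$-linear, which follows since all the functors in sight (the semiorthogonal projections, the global equivalence from Example~\ref{example-geometric-lincat}) are linear over the appropriate bases.

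The main obstacle I anticipate is the bookkeeping in the second paragraph: organizing $P\times_S Q$ as an iterated Severi--Brauer scheme and verifying that the composite semiorthogonal decomposition of $\Dperf(P\times_S Q)$ really has pieces $\Dperf(X\times_S Y, i\,\pr_X^*\alpha+j\,\pr_Y^*\beta)$ with the Brauer classes adding as claimed — this requires knowing that Severi--Brauer schemes pull back correctly (here flatness of $X\to S$, $Y\to S$ matters) and that the class of $P\times_S Q \to X\times_S Y$ in each ``slot'' is the pullback of $\alpha$ resp. $\beta$, which is a Brauer-group computation via the projection maps. A secondary subtlety is confirming that the two semiorthogonal decompositions of $\Dperf(P)\otimes_{\Dperf(S)}\Dperf(Q)$ — one from tensoring the decompositions of the factors, one transported from $\Dperf(P\times_S Q)$ — actually agree piece-by-piece rather than merely having matching numbers of components; this should follow from naturality of the constructions, but needs to be stated carefully.
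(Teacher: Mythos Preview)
Your strategy is sound and closely parallels the paper's, but the paper makes a different choice of ``ambient geometric object'': it uses the $\bmu_m$- and $\bmu_n$-gerbes $\cX \to X$ and $\cY \to Y$ (Lemma~\ref{lemma-mun-gerbe}) rather than Severi--Brauer schemes. The skeleton is the same --- embed the twisted categories as components of a decomposition of an untwisted perfect stack, apply Example~\ref{example-geometric-lincat} to the product, and extract the $(1,1)$-piece --- but the gerbe route is cleaner for exactly the two reasons you flag as obstacles.

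First, the gerbe decomposition of Lemma~\ref{lemma-mun-gerbe} is \emph{orthogonal}, not merely semiorthogonal, and is graded by the character group $\bZ/m$. So once one checks that the functor $\Dperf^k(\cX) \otimes_{\Dperf(S)} \Dperf^\ell(\cY) \to \Dperf(\cX \times_S \cY)$ lands in the weight-$(k,\ell)$ piece $\Dperf^{(k,\ell)}(\cX \times_S \cY)$ for the $\bmu_m \times \bmu_n$-gerbe structure on $\cX \times_S \cY \to X \times_S Y$ --- which is immediate from how the inertia acts on external tensor products --- the matching of the two decompositions is automatic: a collection of fully faithful functors between corresponding summands of two orthogonal decompositions that assemble to a global equivalence must each be an equivalence. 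This replaces your ``secondary subtlety'' with a one-line weight argument. Second, $\bmu_n$-gerbes representing $\alpha$ exist for any $\alpha \in \Br(X)[n]$ with $n$ invertible, whereas your approach tacitly assumes $\alpha \in \BrAz(X)$ so that a Severi--Brauer scheme exists; the hypotheses of the lemma do not guarantee this.

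Your approach does work under the Azumaya hypothesis, and the SOD matching can be carried out (it amounts to checking that the external product of Bernardara's embeddings agrees with the iterated Bernardara embedding for $P \times_S Q \to X \times_S Y$, which one sees by writing both sides in terms of the relative $\cO(i,j)$), but the paper's gerbe argument is both shorter and slightly more general.
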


\begin{proof}
There is a natural $X \times_S Y$-linear functor 
\begin{equation}
\label{equivalence-twisted-tensor}
\Dperf(X, \alpha) \otimes_{\Dperf(S)} \Dperf(Y, \beta) \to \Dperf(X \times_S Y, \pr_X^*(\alpha) + \pr_Y^*(\beta))
\end{equation} 
induced by $(E, F) \mapsto \pr_X^*(E) \otimes \pr_Y^*(F)$ for $E \in \Dperf(X, \alpha)$ and $F \in \Dperf(Y, \beta)$, 
which we claim is an equivalence. 
Choose $\cX \to X$ a $\bmu_m$-gerbe of class $\alpha$ and $\cY \to Y$ a $\bmu_n$-gerbe of class~$\beta$. 
By Lemma~\ref{lemma-mun-gerbe}, there are orthogonal $S$-linear decompositions 
\begin{equation*}
\Dperf(\cX) = \llangle \Dperf^k(\cX) \rrangle_{k \in \bZ/m} \quad \text{and} \quad 
\Dperf(\cY) = \llangle \Dperf^{\ell}(\cY) \rrangle_{\ell \in \bZ/n}.
\end{equation*} 
Tensoring over $\Dperf(S)$ we obtain an orthogonal decomposition 
\begin{equation*}
\Dperf(\cX \times_S \cY) \simeq \Dperf(\cX) \otimes_{\Dperf(S)} \Dperf(\cY) = \llangle \Dperf^k(\cX) \otimes_{\Dperf(S)} \Dperf^\ell(\cY) \rrangle_{(k,\ell) \in \bZ/m \times \bZ/n}, 
\end{equation*}  
where the first equivalence holds by Example~\ref{example-geometric-lincat}, since 
$\cX$ and $\cY$ are perfect stacks by Example~\ref{example-gerbes-perfect} and our assumption on $m$ and $n$.  
Explicitly, the resulting fully faithful functor  
\begin{equation} 
\label{Phikell}
\Dperf^k(\cX) \otimes_{\Dperf(S)} \Dperf^\ell(\cY) \to \Dperf(\cX \times_S \cY)
\end{equation} 
is induced by 
$(E,F) \mapsto \pr_X^*(E) \otimes \pr_Y^*(F)$ for $E \in \Dperf^{k}(\cX)$ and $F \in \Dperf^{\ell}(\cY)$. 
On the other hand, since $\cX \times_S \cY \to X \times_S Y$ is a $\bmu_{m} \times \bmu_n$-gerbe, 
similar to Lemma~\ref{lemma-mun-gerbe} we also have an orthogonal decomposition 
\begin{equation*}
\Dperf(\cX \times_S \cY) = \llangle 
\Dperf^{(k,\ell)}(\cX \times_S \cY) 
\rrangle_{(k,\ell) \in \bZ/m \times \bZ/n}, 
\end{equation*} 
where $\Dperf^{(k,\ell)}(\cX \times_S \cY)$ denotes the subcategory on which the inertial $\bmu_{m} \times \bmu_{n}$ acts with weight $(k,\ell)$. 
The functor~\eqref{Phikell} has image contained in $\Dperf^{(k,\ell)}(\cX \times_S \cY)$, and hence induces a fully faithful functor 
\begin{equation*} 
\Phi_{k,\ell} \colon \Dperf^k(\cX) \otimes_{\Dperf(S)} \Dperf^\ell(\cY) \to \Dperf^{(k,\ell)}(\cX \times_S \cY). 
\end{equation*} 
Since the image and target of $\Phi_{k,\ell}$ both give orthogonal decompositions of $\Dperf(\cX \times_S \cY)$ as we vary over 
$(k,\ell) \in \bZ/m \times \bZ/n$, the functor $\Phi_{k,\ell}$ must be essentially surjective, and hence an equivalence. 
Finally, we note that there is an equivalence 
\begin{equation*}
\Dperf^{(1,1)}(\cX \times_S \cY) \simeq \Dperf(X \times_S Y, \pr_X^*(\alpha) + \pr_Y^*(\beta))
\end{equation*} 
(see \cite[Proposition 2.1.2.6]{lieblich-moduli}), under which the functor $\Phi_{1,1}$ is identified with~\eqref{equivalence-twisted-tensor}. 
\end{proof} 

\begin{remark}
We have taken unnecessarily strong hypotheses in Lemma~\ref{lemma-tensor-twisted-schemes}, for ease of reference to the literature. 
For instance, the assumption that $X$ and $Y$ are flat over $S$ is only needed to ensure that $X \times_S Y$ is a classical scheme. 
\end{remark} 

\subsection{Smooth, proper, and dualizable categories}

\begin{definition}
\label{definition-smooth-proper} 
Let $\cC$ be an $S$-linear category. 
\begin{enumerate}
\item $\cC$ is \emph{proper (over $S$)} if $\cHom_S(E,F) \in \Dperf(S)$ for all $E, F \in \cC$, and 
\item $\cC$ \emph{smooth (over $S$)} if $\id_{\Ind(\cC)} \in \Fun_{S}(\Ind(\cC), \Ind(\cC))$ is a compact object. 
\end{enumerate} 
\end{definition} 

Here, $\Ind(\cC)$ denotes the presentable $S$-linear category obtained by ind-completion, and $\Fun_S(\Ind(\cC), \Ind(\cC))$ the category of presentable $S$-linear functors (see \S\ref{section-presentable-lin-cat}). 

The above notions are closely related to their usual geometric counterparts. 
For example, if $X \to S$ is a smooth proper morphism of schemes, then $\Dperf(X)$ is smooth and proper over $S$  \cite[Lemma 4.9]{NCHPD}. 
Furthermore, semiorthogonal components of a smooth proper $S$-linear category are smooth and proper \cite[Lemma~4.15]{NCHPD}. 
In particular, this yields the following rich source of smooth proper categories. 

\begin{lemma}
Let $X \to S$ be a smooth proper morphism. 
If $\cC$ is an $S$-linear semiorthogonal component of $\Dperf(X)$, 
then $\cC$ is smooth and proper over $S$. 
\end{lemma}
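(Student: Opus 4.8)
The plan is to deduce this immediately from the two results recalled just before the statement. First I would observe that, since $X \to S$ is a smooth proper morphism, the category $\Dperf(X)$ --- equipped with its canonical $S$-linear structure from Example~\ref{example-geometric-lincat} --- is smooth and proper over $S$; this is \cite[Lemma 4.9]{NCHPD}. Strictly speaking that reference is phrased for $S$ a quasi-compact separated scheme, but as already noted in the footnote opening this section the arguments go through verbatim for perfect derived algebraic stacks; alternatively, since smoothness and properness of an $S$-linear category can be tested after a smooth surjective base change, one may reduce to the affine case.

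Next I would invoke that $S$-linear semiorthogonal components of a smooth proper $S$-linear category are again smooth and proper over $S$, which is \cite[Lemma~4.15]{NCHPD}. By hypothesis $\cC$ is an $S$-linear semiorthogonal component of $\Dperf(X)$: it appears in an $S$-linear semiorthogonal decomposition of $\Dperf(X)$, hence is preserved by the $\Dperf(S)$-action and is admissible with $S$-linear inclusion and projection functors, so $\Ind(\cC)$ is an $S$-linear retract of $\Ind(\Dperf(X))$. Applying \cite[Lemma~4.15]{NCHPD} to $\cC \subset \Dperf(X)$ then yields that $\cC$ is smooth and proper over $S$, as desired.

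There is no substantive obstacle here: the statement is a formal consequence of the two cited lemmas. The only points requiring a moment's attention are bookkeeping ones --- confirming that the notion of ``$S$-linear semiorthogonal component'' used in the statement is precisely the input needed for \cite[Lemma~4.15]{NCHPD} (it is, by the definition of $S$-linear semiorthogonal decomposition recalled above), and checking that the smoothness and properness of $\Dperf(X)$ over $S$ survive the mild generalization from schemes to perfect derived algebraic stacks in the base.
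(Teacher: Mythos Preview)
Your proposal is correct and matches the paper's approach exactly: the lemma is stated immediately after recalling \cite[Lemma 4.9]{NCHPD} and \cite[Lemma~4.15]{NCHPD}, and is presented as their direct consequence without a separate proof. Your extra remarks about base change and the bookkeeping of $S$-linear semiorthogonal components are fine but not needed.
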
 

\begin{definition}
\label{definition-smooth-proper-geometric}
A smooth proper $S$-linear category $\cC$ 
is \emph{of geometric origin (over $S$)} if there exists a smooth proper morphism $X \to S$ and a fully faithful functor \mbox{$\cC \hookrightarrow \Dperf(X)$} realizing $\cC$ as an $S$-linear semiorthogonal component. 
\end{definition} 

\begin{remark} 
In this paper, the relevance of being of geometric origin is that it ensures $\cC$ has well-behaved Hodge theory when $S$ is a complex variety, in the form of Theorem~\ref{theorem-Ktop} below. 
Nonetheless, we expect that Theorem~\ref{theorem-Ktop} remains true for any smooth proper $S$-linear category. 
In fact, it is an open question of Orlov \cite[Question 4.4]{orlov-gluing} whether there exists any smooth proper category that is not of geometric origin. 
\end{remark} 

\begin{example}
\label{example-twisted-smooth-proper}
If $X \to S$ is a smooth proper morphism of schemes and $\alpha \in \Br_{\mathrm{Az}}(X)$, then by Lemma~\ref{lemma-D-SB} we see that $\Dperf(X, \alpha)$ is smooth and proper of geometric origin over $S$. 
\end{example} 

As in the geometric case, smoothness and properness are stable under base change. 
\begin{lemma}[{\cite[Lemma 4.10]{NCHPD}}] 
\label{lemma-smooth-proper-bc}
Let $\cC$ be an $S$-linear category. 
Let $T \to S$ be a morphism of perfect derived algebraic stacks. 
\begin{enumerate}
\item If $\cC$ is smooth over $S$, then $\cC_T$ is smooth over $T$. 
\item If $\cC$ is proper over $S$, then $\cC_T$ is proper over $T$. 
\end{enumerate}
\end{lemma}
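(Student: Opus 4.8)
The plan is to treat properness and smoothness separately, in each case reducing the assertion to the stability under base change of a compactness/perfectness statement; write $g \colon T \to S$ for the structure morphism. For properness, the key input is the base-change compatibility of mapping objects: for $E, F \in \cC$ there is a natural equivalence $\cHom_T(E_T, F_T) \simeq g^* \cHom_S(E, F)$ in $\Dqc(T)$ (when $\cC = \Dperf(X)$ for $X \to S$ this is derived base change applied to $f_* \cHom_X(E,F)$; in general it is part of the formalism of \cite{NCHPD}). Granting this, if $\cC$ is proper then $\cHom_S(E,F) \in \Dperf(S)$, and since pullback preserves perfect complexes, $\cHom_T(E_T, F_T) \in \Dperf(T)$ for all $E, F \in \cC$. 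To reach arbitrary objects, recall that $\cC_T$ is generated, as an idempotent-complete stable $\Dperf(T)$-linear category, by the essential image of the functor $\cC \to \cC_T$, $E \mapsto E_T$. Fix $E \in \cC$ and consider the full subcategory of those $Y \in \cC_T$ with $\cHom_T(E_T, Y) \in \Dperf(T)$; since $\cHom_T(E_T, -)$ is exact and satisfies $\cHom_T(E_T, Y \otimes G) \simeq \cHom_T(E_T, Y) \otimes G$ for $G \in \Dperf(T)$, this subcategory is closed under finite colimits, retracts, and the $\Dperf(T)$-action, and it contains every $E'_T$, so it is all of $\cC_T$. A symmetric argument in the first variable, using $\cHom_T(X \otimes G, Y) \simeq \cHom_T(X,Y) \otimes G^{\vee}$, then gives $\cHom_T(X, Y) \in \Dperf(T)$ for all $X, Y \in \cC_T$.

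For smoothness, I would reformulate the smoothness condition of Definition~\ref{definition-smooth-proper} via the standard characterization: $\cC$ is smooth over $S$ if and only if the diagonal bimodule $\Delta_\cC$, regarded as an object of the small $S$-linear category $\cC \otimes_{\Dperf(S)} \cC^{\op}$, is perfect (equivalently, the identity endofunctor in $\Fun_S(\Ind(\cC), \Ind(\cC))$ is compact). The advantage of this phrasing is that the formation of the pair $(\cC \otimes_{\Dperf(S)} \cC^{\op}, \Delta_\cC)$ visibly commutes with base change along $T \to S$: this uses only that $(-)^{\op}$ and the relative tensor product commute with $- \otimes_{\Dperf(S)} \Dperf(T)$, together with the identification of $\Delta_{\cC_T}$ with the image of $\Delta_\cC$ under the functor $\Ind(\cC \otimes_{\Dperf(S)} \cC^{\op}) \to \Ind(\cC_T \otimes_{\Dperf(T)} \cC_T^{\op}) \simeq \Ind(\cC \otimes_{\Dperf(S)} \cC^{\op}) \otimes_{\Dqc(S)} \Dqc(T)$. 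This last functor is the base change of a compactly generated $\Dqc(S)$-linear category, and it preserves compact objects because its right adjoint --- restriction of scalars along $\Dqc(S) \to \Dqc(T)$ --- preserves all colimits. Hence $\Delta_\cC$ perfect forces $\Delta_{\cC_T}$ perfect, i.e. $\cC_T$ is smooth over $T$.

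None of the individual steps is conceptually hard; the real work lies in the foundational bookkeeping in the $\infty$-categorical setting. One must verify that $\cHom_S(-,-)$, $\Ind(-)$, $(-)^{\op}$, and the relative tensor product each commute with $- \otimes_{\Dperf(S)} \Dperf(T)$; identify the diagonal bimodule of $\cC_T$ as the base change of that of $\cC$; and reconcile the ``identity endofunctor'' and ``diagonal bimodule'' formulations of smoothness --- the latter requiring some care in the relative setting, since $\Ind(\cC)$ need not be dualizable over $\Dqc(S)$ when $\cC$ is merely smooth. I expect essentially all of the effort to go into establishing these compatibilities, after which both parts are short.
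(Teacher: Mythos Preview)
The paper does not give its own proof of this lemma; it simply cites \cite[Lemma~4.10]{NCHPD} and moves on. Your proposal is a correct sketch of the standard argument and is essentially what one finds in the cited reference: properness via base change for mapping objects plus a generation argument, and smoothness via preservation of compactness of the diagonal bimodule under base change.

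One minor correction: your caveat that ``$\Ind(\cC)$ need not be dualizable over $\Dqc(S)$ when $\cC$ is merely smooth'' is unnecessary. By Lemma~\ref{lemma-cg-dualizable}, every compactly generated presentable $S$-linear category is dualizable, and $\Ind(\cC)$ is always compactly generated (with $\cC$ as its compact objects). So the identification $\Fun_S(\Ind(\cC), \Ind(\cC)) \simeq \Ind(\cC)^{\vee} \otimes_{\Dqc(S)} \Ind(\cC)$ and the passage between the ``identity endofunctor is compact'' and ``diagonal bimodule is compact'' formulations go through without any smoothness hypothesis. This actually simplifies your bookkeeping rather than complicating it.
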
 

\begin{remark}
The property of being of geometric origin is also stable under base change, 
as a consequence of base change for semiorthogonal decompositions. 
\end{remark} 

The condition that an $S$-linear category is smooth and proper can be characterized purely categorically in terms of the monoidal structure on $\Cat_S$, via the following general notion. 

\begin{definition} 
\label{definition-dualizable} 
Let $(\ccA, \otimes, \one)$ be a symmetric monoidal $\infty$-category. 
An object $A \in \ccA$ is called \emph{dualizable} if there exist an object $A^{\vee} \in \ccA$ and morphisms 
\begin{equation*}
\coev_A \colon \one \to A \otimes A^{\vee} \qquad \text{and} \qquad 
\eva_A \colon A^{\vee} \otimes A \to \one. 
\end{equation*} 
such that the compositions 
\begin{align*}
 A \xrightarrow{ \,  \coev_A \otimes \id_{A} \,} & A \otimes A^{\vee} \otimes A \xrightarrow{\, \id_{A} \otimes \ev_A \,} A  , \\ 
A^{\vee} \xrightarrow{ \, \id_{A^{\vee}} \otimes \coev_A \,} & A^{\vee} \otimes A \otimes A^{\vee} \xrightarrow{\, \ev_A \otimes \id_{A^{\vee}} \,} A^{\vee} , 
\end{align*}
are equivalent to the identity morphisms of $A$ and $A^{\svee}$. 
\end{definition} 
The \emph{dual} $A^{\vee}$ and the \emph{coevaluation} and \emph{evaluation morphisms} $\coev_A$ and $\ev_A$ are uniquely determined in the homotopy category of $\ccA$. 

\begin{remark}
\label{remark-product-dualizable}
If $A$ and $B$ are dualizable objects of $\ccA$, then so is $A^{\vee}$ (with dual $A$) and $A \otimes B$ (with dual $A^{\vee} \otimes B^{\vee}$). 
\end{remark} 

\begin{lemma}
\label{lemma-smooth-proper-dualizable}
Let $\cC$ be an $S$-linear category. 
Then $\cC$ is smooth and proper over $S$ if and only if it is dualizable as an object of $\Cat_S$, in which case: 
\begin{enumerate}
\item The dual is given by the opposite category $\cC^{\vee} = \cC^{\op}$. 
\item \label{C-times-C}
There is an equivalence 
\begin{equation*}
\FM_{\cC} \colon \cC^{\vee} \otimes_{\Dperf(S)} \cC \xrightarrow{\sim} \Fun_S(\cC, \cC)
\end{equation*}
induced by the 
functor $\cC^{\op} \times \cC \to \Fun_S(\cC, \cC)$, $(E, F) \mapsto  \cHom_S(E,-) \otimes F$. 
\item Under the equivalence~$\FM_{\cC}$ (composed with transposition of the factors), the coevaluation morphism 
\begin{equation*}
\coev_{\cC} \colon \Dperf(S) \to \cC \otimes_{\Dperf(S)} \cC^{\vee}
\end{equation*}  
is the unique $S$-linear functor taking $\cO_S$ to $\id_{\cC}$. 
\item The evaluation morphism 
\begin{equation*}
\ev_{\cC} \colon \cC^{\vee} \otimes_{\Dperf(S)} \cC \to \Dperf(S) 
\end{equation*} 
is induced by the functor $\cHom_S(-,-) \colon \cC^{\op} \times \cC \to \Dperf(S)$. 
\end{enumerate} 
\end{lemma}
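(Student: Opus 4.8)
The statement is a compendium of standard facts about dualizable objects in the symmetric monoidal $\infty$-category $\Cat_S = \Mod_{\Dperf(S)}(\Cat_{\mathrm{st}})$; the strategy is to reduce everything to the corresponding statement over a point (i.e.\ for $\bk$-linear categories, which is essentially Lurie's theory of compactly generated/dualizable presentable categories and its small counterpart) and then to propagate it along the $\Dperf(S)$-linear structure. Concretely, I would proceed as follows.

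\emph{Step 1: Smooth and proper $\Rightarrow$ dualizable, with $\cC^\vee = \cC^{\op}$.} Pass to ind-completions: $\Ind(\cC)$ is a presentable $\Dperf(S)$-linear category, dualizable in $\Pr^{\mathrm{L}}_{\Dperf(S)}$ with dual $\Ind(\cC)^\vee \simeq \Fun^{\mathrm{L}}_S(\Ind(\cC), \Dqc(S))$, and the latter is identified with $\Ind(\cC^{\op})$ precisely because $\cC$ is proper — properness says $\cHom_S(E,F) \in \Dperf(S)$, so the functor $\cHom_S(-,-)\colon \cC^{\op}\times\cC \to \Dperf(S)$ lands in perfect complexes and serves as (the generator of) an evaluation. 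Smoothness says $\id_{\Ind(\cC)}$ is compact, equivalently the object of $\Ind(\cC)\otimes_{\Dperf(S)}\Ind(\cC)^\vee$ corresponding to $\id$ under $\FM$ is compact, which is exactly what produces a coevaluation $\coev_\cC$ landing in $\cC\otimes_{\Dperf(S)}\cC^{\op}$ rather than merely the ind-completed tensor product. Dualizability of $\cC$ in $\Cat_S$ then follows because $\cC \mapsto \Ind(\cC)$ is a symmetric monoidal fully faithful embedding $\Cat_S^{\mathrm{idem}} \hookrightarrow \Pr^{\mathrm{L}}_{\Dperf(S)}$ whose image consists of compactly generated categories, and duality data in the target restricting to compact objects descends. (For the converse — dualizable $\Rightarrow$ smooth and proper — run the same argument backwards: the triangle identities force $\coev_\cC$ to be $\id_\cC$ under $\FM$ and force that element to be compact, giving smoothness, and force $\ev_\cC$ to factor through $\Dperf(S)$, giving properness.) Much of this is already available in \cite{NCHPD} (the statement is essentially a repackaging of their results), so in the write-up I would cite \cite[\S4]{NCHPD} for the equivalence ``smooth + proper $\Leftrightarrow$ dualizable'' and the identification $\cC^\vee\simeq\cC^{\op}$, and concentrate on items (2)–(4).

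\emph{Step 2: The Fourier–Mukai equivalence (item (2)).} The functor $\cC^{\op}\times\cC \to \Fun_S(\cC,\cC)$, $(E,F)\mapsto \cHom_S(E,-)\otimes F$, is $\Dperf(S)$-bilinear, hence factors through $\FM_\cC\colon \cC^{\op}\otimes_{\Dperf(S)}\cC \to \Fun_S(\cC,\cC)$. To see it is an equivalence, use that for dualizable $A$ in any symmetric monoidal $\infty$-category one has the canonical equivalence $A^\vee\otimes A \xrightarrow{\sim} \underline{\mathrm{End}}(A)$ (internal endomorphism object), which here reads $\cC^{\op}\otimes_{\Dperf(S)}\cC \xrightarrow{\sim} \Fun_S(\cC,\cC)$ since $\cC^\vee=\cC^{\op}$ and $\Fun_S$ is the internal hom in $\Cat_S$; then one checks this abstract equivalence is computed by the stated formula by testing on the generating objects $\cHom_S(E,-)\otimes F$, i.e.\ tracing through the definitions of $\coev$ and $\ev$ from Step 1.

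\emph{Steps 3–4: Identifying $\coev_\cC$ and $\ev_\cC$ (items (3), (4)).} For (4): by construction in Step 1 the evaluation was built from the pairing $\cHom_S(-,-)\colon \cC^{\op}\times\cC\to\Dperf(S)$, so this is essentially definitional once one knows $\cC^\vee=\cC^{\op}$ — the only thing to verify is compatibility of this pairing with the $\Dperf(S)$-action, which follows from the defining adjunction \eqref{equation-cHom}. For (3): under $\FM_\cC$ (composed with the symmetry $\cC\otimes_{\Dperf(S)}\cC^{\op}\simeq\cC^{\op}\otimes_{\Dperf(S)}\cC$), the coevaluation of a dualizable object is always the morphism $\one \to \underline{\mathrm{End}}(A)$ picking out $\id_A$; uniqueness of such an $S$-linear functor $\Dperf(S)\to\Fun_S(\cC,\cC)$ sending $\cO_S$ to $\id_\cC$ follows because an $S$-linear functor out of the unit $\Dperf(S)$ is determined by the image of $\cO_S$. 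The one genuine verification is the triangle identity (Definition~\ref{definition-dualizable}) relating this $\coev_\cC$ and the $\ev_\cC$ of Step 2–4: unwinding $\FM_\cC$, it becomes the statement that for $E,F\in\cC$ the composite $F \to \cHom_S(E,E)\otimes F \to \cHom_S(E, \cHom_S(E,E)\otimes F)$-type expression collapses to the identity, which is a bookkeeping exercise with the counit of \eqref{equation-cHom}.

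\emph{Main obstacle.} The conceptual content (``smooth proper $=$ dualizable'', $\cC^\vee=\cC^{\op}$) is already in \cite{NCHPD}, so the real work — and the place where a careful write-up must not cut corners — is item (3): pinning down $\coev_\cC$ explicitly and checking that the explicit $\coev_\cC$, $\ev_\cC$, and $\FM_\cC$ satisfy the triangle identities, i.e.\ that the abstract duality datum really is the concrete Fourier–Mukai one. This is where sign/coherence-type subtleties (the symmetry isomorphism swapping the two tensor factors, the distinction between $\cHom_S(E,-)\otimes F$ and $F\otimes\cHom_S(E,-)$) can bite, and it is the step I would budget the most care for; everything else is either a citation or a direct unwinding of the adjunction \eqref{equation-cHom}.
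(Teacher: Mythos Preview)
Your proposal is correct and aligns with the paper's approach: the paper's proof is simply ``The first statement is \cite[Lemma 4.8]{NCHPD}, and the rest follows from the discussion there.'' You have sketched in some detail the content of that reference, which is more than the paper does, but your outline of the argument (ind-completion, dualizability in the presentable setting, descent to compact objects via smoothness and properness, and the abstract identification $A^\vee\otimes A\simeq\underline{\mathrm{End}}(A)$) is exactly what underlies \cite[\S4]{NCHPD}.
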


\begin{proof}
The first statement is \cite[Lemma 4.8]{NCHPD}, and the rest follows from the discussion there. 
\end{proof} 

\begin{example}
\label{example-duality-data-scheme}
When $\cC = \Dperf(X)$ for $X \to S$ a smooth proper morphism of schemes, 
we can describe~$\FM_{\cC}$, $\coev_{\cC}$, and $\ev_{\cC}$ more explicitly as follows. Namely, Example~\ref{example-geometric-lincat} combined with duality $(-)^{\vee} = \cHom_X(-, \cO_X) \colon \Dperf(X)^{\op} \simeq \Dperf(X)$ gives an equivalence
\begin{equation*}
\Dperf(X)^{\op} \otimes_{\Dperf(S)} \Dperf(X) \simeq \Dperf( X \times_S X), 
\end{equation*}  
under which~$\FM_{\cC}$ is identified with the functor of taking Fourier--Mukai transforms 
\begin{equation*}
\Dperf( X \times_S X) \to \Fun_S(\Dperf(X), \Dperf(X)) , \quad K \mapsto \pr_{2*}(\pr_1^*(-) \otimes K) , 
\end{equation*}
and $\coev_{\cC}$ and $\ev_{\cC}$ are identified with the functors 
\begin{equation*}
\Delta_* \circ p^* \colon \Dperf(S) \to \Dperf( X \times_S X) \quad \text{and} \quad 
p_* \circ \Delta^* \colon \Dperf( X \times_S X) \to \Dperf(S) 
\end{equation*} 
where $p \colon X \times_S X \to S$ is the projection and $\Delta \colon X \to X \times_S X$ is the diagonal. 
\end{example} 

\begin{definition}[Kernel functors]
\label{definition-kernel-functors} 
Let $\cC$ and $\cD$ be $S$-linear categories, with $\cC$ proper over~$S$. 
Consider the functor
\begin{equation*}
\label{C-to-D-kernel}
\FM_{\cC \shortrightarrow \cD} \colon \cC^{\op} \otimes_{\Dperf(S)} \cD \to \Fun_S(\cC, \cD)
\end{equation*}
induced by the functor $\cC^{\op} \times \cD \to \Fun_S(\cC, \cD)$, $(E, F) \mapsto \cHom_S(E,-) \otimes F$. 
We call an object $K \in \cC^{\op} \otimes_{\Dperf(S)} \cD$ a \emph{Fourier--Mukai kernel} and write $\Phi_K = \FM_{\cC\shortrightarrow\cD}(K) \colon \cC \to \cD$ for the associated \emph{Fourier--Mukai functor}. 
\end{definition} 

\begin{remark}
The functor $\Phi_K$ can alternatively be described as the composition 
\begin{equation*}
\Phi_K \colon \cC  
\xrightarrow{ \,  \id_{\cC} \otimes K \,} \cC \otimes_{\Dperf(S)} \cC^{\op} \otimes_{\Dperf(S)} \cD \simeq 
\cC^{\op} \otimes_{\Dperf(S)} \cC \otimes_{\Dperf(S)} \cD 
\xrightarrow{\, \ev_{\cC} \otimes \id_{\cD} \,} \cD
\end{equation*} 
where the middle equivalence is given by transposition of the first two factors, and 
$\ev_{\cC}$ is as in Lemma~\ref{lemma-smooth-proper-dualizable} (which is well-defined since $\cC$ is proper over $S$). 
\end{remark} 

The claim of Lemma~\ref{lemma-smooth-proper-dualizable}\eqref{C-times-C} admits the following amplification. 

\begin{lemma}
\label{lemma-kernels}
Let $\cC$ and $\cD$ be $S$-linear categories, with $\cC$ smooth and proper over $S$. 
Then the functor $\FM_{\cC \shortrightarrow \cD}$ is an equivalence. 
Explicitly, the inverse equivalence sends $\Phi \in \Fun_S(\cC, \cD)$ to the image $K_{\Phi}$ of $\cO_S$ under the composition  
\begin{equation*}
\Dperf(S) \xrightarrow{\, \coev_{\cC} \,} 
\cC \otimes_{\Dperf(S)} \cC^{\vee} \xrightarrow{\, \Phi \otimes \id_{\cC^{\vee}} \, } 
\cD \otimes_{\Dperf(S)} \cC^{\vee} \simeq
\cC^{\vee} \otimes_{\Dperf(S)} \cD. 
\end{equation*} 
\end{lemma}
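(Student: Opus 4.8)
The plan is to deduce Lemma~\ref{lemma-kernels} from the dualizability of $\cC$ established in Lemma~\ref{lemma-smooth-proper-dualizable}. Recall the general categorical fact: if $A$ is a dualizable object of a symmetric monoidal $\infty$-category $\ccA$, then for any object $B$ the map
\begin{equation*}
A^{\vee} \otimes B \to \underline{\Hom}(A, B), \qquad \phi \otimes b \mapsto \bigl(a \mapsto \ev_A(\phi \otimes a)\cdot b\bigr),
\end{equation*}
is an equivalence, with inverse given by $f \mapsto (\id_{A^{\vee}} \otimes f)\circ(\coev_A \otimes \id_{\text{?}})$ in the appropriate order. In our situation $\ccA = \Cat_S$, $A = \cC$, $B = \cD$, the internal Hom is $\Fun_S(-,-)$, and the duality data $\cC^{\vee} = \cC^{\op}$, $\coev_{\cC}$, $\ev_{\cC}$ are as spelled out in Lemma~\ref{lemma-smooth-proper-dualizable}. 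So the first step is simply to observe that $\FM_{\cC\shortrightarrow\cD}$ is, by its very definition via $(E,F)\mapsto \cHom_S(E,-)\otimes F$, precisely this canonical ``duality pairing'' map for the dualizable object $\cC$; here one uses part (4) of Lemma~\ref{lemma-smooth-proper-dualizable}, which identifies $\ev_{\cC}$ with $\cHom_S(-,-)$, together with the description of the map $\Phi_K$ as $(\ev_\cC \otimes \id_\cD)\circ(\id_\cC \otimes K)$ given in the remark following Definition~\ref{definition-kernel-functors}.

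Next I would write down the candidate inverse and check it is inverse on both sides. The candidate is exactly the composite displayed in the statement:
\begin{equation*}
\Phi \longmapsto K_\Phi := \bigl(\Phi \otimes \id_{\cC^{\vee}}\bigr)\bigl(\coev_\cC(\cO_S)\bigr),
\end{equation*}
after transposing factors to land in $\cC^{\vee}\otimes_{\Dperf(S)}\cD$. Then one verifies $\Phi_{K_\Phi}\simeq \Phi$ and $K_{\Phi_K}\simeq K$ using the two triangle (zig-zag) identities for $\coev_\cC$ and $\ev_\cC$ from Definition~\ref{definition-dualizable}. Concretely, $\Phi_{K_\Phi}$ unwinds to $(\ev_\cC\otimes\id_\cD)\circ(\id_\cC\otimes\Phi\otimes\id_{\cC^\vee})\circ(\id_\cC\otimes(\text{transpose})\circ\coev_\cC)$; pushing $\Phi$ past the ($\Dperf(S)$-linear) structure maps and applying the identity $(\ev_\cC\otimes\id_\cC)\circ(\id_\cC\otimes\coev_\cC)\simeq\id_\cC$ (appropriately transposed) collapses this to $\Phi$. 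The other composite is symmetric, using the second triangle identity. This is the kind of diagram chase that is routine once set up correctly, so I would present it compactly rather than in full.

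The main obstacle — really the only delicate point — is bookkeeping the transpositions of tensor factors and making sure that ``$\cC$ proper'' (needed for $\ev_\cC$ to exist, hence for $\FM_{\cC\shortrightarrow\cD}$ to be defined at all) and ``$\cC$ smooth'' (needed for $\coev_\cC$ to exist, hence for $K_\Phi$ to be defined) are both invoked at the right places; smoothness is what upgrades the one-sided statement into a genuine equivalence. It is cleanest to phrase the whole argument as an instance of the abstract lemma that in a closed symmetric monoidal $\infty$-category, for a dualizable object $A$ the natural map $A^\vee\otimes B\to\underline{\Hom}(A,B)$ is an equivalence (this is well known; see e.g. the treatment of dualizable objects in \cite{HA}), and then just match up the data. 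So the write-up is: (i) recall the abstract statement; (ii) identify $\FM_{\cC\shortrightarrow\cD}$ with the abstract map using Lemma~\ref{lemma-smooth-proper-dualizable}(1),(4) and the remark after Definition~\ref{definition-kernel-functors}; (iii) conclude, reading off the inverse, which by the abstract description is exactly the stated composite through $\coev_\cC$.
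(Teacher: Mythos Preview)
Your proposal is correct and takes essentially the same approach as the paper, which simply says ``This is a formal consequence of the dualizability of $\cC$, cf.\ \cite[Lemma~4.4]{NCHPD}.'' You have spelled out in detail what the paper leaves to the reader and to the reference: that for a dualizable object $A$ the canonical map $A^\vee \otimes B \to \underline{\Hom}(A,B)$ is an equivalence with inverse built from $\coev_A$, and that $\FM_{\cC\shortrightarrow\cD}$ is precisely this map via Lemma~\ref{lemma-smooth-proper-dualizable}.
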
 

\begin{proof}
This is a formal consequence of the dualizability of $\cC$, cf. \cite[Lemma 4.4]{NCHPD}. 
\end{proof} 

\begin{lemma}
\label{lemma-yoneda-kernels} 
Let $\cC$ be a smooth proper $S$-linear category, 
and let $K \in  \cC^{\vee} \otimes_{\Dperf(S)} \cC$ with associated Fourier--Mukai functor $\Phi_K \in \Fun_S(\cC,\cC)$. 
Then for $E, F \in \cC$ there is a functorial equivalence 
\begin{equation*}
\cHom_S(E \boxtimes F, K) \simeq \cHom_S(F, \Phi_K(E)) . 
\end{equation*} 
\end{lemma}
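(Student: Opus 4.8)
The plan is to transport the problem to a mapping-object computation in the functor category $\Fun_S(\cC,\cC)$ using the Fourier--Mukai equivalence, and then to evaluate that mapping object by a density argument (or, if one prefers, an enriched Yoneda argument).

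First I would invoke Lemma~\ref{lemma-smooth-proper-dualizable}\eqref{C-times-C}: since $\cC$ is smooth and proper, $\FM_{\cC}\colon \cC^{\vee}\otimes_{\Dperf(S)}\cC\xrightarrow{\sim}\Fun_S(\cC,\cC)$ is an $S$-linear equivalence, hence induces equivalences on mapping objects. Thus $\cHom_S(E\boxtimes F,K)$ is identified with the mapping object $\cHom_S(\Phi_{E\boxtimes F},\Phi_K)$ computed in $\Fun_S(\cC,\cC)$, where $\Phi_{E\boxtimes F}=\cHom_S(E,-)\otimes F$ by the description of $\FM_{\cC}$. Next I would construct a natural transformation to $\cHom_S(F,\Phi_K(E))$: evaluation at $E$ is an $S$-linear functor $\Fun_S(\cC,\cC)\to\cC$, so induces $\cHom_S(\Phi_{E\boxtimes F},\Phi_K)\to\cHom_S(\Phi_{E\boxtimes F}(E),\Phi_K(E))$, and precomposing with the morphism $F\simeq\cO_S\otimes F\to\cHom_S(E,E)\otimes F=\Phi_{E\boxtimes F}(E)$ classified by $\id_E$ yields a morphism
\[
\cHom_S(E\boxtimes F,K)\ \simeq\ \cHom_S(\Phi_{E\boxtimes F},\Phi_K)\ \longrightarrow\ \cHom_S(F,\Phi_K(E)),
\]
functorial in $E$, $F$ and $K$.

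To see this morphism is an equivalence I would argue by density. Both sides are exact functors of $K\in\cC^{\vee}\otimes_{\Dperf(S)}\cC$ valued in $\Dqc(S)$ — the right-hand side because $K\mapsto\Phi_K$, $\Phi\mapsto\Phi(E)$, and $\cHom_S(F,-)$ are each exact — and $\cC^{\vee}\otimes_{\Dperf(S)}\cC$ is generated under finite colimits and retracts by pure tensors, so it suffices to treat $K=E_0\boxtimes F_0$. There $\Phi_K=\cHom_S(E_0,-)\otimes F_0$, the left-hand side is $\cHom_S(E_0,E)\otimes\cHom_S(F,F_0)$ because the $\Dqc(S)$-enrichment of a tensor product of $S$-linear categories is computed by tensor products of mapping objects on pure tensors, and the right-hand side is $\cHom_S(F,\cHom_S(E_0,E)\otimes F_0)\simeq\cHom_S(E_0,E)\otimes\cHom_S(F,F_0)$ by $\Dperf(S)$-linearity of $\cHom_S(F,-)$; a short check shows the transformation above realizes this identification. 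Alternatively, one can compute $\cHom_S(\cHom_S(E,-)\otimes F,\Phi_K)$ directly as the end $\int_{X\in\cC}\cHom_S(\cHom_S(E,X)\otimes F,\Phi_K(X))$, which by properness of $\cC$ (so $\cHom_S(E,X)$ is dualizable in $\Dqc(S)$) has integrand $\cHom_S(E,X)^{\vee}\otimes\cHom_S(F,\Phi_K(X))$, whose end in $X$ is $\cHom_S(F,\Phi_K(E))$ by the $\Dqc(S)$-enriched co-Yoneda lemma.

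The computation on pure tensors is immediate, so the real content lies in two supporting facts: that mapping objects in $\cC^{\vee}\otimes_{\Dperf(S)}\cC$ are computed by tensor products of mapping objects on pure tensors (equivalently, the end formula for $\cHom_S$ in $\Fun_S(\cC,\cC)$), and the co-Yoneda lemma in the $\Dqc(S)$-enriched $\infty$-categorical setting. Both are standard, but verifying them — or locating a clean reference in the enriched $\infty$-categorical framework used in the paper — is where I expect the main effort to go.
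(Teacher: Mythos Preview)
Your proposal is correct and follows essentially the same route as the paper: both begin by transporting along $\FM_{\cC}$ to identify $\cHom_S(E\boxtimes F,K)\simeq\cHom_S(\cHom_S(E,-)\otimes F,\Phi_K)$ in $\Fun_S(\cC,\cC)$, and both finish with an enriched Yoneda-type identification. The paper's second step is somewhat slicker than either of your options: it applies the further equivalence $\Fun_S(\cC,\cC)\simeq\Fun_S(\cC\otimes_{\Dperf(S)}\cC^{\vee},\Dperf(S))$ coming from dualizability of $\cC$, under which $\cHom_S(E,-)\otimes F$ becomes the representable functor $\cHom_S(E\boxtimes F,-)$ and $\Phi_K$ becomes $(C,D)\mapsto\cHom_S(D,\Phi_K(C))$, so the claim reduces to the usual enriched Yoneda lemma for functors into $\Dperf(S)$. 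This sidesteps the technical checks you flagged (mapping objects in tensor products on pure tensors, end formulas in $\Fun_S(\cC,\cC)$), replacing them with a single invocation of dualizability; your density argument is a fine alternative but requires a little more bookkeeping.
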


\begin{proof} 
Under the equivalence $\FM_{\cC} \colon \cC^{\op} \otimes_{\Dperf(S)} \cC \xrightarrow{\sim} \Fun_S(\cC,\cC)$ of Lemma~\ref{lemma-smooth-proper-dualizable}\eqref{C-times-C}, the kernel $E \boxtimes F$ corresponds to $\cHom_S(E,-) \otimes F$; thus, we find 
\begin{equation*}
\cHom_S(E \boxtimes F, K) \simeq \cHom_S(\cHom_S(E,-) \otimes F, \Phi_K) 
\end{equation*} 
On the other hand, a version of Yoneda's lemma says 
\begin{equation*}
\cHom_S(\cHom_S(E,-) \otimes F, \Phi_K) \simeq 
\cHom_S(F, \Phi_K(E)). 
\end{equation*} 
More precisely, this corresponds to the usual form of  Yoneda's lemma under the equivalence $\Fun_S(\cC,\cC) \simeq \Fun_S(\cC \otimes_{\Dperf(S)} \cC^{\vee}, \Dperf(S))$ deduced from the dualizability of $\cC$, under which 
$\Phi_K$ corresponds to the functor induced by 
\begin{equation*}
\cC \times \cC^{\op} \to \Dperf(S), \quad (C,D) \mapsto \cHom_S(D,\Phi_K(C)), 
\end{equation*}
and $\cHom_S(E,-) \otimes F$ corresponds to the representable functor 
\begin{equation*} 
\cHom_S( E\boxtimes F, -) \colon \cC \otimes_{\Dperf(S)} \cC^{\op} \to \Dperf(S). \qedhere 
\end{equation*} 
\end{proof} 

\subsection{Serre functors} 

\begin{definition}
Let $\cC$ be a proper $S$-linear category. 
A \emph{relative Serre functor} for $\cC$ over $S$ is an $S$-linear autoequivalence 
$\rS_{\cC/S} \colon \cC \to \cC$ such that there are equivalences 
\begin{equation}
\label{equation-relative-serre}
\cHom_S(E, F)^{\vee} \simeq \cHom_{S}(F, \rS_{\cC/S}(E))
\end{equation} 
functorial in $E, F \in \cC$, where $(-)^{\vee} = \cHom_S(-, \cO_S)$ is the derived dual on $\Dperf(S)$. 
\end{definition} 

The source of the terminology is the following example. 

\begin{example}
\label{example-serre-geometric}
Let $f \colon X \to S$ be a proper Gorenstein morphism of noetherian schemes. 
of relative dimension $n$. 
Then $- \otimes \omega_f[n] \colon \Dperf(X) \to \Dperf(X)$ is a relative Serre functor, where $\omega_f$ is the dualizing sheaf (a line bundle by the Gorenstein assumption). 
Indeed, for $E \in \Dperf(X)$, Grothendieck duality gives  
\begin{equation*}
(f_*G)^{\vee} \simeq f_* \cHom_X(G, \omega_f[n]). 
\end{equation*}
Now taking $G = \cHom_X(E, F)$ for $E, F \in \Dperf(X)$ we obtain the required equivalence
\begin{equation*}
\cHom_S(E, F)^{\vee} \simeq \cHom_S(F, E \otimes \omega_f[n]).
\end{equation*} 
More generally, if $\alpha \in \Br(X)$, then 
$- \otimes \omega_f[n] \colon \Dperf(X, \alpha) \to \Dperf(X, \alpha)$ is a relative Serre functor; 
this follows from the untwisted case, because 
for $E, F \in \Dperf(X, \alpha)$ we have 
\begin{equation*}
\cHom_X(E, F) = E^{\vee} \otimes F \in \Dperf(X) . 
\end{equation*}
\end{example} 

When $\cC$ is smooth proper $S$-linear category, a Serre functor always exists and 
is given by a Fourier--Mukai kernel (in the sense of Definition~\ref{definition-kernel-functors}) described explicitly in terms of the duality data of~$\cC$. 
To formulate this, note that the category $\cC^{\vee} \otimes_{\Dperf(S)} \cC$ is smooth and proper over $S$ by Lemma~\ref{lemma-smooth-proper-dualizable} and Remark~\ref{remark-product-dualizable}, and thus by  \cite[Lemma 4.13]{NCHPD} the functor $\ev_{\cC} \colon \cC^{\vee} \otimes_{\Dperf(S)} \cC \to \Dperf(S)$ admits an $S$-linear right adjoint $\ev_{\cC}^!$. 

\begin{lemma}
\label{lemma-S-FM-kernel}
Let $\cC$ be a smooth proper $S$-linear category. 
Then a relative Serre functor $\rS_{\cC/S}$ over $S$ exists, and 
its Fourier--Mukai kernel is given by $\ev_{\cC}^!(\cO_S) \in  \cC^{\vee} \otimes_{\Dperf(S)} \cC$, i.e. 
\begin{equation*}
\rS_{\cC/S} \simeq \Phi_{\ev_{\cC}^!(\cO_S)}. 
\end{equation*} 
\end{lemma}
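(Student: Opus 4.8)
The plan is to \emph{define} the relative Serre functor to be the Fourier--Mukai functor attached to the claimed kernel, and then to verify the two conditions — the duality equivalence~\eqref{equation-relative-serre} and invertibility — that make it a relative Serre functor in the sense of the definition. Concretely, set $K = \ev_{\cC}^!(\cO_S) \in \cC^{\vee} \otimes_{\Dperf(S)} \cC$ and $\rS_{\cC/S} \coloneqq \Phi_K \colon \cC \to \cC$. Being a Fourier--Mukai functor, $\rS_{\cC/S}$ is automatically $S$-linear (Lemma~\ref{lemma-kernels}), so only~\eqref{equation-relative-serre} and the autoequivalence property require work. Since any two relative Serre functors are canonically equivalent (by Yoneda for the $\Dqc(S)$-enrichment), establishing these conditions for $\Phi_K$ will show that $\cC$ admits a relative Serre functor and, in view of this uniqueness, that any such has Fourier--Mukai kernel $\ev_{\cC}^!(\cO_S)$.

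For~\eqref{equation-relative-serre}, I would compute $\cHom_S(F, \rS_{\cC/S}(E))$ for $E, F \in \cC$ through a short chain of functorial equivalences. First, Lemma~\ref{lemma-yoneda-kernels} applied to the kernel $K$ gives $\cHom_S(F, \Phi_K(E)) \simeq \cHom_S(E \boxtimes F, K)$. Next, the adjunction $\ev_{\cC} \dashv \ev_{\cC}^!$ of $S$-linear functors, read off on mapping objects, gives $\cHom_S(E \boxtimes F, \ev_{\cC}^!(\cO_S)) \simeq \cHom_S(\ev_{\cC}(E \boxtimes F), \cO_S)$. Finally, Lemma~\ref{lemma-smooth-proper-dualizable} identifies $\ev_{\cC}$ with the functor induced by $\cHom_S(-,-)$, so $\ev_{\cC}(E \boxtimes F) \simeq \cHom_S(E, F)$, while $\cHom_S(-, \cO_S) = (-)^{\vee}$. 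Concatenating yields $\cHom_S(F, \rS_{\cC/S}(E)) \simeq \cHom_S(E,F)^{\vee}$ functorially in $E$ and $F$, which is precisely~\eqref{equation-relative-serre}.

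It then remains to see that $\rS_{\cC/S}$ is an autoequivalence — the standard fact that a functor realizing Serre duality is automatically invertible. Since $\cC$ is proper, $\cHom_S(E,F) \in \Dperf(S)$ is reflexive, so applying the duality just proved twice gives a functorial equivalence
\begin{equation*}
\cHom_S(\rS_{\cC/S}(E),\, \rS_{\cC/S}(F)) \;\simeq\; \cHom_S(F, \rS_{\cC/S}(E))^{\vee} \;\simeq\; \cHom_S(E,F)^{\vee\vee} \;\simeq\; \cHom_S(E,F),
\end{equation*}
compatible with composition, so $\rS_{\cC/S}$ is fully faithful. Running the previous paragraph for the smooth proper category $\cC^{\op}$ (smooth and proper by Lemma~\ref{lemma-smooth-proper-dualizable} and Remark~\ref{remark-product-dualizable}) produces a fully faithful endofunctor $T$ of $\cC$ together with a functorial equivalence $\cHom_S(T(F), E) \simeq \cHom_S(E,F)^{\vee} \simeq \cHom_S(F, \rS_{\cC/S}(E))$, i.e.\ an adjunction $T \dashv \rS_{\cC/S}$ in which both functors are fully faithful; hence its unit $\id_{\cC} \to \rS_{\cC/S} \circ T$ and counit $T \circ \rS_{\cC/S} \to \id_{\cC}$ are equivalences, and $\rS_{\cC/S}$ is an autoequivalence with quasi-inverse $T$. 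This completes the verification that $\rS_{\cC/S} = \Phi_{\ev_{\cC}^!(\cO_S)}$ is a relative Serre functor.

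The mathematical content is light; the main obstacle is bookkeeping inside the dualizability formalism — keeping the two tensor factors of $\cC^{\vee} \otimes_{\Dperf(S)} \cC$ straight, translating the abstract adjunctions and the equivalences of Lemmas~\ref{lemma-smooth-proper-dualizable}, \ref{lemma-kernels}, and~\ref{lemma-yoneda-kernels} into honest statements about $\cHom_S$-objects, and handling opposite categories in the invertibility step. In particular, one must check that the chain of equivalences in the second paragraph is natural in both $E$ and $F$, so that it defines an equivalence of $\Dperf(S)$-valued bifunctors; this naturality is exactly what licenses the Yoneda and adjunction arguments used afterwards.
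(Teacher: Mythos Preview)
Your proof is correct and, for the Serre duality identity~\eqref{equation-relative-serre}, follows exactly the paper's argument: the chain $\cHom_S(F,\Phi_K(E)) \simeq \cHom_S(E \boxtimes F, K) \simeq \cHom_S(\ev_{\cC}(E\boxtimes F),\cO_S) \simeq \cHom_S(E,F)^{\vee}$ via Lemma~\ref{lemma-yoneda-kernels}, the $\ev_{\cC} \dashv \ev_{\cC}^!$ adjunction, and the description of $\ev_{\cC}$ from Lemma~\ref{lemma-smooth-proper-dualizable} is precisely what the paper does (in the reverse order). The paper's proof stops there and does not verify that $\Phi_{\ev_{\cC}^!(\cO_S)}$ is an autoequivalence, treating it as standard; your third paragraph supplies that argument explicitly, which is a welcome addition rather than a deviation.
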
 

\begin{proof}
Let $E, F \in \cC$. 
Since $\ev_{\cC}(E \boxtimes F) = \cHom_S(E,F)$, we find 
\begin{equation*}
\cHom_S(E,F)^{\vee} \simeq \cHom_S(E \boxtimes F, \ev^!_{\cC}(\cO_S)) .  
\end{equation*}
On the other hand, by Lemma~\ref{lemma-yoneda-kernels} 
we have 
\begin{equation*}
\cHom_S(E \boxtimes F, \ev^!_{\cC}(\cO_S)) \simeq \cHom_S(F, \Phi_{\ev^!_{\cC}(\cO_S)}(E)) . \qedhere 
\end{equation*} 
\end{proof} 

\begin{lemma}
\label{lemma-bc-Serre}
Let $\cC$ be a smooth proper $S$-linear category. 
Let $T \to S$ be a morphism of perfect derived algebraic stacks.
Then the functor $(\rS_{\cC/S})_T \colon \cC_T \to \cC_T$ obtained by base change from the relative Serre functor for $\cC$ over $S$ 
is a relative Serre functor for $\cC_T$ over $T$. 
\end{lemma}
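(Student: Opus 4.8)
The plan is to reduce the statement to the Fourier--Mukai kernel description of the relative Serre functor provided by Lemma~\ref{lemma-S-FM-kernel}, and then to exploit that base change $(-)_T \colon \Cat_S \to \Cat_T$, $\cC \mapsto \cC_T$, is symmetric monoidal and hence preserves all structure attached to a dualizable object. First I would record that $\cC_T$ is again smooth and proper over $T$ by Lemma~\ref{lemma-smooth-proper-bc}, so that Lemma~\ref{lemma-S-FM-kernel} applied over $T$ guarantees a relative Serre functor $\rS_{\cC_T/T}$ exists and equals the Fourier--Mukai functor with kernel $\ev_{\cC_T}^!(\cO_T) \in \cC_T^\vee \otimes_{\Dperf(T)} \cC_T$. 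It therefore suffices to construct an equivalence of $T$-linear functors $(\rS_{\cC/S})_T \simeq \rS_{\cC_T/T}$, which by Lemma~\ref{lemma-S-FM-kernel} over $S$ reduces to identifying the base-changed kernel $\bigl(\ev_{\cC}^!(\cO_S)\bigr)_T$ with $\ev_{\cC_T}^!(\cO_T)$, together with the compatibility of $(-)_T$ with the Fourier--Mukai construction.

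The core of the argument is then three formal compatibilities of $(-)_T$. First, since $(-)_T$ is symmetric monoidal and $\cC$ is dualizable (Lemma~\ref{lemma-smooth-proper-dualizable}), under the canonical identification $(\cC^\vee)_T \simeq (\cC_T)^\vee$ the functor $(\ev_{\cC})_T$ is identified with $\ev_{\cC_T}$, and likewise $(\coev_{\cC})_T$ with $\coev_{\cC_T}$. Second, the right adjoint $\ev_{\cC}^!$ is $S$-linear by \cite[Lemma 4.13]{NCHPD}, so the unit and counit witnessing $\ev_{\cC} \dashv \ev_{\cC}^!$ are $2$-morphisms in $\Cat_S$; applying $(-)_T$ to them and to the triangle identities exhibits $(\ev_{\cC}^!)_T$ as a right adjoint to $(\ev_{\cC})_T$, whence by the first point and uniqueness of adjoints $(\ev_{\cC}^!)_T \simeq \ev_{\cC_T}^!$, and evaluating on $(\cO_S)_T \simeq \cO_T$ together with functoriality of $(-)_T$ gives $\bigl(\ev_{\cC}^!(\cO_S)\bigr)_T \simeq \ev_{\cC_T}^!(\cO_T)$. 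Third, using the composite description of $\Phi_K$ in terms of $\id_{\cC}\otimes K$, a transposition equivalence, and $\ev_{\cC}\otimes\id$ given after Definition~\ref{definition-kernel-functors}, applying $(-)_T$ and invoking the first point yields $(\Phi_K)_T \simeq \Phi_{K_T}$ for every kernel $K$.

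Combining these,
\[
(\rS_{\cC/S})_T \;\simeq\; \bigl(\Phi_{\ev_{\cC}^!(\cO_S)}\bigr)_T \;\simeq\; \Phi_{(\ev_{\cC}^!(\cO_S))_T} \;\simeq\; \Phi_{\ev_{\cC_T}^!(\cO_T)} \;\simeq\; \rS_{\cC_T/T},
\]
where the four equivalences use, in order, Lemma~\ref{lemma-S-FM-kernel} over $S$, the third compatibility, the second compatibility, and Lemma~\ref{lemma-S-FM-kernel} over $T$. Since the right-hand side satisfies the functorial equivalence~\eqref{equation-relative-serre} over $T$ by construction, so does $(\rS_{\cC/S})_T$, which is exactly the claim.

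I expect the only delicate point to be the second compatibility: in order to base change the adjunction $\ev_{\cC}\dashv\ev_{\cC}^!$ along the monoidal functor $(-)_T$ one genuinely needs that $\ev_{\cC}^!$ is $S$-linear and not merely a plain right adjoint, which is precisely the content of \cite[Lemma 4.13]{NCHPD}; the rest is bookkeeping with symmetric monoidality. A more pedestrian alternative would be to establish $\cHom_S(E,F)_T \simeq \cHom_T(E_T, F_T)$ directly for proper $\cC$ and transport the defining equivalence~\eqref{equation-relative-serre}, but promoting the resulting objectwise equivalence to a functorial one on all of $\cC_T$ --- not just on the essential image of $\cC \to \cC_T$ --- is handled most cleanly by the kernel formalism above.
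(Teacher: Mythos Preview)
Your proof is correct and follows essentially the same route as the paper: both reduce to Lemma~\ref{lemma-S-FM-kernel}, use that $(-)_T$ is symmetric monoidal to identify $(\ev_{\cC})_T$ with $\ev_{\cC_T}$, show that the right adjoint $\ev_{\cC}^!$ base changes (the paper cites \cite[Lemma~2.12]{NCHPD} directly, whereas you derive this by hand from the $S$-linearity of $\ev_{\cC}^!$ --- same content), and observe that $(\Phi_K)_T \simeq \Phi_{K_T}$.
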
 

\begin{proof}
Note that $\cC_T$ is smooth and proper over $S$ by Lemma~\ref{lemma-smooth-proper-bc}. 
By Lemma~\ref{lemma-S-FM-kernel}, the relative Serre functor $\rS_{\cC_T/T}$ is given by the kernel 
\begin{equation*}
\ev_{\cC_T}^!(\cO_T) \in \cC_T^{\vee} \otimes_{\Dperf(T)} \cC_T. 
\end{equation*} 
The duality data for $\cC_T$ (realizing it as a dualizable object of $\Cat_T$) are obtained by base change from those of $\cC$. 
In particular, there is a $T$-linear equivalence
\begin{equation}
\label{bc-C-C}
 \left(\cC^{\vee} \otimes_{\Dperf(S)} \cC \right)_T \simeq \cC_T^{\vee} \otimes_{\Dperf(T)} \cC_T 
\end{equation} 
under which $(\ev_{\cC})_T$ corresponds to $\ev_{\cC_T}$. 
The right adjoint of $(\ev_{\cC})_T$ is obtained by base change to $T$ from the right adjoint $\ev_{\cC}^!$ \cite[Lemma 2.12]{NCHPD}, 
and thus $(\ev_{\cC})_T^!(\cO_T) = (\ev_{\cC}^!(\cO_S))_T$ corresponds to $\ev_{\cC_T}^!(\cO_T)$ under the equivalence~\eqref{bc-C-C}. 
Here, $(\ev_{\cC}^!(\cO_S))_T$ denotes the image of $\ev_{\cC}^!(\cO_S)$ under 
$\cC^{\vee} \otimes_{\Dperf(S)} \cC \to \left(\cC^{\vee} \otimes_{\Dperf(S)} \cC \right)_T$. 
Unwinding the definitions we find that in general, for any kernel $K \in \cC^{\vee} \otimes_{\Dperf(S)} \cC$, its image $K_T \in \left(\cC^{\vee} \otimes_{\Dperf(S)} \cC \right)_T$ corresponds under the equivalence~\eqref{bc-C-C} to the kernel for the base changed functor $(\Phi_{K})_T \colon \cC_T \to \cC_T$. 
Since by Lemma~\ref{lemma-smooth-proper-bc} we have $\Phi_{\ev_{\cC}^!(\cO_S)} \simeq \rS_{\cC/S}$, 
we conclude $\rS_{\cC_T/T} \simeq (\rS_{\cC/S})_T$.  
\end{proof} 

\subsection{Presentable linear categories} 
\label{section-presentable-lin-cat}
It is sometimes\footnote{In this paper, we will only need such categories for our discussion of Hochschild homology in \S\ref{section-hochschild-homology}.} useful to consider a ``large'' version of linear categories, which corresponds to working over $\Dqc(S)$ instead of $\Dperf(S)$. 
Namely, $\Dqc(S)$ is a commutative algebra object in the category $\PrCat_{\mathrm{st}}$ of presentable stable $\infty$-categories, with morphisms concontinuous functors (i.e.\ those preserving small colimits). A \emph{presentable $S$-linear category} is a $\Dqc(S)$-module object of $\PrCat_{\mathrm{st}}$, and the collection of all such categories forms an $\infty$-category $\PrCat_S = \Mod_{\Dqc(S)}(\PrCat_{\mathrm{st}})$. 

The formalism of presentable $S$-linear categories is parallel to that of $S$-linear categories described in \S\ref{section-small-linear-cat}, which for clarity we sometimes refer to as ``small $S$-linear categories''. 
The category $\PrCat_S$ admits a symmetric monoidal structure 
with unit $\Dqc(S)$ and tensor product denoted 
\begin{equation*}
\cC \otimes_{\Dqc(S)} \cD . 
\end{equation*} 
A morphism $\cC \to \cD$ in $\PrCat_S$ is called a \emph{presentable $S$-linear functor}; the collection of all such forms a presentable $S$-linear category $\Fun_S(\cC, \cD)$, which is the internal mapping object in $\PrCat_S$. 
If $\cC$ is a presentable $S$-linear category and $E, F \in \cC$, then there is a mapping object $\cHom_S(E, F) \in \Dqc(S)$ characterized by equivalences as in \eqref{equation-cHom} where we allow $G \in \Dqc(S)$. 

For $\cC \in \Cat_S$ there is a category $\Ind(\cC) \in \PrCat_S$, called its \emph{ind-completion}, which is roughly obtained by freely adjoining all filtered colimits to $\cC$. This gives a symmetric monoidal functor 
\begin{equation*}
\Ind \colon \Cat_S \to \PrCat_S. 
\end{equation*} 
In fact, $\Ind$ factors through an equivalence onto the category $\PrCat_S^{\omega}$ of compactly generated presentable $S$-linear categories, with morphisms the cocontinuous $S$-linear functors which preserve compact objects; the inverse equivalence $(-)^c \colon \PrCat_S^{\omega} \to \Cat_S$ is given by passage to compact objects. 

\begin{remark}
In the above terms, conditions \eqref{das-2} and \eqref{das-3} in Definition~\ref{definition-perfect-stack} of a perfect derived algebraic stack $X$ are together equivalent to the condition that the canonical morphism 
\begin{equation*}
\Ind(\Dperf(X)) \to \Dqc(X) 
\end{equation*} 
is an equivalence. 
\end{remark} 

There is also notion of a presentable $S$-linear semiorthogonal decompositions, which is compatible with $S$-linear decompositions of small $S$-linear categories via ind-completion \cite[Lemma 3.12]{NCHPD}. 

\begin{lemma}[{\cite[Theorem 5.4]{bergh-BS}}]
\label{lemma-mun-gerbe-Dqc}
Let $X$ be a scheme, let $\alpha \in \Br(X)[n]$ with $n$ invertible on $X$, and let $\pi \colon \cX \to X$ a $\bmu_n$-gerbe of class~$\alpha$. 
Then there is a presentable $X$-linear orthogonal decomposition 
\begin{equation*}
\Dqc(\cX) = \llangle \Dqc^k(\cX) \rrangle_{k \in \bZ/n}. 
\end{equation*} 
\end{lemma}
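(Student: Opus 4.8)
The plan is to deduce the presentable statement from the small-category version, Lemma~\ref{lemma-mun-gerbe}, by passing to ind-completions. The key point is that since $n$ is invertible on $X$, the $\bmu_n$-gerbe $\cX$ is a perfect derived algebraic stack (Example~\ref{example-gerbes-perfect}), so the canonical functor $\Ind(\Dperf(\cX)) \to \Dqc(\cX)$ is an equivalence. Thus I would begin by applying the ind-completion functor $\Ind \colon \Cat_X \to \PrCat_X$ to the $X$-linear orthogonal decomposition $\Dperf(\cX) = \llangle \Dperf^k(\cX) \rrangle_{k \in \bZ/n}$ of Lemma~\ref{lemma-mun-gerbe}. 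By the compatibility of small and presentable semiorthogonal decompositions under ind-completion \cite[Lemma 3.12]{NCHPD}, this produces a presentable $X$-linear orthogonal decomposition
\begin{equation*}
\Dqc(\cX) \simeq \Ind(\Dperf(\cX)) = \llangle \Ind(\Dperf^k(\cX)) \rrangle_{k \in \bZ/n},
\end{equation*}
where each $\Ind(\Dperf^k(\cX))$ is regarded as the colimit-closure of $\Dperf^k(\cX)$ inside $\Dqc(\cX)$.

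The remaining task is to identify the component $\Ind(\Dperf^k(\cX))$ with $\Dqc^k(\cX)$ as subcategories of $\Dqc(\cX)$. One inclusion is immediate: $\Dperf^k(\cX) \subset \Dqc^k(\cX)$, and $\Dqc^k(\cX)$ is closed under colimits in $\Dqc(\cX)$ (being cut out by a condition on cohomology sheaves that is stable under colimits), so $\Ind(\Dperf^k(\cX)) \subseteq \Dqc^k(\cX)$. For the reverse inclusion, I would show that the projection functors of the weight decomposition of $\Dqc(\cX)$ preserve perfect complexes; this can be checked étale-locally on $X$, where the gerbe trivializes as $\cX \times_X U \simeq U \times B\bmu_n$ and — since $\bmu_n$ is linearly reductive — the weight decomposition becomes the $\bZ/n$-grading $\Dqc(U \times B\bmu_n) \simeq \bigoplus_{k \in \bZ/n} \Dqc(U)$, under which perfectness is visibly checked componentwise. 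It follows that $\Dqc^k(\cX)$ is compactly generated by $\Dperf^k(\cX)$, hence $\Dqc^k(\cX) \simeq \Ind(\Dperf^k(\cX))$, which completes the argument.

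I do not expect a genuine obstacle here: the statement is essentially \cite[Theorem 5.4]{bergh-BS} and could in principle be quoted directly, and the deduction above is purely formal manipulation of compactly generated linear categories. The only mildly delicate point is the last identification of the weight components, which reduces to the observation that the weight projections are ``geometric enough'' to preserve perfectness — a local computation on the base $X$ using the linear reductivity of $\bmu_n$.
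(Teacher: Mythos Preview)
The paper does not give its own proof of this lemma; it is stated with a citation to \cite[Theorem 5.4]{bergh-BS}, as you yourself note. Your deduction from Lemma~\ref{lemma-mun-gerbe} via ind-completion is correct and is in fact the argument the paper runs (under the extra hypothesis that $X$ is quasi-compact with affine diagonal) in the proof of the closely related Lemma~\ref{lemma-compact-generation-twisted-D}.

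Two remarks. First, your invocation of Example~\ref{example-gerbes-perfect} to conclude that $\cX$ is perfect requires $X$ to be quasi-compact with affine diagonal, hypotheses not present in the lemma as stated; so strictly speaking your argument proves a slightly weaker statement (which, however, suffices for all applications in the paper). Second, your local computation showing that the weight projections preserve perfect complexes is more than is needed for the reverse inclusion: once you have the orthogonal decomposition $\Dqc(\cX) = \llangle \Ind(\Dperf^k(\cX)) \rrangle_{k}$ with $\Ind(\Dperf^k(\cX)) \subseteq \Dqc^k(\cX)$, the pairwise orthogonality of the $\Dqc^k(\cX)$ (immediate from their definition via inertial weights) forces these inclusions to be equalities. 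This is exactly how the paper concludes in the proof of Lemma~\ref{lemma-compact-generation-twisted-D}, and it avoids the \'etale-local check entirely.
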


\begin{lemma}
\label{lemma-compact-generation-twisted-D}
Let $X$ be a quasi-compact scheme with affine diagonal. 
Then there is an equivalence
\begin{equation*}
\Ind(\Dperf(X, \alpha)) \simeq \Dqc(X, \alpha). 
\end{equation*} 
\end{lemma}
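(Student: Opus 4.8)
The statement amounts to the two assertions that $\Dqc(X,\alpha)$ is compactly generated and that its subcategory of compact objects is precisely $\Dperf(X,\alpha)$, embedded in the evident way; granting these, the natural functor $\Ind(\Dperf(X,\alpha))\to\Dqc(X,\alpha)$ is an equivalence, since any compactly generated stable $\infty$-category is the ind-completion of its subcategory of compact objects. The plan is to prove both assertions at once by exhibiting $\Dperf(X,\alpha)$ and $\Dqc(X,\alpha)$ as the weight-$1$ summands of the respective derived categories of a single gerbe representing $\alpha$, and then exploiting that this gerbe is a perfect derived algebraic stack in the sense of Definition~\ref{definition-perfect-stack}.

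Concretely, choose a $\bmu_n$-gerbe $\pi\colon\cX\to X$ of class $\alpha$, where $n$ is a multiple of $\per(\alpha)$ (one may take $n=\per(\alpha)$ when this is invertible on $X$), so that by definition $\Dperf(X,\alpha)=\Dperf^1(\cX)$ and $\Dqc(X,\alpha)=\Dqc^1(\cX)$. Since $\bmu_n$ is affine and of multiplicative type, the stack $\cX$ has affine diagonal and nice stabilizers, hence is perfect (by Example~\ref{example-gerbes-perfect} when $n$ is invertible, and by the perfectness criterion recalled above in general); in particular $\Dqc(\cX)$ is compactly generated with subcategory of compact objects equal to $\Dperf(\cX)$. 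Now the orthogonal weight decompositions
\begin{equation*}
\Dperf(\cX)=\llangle \Dperf^k(\cX)\rrangle_{k\in\bZ/n}
\qquad\text{and}\qquad
\Dqc(\cX)=\llangle \Dqc^k(\cX)\rrangle_{k\in\bZ/n}
\end{equation*}
of Lemma~\ref{lemma-mun-gerbe} and Lemma~\ref{lemma-mun-gerbe-Dqc} (the decomposition of (quasi-)coherent sheaves into $\bmu_n$-weight spaces is available for any $n$) present $\Dqc^1(\cX)$ as a direct summand of $\Dqc(\cX)$ in $\PrCat_{\mathrm{st}}$; as a retract of a compactly generated category it is itself compactly generated. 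Orthogonality moreover forces mapping spaces and colimits in $\Dqc(\cX)$ to split according to weight, so an object of $\Dqc^1(\cX)$ is compact there if and only if it is compact in $\Dqc(\cX)$; hence the compact objects of $\Dqc(X,\alpha)=\Dqc^1(\cX)$ are exactly $\Dperf(\cX)\cap\Dqc^1(\cX)=\Dperf^1(\cX)=\Dperf(X,\alpha)$. This establishes the two assertions, and the resulting equivalence is compatible with the $X$-linear structures, as all the functors involved are.

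The only step requiring a little care is the deduction, from the orthogonality of $\Dqc(\cX)=\llangle\Dqc^k(\cX)\rrangle$, that $\Dqc^1(\cX)$ is a retract of $\Dqc(\cX)$ and that compactness can be tested after the inclusion $\Dqc^1(\cX)\hookrightarrow\Dqc(\cX)$; this is immediate from the finiteness of the $\bmu_n$-weight decomposition, although one could equally work with a $\bG_m$-gerbe and its infinite but still orthogonal $\bZ$-graded weight decomposition. I do not expect a genuine obstacle here: the substance of the lemma is entirely the perfectness of the gerbe together with routine bookkeeping with weight decompositions.
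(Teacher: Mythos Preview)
Your proof is correct and follows essentially the same approach as the paper: both use the perfectness of the $\bmu_n$-gerbe $\cX$ together with the weight decompositions of $\Dperf(\cX)$ and $\Dqc(\cX)$. The paper's execution is marginally slicker---it directly ind-completes the orthogonal decomposition of $\Dperf(\cX)$ (using \cite[Lemma 3.12]{NCHPD}) to obtain $\Dqc(\cX)=\llangle \Ind(\Dperf^k(\cX))\rrangle_{k\in\bZ/n}$ and then matches this with the decomposition $\Dqc(\cX)=\llangle\Dqc^k(\cX)\rrangle$, avoiding the separate verification that compactness can be tested after the inclusion---but this is a cosmetic difference.
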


\begin{proof}
By Example~\ref{example-gerbes-perfect} the stack $\cX$ is perfect. 
Thus, taking the ind-completion of the decomposition of Lemma~\ref{lemma-mun-gerbe} gives a presentable $X$-linear orthogonal decomposition 
\begin{equation*}
\Dqc(\cX) = \llangle \Ind(\Dperf^k(\cX)) \rrangle_{k \in \bZ/n}. 
\end{equation*} 
Since the fully faithful functor $\Ind(\Dperf^k(\cX)) \to \Dqc(\cX)$ has image contained in $\Dqc^k(\cX)$, it therefore must factor via an equivalence 
\begin{equation*}
\Ind(\Dperf^k(\cX)) \simeq \Dqc^k(\cX). 
\end{equation*} 
For $k = 1$ we obtain the claim of the lemma. 
\end{proof}

In the case of small $S$-linear categories, dualizability is a strong finiteness condition, equivalent to being smooth and proper. 
In the presentable case, however, dualizability is automatic for most categories of interest. 

\begin{lemma}[{\cite[Lemma 4.3]{NCHPD}}]
\label{lemma-cg-dualizable}
Let $\cC$ be a compactly generated presentable $S$-linear category. 
Then $\cC$ is dualizable as an object of $\PrCat_S$, with dual 
\begin{equation*}
\cC^{\vee} = \Ind((\cC^c)^{\op}). 
\end{equation*} 
Moreover, there is an equivalence
\begin{equation*}
\FM_{\cC} \colon \cC^{\vee} \otimes_{\Dqc(S)} \cC \xrightarrow{\sim} \Fun_S(\cC, \cC)
\end{equation*} 
under which $\coev_{\cC}$ corresponds to the object $\id_{\cC} \in \Fun_S(\cC, \cC)$, and 
$\ev_{\cC}$ is induced by the functor $\cHom_S(-,-) \colon (\cC^c)^{\op} \times \cC^c \to \Dqc(S)$
\end{lemma}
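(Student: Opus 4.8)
The plan is to reduce to the case where $\cC = \Ind(\cC_0)$ with $\cC_0 = \cC^c$ a small idempotent-complete $\Dperf(S)$-linear category (legitimate because $\Ind$ restricts to an equivalence $\Cat_S \xrightarrow{\sim} \PrCat_S^{\omega}$ with inverse $(-)^c$), and then to exhibit the duality datum explicitly in terms of the enriched Hom of $\cC_0$, the only substantive point being the triangle identities. Since $\Ind \colon \Cat_S \to \PrCat_S$ is symmetric monoidal and $\Ind(\Dperf(S)) \simeq \Dqc(S)$, there are natural identifications $\Ind((\cC_0)^{\op}) \otimes_{\Dqc(S)} \Ind(\cC_0) \simeq \Ind((\cC_0)^{\op} \otimes_{\Dperf(S)} \cC_0)$ and $\Ind(\cC_0) \otimes_{\Dqc(S)} \Ind((\cC_0)^{\op}) \simeq \Ind(\cC_0 \otimes_{\Dperf(S)} (\cC_0)^{\op})$. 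Using the first, I would define $\ev_{\cC}$ to be the unique $\Dqc(S)$-linear colimit-preserving extension of $\cHom_S(-,-) \colon (\cC_0)^{\op} \otimes_{\Dperf(S)} \cC_0 \to \Dperf(S) \hookrightarrow \Dqc(S)$; using the second, I would define $\coev_{\cC}$ to be the $\Dqc(S)$-linear functor sending $\cO_S$ to the object of $\Ind(\cC_0 \otimes_{\Dperf(S)} (\cC_0)^{\op})$ which, under the description of Ind-objects as exact $\Dqc(S)$-valued presheaves, corresponds to the enriched Hom. (Note that $\coev_{\cC}(\cO_S)$ is in general a genuine Ind-object --- the ``diagonal'' of $\cC_0 \otimes_{\Dperf(S)} (\cC_0)^{\op}$ --- and is compact precisely when $\cC$ is moreover smooth over $S$; this is why dualizability here is so much weaker than in $\Cat_S$.)

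Next I would verify the triangle identities for this candidate datum. After unwinding the symmetric monoidal coherences, both zig-zag composites are $\Dqc(S)$-linear colimit-preserving endofunctors of $\Ind(\cC_0)$, respectively $\Ind((\cC_0)^{\op})$, so it suffices to check them on $\cC_0$, respectively $(\cC_0)^{\op}$; there the required identity is exactly the co-Yoneda (density) formula --- every object $E \in \cC_0$ is canonically the colimit, weighted by $\cHom_S(-,E)$, of the compact objects mapping to it, equivalently $\id_{\Ind(\cC_0)}$ is the colimit-preserving extension of the Yoneda embedding. This must be run $\Dqc(S)$-enriched, using compatibility of $\cHom_S$ with the $\Dperf(S)$-action and that filtered colimits in $\cC$ are computed as in $\Dqc(S)$; it is the only step with genuine content and I expect it to be the main obstacle. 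The verification is standard and is carried out in \cite[Lemma 4.3]{NCHPD}, building on \cite{HA}, so in the text we simply cite it.

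Finally, granting dualizability, the identification of $\FM_{\cC}$ is formal: for any dualizable object $\cC$ of a closed symmetric monoidal $\infty$-category and any $\cD$, the canonical morphism $\cC^{\vee} \otimes_{\Dqc(S)} \cD \to \Fun_S(\cC, \cD)$ --- adjoint to $(\ev_{\cC} \otimes \id_{\cD})$ precomposed with the transposition $\cC \otimes_{\Dqc(S)} \cC^{\vee} \simeq \cC^{\vee} \otimes_{\Dqc(S)} \cC$ --- is an equivalence, and tracing through $\cC^{\vee} = \Ind((\cC^c)^{\op})$ together with the formula for $\ev_{\cC}$ shows that on the image of $(\cC^c)^{\op} \times \cC^c$ it is given by $(E, F) \mapsto \cHom_S(E, -) \otimes F$, i.e.\ it is $\FM_{\cC}$. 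That $\coev_{\cC}$ corresponds to $\id_{\cC}$ (after the transposition) and that $\ev_{\cC}$ is induced by $\cHom_S(-,-)$ are then immediate from the constructions above and the general relation between the duality data of a dualizable object and its internal endomorphism object.
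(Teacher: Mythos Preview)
The paper does not supply its own proof of this lemma: the statement is simply attributed to \cite[Lemma 4.3]{NCHPD} and no proof environment follows. Your proposal is a correct outline of the standard argument behind that cited result, and you yourself note that in the text one would just cite it; so your approach and the paper's coincide.
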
 

By Lemma~\ref{lemma-cg-dualizable}, $\Dqc(X)$ is dualizable for any perfect derived algebraic stack $X$ over $S$; 
as in Example~\ref{example-duality-data-scheme}, we can explicitly describe the duality data in this geometric case. 

\begin{example}
\label{example-duality-data-perfectstack}
Let $\cC = \Dqc(X)$ for $f \colon X \to S$ a morphism of perfect derived algebraic stacks. 
By Example~\ref{example-geometric-lincat} combined with duality $(-)^{\vee} \colon \Dperf(X)^{\op} \simeq \Dperf(X)$, we have an equivalence 
\begin{equation*}
\Dperf(X)^{\op} \otimes_{\Dperf(S)} \Dperf(X) \simeq \Dperf( X \times_S X). 
\end{equation*}  
Passing to ind-completion gives an equivalence 
\begin{equation*}
\cC^{\vee} \otimes_{\Dqc(S)} \cC \simeq \Dqc(X \times_S X) ,
\end{equation*} 
under which 
$\coev_{\cC}$ and $\ev_{\cC}$ are identified with the functors 
\begin{equation*}
\Delta_* \circ f^* \colon \Dqc(S) \to \Dqc( X \times_S X) \quad \text{and} \quad 
f_* \circ \Delta^* \colon \Dqc( X \times_S X) \to \Dqc(S) 
\end{equation*} 
where $\Delta \colon X \to X \times_S X$ is the diagonal 
(cf. \cite[Corollary 4.8]{bzfn}). 
\end{example} 

We also have a twisted variant of Example~\ref{example-duality-data-perfectstack}. 

\begin{example}
\label{lemma-duality-twisted-scheme}
Let $\cC = \Dqc(X, \alpha)$  
where $f \colon X \to S$ is a flat quasi-compact morphism from a quasi-compact scheme with affine diagonal to a perfect scheme 
and $\alpha \in \Br(X)[n]$ with $n$ invertible on~$X$. 
By Lemma~\ref{lemma-tensor-twisted-schemes} (which applies since $X$ is perfect by Example~\ref{example-gerbes-perfect}) combined with duality $(-)^{\vee} \colon \Dperf(X,\alpha)^{\op} \simeq \Dperf(X,-\alpha)$, we have an equivalence 
\begin{equation*}
\Dperf(X, \alpha)^{\op} \otimes_{\Dperf(S)} \Dperf(X,\alpha) \simeq \Dperf( X \times_S X, \pr_2^*(\alpha) - \pr_1^*(\alpha) ). 
\end{equation*}  
Passing to ind-completion and using Lemma~\ref{lemma-compact-generation-twisted-D} (which applies since by our assumptions $X \times_S X$ is quasi-compact with affine diagonal) gives an equivalence 
\begin{equation*}
\cC^{\vee} \otimes_{\Dqc(S)} \cC \simeq \Dqc(X \times_{S} X, \pr_2^*(\alpha) - \pr_1^*(\alpha)),  
\end{equation*} 

We now describe $\coev_{\cC}$ and $\ev_{\cC}$. Note that since $\Delta^*(\pr_2^*(\alpha) - \pr_1^*(\alpha)) = 0$, there are by Remark~\ref{remark:pullback_killing_brauer_class} many choices of pushforward and pullback functors
\begin{align*}
\Dqc(X) \to \Dqc( X \times_S X, \pr_2^*(\alpha) - \pr_1^*(\alpha)) , \\  
\Dqc( X \times_S X, \pr_2^*(\alpha) - \pr_1^*(\alpha)) \to \Dqc(X) ,
\end{align*} 
which could assume the roles played by $\Delta_*$ and $\Delta^*$ in Example~\ref{example-duality-data-perfectstack}. 

In fact, in our particular situation, there is a natural choice. 
Let $\cX$ be a $\bmu_n$-gerbe of Brauer class $\alpha$. There is a \emph{dual gerbe} $\cX^\vee$ of Brauer class $- \alpha$. 
The dual gerbe may be described as the relative moduli stack of $1$-twisted line bundles $L$ for the morphism $\cX \to X$, equipped with trivializations $L^{\otimes n} \simeq \cO$.
The product $\cX^\vee \times_X \cX$ carries a canonical $(1,1)$-twisted line bundle $\cP$ (the universal bundle). Writing
\[
    \pi:\cX^\vee \times_X \cX \to X, \quad \iota\colon \cX^\vee \times_X \cX \to \cX^\vee \times_S \cX
\]
we obtain functors
\begin{align*}
    \Delta_*^{\alpha} &\colon \Dqc(X) \to \Dqc^{(1,1)}(\cX^\vee \times_S \cX), \quad E \mapsto \iota_*(\cP \otimes \pi^*E) \\
    \Delta^*_{\alpha} &\colon \Dqc^{(1,1)}(\cX^\vee \times_S \cX) \to \Dqc(X), \quad F \mapsto \pi_*\left(\iota^*(F) \otimes \cP^\vee\right),
\end{align*}
Note that $\Dqc^{(1,1)}(\cX^\vee \times_S \cX)$ is the twisted derived category $\Dqc(X \times_S X, \pr_2^* \alpha - \pr_1^* \alpha)$.

With this notation, $\coev_{\cC}$ and $\ev_{\cC}$ are identified 
with the functors 
\begin{align*}
\Delta^{\alpha}_* \circ f^* & \colon \Dqc(S) \to \Dqc( X \times_S X, \pr_2^*(\alpha) - \pr_1^*(\alpha)) , \\  
f_* \circ \Delta_\alpha^* & \colon \Dqc( X \times_S X, \pr_2^*(\alpha) - \pr_1^*(\alpha)) \to \Dqc(S) .
\end{align*}  
\end{example} 

As in the case of small linear categories, when $\cC$ is a dualizable presentable $S$-linear category, we can describe functors out of $\cC$ in terms of kernels. 
\begin{lemma}
\label{lemma-kernel-presentable}
Let $\cC, \cD$ be presentable $S$-linear categories, with $\cC$ dualizable. 
Then there is an equivalence
\begin{equation*}
\FM_{\cC \to \cD} \colon \cC^{\vee} \otimes_{\Dqc(S)} \cD \xrightarrow{\, \sim \,} \Fun_S(\cC, \cD)
\end{equation*} 
which sends $K \in  \cC^{\vee} \otimes_{\Dqc(S)}\cD$ to the composition 
\begin{equation*}
\Phi_K \colon \cC  
\xrightarrow{ \,  \id_{\cC} \otimes K \,} \cC \otimes_{\Dqc(S)} \cC^{\vee} \otimes_{\Dqc(S)} \cD \simeq 
\cC^{\vee} \otimes_{\Dqc(S)} \cC \otimes_{\Dqc(S)} \cD 
\xrightarrow{\, \ev_{\cC} \otimes \id_{\cD} \,} \cD,
\end{equation*} 
and whose inverse equivalence sends $\Phi \in \Fun_S(\cC, \cD)$ to the image $K_{\Phi}$ of $\cO_S$ under the composition  
\begin{equation*}
\Dqc(S) \xrightarrow{\, \coev_{\cC} \,} 
\cC \otimes_{\Dqc(S)} \cC^{\vee} \xrightarrow{\, \Phi \otimes \id_{\cC^{\vee}} \, } 
\cD \otimes_{\Dqc(S)} \cC^{\vee} \simeq
\cC^{\vee} \otimes_{\Dqc(S)} \cD. 
\end{equation*} 
\end{lemma}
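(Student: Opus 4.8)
The plan is to deduce Lemma~\ref{lemma-kernel-presentable} as a purely formal consequence of the dualizability of $\cC$ in the closed symmetric monoidal $\infty$-category $\PrCat_S$, in exact parallel with the proof of Lemma~\ref{lemma-kernels} in the small case (cf.\ \cite[Lemma 4.4]{NCHPD}). The only structural facts needed are: $\PrCat_S$ is closed symmetric monoidal with internal mapping object $\Fun_S(-,-)$, so that $\Map_{\PrCat_S}(\cE \otimes_{\Dqc(S)} \cC, \cD) \simeq \Map_{\PrCat_S}(\cE, \Fun_S(\cC, \cD))$ naturally in $\cE$; and $\cC$ is dualizable, hence (Definition~\ref{definition-dualizable}) equipped with a dual $\cC^{\vee}$ together with $\coev_{\cC}$ and $\ev_{\cC}$ satisfying the two triangle identities. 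In the compactly generated cases of interest these data are moreover described explicitly by Lemma~\ref{lemma-cg-dualizable}, but no such explicitness is required for the argument.

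First I would observe that, given these data, both assignments in the statement are well-defined morphisms of presentable $S$-linear categories: $K \mapsto \Phi_K$ via the displayed composition $\cC \xrightarrow{\id_{\cC}\otimes K} \cC \otimes_{\Dqc(S)} \cC^{\vee} \otimes_{\Dqc(S)} \cD \simeq \cC^{\vee} \otimes_{\Dqc(S)} \cC \otimes_{\Dqc(S)} \cD \xrightarrow{\ev_{\cC}\otimes\id_{\cD}} \cD$, and $\Phi \mapsto K_{\Phi}$ via $\Dqc(S) \xrightarrow{\coev_{\cC}} \cC \otimes_{\Dqc(S)} \cC^{\vee} \xrightarrow{\Phi\otimes\id_{\cC^{\vee}}} \cD \otimes_{\Dqc(S)} \cC^{\vee} \simeq \cC^{\vee}\otimes_{\Dqc(S)}\cD$. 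Then the heart of the proof is to check the two round-trip equivalences $\Phi_{K_{\Phi}} \simeq \Phi$ and $K_{\Phi_K} \simeq K$. Each of these unwinds, by naturality of $\otimes_{\Dqc(S)}$ and the symmetry isomorphisms, into a composite of coevaluation and evaluation maps that collapses upon a single application of a triangle identity for $(\coev_{\cC}, \ev_{\cC})$ — the usual zig-zag computation. This step is where the bookkeeping is heaviest, but it is entirely formal and takes place at the level of the symmetric monoidal $\infty$-category $\PrCat_S$, making no use of the specific nature of $\Dqc(S)$-linear categories; it is the same computation as in \cite[Lemma 4.4]{NCHPD} transported from $\Cat_{S}$ to $\PrCat_{S}$.

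Finally, I would record that all the constructions are natural, so that $\FM_{\cC \to \cD}$ is not merely an equivalence of underlying $\infty$-categories but an equivalence of presentable $S$-linear categories, and that the composite formula for $\Phi_K$ as stated agrees with the one obtained from $\FM_{\cC}$ by composing with $\id_{\cD}$, just as in Lemma~\ref{lemma-kernels} (this is the $\cD = \cC$ case, already recorded in Lemma~\ref{lemma-cg-dualizable}). The main obstacle is thus not conceptual but organizational: making the zig-zag identities rigorous at the $\infty$-categorical level rather than appealing to string diagrams — and since the argument of \cite[Lemma 4.4]{NCHPD} is insensitive to which closed symmetric monoidal $\infty$-category one works in, it transports verbatim.
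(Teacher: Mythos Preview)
Your proposal is correct and matches the paper's approach: the paper actually omits the proof of this lemma entirely, treating it as a formal consequence of dualizability in the same way as Lemma~\ref{lemma-kernels} (whose one-line proof reads ``This is a formal consequence of the dualizability of $\cC$, cf.\ \cite[Lemma 4.4]{NCHPD}''). Your observation that the argument transports verbatim from $\Cat_S$ to $\PrCat_S$ is exactly the point.
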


Finally, we note a generalization of Lemma~\ref{lemma-yoneda-kernels}, which holds by the same argument, where the $S$-linear category is not required to be smooth and proper and the Fourier--Mukai kernel is only required to exist after ind-completion. 

\begin{lemma}
\label{lemma-presentable-yoneda-kernels} 
Let $\cC$ be small $S$-linear category, 
and let $K \in  \Ind(\cC)^{\vee} \otimes_{\Dqc(S)} \Ind(\cC)$ with associated Fourier--Mukai functor $\Phi_K \in \Fun_S(\Ind(\cC), \Ind(\cC))$. 
Then for $E, F \in \cC$ there is a functorial equivalence 
\begin{equation*}
\cHom_S(E \boxtimes F, K) \simeq \cHom_S(F, \Phi_K(E)) . 
\end{equation*} 
\end{lemma}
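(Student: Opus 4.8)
The plan is to reproduce the proof of Lemma~\ref{lemma-yoneda-kernels} essentially verbatim in the presentable setting, with $\Dperf(S)$ replaced by $\Dqc(S)$, the smooth proper category $\cC$ of that lemma replaced by $\Ind(\cC)$, and the appeal to Lemma~\ref{lemma-smooth-proper-dualizable} replaced by the appeal to Lemma~\ref{lemma-cg-dualizable}. The point is that the latter provides, for any compactly generated presentable $S$-linear category --- and $\Ind(\cC)$ is one by construction, with $\Ind(\cC)^c \simeq \cC$ --- exactly the structure used in the proof of Lemma~\ref{lemma-yoneda-kernels}: dualizability in $\PrCat_S$, an explicit Fourier--Mukai equivalence $\FM_{\Ind(\cC)}$, and a description of the evaluation morphism as the functor induced by $\cHom_S(-,-)$ on $\cC^{\op} \times \cC$. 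None of this uses smoothness or properness of $\cC$.

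Concretely, I would proceed in three steps. First, by Lemma~\ref{lemma-cg-dualizable}, $\Ind(\cC)$ is dualizable in $\PrCat_S$ with dual $\Ind(\cC)^{\vee} \simeq \Ind(\cC^{\op})$, and there is an equivalence $\FM_{\Ind(\cC)} \colon \Ind(\cC)^{\vee} \otimes_{\Dqc(S)} \Ind(\cC) \xrightarrow{\sim} \Fun_S(\Ind(\cC), \Ind(\cC))$ under which $\ev_{\Ind(\cC)}$ is induced by $\cHom_S(-,-) \colon \cC^{\op} \times \cC \to \Dqc(S)$; consequently, for $E, F \in \cC$ the kernel $E \boxtimes F$ corresponds under $\FM_{\Ind(\cC)}$ to the functor $\cHom_S(E,-) \otimes F$. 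Second, applying $\cHom_S(-, K)$ and transporting along $\FM_{\Ind(\cC)}$ gives $\cHom_S(E \boxtimes F, K) \simeq \cHom_S(\cHom_S(E,-) \otimes F, \Phi_K)$, the right-hand mapping object now living in $\Fun_S(\Ind(\cC), \Ind(\cC))$. Third, invoke the Yoneda lemma exactly as in the proof of Lemma~\ref{lemma-yoneda-kernels}: under the equivalence $\Fun_S(\Ind(\cC), \Ind(\cC)) \simeq \Fun_S(\Ind(\cC) \otimes_{\Dqc(S)} \Ind(\cC)^{\vee}, \Dqc(S))$ coming from dualizability, the object $\Phi_K$ corresponds to the functor $(C,D) \mapsto \cHom_S(D, \Phi_K(C))$ while $\cHom_S(E,-) \otimes F$ corresponds to the representable functor $\cHom_S(E \boxtimes F, -)$, and Yoneda identifies $\cHom_S(\cHom_S(E,-) \otimes F, \Phi_K)$ with $\cHom_S(F, \Phi_K(E))$, functorially in $E$ and $F$.

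There is no real obstacle: the argument is purely formal, and the work has been done in isolating Lemma~\ref{lemma-cg-dualizable}, which removes the smooth proper hypothesis from the ingredients needed. The only points meriting a sentence of care are (i) that for $E, F \in \cC$ the mapping objects $\cHom_S(E \boxtimes F, K)$ and $\cHom_S(F, \Phi_K(E))$ are unambiguous, which holds since $\cC \hookrightarrow \Ind(\cC)$ is a fully faithful $S$-linear embedding compatible with mapping objects, and (ii) that the identifications and the Yoneda lemma are being applied in the presentable category $\Fun_S(\Ind(\cC), \Ind(\cC))$, which is legitimate precisely because $\Ind(\cC)$ is dualizable, so this category is equivalent to a category of $\Dqc(S)$-valued functors where Yoneda is available in its usual form.
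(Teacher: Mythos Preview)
Your proposal is correct and is exactly the approach the paper indicates: the paper does not give a separate proof for this lemma but simply states that it ``holds by the same argument'' as Lemma~\ref{lemma-yoneda-kernels}, with the smooth proper hypothesis dropped and the kernel taken after ind-completion. Your three-step outline makes this precise, correctly identifying that Lemma~\ref{lemma-cg-dualizable} supplies in the compactly generated presentable setting all the structure (dualizability, the $\FM$-equivalence, and the description of $\ev$) that Lemma~\ref{lemma-smooth-proper-dualizable} provided in the smooth proper case.
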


%%%%%%%%%%%%%%%%%%%%%%%%%%%%%%%%%%%%%%%%%%%%%%%%%%%%%%

\section{(Co)homological invariants}

In this section, we study some (co)homological invariants of linear categories, namely Hochschild (co)homology and topological $\rK$-theory. 

Fix a perfect derived algebraic stack $S$, which will serve as the base space for our discussion. 
In our discussion of Hodge theory in \S\ref{section-Hodge-theory}, we will specialize to the case where $S$ is a complex variety. 

\subsection{Hochschild homology} 
\label{section-hochschild-homology} 
Given a symmetric monoidal $\infty$-category $(\ccA, \otimes, \one)$ and a dualizable (in the sense of Definition~\ref{definition-dualizable}) object $A \in \ccA$, the 
\emph{trace} of an endomorphism $F \colon A \to A$ is the map $\Tr(F) \in \Map_{\ccA}(\one, \one)$ given by the composition 
\begin{equation*}
\one \xrightarrow{\, \coev_A \,} A \otimes A^{\svee} \xrightarrow{\, F \otimes \id_{A^{\svee}} \,} A \otimes A^{\svee} \simeq A^{\svee} \otimes A \xrightarrow{\, \ev_{A} \,} \one. 
\end{equation*} 
When $\ccA = \PrCat_S$ then $\one = \Dqc(S)$ and the functor $\Tr(F)$ is determined by its value on $\cO_S$. 

\begin{definition}
If $\cC$ is a dualizable presentable $S$-linear category and $\Phi \in \Fun_S(\cC,\cC)$ is an endomorphism, 
the \emph{Hochschild homology of $\cC$ over $S$ with coefficients in $\Phi$} is the complex 
\begin{equation*}
\cHH_*(\cC/S, \Phi) = \Tr(\Phi)(\cO_S) \in \Dqc(S). 
\end{equation*} 
The \emph{Hochschild homology of $\cC$ over $S$} is the complex 
\begin{equation*}
\cHH_*(\cC/S) = \cHH_*(\cC/S, \id_{\cC}) \in \Dqc(S). 
\end{equation*} 
If instead $\cC$ is a small $S$-linear category and $\Phi \in \Fun_S(\cC,\cC)$ is an endomorphism, 
then by Lemma~\ref{lemma-cg-dualizable} the category $\Ind(\cC) \in \PrCat_S$ is dualizable, so we may define  
\begin{align*}
\cHH_*(\cC/S, \Phi) & = \cHH_*(\Ind(\cC)/S, \Ind(\Phi)) \\ 
\cHH_*(\cC/S) & = \cHH_*(\cC/S, \id_{\cC}). 
\end{align*} 
In either case, we use the notation  
\begin{align*}
\cHH_i(\cC/S,\Phi) & = \cH^{-i}(\cHH_*(\cC/S, \Phi)), \\  
\HH_*(\cC/S, \Phi) & = \rR\Gamma(\cHH_*(\cC/S, \Phi)),  \\ 
\HH_i(\cC/S, \Phi) & = \rH^{-i}(\HH_*(\cC/S,\Phi)), 
\end{align*} 
for the degree $-i$ cohomology sheaf, the derived global sections, and the degree $-i$ cohomology of $\cHH_*(\cC/S, \Phi)$, and when $\Phi = \id_{\cC}$ we omit it from all of these notations. 
\end{definition} 

\begin{remark}
Strictly speaking, for the applications in this paper the case of trivial coefficients $\Phi = \id_{\cC}$ suffices, but our results on Chern characters below hold for arbitrary coefficients and may be useful elsewhere in that generality. 
\end{remark} 

\begin{remark}
\label{remark-smooth-proper-HH} 
If $\cC$ is a smooth proper $S$-linear category, then by Lemma~\ref{lemma-smooth-proper-dualizable} 
it is a dualizable object of $\Cat_S$, so for any $\Phi \in \Fun_S(\cC,\cC)$ we may form the trace 
$\Tr(\Phi) \colon \Dperf(S) \to \Dperf(S)$. 
There is a canonical equivalence $\Ind(\Tr(\Phi)) \simeq \Tr(\Ind(\Phi)) \colon \Dqc(S) \to \Dqc(S)$ 
\cite[Remark~3.2]{IHC-CY2}; in particular $\cHH_*(\cC/S, \Phi) \simeq \Tr(\Phi)(\cO_S) \in \Dperf(S)$. 
\end{remark} 

\begin{remark}
\label{remark-HH-kernels}
Let $\cC$ be either a dualizable presentable $S$-linear category or a smooth proper small $S$-linear category. 
Let $\Phi \in \Fun_S(\cC,\cC)$ and let $K_{\Phi}$ be the corresponding kernel, given by Lemma~\ref{lemma-kernel-presentable} or Lemma~\ref{lemma-kernels}. 
From the description of $K_{\Phi}$ in those lemmas, it follows that 
\begin{equation*}
\cHH_*(\cC/S, \Phi) \simeq \ev_{\cC}(K_{\Phi}). 
\end{equation*} 
\end{remark} 

The following result summarizes some important properties of Hochschild homology. 

\begin{theorem}
\label{theorem-HH}
Let $\cC$ be an $S$-linear category. 
\begin{enumerate}
\item \label{theorem-mukai-pairing}
If $\cC$ is a smooth and proper over $S$, then $\cHH_*(\cC/S) \in \Dperf(S)$ 
and there is a canonical nondegenerate pairing $\cHH_*(\cC/S) \otimes \cHH_*(\cC/S) \to \cO_S$, called the \emph{Mukai pairing}, 
which thus induces an equivalence $\cHH_*(\cC/S) \simeq \cHH_*(\cC/S)^{\vee}$.  
\item \label{theorem-smooth-proper-HH}
If $\cC$ is smooth and proper over $S$ and $S$ is a $\bQ$-scheme, 
then $\cHH_i(\cC/S)$ is a finite locally free sheaf for any $i \in \bZ$. 
\item \label{base-change-HH} 
For any morphism of perfect derived algebraic stacks $g \colon T \to S$, there is a canonical equivalence 
\begin{equation*}
g^*\cHH_*(\cC/S) \simeq \cHH_*(\cC_T/T), 
\end{equation*} 
and if $\cC$ is smooth and proper over $S$ and $S$ is a $\bQ$-scheme, then for any $i \in \bZ$ there is a canonical isomorphism 
\begin{equation*} 
g^* \cHH_i(\cC/S) \simeq \cHH_i(\cC_T/T). 
\end{equation*} 
\end{enumerate} 
\end{theorem}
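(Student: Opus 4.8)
I would establish the parts in the order \eqref{theorem-mukai-pairing}, then the derived equivalence in \eqref{base-change-HH}, then \eqref{theorem-smooth-proper-HH}, then the refined (cohomology-sheaf) statement in \eqref{base-change-HH}; the point of this order is that \eqref{theorem-smooth-proper-HH} needs base change, while the refinement of \eqref{base-change-HH} needs local freeness. For \eqref{theorem-mukai-pairing}, that $\cHH_*(\cC/S)\in\Dperf(S)$ is immediate from Lemma~\ref{lemma-smooth-proper-dualizable} and Remark~\ref{remark-smooth-proper-HH}: smoothness and properness make $\cC$ dualizable in $\Cat_S$, so the defining trace is computed inside $\Dperf(S)$. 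For the Mukai pairing I would work abstractly with the symmetric monoidal structure on $\Cat_S$. First record the K\"unneth equivalence $\cHH_*(\cC\otimes_{\Dperf(S)}\cD/S)\simeq\cHH_*(\cC/S)\otimes_{\cO_S}\cHH_*(\cD/S)$ for smooth proper $\cC,\cD$, a formal consequence of the multiplicativity of the trace of the identity of a tensor product of dualizable objects. Then apply the functoriality of $\cHH_*(-/S)$ to the coevaluation and evaluation functors of Lemma~\ref{lemma-smooth-proper-dualizable}, $\coev_\cC\colon\Dperf(S)\to\cC\otimes_{\Dperf(S)}\cC^\vee$ and $\ev_\cC\colon\cC^\vee\otimes_{\Dperf(S)}\cC\to\Dperf(S)$, and compose with K\"unneth to get maps $\cO_S\to\cHH_*(\cC/S)\otimes_{\cO_S}\cHH_*(\cC^\vee/S)$ and $\cHH_*(\cC^\vee/S)\otimes_{\cO_S}\cHH_*(\cC/S)\to\cO_S$; the triangle identities for $\coev_\cC,\ev_\cC$ are preserved by functoriality, so these exhibit $\cHH_*(\cC/S)$ and $\cHH_*(\cC^\vee/S)$ as mutually dual perfect complexes. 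Finally $\cHH_*(\cC^\vee/S)\simeq\cHH_*(\cC/S)$ since the trace of the identity of a dualizable object agrees with that of its dual (here $\cC^\vee=\cC^{\op}$), and the resulting perfect pairing $\cHH_*(\cC/S)\otimes_{\cO_S}\cHH_*(\cC/S)\to\cO_S$ is the Mukai pairing; alternatively one may simply cite the construction of the relative Mukai pairing from the literature.

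For the derived equivalence in \eqref{base-change-HH}, the key point is that the base-change functor $-\otimes_{\Dperf(S)}\Dperf(T)\colon\Cat_S\to\Cat_T$, and likewise $-\otimes_{\Dqc(S)}\Dqc(T)\colon\PrCat_S\to\PrCat_T$, is symmetric monoidal, hence preserves dualizable objects and traces of their endomorphisms. Since $\cHH_*(\cC/S)$ is by definition such a trace (of $\id_\cC$, or of $\id_{\Ind(\cC)}$ in general) and $\Ind(\cC)_T\simeq\Ind(\cC_T)$ because $\Ind$ is symmetric monoidal and $S,T$ are perfect stacks, this formally yields the canonical equivalence $g^*\cHH_*(\cC/S)\simeq\cHH_*(\cC_T/T)$, with $g^*$ the derived pullback on $\Dqc$.

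For \eqref{theorem-smooth-proper-HH}: $\cHH_*(\cC/S)$ is perfect, so its cohomology sheaves are coherent, and it is standard that a perfect complex on $S$ has all cohomology sheaves locally free as soon as the dimensions of the cohomologies of its derived fibres are locally constant functions on $S$ (using upper semicontinuity in the easy direction). By the derived base change just established, $\cHH_*(\cC/S)\otimes^{\mathrm L}_{\cO_S}\kappa(s)\simeq\cHH_*(\cC_s/\kappa(s))$, and $\cC_s$ is smooth and proper over the characteristic-zero field $\kappa(s)$ by Lemma~\ref{lemma-smooth-proper-bc}. Here I would invoke Kaledin's noncommutative Hodge-to-de Rham degeneration theorem: for $\cC_s$ smooth and proper over a field of characteristic zero, the spectral sequence converging to periodic cyclic homology degenerates, so $\dim_{\kappa(s)}HP_j(\cC_s)=\sum_{i\equiv j\ (2)}\dim_{\kappa(s)}\HH_i(\cC_s)$; since periodic cyclic homology is ``de Rham'' in nature and its fibre dimensions are deformation invariants, the two parity sums are locally constant, and combined with the upper semicontinuity of each individual $\dim_{\kappa(s)}\HH_i(\cC_s)$ this forces each to be locally constant. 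Hence each $\cHH_i(\cC/S)$ is locally free of finite rank. The cohomology-sheaf statement in \eqref{base-change-HH} then follows formally: once all cohomology sheaves of the perfect complex $\cHH_*(\cC/S)$ are locally free, the formation of each commutes with arbitrary base change (induct on the amplitude using the truncation triangles), and combined with the derived equivalence this gives $g^*\cHH_i(\cC/S)\simeq\cHH_i(\cC_T/T)$.

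The main obstacle is the local constancy of the Hochschild Betti numbers in \eqref{theorem-smooth-proper-HH}: this is exactly where the characteristic-zero hypothesis is essential, and it rests on the genuinely deep input of Kaledin's degeneration theorem together with the local-system (de Rham) nature of periodic cyclic homology in families. Everything else --- the perfectness and self-duality in \eqref{theorem-mukai-pairing}, and both assertions of \eqref{base-change-HH} --- is formal manipulation of traces of dualizable objects in symmetric monoidal $\infty$-categories together with bookkeeping along Postnikov towers.
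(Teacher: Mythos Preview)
Your proposal is correct and follows essentially the same approach as the paper, which simply cites \cite[Lemma 3.6, Lemma 3.4, and Theorem 3.5]{IHC-CY2} and identifies the noncommutative Hodge-to-de Rham degeneration \cite{kaledin1, kaledin2, akhil-degeneration} as the main input for part~\eqref{theorem-smooth-proper-HH}. Your fleshed-out argument --- formal trace manipulations for \eqref{theorem-mukai-pairing} and the derived part of \eqref{base-change-HH}, then degeneration plus the flat (Gauss--Manin) nature of periodic cyclic homology to force local constancy of the Hochschild numbers --- is exactly the content of those cited results.
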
 

\begin{proof}
See \cite[Lemma 3.6, Lemma 3.4, and Theorem 3.5]{IHC-CY2}. 
The main content is \eqref{theorem-smooth-proper-HH}, which is a 
consequence of the  
the degeneration of the noncommutative Hodge-to-de Rham spectral sequence 
\cite{kaledin1, kaledin2, akhil-degeneration}. 
\end{proof} 

For further background on Hochschild homology, see \cite[\S3]{IHC-CY2} and the references therein.\footnote{As a warning, in \cite{IHC-CY2} the complex $\cHH_*(\cC/S, \Phi)$ is denoted instead by $\HH_*(\cC/S, \Phi)$, which we have reserved for its global sections.}
Below we detail some additional properties for which we do not know an adequate reference. 
The first concerns the invariance of the Hochschild homology of a scheme under twisting by a Brauer class. 
In fact, such a result was proved for an arbitrary additive invariant by Tabuada and Van den Bergh \cite{HH-TVdB}, but their argument does not apply to Hochschild cohomology, which we will also need. 
We supply an alternative argument, due to Vadim Vologodsky, which gives a more precise statement and also applies to cohomology (see Lemma~\ref{lemma-hochschild-cohomology-twisted-variety}).  

\begin{lemma}[Vologodsky]
\label{lemma-hochschild-homology-twisted-variety} 
Let $f \colon X \to S$ be a flat morphism from a noetherian scheme with affine diagonal to a perfect noetherian scheme.
Let $\alpha \in \Br(X)[n]$, where $n$ is invertible on $S$.
Then there are equivalences 
\begin{equation}
\label{HH-twist}
\cHH_*(\Dperf(X, \alpha)/S) \simeq f_* \Delta_{\alpha}^* \Delta^{\alpha}_* \cO_X \simeq f_* \Delta^*\Delta_*\cO_X \simeq \cHH_*(\Dperf(X)/S) ,
\end{equation} 
where $\Delta \colon X \to X \times_S X$ is the diagonal and $\Delta_*^{\alpha}$, $\Delta^*_{\alpha}$ are the functors described in Example~\ref{lemma-duality-twisted-scheme}. In fact, there is an equivalence 
\begin{equation}
\label{HH-twist-pushpull}
        \Delta^*_{\alpha} \Delta_*^{\alpha} \cO_X \simeq \Delta^* \Delta_* \cO_X .
\end{equation}
\end{lemma}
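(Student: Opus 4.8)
The plan is to first reduce the chain of equivalences \eqref{HH-twist} to its last link \eqref{HH-twist-pushpull}, and then to prove \eqref{HH-twist-pushpull} by a base-change computation on gerbes.

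\textbf{Reduction.} Put $\cC = \Dperf(X,\alpha)$. By Lemma~\ref{lemma-compact-generation-twisted-D}, $\Ind(\cC) \simeq \Dqc(X,\alpha)$, so by Remark~\ref{remark-HH-kernels} the complex $\cHH_*(\cC/S)$ is obtained by applying $\ev_{\Dqc(X,\alpha)}$ to the Fourier--Mukai kernel of $\id_{\cC}$, which by Lemma~\ref{lemma-kernel-presentable} is $\coev_{\Dqc(X,\alpha)}(\cO_S)$. Feeding in the explicit descriptions of $\coev$ and $\ev$ from Example~\ref{lemma-duality-twisted-scheme} and using $f^*\cO_S \simeq \cO_X$ gives $\cHH_*(\Dperf(X,\alpha)/S) \simeq f_*\Delta_{\alpha}^*\Delta_*^{\alpha}\cO_X$. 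Running the identical argument with $\alpha = 0$, using Example~\ref{example-duality-data-perfectstack} in place of Example~\ref{lemma-duality-twisted-scheme} (note that $X$ is a perfect stack, being a quasi-compact scheme with affine diagonal), gives $\cHH_*(\Dperf(X)/S) \simeq f_*\Delta^*\Delta_*\cO_X$. Hence the two outer equivalences of \eqref{HH-twist} are formal, and the middle one is the image of \eqref{HH-twist-pushpull} under $f_*$. So it suffices to prove \eqref{HH-twist-pushpull}.

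\textbf{Proof of \eqref{HH-twist-pushpull}.} Choose a $\bmu_n$-gerbe $\cX \to X$ of class $\alpha$, let $\cX^\vee \to X$ be its dual and $\cP$ the Poincar\'e bundle on $Z \coloneqq \cX^\vee \times_X \cX$ as in Example~\ref{lemma-duality-twisted-scheme}, and write $\pi \colon Z \to X$ and $\iota \colon Z \to W \coloneqq \cX^\vee \times_S \cX$ for the canonical maps and $g \colon W \to X \times_S X$ for the gerbe projection. The square with vertical maps $\pi, g$ and horizontal maps $\iota, \Delta$ is Cartesian, $g$ is a flat $\bmu_n \times \bmu_n$-gerbe, $\iota$ is a closed immersion, and $\pi$ is a $\bmu_n \times \bmu_n$-gerbe. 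Unwinding the definitions of $\Delta_*^{\alpha}$ and $\Delta_{\alpha}^*$ yields $\Delta_{\alpha}^*\Delta_*^{\alpha}\cO_X \simeq \pi_*\bigl(\iota^*\iota_*\cP \otimes \cP^\vee\bigr)$. Now I would use two standard facts. First, for the closed immersion $\iota$ and any $F \in \Dqc(Z)$ there is a natural equivalence $\iota^*\iota_*F \simeq F \otimes_{\cO_Z}\iota^*\iota_*\cO_Z$ (a formal consequence of the projection formula, checked locally on $W$); taking $F = \cP$ and using that $\cP$ is invertible, so $\cP \otimes \cP^\vee \simeq \cO_Z$, this gives $\iota^*\iota_*\cP \otimes \cP^\vee \simeq \iota^*\iota_*\cO_Z$. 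Second, flat base change along the Cartesian square gives $g^*\Delta_*\cO_X \simeq \iota_*\pi^*\cO_X = \iota_*\cO_Z$, so $\iota^*\iota_*\cO_Z \simeq \iota^*g^*\Delta_*\cO_X \simeq \pi^*\Delta^*\Delta_*\cO_X$ since $g \circ \iota = \Delta \circ \pi$. Combining, $\Delta_{\alpha}^*\Delta_*^{\alpha}\cO_X \simeq \pi_*\pi^*\bigl(\Delta^*\Delta_*\cO_X\bigr) \simeq \Delta^*\Delta_*\cO_X$, where the last equivalence holds because $\pi$ is a gerbe banded by the linearly reductive group $\bmu_n \times \bmu_n$ (as $n$ is invertible on $X$), so that $\pi_*\pi^* \simeq \id$ by the projection formula.

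\textbf{Expected difficulty.} The argument is routine once the players are in place; the real work is in the bookkeeping. The most delicate point is the \emph{reduction}: one must unwind the gerbe-theoretic definitions of $\Delta_*^{\alpha}$ and $\Delta_{\alpha}^*$ from Example~\ref{lemma-duality-twisted-scheme} and check that they genuinely realize $\coev$ and $\ev$ for the duality datum of $\Dqc(X,\alpha)$, so that the displayed identifications compose correctly on the nose. One also needs to be mindful that fiber products are derived here (Convention~\ref{convention-derived-fiber-product}), so that the square $Z = W \times_{X \times_S X} X$ is a derived Cartesian square and flat base change applies, and that the self-intersection formula and $\pi_*\pi^* \simeq \id$ hold for the derived Deligne--Mumford stacks at hand; but these are all standard.
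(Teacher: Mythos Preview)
Your reduction to \eqref{HH-twist-pushpull} is the same as the paper's.  For the key identity \eqref{HH-twist-pushpull} itself, your argument is correct but takes a genuinely different route.  You invoke the self-intersection formula $\iota^{*}\iota_{*}F \simeq F \otimes_{\cO_{Z}} \iota^{*}\iota_{*}\cO_{Z}$; this is not literally the projection formula, but it does follow from associativity of derived tensor products: since $\iota_{*}F$ is already an $\cO_{Z}$-module, one has $\iota_{*}F \otimes^{L}_{\cO_{W}} \cO_{Z} \simeq F \otimes^{L}_{\cO_{Z}} (\cO_{Z} \otimes^{L}_{\cO_{W}} \cO_{Z})$.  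Applied to $F = \cP$ and combined with flat base change along the gerbe $g$ and the identity $\pi_{*}\pi^{*} \simeq \id$, this gives the result directly.

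The paper instead passes to the formal completion of $W = \cX^{\vee} \times_{S} \cX$ along $Z$.  The point is that while the twisted line bundle $\cP$ does not extend to $W$ --- the obstruction being the Brauer class $\pr_{2}^{*}\alpha - \pr_{1}^{*}\alpha$ on $X \times_{S} X$ --- it \emph{does} extend to a line bundle $\widehat{\cP}$ on the formal completion, because that Brauer class pulls back to zero along the diagonal and the \'etale site is invariant under nilpotent thickening.  Once $\widehat{\cP}$ exists, the ordinary projection formula $\hat{\iota}_{*}\hat{\iota}^{*}\widehat{\cP} \simeq \hat{\iota}_{*}\cO_{Z} \otimes \widehat{\cP}$ yields the completed identity, and one checks separately that completion does not change either side.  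Your approach is more elementary and bypasses the completion entirely; the paper's approach, while longer, makes visible the geometric reason the twist is harmless --- namely that the relevant Brauer obstruction vanishes in a formal neighborhood of the diagonal --- and this same mechanism is reused without change in the companion computation for Hochschild cohomology (Lemma~\ref{lemma-hochschild-cohomology-twisted-variety}).
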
 

\begin{proof} 
The outer two equivalences in~\eqref{HH-twist} follow from Lemma~\ref{lemma-compact-generation-twisted-D} and Examples~\ref{example-duality-data-perfectstack} and~\ref{lemma-duality-twisted-scheme}, so it suffices to prove the identity~\eqref{HH-twist-pushpull}. 
The idea of the proof is to work over the formal completion of the diagonal. 

We adopt the notation of Example~\ref{lemma-duality-twisted-scheme}, so that $\cX$ is a $\bmu_n$-gerbe of class $\alpha$, $\cX^\vee$ is the dual gerbe, and $\cP$ is the universal bundle on $\cX^\vee \times_X \cX$. There is a diagram
\[
    \begin{tikzcd}
        \cX^\vee \times_X \cX \ar[dr, "\iota"'] \ar[r, "\hat\iota"] & \left(\cX^\vee \times_S \cX\right)^\wedge \ar[d, "g"] \\ & \cX^\vee \times_S \cX ,
    \end{tikzcd}
\]
where $(\cX^\vee \times_s \cX)^\wedge$ is the formal completion of $\cX^\vee \times_S \cX$ along the closed substack $\cX^\vee \times_X \cX$. The identity \eqref{HH-twist-pushpull} can be checked after pullback to the gerbe $\cX^\vee \times_X \cX$, where it may be written as
\begin{equation}
\label{equation:pulled_back_hh_identity}
\cP^\vee \otimes \iota^* \iota_*(\cP) \simeq \iota^* \iota_*(\cO_{\cX^\vee \times_X \cX}).
\end{equation}
The natural completion maps
\begin{align*}
    \cP^\vee \otimes \iota^* \iota_*(\cP) &\to \cP^\vee \otimes \hat{\iota}^* \hat{\iota}_*(\cP) , \\
    \iota^* \iota_*(\cO_{\cX^\vee \times_X \cX}) &\to \hat{\iota}^* \hat{\iota}_*(\cO_{\cX^\vee \times_X \cX}) 
\end{align*}
are equivalences, because of the observation that
\[
    g^* g_* \hat{\iota}_* (M) \simeq \hat{\iota}_* M,
 \] 
for any coherent sheaf $M$ on $\cX^\vee \times_X \cX$. \'Etale-locally on $\cX^\vee \times_S \cX$, the observation boils down to the fact that if $A$ is a noetherian ring and $M$ is a finitely generated $A/I$-module for an ideal $I$, then the natural map 
\[
    M \to M \otimes_{A} \widehat{A}
\]
is an isomorphism, where $\widehat{A}$ is the $I$-adic completion of $A$.

From the previous paragraph, it remains to show the completed version of \eqref{equation:pulled_back_hh_identity}:
\begin{equation}
\label{equation:completed_hh_identity}
    \cP^\vee \otimes \hat{\iota}^* \hat{\iota}_*(\cP) \simeq \hat{\iota}^* \hat{\iota}_*(\cO_{\cX^\vee \times_X \cX}).
\end{equation}
Suppose that there exists a line bundle $\widehat \cP$ on the completion $(\cX^\vee \times_S \cX)^\wedge$ which pulls back to $\cP$. (The existence of $\widehat{\cP}$ is shown in the next paragraph.) Then \eqref{equation:completed_hh_identity} follows:
\begin{align*}
    \cP^\vee \otimes \hat{\iota}^* \hat{\iota}_*(\cP) &\simeq \cP^\vee \otimes \hat{\iota}^* \hat{\iota}_*\hat{\iota}^*(\widehat{\cP}) \\
        &\simeq \cP^\vee \otimes \hat{\iota}^*(\hat{\iota}_* (\cO_{\cX^\vee \times_X \cX}) \otimes \widehat{\cP}) \\
        &\simeq \cP^\vee \otimes \hat{\iota}^*(\hat{\iota}_* (\cO_{\cX^\vee \times_X \cX})) \otimes \cP \\
        &\simeq \hat{\iota}^*(\hat{\iota}_* (\cO_{\cX^\vee \times_X \cX})).
\end{align*}
In the second line, we have used the projection formula for a ringed topos \cite[\href{https://stacks.math.columbia.edu/tag/0943}{Section 0943}]{stacks-project}. This proves the identity \eqref{equation:completed_hh_identity}, assuming the existence of $\widehat{\cP}$.

To conclude the proof, we show that there exists a line bundle $\widehat{\cP}$ on $(\cX^\vee \times_S \cX)^\wedge$ which pulls back to $\cP$ along $\hat{\iota}$. The $(\bmu_n \times \bmu_n)$-gerbe $\cX^\vee \times_S \cX$ over $X \times_S X$ may be ``multiplied'' to a $\bmu_n$-gerbe $\cG$ on $X \times_S X$ of class $\pr_2^*[\cX] - \pr_1^*[\cX]$. Let $\widehat{\cG}$ be the formal completion of $\cG$ along the preimage of the diagonal, so that $\widehat{\cG}$ is a $\bmu_n$-gerbe over $(X \times_S X)^\wedge$. In fact, $\widehat{\cG}$ is a trivial $\bmu_n$-gerbe. Indeed, on one hand the natural pullback map
\begin{equation*}
    \widehat{\Delta}^* \colon \rH^2_{\et}((X \times_S X)^\wedge, \bmu_n) \to \rH_{\et}^2(X, \bmu_n)
\end{equation*}
is an isomorphism, because of the invariance of the \'etale site under nilpotent thickening. On the other hand, $[\widehat{\cG}]$ lies in the kernel.

By the previous paragraph, there exists a $1$-twisted line bundle $L$ on $\widehat{\cG}$. (This is Lemma~\ref{lemma-essentially-trivial} for formal schemes; the proof is identical.) Let $\widehat{\cP}'$ be the pullback of $L$ to $(\cX^\vee \times_S \cX)^\wedge$. Then $\widehat{\cP}'$ is a $(1,1)$-twisted line bundle, and its restriction to $\cX^\vee \times_X \cX$ differs from $\cP$ by an element $L_0$ of $\Pic(X)$. Then
\[
    \widehat{\cP} = \widehat{\cP}' \otimes \pr_1^*(L_0^{-1})
\]
is the desired line bundle. 
\end{proof}

\begin{remark}
    If $n$ is not invertible on $S$, then the conclusion of Lemma~\ref{lemma-hochschild-homology-twisted-variety} does not hold in general \cite[Theorem~1.3]{HKR-charp}.
\end{remark}

\begin{remark}
    If $X$ is a Deligne--Mumford stack then the conclusion of Lemma~\ref{lemma-hochschild-homology-twisted-variety} often fails. For example, if $X = \rB G$ over $\bC$ for $G = \bZ/2 \times \bZ/2$, then $\Br(X) = \bZ/2$, and the nonzero class $\alpha$ comes from the $\bmu_2$-gerbe 
    \[
        \rB D_8 \to \rB G,
    \]
    where $D_8$ is the dihedral group of order $8$. The twisted derived category $\Dperf(X, \alpha)$ is identified with the subcategory $\langle V \rangle$ of $\Dperf(\rB D_8)$ generated by the irreducible $2$-dimensional representation of $D_8$. In particular, $\HH_0(\Dperf(X, \alpha)) = \bC$, whereas $\HH_0(\Dperf(X)) = \bC^4$.
\end{remark}

\begin{lemma}
\label{lemma-HH-Serre}
Let $\cC$ be a smooth proper $S$-linear category, 
let $\Phi \in \Fun_S(\cC,\cC)$, and let $\rS_{\cC/S}$ be the relative Serre functor (which exists by Lemma~\ref{lemma-S-FM-kernel}). 
Then there is an equivalence 
\begin{equation*} 
\cHH_*(\cC/S, \Phi)^{\vee} \simeq \cHom_S(\Phi, \rS_{\cC/S}) 
\end{equation*} 
where the right side is the mapping object computed in $\Fun_S(\cC,\cC)$. 
\end{lemma}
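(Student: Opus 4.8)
The plan is to re-express both sides of the claimed equivalence in terms of Fourier--Mukai kernels and then deduce it from the $S$-linear adjunction $\ev_{\cC} \dashv \ev_{\cC}^!$ together with the description of the relative Serre kernel in Lemma~\ref{lemma-S-FM-kernel}. Throughout, $\cC$ is smooth and proper, so by Lemma~\ref{lemma-smooth-proper-dualizable} it is dualizable in $\Cat_S$ with $\cC^{\vee} = \cC^{\op}$, and by Lemma~\ref{lemma-kernels} the functor $\FM_{\cC} \colon \cC^{\vee} \otimes_{\Dperf(S)} \cC \xrightarrow{\sim} \Fun_S(\cC,\cC)$ is an equivalence carrying a kernel $K$ to $\Phi_K$; let $K_{\Phi}$ denote the kernel with $\Phi_{K_{\Phi}} \simeq \Phi$.

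First I would use Remark~\ref{remark-HH-kernels} to identify $\cHH_*(\cC/S,\Phi) \simeq \ev_{\cC}(K_{\Phi})$, and Remark~\ref{remark-smooth-proper-HH} to note that this object lies in $\Dperf(S)$, so that its dual is the mapping object $\cHom_S(\ev_{\cC}(K_{\Phi}), \cO_S)$ computed in the $S$-linear category $\Dperf(S)$. Next I would invoke the $S$-linear right adjoint $\ev_{\cC}^!$ of $\ev_{\cC}$ (which exists by the remark preceding Lemma~\ref{lemma-S-FM-kernel}) and the general fact that an $S$-linear adjunction $L \colon \cD \rightleftarrows \cE \colon R$ induces a functorial equivalence $\cHom_S(Ld, e) \simeq \cHom_S(d, Re)$ between the mapping object computed in $\cE$ and that computed in $\cD$ --- proved by testing against $G \in \Dperf(S)$, using the $S$-linearity of $L$ to rewrite $(Ld) \otimes G \simeq L(d \otimes G)$ and the defining property~\eqref{equation-cHom}. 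Applied with $L = \ev_{\cC}$, $R = \ev_{\cC}^!$, $d = K_{\Phi}$, and $e = \cO_S$, this gives $\cHom_S(\ev_{\cC}(K_{\Phi}), \cO_S) \simeq \cHom_S(K_{\Phi}, \ev_{\cC}^!(\cO_S))$, the latter computed in $\cC^{\vee} \otimes_{\Dperf(S)} \cC$. Finally, by Lemma~\ref{lemma-S-FM-kernel} the kernel $\ev_{\cC}^!(\cO_S)$ corresponds to $\rS_{\cC/S}$ under $\FM_{\cC}$, and since $\FM_{\cC}$ is an $S$-linear equivalence it identifies mapping objects, so $\cHom_S(K_{\Phi}, \ev_{\cC}^!(\cO_S)) \simeq \cHom_S(\Phi, \rS_{\cC/S})$ in $\Fun_S(\cC,\cC)$. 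Chaining the displayed equivalences yields the lemma.

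The main obstacle is not any hard input but rather the careful bookkeeping of which category each mapping object is computed in, together with confirming the two ``formal'' facts used above: that an $S$-linear adjunction passes to internal mapping objects, and that an $S$-linear equivalence identifies them. Both reduce immediately to the universal property~\eqref{equation-cHom} and the compatibility of the relevant functors with the $\Dperf(S)$-action, so I expect no genuine difficulty here; the real content of the statement is already packaged in the dualizability formalism of Lemma~\ref{lemma-smooth-proper-dualizable} and the kernel formula for the Serre functor in Lemma~\ref{lemma-S-FM-kernel}.
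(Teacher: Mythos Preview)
Your proposal is correct and follows exactly the same route as the paper's proof: identify $\cHH_*(\cC/S,\Phi)$ with $\ev_{\cC}(K_{\Phi})$ via Remark~\ref{remark-HH-kernels}, dualize using the adjunction $\ev_{\cC} \dashv \ev_{\cC}^!$, and then transport along the equivalence $\FM_{\cC}$ using Lemma~\ref{lemma-S-FM-kernel}. The only difference is that you spell out the ``formal'' facts about $S$-linear adjunctions and equivalences passing to internal mapping objects, which the paper leaves implicit.
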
 

\begin{proof}
By Remark~\ref{remark-HH-kernels} we find 
\begin{equation*}
\cHH_*(\cC/S, \Phi)^{\vee}  
 \simeq \cHom_S(\ev_{\cC}(K_\Phi), \cO_S) 
 \simeq \cHom_S(K_{\Phi}, \ev_{\cC}^!(\cO_S)). 
\end{equation*} 
By Lemma~\ref{lemma-smooth-proper-dualizable}\eqref{C-times-C} there is an equivalence $\FM_{\cC} \colon \cC^{\vee} \otimes_{\Dperf(S)} \cC \xrightarrow{\sim} \Fun_S(\cC, \cC)$, under which $\FM_{\cC}(K_{\Phi}) \simeq \Phi$ by the definition of $K_{\Phi}$ and $\FM_{\cC}(\ev_{\cC}^!(\cO_S)) \simeq \rS_{\cC/S}$ by Lemma~\ref{lemma-S-FM-kernel}, so the claim follows. 
\end{proof} 

Hochschild homology is a recipient for a generalized Chern character, defined out of the complex $\cHom_S(E, \Phi(E))$ for any object $E$ of an $S$-linear category. 
A construction can be found for instance in \cite[\S3.2]{IHC-CY2}, but we give a direct description in terms of Fourier--Mukai kernels below. 
 
\begin{construction}[Chern characters]
Let $\cC$ be an $S$-linear category, let $\Phi \in \Fun_S(\cC, \cC)$, and let $E \in \cC$. 
Let $K_{\Ind(\Phi)} \in \Ind(\cC)^{\vee} \otimes_{\Dqc(S)} \Ind(\cC)$ be the 
Fourier--Mukai kernel for the functor $\Ind(\Phi) \in \Fun_S(\Ind(\cC), \Ind(\cC))$, which exists by Lemma~\ref{lemma-cg-dualizable}. 
By Lemma~\ref{lemma-presentable-yoneda-kernels} we have an equivalence 
\begin{equation*}
\cHom_S(E \boxtimes \Phi(E), K_{\Ind(\Phi)}) \simeq \cHom_S(\Phi(E), \Phi(E)) , 
\end{equation*} 
and thus a canonical morphism 
\begin{equation}
\label{ch-kernel} 
E \boxtimes \Phi(E) \to K_{\Ind(\Phi)}
\end{equation} 
corresponding to $\id_{\Phi(E)}$. 
Note that $\ev_{\cC}(E \boxtimes \Phi(E)) \simeq \cHom_S(E, \Phi(E))$, while by Remark~\ref{remark-HH-kernels} we have $\ev_{\cC}(K_{\Ind(\Phi)}) \simeq \cHH_*(\cC/S, \Phi)$. 
The \emph{Chern character of $E$ with coefficients in $\Phi$} is the morphism 
\begin{equation*}
\ch_{E,\Phi} \colon \cHom_S(E, \Phi(E)) \to \cHH_*(\cC/S, \Phi) 
\end{equation*} 
obtained by applying $\ev_{\cC}$ to~\eqref{ch-kernel}. 
We write $\ch_{E} = \ch_{E, \id_{\cC}}$ for the case when $\Phi = \id_{\cC}$. 
\end{construction}

\begin{remark}
When $\cC$ is smooth and proper over $S$, 
then we may avoid the use of ind-completion in the definition of the Chern character, using the 
kernel $K_{\Phi} \in \cC^{\vee} \otimes_{\Dperf(S)} \cC$ for $\Phi$ given by Lemma~\ref{lemma-smooth-proper-dualizable}\eqref{C-times-C} in place of $K_{\Ind(\Phi)}$, and Lemma~\ref{lemma-yoneda-kernels} in place of Lemma~\ref{lemma-presentable-yoneda-kernels}. 
In this situation, the Chern character also admits the following simple description in terms of Serre duality. 
\end{remark} 

\begin{lemma}
\label{lemma-ch-dual} 
Let $\cC$ be a smooth proper $S$-linear category, 
let $\Phi \in \Fun_S(\cC, \cC)$, and let $E \in \cC$. 
Then there is a commutative diagram 
\begin{equation*}
    \begin{tikzcd}
    \cHH_*(\cC/S, \Phi)^{\vee} \ar[r, "\ch_{E, \Phi}^{\vee}"] \ar[d, "\sim" {anchor=south, rotate=90}] & \cHom_{S}(E, \Phi(E))^{\vee}  \ar[d, "\sim" {anchor=north, rotate=90}] \\ 
    \cHom_S(\Phi, \rS_{\cC/S}) \ar[r, "\eta_{E}"] & \cHom_S(\Phi(E), \rS_{\cC/S}(E)) 
    \end{tikzcd}
\end{equation*} 
where $\eta_E$ is the morphism given by evaluating a natural transformation at $E$, 
the left vertical arrow is the equivalence of Lemma~\ref{lemma-HH-Serre}, 
and the right vertical arrow is Serre duality. 
\end{lemma}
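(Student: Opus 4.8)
The plan is to unwind every equivalence in sight and observe that both the horizontal and the vertical maps in the square are assembled from just two elementary ingredients: the adjunction $(\ev_{\cC}, \ev_{\cC}^!)$ of \cite[Lemma 4.13]{NCHPD}, and the Fourier--Mukai equivalence $\FM_{\cC} \colon \cC^{\vee}\otimes_{\Dperf(S)}\cC \xrightarrow{\sim} \Fun_S(\cC,\cC)$ of Lemma~\ref{lemma-smooth-proper-dualizable}\eqref{C-times-C}, together with the Yoneda identification of Lemma~\ref{lemma-yoneda-kernels} (which is itself extracted from $\FM_{\cC}$). Write $K_{\Phi} \in \cC^{\vee}\otimes_{\Dperf(S)}\cC$ for the kernel of $\Phi$, so that $\FM_{\cC}(K_{\Phi}) \simeq \Phi$ and, by Lemma~\ref{lemma-S-FM-kernel}, $\FM_{\cC}(\ev_{\cC}^!(\cO_S)) \simeq \rS_{\cC/S}$; and let $\gamma_E \colon E \boxtimes \Phi(E) \to K_{\Phi}$ be the morphism corresponding under Lemma~\ref{lemma-yoneda-kernels} to $\id_{\Phi(E)} \in \cHom_S(\Phi(E), \Phi(E))$, so that $\ch_{E,\Phi} = \ev_{\cC}(\gamma_E)$ by the Construction (in the smooth proper case, using $K_{\Phi}$ in place of $K_{\Ind(\Phi)}$).

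First I would identify $\ch_{E,\Phi}^{\vee}$ after passing through the adjunction. Since $\ev_{\cC}(E\boxtimes\Phi(E)) \simeq \cHom_S(E,\Phi(E))$ and $\ev_{\cC}(K_{\Phi}) \simeq \cHH_*(\cC/S,\Phi)$, dualizing and using $\cHom_S(\ev_{\cC}(-), \cO_S) \simeq \cHom_S(-, \ev_{\cC}^!(\cO_S))$ identifies $\ch_{E,\Phi}^{\vee}$ with the map
\[
    \cHom_S(K_{\Phi}, \ev_{\cC}^!(\cO_S)) \longrightarrow \cHom_S(E\boxtimes\Phi(E), \ev_{\cC}^!(\cO_S))
\]
of precomposition with $\gamma_E$; this is exactly naturality of the adjunction. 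Moreover, by the proof of Lemma~\ref{lemma-HH-Serre} the left vertical arrow of the square is this same adjunction isomorphism followed by applying $\FM_{\cC}$ to mapping objects out of $K_{\Phi}$, and by the proof of Lemma~\ref{lemma-S-FM-kernel} the right vertical arrow (relative Serre duality, applied with the second argument $\Phi(E)$) is the adjunction isomorphism followed by the Yoneda equivalence of Lemma~\ref{lemma-yoneda-kernels} for $E\boxtimes\Phi(E)$. Naturality of the adjunction therefore reduces the claim to the commutativity of the square obtained after applying $\FM_{\cC}$: namely, that under the Yoneda identification $\cHom_S(\cHom_S(E,-)\otimes\Phi(E), \rS_{\cC/S}) \simeq \cHom_S(\Phi(E), \rS_{\cC/S}(E))$, precomposition with $\widetilde\gamma_E := \FM_{\cC}(\gamma_E) \colon \cHom_S(E,-)\otimes\Phi(E) \to \Phi$ becomes the evaluation-at-$E$ map $\eta_E$.

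The main obstacle is exactly this last compatibility. The point is that, since $\FM_{\cC}$ is a monoidal equivalence sending $E\boxtimes\Phi(E)$ to $\cHom_S(E,-)\otimes\Phi(E)$, the transformation $\widetilde\gamma_E$ is precisely the one corresponding to $\id_{\Phi(E)}$ under the Yoneda equivalence $\cHom_S(\cHom_S(E,-)\otimes\Phi(E), \Phi) \simeq \cHom_S(\Phi(E), \Phi(E))$ of Lemma~\ref{lemma-yoneda-kernels} --- the ``universal'' evaluation transformation --- and the defining property of that equivalence is that precomposition with $\widetilde\gamma_E$ implements evaluation at $E$. Making this precise amounts to tracing $\id_{\Phi(E)}$ through the chain of equivalences underlying Lemma~\ref{lemma-yoneda-kernels}, which in turn rests on the dualizability of $\cC$ and the explicit descriptions of $\coev_{\cC}$ and $\ev_{\cC}$ in Lemma~\ref{lemma-smooth-proper-dualizable}; this is bookkeeping of $\infty$-categorical equivalences rather than a substantive difficulty, but it is where the care lies. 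As a sanity check one can run the whole argument in the untwisted geometric case $\cC = \Dperf(X)$ with $\Phi = \id$, where every functor becomes one of the explicit Fourier--Mukai/Grothendieck-duality functors of Example~\ref{example-duality-data-scheme} and the diagram reduces to the classical compatibility between the Hochschild-homology Chern character, its Serre-dual, and restriction of a Hochschild class to a point.
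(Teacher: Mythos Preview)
Your argument is correct and follows essentially the same route as the paper: both proofs factor the square into an adjunction step (identifying $\ch_{E,\Phi}^{\vee}$ with precomposition by $\gamma_E$ after passing from $\ev_{\cC}(-)^{\vee}$ to $\cHom_S(-,\ev_{\cC}^!(\cO_S))$) followed by applying $\FM_{\cC}$ and invoking the Yoneda identification of Lemma~\ref{lemma-yoneda-kernels} to match the result with $\eta_E$. The paper presents this as a three-stage diagram rather than the two-stage reduction you give, but the content is identical.
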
 

\begin{proof}
Let $K_{\Phi} \in \cC^{\vee} \otimes_{\Dperf(S)} \cC$ be the 
Fourier--Mukai kernel for $\Phi$. 
We consider the diagram 
\begin{equation*}
\begin{tikzcd} 
 \cHH_*(\cC/S, \Phi)^{\vee} \ar[r, "\ch_{E, \Phi}^{\vee}"] \ar[d, "\sim" {anchor=south, rotate=90}] & \cHom_{S}(E, \Phi(E))^{\vee}   \ar[d, "\sim" {anchor=north, rotate=90}] \\ 
\cHom_S(\ev_{\cC}(K_{\Phi}), \cO_S) \ar[r] \ar[d, "\sim" {anchor=south, rotate=90}] 
& \cHom_S(\ev_{\cC}(E \boxtimes \Phi(E)), \cO_S) \ar[d, "\sim" {anchor=north, rotate=90}] \\ 
\cHom_S(K_{\Phi}, \ev^!_{\cC}(\cO_S)) \ar[r] \ar[d, "\sim" {anchor=south, rotate=90}] & 
 \cHom_S(E \boxtimes \Phi(E), \ev_{\cC}^!(\cO_S)) \ar[d, "\sim" {anchor=north, rotate=90}] \\ 
 \cHom_S(\Phi, \rS_{\cC/S}) \ar[r, "\eta_E"] & \cHom_S(\Phi(E), \rS_{\cC/S}(E)). 
 \end{tikzcd}
\end{equation*} 
where: 
\begin{itemize}
\item The top square is given by dualizing the definition of $\ch_{E,\Phi}$. 
\item The middle square is given by adjunction. 
\item The bottom left vertical arrow arises from the equivalence $\cC^{\vee} \otimes_{\Dperf(S)} \cC \xrightarrow{\sim} \Fun_S(\cC, \cC)$ of Lemma~\ref{lemma-smooth-proper-dualizable}\eqref{C-times-C}, 
which sends $K_{\Phi}$ to $\Phi$ by construction and $\ev^!_{\cC}(\cO_S)$ to $\rS_{\cC/S}$ by Lemma~\ref{lemma-S-FM-kernel}. 
\item The bottom right vertical arrow is given by Lemma~\ref{lemma-yoneda-kernels}, using 
again that $\ev^!_{\cC}(\cO_S)$ is the kernel for $\rS_{\cC/S}$. 
\end{itemize} 
The top square and middle square commute by definition. 
Unwinding the equivalences invoked in the construction of the diagram, 
it is straightforward to check that the bottom square commutes, 
the composition of the left vertical arrows is the equivalence of Lemma~\ref{lemma-HH-Serre}, 
and the composition of the right vertical arrows is Serre duality. 
The outer square thus gives the desired diagram. 
\end{proof} 

\subsection{Hochschild cohomology} 
\label{section-Hochschild-cohomology}

\begin{definition}
Let $\cC$ be a small or presentable $S$-linear category. 
The \emph{Hochschild cohomology of $\cC$ over $S$} is the complex 
\begin{equation*}
\cHH^*(\cC/S) = \cHom_S(\id_{\cC}, \id_{\cC}) \in \Dqc(S) . 
\end{equation*} 
We use the notation 
\begin{align*}
\cHH^i(\cC/S) & = \cH^{i}(\cHH^*(\cC/S)), \\  
\HH^*(\cC/S) & = \rR\Gamma(\cHH^*(\cC/S)),  \\ 
\HH^i(\cC/S) & = \rH^{i}(\HH^*(\cC/S)), 
\end{align*} 
for the degree $i$ cohomology sheaf, the derived global sections, and the 
degree $i$ cohomology of $\cHH^*(\cC/S)$. 
\end{definition} 

\begin{remark}
\label{remark-hochschild-cohomology-ind}
If $\cC$ is a small $S$-linear category, then $\cHH^*(\cC/S) \simeq \cHH^*(\Ind(\cC)/S)$ \cite[Remark 4.2]{IHC-CY2}. 
\end{remark}

\begin{remark} 
There is also a version of Hochschild cohomology with coefficients in an endomorphism $\Phi \in \Fun_S(\cC,\cC)$ \cite[Definition 4.1]{IHC-CY2}, but we do not discuss it since we do not need it for our applications and we have no new results to add in this context.
\end{remark} 

Below we recall some basic results about Hochschild cohomology, and compute it for twisted derived categories. 
For further background, see \cite[\S4]{IHC-CY2} or \cite[\S2.2]{Fano3fold} and the references therein.\footnote{The same notation warning about \cite{IHC-CY2} as for Hochschild homology applies: there the complex $\cHH^*(\cC/S)$ is denoted instead by $\HH^*(\cC/S)$.}

\begin{lemma}[{\cite[Remark 2.5 and Lemma 2.6]{Fano3fold}}]
\label{lemma-HH-co-bc}
Let $\cC$ be an $S$-linear category. 
\begin{enumerate}
\item If $\cC$ is smooth and proper over $S$, then $\cHH^*(\cC/S) \in \Dperf(S)$. 
\item For any morphism of perfect derived algebraic stacks $g \colon T \to S$, there is a canonical equivalence 
\begin{equation*}
g^* \cHH^*(\cC/S) \simeq \cHH^*(\cC_T/T). 
\end{equation*} 
\end{enumerate} 
\end{lemma}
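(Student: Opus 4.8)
The plan is to handle the two parts separately: part~(1) is a formal consequence of dualizability, while part~(2) reduces, after replacing $\cC$ by its ind-completion, to a base-change statement for a mapping complex, which I would establish by trading Hochschild cohomology for Hochschild homology.

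For part~(1): since $\cC$ is smooth and proper over $S$, it is dualizable as an object of $\Cat_S$ with $\cC^{\vee}=\cC^{\op}$ by Lemma~\ref{lemma-smooth-proper-dualizable}; hence $\cC^{\vee}$ and $\cC^{\vee}\otimes_{\Dperf(S)}\cC$ are dualizable too by Remark~\ref{remark-product-dualizable}, so, using again the ``only if'' direction of Lemma~\ref{lemma-smooth-proper-dualizable}, the category $\cC^{\vee}\otimes_{\Dperf(S)}\cC$ is smooth and proper over $S$. By Lemma~\ref{lemma-smooth-proper-dualizable}\eqref{C-times-C} it is equivalent, as an $S$-linear category, to $\Fun_S(\cC,\cC)$, and properness of this category says exactly that $\cHom_S(E,F)\in\Dperf(S)$ for all of its objects; applying this with $E=F=\id_{\cC}$ gives $\cHH^*(\cC/S)=\cHom_S(\id_{\cC},\id_{\cC})\in\Dperf(S)$.

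For part~(2): by Remark~\ref{remark-hochschild-cohomology-ind} we may replace $\cC$ by $\Ind(\cC)$ and $\cC_T$ by $\Ind(\cC)_T\simeq\Ind(\cC_T)$, so it suffices to treat a dualizable presentable $S$-linear category $\cD=\Ind(\cC)$. Since the base-change functor $(-)_T\colon\PrCat_S\to\PrCat_T$ is symmetric monoidal, it carries $\cD$, its dual $\cD^{\vee}$, the coevaluation and evaluation morphisms, and the equivalence $\FM_{\cD}\colon\cD^{\vee}\otimes_{\Dqc(S)}\cD\xrightarrow{\sim}\Fun_S(\cD,\cD)$ of Lemma~\ref{lemma-cg-dualizable} to the analogous data for $\cD_T$; in particular the Fourier--Mukai kernel $\coev_{\cD}(\cO_S)$ of $\id_{\cD}$ base-changes to the kernel $\coev_{\cD_T}(\cO_T)$ of $\id_{\cD_T}$. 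What remains --- and what I expect to be the main obstacle --- is to show that applying $g^*$ to the mapping complex $\cHom_S(\id_{\cD},\id_{\cD})$ computes $\cHom_T(\id_{\cD_T},\id_{\cD_T})$: mapping objects are limit-type constructions and $g^*$ does not commute with limits, so the corresponding statement fails for a general object in place of $\id_{\cD}$ and one must exploit the special structure of the identity kernel.

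To resolve this, when $\cC$ is smooth and proper I would bypass Hochschild cohomology altogether. Because the relative Serre functor $\rS_{\cC/S}$ is an equivalence, precomposition with it is an autoequivalence of $\Fun_S(\cC,\cC)$, so $\cHom_S(\id_{\cC},\id_{\cC})\simeq\cHom_S(\rS_{\cC/S},\rS_{\cC/S})$, and by Lemma~\ref{lemma-HH-Serre} the latter is $\cHH_*(\cC/S,\rS_{\cC/S})^{\vee}$; thus $\cHH^*(\cC/S)\simeq\cHH_*(\cC/S,\rS_{\cC/S})^{\vee}$, which lies in $\Dperf(S)$. Now Hochschild homology with coefficients does base-change: writing $\cHH_*(\cC/S,\Phi)\simeq\ev_{\cC}(K_{\Phi})$ (Remark~\ref{remark-HH-kernels}) and using, as in the proof of Lemma~\ref{lemma-bc-Serre}, that $\ev_{\cC}$ and the passage to kernels $\Phi\mapsto K_{\Phi}$ are compatible with base change, together with $(\rS_{\cC/S})_T\simeq\rS_{\cC_T/T}$ (Lemma~\ref{lemma-bc-Serre}), gives $g^*\cHH_*(\cC/S,\rS_{\cC/S})\simeq\cHH_*(\cC_T/T,\rS_{\cC_T/T})$; since $g^*$ commutes with $(-)^{\vee}$ on perfect complexes, dualizing and reapplying Lemma~\ref{lemma-HH-Serre} over $T$ yields $g^*\cHH^*(\cC/S)\simeq\cHH^*(\cC_T/T)$. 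For general $\cC$ one instead works locally on $S$ and, after presenting $\cC$ by a sheaf of dg-algebras $\cA$, identifies $\cHH^*(\cC/S)$ with the derived endomorphisms of the diagonal $\cA$-bimodule; its bar resolution has terms built from tensor powers of $\cA$ over $\cO_S$, so the resulting Hochschild cochain complex is compatible with base change. This is the argument of \cite[Remark~2.5 and Lemma~2.6]{Fano3fold}.
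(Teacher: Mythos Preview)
The paper does not give its own proof of this lemma but simply defers to \cite[Remark~2.5 and Lemma~2.6]{Fano3fold}, so there is little to compare against directly. Your argument for part~(1) is the standard one and is correct. For part~(2), your Serre-functor route in the smooth and proper case is correct and rather elegant: the identification $\cHH^*(\cC/S)\simeq\cHH_*(\cC/S,\rS_{\cC/S})^{\vee}$ via Lemma~\ref{lemma-HH-Serre} reduces the question to base change for Hochschild homology with coefficients, which follows from the description $\cHH_*(\cC/S,\Phi)\simeq\ev_{\cC}(K_{\Phi})$ of Remark~\ref{remark-HH-kernels} together with the compatibility of the duality data and kernel construction with base change (symmetric monoidality of $(-)_T$, exactly as in the proof of Lemma~\ref{lemma-bc-Serre}). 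This argument is self-contained within the paper's framework and avoids having to unpack dg-algebra presentations; it is genuinely different from, and in some ways cleaner than, the bar-resolution approach one expects in the cited reference. Your sketch for general $\cC$ via the bar resolution is the right shape, though as stated it is a bit glib: the Hochschild cochain complex is a \emph{totalization} of terms $\cHom_{\cO_S}(\cA^{\otimes n},\cA)$, and one must say why $g^*$ commutes with this limit and with these inner $\cHom$'s, neither of which is automatic without some finiteness. Since the paper itself simply cites the reference for this, your deferral is entirely in line with it.
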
 

Vologodsky's argument from Lemma~\ref{lemma-hochschild-homology-twisted-variety} also applies to the cohomological setting: 

\begin{lemma}[Vologodsky]
\label{lemma-hochschild-cohomology-twisted-variety} 
Let $f \colon X \to S$ be a flat morphism from a noetherian scheme with affine diagonal to a perfect noetherian scheme, and let $\alpha \in \Br(X)[n]$ with $n$ invertible on~$X$. 
Then there are equivalences 
\begin{equation*}
\cHH^*(\Dperf(X, \alpha)/S) \simeq 
 \cHom_S(\Delta_*^\alpha \cO_X, \Delta_*^{\alpha} \cO_X)
 \simeq 
 \cHom_S(\Delta_*\cO_X, \Delta_*\cO_X) 
 \simeq 
\cHH^*(\Dperf(X)/S)  
\end{equation*} 
where $\Delta \colon X \to X \times_S X$ is the diagonal and $\Delta_*^{\alpha}$, $\Delta^*_{\alpha}$ are the functors described in Example~\ref{lemma-duality-twisted-scheme}. 
\end{lemma}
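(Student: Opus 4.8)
The plan is to run the proof of Lemma~\ref{lemma-hochschild-homology-twisted-variety} in the cohomological setting, using internal homomorphism sheaves in place of the push-pull functors along the diagonal there, and then to invoke the identity~\eqref{HH-twist-pushpull}, which has already been established.

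First I would identify the two outer terms in the statement with self-mapping objects of diagonal pushforwards, exactly as in the homological case. By Remark~\ref{remark-hochschild-cohomology-ind} and Lemma~\ref{lemma-compact-generation-twisted-D} one may work with the dualizable presentable $S$-linear category $\cC = \Dqc(X, \alpha) \simeq \Ind(\Dperf(X, \alpha))$. The equivalence $\FM_{\cC}$ of Lemma~\ref{lemma-cg-dualizable} is $S$-linear, so it carries the mapping object $\cHH^*(\cC/S) = \cHom_S(\id_{\cC}, \id_{\cC})$ to $\cHom_S(K, K)$, where $K \in \cC^{\vee} \otimes_{\Dqc(S)} \cC$ is the Fourier--Mukai kernel of $\id_{\cC}$, i.e.\ $K = \coev_{\cC}(\cO_S)$. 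Using the geometric description of the duality data in Example~\ref{lemma-duality-twisted-scheme}, one has $\cC^{\vee} \otimes_{\Dqc(S)} \cC \simeq \Dqc(X \times_S X, \pr_2^*\alpha - \pr_1^*\alpha)$ and $K = \Delta_*^{\alpha}\cO_X$, so that
\begin{equation*}
\cHH^*(\Dperf(X, \alpha)/S) \simeq \cHom_S(\Delta_*^{\alpha}\cO_X, \Delta_*^{\alpha}\cO_X) .
\end{equation*}
Specializing to $\alpha = 0$ and using Example~\ref{example-duality-data-perfectstack} instead gives $\cHH^*(\Dperf(X)/S) \simeq \cHom_S(\Delta_*\cO_X, \Delta_*\cO_X)$. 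This would establish the first and last equivalences and reduce the lemma to the middle one.

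Next I would use that the functor $\Delta^*_{\alpha}$ of Example~\ref{lemma-duality-twisted-scheme} is the left adjoint of $\Delta_*^{\alpha}$: this follows by unwinding the definitions $\Delta_*^{\alpha} = \iota_*(\cP \otimes \pi^*(-))$ and $\Delta^*_{\alpha} = \pi_*(\iota^*(-) \otimes \cP^{\vee})$, the adjunctions $\iota^* \dashv \iota_*$ and $\pi^* \dashv \pi_*$, and the identification $\pi_* \simeq \pi_!$ for the tame gerbe $\pi$ (valid since $n$ is invertible on $X$); and this adjunction is visibly $S$-linear. It then follows that $\cHom_S(\Delta_*^{\alpha}\cO_X, \Delta_*^{\alpha}\cO_X) \simeq \cHom_S(\Delta^*_{\alpha}\Delta_*^{\alpha}\cO_X, \cO_X)$, and likewise $\cHom_S(\Delta_*\cO_X, \Delta_*\cO_X) \simeq \cHom_S(\Delta^*\Delta_*\cO_X, \cO_X)$ in the untwisted case. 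The equivalence $\Delta^*_{\alpha}\Delta_*^{\alpha}\cO_X \simeq \Delta^*\Delta_*\cO_X$ of~\eqref{HH-twist-pushpull} then yields the required identification of the right-hand sides, hence the middle equivalence.

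I do not expect a serious obstacle. The only slightly delicate points are the bookkeeping that identifies $\cHH^*$ of a (twisted) derived category with the self-mapping object of the (twisted) diagonal pushforward — which differs only cosmetically from the analogous step in the proof of Lemma~\ref{lemma-hochschild-homology-twisted-variety} — and the verification that $\Delta^*_{\alpha}$ is genuinely the left adjoint of $\Delta_*^{\alpha}$ as $S$-linear functors; all the substantive geometric content (triviality of the twist on the formal neighborhood of the diagonal and existence of the trivializing line bundle $\widehat{\cP}$) has already been packaged into~\eqref{HH-twist-pushpull} and requires no new work.
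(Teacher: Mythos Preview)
Your proposal is correct and follows essentially the same approach as the paper: identify the outer equivalences via the duality data of Examples~\ref{example-duality-data-perfectstack} and~\ref{lemma-duality-twisted-scheme} (after passing to $\Dqc$ via Remark~\ref{remark-hochschild-cohomology-ind} and Lemma~\ref{lemma-compact-generation-twisted-D}), reduce the middle equivalence to $\cHom_S(\Delta^*_{\alpha}\Delta_*^{\alpha}\cO_X,\cO_X)\simeq\cHom_S(\Delta^*\Delta_*\cO_X,\cO_X)$ via adjunction, and then invoke~\eqref{HH-twist-pushpull}. The only cosmetic difference is that the paper unwinds the adjunction step explicitly through the chain $\cHom_S(\iota_*\cP,\iota_*\cP)\simeq\cHom_S(\iota^*\iota_*\cP,\cP)\simeq\cHom_S(\cP^{\vee}\otimes\iota^*\iota_*\cP,\cO)$ rather than packaging it as $\Delta^*_{\alpha}\dashv\Delta_*^{\alpha}$.
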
 

\begin{proof}
By Remark~\ref{remark-hochschild-cohomology-ind} and Lemma~\ref{lemma-compact-generation-twisted-D}, we may pass to the unbounded quasi-coherent derived category for the computation. 
Then the outer two equivalences follow from 
Examples~\ref{example-duality-data-perfectstack} and~\ref{lemma-duality-twisted-scheme}. 
Adopting the notation of 
the proof of Lemma~\ref{lemma-hochschild-homology-twisted-variety}, we have 
\begin{align*}
    \cHom_S(\Delta_*^\alpha \cO_X, \Delta_*^{\alpha} \cO_X) & \simeq \cHom_S(\iota_*(\cP), \iota_*(\cP)) \\
    &\simeq \cHom_S(\iota^*\iota_*(\cP), \cP) \\
    &\simeq \cHom_S(\cP^\vee \otimes \iota^*\iota_*(\cP), \cO_{\cX^\vee \times_X \cX}) \\
    &\simeq \cHom_S(\Delta^*_{\alpha} \Delta_*^{\alpha} \cO_X, \cO_X), 
\end{align*}
and similarly 
\begin{equation*}
 \cHom_S(\Delta_*\cO_X, \Delta_*\cO_X) \simeq \cHom_S(\Delta^*\Delta_*\cO_X, \cO_X). 
\end{equation*}
Since $\Delta^*_{\alpha} \Delta_*^{\alpha} \cO_X \simeq \Delta^* \Delta_* \cO_X$ by Lemma~\ref{lemma-hochschild-homology-twisted-variety}, 
this finishes the proof. 
\end{proof}

Using Hochschild cohomology, Anel and To\"{e}n \cite{anel-toen} introduced an analog for categories of the geometric notion of connectedness. 
To formulate it, note that if $\cC$ is an $S$-linear category, then there is a canonical morphism $\cO_S \to \cHH^0(\cC/S)$ (corresponding to the identity in $\cHom_S(\id_{\cC}, \id_{\cC})$). 

\begin{definition}
\label{definition-connected}
Let $\cC$ be an $S$-linear category.  
Then $\cC$ is \emph{connected over $S$} if for every morphism $T \to S$ from a perfect scheme $T$, $\cHH^i(\cC_T/T) = 0$ for $i < 0$ and the morphism $\cO_T \to \cHH^0(\cC_T/T)$ is an isomorphism. 
\end{definition}

\begin{lemma}
\label{lemma-connected-Dperf}
Let $f \colon X \to S$ be a flat proper surjective morphism of noetherian schemes 
with geometrically reduced and connected fibers. 
Let $\alpha \in \Br(X)$ be a Brauer class. 
Then $\Dperf(X, \alpha)$ is a connected $S$-linear category. 
\end{lemma}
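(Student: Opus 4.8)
The statement is that for $f\colon X\to S$ flat, proper, surjective with geometrically reduced and connected fibers, the category $\Dperf(X,\alpha)$ is connected over $S$ in the sense of Definition~\ref{definition-connected}. By Lemma~\ref{lemma-HH-co-bc}, Hochschild cohomology commutes with base change, and by base change for twisted derived categories (and the stability of the hypotheses on $f$ under base change to a perfect scheme $T\to S$), it suffices to prove the following: for such an $f$, one has $\cHH^i(\Dperf(X,\alpha)/S)=0$ for $i<0$ and the natural map $\cO_S\to\cHH^0(\Dperf(X,\alpha)/S)$ is an isomorphism. So I would fix $T=S$ and just prove the two conditions, since the base-changed situation is of the same form.

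\textbf{Reduction to the untwisted case.} The key point is that, by Lemma~\ref{lemma-hochschild-cohomology-twisted-variety}, there is an equivalence
\begin{equation*}
\cHH^*(\Dperf(X,\alpha)/S)\simeq\cHH^*(\Dperf(X)/S),
\end{equation*}
provided $\alpha\in\Br(X)[n]$ with $n$ invertible on $X$ and $X$ is noetherian with affine diagonal and $f$ is flat (this last hypothesis being part of our assumptions). Note that the equivalence in Lemma~\ref{lemma-hochschild-cohomology-twisted-variety} is compatible with the canonical maps from $\cO_S$ — both arise from the identity endofunctor and the ``multiplication'' identification; this compatibility can be checked directly, or one observes that the morphism $\cO_S\to\cHH^0$ is characterized as the unit, which is preserved under any monoidal equivalence of the relevant traces. (One should say a word about the hypothesis that $n$ is invertible: since $\Br(X)$ is torsion and, in the situations of interest, $X$ lives over a field of characteristic prime to $\per(\alpha)$, or one simply assumes this as part of the hypothesis; but actually the cleanest route is to observe that the statement we want is \emph{about} the period-index / Hodge-theoretic setting where this is always arranged, so I would add the hypothesis ``$\per(\alpha)$ invertible on $X$'' if it is not already implicit.) Thus it remains to show $\Dperf(X)$ is connected over $S$ in the untwisted case.

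\textbf{The untwisted case.} For $\cC=\Dperf(X)$ with $f\colon X\to S$ proper, we have (Example~\ref{example-duality-data-scheme} and the computation of Hochschild cohomology via the diagonal, e.g.\ as in Lemma~\ref{lemma-hochschild-cohomology-twisted-variety} with $\alpha=0$)
\begin{equation*}
\cHH^*(\Dperf(X)/S)\simeq\cHom_S(\Delta_*\cO_X,\Delta_*\cO_X)\simeq f_*\cHom_X(\Delta^*\Delta_*\cO_X,\cO_X)\simeq f_*\cRHom_X(\Delta^*\Delta_*\cO_X,\cO_X),
\end{equation*}
and the complex $\Delta^*\Delta_*\cO_X=\bigoplus\text{(or filtered pieces)}$ has its $\cH^0$ equal to $\cO_X$ and all other cohomology sheaves in degrees $\le -1$ (supported on the diagonal, via HKR-type reasoning or just because $\Delta$ is a regular closed immersion when $X/S$ is smooth, and in general by a local computation over $X\times_S X$). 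It follows that $\cHH^i(\Dperf(X)/S)=0$ for $i<0$: indeed $\cHom_X(\Delta^*\Delta_*\cO_X,\cO_X)$ has cohomology only in degrees $\ge 0$ because $\Delta^*\Delta_*\cO_X$ lives in degrees $\le 0$ with $\cH^0=\cO_X$, and pushforward along the proper map $f$ preserves connectivity in the appropriate direction. For $\cHH^0$, the $0$-th cohomology of $\cHom_X(\Delta^*\Delta_*\cO_X,\cO_X)$ is $\cHom_{\cO_X}(\cO_X,\cO_X)=\cO_X$ (the higher cohomology sheaves of $\Delta^*\Delta_*\cO_X$ contribute only to $\cHH^{\ge 1}$), so $\cHH^0(\Dperf(X)/S)\simeq f_*\cO_X$. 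Finally, by the theorem on cohomology and base change together with the hypothesis that $f$ is proper with geometrically reduced and connected fibers, $f_*\cO_X=\cO_S$ (this is the standard fact that such an $f$ has $\cO_S\xrightarrow{\sim}f_*\cO_X$ — ``$f_*\cO_X=\cO_S$ universally'' — using reducedness and connectedness of the fibers, cf.\ \cite[\href{https://stacks.math.columbia.edu/tag/0E0L}{Tag 0E0L}]{stacks-project}), and under this identification the canonical map $\cO_S\to\cHH^0$ is the identity. This completes the proof.

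\textbf{Main obstacle.} The substantive inputs are all cited: Lemma~\ref{lemma-hochschild-cohomology-twisted-variety} for the reduction to the untwisted case, and the classical fact $f_*\cO_X=\cO_S$ for proper morphisms with geometrically connected and reduced fibers. The only genuine care needed is (i) checking that the base-changed situation $X_T\to T$ still satisfies all hypotheses — flatness, properness, surjectivity, geometrically reduced and connected fibers are all preserved under arbitrary base change, and $T$ being a perfect scheme is given — so the argument applies uniformly; and (ii) tracking the compatibility of the equivalence of Lemma~\ref{lemma-hochschild-cohomology-twisted-variety} with the unit maps $\cO_S\to\cHH^0$, which I expect to be formal but should be stated. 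The mild hypothesis ``$\per(\alpha)$ invertible on $X$'' (or that $\alpha$ is $n$-torsion with $n$ invertible) must be present for Lemma~\ref{lemma-hochschild-cohomology-twisted-variety} to apply; if the paper intends Lemma~\ref{lemma-connected-Dperf} without that hypothesis, then one would instead need the Tabuada--Van den Bergh invariance of Hochschild \emph{homology} under twisting plus a separate (harder) argument for cohomology, which is exactly the gap Vologodsky's lemma was introduced to fill — so realistically the hypothesis should be included.
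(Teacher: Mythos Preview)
Your approach is the same as the paper's, whose entire proof is the single sentence ``This follows from Lemma~\ref{lemma-hochschild-cohomology-twisted-variety}''; you have correctly identified this reduction and spelled out the untwisted case (via $\cHH^0\simeq f_*\cO_X\simeq\cO_S$) that the paper leaves implicit. Your concern about the hypothesis that $\per(\alpha)$ be invertible on $X$ is well-taken: the paper's statement omits it while the cited lemma requires it, so one should either add it or regard it as implicit in the paper's standing conventions.
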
 

\begin{proof}
This follows from Lemma~\ref{lemma-hochschild-cohomology-twisted-variety}.  
\end{proof} 

Many interesting semiorthogonal components of derived categories of varieties can be shown to be connected by combining Lemma~\ref{lemma-connected-Dperf} with Kuznetsov's results on heights of exceptional collections \cite{kuznetsov-heights}; 
see for instance \cite[\S2.3]{Fano3fold} and Example~\ref{example-cubic-sevenfold} below. 

\subsection{Hodge theory} 
\label{section-Hodge-theory} 

In what follows, all local systems are taken with respect to the analytic topology, although following our conventions (\S \ref{conventions}) we suppress analytification from the notation

\begin{theorem}
\label{theorem-Ktop}
Let $\cC$ be a smooth proper $S$-linear category of geometric origin, where $S$ is a complex variety. 
Then there is functorially associated local system of finitely generated abelian groups 
$\Ktop[0](\cC/S)$ on $S$ (for the analytic topology), 
which underlies a weight $0$ variation of Hodge structures and satisfies the following: 
\begin{enumerate}
\item If $\cC = \langle \cC_1, \dots, \cC_m \rangle$ is an $S$-linear semiorthogonal decomposition, 
then there is an isomorphism 
\begin{equation*}
\Ktop[0](\cC/S) \simeq \Ktop[0](\cC_1/S) \oplus \cdots \oplus \Ktop[0](\cC_m/S) 
\end{equation*} 
induced by the projection functors onto the components $\cC_i$. 
\item When $f \colon X \to S$ is a smooth proper morphism of complex varieties, $\Ktop[0](\Dperf(X)/S)$ is the local system $U \mapsto \Ktop[0](X_U)$ of topological $\rK$-theory groups of the family, and there is an isomorphism 
\begin{equation*}
\Ktop[0](\Dperf(X)/S) \otimes \bQ \simeq \bigoplus_{k \in \bZ} \rR^{2k} f_* \bQ(k)
\end{equation*} 
of variations of rational Hodge structures on $S$ (where $(k)$ denotes the Tate twist). 
\item If $S = \Spec(\bC)$, in which case we write  $\Ktop[0](\cC)$ for $\Ktop[0](\cC/S)$, then 
the $p$-th graded piece of the Hodge filtration on $\Ktop[0](\cC) \otimes \bC$ is isomorphic to $\HH_{2p}(\cC) \coloneqq \rH^{-2p}(\cHH_*(\cC))$. 
For general $S$, the fiber of $\Ktop[0](\cC/S)$ over $s \in S(\bC)$ is $\Ktop[0](\cC_s)$. 
\end{enumerate} 
\end{theorem}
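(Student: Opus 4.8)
The plan is to build everything on Blanc's topological $\rK$-theory together with the relative Hodge theory of \cite{IHC-CY2}, using the geometric-origin hypothesis to reduce local constancy and Griffiths transversality to classical statements about families of varieties. First, over $S = \Spec(\bC)$, Blanc's topological realization assigns to every small idempotent-complete stable $\bC$-linear $\infty$-category $\cD$ a $KU$-module spectrum $\Ktop(\cD)$, functorially in $\cD$, with $\Ktop(\Perf(Y)) \simeq KU(Y(\bC))$ for any finite-type $\bC$-scheme $Y$, and with $\Ktop[0](\cD)$ finitely generated when $\cD$ is smooth and proper. For general $S$, following \cite{IHC-CY2} I would produce a sheaf of spectra on the analytic site $S^{\an}$ by sheafifying the presheaf $U \mapsto \Ktop(\cC_{U})$ over étale $U \to S$, and define $\Ktop[0](\cC/S)$ to be its degree-zero homotopy sheaf; functoriality in $\cC$ and compatibility with base change along $T \to S$ are inherited from Blanc's construction and from base change for linear categories.

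\emph{Local constancy and properties (1), (2).} Since Blanc's $\Ktop$ is a localizing invariant, it sends semiorthogonal decompositions to direct sums, both fiberwise and in the family; hence for a general $\cC$ the sheaf $\Ktop[0](\cC/S)$ is cut out of $\Ktop[0](\Dperf(X)/S)$ by the $\cO_S$-linear projection functor (a Fourier--Mukai functor, so it acts on topological $\rK$-theory), and it suffices to prove local constancy for $\cC = \Dperf(X)$ with $X \to S$ smooth proper. There Blanc's comparison identifies the sheaf of spectra with $U \mapsto KU(X_U(\bC))$; since $X(\bC) \to S(\bC)$ is a proper submersion, Ehresmann's theorem makes it a locally trivial fiber bundle, so $U \mapsto KU^0(X_U(\bC))$ is a local system with stalk $\Ktop[0](X_s)$, finitely generated by the Atiyah--Hirzebruch spectral sequence and finiteness of the cohomology of the compact fibers. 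This gives (2); the rational identification $\Ktop[0](\Dperf(X)/S) \otimes \bQ \simeq \bigoplus_k \rR^{2k} f_* \bQ(k)$ is the Chern character, which matches the Tate structure on each summand by the standard comparison of Betti and algebraic de Rham cohomology. Well-definedness of $\Ktop[0](\cC/S)$ — independence of the chosen geometric origin — follows because the resulting local system is determined by its stalks, each canonically $\Ktop[0](\cC_s)$ by Blanc, together with its monodromy, which is pinned down by the functorial sheaf-of-spectra construction; this also yields functoriality in $\cC$ in general.

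\emph{The variation of Hodge structure and property (3).} Tensoring with $\cO_S^{\an}$ produces a holomorphic bundle with flat connection; by \cite{IHC-CY2}, via Blanc's comparison of $\Ktop$ with periodic cyclic homology and the degeneration of the noncommutative Hodge-to-de Rham spectral sequence used in Theorem~\ref{theorem-HH}, this bundle is the $2$-periodic de Rham bundle $\bigoplus_p \rH^{-2p}(\HP_*(\cC/S))$ with its noncommutative Hodge filtration $F^\bullet$. Griffiths transversality holds in the geometric case $\Dperf(X)$, where it is the classical statement for $\bigoplus_k \rH^{2k}(X_s)$, and transfers to semiorthogonal summands since the projectors are $\cO_S$-linear and flat; hence $(\Ktop[0](\cC/S), F^\bullet)$ is a weight $0$ VHS and the splitting in (1) is a splitting of VHS. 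Finally the fiber over $s \in S(\bC)$ is $\Ktop[0](\cC_s)$ by construction, and $\gr_F^p(\Ktop[0](\cC) \otimes \bC) \simeq \rH^{-2p}(\HP_*(\cC))/F^{p+1} \simeq \HH_{2p}(\cC)$ by Hodge-to-de Rham degeneration, which is (3).

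\emph{Main obstacle.} The real work is not any single property in isolation but assembling the relative picture so that functoriality in $\cC$, compatibility with base change and with semiorthogonal decompositions, and the comparison relative to $S$ with periodic cyclic homology all hold simultaneously — this is what makes the Gauss--Manin connection and Hodge filtration available in families. Each input (Blanc's absolute comparison, the localizing-invariant property, noncommutative Hodge-to-de Rham degeneration) is at hand, but threading them through the base $S$ is delicate, and I expect the proof to consist largely of citing and repackaging the corresponding results of \cite{IHC-CY2}, with the geometric-origin hypothesis doing the work of reducing local constancy and Griffiths transversality to the classical geometric case.
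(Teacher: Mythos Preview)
Your proposal is correct and follows essentially the same approach as the paper, which itself simply cites the literature: Blanc \cite{blanc} for the absolute construction, Moulinos \cite{moulinos} for the relative sheaf-of-spectra construction (which you describe but do not attribute), Kaledin and Mathew for the noncommutative Hodge-to-de Rham degeneration, and \cite[\S5.1]{IHC-CY2} for the assembly into a VHS. Your sketch accurately unpacks what those references contain; the only addition worth making is to credit Moulinos for the relative topological $\rK$-theory, since the sheafification you describe is precisely his construction.
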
 

\begin{proof}
The construction of $\Ktop[0](\cC/S)$ is due to 
Blanc in the absolute case \cite{blanc} and to Moulinos \cite{moulinos} in general. 
The essential ingredient in constructing the Hodge structure is 
the degeneration of the noncommutative Hodge-to-de Rham spectral sequence \cite{kaledin1, kaledin2, akhil-degeneration}. 
For details, see \cite[\S5.1]{IHC-CY2}. 
\end{proof} 

\begin{remark}
    When $\cC = \Dperf(X, \alpha)$ for a Brauer class $\alpha \in \BrAz(X)$, and $X \to S$ is a smooth proper morphism of complex varieties, then Theorem~\ref{theorem-Ktop} implies that there is a variation of Hodge structure on the local system $\Ktop[0](\Dperf(X, \alpha)/S)$, which we shall denote simply by $\Ktop[0]((X,\alpha)/S)$. Indeed, $\Dperf(X, \alpha)$ is of geometric origin by Lemma~\ref{lemma-D-SB}. 

    When the Brauer class is topologically trivial (Definition~\ref{def:topologically_trivial}), the results of \cite{hotchkiss-pi} calculate the associated variation of Hodge structure in terms of \emph{twisted Mukai structures}. This is explained in the special case when $X \to S$ is a family of abelian varieties in \S\ref{sec:hodge_classes_on_twisted_abelian_varieties}.
 \end{remark}

%%%%%%%%%%%%%%%%%%%%%%%%%%%%%%%%%%%%%%%%%%%%%%%%%%%%%%

\section{Calabi--Yau categories} 
\label{section-CY-cats}

In this short section we introduce (families of) Calabi--Yau categories and specialize some of our earlier 
results to this setting. 
The main results of this paper only require $3$-dimensional Calabi--Yau categories, 
but as it is no harder we consider arbitrary dimensions. 

Fix a perfect derived algebraic stack $S$, which will serve as the base space for our discussion.

\subsection{Definition and examples}

\begin{definition}
\label{definition-CYn}
Let $n \in \bZ$. 
An \emph{$n$-dimensional Calabi--Yau (CY$n$) category over $S$} is an 
$S$-linear category $\cC$ such that: 
\begin{enumerate}
\item \label{CYn-connected-smooth-proper}
$\cC$ is smooth, proper, and connected over $S$. 
\item \label{CYn-Serre} 
$\rS_{\cC/S} = (- \otimes L)[n]$ for a line bundle $L$ on $S$. 
\end{enumerate} 
\end{definition} 

This property is preserved under base change; in particular, the fibers $\cC_s$ of $\cC$ are CY$n$ over $\kappa(s)$ for all $s \in S$. 

\begin{lemma}
Let $\cC$ be a CY$n$ category over $S$. 
Let $T \to S$ be a morphism from a perfect derived algebraic stack. 
Then $\cC_T$ is a CY$n$ category over $T$. 
\end{lemma}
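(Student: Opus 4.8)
The plan is to verify the two conditions of Definition~\ref{definition-CYn} for $\cC_T$ over $T$, using the base change results already established. For condition~\eqref{CYn-connected-smooth-proper}, smoothness and properness of $\cC_T$ over $T$ follow immediately from Lemma~\ref{lemma-smooth-proper-bc}, and connectedness follows from the base change statement for Hochschild cohomology: by Definition~\ref{definition-connected}, $\cC$ being connected over $S$ means that for every perfect scheme $T' \to S$ we have $\cHH^i(\cC_{T'}/T') = 0$ for $i < 0$ and $\cO_{T'} \xrightarrow{\sim} \cHH^0(\cC_{T'}/T')$; since any perfect scheme $T' \to T$ is also a perfect (derived algebraic stack, hence in particular a) scheme over $S$ via composition with $T \to S$, and $(\cC_T)_{T'} \simeq \cC_{T'}$, the connectedness of $\cC_T$ over $T$ is a special case of the connectedness of $\cC$ over $S$. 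Strictly, one should note that the base $T$ here is only assumed to be a perfect derived algebraic stack rather than a scheme, but Definition~\ref{definition-connected} only tests against perfect \emph{schemes} mapping in, so this causes no difficulty.

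For condition~\eqref{CYn-Serre}, the key point is the compatibility of the relative Serre functor with base change, which is exactly Lemma~\ref{lemma-bc-Serre}: it gives $\rS_{\cC_T/T} \simeq (\rS_{\cC/S})_T$. Since $\rS_{\cC/S} = (- \otimes L)[n]$ for a line bundle $L$ on $S$, base changing along $T \to S$ yields $(\rS_{\cC/S})_T = (- \otimes L_T)[n]$, where $L_T$ is the pullback of $L$ to $T$ (a line bundle on $T$, since pullback of line bundles along a morphism of perfect derived algebraic stacks gives a line bundle — or more precisely, the action of $\Dperf(S)$ on $\cC$ tensored up to $\Dperf(T)$ sends the invertible object $L$ to the invertible object $L \otimes_{\cO_S} \cO_T$). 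Hence $\rS_{\cC_T/T} = (- \otimes L_T)[n]$ with $L_T \in \Pic(T)$, establishing condition~\eqref{CYn-Serre} for $\cC_T$.

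I do not expect a genuine obstacle here — this is a routine ``stability under base change'' lemma and every ingredient (Lemma~\ref{lemma-smooth-proper-bc} for smoothness and properness, the definition of connectedness which is manifestly closed under further base change, and Lemma~\ref{lemma-bc-Serre} for the Serre functor) has already been recorded in the excerpt. The only mild subtlety worth a sentence in the writeup is the observation that $T$ need not be a scheme while Definition~\ref{definition-connected} is phrased in terms of test schemes; this is harmless precisely because connectedness is defined by a universal property over all perfect test schemes, which is preserved by pulling the whole situation back to $T$. The proof is therefore essentially a three-line assembly of prior results.

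\begin{proof}
We verify the two conditions of Definition~\ref{definition-CYn} for $\cC_T$ over $T$.

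First, $\cC_T$ is smooth and proper over $T$ by Lemma~\ref{lemma-smooth-proper-bc}. For connectedness, let $T' \to T$ be a morphism from a perfect scheme. Composing with $T \to S$ exhibits $T'$ as a perfect scheme over $S$, and $(\cC_T)_{T'} \simeq \cC_{T'}$ by the compatibility of base change with composition. Since $\cC$ is connected over $S$, Definition~\ref{definition-connected} gives $\cHH^i(\cC_{T'}/T') = 0$ for $i < 0$ and $\cO_{T'} \xrightarrow{\sim} \cHH^0(\cC_{T'}/T')$, which is precisely the condition required of $(\cC_T)_{T'}$. Hence $\cC_T$ is connected over $T$, and condition~\eqref{CYn-connected-smooth-proper} holds.

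For condition~\eqref{CYn-Serre}, write $\rS_{\cC/S} = (- \otimes L)[n]$ with $L$ a line bundle on $S$. By Lemma~\ref{lemma-bc-Serre}, the base-changed functor $(\rS_{\cC/S})_T$ is a relative Serre functor for $\cC_T$ over $T$, so $\rS_{\cC_T/T} \simeq (\rS_{\cC/S})_T$. Base changing the formula for $\rS_{\cC/S}$ along $T \to S$ gives $(\rS_{\cC/S})_T \simeq (- \otimes L_T)[n]$, where $L_T$ denotes the image of $L$ under $\Dperf(S) \to \Dperf(T)$; since this functor is symmetric monoidal, $L_T$ is invertible, i.e.\ a line bundle on $T$. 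Therefore $\rS_{\cC_T/T} \simeq (- \otimes L_T)[n]$ with $L_T \in \Pic(T)$, establishing condition~\eqref{CYn-Serre}. Thus $\cC_T$ is a CY$n$ category over $T$.
\end{proof}
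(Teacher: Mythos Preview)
Your proof is correct and takes essentially the same approach as the paper: the paper's proof is a one-sentence citation of the three inputs (Definition~\ref{definition-connected} for connectedness, Lemma~\ref{lemma-smooth-proper-bc} for smoothness and properness, and Lemma~\ref{lemma-bc-Serre} for the Serre functor), and you have simply unpacked each of these in slightly more detail.
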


\begin{proof} 
Connectedness is preserved under base change by Definition~\ref{definition-connected}, 
smoothness and properness are preserved under base change by Lemma~\ref{lemma-smooth-proper-bc}, and condition~\eqref{CYn-Serre} in Definition~\ref{definition-CYn} is preserved under base change by Lemma~\ref{lemma-bc-Serre}. 
\end{proof} 

\begin{remark} 
A variant of Definition~\ref{definition-CYn} in case $n=2$ appears in 
\cite[\S6.1]{IHC-CY2}. The definition there is stronger in that $\cC$ is required to be smooth and proper of geometric origin, but weaker in that condition~\eqref{CYn-Serre} is replaced with its fibral version and connectedness is replaced by a slightly weaker fibral condition (only requiring that $\cHH^0$ is the scalars). 
Whenever we use Hodge theory for categories in the sense of \S\ref{section-Hodge-theory}, we will restrict to CY$n$ categories of geometric origin, but we have not included this condition in our definition as it is not needed for other results. 
\end{remark} 

The following is the most important example of a CY category for this paper. 
\begin{lemma}
\label{lemma-twisted-CYn}
Let $f \colon X \to S$ be a smooth proper morphism of noetherian schemes of relative dimension $n$, with geometrically connected fibers and 
$\omega_f = f^*L$ for a line bundle $L$ on $S$. 
Let $\alpha \in \BrAz(X)$. 
Then $\Dperf(X, \alpha)$ is a CY$n$ category of geometric origin over $S$. 
\end{lemma}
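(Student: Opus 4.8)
The plan is to verify the three defining conditions of a CY$n$ category from Definition~\ref{definition-CYn} for $\cC = \Dperf(X, \alpha)$, together with the ``geometric origin'' requirement, using results already established in the excerpt. First I would check geometric origin: since $\alpha \in \BrAz(X)$, Lemma~\ref{lemma-D-SB} exhibits $\Dperf(X, \alpha)$ as an $X$-linear (hence $S$-linear, via the structure morphism $X \to S$) semiorthogonal component of $\Dperf(P)$, where $\pi \colon P \to X$ is a Severi--Brauer scheme for $\alpha$. As $P \to S$ is smooth and proper (being a Zariski-locally projective bundle over $X$, which is smooth and proper over $S$), this realizes $\Dperf(X, \alpha)$ as an $S$-linear semiorthogonal component of $\Dperf(P)$ for a smooth proper morphism $P \to S$; hence $\Dperf(X,\alpha)$ is smooth and proper of geometric origin over $S$ (this is exactly Example~\ref{example-twisted-smooth-proper}). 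That simultaneously disposes of the smoothness and properness halves of condition~\eqref{CYn-connected-smooth-proper}.

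Next I would establish connectedness over $S$. For this I invoke Lemma~\ref{lemma-connected-Dperf}: $f \colon X \to S$ is flat, proper, and surjective (flat and proper since smooth and proper; surjective because $S$ is covered by the images of the geometrically connected — in particular nonempty — fibers, or one restricts to the scheme-theoretic image and notes the hypotheses are only needed fiberwise), with geometrically reduced (smooth implies geometrically reduced) and geometrically connected fibers. Hence $\Dperf(X,\alpha)$ is connected over $S$. One small point worth a remark: Lemma~\ref{lemma-connected-Dperf} is stated for $\alpha \in \Br(X)$, and here $\alpha \in \BrAz(X)$; this is harmless since there is a natural map $\BrAz(X) \to \Br(X)$ and the twisted category only depends on the image, but if $f$ is not surjective one should first replace $S$ by the open-and-closed locus over which fibers are nonempty, or simply note that connectedness of $\cC$ over $S$ is checked after arbitrary base change $T \to S$ and the fibral hypotheses persist.

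Finally, condition~\eqref{CYn-Serre}: I need the relative Serre functor of $\Dperf(X, \alpha)$ over $S$ to be $(-\otimes L)[n]$ for a line bundle $L$ on $S$. By Example~\ref{example-serre-geometric}, since $f$ is smooth (hence Gorenstein) and proper of relative dimension $n$, the functor $-\otimes \omega_f[n] \colon \Dperf(X,\alpha) \to \Dperf(X,\alpha)$ is a relative Serre functor for $\Dperf(X,\alpha)$ over $S$; and by hypothesis $\omega_f = f^*L$, so this functor is $(-\otimes f^*L)[n]$, which is precisely the action of the line bundle $L \in \Pic(S)$ on $\cC$ (as in Example~\ref{example-geometric-lincat}) shifted by $n$. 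Since relative Serre functors are unique up to canonical equivalence when they exist, this is \emph{the} relative Serre functor, and condition~\eqref{CYn-Serre} holds. Assembling the three conditions gives the claim. I expect no serious obstacle here — the lemma is essentially a bookkeeping consequence of the structural results already proved; the only mild subtlety is matching the surjectivity/fibral hypotheses of Lemma~\ref{lemma-connected-Dperf} to the present setup, and keeping the $\BrAz$ versus $\Br$ distinction straight.
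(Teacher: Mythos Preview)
Your proposal is correct and follows essentially the same approach as the paper's proof: invoke Example~\ref{example-twisted-smooth-proper} for smooth, proper, and geometric origin; Lemma~\ref{lemma-connected-Dperf} for connectedness; and Example~\ref{example-serre-geometric} for the Serre functor. The extra remarks you add about surjectivity and $\BrAz$ versus $\Br$ are harmless elaborations that the paper omits.
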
 

\begin{proof}
By Example~\ref{example-twisted-smooth-proper}, the category $\Dperf(X, \alpha)$ is smooth and proper of geometric origin over $S$. 
By Lemma~\ref{lemma-connected-Dperf}, the category $\Dperf(X, \alpha)$ is connected over $S$. 
Finally, by Example~\ref{example-serre-geometric} the relative Serre functor of $\Dperf(X, \alpha)$ is $(- \otimes L)[n]$. 
\end{proof} 

Many other examples of CY categories can be constructed as semiorthogonal components of Fano varieties, 
using the results of~\cite{kuznetsov-CY}. 
For a survey of the known CY2 examples, see \cite[\S6.2]{IHC-CY2}. 
Below we spell out an interesting CY3 example.  

\begin{example}
\label{example-cubic-sevenfold} 
Let $f \colon X \to S$ be a smooth family of cubic sevenfolds, defined by a section of 
$L \boxtimes \cO_{\bP^8}(3)$ on $S \times \bP^8$ for some line bundle $L$ on $S$. 
Then $\cO_X, \cO_X(1), \dots, \cO_X(5)$ is a relative exceptional collection over $S$ (in the sense of \cite[\S3.3]{stability-families}). 
The \emph{Kuznetsov component} $\Ku(X) \subset \Dperf(X)$ is defined by the $S$-linear semiorthogonal decomposition 
\begin{equation*}
\Dperf(X) = \llangle 
\Ku(X), f^*\Dperf(S), f^*\Dperf(S) \otimes \cO_X(1), \dots, 
f^*\Dperf(S) \otimes \cO_X(5) \rrangle .
\end{equation*} 
The category $\Ku(X)$ is CY3 of geometric origin over $S$. 
Indeed, $\Ku(X)$ is smooth and proper of geometric origin over $S$ by definition, 
connected over $S$ by \cite[Corollary 2.11]{Fano3fold}, 
and has relative Serre functor $\rS_{\Ku(X)/S} \simeq (- \otimes L^{\otimes 2})[3]$ by the results of \cite{kuznetsov-CY}.\footnote{The results of \cite{kuznetsov-CY} are stated for smooth varieties over a field, but they extend directly to the relative setting.} 
\end{example} 

As mentioned in \S\ref{section-IHC-CY3}, the integral Hodge conjecture holds for cubic sevenfolds and their Kuznetsov components:  

\begin{lemma}
\label{lemma-cubic-sevenfold}
Let $X \subset \bP^8$ be a complex smooth cubic sevenfold. 
Then the integral Hodge conjecture holds for $X$ (in all degrees), as well as for $\Ku(X)$. 
\end{lemma}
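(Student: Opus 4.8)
The plan is to reduce the integral Hodge conjecture for the smooth cubic sevenfold $X \subset \bP^8$ and its Kuznetsov component $\Ku(X)$ to elementary facts about the cohomology and Chow groups of $X$. First I would analyze the Hodge structure of $X$: since $X$ is a smooth hypersurface of odd dimension $7$, by Lefschetz and the computation of primitive cohomology of cubics, the only cohomology that is not of Hodge--Tate type in even degree is $\rH^4(X,\bZ)$ and $\rH^{10}(X,\bZ)$ (dual to each other), together with the odd-degree Hodge structure $\rH^7(X,\bZ)$, which is not relevant for the Hodge conjecture. In even degrees $2k \neq 4, 10$ the Hodge classes are spanned over $\bZ$ by powers of the hyperplane class, which are obviously algebraic; so the content is in degree $4$.

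For $\rH^4(X,\bZ)$, the first key step is to observe that the primitive cohomology $\rH^4_{\prim}(X,\bZ)$ carries a Hodge structure of level $\le 1$ (indeed $h^{3,1} = 0$ for a cubic sevenfold — the Hodge numbers of the primitive cohomology are concentrated in $(p,q)$ with $|p-q|\le 1$), so \emph{every} integral class in $\rH^4(X,\bZ)$ is a Hodge class. I would then invoke the geometric fact that $\rH^4(X,\bZ)$ is generated by algebraic cycles: one standard route is that $\rH^4(X,\bZ)$ is generated by the classes of planes $\bP^2 \subset X$ contained in the cubic (the variety of planes on a cubic sevenfold is nonempty and of the expected dimension, and the induced cylinder/Abel--Jacobi-type correspondence surjects onto $\rH^4$), or alternatively one can use that a general complete intersection surface in $X$, together with $h^2$, already spans $\rH^4(X,\bQ)$ and then upgrade to $\bZ$-coefficients using that the Hodge structure has no torsion obstruction because $\rH^*(X,\bZ)$ is torsion-free (hypersurface in $\bP^8$) and the intersection form on middle-dimensional algebraic pieces is unimodular. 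Combining this with the other degrees gives the integral Hodge conjecture for $X$ in all degrees.

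The transfer to $\Ku(X)$ is then formal, and uses the machinery recalled in \S\ref{section-intro-hodge-theory} and \S\ref{section-Hodge-theory}. From the $S$-linear (here $S = \Spec \bC$) semiorthogonal decomposition of Example~\ref{example-cubic-sevenfold} and Theorem~\ref{theorem-Ktop}(1), we get a decomposition $\Ktop[0](\Dperf(X)) \cong \Ktop[0](\Ku(X)) \oplus \bZ^{\oplus 6}$ respecting Hodge structures, where the six copies of $\bZ$ come from $f^*\Dperf(\pt)\otimes \cO_X(i)$ and are spanned by the algebraic classes $[\cO_X(i)]$. Since the integral Hodge conjecture holds for $\Dperf(X)$ (equivalent to the IHC for $X$ by the discussion in \S\ref{section-intro-hodge-theory}, using $\rH^*(X,\bZ)$ torsion-free), and the projection $\Dperf(X) \to \Ku(X)$ sends $\rK_0$ to $\rK_0$, any integral Hodge class on $\Ku(X)$, viewed inside $\Ktop[0](\Dperf(X))$ via the splitting, is a Hodge class there, hence algebraic, hence its image under the projection — which is the original class — is algebraic in $\Ku(X)$.

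The main obstacle I anticipate is not the categorical bookkeeping, which is routine given Theorem~\ref{theorem-Ktop}, but pinning down cleanly the statement that $\rH^4(X,\bZ)$ is generated by algebraic classes \emph{integrally} (as opposed to rationally, which is immediate from the Lefschetz $(1,1)$-type argument plus existence of surfaces). One wants either an explicit family of algebraic surfaces whose classes, together with $h^2$, generate the unimodular lattice $\rH^4(X,\bZ)$, or an appeal to a known result on the integral Hodge conjecture for cubic hypersurfaces / complete intersections with cohomology of level $\le 1$; I would cite the relevant source (e.g.\ results on the integral Hodge conjecture in small degree for hypersurfaces, or the explicit geometry of planes on cubics) rather than reprove it. If necessary, one can also deduce the degree-$4$ case from the integral Hodge conjecture for the intermediate Jacobian / Fano variety of planes, but the cleanest writeup will simply quote the fact that the cycle class map $\CH^2(X) \to \rH^4(X,\bZ)$ is surjective for a smooth cubic sevenfold.
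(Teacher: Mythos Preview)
Your proposal has a genuine gap stemming from a misidentification of the cohomology of a cubic sevenfold. Since $\dim X = 7$ is odd, the middle cohomology is $\rH^7(X,\bZ)$, and by the Lefschetz hyperplane theorem together with Poincar\'e duality \emph{every} even cohomology group $\rH^{2k}(X,\bZ)$ is free of rank $1$. There is no primitive $\rH^4$ to worry about; in particular $\rH^4(X,\bZ) = \bZ \cdot h^2$ is already generated by the hyperplane class, so your entire discussion of $\rH^4_{\prim}$, its level, and planes generating it is vacuous.

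The actual content lies elsewhere and you miss it. Your claim that ``in even degrees $2k \neq 4,10$ the Hodge classes are spanned over $\bZ$ by powers of the hyperplane class'' is false for $2k = 8, 10, 12, 14$: since $\deg X = 3$, the class $h^k$ is only $3$ times the generator of $\rH^{2k}(X,\bZ)$ for $k \ge 4$. To get the generator of $\rH^{2(7-j)}(X,\bZ)$ one needs an algebraic cycle of degree $1$ against $h^j$, and the natural candidate is the class of a linear $\bP^j \subset X$. Thus the real question is whether $X$ contains a line, a $2$-plane, and a $3$-plane. Lines and $2$-planes exist by an easy dimension count, but for $3$-planes the expected dimension of the parameter space is $0$, and one has to argue (via a Chern class computation showing the virtual count is $321489 \neq 0$) that $3$-planes actually exist on every smooth cubic sevenfold. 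This is precisely what the paper does, and it is the step your proposal never reaches. Your categorical transfer to $\Ku(X)$ is fine and matches the paper's use of \cite[Proposition~5.16]{IHC-CY2}.
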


\begin{proof}
By \cite[Proposition 5.16]{IHC-CY2}, it suffices to prove the integral Hodge conjecture for $X$ in all degrees. 
By the Lefschetz hyperplane theorem, it is enough to show that $X$ contains a line, a $2$-plane, and a $3$-plane. 
By a dimension count, the only nontrivial case is a $3$-plane. The expected dimension of the space of $3$-planes is $0$, and 
a standard Chern class computation shows that the virtual number of such planes is nonzero (equal to $321489$), so $3$-planes must exist on $X$. 
\end{proof}

\subsection{Hochschild (co)homology and Chern characters} 
The Calabi--Yau condition implies a close relation between 
Hochschild homology and cohomology. 
Note that for any object $E \in \cC$ of an $S$-linear category, 
there is a canonical morphism 
\begin{equation}
\label{equation-action-morphism}
a_E \colon \cHH^*(\cC/S) \to \cHom_S(E,E)
\end{equation} 
given by evaluation at $E$. 

\begin{corollary}
\label{CY-HH} 
Let $\cC$ be a CY$n$ category over $S$ with $\rS_{\cC/S} = (-\otimes L)[n]$ for a line 
bundle $L$ on $S$. 
\begin{enumerate}
\item \label{CY-HH-co-ho}
There is an equivalence $\cHH_*(\cC/S)^{\vee} \simeq \cHH^*(\cC/S) \otimes L [n]$. 
\item \label{CY-ch-dual}
For any $E \in \cC$, there is a commutative diagram 
\begin{equation*}
    \begin{tikzcd}
    \cHH_*(\cC/S)^{\vee} \ar[rr, "\ch_{E}^{\vee}"] \ar[d, "\sim" {anchor=south, rotate=90}] && \cHom_{S}(E, E)^{\vee}  \ar[d, "\sim" {anchor=north, rotate=90}] \\ 
    \cHH^*(\cC/S) \otimes L [n] \ar[rr, "a_{E} \otimes \id"] && \cHom_S(E,E) \otimes L [n] 
    \end{tikzcd}
\end{equation*} 
where the left vertical arrow is the equivalence of~\eqref{CY-HH-co-ho} 
and the right vertical arrow is Serre duality. 

\item \label{CY-HH-degrees}
If $S$ is a $\bQ$-scheme, then we have $\cHH_*(\cC/S) \in \Dqc^{[-n,n]}(S)$. 
\end{enumerate} 
\end{corollary}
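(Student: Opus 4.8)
\textbf{Proof plan for Corollary~\ref{CY-HH}.}

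The plan is to deduce all three items from the general results of the previous sections, specialized to the Calabi--Yau hypothesis $\rS_{\cC/S} = (-\otimes L)[n]$. For \eqref{CY-HH-co-ho}, I would apply Lemma~\ref{lemma-HH-Serre} with $\Phi = \id_{\cC}$: it gives $\cHH_*(\cC/S)^{\vee} \simeq \cHom_S(\id_{\cC}, \rS_{\cC/S})$, where the right-hand side is the mapping object in $\Fun_S(\cC,\cC)$. Since $\rS_{\cC/S} \simeq (-\otimes L)[n]$, acting by a line bundle and a shift commutes with the $S$-linear structure, so $\cHom_S(\id_{\cC}, \rS_{\cC/S}) \simeq \cHom_S(\id_{\cC}, \id_{\cC}) \otimes L[n] = \cHH^*(\cC/S) \otimes L[n]$; this is the desired equivalence. (One should check that the identification $\cHom_S(\id_\cC, (-\otimes L)[n]) \simeq \cHom_S(\id_\cC,\id_\cC)\otimes L[n]$ is the evident one coming from the projection formula for the $\Dperf(S)$-action; this is a routine unwinding.)

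For \eqref{CY-ch-dual}, I would invoke Lemma~\ref{lemma-ch-dual} with $\Phi = \id_{\cC}$, which produces a commutative square relating $\ch_E^{\vee} \colon \cHH_*(\cC/S)^{\vee} \to \cHom_S(E,E)^{\vee}$ to the evaluation morphism $\eta_E \colon \cHom_S(\id_{\cC}, \rS_{\cC/S}) \to \cHom_S(E, \rS_{\cC/S}(E))$, with left vertical arrow the equivalence of Lemma~\ref{lemma-HH-Serre} and right vertical arrow Serre duality. Then I substitute $\rS_{\cC/S} = (-\otimes L)[n]$ throughout: the source of $\eta_E$ becomes $\cHH^*(\cC/S)\otimes L[n]$ (via the identification used in part \eqref{CY-HH-co-ho}), the target $\cHom_S(E,\rS_{\cC/S}(E))$ becomes $\cHom_S(E,E)\otimes L[n]$, and under these identifications $\eta_E$ --- evaluation of a natural transformation at $E$ --- becomes precisely $a_E \otimes \id_{L[n]}$, where $a_E$ is the action morphism \eqref{equation-action-morphism}. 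Matching up the left and right vertical arrows with those of Lemma~\ref{lemma-ch-dual} gives the stated diagram.

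For \eqref{CY-HH-degrees}, assume $S$ is a $\bQ$-scheme. By Theorem~\ref{theorem-HH}\eqref{theorem-smooth-proper-HH}, each $\cHH_i(\cC/S)$ is a finite locally free sheaf, so $\cHH_*(\cC/S) \in \Dqc^{\leq n}(S)$ is equivalent to $\cHH_i(\cC/S) = 0$ for $i > n$, i.e. $\cHH_*(\cC/S) \in \Dqc^{\geq -n}(S)$ is equivalent to $\cHH_i(\cC/S) = 0$ for $i < -n$; since $\cHH_i(\cC/S)$ is locally free, vanishing can be checked fiberwise, and by Theorem~\ref{theorem-HH}\eqref{base-change-HH} the fibers are $\cHH_i(\cC_s)$. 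So I reduce to $S = \Spec(\bC)$. Now $\cHH^*(\cC)$ sits in nonnegative cohomological degrees: it suffices that $\cHH^i(\cC) = 0$ for $i < 0$, which holds because $\cC$ is connected over $\bC$ (Definition~\ref{definition-connected}). Combining this with part \eqref{CY-HH-co-ho} (which over a point reads $\cHH_*(\cC)^{\vee} \simeq \cHH^*(\cC)[n]$, as $L$ is trivial), we get that $\cHH_*(\cC)^{\vee}$ is supported in cohomological degrees $\geq -n$, hence (using that all cohomologies are finite-dimensional $\bC$-vector spaces, by Theorem~\ref{theorem-HH}\eqref{theorem-smooth-proper-HH} or directly from smoothness and properness) $\cHH_*(\cC)$ is supported in cohomological degrees $\leq n$. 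Finally, the bound $\cHH_*(\cC) \in \Dqc^{\geq -n}(S)$ follows from the same argument applied with $\cC$ replaced by $\cC^{\op}$, which is also CY$n$ with the same $L$, together with the Mukai pairing equivalence $\cHH_*(\cC/S) \simeq \cHH_*(\cC/S)^{\vee}$ of Theorem~\ref{theorem-HH}\eqref{theorem-mukai-pairing}; alternatively, dualize the one-sided bound using that over a $\bQ$-scheme the cohomology sheaves are locally free and $\cHH_*(\cC/S) \simeq \cHH_*(\cC/S)^{\vee}$. I expect the only mildly delicate point to be the bookkeeping in part \eqref{CY-ch-dual} --- making sure the identification of $\eta_E$ with $a_E \otimes \id$ is the right one and is compatible with the vertical identifications --- but this is a matter of carefully tracing through the equivalences already established, not of new mathematical input.
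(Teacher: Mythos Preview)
Your proposal is correct and follows essentially the same approach as the paper: parts \eqref{CY-HH-co-ho} and \eqref{CY-ch-dual} are exactly Lemmas~\ref{lemma-HH-Serre} and~\ref{lemma-ch-dual} specialized to $\Phi = \id_{\cC}$, and part \eqref{CY-HH-degrees} uses connectedness to bound $\cHH^*$, part \eqref{CY-HH-co-ho} to transfer this to $\cHH_*^{\vee}$, and then the Mukai self-duality plus local freeness of the cohomology sheaves to conclude. The only small detour is your reduction to $S = \Spec(\bC)$ in part \eqref{CY-HH-degrees}: this is unnecessary, since connectedness (Definition~\ref{definition-connected}) already gives $\cHH^*(\cC/S) \in \Dqc^{\geq 0}(S)$ directly over $S$, and the rest of the argument goes through without passing to a point.
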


\begin{proof}
\eqref{CY-HH-co-ho} and \eqref{CY-ch-dual} follow from Lemmas~\ref{lemma-HH-Serre} and~\ref{lemma-ch-dual} with $\Phi = \id_{\cC}$. 
For part \eqref{CY-HH-degrees}, note that $\cHH^*(\cC/S) \in \Dqc^{\geq 0}(S)$ by the 
connectedness of $\cC$ over $S$, and therefore $\cHH_*(\cC/S)^{\vee} \in \Dqc^{\geq -n}(S)$ by~\eqref{CY-HH-co-ho}. 
But $\cHH_*(\cC/S)$ is self-dual by Theorem~\ref{theorem-HH}\eqref{theorem-mukai-pairing} and has locally free cohomology sheaves if $S$ is a $\bQ$-scheme by Theorem~\ref{theorem-HH}\eqref{theorem-smooth-proper-HH}, so~\eqref{CY-HH-degrees} follows. 
\end{proof}  

%%%%%%%%%%%%%%%%%%%%%%%%%%%%%%%%%%%%%%%%%%%%%%%%%%%%%%
%%%%%%%%%%%%%%%%%%%%%%%%%%%%%%%%%%%%%%%%%%%%%%%%%%%%%%

\newpage 
\part{Moduli} 
\label{part-moduli} 

%%%%%%%%%%%%%%%%%%%%%%%

\section{Moduli of objects in a category} 
\label{section-moduli} 

Fix a perfect scheme $S$ and a smooth proper $S$-linear category $\cC$. 
In this section, we discuss the moduli space of objects in $\cC$, following work of Lieblich~\cite{lieblich-moduli} and Toen--Vaqui\'{e} \cite{toen-moduli}. 
We will consider three incarnations of this space: as a derived stack, as a stack, and as an algebraic space. 
For applications later in the paper, we will primarily be interested in the latter two more classical incarnations, but the enhancement as a derived stack will be useful for proving results, as its deformation theory is more natural.  

\subsection{Classical moduli spaces} 
\label{section-classical-moduli} 
We consider the moduli functor 
\begin{alignat}{3}
\nonumber \cM_{\gl}(\cC/S) \colon  &  (\Sch/S)^{\op} ~~ &  \to & ~~ \Grpd & \\ 
\label{MCS} & \qquad T & \mapsto 
& \set{E \in \cC_T ~ | ~  \Ext^{<0}_{\kappa(t)}(E_t, E_t) = 0 \text{ for all } t \in T} & ,  
\end{alignat} 
where the right-hand side is considered as a groupoid (by truncating the $\infty$-groupoid spanned by the displayed objects in $\cC_T$). 
Following Lieblich, we call $\cM_{\gl}(\cC/S)$ the \emph{moduli stack of gluable objects} in $\cC$. 
We also consider the \emph{moduli stack of simple gluable objects} 
\begin{equation*} 
s\cM_{\gl}(\cC/S) \subset \cM_{\gl}(\cC/S)
\end{equation*} 
defined by the extra condition that $E \in \cC_T$ is \emph{simple}, i.e. 
$\Hom_{\kappa(t)}(E_t,E_t) = \kappa(t)$ for all $t \in T$. 

\begin{theorem}
\label{theorem-MC}
The functor $\cM_{\gl}(\cC/S)$ is an algebraic stack which is locally of finite presentation over $S$. 
Moreover, $s\cM_{\gl}(\cC/S) \subset \cM_{\gl}(\cC/S)$ is an open substack (hence also algebraic and locally of finite presentation over $S$), and is a $\bG_m$-gerbe over an algebraic space $s\rM_{\gl}(\cC/S)$ locally of finite presentation over $S$. 
\end{theorem}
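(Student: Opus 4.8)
The plan is to reduce everything to the already-established theory of Toën--Vaquié and Lieblich, which constructs the moduli functor $\cM(\cC/S)$ of \emph{all} objects in $\cC$ as a derived (locally geometric) stack, locally of finite presentation over $S$; the substack cut out by the vanishing condition $\Ext^{<0}_{\kappa(t)}(E_t,E_t)=0$ is the classical truncation of a bounded piece. More precisely, first I would observe that the pseudo-functor $\cM_{\gl}(\cC/S)$ is a stack for the étale (indeed fppf) topology: this is a descent statement for objects of $\cC_T$, which follows from the fact that $T \mapsto \cC_T$ satisfies étale descent (a standard property of $S$-linear categories, since base change is a colimit and $\Dperf$ satisfies descent), together with the observation that the conditions ``$\Ext^{<0}(E_t,E_t)=0$'' are local on $T$. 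Then, since $\cC$ is smooth and proper over $S$, for any affine $T \to S$ and any $E \in \cC_T$ the complex $\cHom_T(E,E) \in \Dperf(T)$ is perfect, so $\Ext^i_{\kappa(t)}(E_t,E_t)$ is finite-dimensional and vanishes for $|i| \gg 0$; hence the gluability condition is open on $T$ and $\cM_{\gl}(\cC/S)$ is an open substack of $\cM(\cC/S)$ (in the appropriate truncated sense). Combined with the Artin--Lurie representability criterion applied by Toën--Vaquié/Lieblich — whose hypotheses (obstruction theory governed by $\cHom(E,E)$, which is perfect by smoothness-properness) are verified exactly because $\cC$ is smooth and proper — this shows $\cM_{\gl}(\cC/S)$ is an algebraic stack locally of finite presentation over $S$. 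I would cite \cite{toen-moduli, lieblich-moduli} for this input.

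Next, for the openness of $s\cM_{\gl}(\cC/S)$: the simplicity condition $\Hom_{\kappa(t)}(E_t,E_t)=\kappa(t)$ means $\dim_{\kappa(t)}\Hom(E_t,E_t)=1$. Since $\cHom_T(E,E)$ is perfect and its formation commutes with base change, the function $t \mapsto \dim_{\kappa(t)}\rH^0(\cHom_{\kappa(t)}(E_t,E_t))$ is upper semicontinuous (by cohomology-and-base-change / semicontinuity for perfect complexes), and the unit morphism $\cO_T \to \cHom_T(E,E)$ always gives $\dim \geq 1$; hence the locus where equality holds is open. Pulling back along an atlas of $\cM_{\gl}(\cC/S)$ shows $s\cM_{\gl}(\cC/S)$ is an open substack, so it inherits algebraicity and local finite presentation over $S$.

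Finally, the rigidification: on $s\cM_{\gl}(\cC/S)$ the automorphism group of every object $E$ is exactly $\Aut(E) = \rH^0(\cHom(E,E))^\times = \bG_m$ (the scalars), acting trivially on $E$ itself. Thus $\bG_m$ sits inside the inertia stack as a central subgroup sheaf, flat over $s\cM_{\gl}(\cC/S)$, and one may form the rigidification $s\rM_{\gl}(\cC/S) := s\cM_{\gl}(\cC/S) \fatslash\, \bG_m$ in the sense of Abramovich--Olsson--Vistoli / \cite{stacks-project}; the quotient map $s\cM_{\gl}(\cC/S) \to s\rM_{\gl}(\cC/S)$ is a $\bG_m$-gerbe, and since the inertia of $s\rM_{\gl}(\cC/S)$ is now trivial it is an algebraic space, locally of finite presentation over $S$ because $s\cM_{\gl}(\cC/S)$ is and rigidification preserves this. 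The main obstacle is the first step — verifying that the Artin-type representability criterion genuinely applies to the noncommutative moduli functor — but this is precisely what is done in \cite{toen-moduli} and \cite{lieblich-moduli}, and the role of the smooth-proper hypothesis is exactly to guarantee the perfectness of $\cHom(E,E)$ that makes the deformation theory well-behaved; the remaining semicontinuity and rigidification steps are routine.
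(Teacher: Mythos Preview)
Your proposal is correct and follows essentially the same approach as the paper: both defer to the established literature of Lieblich \cite{lieblich-moduli} and To\"{e}n--Vaqui\'{e} \cite{toen-moduli} for the algebraicity, with the openness of the simple locus and the $\bG_m$-rigidification being routine consequences of the perfectness of $\cHom_T(E,E)$. The paper's proof is terser (pure citation), noting additionally that for the case of an $S$-linear semiorthogonal component $\cC \subset \Dperf(X)$ one can bootstrap from Lieblich's geometric case via \cite[Proposition~9.2 and Lemma~9.8]{stability-families}, whereas you go directly through the general result of \cite{toen-moduli}; but this is a difference of emphasis rather than of mathematical content.
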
 

\begin{proof}
In the case where $\cC = \Dperf(X)$ for $X \to S$ a smooth proper morphism, this was first proved in \cite[Theorem 4.2.1 and Corollary 4.3.3]{lieblich-moduli}. 
As explained in \cite[Proposition 9.2 and Lemma 9.8]{stability-families}, this can be used to deduce the result in case 
$\cC \subset \Dperf(X)$ is an $S$-linear semiorthogonal component (which is the only case needed for the applications in this paper). 
The general case is proved in \cite{toen-moduli}. 
\end{proof} 

\begin{remark}
As shown in~\cite{lieblich-moduli, toen-moduli}, a moduli stack of objects in $\cC$ can be defined under weaker hypotheses than $\cC$ being smooth and proper. 
However, the definition of the moduli stack then becomes more complicated. 
As we will not need this generality for our main results, we stick to the smooth and proper case. 
\end{remark} 

\subsection{Derived moduli spaces} 
\label{section-derived-moduli} 
We use the language of derived algebraic geometry, 
our conventions on which are recalled in \S\ref{section-DAG}. 
The \emph{derived moduli stack of objects} in $\cC$ is the functor 
\begin{equation*}
\fM(\cC/S) \colon (\dAff/S)^{\op} \to \Grpd_{\infty}
\end{equation*}
whose value on $T$ is the $\infty$-groupoid $(\cC_T)^{\simeq}$ obtained from $\cC_T$ by discarding non-invertible $1$-morphisms. 

\begin{remark}
The $T$-points of $\fM(\cC/S)$ for a derived algebraic stack $T$ can be described by the same formula as in the case when $T$ is a derived affine scheme. 
Indeed, we may write $T = \colim_{U \in \dAff/T} U$ as the colimit of the derived affine schemes over $T$. Then 
\begin{align*}
\Hom( T, \fM(\cC/S) ) & \simeq \lim_{U \in \dAff/T} \Hom(U, \fM(\cC/S)) \\ 
& \simeq \lim_{U \in \dAff/T} (\cC_U)^{\simeq} \\ 
& \simeq \left( \lim_{U \in \dAff/T} \cC_U \right)^{\simeq} , 
\end{align*} 
where the last line holds since the operation $(-)^{\simeq}$ of passing to the maximal sub-$\infty$-groupoid is right adjoint to the inclusion $\Grpd_{\infty} \to \Cat_{\infty}$ of $\infty$-groupids into $\infty$-categories. On the other hand, we have 
\begin{align*}
\lim_{U \in \dAff/T} \cC_U & = 
\lim_{U \in \dAff/T} \cC \otimes_{\Dperf(S)} \Dperf(U)  \\ 
& \simeq \cC \otimes_{\Dperf(S)} \left( \lim_{U \in \dAff/T} \Dperf(U) \right) \\ 
& \simeq \cC \otimes_{\Dperf(S)} \Dperf(T) \\ 
& = \cC_T,  
\end{align*} 
where the second line holds since by dualizability of $\cC \in \Cat_S$ the operation $\cC \otimes_{\Dperf(S)} -$ admits a left adjoint given by $\cC^{\vee} \otimes_{\Dperf(S)} -$, 
and the third and fourth lines hold by definition. 
\end{remark} 

Now we can state the main foundational result about the derived moduli stack of objects. 

\begin{theorem}[{\cite[Theorem 3.6]{toen-moduli}}] 
\label{theorem-derived-M-algebraic}
The functor $\fM(\cC/S)$ is a derived locally algebraic stack which is locally of finite presentation over $S$. 
\end{theorem}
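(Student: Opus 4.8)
The plan is to recognize $\fM(\cC/S)$ as the ``moduli of objects'' functor $\mathcal{M}_{\cC}$ studied in~\cite{toen-moduli} and invoke their representability theorem. More precisely, recall that for a smooth proper $S$-linear category (equivalently, a dualizable object of $\Cat_S$), To\"en--Vaqui\'e associate to $\cC$ a derived stack whose functor of points on a derived affine scheme $T = \Spec(A)$ over $S$ is the maximal sub-$\infty$-groupoid of the category of $A$-linear ``pseudo-perfect'' modules over $\cC$; under the smoothness and properness hypotheses this agrees with $(\cC_T)^{\simeq} = (\cC \otimes_{\Dperf(S)} \Dperf(T))^{\simeq}$, which is exactly the value of $\fM(\cC/S)$ at $T$. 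The two functors therefore agree on $\dAff/S$, hence as derived stacks.

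First I would spell out this identification carefully: the key point is that $\cC$ being smooth and proper over $S$ means $\cC$ is a compact object of $\Cat_S$ (equivalently a saturated/dualizable $\Dperf(S)$-linear category in the sense of~\cite{toen-moduli}), so that the ``finite type'' and ``saturation'' hypotheses in~\cite[Theorem~3.6]{toen-moduli} are met; this is precisely where Definition~\ref{definition-smooth-proper} and Lemma~\ref{lemma-smooth-proper-dualizable} enter. Second, I would transcribe the conclusion of~\cite[Theorem~3.6]{toen-moduli}: the resulting derived stack is locally geometric and locally of finite presentation over the base — which in the terminology fixed in \S\ref{section-DAG} is exactly the assertion that $\fM(\cC/S)$ is a derived locally algebraic stack, locally of finite presentation over $S$. (One small caveat worth a remark: To\"en--Vaqui\'e work over a base which is a derived affine scheme or more generally a nice base stack; since we have fixed $S$ to be a perfect scheme, either one cites the relative version directly or reduces to the affine case by descent along an affine \'etale cover of $S$ and then glues, using that local geometricity and local finite presentation are \'etale-local properties.)

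The only genuine content beyond citation is checking that the To\"en--Vaqui\'e moduli functor, whose $T$-points are a priori described via pseudo-perfect $\cC_T$-modules (i.e. $\Dperf(S)$-linear functors $\cC_T \to \Dperf(T)$ landing in perfect complexes), really does coincide with the naive functor $T \mapsto (\cC_T)^{\simeq}$. This is where smoothness and properness of $\cC$ are essential: properness makes every object of $\cC_T$ give a pseudo-perfect module (via $\cHom_T(-, E)$ or its dual), and smoothness together with dualizability (Lemma~\ref{lemma-smooth-proper-dualizable}, in particular the equivalence $\FM_{\cC}$) makes every pseudo-perfect module representable by an object of $\cC_T$; the identification is then induced by $\FM_{\cC \shortrightarrow \Dperf(T)}$ of Definition~\ref{definition-kernel-functors}. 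I expect this compatibility of the two descriptions to be the main (though not deep) obstacle, since it requires being careful about $\infty$-categorical bookkeeping; everything else is a direct appeal to~\cite[Theorem~3.6]{toen-moduli}. In the write-up I would keep this brief, as the identification is already implicit in the formula~\eqref{MCS}-style definition and in the remark preceding the theorem.
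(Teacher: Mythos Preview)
Your proposal is correct, and in fact more detailed than what the paper does: the paper gives no proof at all, simply citing \cite[Theorem~3.6]{toen-moduli} in the theorem header and moving on. Your plan to identify $\fM(\cC/S)$ with the To\"en--Vaqui\'e moduli functor and to verify that the smooth-proper hypothesis on $\cC$ matches their saturated/finite-type hypothesis is the right way to justify the citation, but the paper treats this as understood and omits it entirely.
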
 

One of the key advantages of the derived moduli stack $\fM(\cC/S)$ is that its (co)tangent complex admits a simple description. 

\begin{theorem}[{\cite[Corollary 3.17]{toen-moduli}}] 
\label{theorem-LMder}
Let $g \colon T \to \fM(\cC/S)$ be a morphism from a derived algebraic stack $T$, 
corresponding to an object $E \in \cC_T$. 
Then the pullback of the relative cotangent complex $L_{\fM(\cC/S)/S}$ of $\fM(\cC/S) \to S$ to $T$ is given by 
\begin{equation*}
g^* L_{\fM(\cC/S)/S} \simeq \left( \cHom_T(E,E)[1] \right)^{\vee} . 
\end{equation*} 
\end{theorem}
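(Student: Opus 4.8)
\textbf{Proof plan for Theorem~\ref{theorem-LMder}.}

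The plan is to reduce the computation of the relative cotangent complex to a universal case and then use the known deformation theory of the derived stack of objects. First I would recall the standard fact that the cotangent complex is characterized by its corepresentation of the space of square-zero extensions: for a point $g\colon T \to \fM(\cC/S)$ corresponding to $E \in \cC_T$, and for a quasi-coherent sheaf $\cF$ on $T$, the mapping space $\Map_T(g^* L_{\fM(\cC/S)/S}, \cF)$ should be equivalent to the space of lifts of $E$ to the square-zero extension $T[\cF]$. So the core of the proof is to identify this space of lifts with $\Map_T((\cHom_T(E,E)[1])^\vee, \cF) \simeq \Map_T(\cF^\vee, \cHom_T(E,E)[1]) \simeq \cHom_T(E,E)[1] \otimes \cF$ when $\cF$ is perfect (and then extend by writing a general $\cF$ as a filtered colimit, using that the cotangent complex of a locally finitely presented stack is a perfect-to-compact object, i.e. almost perfect).

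The key steps, in order: (1) reduce to $T$ affine, say $T = \Spec A$ with $A \in \dRing$, and to $\cF$ a connective perfect $A$-module, by standard limit arguments (both sides commute with the relevant colimits since $\fM(\cC/S)$ is locally of finite presentation over $S$ by Theorem~\ref{theorem-derived-M-algebraic}); (2) identify the space of lifts of $E \in \cC_A$ along $\Spec A \to \Spec(A \oplus \cF)$ — here one uses that $\cC_{A \oplus \cF} \simeq \cC_A \times_{\cC_A} \cC_A$ (the derived tensor product $\cC \otimes_{\Dperf(S)} \Dperf(A\oplus\cF)$ unwinds into a pullback, using dualizability of $\cC$ and the square-zero structure), so a lift is the data of a path from $E$ to itself in an appropriate sense, whose space of choices is governed by $\Map_{\cC_A}(E, E \otimes \cF[1]) \simeq \Map_A(\cF^\vee, \cHom_A(E,E)[1])$ using the enrichment~\eqref{equation-cHom} and the mapping object $\cHom_A(E,E)$; (3) assemble these equivalences naturally in $\cF$ to conclude $g^* L_{\fM(\cC/S)/S} \simeq (\cHom_T(E,E)[1])^\vee$ first for perfect $\cF$ and then in general. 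Finally, one checks compatibility with base change in $T$ so that the statement holds for arbitrary derived algebraic stacks $T$, not just affines, by descent.

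The main obstacle I anticipate is step (2): carefully unwinding the $S$-linear structure to compute $\cC_{A\oplus\cF}$ in terms of $\cC_A$ and identifying the fiber of the restriction map $(\cC_{A\oplus\cF})^\simeq \to (\cC_A)^\simeq$ over $E$. This requires the identification of square-zero deformations of an object in a linear category with $1$-cocycles valued in $\cHom(E,E)$, shifted appropriately — essentially a noncommutative avatar of the classical fact that deformations of a complex $E$ are controlled by $\Ext^1(E,E)$ with obstructions in $\Ext^2(E,E)$. In the cited reference \cite{toen-moduli} this is handled via the cotangent complex of the stack of perfect complexes and its functoriality along the $\Dperf(S)$-linear structure; the honest shortcut is to cite \cite[Corollary 3.17]{toen-moduli} directly, which is what the statement attributes it to, and simply verify that the $S$-linear (as opposed to absolute) version follows by the same argument applied relative to $S$, noting that $L_{\fM(\cC/S)/S}$ differs from the absolute cotangent complex only by the pullback of $L_S$, which does not affect the relative formula.
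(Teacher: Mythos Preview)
The paper does not give its own proof of this theorem: it is stated with attribution to \cite[Corollary~3.17]{toen-moduli} and used as a black box. Your proposal is a reasonable sketch of the deformation-theoretic argument underlying that cited result, and your closing remark---that the honest move is simply to cite To\"{e}n--Vaqui\'{e}---is exactly what the paper does.
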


\subsection{Derived enhancements} 
\label{subsection-derived-enhancement} 

We define the \emph{higher moduli stack of object} in $\cC$ to be the classical truncation 
\begin{equation*}
\cM(\cC/S) \coloneqq \fM(\cC/S)_{\cl} \colon (\Aff/S)^{\op} \to \Grpd_{\infty}, 
\end{equation*}
which by Theorem~\ref{theorem-derived-M-algebraic} is a higher locally algebraic stack locally of finite presentation over $S$. 

\begin{example}
\label{example-LM}
Let $\cM \to \cM(\cC/S)$ be a Zariski open immersion. 
Then by Remark~\ref{remark-derived-enhancements} there is a corresponding 
Zariski open immersion of derived locally algebraic stacks 
\begin{equation*}
\fM \to \fM(\cC/S)
\end{equation*} 
which recovers $\cM \to \cM(\cC/S)$ upon taking classical truncations. 
As observed in \cite[Corollary 3.21 and 3.22]{toen-moduli}, 
the natural morphisms $s\cM_{\gl}(\cC/S) \to \cM_{\gl}(\cC/S) \to \cM(\cC/S)$ are Zariski open immersions; thus for example we may take $\cM = s\cM_{\gl}(\cC/S)$ or $\cM = \cM_{\gl}(\cC/S)$, giving Zariski open immersions of derived enhancements 
\begin{equation*}
s\fM_{\gl}(\cC/S) \to \fM_{\gl}(\cC/S) \to \fM(\cC/S) . 
\end{equation*} 
\end{example} 

As illustrated by Theorem~\ref{theorem-LMder}, derived moduli problems often have 
well-behaved cotangent complexes. 
This leads to control on the cotangent complex of the underived moduli problem, via the 
following observation. 

\begin{lemma}[{\cite[Proposition 1.2]{toen-obstruction}}]
\label{lemma-enhancement-obs}
Let $\fX$ be a derived algebraic stack over $S$, let $\cX$ be its truncation, 
and let $i \colon \cX \to \fX$ be the canonical map. 
Then the cone $L_{\cX/\fX}$ of the morphism 
\begin{equation*}
i^*L_{\fX/S} \to L_{\cX/S}
\end{equation*} 
satisfies $L_{\cX/\fX} \in \Dqc^{\leq -2}(\cX)$. 
\end{lemma}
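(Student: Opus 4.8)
\textbf{Proof plan for Lemma~\ref{lemma-enhancement-obs}.} The statement is that for a derived algebraic stack $\fX$ over $S$ with classical truncation $\cX$ and canonical closed immersion $i \colon \cX \to \fX$, the cone $L_{\cX/\fX}$ of the map $i^*L_{\fX/S} \to L_{\cX/S}$ is $2$-connective (lives in $\Dqc^{\leq -2}(\cX)$). The plan is to reduce to a purely local, affine computation, where this becomes a concrete statement about cotangent complexes of animated rings and their connectivity estimates.

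First I would observe that the base $S$ drops out: the transitivity cofiber sequence for the triple $\cX \to \fX \to S$ gives $i^*L_{\fX/S} \to L_{\cX/S} \to L_{\cX/\fX}$, so $L_{\cX/\fX}$ is \emph{independent} of $S$ — it is the relative cotangent complex of $i$ alone. Thus it suffices to bound the connectivity of $L_{\cX/\fX}$ for the absolute closed immersion $i$. Next, since the cotangent complex and its connectivity can be checked smooth-locally (both $\cX$ and $\fX$ admit smooth atlases from derived affines, compatibly, because $i$ induces an equivalence of the underlying small étale/Zariski sites by Remark~\ref{remark-derived-enhancements}), I would reduce to the affine case: $\fX = \Spec(A)$ for an animated ring $A$, and $\cX = \Spec(\pi_0 A)$, with $i$ corresponding to the surjection $A \to \pi_0 A$ whose kernel (fiber) is $1$-connective.

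The heart of the matter is then the following: if $A \to B$ is a map of animated (simplicial commutative) rings which is $1$-connective (i.e.\ $\pi_0 A \to \pi_0 B$ is surjective and — in our case — an isomorphism, with the fiber of $A \to B$ being $1$-connective), then $L_{B/A}$ is $2$-connective. This is a standard connectivity estimate for the cotangent complex; it follows, for instance, from the fact that $L_{B/A}$ is computed by the relative bar/cotangent complex and the first nonvanishing homotopy is $\pi_2 L_{B/A} \cong \pi_1(\fib(A\to B))$ together with higher Tor corrections, all in degrees $\geq 2$. Concretely, one can appeal to \cite[Chapter 25]{SAG} or to the analogous statement for simplicial commutative rings: a surjection of animated rings inducing an isomorphism on $\pi_0$ has $1$-connective fiber, hence $2$-connective relative cotangent complex (the general principle being that $L_{B/A}$ is $(n+1)$-connective when $A \to B$ is $n$-connective). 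Applying this with $A \to \pi_0 A$, whose fiber is visibly $1$-connective, yields $L_{\pi_0 A / A} \in \Dqc^{\leq -2}$, which is exactly the claim.

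The main obstacle is not conceptual but bookkeeping: making sure the smooth-local reduction is carried out cleanly, in particular checking that a smooth atlas $\fU \to \fX$ by derived affines pulls back to a smooth atlas $\cU = (\fU)_{\cl} \to \cX$ so that $L_{\cX/\fX}$ pulls back to $L_{\cU/\fU}$ (using smooth base change for the cotangent complex and that $\fU_{\cl} = \cX \times_{\fX} \fU$ by Remark~\ref{remark-derived-colimits}). Once that is in place, the affine connectivity estimate is the cited input from \cite{toen-obstruction} (or equivalently \cite{SAG}), and there is nothing further to do. One could alternatively just quote \cite[Proposition 1.2]{toen-obstruction} directly, as the excerpt does, in which case the ``proof'' is simply a pointer; but the self-contained argument above is the natural one to record if a proof is wanted.
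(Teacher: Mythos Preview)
Your proposal is correct. Note that the paper does not supply a proof for this lemma at all; it simply records the statement and cites \cite[Proposition~1.2]{toen-obstruction} as its source. Your argument---dropping the base $S$ via the transitivity triangle, reducing smooth-locally to the affine case, and then invoking the standard connectivity estimate $L_{\pi_0 A/A} \in \Dqc^{\leq -2}$ for animated rings---is exactly the content of the cited reference, so you have effectively written out what the paper chose to black-box.
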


%%%%%%%%%%%%%%%%%%%%%%%%%%%%%%%%%%%%%%%%%%%%%%%%%%%%%%

\section{Groups of autoequivalences} 
\label{section-autoequivalences}  
Fix a perfect scheme $S$ and a smooth proper $S$-linear category $\cC$. 
In this section, we discuss the geometric structure of the group of autoequivalences of $\cC$, which, like the moduli space of objects in $\cC$, comes in three guises. 
In particular, we describe its cotangent complex and study properties of its identity component.

\subsection{Groups of autoequivalences} 
We denote by $\Aut_S(\cC) \in \Grpd_{\infty}$ the space of automorphisms of $\cC$ considered as an object of the $\infty$-category $\Cat_S$ of $S$-linear categories. 
The \emph{stack of autoequivalences} of $\cC$ is the functor 
\begin{alignat}{3}
\nonumber \cAut(\cC/S) \colon  &  (\Sch/S)^{\op} ~~ &  \to & ~~ \Grpd_{\infty} & \\ 
\label{AutC}  & \qquad T & \mapsto 
& \Aut_T(\cC_T) & . 
\end{alignat}  

In Proposition~\ref{proposition-cAut} below we explain the basic properties of $\cAut(\cC/S)$.  
Recall $\Fun_S(\cC,\cC)$ is the $S$-linear category of $S$-linear endofunctors of $\cC$.  
\begin{lemma}
\label{lemma-Fun}
The $S$-linear category $\Fun_S(\cC,\cC)$ is smooth and proper over $S$. 
Further, if $T \to S$ is a morphism from a locally algebraic derived stack $T$, then there is an equivalence 
\begin{equation*}
(\Fun_S(\cC,\cC))_T \simeq \Fun_T(\cC_T, \cC_T). 
\end{equation*} 
of $T$-linear categories. 
\end{lemma}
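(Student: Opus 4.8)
\textbf{Proof plan for Lemma~\ref{lemma-Fun}.}

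\emph{Smoothness and properness.} The plan is to deduce both statements from the dualizability of $\cC$ established in Lemma~\ref{lemma-smooth-proper-dualizable}. Since $\cC$ is smooth and proper over $S$, it is a dualizable object of $\Cat_S$ with dual $\cC^{\vee} = \cC^{\op}$, and by Remark~\ref{remark-product-dualizable} the tensor product $\cC^{\vee} \otimes_{\Dperf(S)} \cC$ is again dualizable, hence smooth and proper over $S$ by the ``only if'' direction of Lemma~\ref{lemma-smooth-proper-dualizable}. Now invoke the equivalence
\begin{equation*}
\FM_{\cC} \colon \cC^{\vee} \otimes_{\Dperf(S)} \cC \xrightarrow{\ \sim\ } \Fun_S(\cC,\cC)
\end{equation*}
of Lemma~\ref{lemma-smooth-proper-dualizable}\eqref{C-times-C}: since being smooth and proper over $S$ is a property preserved under $S$-linear equivalence (it is intrinsic to the object of $\Cat_S$), $\Fun_S(\cC,\cC)$ is smooth and proper over $S$.

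\emph{Base change.} For the second assertion, I would combine the base-change compatibility of the tensor product of linear categories with that of $\FM_{\cC}$. First, since $\cC$ is dualizable with dual $\cC^{\op}$, base change along $T \to S$ gives that $\cC_T$ is dualizable over $T$ with dual $(\cC^{\op})_T \simeq (\cC_T)^{\op}$ (duality data pass to $T$ by base change, as used repeatedly, e.g.\ in the proof of Lemma~\ref{lemma-bc-Serre}). Next, base change is symmetric monoidal, so
\begin{equation*}
\bigl(\cC^{\vee} \otimes_{\Dperf(S)} \cC\bigr)_T \simeq \cC_T^{\vee} \otimes_{\Dperf(T)} \cC_T .
\end{equation*}
Applying $\FM_{\cC_T}$ on the right-hand side and the base change $(\FM_{\cC})_T$ on the left, and noting that $\FM_{\cC_T}$ is defined by the same formula $(E,F) \mapsto \cHom_T(E,-)\otimes F$ — which is the base change of the formula defining $\FM_{\cC}$, using that mapping objects are compatible with base change — one obtains a chain of $T$-linear equivalences
\begin{equation*}
\bigl(\Fun_S(\cC,\cC)\bigr)_T \simeq \bigl(\cC^{\vee}\otimes_{\Dperf(S)}\cC\bigr)_T \simeq \cC_T^{\vee}\otimes_{\Dperf(T)}\cC_T \simeq \Fun_T(\cC_T,\cC_T).
\end{equation*}
Alternatively, and perhaps more cleanly, one can argue directly: for dualizable $\cC$, the internal mapping object $\Fun_S(\cC,\cC)$ is $\cC^{\vee}\otimes_{\Dperf(S)}\cC$ by definition of dualizability in a closed symmetric monoidal category, and the same holds over $T$ after base change, so the identification is immediate from the monoidal base-change functor.

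\emph{Main obstacle.} The substantive point is checking that the equivalence $\FM_{\cC}$ is genuinely compatible with base change — i.e.\ that $(\FM_{\cC})_T$ and $\FM_{\cC_T}$ agree under the identification $\bigl(\cC^{\vee}\otimes_{\Dperf(S)}\cC\bigr)_T \simeq \cC_T^{\vee}\otimes_{\Dperf(T)}\cC_T$. This is a compatibility that should already be implicit in \cite{NCHPD}, and it ultimately reduces to the fact that the evaluation and coevaluation morphisms for a dualizable object base change correctly, together with the compatibility of $\cHom_S(-,-)$ with base change for objects of a smooth proper category; I expect this to be the only part requiring care, and it can be handled by the same bookkeeping as in the proof of Lemma~\ref{lemma-bc-Serre}.
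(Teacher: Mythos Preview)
Your argument is correct and is precisely the one underlying the paper's proof, which simply cites \cite[Lemma~2.6]{Fano3fold} and \cite[Lemma~4.3]{IHC-CY2}; those proofs proceed exactly via dualizability and the equivalence $\FM_{\cC} \colon \cC^{\vee}\otimes_{\Dperf(S)}\cC \xrightarrow{\sim} \Fun_S(\cC,\cC)$ as you outline. Note that your ``main obstacle'' is not one: for the bare existence of a $T$-linear equivalence $(\Fun_S(\cC,\cC))_T \simeq \Fun_T(\cC_T,\cC_T)$ it suffices that $\FM_{\cC}$ and $\FM_{\cC_T}$ are each equivalences (Lemma~\ref{lemma-smooth-proper-dualizable} over $S$ and over $T$ respectively), so no compatibility between $(\FM_{\cC})_T$ and $\FM_{\cC_T}$ is required.
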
 

\begin{proof}
As $\cC$ is assumed smooth and proper over $S$, 
the first claim holds by the (proof of) \cite[Lemma 2.6]{Fano3fold}, 
while the second holds by the (proof of) \cite[Lemma 4.3]{IHC-CY2}. 
\end{proof} 

\begin{proposition}
\label{proposition-cAut} 
The functor $\cAut(\cC/S)$ is a higher locally algebraic stack which is locally of finite presentation over $S$ and admits a  
Zariski open immersion 
\begin{equation}
\label{equation-immersion-Aut-Fun}
\cAut(\cC/S) \to \cM(\Fun_S(\cC,\cC)/S). 
\end{equation} 
Moreover, if $\cC$ is a connected $S$-linear category (in the sense of Definition~\ref{definition-connected}), then $\cAut(\cC/S)$ is $1$-truncated (i.e. takes values in $\Grpd$), is an algebraic stack locally of finite presentation over $S$, and is a $\bG_m$-gerbe over a group algebraic space $\Aut(\cC/S)$ locally of finite presentation over $S$. 
\end{proposition}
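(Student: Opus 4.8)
The plan is to analyze the functor $\cAut(\cC/S)$ by comparing it with the moduli stack of objects in the category $\Fun_S(\cC,\cC)$ of $S$-linear endofunctors. First I would recall from Lemma~\ref{lemma-Fun} that $\Fun_S(\cC,\cC)$ is smooth and proper over $S$, and that its formation commutes with base change, so that the general moduli theory of Part~\ref{part-moduli} (Theorems~\ref{theorem-MC}, \ref{theorem-derived-M-algebraic}) applies to it. An $S$-linear autoequivalence is in particular an $S$-linear endofunctor, so there is a natural transformation $\cAut(\cC/S) \to \cM(\Fun_S(\cC,\cC)/S)$ sending a family of autoequivalences to the underlying family of endofunctors; the content of the first assertion is that this is a Zariski open immersion. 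To see this, I would check that being an equivalence is a Zariski-open condition in families: given a family $\Phi \in \Fun_T(\cC_T,\cC_T)$, I would use that $\Phi$ admits both adjoints (since $\cC_T$ is smooth and proper, cf.\ \cite[Lemma 4.13]{NCHPD}), and that the locus where the unit and counit of the adjunction are equivalences is open. Concretely, one can spread out: if $\Phi_t$ is an equivalence at a point $t \in T$, then the cones of the unit and counit are perfect complexes on $\cC$-bimodule data whose vanishing is an open condition, so $\Phi$ is an equivalence in a Zariski neighborhood of $t$. This, together with the fact that $\cM(\Fun_S(\cC,\cC)/S)$ is higher locally algebraic and locally of finite presentation over $S$, gives the same for $\cAut(\cC/S)$ and the open immersion~\eqref{equation-immersion-Aut-Fun}.

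Next I would treat the connected case. The point here is that connectedness of $\cC$ feeds into a connectedness-type statement for $\Fun_S(\cC,\cC)$, controlling the negative Ext groups of the identity functor and its deformations. Specifically, for $\Phi \in \cAut(\cC_T/T)$, I would compute $\Ext^{<0}_{\kappa(t)}(\Phi_t,\Phi_t)$ in $\Fun_{\kappa(t)}(\cC_t,\cC_t)$; since $\Phi_t$ is an equivalence, composing with $\Phi_t^{-1}$ identifies this with $\Ext^{<0}_{\kappa(t)}(\id_{\cC_t},\id_{\cC_t}) = \HH^{<0}(\cC_t/\kappa(t))$, which vanishes by the connectedness of $\cC$ (Definition~\ref{definition-connected}), and moreover $\Ext^0(\id,\id) = \HH^0(\cC_t/\kappa(t)) = \kappa(t)$. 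Hence the open immersion~\eqref{equation-immersion-Aut-Fun} factors through the open substack $s\cM_{\gl}(\Fun_S(\cC,\cC)/S)$ of \emph{simple gluable} objects. By Theorem~\ref{theorem-MC}, $s\cM_{\gl}(\Fun_S(\cC,\cC)/S)$ is an algebraic stack locally of finite presentation over $S$ which is a $\bG_m$-gerbe over an algebraic space; pulling this structure back along the open immersion shows $\cAut(\cC/S)$ is $1$-truncated, algebraic, locally of finite presentation over $S$, and a $\bG_m$-gerbe over an algebraic space, which we name $\Aut(\cC/S)$.

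Finally I would address the \emph{group} structure on $\Aut(\cC/S)$. Composition of autoequivalences endows $\cAut(\cC/S)$ with the structure of a group object in stacks over $S$ (indeed it is the automorphism space of $\cC$ in the $\infty$-category $\Cat_S$, functorially in $T$), with unit the identity functor. The $\bG_m$ appearing in the gerbe structure over $\Aut(\cC/S)$ is the automorphisms of an object of $s\cM_{\gl}$, i.e.\ the scalars $\Hom(\Phi,\Phi)^\times = \bG_m$; since these scalar automorphisms are central and compatible with composition (the canonical $\bG_m$ inside $\Aut$ of a simple object is natural), the group structure on $\cAut(\cC/S)$ descends to the rigidification $\Aut(\cC/S)$, making it a group algebraic space locally of finite presentation over $S$. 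I expect the \textbf{main obstacle} to be the first part: carefully verifying that ``is an equivalence'' is an open condition in a family of $S$-linear endofunctors, i.e.\ constructing the open immersion~\eqref{equation-immersion-Aut-Fun}, since this requires spreading out the invertibility of the unit/counit of adjunction using perfectness of the relevant bimodule objects and cannot be reduced to a pointwise statement without argument. The connectedness and group-structure parts are then comparatively formal, being transported directly from the corresponding facts about $s\cM_{\gl}$ in Theorem~\ref{theorem-MC}.
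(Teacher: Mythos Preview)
Your proposal is correct and follows essentially the same strategy as the paper. For the open immersion~\eqref{equation-immersion-Aut-Fun}, the paper simply cites \cite{toen-moduli} (the argument after Lemma~3.25 there) rather than spelling out the adjoint/unit--counit argument you sketch; your version is a reasonable unpacking of that citation. For the connected case, the paper computes the homotopy groups $\pi_i(\Aut_T(\cC_T),\Phi)$ directly via \cite[Lemma~3.3]{Fano3fold} to obtain $1$-truncatedness, then invokes \cite[Lemma~2.19]{toen-moduli} and $\bG_m$-rigidification, whereas you route through the open substack $s\cM_{\gl}(\Fun_S(\cC,\cC)/S)$ and pull back the conclusion of Theorem~\ref{theorem-MC}. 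These are the same computation (your $\Ext^{\leq 0}(\Phi_t,\Phi_t)\cong\HH^{\leq 0}(\cC_t/\kappa(t))$ is exactly the homotopy group calculation), just packaged differently; your packaging is slightly more economical since it reuses Theorem~\ref{theorem-MC} rather than appealing to external rigidification results.
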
 

\begin{proof} 
This essentially follows from the arguments in the proof of \cite[Corollary 3.24]{toen-moduli}, but for convenience we sketch the proof. 
The morphism $\cAut(\cC/S) \to \cM(\Fun_S(\cC,\cC)/S)$ is given on $T \in \Sch/S$ by the natural morphism $\Aut_T(\cC_T) \to \Fun_T(\cC_T, \cC_T) \simeq (\Fun_S(\cC, \cC))_T$, where the equivalence is given by Lemma~\ref{lemma-Fun}. 
That this morphism is a Zariski open immersion is shown just after Lemma 3.25 in \cite{toen-moduli}. 
It follows that $\cAut(\cC/S)$ is a locally algebraic stack locally of finite presentation over $S$, as $\cM(\Fun_S(\cC,\cC)/S)$ is one. 

If $\cC$ is connected, then \cite[Lemma 3.3]{Fano3fold} implies that for any point $\Phi \in \Aut_T(\cC_T)$ we have 
\begin{equation*}
\pi_i(\Aut_T(\cC_T), \Phi) \cong 
\begin{cases}
\cO_T(T)^{\times} & \text{if } i = 1, \\
0 & \text{if } i \geq 2. 
\end{cases} 
\end{equation*} 
Therefore, $\cAut(\cC/S)$ is $1$-truncated. 
It follows that $\cAut(\cC/S)$ is an algebraic stack by \cite[Lemma 2.19]{toen-moduli}. 
Finally, the algebraic space $\Aut(\cC/S)$ can be obtained as the $\bG_m$-rigidification of $\cAut(\cC/S)$.  
\end{proof} 

By our discussion of derived enhancements in \S\ref{subsection-derived-enhancement}, it follows that the Zariski open immersion \eqref{equation-immersion-Aut-Fun} corresponds to a Zariski open immersion of derived locally algebraic stacks 
\begin{equation*}
\fAut(\cC/S) \to \fM(\Fun_S(\cC,\cC)/S)
\end{equation*} 
which recovers \eqref{equation-immersion-Aut-Fun} upon taking classical truncations.
Explicitly, 
\begin{equation*}
\fAut(\cC/S) \colon (\dAff/S)^{\op} \to \Grpd_{\infty}
\end{equation*} 
is defined by the same formula~\eqref{AutC} as $\cAut(\cC/S)$ but on derived schemes $T$. 
We call $\fAut(\cC/S)$ the \emph{derived stack of autoequivalences} of $\cC$. 

\begin{remark}
By construction, $\cAut(\cC/S)$ is a group stack over $S$ (i.e. a group object in the category $\St/S$) which acts on the stacks 
\begin{equation*}
s\cM_{\gl}(\cC/S) \subset \cM_{\gl}(\cC/S) \subset \cM(\cC/S)
\end{equation*} 
while $\fAut(\cC/S)$ is a group derived stack over $S$ (i.e. a group object in the category $\dSt/S$) which acts on the derived enhancements  
\begin{equation*}
s\fM_{\gl}(\cC/S) \subset \fM_{\gl}(\cC/S) \subset \fM(\cC/S). 
\end{equation*}  
When $\cC$ is a connected $S$-linear category, then the action of $\cAut(\cC/S)$ on $s\cM_{\gl}(\cC/S)$ induces an action of the group algebraic space $\Aut(\cC/S)$ on $s\rM_{\gl}(\cC/S)$. 
\end{remark} 

\subsection{The cotangent complex} 

\begin{lemma}
\label{lemma-LG}
Let $\fG$ be a group derived algebraic stack over $S$, with
identity section $e \colon S \to \fG$. 
Then there is an equivalence 
\begin{equation*}
L_{\fG/S} \simeq e^*L_{\fG/S} \otimes \cO_{\fG}. 
\end{equation*} 
\end{lemma}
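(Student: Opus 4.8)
The statement is a standard fact: the cotangent complex of a group object is trivialized by translation, i.e. it is the pullback of its own fiber at the identity. The plan is to exploit the group structure --- in particular the multiplication map $m\colon \fG \times_S \fG \to \fG$ --- to produce the desired equivalence.

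First I would set up notation: let $p_1, p_2 \colon \fG \times_S \fG \to \fG$ be the two projections, $m \colon \fG \times_S \fG \to \fG$ the multiplication, and $e \colon S \to \fG$ the identity section. Fix an $S$-point (or base change to an arbitrary $T$-point) and observe that the key geometric input is that for each point $g$, right translation $r_g \colon \fG \to \fG$ is an automorphism over $S$ with inverse $r_{g^{-1}}$; this already shows that the ``fibers'' of $L_{\fG/S}$ at any two points are identified. To make this functorial (and hence prove the statement as an equivalence of complexes on $\fG$), I would instead argue with the multiplication map directly: the composite $\fG \simeq \fG \times_S S \xrightarrow{\id \times e} \fG \times_S \fG \xrightarrow{m} \fG$ is the identity, while $m$ restricted along the other inclusion $\fG \xrightarrow{\sim} S \times_S \fG \xrightarrow{e \times \id} \fG \times_S \fG$ is also the identity.

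The main step is to analyze the cotangent complex of $m \colon \fG \times_S \fG \to \fG$. Using the transitivity triangle for $\fG \times_S \fG \xrightarrow{m} \fG \to S$ together with the fact that $\fG \times_S \fG \to \fG$ is (via $p_2$, say) itself a base change of $\fG \to S$, I would identify $m^* L_{\fG/S}$ in terms of $L_{\fG/S}$ pulled back along both projections. Concretely, the standard trick is: $(\id, e)^* m^* L_{\fG/S} \simeq L_{\fG/S}$ since $m \circ (\id, e) = \id_{\fG}$, and on the other hand one computes $m^* L_{\fG/S}$ via the two sections to get a splitting $p_1^* L_{\fG/S} \oplus p_2^* L_{\fG/S}$-type description near the identity, which upon restricting one factor to the identity yields $L_{\fG/S} \simeq p_2^* L_{\fG/S}|_{\{e\} \times \fG} \oplus (\text{pullback of } e^* L_{\fG/S})$ --- and the first summand vanishes because $S \to \fG \xrightarrow{e} \fG$ composes to a section whose relative cotangent complex over $S$ is zero. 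I would package this cleanly by noting that $L_{(\fG\times_S\fG)/\fG}$ (relative to $p_2$) is $p_1^* L_{\fG/S}$, pulling back the transitivity triangle for $m$ along $(e \times \id)\colon \fG \to \fG \times_S \fG$, and using $m \circ (e \times \id) = \id$.

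I expect the main obstacle to be bookkeeping rather than conceptual: carefully tracking which projection/section is being used and making sure the identifications are natural (not just pointwise), so that the final statement is genuinely an equivalence of objects in $\Dqc(\fG)$ and not merely a fiberwise isomorphism. One clean way around this that I would likely adopt: since $L_{\fG/S}$ is a perfect complex (as $\fG$ is locally of finite presentation over $S$ in our applications, though in general one works with the cotangent complex regardless), it suffices to produce the trivialization over an fppf or smooth cover, and the translation argument does this canonically; alternatively, invoke the universal property of $\fG$ as a group object to get the comultiplication-type structure on $L_{\fG/S}$ directly. Either route reduces the problem to the transitivity triangle computation sketched above, and I would present whichever is shortest. (In the worst case one can cite that this is \cite[Part III]{DAG-gaitsgory-1}-style formalism, but a direct two-line derived-scheme argument as above should suffice.)
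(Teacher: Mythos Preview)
Your setup is right --- use the multiplication $m$ and the section $b = (\id_{\fG}, e \circ \pi)$ --- and this is exactly what the paper does. But your execution is muddled: you conflate the cotangent complex of $\fG \times_S \fG$ relative to $m$ with the one relative to $p_2$ without justifying why they agree, and the claim that ``$p_2^* L_{\fG/S}|_{\{e\} \times \fG}$ vanishes'' is simply wrong (that restriction is $L_{\fG/S}$, not zero). The transitivity-triangle-plus-splitting route you sketch can be made to work, but only after you know $L_{(\fG\times_S\fG)/\fG, m} \simeq L_{(\fG\times_S\fG)/\fG, \pr_i}$, which is precisely the point you never address.

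The paper bypasses all of this with one observation you do not make: the square
\[
\begin{tikzcd}
\fG \times_S \fG \ar[r, "m"] \ar[d, "\pr_1"'] & \fG \ar[d, "\pi"] \\
\fG \ar[r, "\pi"] & S
\end{tikzcd}
\]
is \emph{cartesian} (the shearing automorphism $(g,h) \mapsto (g, gh)$ of $\fG \times_S \fG$ over $\fG$ via $\pr_1$ takes $\pr_2$ to $m$). Base change for the cotangent complex then gives $m^* L_{\fG/S} \simeq L_{(\fG \times_S \fG)/\fG} \simeq \pr_2^* L_{\fG/S}$ in one stroke. Pulling back along $b$, using $m \circ b = \id_{\fG}$ and $\pr_2 \circ b = e \circ \pi$, yields $L_{\fG/S} \simeq \pi^* e^* L_{\fG/S}$ immediately --- no triangles, no splittings, and naturality is automatic.
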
 

\begin{proof}
Consider the cartesian diagram 
\begin{equation*} 
    \begin{tikzcd}
\fG \times_S \fG \ar[r, "m"] \ar[d, "\pr_{1}"'] & \fG \ar[d, "\pi"] \\ 
\fG \ar[r, "\pi"] & S
    \end{tikzcd}
\end{equation*} 
where $m$ is the multiplication map, $\pr_1$ is the first projection, and $\pi$ is the structure morphism. 
If $b = (\id_{\fG}, e \circ \pi) \colon \fG \to \fG \times_S \fG$, then $m \circ b = \id_{\fG}$ and hence pulling back we find an equivalence 
\begin{equation*}
L_{\fG/S} \simeq b^*m^*L_{\fG/S}. 
\end{equation*}  
On the other hand, base change for the cotangent complex gives 
\begin{equation*}
m^*L_{\fG/S} \simeq L_{\fG \times_S \fG/\fG} \simeq \pr_2^* L_{\fG/S}. 
\end{equation*} 
Since $\pr_2 \circ b = e \circ \pi$, combining the above two observations gives 
\begin{equation*}
L_{\fG/S} \simeq b^*\pr_2^*L_{\fG/S} \simeq \pi^* e^* L_{\fG/S}. \qedhere
\end{equation*}
\end{proof}

\begin{corollary}
\label{corollary-LfAut}
Let $\fG$ be a group derived algebraic stack over $S$ which is an open subgroup of $\fAut(\cC/S)$. 
Then there is an equivalence 
\begin{equation*}
L_{\fG/S} \simeq (\cHH^*(\cC/S)[1])^{\vee} \otimes \cO_{\fG}. 
\end{equation*} 
\end{corollary}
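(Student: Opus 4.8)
The plan is to reduce the computation of $L_{\fG/S}$ to the computation of its fiber at the identity section, using Lemma~\ref{lemma-LG}, and then to identify that fiber with $(\cHH^*(\cC/S)[1])^{\vee}$ by exhibiting $\fG$ as an open subspace of $\fM(\Fun_S(\cC,\cC)/S)$ and invoking Theorem~\ref{theorem-LMder}.

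First I would apply Lemma~\ref{lemma-LG} to the group derived algebraic stack $\fG$, with structure morphism $\pi\colon \fG \to S$ and identity section $e\colon S \to \fG$, to obtain $L_{\fG/S} \simeq \pi^* e^* L_{\fG/S} = e^* L_{\fG/S} \otimes \cO_{\fG}$. So it suffices to produce an equivalence $e^* L_{\fG/S} \simeq (\cHH^*(\cC/S)[1])^{\vee}$ of complexes on $S$. Next, since $\fG$ is an open subgroup of $\fAut(\cC/S)$, the inclusion $\fG \hookrightarrow \fAut(\cC/S)$ is an open immersion; composing it with the Zariski open immersion $\fAut(\cC/S) \to \fM(\Fun_S(\cC,\cC)/S)$ from the discussion after Proposition~\ref{proposition-cAut} yields an open immersion $j\colon \fG \to \fM(\Fun_S(\cC,\cC)/S)$. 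Open immersions are formally \'etale, so $L_{\fG/\fM(\Fun_S(\cC,\cC)/S)} \simeq 0$ and the canonical map is an equivalence $L_{\fG/S} \simeq j^* L_{\fM(\Fun_S(\cC,\cC)/S)/S}$; pulling back along $e$ gives $e^* L_{\fG/S} \simeq (j\circ e)^* L_{\fM(\Fun_S(\cC,\cC)/S)/S}$. Here $j\circ e\colon S \to \fM(\Fun_S(\cC,\cC)/S)$ is the $S$-point classifying the object $\id_{\cC} \in \Fun_S(\cC,\cC) = (\Fun_S(\cC,\cC))_S$, because the identity of $\fG$ maps to the identity autoequivalence of $\cC$, which is precisely this object of the endofunctor category.

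Finally, by Lemma~\ref{lemma-Fun} the $S$-linear category $\Fun_S(\cC,\cC)$ is smooth and proper over $S$, so Theorem~\ref{theorem-LMder} applies with $\Fun_S(\cC,\cC)$ in the role of $\cC$ and with the $S$-point $j\circ e$ classifying $\id_{\cC}$, giving
\[
(j\circ e)^* L_{\fM(\Fun_S(\cC,\cC)/S)/S} \simeq \bigl( \cHom_S(\id_{\cC}, \id_{\cC})[1] \bigr)^{\vee}.
\]
By the definition of Hochschild cohomology in \S\ref{section-Hochschild-cohomology} we have $\cHom_S(\id_{\cC}, \id_{\cC}) = \cHH^*(\cC/S)$, so the right-hand side is $(\cHH^*(\cC/S)[1])^{\vee}$. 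Chaining the equivalences from the three steps then yields $L_{\fG/S} \simeq (\cHH^*(\cC/S)[1])^{\vee} \otimes \cO_{\fG}$, as claimed.

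I do not expect a genuine obstacle: the argument is bookkeeping that strings together Lemma~\ref{lemma-LG}, the open immersions of Proposition~\ref{proposition-cAut}, and Theorem~\ref{theorem-LMder}. The only points that deserve a sentence of care are (i) that Theorem~\ref{theorem-LMder} may legitimately be applied with $\Fun_S(\cC,\cC)$ as the base category, which is licensed by its smoothness and properness over $S$ via Lemma~\ref{lemma-Fun}, and (ii) the identification of the $S$-point $j\circ e$ with the one classifying $\id_{\cC}$, which is immediate once the two open immersions are unwound.
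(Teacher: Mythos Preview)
Your proof is correct and follows essentially the same approach as the paper: reduce via Lemma~\ref{lemma-LG} to computing $e^*L_{\fG/S}$, then use that $\fG$ is Zariski open in $\fM(\Fun_S(\cC,\cC)/S)$ with the identity section classifying $\id_{\cC}$, and apply Theorem~\ref{theorem-LMder} together with the definition of Hochschild cohomology. Your write-up is slightly more explicit (spelling out formal \'etaleness of open immersions and invoking Lemma~\ref{lemma-Fun} to justify that $\Fun_S(\cC,\cC)$ falls within the hypotheses of \S\ref{section-moduli}), but the argument is the same.
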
 

\begin{proof}
By Lemma~\ref{lemma-LG}, it suffices to show that 
$e^*L_{\fG/S} \simeq (\cHH^*(\cC/S)[1])^{\vee}$ where $e \colon S \to \fG$ is the identity section. 
By assumption, $\fG$ is a Zariski open inside the derived locally algebraic stack 
$\fM(\Fun_S(\cC,\cC)/S)$, and the composition $S \xrightarrow{ e } \fG \to \fM(\Fun_S(\cC,\cC)/S)$ is classified by the object $\id_{\cC} \in \Fun_S(\cC,\cC)$. 
Therefore, by Theorem~\ref{theorem-LMder} we have 
\begin{equation*}
e^*L_{\fG/S} \simeq (\cHom_{S}(\id_{\cC}, \id_{\cC})[1])^{\vee} = (\cHH^*(\cC/S)[1])^{\vee}, 
\end{equation*} 
where the final equality holds by the definition of Hochschild cohomology. 
\end{proof} 

\begin{lemma}
\label{lemma-L-cAut}
Let $\cG$ be a group higher algebraic stack over $S$ which is an open subgroup of 
$\cAut(\cC/S)$. 
Assume that $\cG \to S$ is smooth. 
Then there is an equivalence 
\begin{equation*}
L_{\cG/S} \simeq \tau^{\geq 0}((\cHH^*(\cC/S)[1])^{\vee} \otimes \cO_{\cG})
\end{equation*} 
\end{lemma}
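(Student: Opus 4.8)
The plan is to deduce the statement about the classical group stack $\cG$ from the derived version in Corollary~\ref{corollary-LfAut} by means of the general comparison in Lemma~\ref{lemma-enhancement-obs}. First I would choose a derived enhancement $\fG$ of $\cG$. Since $\cG$ is an open subgroup of $\cAut(\cC/S)$, the open immersion $\cG \hookrightarrow \cAut(\cC/S)$ corresponds under the derived-enhancement dictionary of \S\ref{subsection-derived-enhancement} to a Zariski open immersion $\fG \hookrightarrow \fAut(\cC/S)$, and $\fG$ is then automatically a group derived algebraic stack over $S$ (the group structure being inherited from $\fAut(\cC/S)$, using that open immersions are monomorphisms) with truncation $\cG$. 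So Corollary~\ref{corollary-LfAut} applies and gives $L_{\fG/S} \simeq (\cHH^*(\cC/S)[1])^{\vee} \otimes \cO_{\fG}$.

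Next I would invoke Lemma~\ref{lemma-enhancement-obs} with $\fX = \fG$, $\cX = \cG$, and $i \colon \cG \to \fG$ the canonical closed immersion (which induces an equivalence of Zariski sites, so in particular $i^*$ is conservative enough for our purposes). The lemma gives a fiber sequence
\begin{equation*}
i^* L_{\fG/S} \to L_{\cG/S} \to L_{\cG/\fG}
\end{equation*}
with $L_{\cG/\fG} \in \Dqc^{\leq -2}(\cG)$. Pulling back the formula for $L_{\fG/S}$ along $i$ and using $i^*\cO_{\fG} \simeq \cO_{\cG}$, we get $i^*L_{\fG/S} \simeq (\cHH^*(\cC/S)[1])^{\vee} \otimes \cO_{\cG}$, where I am (harmlessly) writing $\cHH^*(\cC/S)$ for its pullback to $\cG$. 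Now the key input is that $\cC$ is smooth and proper over $S$, so $\cHH^*(\cC/S) \in \Dperf(S)$ by Theorem~\ref{theorem-HH} (more precisely Lemma~\ref{lemma-HH-co-bc}); moreover $\cHH^*(\cC/S) \in \Dqc^{\geq 0}(S)$. Hence $(\cHH^*(\cC/S)[1])^{\vee}$ lives in cohomological degrees $\leq 1$, i.e.\ in $\Dqc^{\leq 1}$, and therefore so does $i^*L_{\fG/S}$.

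The crucial observation is now a truncation argument on the fiber sequence. Since $\cG \to S$ is smooth, $L_{\cG/S}$ is a perfect complex concentrated in cohomological degrees $[-1,0]$ (it is locally a two-term complex of vector bundles in degrees $-1,0$); in particular $L_{\cG/S} \in \Dqc^{\geq -1}(\cG)$ and $L_{\cG/S} \in \Dqc^{\leq 0}(\cG)$. Applying the long exact sequence of cohomology sheaves to the fiber sequence above, and using $\cH^j(L_{\cG/\fG}) = 0$ for $j > -2$: in degrees $j \geq -1$ the map $\cH^j(i^*L_{\fG/S}) \to \cH^j(L_{\cG/S})$ is an isomorphism. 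Therefore
\begin{equation*}
L_{\cG/S} \simeq \tau^{\geq -1} L_{\cG/S} \simeq \tau^{\geq -1}(i^*L_{\fG/S}) \simeq \tau^{\geq -1}\big((\cHH^*(\cC/S)[1])^{\vee} \otimes \cO_{\cG}\big).
\end{equation*}
It remains only to identify $\tau^{\geq -1}$ of $(\cHH^*(\cC/S)[1])^{\vee}$ with $\tau^{\geq 0}$ of that complex as written in the statement --- but these agree because $(\cHH^*(\cC/S)[1])^{\vee}$ has no cohomology below degree $0$: indeed $\cHH^*(\cC/S) \in \Dqc^{\geq 0}(S)$ forces $(\cHH^*(\cC/S))^{\vee} \in \Dqc^{\leq 0}(S)$ is not quite enough by itself, so here one uses that $\cHH^*(\cC/S)$ is perfect and that its $0$-th cohomology, which contributes the potential degree-$1$ term of $(\cHH^*(\cC/S)[1])^{\vee}$, is precisely what survives; reconciling the indexing ``$\tau^{\geq 0}$'' in the statement with ``$\tau^{\geq -1}$'' above is then a matter of noting the shift $[1]$ moves degree $-1$ to degree $0$. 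The main obstacle is this last bookkeeping step: one must be careful that the truncation in the statement is compatible with the fiber sequence --- i.e.\ that the degree $-1$ part of $L_{\cG/S}$ matches the degree $-1$ part of $(\cHH^*(\cC/S)[1])^{\vee}$, equivalently the degree $0$ Hochschild cohomology $\cHH^0(\cC/S)$ --- which is exactly the content of $\cG$ being smooth with cotangent complex governed by $\cHH^{\leq 1}$, and this is where smoothness of $\cG \to S$ is genuinely used rather than just formal.
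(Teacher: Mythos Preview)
Your overall strategy matches the paper's proof exactly: pass to the derived enhancement $\fG \subset \fAut(\cC/S)$, apply Corollary~\ref{corollary-LfAut} to get $L_{\fG/S} \simeq (\cHH^*(\cC/S)[1])^{\vee} \otimes \cO_{\fG}$, and then compare with $L_{\cG/S}$ via Lemma~\ref{lemma-enhancement-obs}. The problem is a concrete degree error that causes the final truncation to come out wrong.

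You assert that since $\cG \to S$ is smooth, $L_{\cG/S}$ is perfect in cohomological degrees $[-1,0]$. This is false: $\cG$ is an algebraic (Artin) stack, not a Deligne--Mumford stack, and for a smooth morphism of Artin stacks the cotangent complex lies in $\Dqc^{[0,1]}$ (cf.\ Remark~\ref{remark-tor-amp} in the paper, where for general algebraic stacks $L_{\cX/S} \in \Dqc^{\leq 1}$, and the degree-$1$ part is genuinely present when automorphism groups are positive-dimensional, as for a $\bG_m$-gerbe). With the correct range $[0,1]$, the map $i^*L_{\fG/S} \to L_{\cG/S}$ factors through $\tau^{\geq 0}(i^*L_{\fG/S})$, and since $L_{\cG/\fG} \in \Dqc^{\leq -2}$ this factored map is an isomorphism. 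That gives $\tau^{\geq 0}$ on the nose, as in the statement.

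Your incorrect range $[-1,0]$ produces $\tau^{\geq -1}$ instead, and the attempted reconciliation at the end does not work: $\cH^{-1}\big((\cHH^*(\cC/S)[1])^{\vee}\big)$ is (up to duality) controlled by $\cHH^2(\cC/S)$, which is typically nonzero (e.g.\ for $\cC = \Dperf(X)$ with $X$ a Calabi--Yau variety). So $\tau^{\geq -1}$ and $\tau^{\geq 0}$ genuinely differ, and the ``shift $[1]$ moves degree $-1$ to degree $0$'' remark is a non-sequitur. Also note a smaller slip: with cofiber in $\Dqc^{\leq -2}$ the induced map on $\cH^{-1}$ is only surjective, not an isomorphism; this is harmless once you use the correct range for $L_{\cG/S}$, but it is another symptom of the same bookkeeping issue.
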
 

\begin{proof}
Let $\fG$ 
be the open subgroup of $\fAut(\cC/S)$ corresponding to $\cG \subset \cAut(\cC/S)$, 
so that $i \colon \cG \to \fG$ is a derived enhancement. 
As $\cG \to S$ is smooth we have $L_{\cG/S} \in \Dqc^{[0,1]}(\cG)$, 
and therefore the morphism $i^*L_{\fG/S} \to L_{\cG/S}$ factors via 
a morphism $\alpha \colon \tau^{\geq 0}i^*L_{\fG/S} \to L_{\cG/S}$. 
By Lemma~\ref{lemma-enhancement-obs} the morphism 
$\alpha$ induces an isomorphism on cohomology sheaves in degrees $\geq 0$, 
while by Corollary~\ref{corollary-LfAut} the source of $\alpha$ is 
precisely $\tau^{\geq 0}((\cHH^*(\cC/S)[1])^{\vee} \otimes \cO_{\cG})$. 
\end{proof} 

As we explain in Lemma~\ref{lemma-Aut0} below, in nice situations there is a canonical choice of $\cG$ satisfying the hypotheses of 
Lemma~\ref{lemma-L-cAut}.

\subsection{The identity component} 
\label{section-identity-component}
\begin{definition}[Identity component]
\label{definition-identity-component}
If $G \to \Spec(k)$ is a group algebraic space locally of finite type over a field, we denote by $G^{0} \subset G$ the \emph{identity component} of $G$, i.e. the open subgroup given by the connected component of the identity. 
If $G \to S$ is a group algebraic space locally of finite type over a scheme $S$, we consider the union of the the identity components $G_s^0 \subset G_s$, $s \in S$, of all fibers; when this is an open subset $G^0 \subset G$, then it is a subgroup called the \emph{identity component} of $G \to S$, and we say ``the identity component exists'' to signify this situation. 
\end{definition} 

\begin{remark}
When it exists, the formation of the identity component commutes with base change \cite[Lemma 5.1]{kleiman-picard}. 
\end{remark} 

\begin{definition}[Identity component of the stack of autoequivalences]
\label{definition-identity-component-C}
If $\cC$ is a connected $S$-linear category, then Proposition~\ref{proposition-cAut} shows there is a $\bG_m$-gerbe $\cAut(\cC/S) \to \Aut(\cC/S)$ of the stack of autoequivalences over the space of autoequivalences. 
If the identity component of $\Aut(\cC/S) \to S$ exists, then we denote it by $\Aut^0(\cC/S)$, and define the 
\emph{identity component} $\cAut^0(\cC/S)$ of $\cAut(\cC/S)$ as the subgroup given by the fiber product diagram 
\begin{equation*}
\xymatrix{
\cAut^0(\cC/S) \ar[r] \ar[d] & \cAut(\cC/S) \ar[d] \\ 
\Aut^0(\cC/S) \ar[r] & \Aut(\cC/S), 
}
\end{equation*} 
where the horizontal arrows are open immersions and the vertical arrows are $\bG_m$-gerbes. 
\end{definition} 

\begin{lemma}
\label{lemma-Aut0}
Let $\cC$ be a connected, smooth, and proper $S$-linear category over a locally noetherian $\bQ$-scheme $S$. 
Assume that the function $s \mapsto \dim_{\kappa(s)} \HH^1(\cC_s/\kappa(s))$ is constant on $S$, say equal to $d \in \bZ$. 
\begin{enumerate}
\item \label{lemma-Aut0-1}
The identity component $\Aut^0(\cC/S)$ of $\Aut(\cC/S) \to S$ exists, and is of finite type and relative dimension $d$ over $S$. 
\item \label{lemma-Aut0-2} 
If $S$ is reduced, then $\Aut^0(\cC/S)$ (and hence also $\cAut^0(\cC/S)$) is smooth over $S$.
\end{enumerate} 
\end{lemma}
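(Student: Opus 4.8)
The strategy is to reduce everything to the fibers via Proposition~\ref{proposition-cAut} and then to exploit the characteristic-zero hypothesis through Cartier's theorem. Since $\cAut(\cC/S)$ is a $\bG_m$-gerbe over the group algebraic space $G \coloneqq \Aut(\cC/S)$ and $\bG_m$-gerbes are smooth, it will suffice to prove both assertions for $G$, the statements for $\cAut^0(\cC/S)$ then following from Definition~\ref{definition-identity-component-C}. Fix $s \in S$. Then $G_s = \Aut(\cC_s)$ is a group algebraic space locally of finite type over the field $\kappa(s)$, hence a group scheme (a group algebraic space locally of finite type over a field is a scheme); and since $S$ is a $\bQ$-scheme, $\characteristic \kappa(s) = 0$, so Cartier's theorem forces $G_s$ to be smooth over $\kappa(s)$. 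This is the crucial point: the relevant deformation-obstruction space $\HH^2(\cC_s/\kappa(s))$ is in general nonzero, so smoothness is not formal. Thus $G_s^0$ is a smooth connected group scheme of finite type over $\kappa(s)$, of dimension $\dim_{\kappa(s)} T_e G_s$. By Corollary~\ref{corollary-LfAut} the tangent complex of $\fAut(\cC/S)$ at the identity is $\cHH^*(\cC/S)[1]$, and since $\cC$ is connected this has $\cHH^0(\cC/S) = \cO_S$ in degree $-1$ (the $\bG_m$-directions, removed upon rigidification) and $\cHH^1(\cC/S)$ in degree $0$; hence $T_e G_s \cong \HH^1(\cC_s/\kappa(s))$ and $\dim G_s^0 = d$ for all $s$.

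For~\eqref{lemma-Aut0-1}, since $G \to S$ is locally of finite presentation and the function $s \mapsto \dim_{e_s} G_s = d$ is locally constant, the theory of relative identity components (SGA3, Exp.~VI$_{\mathrm B}$, Thm.~3.10, transcribed to algebraic spaces) shows that the fiberwise identity component $G^0 = \bigcup_s G_s^0$ is an open subgroup of $G$ whose formation commutes with base change; being locally of finite presentation over $S$ with finite-type fibers, it is of finite type over $S$, and it has relative dimension $d$ by the fiber computation above. In particular $\Aut^0(\cC/S)$ exists, whence $\cAut^0(\cC/S)$ is defined via Definition~\ref{definition-identity-component-C}, proving~\eqref{lemma-Aut0-1}.

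For~\eqref{lemma-Aut0-2}, the morphism $G^0 \to S$ is locally of finite presentation with geometrically smooth fibers, so only flatness remains, and this is where reducedness of $S$ enters. First, $\cHH^1(\cC/S)$ is a coherent sheaf on the locally noetherian scheme $S$ (as $\cHH^*(\cC/S) \in \Dperf(S)$ by Lemma~\ref{lemma-HH-co-bc}) of constant fiber rank $d$: connectedness gives $\cHH^{<0}(\cC/S) = 0$ and $\cHH^0(\cC/S) = \cO_S$, so the triangle $\cO_S \to \cHH^*(\cC/S) \to \tau^{\geq 1}\cHH^*(\cC/S)$ stays a triangle after pulling back to $\Spec\kappa(s)$ and identifies $\cHH^1(\cC/S) \otimes \kappa(s)$ with $\HH^1(\cC_s/\kappa(s))$ (using Lemma~\ref{lemma-HH-co-bc} and the fact that the lowest cohomology of a derived pullback is the ordinary pullback of the lowest cohomology), which has dimension $d$. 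Over the reduced base $S$, a coherent sheaf of constant fiber rank is locally free, so $\cHH^1(\cC/S)$ is locally free of rank $d$; by translation-invariance of differentials on a group scheme (the classical analog of Lemma~\ref{lemma-LG}) it follows that $\Omega_{G^0/S} \cong \pi^* e^*\Omega_{G^0/S}$ is locally free of rank $d$. Combined with the fibers of $G^0 \to S$ being smooth of constant dimension $d$, the standard flatness criterion for equidimensional smooth-fibered families over a reduced noetherian base shows that $G^0 \to S$ is flat, hence smooth; since $\cAut^0(\cC/S) \to \Aut^0(\cC/S)$ is a $\bG_m$-gerbe, $\cAut^0(\cC/S) \to S$ is smooth as well.

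The principal obstacle I anticipate is the flatness step in~\eqref{lemma-Aut0-2}: the reducedness of $S$ is genuinely needed, and the cleanest implementation is to combine the local freeness of $\cHH^1(\cC/S)$ with the flatness criterion for equidimensional families over a reduced base --- alternatively, one could route the argument through the derived enhancement $\fAut^0(\cC/S)$ using Lemma~\ref{lemma-enhancement-obs} and Lemma~\ref{lemma-L-cAut}, but this requires a bootstrap since Lemma~\ref{lemma-L-cAut} presupposes smoothness. A secondary point requiring care is the transcription of the relative-identity-component results of SGA3 from group schemes to group algebraic spaces.
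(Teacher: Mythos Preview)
Your approach is essentially the paper's: smoothness of the fibers $G_s$ by Cartier's theorem in characteristic zero, identification of $T_eG_s$ with $\HH^1(\cC_s/\kappa(s))$ via Corollary~\ref{corollary-LfAut}, openness of $\bigcup_s G_s^0$ from constancy of the fiber dimension, and flatness over a reduced base from a standard criterion. The paper's references are EGA~IV$_3$, 15.6.3--4 for openness (and universal openness), EGA~IV$_3$, 15.6.7 for flatness when $S$ is reduced, and EGA~IV$_4$, 17.5.1 to conclude smoothness; your SGA3 citation plays the same role as the first of these.

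There is one genuine, if small, gap. The implication ``locally of finite presentation with finite-type fibers $\Rightarrow$ finite type'' is false as stated: consider $\coprod_{p\text{ prime}} \Spec(\bF_p) \to \Spec(\bZ)$, which is locally of finite presentation with fibers that are single points, yet not quasi-compact. The paper closes this gap by invoking the same argument as for Picard schemes \cite[Proposition~5.20]{kleiman-picard}, which exploits the identity section and the uniform bound $d$ on fiber dimension to show quasi-compactness. Separately, your detour in~\eqref{lemma-Aut0-2} through local freeness of $\cHH^1(\cC/S)$ and hence of $\Omega_{G^0/S}$ is unnecessary: the flatness criterion you invoke at the end (equidimensional smooth fibers over a reduced noetherian base, i.e.\ exactly EGA~IV$_3$, 15.6.7 together with the universal openness already established) does not take $\Omega$ as input, so that paragraph can simply be dropped.
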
 

\begin{proof}
\eqref{lemma-Aut0-1} 
By construction, the fiber of $\Aut(\cC/S)$ over $s \in S$ is $\Aut(\cC_s/\kappa(s))$. 
This is a group algebraic space locally of finite presentation over the field $\kappa(s)$, 
which has characteristic $0$ by assumption. 
Therefore $\Aut(\cC_s/\kappa(s))$ is smooth over $\kappa(s)$ 
(see \cite[\href{https://stacks.math.columbia.edu/tag/047N}{Tag 047N}]{stacks-project}). 
Combining Corollary~\ref{corollary-LfAut}, Lemma~\ref{lemma-enhancement-obs}, and the fact that $\cAut(\cC_s/\kappa(s)) \to \Aut(\cC_s/\kappa(s))$ is a $\bG_m$-gerbe, we find that the tangent space to $\Aut(\cC_s/\kappa(s))$ at the identity is 
isomorphic to $\HH^1(\cC_s/\kappa(s))$, whose dimension is independent of $s \in S$. 
In particular, we have shown that the $\Aut^0(\cC_s/\kappa(s))$ are all smooth of the same dimension $d$. 
Now applying \cite[15.6.3 and 15.6.4]{EGAIV3}\footnote{Technically, here and below the results invoked from \cite{EGAIV3} are stated for schemes, but they are also valid for algebraic spaces, cf. \cite{romagny}.}, we find that the union $\Aut^0(\cC/S)$ of the $\Aut^0(\cC_s/\kappa(s))$ is open 
in $\Aut(\cC/S)$, and the structure morphism $\Aut^0(\cC/S) \to S$ is universally open. 
That $\Aut^0(\cC/S) \to S$ is of finite type follows by the same argument as in \cite[Proposition 5.20]{kleiman-picard} for Picard schemes. 
Finally, if $S$ is reduced, then $\Aut^0(\cC/S) \to S$ is flat by \cite[15.6.7]{EGAIV3}, and hence 
smooth by \cite[17.5.1]{EGAIV4}. 
\end{proof} 

The hypotheses of Lemma~\ref{lemma-Aut0} are automatic in the CY setting: 
 
\begin{lemma}
\label{lemma-HH1-CY}
Let $\cC$ be a CY$n$ category over a locally noetherian $\bQ$-scheme $S$. 
Then the function $s \mapsto \dim_{\kappa(s)} \HH^1(\cC_s/\kappa(s))$ is locally 
constant on $S$. 
In particular, the identity component $\Aut^0(\cC/S)$ exists and is of finite type over $S$, and if $S$ is reduced it (as well as $\cAut^0(\cC/S)$) is smooth over $S$. 
\end{lemma}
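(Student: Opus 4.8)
The statement to prove is Lemma~\ref{lemma-HH1-CY}: for a CY$n$ category $\cC$ over a locally noetherian $\bQ$-scheme $S$, the function $s \mapsto \dim_{\kappa(s)} \HH^1(\cC_s/\kappa(s))$ is locally constant, and the consequences for $\Aut^0(\cC/S)$ follow. The plan is to deduce everything from the identification of Hochschild cohomology with (the dual of, shifted) Hochschild homology provided by the Calabi--Yau condition, namely Corollary~\ref{CY-HH}\eqref{CY-HH-co-ho}, combined with the local freeness of Hochschild homology sheaves from Theorem~\ref{theorem-HH}. The key point is that Hochschild homology of a smooth proper category over a $\bQ$-scheme has \emph{locally free} cohomology sheaves of formation compatible with base change, whereas Hochschild cohomology a priori does not have this property — but the CY duality transports the good behavior from homology to cohomology.

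\textbf{Key steps, in order.} First I would invoke Corollary~\ref{CY-HH}\eqref{CY-HH-co-ho}: since $\cC$ is CY$n$ over $S$ with $\rS_{\cC/S} = (-\otimes L)[n]$, there is an equivalence $\cHH_*(\cC/S)^{\vee} \simeq \cHH^*(\cC/S) \otimes L[n]$ in $\Dperf(S)$ (both sides are perfect: $\cHH_*$ by Theorem~\ref{theorem-HH}\eqref{theorem-mukai-pairing} and $\cHH^*$ by Lemma~\ref{lemma-HH-co-bc}, using that $\cC$ is smooth and proper). Passing to the $i$-th cohomology sheaf and using that dualizing and tensoring by the line bundle $L[n]$ shift and reindex, this gives $\cHH^{i}(\cC/S) \simeq (\cHH_{-n+i}(\cC/S))^{\vee} \otimes L$ (up to the precise index bookkeeping, which I will not grind through here). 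Second, by Theorem~\ref{theorem-HH}\eqref{theorem-smooth-proper-HH}, since $S$ is a $\bQ$-scheme and $\cC$ is smooth and proper, each $\cHH_j(\cC/S)$ is a finite locally free sheaf on $S$; hence so is its dual, and so is $\cHH^i(\cC/S)$ for every $i$, in particular $\cHH^1(\cC/S)$. Third, by Theorem~\ref{theorem-HH}\eqref{base-change-HH}, $\cHH_j$ commutes with arbitrary base change on $S$ (for $S$ a $\bQ$-scheme), so combining with the CY duality — whose formation also commutes with base change, since $\rS_{\cC/S}$ base-changes to $\rS_{\cC_T/T}$ by Lemma~\ref{lemma-bc-Serre} and the duality data base-change — we get $g^*\cHH^i(\cC/S) \simeq \cHH^i(\cC_T/T)$ compatibly. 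Taking $g$ to be the inclusion of a point $s$ gives $\HH^1(\cC_s/\kappa(s)) = \cHH^1(\cC/S) \otimes_{\cO_S} \kappa(s)$, which, being the fiber of a locally free sheaf, has locally constant rank. This proves the first assertion.

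\textbf{Consequences for $\Aut^0$.} The remaining claims follow immediately from Lemma~\ref{lemma-Aut0}: having verified that $s \mapsto \dim_{\kappa(s)}\HH^1(\cC_s/\kappa(s))$ is locally constant, I may work locally on $S$ and assume it is constant equal to some $d$; then Lemma~\ref{lemma-Aut0}\eqref{lemma-Aut0-1} gives that $\Aut^0(\cC/S)$ exists and is of finite type (of relative dimension $d$) over $S$, and Lemma~\ref{lemma-Aut0}\eqref{lemma-Aut0-2} gives smoothness of $\Aut^0(\cC/S)$ (hence of the $\bG_m$-gerbe $\cAut^0(\cC/S)$ over it) when $S$ is reduced. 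One should note that ``locally constant'' rather than ``constant'' is harmless here since existence, finite type, and smoothness are all local on $S$.

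\textbf{Expected main obstacle.} The conceptual content is entirely carried by the earlier results, so there is no hard analytic or geometric difficulty; the only thing requiring care is the index/shift bookkeeping in translating $\cHH_*(\cC/S)^{\vee} \simeq \cHH^*(\cC/S)\otimes L[n]$ into a statement about individual cohomology \emph{sheaves} $\cHH^i$ versus $\cHH_j$, and making sure the base-change compatibility of the CY duality isomorphism (not just of each side separately) is legitimately invoked — i.e.\ that the equivalence of Corollary~\ref{CY-HH}\eqref{CY-HH-co-ho} is itself compatible with pullback, which it is because it is built from the relative Serre functor and the evaluation/coevaluation duality data, both of which base-change (Lemma~\ref{lemma-bc-Serre}, Lemma~\ref{lemma-smooth-proper-bc}). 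This is routine but is the one place where a careless argument could go wrong.
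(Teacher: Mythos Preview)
Your proposal is correct and follows essentially the same route as the paper: reduce to the local freeness and base-change compatibility of $\cHH_{n-1}(\cC/S)$ via the CY duality of Corollary~\ref{CY-HH}\eqref{CY-HH-co-ho}, then invoke Lemma~\ref{lemma-Aut0}. The paper simply does this fiberwise rather than globally.

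One small point worth flagging, since you singled it out as the ``expected main obstacle'': the passage from the complex-level equivalence $\cHH_*(\cC/S)^\vee\simeq\cHH^*(\cC/S)\otimes L[n]$ to an isomorphism of individual cohomology \emph{sheaves} is not pure index bookkeeping --- it needs that dualizing a perfect complex with locally free cohomology commutes with taking cohomology, i.e.\ local formality. The paper avoids this by also invoking the Mukai self-duality $\cHH_*(\cC/S)\simeq\cHH_*(\cC/S)^\vee$ of Theorem~\ref{theorem-HH}\eqref{theorem-mukai-pairing}: combining the two gives $\cHH^*(\cC/S)\otimes L[n]\simeq\cHH_*(\cC/S)$ directly, so one reads off $\HH^1(\cC_s/\kappa(s))\simeq\HH_{n-1}(\cC_s/\kappa(s))$ on each fiber with no further work (over a field every complex is formal). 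Adding this one citation would close the gap in your write-up cleanly.
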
 

\begin{proof}
By Theorem~\ref{theorem-HH}\eqref{theorem-mukai-pairing} and Corollary~\ref{CY-HH}\eqref{CY-HH-co-ho}, we have $\HH^1(\cC_s/\kappa(s)) \simeq \HH_{n-1}(\cC_s/\kappa(s))$. 
But by Theorem~\ref{theorem-HH} the vector spaces $\HH_{n-1}(\cC_s/\kappa(s))$ are the fibers of the finite locally free 
sheaf $\cHH_{n-1}(\cC/S)$, and hence their dimensions are locally constant. 
\end{proof} 

When it exists, the identity component of a linear category must preserve all semiorthogonal decompositions: 

\begin{lemma}
\label{lemma-autoequivalence-sod}
Let $\cC$ be a connected, smooth, and proper $S$-linear category over a scheme $S$ such that $\Aut^0(\cC/S) \to S$ exists. 
Let $\cC = \llangle \cC_1, \dots, \cC_n \rrangle$ be an $S$-linear semiorthogonal decomposition. 
Then restriction to any subcategory $\cC_i \subset \cC$ determines a homomorphism 
\begin{equation*}
\Aut^0(\cC/S) \to \Aut^0(\cC_i/S). 
\end{equation*} 
\end{lemma}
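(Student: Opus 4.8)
The statement is that when $\cC = \llangle \cC_1,\dots,\cC_n\rrangle$ is an $S$-linear semiorthogonal decomposition and $\Aut^0(\cC/S)$ exists, restriction to $\cC_i$ gives a homomorphism $\Aut^0(\cC/S)\to\Aut^0(\cC_i/S)$. The plan is to first produce a restriction morphism $\cAut^0(\cC/S)\to\cAut(\cC_i/S)$ (at the level of stacks of autoequivalences, and functorially in the base), then descend it to a morphism of algebraic spaces $\Aut^0(\cC/S)\to\Aut(\cC_i/S)$, and finally observe that the image lands in the identity component $\Aut^0(\cC_i/S)$ by a connectedness argument. The key input for the first step is that, since $\Aut^0(\cC/S)$ is (by definition and by Proposition~\ref{proposition-cAut} / Lemma~\ref{lemma-Aut0}) a subgroup of $\cAut(\cC/S)$, its $T$-points over $T\to S$ are $S$-linear autoequivalences $\Phi$ of $\cC_T$ which, moreover, lie in the connected component of the identity fiberwise over $T$.

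\textbf{Step 1: each autoequivalence in the identity component preserves the semiorthogonal decomposition.} This is essentially Lemma~\ref{lemma-autoequivalence-sod}'s true content, reproved here in the correct generality. Given $T\to S$ and $\Phi\in\Aut^0(\cC/S)(T)$, base change gives an $S$-linear decomposition $\cC_T = \llangle (\cC_1)_T,\dots,(\cC_n)_T\rrangle$. One wants $\Phi((\cC_i)_T) = (\cC_i)_T$ for each $i$. Fiberwise over a geometric point $t$ of $T$, the autoequivalence $\Phi_t$ of $\cC_t$ lies in $\Aut^0(\cC_t/\kappa(t))$, hence is a limit of deformations of the identity; since the set of semiorthogonal components of a fixed length is discrete (an autoequivalence permutes the components of a fixed-length decomposition, and the symmetric group is discrete) and $\Aut^0$ is connected, $\Phi_t$ must fix each $(\cC_i)_t$. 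One then upgrades this fiberwise statement to the statement over $T$: the condition ``$\Phi$ maps $(\cC_i)_T$ into $(\cC_i)_T$'' can be tested on objects, and (by base change for the decomposition and for $\cHom_T$, cf.\ \S\ref{section-small-linear-cat}) the vanishing $\cHom_T(\Phi(E), F)=0$ for $E\in(\cC_i)_T$, $F\in(\cC_j)_T$ with $j>i$ can be checked on fibers over points of $T$ — this is exactly where fiberwise vanishing of a mapping object in $\Dperf(T)$ implies vanishing, which one deduces since $\cC$ is proper over $S$ so these mapping objects are perfect. Thus $\Phi$ restricts to an $S$-linear autoequivalence $\Phi|_{(\cC_i)_T}$ of $(\cC_i)_T$.

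\textbf{Step 2: assemble into a morphism of group algebraic spaces and land in the identity component.} The assignment $\Phi\mapsto \Phi|_{\cC_i}$ is functorial in $T\to S$ and visibly multiplicative (restriction of a composite is the composite of restrictions, and the identity restricts to the identity), so it defines a homomorphism of group $S$-stacks $\cAut^0(\cC/S)\to\cAut(\cC_i/S)$; here one uses that $\cC_i$, being a semiorthogonal component of a smooth proper connected category, is itself smooth, proper, and connected over $S$ (smoothness and properness by \cite[Lemma~4.15]{NCHPD}, and connectedness can be arranged since $\cC_i$ is again a semiorthogonal component — or, since the final conclusion only needs that $\Aut^0(\cC_i/S)$ makes sense, one appeals to the CY hypotheses of the ambient application via Lemma~\ref{lemma-HH1-CY} — but in the generality stated one should note the minor gap that connectedness of $\cC_i$ is an extra hypothesis implicitly needed for $\cAut(\cC_i/S)$ to be an algebraic stack; it holds in all cases where the statement is applied). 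Rigidifying by the central $\bG_m$ on both sides turns this into a homomorphism of group algebraic spaces $\Aut^0(\cC/S)\to\Aut(\cC_i/S)$, compatibly with base change. Finally, since $\Aut^0(\cC/S)\to S$ has connected fibers and sends the identity section to the identity section, and since the formation of the identity component commutes with base change \cite[Lemma~5.1]{kleiman-picard}, the image fiber over each $s\in S$ is a connected subset of $\Aut(\cC_s/\kappa(s))$ containing the identity, hence contained in $\Aut^0(\cC_s/\kappa(s))$; so the morphism factors through the open subgroup $\Aut^0(\cC_i/S)\subset\Aut(\cC_i/S)$, as desired.

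\textbf{Main obstacle.} The only genuine technical point is Step~1: passing from the fiberwise statement ``$\Phi_t$ preserves $(\cC_i)_t$'' to the statement over the total space $T$. Everything else is formal functoriality and rigidification. The fiberwise-to-global passage hinges on (i) base change for semiorthogonal decompositions and for enriched $\Hom$'s, and (ii) the fact that properness of $\cC$ over $S$ makes the relevant mapping complexes perfect over $T$, so that fiberwise vanishing forces vanishing; one should also be slightly careful that ``lands in the identity component'' is a genuinely fiberwise/connectedness input and not automatic — this is the reason the hypothesis ``$\Aut^0(\cC/S)$ exists'' is invoked.
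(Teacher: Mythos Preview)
Your overall architecture matches the paper's: reduce to the fiberwise statement, then use perfectness of mapping complexes to globalize. The fiberwise-to-global passage and Step~2 are fine and essentially identical to the paper's treatment.

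The genuine gap is in the fiberwise argument of Step~1. You write that ``an autoequivalence permutes the components of a fixed-length decomposition, and the symmetric group is discrete.'' This is false: a general autoequivalence $\Phi$ sends the decomposition $\llangle \cC_1,\dots,\cC_n\rrangle$ to the decomposition $\llangle \Phi(\cC_1),\dots,\Phi(\cC_n)\rrangle$, but there is no reason at all that $\Phi(\cC_i)$ should coincide with any of the original $\cC_j$. (For instance, on $\Dperf(\bP^1)=\llangle \cO,\cO(1)\rrangle$, tensoring by $\cO(1)$ produces $\llangle \cO(1),\cO(2)\rrangle$.) So the ``discrete permutation'' picture does not apply, and your connectedness-of-$\Aut^0$ argument does not get off the ground.

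What the paper does instead is prove the needed local-constancy directly (Lemma~\ref{lemma-connected-family-aut-sod}): over an algebraically closed field $k$, given a connected finite-type $k$-scheme $T$ parameterizing autoequivalences $\Phi_t$ of $\cC$ with $\Phi_0(\cD)=\cD$ for some $0\in T(k)$, one shows that for any semiorthogonal component $\cD'$ the set $U(\cD')=\{t\in T(k)\mid \Phi_t(\cD)=\cD'\}$ is \emph{open}. This is done by choosing generators $D\in\cD$ and $E'\in{^\perp}\cD'$ and observing that the vanishing of the perfect complex $\cHom_T(E'_T,\Phi(D_T))$ is an open condition on $t$. Since the $U(\cD')$ partition $T(k)$ into opens and $T$ is connected, one concludes $T(k)=U(\cD)$. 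Applying this with $T$ a connected parameter space inside $\Aut^0(\cC_t)$ containing the identity gives the fiberwise preservation you need. In short: the discreteness you invoke is not a general fact about autoequivalences acting on a finite set, but a semicontinuity-plus-generators argument that must be supplied, and this is precisely the content you are missing.
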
 

\begin{proof}
For a scheme $T$ over $S$, a $T$-point of $\Aut^0(\cC/S)$ corresponds to a $T$-linear autoequivalence $\Phi$ of $\cC_T$ such that for every $t \in T$ the autoequivalence $\Phi_t$ of the fiber $\cC_t$ is contained in $\Aut^0(\cC_t)$. 
We must show that for any $i$, $\Phi$ restricts to a $T$-linear autoequivalence of $(\cC_i)_T$ 
(as it then follows that the restriction is indeed a $T$-point of $\Aut^0(\cC_i)$). 
By renaming, we may assume $T = S$. 
The claim is equivalent to the vanishing of the composite functor $\pr_j \circ \Phi \circ \pr_i$ for all $j \neq i$, where $\pr_k \colon \cC \to \cC_k$ is the projection functor of the semiorthogonal decomposition. 
In general, the vanishing of an $S$-linear functor $\Psi \colon \cC \to \cC$ may be checked on geometric fibers. 
Indeed, for any object $C \in \cC$ and geometric point $s$ of $S$, by base change we have 
\begin{equation*}
\cHom_S(\Psi(C), \Psi(C))_{s} \simeq \cHom_{\kappa(s)}(\Psi_s(C_s), \Psi_s(C_s)); 
\end{equation*}  
if $\Psi_s$ vanishes for every $s$, then so must the complex $\cHom_S(\Psi(C), \Psi(C)) \in \Dperf(S)$ as its fibers do, 
which implies $\Psi$ vanishes. 
Therefore, the claim that $\Phi$ restricts to an $S$-linear autoequivalence of $\cC_i$ reduces to the case where $S = \Spec(k)$ is the spectrum of an algebraically closed field, 
in which case it follows from Lemma~\ref{lemma-connected-family-aut-sod} below. 
\end{proof} 

\begin{lemma}
\label{lemma-connected-family-aut-sod} 
Let $\cC$ be a connected, smooth, and proper $k$-linear category over an algebraically closed field $k$.  
Let $\cD \subset \cC$ be  $k$-linear semiorthogonal component. 
Let $T$ be a connected scheme of finite type over $k$. 
Let $\Phi \in \Aut_T(\cC_T)$ be a $T$-linear autoequivalence. 
Assume there exists a point $0 \in T(k)$ such that $\Phi_0(\cD) = \cD$. 
Then $\Phi_t(\cD) = \cD$ for all $t \in T(k)$. 
\end{lemma}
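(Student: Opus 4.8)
The plan is to show that the locus of $t \in T(k)$ where $\Phi_t(\cD) = \cD$ is both open and closed in $T$; since $T$ is connected and this locus is nonempty (it contains $0$), it is all of $T$. The key is to detect the equality $\Phi_t(\cD) = \cD$ by a numerical invariant that varies nicely with $t$. Concretely, let $\cE \subset \cC$ be a complementary semiorthogonal component, so $\cC = \llangle \cD, \cE\rrangle$ or $\cC = \llangle \cE, \cD\rrangle$ (we may reduce to a two-step decomposition by grouping the other pieces). As in the proof of Lemma~\ref{lemma-autoequivalence-sod}, the condition $\Phi_t(\cD) = \cD$ is equivalent to the vanishing of the composite functor $\pr_{\cE} \circ \Phi_t \circ \iota_{\cD}$ on the fiber $\cC_t$, where $\iota_{\cD}$ is the inclusion and $\pr_{\cE}$ the projection. (One inclusion is automatic for a fixed component once the other is known, using that $\Phi_t$ is an equivalence and the semiorthogonality; alternatively one checks both composites $\pr_{\cE}\circ\Phi_t\circ\iota_{\cD}$ and $\pr_{\cD}\circ\Phi_t\circ\iota_{\cE}$ vanish.)

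First I would form the relative Fourier--Mukai kernel of the functor $\Psi \coloneqq \pr_{\cE} \circ \Phi \circ \iota_{\cD} \colon \cD_T \to \cE_T$, using Lemma~\ref{lemma-kernels}: it corresponds to an object $K_\Psi \in \cD_T^\vee \otimes_{\Dperf(T)} \cE_T$, which is a smooth and proper $T$-linear category. The function $t \mapsto \chi(\cHom_{\kappa(t)}(K_{\Psi,t}, K_{\Psi,t}))$ is locally constant on $T$, since $\cHom_T(K_\Psi, K_\Psi) \in \Dperf(T)$ and Euler characteristics of perfect complexes are locally constant in families. Moreover, $\Psi_t = 0$ if and only if $K_{\Psi,t} = 0$, if and only if $\cHom_{\kappa(t)}(K_{\Psi,t}, K_{\Psi,t}) = 0$; and since $K_{\Psi,t}$ is the kernel of an exact functor between $k$-linear categories arising from a semiorthogonal component, the self-$\Hom$ complex has nonnegative Euler characteristic which is zero exactly when $K_{\Psi,t} = 0$ (its $\Hom^0$ contains the identity when $K_{\Psi,t}\neq 0$). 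Hence $\{t : \Psi_t = 0\}$ is the vanishing locus of a locally constant $\bZ$-valued function, so it is open and closed. Since it contains $0$ and $T$ is connected, $\Psi_t = 0$ for all $t$, i.e. $\Phi_t(\cD) \subseteq \cD^{\perp\perp}$ direction as needed; running the symmetric argument for the other composite gives $\Phi_t(\cD) = \cD$.

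The main obstacle I anticipate is making precise the claim that vanishing of $\Phi_t(\cD) = \cD$ is detected by a \emph{locally constant} invariant rather than merely a semicontinuous one. The cleanest route is the one above via the perfect complex $\cHom_T(K_\Psi, K_\Psi)$ and Euler characteristics, which sidesteps any upper-semicontinuity subtlety: one only needs that $K_\Psi$ is a perfect object over $T$ (which follows from smoothness and properness of $\cD_T^\vee \otimes_{\Dperf(T)} \cE_T$ over $T$, via Lemma~\ref{lemma-smooth-proper-dualizable} and Remark~\ref{remark-product-dualizable}, together with Definition~\ref{definition-smooth-proper}), and that a nonzero object in a $k$-linear smooth proper category has nonzero identity endomorphism. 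A slightly more delicate point is the reduction from an $n$-step semiorthogonal decomposition to the two-step case: here one uses that a semiorthogonal component inside $\cC$ is itself cut out by orthogonality to an explicit admissible subcategory, so $\cD$ together with its ``left'' and ``right'' complements gives a two-step decomposition to which the above applies. These reductions are routine, so the substance is entirely in the locally-constant Euler characteristic argument, which is standard for families of perfect complexes.
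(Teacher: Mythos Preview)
Your Euler-characteristic argument has a genuine gap. You claim that for a nonzero object $K$ in a smooth proper $k$-linear category, $\chi(\cHom_k(K,K)) > 0$ because $\Hom^0(K,K)$ contains the identity. But the presence of a nonzero $\Hom^0$ says nothing about the alternating sum: higher (or negative-degree) Ext groups can cancel it completely. Already for a line bundle $L$ on a curve of genus $g$ one has $\chi(L,L) = 1-g$, which is negative for $g \geq 2$. So the locus $\{t : K_{\Psi,t} = 0\}$ cannot be detected as the vanishing locus of a locally constant integer in this way. You do correctly obtain that this locus is \emph{open} (support of a perfect complex is closed), but you have no argument for closedness, and that is the whole difficulty.

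The paper's proof, following Okawa, circumvents closedness by a partition trick. For \emph{every} semiorthogonal component $\cD' \subset \cC$, the set $U(\cD') = \{t \in T(k) : \Phi_t(\cD) = \cD'\}$ is open: one checks, using generators $D \in \cD$ and $E' \in {}^{\perp}\cD'$, that the vanishing of the perfect complex $\cHom_T(E'_T, \Phi(D_T))$ is an open condition on $t$, and combines this with the analogous statement for $\Phi^{-1}$. Now since $\Phi_t$ is an autoequivalence, $\Phi_t(\cD)$ is always \emph{some} semiorthogonal component of $\cC$, so letting $\Lambda = \{\Phi_t(\cD) : t \in T(k)\}$ one obtains a disjoint open cover $T(k) = \coprod_{\cD' \in \Lambda} U(\cD')$. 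Connectedness of $T$ forces $T(k) = U(\cD)$. The point is that closedness of $U(\cD)$ is never proved directly; instead its complement is exhibited as a union of opens.
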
 

\begin{proof}
The proof is essentially the same as that of \cite[Theorem 3.9]{okawa-nonexistence}, but we include the argument for convenience.

First we claim that for any $k$-linear semiorthogonal component $\cD' \subset \cC$, the set 
\begin{equation*}
U(\cD') = \set{ t \in T(k) \st \Phi_t(\cD) = \cD' } 
\end{equation*} 
is open in $T(k)$. 
It suffices to show that the locus of $t \in T(k)$ with $\Phi_t(\cD) \subset \cD'$ is open, 
since then by replacing $\cD$ with $\cD'$ and $\Phi$ with $\Phi^{-1}$ we also find that the locus of $t \in T(k)$ with $\Phi_t^{-1}(\cD') \subset \cD$ is open. 
Let $\ccE' = {^\perp}\cD'$ be the left orthogonal of $\cD'$, so that there is a semiorthogonal decomposition $\cC = \llangle \cD' , \ccE' \rrangle$. 
Then we must show that the locus of $t \in T(k)$ such that $\cHom_k(E', \Phi_t(D)) = 0$ for all $E' \in \ccE', D \in \cD$ is open. 
It follows from \cite[Lemma~3.9]{antieau-gepner} (see also \cite[Lemma 2.6]{toen-generator}) that $\cC$ admits a generator, and hence (by projection) 
so does any semiorthogonal component. 
The vanishing $\cHom_k(E', \Phi_t(D)) = 0$ for all $E' \in \ccE', D \in \cD$ is equivalent to the vanishing when $E' \in \ccE'$ and $D \in \cD$ are chosen generators. 
Let $E'_T \in \cC_T$ and $D_T \in \cC_T$ be the pullbacks of $E'$ and $D$ along $T \to \Spec(k)$. 
Then by base change, the fiber of the complex
\begin{equation*}
\cHom_T(E'_T, \Phi(D_T)) \in \Dperf(T) 
\end{equation*} 
over $t \in T(k)$ is $\cHom_k(E', \Phi_t(D))$, and hence its vanishing is an open condition on $t$. 
This completes the proof that $U(\cD') \subset T(k)$ is open. 

Now let $\Lambda$ denote the set of semiorthogonal components of $\cC$ obtained as images of $\Phi_t$ for $t \in T(k)$, i.e. 
$\Lambda = \set{ \Phi_t(\cD) \st  t \in T(k) }$. 
Then $T(k) = \coprod_{\cD' \in \Lambda} U(\cD')$ is a disjoint union of open sets. 
By connectedness of $T$ we deduce that $T(k) = U(\cD)$. 
\end{proof}

\begin{lemma}
\label{lemma-Aut0-CYn-proper} 
Let $f \colon X \to S$ be a smooth proper morphism of constant relative dimension $n$ with geometrically connected fibers and 
$\omega_f = f^*L$ for a line bundle $L$ on $S$, where $S$ is a reduced locally noetherian $\bQ$-scheme. 
Let $\alpha \in \BrAz(X)$.  
Then the identity component $\Aut^0(\Dperf(X, \alpha)/S)$ exists and is smooth and proper over $S$. 
\end{lemma}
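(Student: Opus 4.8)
The plan is to verify the two hypotheses of Lemma~\ref{lemma-Aut0} for the category $\cC = \Dperf(X, \alpha)$, which gives existence and smoothness of $\Aut^0(\cC/S)$, and then to establish properness separately. By Lemma~\ref{lemma-twisted-CYn}, the category $\Dperf(X, \alpha)$ is a CY$n$ category of geometric origin over $S$, so Lemma~\ref{lemma-HH1-CY} immediately yields that $s \mapsto \dim_{\kappa(s)}\HH^1(\cC_s/\kappa(s))$ is locally constant, and hence (as $S$ is reduced) that $\Aut^0(\cC/S)$ exists, is of finite type over $S$, and is smooth over $S$. Strictly speaking Lemma~\ref{lemma-Aut0} and Lemma~\ref{lemma-HH1-CY} are stated for the function being constant or locally constant on $S$; if $S$ is not connected one argues component by component, or simply notes that local constancy suffices for the open/universally open conclusions of \cite[15.6.3, 15.6.4]{EGAIV3}. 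This disposes of everything except properness.

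For properness, the key point is that $\Aut^0(\cC/S)$ is a smooth group algebraic space of finite type over $S$, so by the valuative criterion it suffices to check properness on geometric fibers, i.e.\ to show $\Aut^0(\cC_s)$ is proper over $\kappa(s)$ for each geometric point $s$. Thus we reduce to the case $S = \Spec(k)$ with $k$ algebraically closed, $X$ a smooth projective variety of dimension $n$ with $\omega_X \cong \cO_X$, and $\alpha \in \Br(X)$, and we must show $\Aut^0(\Dperf(X,\alpha))$ is proper. The strategy here is to compare with the untwisted case. By Lemma~\ref{lemma-finite-cover-kill} there is a finite flat surjection $\pi \colon Y \to X$ with $\pi^*\alpha = 0$; alternatively, and more directly, pass to a Severi--Brauer variety $P \to X$ of class $\alpha$ and use the $X$-linear semiorthogonal decomposition $\Dperf(P) = \langle \cD_0, \dots, \cD_{m-1}\rangle$ of Lemma~\ref{lemma-D-SB}, in which $\cD_1 \simeq \Dperf(X, \alpha)$. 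Since $P$ is a smooth projective variety, $\Aut^0(\Dperf(P))$ is proper over $k$: indeed the group of autoequivalences of the derived category of a smooth projective variety is built (via the Bondal--Orlov / Rouquier description when $\omega_P$ or $\omega_P^{-1}$ is ample, and in general via the fact that $\Aut^0$ acts through $\mathrm{Aut}^0(P) \ltimes \Pic^0(P)$) from the connected automorphism group scheme and the Picard variety, both of which are proper. By Lemma~\ref{lemma-autoequivalence-sod}, restriction to the component $\cD_1 \subset \Dperf(P)$ gives a homomorphism $\Aut^0(\Dperf(P)) \to \Aut^0(\Dperf(X,\alpha))$, and one shows this is surjective (every element of $\Aut^0(\Dperf(X,\alpha))$ comes from an autoequivalence of $\Dperf(P)$ preserving the decomposition, e.g.\ because $P \to X$ is an étale-locally trivial projective bundle and autoequivalences in the identity component are induced by automorphisms of $P$ over $X$ together with twists by line bundles pulled back from the Picard variety of $X$). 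A surjective image of a proper group algebraic space is proper, so $\Aut^0(\Dperf(X,\alpha))$ is proper over $k$.

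In the relative setting one then concludes: $\Aut^0(\Dperf(X,\alpha)/S) \to S$ is of finite type, and its geometric fibers are proper, and it is also universally closed because properness on fibers plus finite type plus (e.g.) the going-up behaviour established in the proof of Lemma~\ref{lemma-Aut0} via \cite{EGAIV3} upgrades to properness of the morphism; more cleanly, a finite-type group algebraic space over a base all of whose geometric fibers are proper is itself proper over the base (this is standard once one has universal openness from Lemma~\ref{lemma-Aut0}\eqref{lemma-Aut0-1} together with properness of the fibers, via the valuative criterion applied after base change to a trait). Hence $\Aut^0(\Dperf(X,\alpha)/S)$ is smooth and proper over $S$.

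I expect the main obstacle to be the surjectivity claim in the geometric fiber case: one needs to know that every autoequivalence of $\Dperf(X,\alpha)$ lying in the identity component is realised by an honest geometric symmetry — an automorphism of $X$ together with a line bundle twist — and hence lifts to $\Dperf(P)$. Over a field this is essentially the Bondal--Orlov--Rouquier type analysis adapted to the twisted/Calabi--Yau situation; alternatively one can bypass surjectivity by instead arguing that $\Aut^0(\Dperf(X,\alpha))$ is a closed subgroup scheme of $\Aut^0(\Dperf(P))$ cut out by the condition of preserving the component $\cD_1$ (using Lemma~\ref{lemma-connected-family-aut-sod}), which is a closed condition, and a closed subgroup of a proper group scheme is proper. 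This second route avoids the surjectivity question altogether and is probably the cleanest, so I would organise the proof around it: embed $\Aut^0(\Dperf(X,\alpha))$ as (an open-and-)closed subgroup of the proper group $\Aut^0(\Dperf(P))$, deduce properness on fibers, and propagate to the relative statement.
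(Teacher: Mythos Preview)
Your approach has a genuine gap at the heart of the properness argument. The claim that $\Aut^0(\Dperf(P))$ is proper over $k$ is false: a Severi--Brauer variety $P \to X$ is \'{e}tale-locally a projective bundle, so $\Aut^0(P)$ contains the relative $\mathrm{PGL}_m$ over $X$, which is affine and positive-dimensional. (Already in the split case $P = \bP^{m-1} \times X$ one has $\mathrm{PGL}_m \subset \Aut^0(P)$.) The assertion that for a smooth projective variety ``the connected automorphism group scheme and the Picard variety \dots are proper'' is simply wrong for $\Aut^0$ --- think of $\bP^n$ --- and Bondal--Orlov does not help here since neither $\omega_P$ nor $\omega_P^{-1}$ is ample in general. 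Consequently neither your surjectivity route nor your closed-subgroup route can succeed with $P$ as the ambient variety; note also that Lemma~\ref{lemma-autoequivalence-sod} gives a map $\Aut^0(\Dperf(P)) \to \Aut^0(\Dperf(X,\alpha))$, not an embedding in the opposite direction, so the ``closed subgroup'' formulation is backwards.

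The paper's proof fixes exactly this issue by replacing the Severi--Brauer variety with a $\bmu_n$-gerbe $\cX \to X$ of class $\alpha$. Because $\cX \to X$ is \'{e}tale in the Deligne--Mumford sense, the natural map $\Aut^0(\cX/k) \to \Aut^0(X/k)$ is \'{e}tale with finite kernel, and $\Aut^0(X/k)$ is proper since $X$ is Calabi--Yau (it cannot contain $\bG_a$ or $\bG_m$, as $X$ is not ruled). One then constructs a homomorphism $\Aut^0(\cX/S) \times_S \Pic^0(\cX/S) \to \Aut^0(\Dperf(X,\alpha)/S)$ via the orthogonal decomposition of $\Dperf(\cX)$ and Lemma~\ref{lemma-autoequivalence-sod}, and proves it is surjective by a Lie algebra computation: on tangent spaces it becomes the HKR isomorphism $\rH^0(\rT_X) \oplus \rH^1(\cO_X) \cong \HH^1(\Dperf(X)/k)$ composed with the isomorphism $\HH^1(\Dperf(X)/k) \cong \HH^1(\Dperf(X,\alpha)/k)$ of Lemma~\ref{lemma-hochschild-cohomology-twisted-variety}. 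Since the source is proper, the image is proper.
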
 

\begin{proof} 
The category $\Dperf(X, \alpha)$ is CY$n$ over $S$ by Lemma~\ref{lemma-twisted-CYn}. 
Hence $\Aut^0(\Dperf(X, \alpha)/S)$ exists and is smooth and of finite type over $S$ by Lemma~\ref{lemma-HH1-CY}. 
To show properness, we first prove two related group spaces are proper.

Choose a $\bmu_n$-gerbe $\pi \colon \cX \to X$ of class $\alpha$. 
Consider the automorphism and Picard stacks of $\cX$ over $S$, denoted 
$\cAut(\cX/S)$ and $\cPic(\cX/S)$, which are gerbes over algebraic spaces $\Aut(\cX/S)$ and $\Pic(\cX/S)$. 
Arguing as in Lemma~\ref{lemma-HH1-CY} shows that the identity components $\Aut^0(\cX/S)$ and $\Pic^0(\cX/S)$ exist and are of finite type, smooth, and of constant relative dimension over $S$. 
We claim that both are also proper over~$S$. 
For $\Pic^0(\cX/S)$, this holds for instance because the pullback map $\Pic^0(X/S) \to \Pic^0(\cX/S)$ is an isomorphism. 
For $\Aut^0(\cX/S)$, we instead have a natural morphism $\Aut^0(\cX/S) \to \Aut^0(X/S)$. 
In order to show that $\Aut^0(\cX/S) \to S$ is proper, by \cite[15.7.11]{EGAIV3} we may pass to geometric fibers and assume that $S = \Spec(k)$ for an algebraically closed field $k$. 
On Lie algebras, the homomorphism $\Aut^0(\cX/k) \to \Aut^0(X/k)$ induces 
the map $\rH^0(\rT_{\cX/k}) \to \rH^0(\rT_{X/k})$, which is an isomorphism 
since $\rT_{\cX/k} \to \pi^* \rT_{X/k}$ is. 
It follows that $\Aut^0(\cX/k) \to \Aut^0(X/k)$ is \'{e}tale with kernel a finite group scheme $H$ over $k$; in fact, since $\Aut^0(X/k)$ is connected, the homomorphism $\Aut^0(\cX/k) \to \Aut^0(X/k)$ must be surjective, and is identified with the quotient by $H$. 
In particular, the morphism $\Aut^0(\cX/k) \to \Aut^0(X/k)$ is proper, so we reduce to showing that $\Aut^0(X/k)$ is proper over $k$. 
For this, it suffices to show that that $\Aut^0(X/k)$ does not contain a copy of $G = \bG_a$ or $G = \bG_m$. 
If so, then $X$ would be birational to a product $G \times Y$ by \cite[Theorem 10]{algebraic-groups}, but $X$ cannot be ruled since $\omega_X \cong \cO_X$. 

The actions of $\Aut^0(\cX/S)$ and $\Pic^0(\cX/S)$ on $\Dperf(\cX)$ give rise to a homomorphism 
\begin{equation*}
\Aut^0(\cX/S) \times_S \Pic^0(\cX/S) \to \Aut^0(\Dperf(\cX)/S). 
\end{equation*} 
By Lemma~\ref{lemma-mun-gerbe} the category $\Dperf(X, \alpha)$ is an $S$-linear semiorthogonal component of $\Dperf(\cX)$, 
so by Lemma~\ref{lemma-autoequivalence-sod} there is an induced homomorphism 
\begin{equation*}
\Aut^0(\Dperf(\cX)/S) \to \Aut^0(\Dperf(X, \alpha)/S). 
\end{equation*} 
Consider the composition 
\begin{equation*}
\rho \colon \Aut^0(\cX/S) \times_S \Pic^0(\cX/S)  \to \Aut^0(\Dperf(X, \alpha)/S)
\end{equation*} 
of the above homomorphisms. 
By the previous paragraph the source of $\rho$ is proper over $S$, 
so by \cite[\href{https://stacks.math.columbia.edu/tag/08AJ}{Tag 08AJ}]{stacks-project} to 
show that $\Aut^0(\Dperf(X, \alpha)/S)$ is proper over $S$ it suffices to show $\rho$ is surjective, 
for which we may reduce to the case where $S = \Spec(k)$ for an algebraically closed field $k$. 
On Lie algebras, this induces a map 
\begin{equation*} 
\rH^0(\rT_{\cX/k}) \oplus \rH^1(\cO_{\cX}) \to \HH^1(\Dperf(X, \alpha)/k). 
\end{equation*} 
Under the isomorphisms $\rH^0(\rT_{\cX/k})\cong \rH^0(\rT_{X/k})$ and $\rH^1(\cO_{\cX}) \cong \rH^1(\cO_X)$ and the HKR isomorphism $\HH^1(\Dperf(X)/k) \cong \rH^0(\rT_{X/k}) \oplus \rH^1(\cO_X)$, the map on Lie algebras is identified with the isomorphism $\HH^1(\Dperf(X)/k) \cong \HH^1(\Dperf(X, \alpha)/k)$ from Lemma~\ref{lemma-hochschild-cohomology-twisted-variety}.  
It follows that $\rho$ is \'{e}tale, and hence surjective as the target is a connected group. 
\end{proof} 

\begin{remark}
\label{remark-Aut0-proper}
It seems plausible that in the setting of Lemma~\ref{lemma-HH1-CY}, $\Aut^0(\cC/S)$ is necessarily proper over $S$. 
Lemma~\ref{lemma-Aut0-CYn-proper} gives the (Brauer twisted) geometric case of this statement. 
\end{remark}

%%%%%%%%%%%%%%%%%%%%%%%%%%%%%%%%%%%%%%%%%%%%%%%%%%%%%%

\section{Moduli of objects modulo autoequivalences} 
\label{section-moduli-objects-modulo-autoequivalence}

Fix a perfect scheme $S$ and a smooth proper $S$-linear category $\cC$. 
So far in \S\ref{section-moduli}-\S\ref{section-autoequivalences} we have constructed various incarnations of the moduli space of objects in $\cC$, 
as well as corresponding group spaces that act on them. 
Our interest now is in their quotient spaces. 
In particular, in Proposition~\ref{proposition-quotient-enhancement} we describe under certain hypotheses  
the cotangent complex of the derived version of the quotient; 
this forms a key ingredient in our theory of reduced DT invariants developed in Part~\ref{part-DT-theory}. 

\subsection{Generalities on quotients} 
Let $\fX \in \dSt/S$ be a derived stack over $S$ and $\fG \in \dSt/S$ a group derived stack over $S$ that acts on~$\fX$. 
The action defines a simplicial diagram  
\begin{equation*}
\xymatrix{
 \cdots 
 \ar@<1.5ex>[r] \ar@<.5ex>[r] \ar@<-.5ex>[r]  \ar@<-1.5ex>[r] & 
 \fX \times_S \fG \times_S \fG 
\ar@<1ex>[r] \ar@<0ex>[r] \ar@<-1ex>[r] & 
\fX \times_S \fG 
\ar@<.5ex>[r]   \ar@<-.5ex>[r]  &  \fX
 }
\end{equation*} 
in $\dSt/S$, whose colimit $\fX/\fG$ is by definition the \emph{quotient derived stack} of $\fX$ by $\fG$. 
Note that $\dSt/S$ admits all colimits (and limits), so this definition makes sense. 
Similarly, for $\cX \in \St/S$ a higher stack acted on by a group higher stack $\cG \in \St/S$, 
we may form the \emph{quotient higher stack} $\cX/\cG$.  

\begin{remark}
\label{remark-quotient-spaces}
Suppose as above that $\fX$ is a derived stack acted on by a group derived stack~$\fG$. 
Let $\cX = \fX_{\cl}$ and $\cG = \fG_{\cl}$ be the classical truncations. 
Then $\cG$ is a group higher stack and we have $(\fX/\fG)_{\cl} \simeq \cX/\cG$. 
Indeed, this follows from the commutation of classical truncation 
with colimits and limits (Remark~\ref{remark-derived-colimits}). 

Conversely, let $\cX$ be a higher stack acted on by a group higher stack~$\cG$, 
and let \mbox{$\fX = \iota(\cX)$} and $\fG = \iota(\cG)$ be their derived extensions. 
Then $\fG$ does \emph{not} automatically inherit the structure of a group derived stack 
which acts on $\fX$, because in general derived extension does not preserve fiber products. 
However, if $\cG \to S$ is flat, then 
$\fG$ is naturally a group higher stack which acts on $\fX$ and $\iota(\cX/\cG) \simeq \fX/\fG$; 
indeed, this follows from the fact that derived extension preserves pullbacks along flat morphisms of higher algebraic stacks and commutes with colimits (Remark~\ref{remark-derived-colimits}). 
Thus, in this situation there is no harm in our usual abuse of notation by which we omit 
$\iota$ when thinking of a higher stack as a derived stack. 
\end{remark} 

\begin{lemma}
\label{lemma-quotient-algebraic}
Let $\fG$ be a group derived algebraic stack over $S$ whose structure morphism 
$\fG \to S$ is flat and locally of finite presentation. 
Let $\fX$ be a derived algebraic stack on which $\fG$ acts. 
Then the quotient $\fX/\fG$ is a derived algebraic stack over $S$. 
Similarly, if $\cG$ is a group higher algebraic stack over $S$, flat and locally of finite presentation 
over $S$, which acts on a higher algebraic stack $\cX$, then the quotient $\cX/\cG$ is a higher algebraic stack over $S$. 
\end{lemma}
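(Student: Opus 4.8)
The plan is to reduce the statement to a standard descent/representability criterion for colimits of simplicial diagrams of (derived) algebraic stacks along groupoid objects. The key observation is that the action of $\fG$ on $\fX$ gives a groupoid object $[\fX \times_S \fG \rightrightarrows \fX]$ in the $\infty$-category $\dSt/S$, and the quotient $\fX/\fG$ is the geometric realization of its \v{C}ech nerve; so it suffices to check that this groupoid is ``smooth'' (or at least flat and locally of finite presentation) in the appropriate sense and that effective descent holds. First I would recall the general principle (see e.g. \cite[\S 5]{toen-simplicial}, \cite[Part I, Chapter 2]{DAG-gaitsgory-1}, or the analogous classical statement in \cite{stacks-project}): if $\fX_\bullet$ is a simplicial object in derived algebraic stacks such that $\fX_\bullet$ is a groupoid object, the face maps $\fX_1 \to \fX_0$ are flat and locally of finite presentation, and $\fX_\bullet$ satisfies the Segal and effectivity conditions, then $\colim \fX_\bullet$ is a derived algebraic stack and $\fX_0 \to \colim\fX_\bullet$ is a flat, locally finitely presented atlas-type morphism.

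The main steps would be: (1) Verify that $[\fX \times_S \fG \rightrightarrows \fX]$ is a groupoid object in $\dSt/S$; this is formal from the group action axioms, since $\fG$ is a group derived algebraic stack. (2) Check that the source map $\fX \times_S \fG \to \fX$ (one of the face maps) is flat and locally of finite presentation: this holds because it is the base change of $\fG \to S$ along $\fX \to S$, and flatness and local finite presentation are stable under base change. (3) Invoke the representability criterion to conclude that the realization $\fX/\fG$ is a derived algebraic stack, and that $\fX \to \fX/\fG$ is flat, locally of finite presentation, and a smooth-type surjection (so that $\fX/\fG$ indeed admits, locally, a smooth surjection from a disjoint union of derived affines — noting that $\fX$ itself is a derived algebraic stack). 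Finally, (4) deduce the higher (underived) statement: either run the identical argument in $\St/S$ with the classical representability criterion for groupoid objects of higher algebraic stacks, or — more economically — apply the derived case to $\fX = \iota(\cX)$, $\fG = \iota(\cG)$ (using that $\cG \to S$ is flat, so that derived extension is compatible with the action and with the quotient, per Remark~\ref{remark-quotient-spaces}), and then take classical truncations, using that $(\fX/\fG)_{\cl} \simeq \cX/\cG$ (Remark~\ref{remark-quotient-spaces} again) together with the fact that classical truncation preserves the property of being (locally) algebraic.

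The step I expect to be the main obstacle is (3) — more precisely, nailing down the precise form of the representability theorem for colimits of groupoid objects in the $\infty$-category of derived algebraic stacks, since the literature (\cite{toen-simplicial,DAG-gaitsgory-1,HAG2}) states such results under slightly varying hypotheses and with a somewhat intricate inductive notion of geometricity. The subtlety is that $\fG \to S$ is only assumed flat and locally of finite presentation, not smooth, so one does not literally get a smooth atlas from $\fX$ directly; one instead obtains a flat, locally finitely presented covering, and must then descend a smooth atlas of $\fX$ along it, which is where the $n$-geometricity bookkeeping enters. Once the correct black-box theorem is cited, the verification of its hypotheses (groupoid object, flat locally finitely presented face maps) is routine from base-change stability, and the higher-stack case follows either by the same argument or by truncation as indicated above.
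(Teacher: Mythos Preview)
Your approach is essentially identical to the paper's: set up the action groupoid, observe that the projection $\fX \times_S \fG \to \fX$ is flat and locally of finite presentation by base change (hence so is the quotient map $\fX \to \fX/\fG$), and invoke a descent theorem. The reference that resolves your flagged obstacle in step~(3)---descent along a flat, locally finitely presented surjection rather than a smooth one---is To\"{e}n's faithfully flat descent for $n$-Artin stacks \cite{toen-descent}; the paper cites exactly this and remarks that without it one would need to assume $\fG \to S$ smooth.
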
 

\begin{proof}
Consider the cartesian diagram 
\begin{equation} 
\vcenter{
\label{action-diagram}
\xymatrix{
\fX \times_S \fG \ar[r]^{a} \ar[d]_{\pr_{\fX}} & \fX \ar[d]^{q} \\ 
\fX \ar[r]^{q} & \fX/\fG
}}
\end{equation} 
where $q \colon \fX \to \fX/\fG$ is the quotient morphism, 
$a \colon \fX \times_S \fG \to \fX$ is the action morphism, and 
$\pr_{\fX} \colon \fX \times_S \fG \to \fX$ is the projection. 
The morphism $\pr \colon \fX \times_S \fG \to \fX$ is flat and locally of finite presentation, 
being a base change of $\fG \to S$, 
and thus $q \colon \fX \to \fX/\fG$ is a flat and locally of finite presentation surjection. 
It follows by a result of To\"{e}n \cite{toen-descent} that $\fX/\fG$ is a derived algebraic stack. 
(The use of \cite{toen-descent} can be avoided at the expense of assuming instead that $\fG \to S$ is smooth.) This proves the first claim of the lemma, and the second follows similarly. 
\end{proof} 

\begin{lemma}
\label{lemma-L-quotient} 
Let $\fG$ be a group derived algebraic stack over $S$ whose structure morphism 
$\fG \to S$ is flat and locally of finite presentation. 
Let $\fX$ be a derived algebraic stack on which $\fG$ acts. 
Then the relative cotangent complex of $\fX \to \fX/\fG$ is given by 
\begin{equation*}
L_{\fX/(\fX/\fG)} \simeq e^*L_{\fG/S} \otimes \cO_{\fX} 
\end{equation*} 
where $e \colon S \to \fG$ is the identity section and the right-hand side denotes 
the pullback of $e^*L_{\fG/S}$ along the structure morphism $\fX \to S$. 
\end{lemma}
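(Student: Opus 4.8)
The plan is to compute $L_{\fX/(\fX/\fG)}$ by exploiting the cartesian square~\eqref{action-diagram} that exhibits the action morphism and projection as a base change of the quotient morphism $q \colon \fX \to \fX/\fG$ against itself. The key point is that the action map $a \colon \fX \times_S \fG \to \fX$ is, fiberwise over $\fX$ via $\pr_{\fX}$, ``the same'' as the structure map $\fG \to S$ after base change; more precisely, the square identifies $\pr_{\fX}$ with the pullback $q' \colon \fX \times_S \fG \to \fX$ of $q$. Since the cotangent complex is stable under base change, we get
\begin{equation*}
\pr_{\fX}^* L_{\fX/(\fX/\fG)} \simeq L_{(\fX \times_S \fG)/\fX} ,
\end{equation*}
where on the right the map $\fX \times_S \fG \to \fX$ is the \emph{first} projection $\pr_{\fX}$ appearing in the square, i.e. the one whose composite with $q$ equals $q \circ a$. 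But there are two projections to $\fX$: the one playing the role of ``$q'$'' in the cartesian square is $a$-compatible, and unwinding the standard description of the Čech nerve of $q$ shows the relevant projection $\fX \times_S \fG \to \fX$ whose cotangent complex we need is the one that is a base change of $\fG \to S$ along $\fX \to S$. Hence $L_{(\fX\times_S\fG)/\fX} \simeq \mathrm{pr}^* L_{\fG/S}$ where $\mathrm{pr}\colon \fX\times_S\fG\to\fG$, and this already lives over $\fX$.

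The next step is to combine this with Lemma~\ref{lemma-LG} (applied to the group derived algebraic stack $\fG$ — note $\fG$ is algebraic since it is flat and locally of finite presentation over $S$, and one may reduce to that case or simply invoke the group-stack version of Lemma~\ref{lemma-LG}, whose proof only used the cancellation $m \circ b = \id$ and base change for the cotangent complex, both valid for group derived stacks). Lemma~\ref{lemma-LG} gives $L_{\fG/S} \simeq \pi_{\fG}^* e^* L_{\fG/S}$ where $\pi_{\fG}\colon \fG \to S$ is the structure map. Pulling back along $\mathrm{pr}\colon \fX\times_S\fG \to \fG$ and using $\pi_{\fG}\circ \mathrm{pr} = \pi_{\fX}\circ \pr_{\fX}$ (with $\pi_{\fX}\colon \fX\to S$), we obtain
\begin{equation*}
\pr_{\fX}^* L_{\fX/(\fX/\fG)} \simeq \mathrm{pr}^* L_{\fG/S} \simeq \pr_{\fX}^* \pi_{\fX}^* e^* L_{\fG/S} = \pr_{\fX}^*\left( e^* L_{\fG/S} \otimes \cO_{\fX} \right) .
\end{equation*}
Finally, since $\pr_{\fX} \colon \fX \times_S \fG \to \fX$ is faithfully flat (being a base change of the faithfully flat morphism $\fG \to S$ — here one uses surjectivity of $\fG\to S$, which holds as $e$ is a section), and faithfully flat descent holds for the (quasi-coherent) cotangent complex, we may descend the equivalence along $\pr_{\fX}$ to conclude $L_{\fX/(\fX/\fG)} \simeq e^* L_{\fG/S} \otimes \cO_{\fX}$, as claimed. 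The higher-stack (underived) variant follows by the identical argument, or by truncation.

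The main obstacle I anticipate is bookkeeping: correctly matching up \emph{which} of the two projections $\fX \times_S \fG \rightrightarrows \fX$ in the Čech-nerve / cartesian square~\eqref{action-diagram} is the base change of $q$, and tracking the identifications so that the pullback $\mathrm{pr}^* L_{\fG/S}$ genuinely factors through $\pi_{\fX}$ as written. A clean way to sidestep the sign/side confusion is to use the isomorphism $\fX \times_S \fG \xrightarrow{\sim} \fX \times_{\fX/\fG} \fX$, $(x,g) \mapsto (x, a(x,g))$, under which $\pr_{\fX}$ becomes the first projection and $a$ the second; then $L$ of the first projection is the pullback of $L_{\fX/(\fX/\fG)}$ along $q$ composed with... — i.e. one must be careful that $L_{\fX\times_{\fX/\fG}\fX / \fX}$ computed via the \emph{second} factor equals $a^* L_{\fX/(\fX/\fG)}$, then transport back through the isomorphism to $\pr_{\fX}$. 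The secondary technical point is justifying faithfully flat descent for cotangent complexes of derived algebraic stacks, but this is standard (and one can alternatively avoid descent by pulling back further along the identity section $\fX = \fX \times_S S \to \fX \times_S \fG$ to pin down the answer, then noting both sides of the claimed equivalence pull back compatibly). Neither of these is a deep difficulty, so the lemma should go through routinely once the diagram chase is set up carefully.
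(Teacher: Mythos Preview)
Your overall strategy is the same as the paper's, but the key base-change step is misapplied. In the cartesian square~\eqref{action-diagram}, base change for the cotangent complex gives
\[
a^{*} L_{\fX/(\fX/\fG)} \;\simeq\; L_{(\fX \times_S \fG)/\fX}
\]
(the right-hand side taken relative to $\pr_{\fX}$), not $\pr_{\fX}^{*} L_{\fX/(\fX/\fG)} \simeq L_{(\fX \times_S \fG)/\fX}$ as you write: the cotangent of the \emph{left vertical} is the pullback along the \emph{top horizontal} of the cotangent of the right vertical. This is exactly the confusion you flag in your obstacle paragraph, but it does invalidate the main line. Once you correct it, the subsequent ``descend along $\pr_{\fX}$'' step no longer matches what you have, and in any case an equivalence $f^{*}A \simeq f^{*}B$ along a faithfully flat $f$ does not by itself yield $A \simeq B$ without checking compatibility with the descent data, which you do not do.

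The paper resolves both issues by pulling back along the section $b = (\id_{\fX},\, e\circ\pi_{\fX}) \colon \fX \to \fX \times_S \fG$ of $a$: since $a \circ b = \id_{\fX}$ one obtains
\[
L_{\fX/(\fX/\fG)} \;=\; b^{*}a^{*}L_{\fX/(\fX/\fG)} \;\simeq\; b^{*}\pr_{\fG}^{*}L_{\fG/S},
\]
and then Lemma~\ref{lemma-LG} (or the direct observation $\pr_{\fG}\circ b = e\circ\pi_{\fX}$) gives $\pi_{\fX}^{*}e^{*}L_{\fG/S}$. This is precisely the alternative you mention at the end, and it needs no descent. So your proposal contains the right idea, but the clean execution is the one you relegated to a parenthetical.
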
 

\begin{proof}
First note that by Lemma~\ref{lemma-quotient-algebraic}, the quotient $\fX/\fG$ is indeed a 
derived algebraic stack. 
Considering the diagram~\eqref{action-diagram}, base change for the cotangent 
complex gives 
\begin{equation*} 
a^*L_{\fX/(\fX/\fG)} \simeq L_{\fX \times_S \fG/\fX} \simeq \pr_{\fG}^*L_{\fG/S}
\end{equation*} 
where $\pr_{\fG} \colon \fX \times_S \fG \to \fG$ is the projection. 
If $b = (\id_{\fX}, e \circ \pi) \colon \fX \to \fX \times_S \fG$ where $\pi \colon \fG \to S$ is the structure morphism, then $a \circ b = \id_{\fX}$ and 
hence pulling back the above equivalence along $b$ gives 
\begin{equation*}
L_{\fX/(\fX/\fG)} \simeq b^*\pr_{\fG}^*L_{\fG/S}. 
\end{equation*} 
Now the result follows from Lemma~\ref{lemma-LG}.
\end{proof} 

\subsection{Quotient of the moduli space of objects} 
Now we can describe the cotangent complex of a quotient of a moduli stack of objects in $\cC$. 

\begin{proposition}
\label{proposition-quotient-enhancement}
Let $\cG$ be a group higher algebraic stack over $S$ which is an open 
subgroup of $\cAut(\cC/S)$, and assume that $\cG \to S$ is smooth. 
Let $\cM \to \cM(\cC/S)$ be a Zariski open which is preserved by the action of $\cG$. 
Let $\fM \to \fM(\cC/S)$ be the corresponding Zariski open. 
\begin{enumerate}
\item 
\label{quotient-enhancement} 
The derived algebraic stack $\fM$ inherits a $\cG$-action such that the canonical derived enhancement morphism 
$i \colon \cM \to \fM$ is $\cG$-equivariant. 
The quotient $\fM/\cG$ is a derived enhancement of $\cM/\cG$,  
with the derived enhancement morphism $\cM/\cG \to \fM/\cG$ induced by $i$ 
upon passing to quotients. 

\item 
\label{quotient-L}
Let $\cE \in \cC_{\fM}$ be the universal object. Consider the morphism 
\begin{equation}
\label{equation-beta}
\beta \colon (\cHom_{\fM}(\cE, \cE)[1])^{\vee} \xrightarrow{\, (a_{\cE}[1])^{\vee} \,} (\cHH^*(\cC_{\fM}/\fM)[1])^{\vee} \xrightarrow{\, \,}  
\tau^{\geq 0}((\cHH^*(\cC/S)[1])^{\vee}) \otimes \cO_{\fM} 
\end{equation} 
in $\Dqc(\fM)$, where $(a_{\cE}[1])^{\vee}$ is the dual of the shift of the canonical morphism~\eqref{equation-action-morphism} and the last morphism is the pullback to $\fM$ of the truncation map 
\begin{equation}
\label{HH-truncation} 
(\cHH^*(\cC/S)[1])^{\vee} \to \tau^{\geq 0}((\cHH^*(\cC/S)[1])^{\vee}) . 
\end{equation} 
Then if $q \colon \fM \to \fM/\cG$ is the quotient morphism, there is an exact triangle 
\begin{equation}
\label{fM-mod-G-cotangent}
q^*L_{(\fM/\cG)/S} \xrightarrow{\, \,} (\cHom_{\fM}(\cE, \cE)[1])^{\vee} \xrightarrow{\, \beta \,} 
\tau^{\geq 0}((\cHH^*(\cC/S)[1])^{\vee}) \otimes \cO_{\fM}  .  
\end{equation} 

\item 
\label{quotient-L-descend}
The exact triangle \eqref{fM-mod-G-cotangent} descends to an exact triangle 
\begin{equation}
\label{fM-mod-G-cotangent-descend}
L_{(\fM/\cG)/S} \xrightarrow{\, \,} (\cHom_{\fM}(\cE, \cE)[1])^{\vee} \xrightarrow{\, \overline{\beta} \,} 
\tau^{\geq 0}((\cHH^*(\cC/S)[1])^{\vee}) \otimes \cO_{\fM}  . 
\end{equation} 
in $\Dqc(\fM/\cG)$, where by abuse of notation we denote the second and third term by the same symbol as their pullbacks 
to $\fM$. 
\end{enumerate}
\end{proposition}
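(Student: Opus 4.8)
The plan is to proceed in the three announced steps, extracting everything from the equivariance of the derived enhancement and the basic descent properties of quotient stacks.

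For part \eqref{quotient-enhancement}, the key point is that the Zariski open immersion $\fM \to \fM(\cC/S)$ is uniquely determined by its classical truncation (Remark~\ref{remark-derived-enhancements}), and the $\cG$-action on $\cM(\cC/S)$ restricts to $\cM$ by hypothesis. Since $\cG \to S$ is smooth, hence flat and locally of finite presentation, Remark~\ref{remark-quotient-spaces} tells us that $\iota(\cG)$ is a group derived stack acting on $\fM$ and that $\iota(\cM/\cG) \simeq \fM/\cG$; I would note that the $\cG$-equivariance of the enhancement map $i\colon \cM \to \fM$ follows because $i$ is the canonical map $\fM_{\cl} \to \fM$, which is functorial. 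Lemma~\ref{lemma-quotient-algebraic} then gives that $\fM/\cG$ is a derived algebraic stack, and it is a derived enhancement of $\cM/\cG$ by the truncation computation of Remark~\ref{remark-quotient-spaces}.

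For part \eqref{quotient-L}, I would apply Lemma~\ref{lemma-L-quotient} to the $\cG$-action on $\fM$ to obtain the exact triangle
\begin{equation*}
q^*L_{(\fM/\cG)/S} \to L_{\fM/S} \to L_{\fM/(\fM/\cG)} \simeq e^*L_{\cG/S} \otimes \cO_{\fM}.
\end{equation*}
Now I identify the two right-hand terms. By Theorem~\ref{theorem-LMder} applied to the universal object $\cE \in \cC_{\fM}$, we have $L_{\fM/S} \simeq (\cHom_{\fM}(\cE,\cE)[1])^{\vee}$. Since $\cG$ is an open subgroup of $\cAut(\cC/S)$ and $\cG \to S$ is smooth, Lemma~\ref{lemma-L-cAut} gives $L_{\cG/S} \simeq \tau^{\geq 0}((\cHH^*(\cC/S)[1])^{\vee} \otimes \cO_{\cG})$, so pulling back along the identity section and then along $\fM \to S$ yields $e^*L_{\cG/S} \otimes \cO_{\fM} \simeq \tau^{\geq 0}((\cHH^*(\cC/S)[1])^{\vee}) \otimes \cO_{\fM}$ (using base change for $\cHH^*$ from Lemma~\ref{lemma-HH-co-bc} and that truncation commutes with the relevant flat pullbacks). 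The remaining — and main — task is to check that the connecting map $L_{\fM/S} \to L_{\fM/(\fM/\cG)}$ is identified with $\beta$ as defined in \eqref{equation-beta}, i.e. with the dual shifted action map $a_{\cE}$ followed by the truncation \eqref{HH-truncation}. This is the deformation-theoretic heart: one unwinds that the infinitesimal action of $\cG$ on $\cM$ at the point $[\cE]$ is given by the map from Hochschild cohomology (the Lie algebra of $\cAut$, via Corollary~\ref{corollary-LfAut}) to $\Ext^1(\cE,\cE)$ (the tangent space of $\cM$), which is precisely the evaluation/action morphism $a_{\cE}$ from \eqref{equation-action-morphism}; dualizing and shifting, and tracking the truncation coming from Lemma~\ref{lemma-L-cAut}, gives $\beta$. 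Concretely I would realize this by a diagram chase in $\Dqc(\fM)$ comparing the cotangent sequence of $\fM \to \fM/\cG$ with the one obtained by applying Theorem~\ref{theorem-LMder} functorially to the orbit map $\cG \to \fM$, $g \mapsto g \cdot \cE$; the composite $\cG \to \fAut(\cC/S) \to \fM(\Fun_S(\cC,\cC)/S)$ and the action functor identify the relevant cotangent maps with $a_{\cE}$.

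For part \eqref{quotient-L-descend}, I would argue by fppf (or smooth) descent along $q\colon \fM \to \fM/\cG$. The middle and right terms of \eqref{fM-mod-G-cotangent} are pullbacks along $q$ of canonically defined objects on $\fM/\cG$: the term $(\cHom(\cE,\cE)[1])^{\vee}$ descends because $\cHom_{\fM}(\cE,\cE)$ is the pullback of the sheaf of (relative) endomorphisms attached to the universal object on the quotient, and $\tau^{\geq 0}((\cHH^*(\cC/S)[1])^{\vee}) \otimes \cO_{\fM}$ is a pullback from $S$, a fortiori from $\fM/\cG$. The leftmost term $q^*L_{(\fM/\cG)/S}$ is by construction a pullback. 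So it remains to check that the map $\beta$ descends; here I would invoke that $\beta$ is built from $a_{\cE}$ and the truncation map, both of which are $\cG$-equivariant (the action map $a_{\cE}$ is equivariant by naturality of Theorem~\ref{theorem-LMder} in the moduli point, and the truncation \eqref{HH-truncation} is pulled back from $S$), so $\beta$ itself is a pullback $q^*\overline{\beta}$ for a unique $\overline{\beta}$ on $\fM/\cG$ by descent for morphisms in $\Dqc$. Then the triangle \eqref{fM-mod-G-cotangent-descend} is obtained by descending \eqref{fM-mod-G-cotangent}, using that the formation of fibers/cofibers of morphisms commutes with pullback. The main obstacle throughout is the bookkeeping in part \eqref{quotient-L}: correctly identifying the connecting homomorphism with $a_{\cE}$ (including all shifts, duals, and the truncation), which requires carefully matching the universal-object description of the cotangent complex of $\fM$ with the Lie-algebra description of the cotangent complex of $\cAut$, and verifying these are compatible under the action morphism.
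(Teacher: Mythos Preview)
Your proposal is correct and follows essentially the same route as the paper. Parts~\eqref{quotient-enhancement} and~\eqref{quotient-L} match the paper's argument closely: the paper also starts from the transitivity triangle for $q$, identifies $L_{\fM/S}$ via Theorem~\ref{theorem-LMder} and $L_{\fM/(\fM/\cG)}$ via Lemmas~\ref{lemma-L-quotient} and~\ref{lemma-L-cAut}, and then carries out exactly the diagram chase you outline, using the action diagram and the section $b=(\id_{\fM},e)$ to factor the connecting map as $\gamma_2\circ\gamma_1$ with $\gamma_1$ the dual of $a_{\cE}[1]$ and $\gamma_2$ the truncation (the factorization through the derived group $\fG\subset\fAut(\cC/S)$ is what produces the split into these two pieces, as you anticipate).

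For part~\eqref{quotient-L-descend} the paper takes a shortcut you may find cleaner than your descent-by-hand argument: once~\eqref{fM-mod-G-cotangent} has been identified with the canonical transitivity triangle $q^*L_{(\fM/\cG)/S}\to L_{\fM/S}\to L_{\fM/(\fM/\cG)}$, the first morphism descends tautologically (it is the canonical map attached to $q$), and therefore so does the second. This avoids checking equivariance of $\beta$ separately.
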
 

\begin{proof}
\eqref{quotient-enhancement}
Because $\cG \to S$ is smooth, as noted in Remark~\ref{remark-quotient-spaces} its derived extension $\iota(\cG)$ is a group derived algebraic stack, and as usual we may safely conflate $\cG$ and $\iota(\cG)$. 
Note that $\cG$ acts on $\fM(\cC/S)$ via the homomorphism $\cG \to \cAut(\cC/S) \to \fAut(\cC/S)$. 
By the correspondence between Zariski opens in $\cM(\cC/S)$ and $\fM(\cC/S)$, it follows that 
$\cG$ acts on $\fM$ such that $\cM \to \fM$ is $\cG$-equivariant. That 
$\fM/\cG$ is a derived enhancement of $\cM/\cG$ follows from Remark~\ref{remark-quotient-spaces}. 

\eqref{quotient-L} Consider the exact triangle 
\begin{equation*}
q^*L_{(\fM/\cG)/S} \to L_{\fM/S} \to L_{\fM/(\fM/\cG)}
\end{equation*} 
associated to the morphism $q \colon \fM \to \fM/\cG$. 
By Theorem~\ref{theorem-LMder}, we have an identification 
\begin{equation*}
L_{\fM/S} \simeq (\cHom_{\fM}(\cE, \cE)[1])^{\vee}. 
\end{equation*} 
By Lemmas~\ref{lemma-L-quotient} and~\ref{lemma-L-cAut}, we have an identification  
\begin{equation*}
L_{\fM/(\fM/\cG)} \simeq \tau^{\geq 0}((\cHH^*(\cC/S)[1])^{\vee}) \otimes \cO_{\fM} . 
\end{equation*} 
Now we explain why, under these identifications, the map $\gamma \colon L_{\fM/S} \to L_{\fM/(\fM/\cG)}$ is given by $\beta$. 
Let $\fG$ be the open subgroup of $\fAut(\cC/S)$ corresponding to $\cG$, so that $\fG$ acts on $\fM$, 
and consider the 
commutative diagram 
\begin{equation} 
\vcenter{
\label{equation-M-action}
\xymatrix{
&& \fM \times_S \fG  \ar[dr]^{a'} & \\ 
\fM \ar[r]^-{b} & \fM \times_S \cG \ar[rr]^{a} \ar[d]_{\pr_{\fM}} \ar[ur]^{i} && \fM \ar[d]^{q} \\ 
& \fM \ar[rr]^{q} &&  \fM/\cG
}}
\end{equation} 
where $b = (\id_{\fM}, e)$ with $e \colon S \to \fG$ the identity section, 
$i$ is the closed embedding induced by $\cG \to \fG$, 
$a$ and $a'$ are the corresponding action morphisms, and the square is cartesian. 
Then $a \circ b = \id_{\fM}$, so $\gamma \colon L_{\fM/S} \to L_{\fM/(\fM/\cG)}$ is identified with its pullback along $a \circ b$. 
Let us consider its pullback along $a$. 
Under the base change isomorphism $a^*L_{\fM/(\fM/\cG)} \simeq L_{\fM \times_S \cG/\fM}$, $a^*\gamma$ is 
identified with the composition of the natural maps 
\begin{equation*}
a^*L_{\fM/S} \to L_{\fM \times_S \cG/S} \to L_{\fM \times_S \cG/\fM}. 
\end{equation*} 
In view of the upper triangle in~\eqref{equation-M-action}, 
this factors as 
\begin{equation*}
a^*L_{\fM/S} \to i^*L_{\fM \times_S \fG/S} \to i^*L_{\fM \times_S \fG/\fM} \to  L_{\fM \times_S \cG/\fM}.
\end{equation*} 
Using that $L_{\fM \times_S \fG/\fM} \simeq L_{\fG/S} \otimes \cO_{\fM \times_S \fG}$ and 
$L_{\fM \times_S \cG/\fM} \simeq L_{\cG/S} \otimes \cO_{\fM \times_S \fG}$, and pulling back along $b$, 
we find that $\gamma$ factors as 
\begin{equation*}
L_{\fM/S} \xrightarrow{\, \gamma_1 \,} L_{\fG/S} \otimes \cO_{\fM} \xrightarrow{\, \gamma_2 \,} L_{\cG/S} \otimes \cO_{\cM} , 
\end{equation*} 
where the $\gamma_2$ is the pullback to $\fM$ of the morphism $L_{\fG/S} \otimes \cO_{\cG} \to L_{\cG/S}$ 
induced by $\cG \to \fG$. 
By Corollary~\ref{corollary-LfAut} and (the proof of) Lemma~\ref{lemma-L-cAut}, $\gamma_2$ is 
identified with the pullback to $\fM$ of the truncation morphism~\eqref{HH-truncation}. 
Note that by base change for Hochschild cohomology (Lemma~\ref{lemma-HH-co-bc}), the pullback of 
$(\cHH^*(\cC/S)[1])^{\vee}$ to $\fM$ can be identified with $(\cHH^*(\cC_{\fM}/\fM)[1])^{\vee}$, the middle term in the composition~\eqref{equation-beta} defining $\beta$. 
Finally, tracing through the above description of $\gamma_1$ and using the identification 
$L_{\fM/S} \simeq (\cHom_{\fM}(\cE, \cE)[1])^{\vee}$ of Theorem~\ref{theorem-LMder} shows 
that $\gamma_1$ is dual to the natural morphism $\cHH^*(\cC_{\fM}/\fM)[1] \to \cHom_\fM(\cE, \cE)[1]$. 

\eqref{quotient-L-descend} 
As shown above, the triangle~\eqref{fM-mod-G-cotangent} is precisely the exact triangle 
\begin{equation*}
q^*L_{(\fM/\cG)/S} \to L_{\fM/S} \to L_{\fM/(\fM/\cG)}
\end{equation*} 
associated to the morphism $q \colon \fM \to \fM/\cG$. 
As the first morphism in this triangle naturally descends to $\fM/\cG$, so does the second. 
\end{proof} 

\begin{remark}
\label{remark-derived-obs-thy-rewritten}
Suppose that, in the situation of Proposition~\ref{proposition-quotient-enhancement}, 
the Hochschild cohomology $\cHH^*(\cC/S)$ has locally free cohomology sheaves; 
for instance, by Corollary~\ref{CY-HH} and Theorem~\ref{theorem-HH}\eqref{theorem-smooth-proper-HH}, this holds if $\cC$ is CY and $S$ is a $\bQ$-scheme. 
Then 
\begin{equation*}
\tau^{\geq 0}((\cHH^*(\cC/S)[1])^{\vee}) \otimes \cO_{\fM} \simeq 
\tau^{\geq 0}((\cHH^*(\cC_{\fM}/\fM)[1])^{\vee})). 
\end{equation*} 
Moreover, in this case all of the objects appearing in the triangles~\eqref{fM-mod-G-cotangent} 
and~\eqref{fM-mod-G-cotangent-descend} are in fact perfect, as 
the second two objects in each triangle are always so by our assumption that $\cC$ is smooth and proper over $S$. 
\end{remark} 

\subsection{Preservation by the identity component} 
The following observation is useful for constructing pairs $\cG$ and $\cM$ to which Proposition~\ref{proposition-quotient-enhancement} applies. 

\begin{lemma}
\label{lemma-aut0-preserve-open}
Let $\cC$ be a connected $S$-linear category such that 
$\Aut^0(\cC/S)$ exists (see Definition~\ref{definition-identity-component}). 
Let $\cM \subset \cM_{\gl}(\cC/S)$ be a Zariski open substack which is universally closed over $S$. 
Then the action of the identity component $\cAut^0(\cC/S)$ of the stack of autoequivalences on $\cM_{\gl}(\cC/S)$ preserves $\cM$. 
\end{lemma}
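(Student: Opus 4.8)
The statement is that a universally closed Zariski open $\cM \subset \cM_{\gl}(\cC/S)$ is preserved by the action of $\cAut^0(\cC/S)$. Since the $\bG_m$-gerbe $\cAut^0(\cC/S) \to \Aut^0(\cC/S)$ acts on $\cM_{\gl}(\cC/S)$ through $\Aut^0(\cC/S)$ up to the inert $\bG_m$-action (which acts trivially on the underlying space, hence preserves any open), it suffices to show that the action morphism $a \colon \Aut^0(\cC/S) \times_S \cM_{\gl}(\cC/S) \to \cM_{\gl}(\cC/S)$ restricts to a morphism $\Aut^0(\cC/S) \times_S \cM \to \cM$. The plan is to check this fiberwise over $S$, and then reduce to a connectedness argument in each fiber.

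\textbf{Key steps.} First, I would reduce to the case where $S = \Spec(k)$ for an algebraically closed field $k$: the formation of $\Aut^0(\cC/S)$ commutes with base change (Remark after Definition~\ref{definition-identity-component}), the moduli stacks commute with base change, and ``$\cM$ preserved'' can be checked on geometric fibers since it is an open condition. Second, fix a geometric point $g \in \Aut^0(\cC_k/k)(k)$, i.e.\ an autoequivalence $\Phi_g$ lying in the identity component, and an object $E_0 \in \cM(k) \subset \cM_{\gl}(\cC_k/k)(k)$. I want to show $\Phi_g(E_0) \in \cM(k)$. Since $\Aut^0$ is connected and of finite type over $k$ (Lemma~\ref{lemma-Aut0}, applicable because $\cM \subset \cM_{\gl}$ forces $\cC$ to have the relevant finiteness, or directly from the hypotheses as in Lemma~\ref{lemma-HH1-CY} when $\cC$ is CY --- but in general one should just invoke that $\Aut^0$ is by definition a connected group of finite type over $k$), I can choose a connected finite-type $k$-scheme $T$ with a point $0 \in T(k)$ and a morphism $T \to \Aut^0(\cC_k/k)$ hitting both the identity at $0$ and $g$ at some point $1 \in T(k)$; concretely, one can take $T = \Aut^0(\cC_k/k)$ itself with $0$ the identity section. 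The universal autoequivalence over $\Aut^0$ pulls back to a $T$-linear autoequivalence $\Phi$ of $\cC_T$. Third, consider the $T$-family of objects $\Phi(E_{0,T}) \in \cC_T$, where $E_{0,T}$ is the pullback of $E_0$. This family determines a morphism $\varphi \colon T \to \cM_{\gl}(\cC_k/k)$: indeed $\Phi_t$ is an equivalence for each $t$, so $\Phi_t(E_0)$ remains gluable, since gluability ($\Ext^{<0}(E,E) = 0$) is preserved by equivalences. At $t = 0$ we have $\Phi_0 = \id$, so $\varphi(0) = E_0 \in \cM$. Fourth, $\cM \subset \cM_{\gl}(\cC_k/k)$ is open and universally closed over $\Spec(k)$, hence both open and closed in $\cM_{\gl}(\cC_k/k)$; therefore $\varphi^{-1}(\cM) \subset T$ is open and closed, and contains $0$. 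Since $T$ is connected, $\varphi^{-1}(\cM) = T$, and in particular $\varphi(1) = \Phi_g(E_0) \in \cM$. This proves the fiberwise claim, and hence the lemma.

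\textbf{Main obstacle.} The delicate point is the spreading-out / base-change reduction rather than the connectedness core, which is essentially the same argument as Lemma~\ref{lemma-connected-family-aut-sod}. Specifically: one must be careful that ``$\cM$ is preserved by the action'' is genuinely a fiberwise-checkable condition --- this requires knowing that the action morphism $a$ and the open $\cM$ are suitably compatible with base change, and that an open substack $\cM \subset \cM_{\gl}(\cC/S)$ that is universally closed over $S$ has the property that its geometric fibers $\cM_s \subset \cM_{\gl}(\cC_s)$ are again universally closed (i.e.\ open and closed) over $\kappa(s)$; this follows from stability of open immersions and universally closed morphisms under base change. A second minor subtlety is verifying that the hypotheses of Lemma~\ref{lemma-Aut0} (or at least enough of them to know $\Aut^0(\cC_k/k)$ is connected and of finite type) are in force --- but $\Aut^0$ is connected of finite type essentially by its definition as the connected component of identity of a group algebraic space locally of finite type over the field $k$, once one knows $\cAut(\cC_k/k) \to \Aut(\cC_k/k)$ is a $\bG_m$-gerbe with $\Aut(\cC_k/k)$ locally of finite presentation over $k$, which is Proposition~\ref{proposition-cAut} applied to the connected category $\cC_k$. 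Everything else is routine.
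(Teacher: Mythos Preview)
Your reduction to $S = \Spec(k)$ is fine and matches the paper. The gap is in your fourth step: the claim that ``$\cM$ universally closed over $\Spec(k)$ implies $\cM$ is closed in $\cM_{\gl}(\cC_k/k)$'' is false. Universal closedness is a property of the structure morphism $\cM \to \Spec(k)$, not of the open immersion $\cM \hookrightarrow \cM_{\gl}(\cC_k/k)$. The implication you want would follow if $\cM_{\gl}(\cC_k/k)$ were separated over $k$, but it is very far from separated (think of a $\bP^1$ with a doubled point: one copy of $\bP^1$ is open and proper over $k$, yet not closed in the ambient space). So $\varphi^{-1}(\cM) \subset T$ is only known to be open, and your connectedness argument does not conclude.

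The paper uses the universal closedness in a different, and essential, way. Instead of fixing a single object $E_0$ and tracing a path in the group, it works with all of $\cM$ at once: in $\cG \times_k \cM$ (with $\cG = \cAut^0(\cC/k)$) the locus $\{(g,m) : \Phi_g(\cE_m) \in \cM\}$ is open because $\cM \subset \cM_{\gl}$ is open. Its closed complement has closed image under the projection $\cG \times_k \cM \to \cG$ precisely because $\cM \to \Spec(k)$ is universally closed, so $\cU = \{g : \Phi_g(\cM) \subset \cM\}$ is an open neighborhood of the identity in $\cG$. Passing to $G = \Aut^0(\cC/k)$ and replacing $U$ by $U \cap U^{-1}$, the set of $g$ with $\Phi_g(\cM) = \cM$ contains an open neighborhood of $1$, hence is an open subgroup of the connected group $G$, hence all of $G$. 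The universal closedness hypothesis is used to pass from ``open in $\cG \times_k \cM$'' to ``open in $\cG$'', not to produce a closed subset of $\cM_{\gl}$.
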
 

\begin{remark} 
In particular, if $\cAut^0(\cC/S) \to S$ is smooth (a condition for which we gave simple criteria in Lemmas~\ref{lemma-Aut0} and~\ref{lemma-HH1-CY}), 
the hypotheses of Proposition~\ref{proposition-quotient-enhancement} are satisfied. 
\end{remark} 

\begin{proof} 
The question is topological so we may check it on fibers and reduce to the case where $S = \Spec(k)$. 
Let $\cG = \cAut^0(\cC/k)$ and $G = \Aut^0(\cC/k)$. 
Let $\cE \in \cC_{\cM}$ be the universal object, and let 
$\Phi \in \Aut_{\cG}(\cC_{\cG})$ be the universal autoequivalence. 
Let $\cE_{\cG \times_k \cM} \in \cC_{\cG \times_k \cM}$ be the pullback of $\cE$, let
$\Phi_{\cG \times_k \cM} \in \Aut_{\cG \times_k \cM}(\cC_{\cG \times_k \cM})$ be the base changed autoequivalence, and set 
\begin{equation*}
\cF \coloneqq \Phi_{\cG \times_k \cM}(\cE_{\cG \times_k \cM}) \in \cC_{\cG \times_k \cM}. 
\end{equation*} 
Then $\cF$ is a family of gluable objects in $\cC$ parameterized by $\cG \times_k \cM$, 
whose fiber over any point $(g,m) \in \cG \times_k \cM$ is 
\begin{equation*}
\cF_{(g,m)} = \Phi_{g}(\cE_m), 
\end{equation*} 
where $\Phi_g$ is the fiber of $\Phi$ over $g$ (i.e. the autoequivalence of $\cC$ corresponding to $g$). 

As $\cM \subset \cM_{\gl}(\cC/S)$ is an open substack, the locus of points $(g,m) \in \cG \times_k \cM$ such that $\cF_{(g,m)} \in \cM$ is open.  
As $\cM \to \Spec(k)$ is universally closed, the locus $\cU \subset \cG$ of points $g$ such that $\cF_{(g,m)} \in \cM$ for all $m \in \cM$ is therefore also open. 
If $1 \in \cG$ denotes the identity element, then 
$\cF_{(1,m)} = \cE_m$ for all $m \in \cM$, so $\cU$ is an open neighborhood of $1$. 
Let $U \subset G$ be the image of $\cU$ under the $\bG_m$-gerbe $\cG \to G$. 
Replacing $U$ by $U \cap U^{-1}$, it follows that for all $g \in U$ we have 
\begin{equation*}
\Phi_g(\cM) = \cM, 
\end{equation*} 
where here we regard $\Phi_g$ as an automorphism of $\cM_{\gl}(\cC/S)$. 
Therefore, the locus of $g \in G$ such that $\Phi_g(\cM) = \cM$ is an open subgroup of $G$, 
and hence equal to $G$, as $G$ is connected. 
\end{proof} 

Any moduli space $\cM$ of semistable objects of fixed class with respect to a reasonable notion of stability will satisfy the hypotheses of Lemma~\ref{lemma-aut0-preserve-open}. 
In \S\ref{section-moduli-twisted-sheaves} we will spell out the case of moduli of semistable twisted sheaves, and 
in \S\ref{section-moduli-ss} the case of moduli of semistable objects in a category. 

%%%%%%%%%%%%%%%%%%%%%%%%%%%%%%%%%%%%%%%%%%%%%%%%%%%%%%
%%%%%%%%%%%%%%%%%%%%%%%%%%%%%%%%%%%%%%%%%%%%%%%%%%%%%%

\newpage 
\part{Stability} 
\label{part-stability} 

%%%%%%%%%%%%%%%%%%%%%%%%%%%%%%

\section{Stability of twisted sheaves} 
\label{section-stability-twisted-sheaves} 
In this section we set up our conventions on slope and Gieseker stability of twisted sheaves, mostly following \cite{lieblich-moduli-twisted}. 

\subsection{Chern characters}
\label{section-chern-character}
We will use Vistoli's theory of rational Chow groups for DM stacks over a field \cite{vistoli-chow},
which we denote by $\CH^*(-)_{\bQ}$. 
This theory is formally similar to that of Chow groups of varieties, and in particular comes equipped with a formalism of Chern classes for perfect complexes. 
As the notation suggests, when applied to a variety $\CH^*(X)_{\bQ}$ agrees with the rationalization of the usual Chow group. 

We apply this to a $\bmu_n$-gerbe $\pi \colon \cX \to X$ over a variety $X$ representing a class $\alpha \in \Br(X)$. 
In this case, the results of \cite{vistoli-chow} show that the pushforward map $\pi_* \colon \CH^*(\cX)_{\bQ} \to \CH^*(X)_{\bQ}$ is an isomorphism. 
As any $E \in \Dperf(X, \alpha)$ is in particular a perfect complex on $\cX$, we may consider its Chern character $\ch(E) \in \CH^*(\cX)_{\bQ}$; this construction gives a homomorphism 
\begin{equation*}
\ch \colon \rK_0(X, \alpha) \to \CH^*(\cX)_{\bQ}. 
\end{equation*} 
Of course, $\ch_0(E) = \rk(E)$ is simply the rank. 

\begin{lemma}
\label{lemma-ch-numerical}
If $X$ is a smooth proper variety, then the Chern character descends to a homomorphism 
\begin{equation*}
\ch \colon \rK_{\num}(X, \alpha) \to \CH_{\num}^*(\cX)_{\bQ},
\end{equation*} 
where $\CH^*_{\num}(\cX)_{\bQ}$ denotes the quotient of $\CH^*(\cX)_{\bQ}$ by numerical equivalence. 
\end{lemma}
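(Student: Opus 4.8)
~\textbf{Plan.} The statement asserts that the Chern character $\ch \colon \rK_0(X,\alpha) \to \CH^*(\cX)_{\bQ}$ factors through $\rK_{\num}(X,\alpha)$ and lands in the quotient by numerical equivalence. There are two separate assertions to verify: (i) if $u \in \rK_0(X,\alpha)$ lies in the kernel of the Euler pairing $\chi$ on the left, then $\ch(u)$ is numerically trivial in $\CH^*(\cX)_{\bQ}$; and (ii) more precisely, that numerical equivalence on the target is the relevant equivalence relation, i.e. the map is well-defined into $\CH^*_{\num}(\cX)_{\bQ}$ once we know it kills $\ker(\chi)$. The natural tool is a twisted Hirzebruch--Riemann--Roch formula: for $E \in \Dperf(X,\alpha)$ and $F \in \Db(X,\alpha)$ one has $F^\vee \otimes E \in \Dperf(X)$ (as noted in Example~\ref{example-serre-geometric}), hence $\cHom_X(F,E) \in \Dperf(X)$ descends to a genuine perfect complex on $X$, and
\begin{equation*}
\chi(F,E) = \chi(X, \cHom_X(F,E)) = \int_X \ch(\cHom_X(F,E)) \cdot \td(X) = \int_{\cX} \ch(E) \cdot \ch(F)^\vee \cdot \pi^*\td(X),
\end{equation*}
using that $\pi_* \colon \CH^*(\cX)_{\bQ} \xrightarrow{\sim} \CH^*(X)_{\bQ}$ and the projection formula. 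The last integral exhibits $\chi(F,E)$ as a pairing between $\ch(E)$ and the class $\ch(F)^\vee \cdot \pi^*\td(X)$.

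First I would set up the twisted HRR formula above carefully, checking that $\ch(\cHom_X(E,F))$ computed on $\cX$ agrees via $\pi_*$ with the Chern character of the untwisted complex $\cHom_X(E,F)$ on $X$ (this uses multiplicativity of $\ch$ and $\pi^*\ch = \ch\circ\pi^*$, which hold in Vistoli's theory). Then, for (i): suppose $u \in \ker(\chi \text{ on the left})$, so $\chi(u, w) = 0$ for all $w \in \rG_0(X,\alpha)$. By the HRR formula, $\int_{\cX} \ch(u)\cdot \ch(w)^\vee \cdot \pi^*\td(X) = 0$ for all such $w$. Since $X$ is smooth, $\Db(X,\alpha) = \Dperf(X,\alpha)$ and the classes $\ch(w)$ as $w$ ranges over $\rK_0(X,\alpha)$ span a $\bQ$-subspace of $\CH^*(\cX)_{\bQ}$; I would need to argue this subspace is large enough that $\ch(u)$ pairing trivially against all of it (after multiplying by the invertible-in-the-Chow-ring class $\ch(w)^\vee \td(X)$, or rather, noting $\td(X)$ is invertible and $(-)^\vee$ is an involution) forces $\ch(u)$ to be numerically trivial. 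The cleanest way: the Chern character $\rK_0(X,\alpha)\otimes\bQ \to \CH^*(\cX)_{\bQ}$ is surjective onto its image, and the Poincaré-type pairing $\langle a, b\rangle = \int_{\cX} a\cdot b^\vee\cdot\pi^*\td(X)$ on $\ch(\rK_0(X,\alpha))\otimes\bQ$ has radical consisting precisely of numerically trivial classes in the span — because pairing trivially against all algebraic classes (of the form $\ch(w)$) is, by definition of numerical equivalence on $\CH^*(\cX)_{\bQ}$ restricted to algebraic cycles, numerical triviality. Conversely, if $\ch(u)$ is numerically trivial then $\chi(u,w)=0$ for all $w$, giving that $\ch$ is well-defined on $\rK_{\num}$ and injective-mod-numerical on the nose, hence descends to $\rK_{\num}(X,\alpha)\to\CH^*_{\num}(\cX)_{\bQ}$.

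The main obstacle is matching the two notions of ``numerical equivalence'' so that the radical of the Euler pairing on $\rK_0(X,\alpha)$ corresponds exactly to numerically trivial Chern characters on $\cX$, rather than just to a possibly larger equivalence. Concretely: numerical equivalence on $\CH^*(\cX)_{\bQ}$ is defined by pairing against \emph{all} cycle classes via the intersection product and degree map, whereas the Euler pairing only sees classes of the form $\ch(w)\cdot\td(X)$. One must check these detect the same relation on the image of $\ch$ — this follows because $\td(X)$ is a unit in $\CH^*(X)_{\bQ}$ and, via $\pi_*$, every algebraic cycle class on $\cX$ arises (up to the $\td$ twist and the $\bQ$-span) from a twisted complex: given any coherent $\cO_X$-module $G$, the complex $G \otimes \cHom_X(F_0, -)$ for a fixed twisted generator $F_0$ produces twisted complexes whose Chern characters, after dividing by the fixed class $\ch(F_0)^\vee$, span all of $\ch(\rK_0(X))_{\bQ} = \CH^*(X)_{\bQ}$, pulled back to $\cX$. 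I would also need to invoke that $\CH^*_{\num}(\cX)_{\bQ} \cong \CH^*_{\num}(X)_{\bQ}$ via $\pi_*$, which follows from $\pi_*$ being an isomorphism on rational Chow compatible with proper pushforward and the projection formula. With these identifications in place the factorization is formal, so I would present it as: twisted HRR $\Rightarrow$ radical of $\chi$ on the left $=$ preimage under $\ch$ of numerically trivial classes $\Rightarrow$ descent.
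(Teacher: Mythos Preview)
Your approach is correct and genuinely different from the paper's. You work directly on $\cX$ via a twisted Hirzebruch--Riemann--Roch identity $\chi(E,F) = \int_{\cX} \ch(E)^\vee \cdot \ch(F) \cdot \pi^*\td(X)$, then argue that the image of $\ch$ on $\rK_0(X,\alpha)$ spans $\CH^*(\cX)_{\bQ}$. The paper instead pulls back along a finite flat cover $f \colon Y \to X$ with $f^*\alpha = 0$ (Lemma~\ref{lemma-finite-cover-kill}), untwists on $\cY$ via a twisted line bundle, applies the untwisted case on $Y$, and descends using $f'_*f'^* = \deg(f)$ on rational Chow. The paper's route sidesteps any analysis of the image of the twisted Chern character; yours avoids the auxiliary variety and in fact yields the sharper statement that $\ch \colon \rK_{\num}(X,\alpha)_{\bQ} \to \CH^*_{\num}(\cX)_{\bQ}$ is an isomorphism.

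Your handling of the ``main obstacle'' is the one soft spot: the sentence involving ``$G \otimes \cHom_X(F_0,-)$'' does not parse as written. The intended argument is simply this: since (implicitly) $X$ is projective, $\alpha \in \BrAz(X)$, so there exists a twisted locally free sheaf $F_0$ of positive rank; then for $G \in \Dperf(X)$ the object $\pi^*G \otimes F_0$ lies in $\Dperf(X,\alpha)$ with $\ch(\pi^*G \otimes F_0) = \pi^*\ch(G) \cdot \ch(F_0)$, and since $\ch(F_0)$ is a unit in $\CH^*(\cX)_{\bQ}$ and $\pi^*$ is an isomorphism on rational Chow, these classes span $\CH^*(\cX)_{\bQ}$. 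With this clarification your argument is complete. Note that both proofs silently require $\alpha$ to be Azumaya, which holds in the projective setting used throughout \S\ref{section-stability-twisted-sheaves}.
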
 

\begin{proof}
Given $E \in \Dperf(X, \alpha)$ be a numerically trivial object, 
we must show $\ch(E) \in \CH^*(\cX)_{\bQ}$ is numerically trivial. 
Let $f \colon Y \to X$ be a finite cover killing $\alpha$ as in Lemma~\ref{lemma-finite-cover-kill}, and consider the pullback diagram 
\begin{equation*}
\xymatrix{
\cY \ar[r]^{f'} \ar[d]_{\pi'} & \cX \ar[d]^{\pi} \\ 
Y \ar[r]^{f} & X
}
\end{equation*} 
Since $f'_* \circ f'^*$ is multiplication by $\deg(f)$ on $\CH^*(\cX)_{\bQ}$, 
it suffices to show that the class $f'^*\ch(E) = \ch(f'^*E) \in \CH^*(\cY)_{\bQ}$ 
is numerically trivial. 

Since $f^*(\alpha) = 0$, there exists an invertible twisted sheaf $L$ 
on $\cY$ (Lemma~\ref{lemma-essentially-trivial}). Then $f'^*E \otimes L^{\vee} \in \Dperf^0(\cY)$ is an untwisted complex on $\cY$, with $\ch(f'^*E \otimes L^{\vee}) = \ch(f'^*E)$. 
In this way, we reduce to proving the lemma when $\alpha = 0$. 
But in this case it follows from Hirzebruch--Riemann--Roch that $\ch \colon \rK_0(X) \to \CH^*(X)_{\bQ}$ 
induces a map $\rK_\num(X) \to \CH_{\num}^*(X)_{\bQ}$ (which is in fact an isomorphism after tensoring by $\bQ$). 
\end{proof} 

In our later discussion of stability conditions, it will be useful to consider Chern characters twisted by a real divisor.  
\begin{definition}
\label{definition-twisted-ch}
Let $X$ be a smooth proper variety equipped with a Brauer class \mbox{$\alpha \in \Br(X)$}, represented by a $\bmu_n$-gerbe $\pi \colon \cX \to X$, 
and a real divisor $D \in \Div(X)_{\bR}$. 
For a twisted perfect complex $E \in \Dperf(X, \alpha) = \Dperf^1(\cX)$, the \emph{$D$-twisted Chern character} is 
\begin{equation*}
\ch^D(E) \coloneqq e^{-\pi^*(D)}\ch(E) \in \CH^*(\cX)_{\bR}
\end{equation*} 
where $e^{-\pi^*(D)}$ is the formal exponential of $-\pi^*(D) \in \CH^1(\cX)_{\bR}$. 
\end{definition} 

By abuse of notation, we will often denote the pullback of a divisor along $\pi \colon \cX \to X$ by the same symbol. 
With this convention, in low degrees we have 
\begin{align*}
\ch_0^D & = \ch_0 = \rk , \\ 
\ch_1^D & = \ch_1 - D \ch_0 , \\ 
\ch_2^D & = \ch_2 - D \ch_1 + \frac{D^2}{2}  \ch_0 ,  \\ 
\ch_3^D & = \ch_3 - D  \ch_2 + \frac{D^2}{2}\ch_1 - \frac{D^3}{6} \ch_0.
\end{align*} 
Let us also note that by Lemma~\ref{lemma-ch-numerical}, the $D$-twisted Chern character defines a homomorphism 
\begin{equation*}
\ch^D \colon \rK_{\num}(X, \alpha) \to \CH_{\num}^*(\cX)_{\bR}. 
\end{equation*}

\begin{remark}
We have followed Lieblich \cite{lieblich-moduli-twisted} in our definition of the Chern character for twisted sheaves. There is another possible definition, going back to Yoshioka \cite{yoshioka-twisted-sheaves}, which can be made when $\alpha \in \Br_{\mathrm{Az}}(X)$ is represented by an Azumaya algebra, or equivalently when there exists a locally free twisted sheaf $V$ on a $\bmu_n$-gerbe $\pi \colon \cX \to X$ of class $\alpha$. 
Given such a $V$, for $E \in \Dperf(X, \alpha)$ we may define 
\begin{equation*}
\ch^V(E) \coloneqq \frac{\ch(\pi_*(E \otimes V^{\vee}))}{\sqrt{\ch(\pi_*(V \otimes V^{\vee}))}} \in \CH^*(X)_{\bQ}. 
\end{equation*} 
In general $\ch^V$ differs from the Chern character $\ch$ defined above. 
For instance, $\ch^V$ depends on the choice of $V$, whereas $\ch$ is canonically defined. 
However, for most purposes, including ours in this paper, one could use $\ch^V$ in place of $\ch$. 
For instance, both Chern characters lead to the same notion of slope (semi)stability for twisted sheaves, which satisfy a Bogomolov inequality, as studied in \S\ref{section-slope-stability-twisted-sheaves} below. 
\end{remark}

\subsection{Slope stability} 
\label{section-slope-stability-twisted-sheaves} 
Slope stability of twisted sheaves is defined analogously to the classical case. 
\begin{definition}
\label{definition-twisted-stability}
Let $X$ be a smooth projective variety equipped with a class \mbox{$\alpha \in \Br(X)$} represented by a $\bmu_n$-gerbe $\pi \colon \cX \to X$, 
a real ample divisor $\omega$, and a real divisor $D$. 
For a twisted coherent sheaf $E \in \Coh(X, \alpha) = \Coh^1(\cX)$, the $(\omega, D)$-\emph{slope} is  
\begin{equation*}
\mu_{\omega, D}(E) \coloneqq 
\begin{cases}
\frac{ \omega^{\dim X -1} \cdot \ch^D_1(E) }{ \omega^{\dim X} \cdot \ch^D_0(E) } & \text{if } \ch^D_0(E) \neq 0 , \\
+\infty & \text{if } \ch^D_0(E) = 0, 
\end{cases}
\end{equation*} 
We say $E$ is \emph{$\mu_{\omega, D}$-semistable} (or \emph{slope semistable} if $(\omega, D)$ are understood) if for all nontrivial $\alpha$-twisted subsheaves $F \subset E$, 
the inequality $\mu_{\omega,D}(F) \leq \mu_{\omega,D}(E)$ holds, 
and $E$ is \emph{$\mu_{\omega, D}$-stable} if for all $F$ the inequality is strict. 
When $D = 0$ we write $\mu_{\omega} = \mu_{\omega,0}$. 
\end{definition} 

\begin{remark}
The notion of $\mu_{\omega, D}$-(semi)stability is independent of $D$. 
Classically only the case $D = 0$ was considered, but the ability to vary $D$ is useful when studying stability conditions. 
\end{remark}

Many results for semistable sheaves have analogs in the twisted setting. For instance, the same arguments as in the untwisted case show the existence of a \emph{Harder--Narasimhan (HN) filtration} with respect to $\mu_{\omega, D}$-stability for any $E \in \Coh(X, \alpha)$. 
That is, there exists a filtration 
\begin{equation*}
0 = E_0 \subset E_1 \subset \cdots \subset E_m = E
\end{equation*} 
such that the factors $A_i = E_i/E_{i-1}$ are $\mu_{\omega,D}$-semistable and 
\begin{equation*}
\mu_{\omega,D}(A_1) > \mu_{\omega,D}(A_2) > \cdots > \mu_{\omega,D}(A_m). 
\end{equation*}  
We use the notation $\mu_{\omega,D}^+ \coloneqq \mu_{\omega,D}(A_1)$ and $\mu_{\omega,D}^{-}(E) = \mu_{\omega,D}(A_m)$. 

Combined with Lemma~\ref{lemma-finite-cover-kill}, the following lemma sometimes allows one to reduce statements to the untwisted setting, as we will illustrate in the proof of the twisted Bogomolov inequality below. 

\begin{lemma}
\label{lemma-pullback-ss}
Let $f \colon Y \to X$ be a finite surjective morphism of smooth projective varieties over a field of characteristic $0$. 
Let $\alpha \in \Br(X)$ and let $H$ be an ample divisor on $X$. 
Let $E \in \Coh(X, \alpha)$ be a torsion free twisted sheaf. 
Then $E$ is $\mu_H$-semistable if and only if the pullback $f^*E \in \Coh(Y, f^*\alpha)$ is $\mu_{f^*H}$-semistable. 
\end{lemma}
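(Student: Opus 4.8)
The plan is to reduce the twisted statement to the known untwisted case by pulling back along a finite cover that kills $\alpha$, and then comparing semistability upstairs along $f$ with semistability downstairs. First I would fix a $\bmu_n$-gerbe $\pi \colon \cX \to X$ of class $\alpha$ and form the pullback gerbe $\pi' \colon \cY \to Y$ of class $f^*\alpha$. The ``only if'' direction is the essential content: the converse, that $\mu_{f^*H}$-semistability of $f^*E$ implies $\mu_H$-semistability of $E$, is immediate, since any destabilizing $\alpha$-twisted subsheaf $F \subset E$ pulls back to an $f^*\alpha$-twisted subsheaf $f^*F \subset f^*E$ with the same slope (as $\mu_{f^*H}$ of a pullback is $\deg(f)$ times $\mu_H$ of the original, by the projection formula $f^*H^{\dim X - 1} \cdot \ch_1(f^*F) = \deg(f)\, H^{\dim X - 1} \cdot \ch_1(F)$ and similarly for ranks, so slopes are multiplied by a common positive constant), contradicting semistability of $f^*E$. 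One should note $f^*E$ is still torsion free because $f$ is finite flat (being a finite morphism of smooth varieties).

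For the ``only if'' direction, suppose $E$ is $\mu_H$-semistable but $f^*E$ is not $\mu_{f^*H}$-semistable. The standard trick is to use the Galois closure: enlarging $Y$ if necessary (replacing $Y$ by a resolution of the normalization of $X$ in a Galois closure of $k(Y)/k(X)$, which is harmless since composing two finite covers that kill $\alpha$ still kills $\alpha$, and semistability of $f^*E$ on the intermediate cover follows from semistability on the top by the converse direction just proved), I may assume $f \colon Y \to X$ is the quotient by a finite group $G$ acting on $Y$. Then $G$ acts on $\cY$ compatibly, hence on $\Coh(Y, f^*\alpha)$, and the Harder--Narasimhan filtration of $f^*E$ with respect to $\mu_{f^*H}$ is unique, hence $G$-invariant. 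Therefore its maximal destabilizing subsheaf $F_1 \subset f^*E$ is a $G$-equivariant $f^*\alpha$-twisted subsheaf, so it descends to an $\alpha$-twisted subsheaf $F \subset E$ (using that $f^*\alpha$ together with its equivariant structure is pulled back from $\alpha$ — concretely, the $G$-equivariant objects of $\Coh(\cY)$ with inertial weight $1$ are equivalent to $\Coh(\cX)$ with inertial weight $1$, since $\cX = [\cY/G]$ after matching up the gerbe data). Since $\mu_{f^*H}(F_1) > \mu_{f^*H}(f^*E)$ and these slopes are $\deg(f)$ times $\mu_H(F)$ and $\mu_H(E)$ respectively, we get $\mu_H(F) > \mu_H(E)$, contradicting semistability of $E$.

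Alternatively, and perhaps more in the spirit of the rest of the paper, I would avoid the Galois reduction entirely and argue as follows. By Lemma~\ref{lemma-finite-cover-kill}, after a further finite base change we may assume $f^*\alpha = 0$, so by Lemma~\ref{lemma-essentially-trivial} there is an invertible twisted sheaf $L$ on $\cY$; then $f^*E \otimes L^\vee \in \Coh^0(\cY) = \Coh(Y)$ is an honest coherent sheaf on $Y$, and since tensoring by the line bundle $L^\vee$ preserves slopes up to an additive constant (and preserves the subsheaf lattice), $\mu_{f^*H}$-semistability of $f^*E$ is equivalent to $\mu_{f^*H}$-semistability of the untwisted sheaf $f^*E \otimes L^\vee$. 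So the claim becomes the purely untwisted statement that $E$ is $\mu_H$-semistable iff $f^*E \otimes L^\vee$ is, which, after again reducing to the Galois case downstairs as above, is the classical fact that (semi)stability is preserved by pullback along finite morphisms in characteristic $0$ — see e.g. Huybrechts--Lehn. The main obstacle, and the only place the characteristic-$0$ hypothesis enters, is precisely this classical untwisted input: in positive characteristic pullback along inseparable covers can destroy semistability, whereas in characteristic $0$ finite covers are generically étale and one has the averaging / Galois-descent argument for HN filtrations. I would cite the untwisted result rather than reprove it, and spell out only the twisted-to-untwisted reduction via $L$.
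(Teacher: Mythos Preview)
Your proposal is correct and takes the same approach as the paper: the paper's proof is a one-line citation to \cite[Lemma~3.2.2]{huybrechts-lehn}, observing that the argument there (reduction to a Galois cover and descent of the $G$-invariant Harder--Narasimhan filtration) carries over verbatim to twisted sheaves, which is precisely what your first paragraph spells out. Two minor corrections that do not affect the argument: pullback actually \emph{preserves} slopes (both $H^{\dim X-1}\cdot\ch_1$ and $H^{\dim X}\cdot\ch_0$ scale by $\deg f$), and the normalization of $X$ in the Galois closure already suffices, so no resolution is needed.
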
 

\begin{proof}
This follows by the same argument as in \cite[Lemma 3.2.2]{huybrechts-lehn} which treats the untwisted version of the assertion. 
\end{proof}

\subsection{Bogomolov inequality} 
The Bogomolov inequality plays an important role in the construction of stability conditions. 
To state it, we need to introduce the discriminant. 

\begin{definition}
Let $X$ be a smooth projective variety with a class \mbox{$\alpha \in \Br(X)$} represented by a $\bmu_n$-gerbe $\pi \colon \cX \to X$. 
For $E \in \Dperf(E,\alpha) = \Dperf^1(\cX)$, the \emph{discriminant} is 
\begin{equation*}
\Delta(E) \coloneqq \ch_1(E)^2 - 2\ch_0(E) \ch_2(E) \in \CH^2(\cX)_{\bQ}. 
\end{equation*}  
\end{definition} 

\begin{remark}
If $D$ is a real divisor on $X$, then we have 
\begin{equation*}
\Delta(E) = \ch^D_1(E)^2 - 2\ch^D_0(E) \ch^D_2(E), 
\end{equation*} 
so allowing a twist by $D$ leads to the same notion of discriminant. 
\end{remark} 

\begin{theorem}[Twisted Bogomolov inequality]
\label{theorem-twisted-bogomolov}
Let $X$ be a smooth projective variety over a field of characteristic $0$, 
equipped with a Brauer class $\alpha \in \Br(X)$ and an ample divisor $H$.  
Then for any a torsion free $\mu_H$-semistable twisted sheaf $E \in \Coh(X, \alpha)$, we have 
\begin{equation*}
 H^{\dim X -2} \Delta(E)  \geq 0 
\end{equation*} 
\end{theorem}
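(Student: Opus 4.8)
The plan is to reduce the twisted Bogomolov inequality to the classical (untwisted) one by passing to a finite cover that kills the Brauer class, using the tools already assembled in the excerpt. First I would invoke Lemma~\ref{lemma-finite-cover-kill} to produce a smooth projective variety $Y$ together with a surjective finite flat morphism $f \colon Y \to X$ such that $f^*(\alpha) = 0$; after replacing $Y$ by a resolution if needed, we may assume $f$ is a finite surjective morphism of smooth projective varieties (in characteristic $0$ resolutions exist and one can compose with a generically finite morphism, or simply note that the construction in Lemma~\ref{lemma-finite-cover-kill} via a generic complete intersection in a Severi--Brauer variety already yields $Y$ smooth). Since $E$ is torsion free and $\mu_H$-semistable, Lemma~\ref{lemma-pullback-ss} tells us that $f^*E \in \Coh(Y, f^*\alpha)$ is $\mu_{f^*H}$-semistable and torsion free (torsion-freeness of the pullback under a finite flat morphism is standard).

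Next, because $f^*(\alpha) = 0$, Lemma~\ref{lemma-essentially-trivial} provides an invertible $f^*\alpha$-twisted sheaf $L$ on a $\bmu_n$-gerbe $\cY \to Y$ of class $f^*\alpha$, so that $f^*E \otimes L^{\vee}$ is an \emph{untwisted} coherent sheaf on $Y$ (an object of $\Coh^0(\cY)$, which we identify with $\Coh(Y)$). Tensoring by the line bundle $L^{\vee}$ preserves $\mu_{f^*H}$-semistability and torsion-freeness, and it does not change the discriminant: indeed $\Delta$ is insensitive to twisting by a (real, hence also integral) divisor, as recorded in the remark following the definition of $\Delta$, so $\Delta(f^*E \otimes L^{\vee}) = \Delta(f^*E)$ in $\CH^2(\cY)_{\bQ} = \CH^2(Y)_{\bQ}$. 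Now the classical Bogomolov inequality for the torsion-free $\mu_{f^*H}$-semistable sheaf $f^*E \otimes L^{\vee}$ on the smooth projective variety $Y$ over a field of characteristic $0$ gives
\begin{equation*}
(f^*H)^{\dim Y - 2} \cdot \Delta(f^*E \otimes L^{\vee}) \geq 0.
\end{equation*}

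Finally I would descend this inequality back to $X$. Since $\dim Y = \dim X$ and Chern characters commute with pullback, $\Delta(f^*E) = f^*\Delta(E)$, and by the projection formula together with the fact that $f$ is finite of some degree $\delta = \deg(f) > 0$,
\begin{equation*}
f_*\!\left( (f^*H)^{\dim X - 2} \cdot f^*\Delta(E) \right) = \delta \cdot H^{\dim X - 2} \cdot \Delta(E),
\end{equation*}
and $f_*$ of an effective (nonnegative) zero-cycle class is nonnegative of the same sign; hence $H^{\dim X - 2}\,\Delta(E) \geq 0$, as desired. The only mild subtleties, and hence the main points to get right, are: (i) arranging $Y$ smooth in Lemma~\ref{lemma-finite-cover-kill} — handled by the explicit complete-intersection construction cited there, or by resolution of singularities; and (ii) checking that pullback along the finite flat $f$ preserves torsion-freeness and that tensoring by $L^{\vee}$ is harmless — both are routine but should be stated. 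The genuinely nontrivial inputs (the untwisted Bogomolov inequality in characteristic $0$, and Lemmas~\ref{lemma-finite-cover-kill}, \ref{lemma-pullback-ss}, \ref{lemma-essentially-trivial}) are all either classical or already available in the excerpt, so there is no serious obstacle beyond bookkeeping.
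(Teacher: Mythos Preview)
Your proposal is correct and follows essentially the same route as the paper: pass to a finite cover killing $\alpha$ via Lemma~\ref{lemma-finite-cover-kill}, preserve semistability via Lemma~\ref{lemma-pullback-ss}, untwist using an invertible twisted sheaf as in the proof of Lemma~\ref{lemma-ch-numerical}, and then invoke the classical Bogomolov inequality. The paper is considerably terser, but the structure and the key inputs are identical; note also that Lemma~\ref{lemma-finite-cover-kill} already produces a smooth $Y$, so your concern (i) is not actually an issue.
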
 

\begin{proof}
This is a special case of \cite[Proposition 3.2.3.13]{lieblich-moduli-twisted} where the base field is allowed to be of arbitrary characteristic; in that generality, the inequality is more complicated as it requires a correction term.  
In our case, the argument is simple. 
Indeed, using Lemmas~\ref{lemma-finite-cover-kill} and~\ref{lemma-pullback-ss}, 
by the argument from Lemma~\ref{lemma-ch-numerical} we may reduce to proving the claim in the untwisted case, which is nothing but the well-known Bogomolov inequality \cite[Theorem 3.4.1]{huybrechts-lehn}. 
\end{proof} 

\subsection{Moduli of semistable twisted sheaves} 
\label{section-moduli-twisted-sheaves}
There is also a version of Gieseker stability for twisted sheaves. 
If $\alpha \in \Br(X)$ is a class represented by a $\bmu_n$-gerbe $\pi \colon \cX \to X$ and $
H$ is an ample divisor on $X$, 
then for $E \in \Coh(X, \alpha) = \Coh^1(\cX)$ one defines Gieseker $H$-stability analogously to the untwisted case, using the \emph{geometric Hilbert polynomial} 
\begin{equation*}
P_H(E,t) \coloneqq n \int_{\cX} \ch(E \otimes \cO_{\cX}(\pi^*(tH))) \, \td_\cX
\end{equation*} 
in place of the Hilbert polynomial of a coherent sheaf. 
This reduces to usual Gieseker stability for coherent sheaves when $\alpha = 0$. 
We refer to \cite{lieblich-moduli-twisted} for the details of this theory. 
Similarly to slope stability, one can more generally replace $H$ by a real ample divisor $\omega$ and consider a twist by a real divisor $D$ in Gieseker stability, but we will not need this. 

There is a well-behaved moduli space of Gieseker semistable sheaves:

\begin{example}[Moduli of semistable twisted sheaves]
\label{example-moduli-sheaves} 
Let $X$ be a smooth projective variety over a field $k$, equipped with a Brauer class $\alpha \in \Br(X)$ and an ample divisor $H$. 
Let $\Knum(X)$ denote the numerical K-theory of $X$, i.e. the quotient of the 
Grothendieck group $\rK_0(X)$ of coherent sheaves by the kernel of the Euler pairing. 
For a class $v \in \Knum(X, \alpha)$, let $\cM_H(v)$ denote the moduli stack of Gieseker $H$-semistable $\alpha$-twisted coherent sheaves of class $v$. 
Then $\cM_H(v)$ is an open substack of the stack $\cM_{\gl}(\Dperf(X,\alpha)/k)$ of gluable objects in $\Dperf(X,\alpha)$, 
and $\cM_H(v) \to \Spec(k)$ is finite type and universally closed. 
This holds by the results of \cite{lieblich-moduli-twisted}, generalizing well-known results from the case when $\alpha = 0$ \cite{huybrechts-lehn}. 
\end{example} 

To formulate a relative version of moduli of semistable twisted sheaves, we need to fix a relative version of the numerical class $v$. 
There are various ways to do this. 
For instance, one may fix the geometric Hilbert polynomial. 
Another possibility is to consider classes in the relative numerical Grothendieck group, or a finite rank abelian group to which it maps, as in our discussion of stability conditions relative to a base in \S\ref{section-stability-families} below. 
For applications to the noncommutative variational Hodge conjecture or period-index problem, 
it will be useful to consider a topological variant where we fix a section of the relative topological K-theory. 

\begin{example}[Relative moduli of semistable twisted sheaves]
\label{example-relative-moduli-sheaves}
Let $f \colon X \to S$ be a smooth proper morphism of complex varieties equipped with a 
Brauer class $\alpha \in \Br(X)$ and a relatively ample divisor $H$ on $X$ over $S$.  
Let $v$ be a section of the local system $\Ktop[0](\Dperf(X, \alpha)/S))$. 
Note that any object $E \in \Dperf(X, \alpha)$ gives a section $v_{E}$ of $\Ktop[0](\Dperf(X, \alpha)/S))$ 
whose fiber over $s \in S(\bC)$ is the class of $E_s$ in $\Ktop[0](\Dperf(X_s, \alpha_s))$. 
We denote by $\cM_{H}(v)$ the moduli stack of Gieseker $H$-semistable $\alpha$-twisted coherent sheaves of class $v$. 
Since $v$ determines the geometric Hilbert polynomial, it follows again from the results of \cite{lieblich-moduli-twisted} that 
$\cM_{H}(v)$ is an open substack of $\cM_{\gl}(\Dperf(X, \alpha)/S)$ which is finite type and universally closed over $S$. 
\end{example} 

\section{Stability conditions}
\label{section-stability} 

In this section, we discuss stability conditions on categories and their associated moduli spaces of semistable objects, which provide the main examples to which we will apply the general theory from \S\ref{section-moduli-objects-modulo-autoequivalence}. 
We also give a new proof of a variant of a result of Polishchuk \cite{polishchuk-t-structure}, which says  
stability conditions are invariant under the identity component of the group of autoequivalences (Proposition~\ref{proposition-stability-invariant}). 

\subsection{Stability conditions in a family}
\label{section-stability-families} 
The notion of a stability condition on a triangulated category $\cC$, due to Bridgeland \cite{bridgeland-stability}, gives rise to a well-behaved theory of semistable objects in $\cC$. 
The definition depends on the choice of a homomorphism $\bv \colon \rK_0(\cC) \to \Lambda$, called a \emph{Mukai homomorphism}, from the Grothendieck group of $\cC$ to a finite rank free abelian group. 
A stability condition $\sigma$ on $\cC$ with respect to $\bv$ then consists of a pair $(Z, \cA)$ where $Z \colon \Lambda \to \bC$ is a group homomorphism (the \emph{central charge} of $\sigma$) and $\cA$ is the heart of a bounded t-structure, which satisfy certain compatibility conditions. 
When $\cC$ is (the homotopy category of) a smooth proper $k$-linear category for a field $k$, then it is often assumed that $\bv$ factors through the map $\rK_0(\cC) \to \Knum(\cC)$ to the numerical Grothendieck group, defined as the quotient by the kernel of the Euler pairing $\chi(E,F) = \sum_i (-1)^i \dim_k \Ext^i(E,F)$; in this case, $\sigma$ is called a \emph{numerical} stability condition. 
We say that $\sigma$ is a \emph{full} numerical stability condition when we can take $\Lambda = \Knum(\cC)$, i.e. 
the support property holds with respect to $\Knum(\cC)$. 

In \cite{stability-families} this notion was generalized to the relative setting, where instead of a triangulated category we consider an $S$-linear category $\cC$. 
For simplicity, we will consider the case where $X \to S$ is a smooth proper morphism of varieties and $\cC \subset \Dperf(X)$ is an $S$-linear semiorthogonal component. 
We consider the \emph{relative numerical Grothendieck group} $\Knum(\cC/S)$, which is roughly the group obtained from $\bigoplus_{s \in S} \Knum(\cC_s)$ by identifying elements in different summands that are the restriction of a global object $E \in \cC$; see \cite{stability-families} for the precise definition. 
Then we fix a Mukai homomorphism $\bv \colon \Knum(\cC/S) \to \Lambda$ to a finite rank free abelian group. 
Note that for any $s \in S$, we obtain a homomorphism $\bv_s \colon \Knum(\cC_s) \to \Lambda$ given as the composition of the canonical map $\Knum(\cC_s) \to \Knum(\cC/S)$ with $\bv$. 

A \emph{stability condition on $\cC$ over $S$} with respect to $\bv \colon \Knum(\cC/S) \to \Lambda$ 
consists of a group homomorphism $Z \colon \Lambda \to \bC$ and a collection ${\sigma} = (\sigma_s = (Z_s, \cA_s))_{s \in S}$ of numerical stability conditions on the fibers $\cC_s$ satisfying various compatibility and tameness conditions; 
in particular, we require that $Z_s = Z \circ \bv_s$. 
For the precise definition we refer to \cite[Definitions 20.5 and 20.15]{stability-families}. 

\begin{warning}
\label{warning-stability-k}
In the case where $S = \Spec(k)$ for a field $k$, we can consider a stability condition on $\cC$ in the sense introduced by Bridgeland, or a stability condition on $\cC$ \emph{over $k$} in the sense of \cite{stability-families}. These notions are not a priori equivalent. A stability condition on $\cC$ over $k$ is a stability condition in Bridgeland's sense satisfying some additional properties, most importantly the existence of proper moduli spaces of semistable objects. 
However, the results of \cite{stability-families} show that all known constructions of stability conditions in Bridgeland's sense actually give stability conditions over $k$. 
\end{warning} 

\subsection{Topological Mukai homomorphisms} 
Over the complex numbers, it will be useful to consider stability conditions over $S$ with respect to Mukai homomorphisms that are topological in the following sense. 
Recall from Theorem~\ref{theorem-Ktop} that we have a local system $\Ktop[0](\cC/S)$ on $S$ whose fibers are $\Ktop[0](\cC_s)$ for $s \in S(\bC)$. 

\begin{definition}
\label{definition-topological-v}
Let $\cC \subset \Dperf(X)$ be an $S$-linear semiorthogonal component, where 
$X \to S$ is a smooth proper morphism of complex varieties. 
A Mukai homomorphism $\bv \colon \Knum(\cC/S) \to \Lambda$ is called \emph{topological} if there is given an abelian group $\Lambda^{\rtop}$ with an inclusion $\Lambda \subset \Lambda^{\rtop}$ 
and a morphism $\bv^{\rtop} \colon \Ktop[0](\cC/S) \to \underline{\Lambda}^{\rtop}$ of local systems, where $\underline{\Lambda}^{\rtop}$ is the constant local system with value $\Lambda^{\rtop}$, 
such that for every $s \in S(\bC)$ the diagram
\begin{equation*}
\begin{tikzcd}
\rK_0(\cC_s) \arrow{r} \arrow{dr} & \Knum(\cC_s) \arrow{rr}{\bv_s} && \Lambda \arrow[hook]{d} \\ 
& \Ktop[0](\cC_s) \arrow{rr}{\bv_s^{\rtop}} && \Lambda^{\rtop}
\end{tikzcd}
\end{equation*} 
commutes, where the maps out of $\rK_0(\cC_s)$ are the canonical ones and $\bv_s^{\rtop}$ is the fiber of $\bv^{\rtop}$ over $s$. 
\end{definition}

\begin{example}
\label{example-topological-v}
	Let $f:X \to S$ be a smooth, projective morphism, where $S$ is a quasi-projective complex variety, and let $\pi:\cX \to X$ be a $\bmu_n$-gerbe whose restriction to the fibers of $f$ is essentially topologically trivial. Let $\omega$ and $D$ be real divisors on $X$, with $\omega$ relatively ample over $S$. 

	For each $s \in S$, there is a commutative diagram
	\[
		\begin{tikzcd}
			\rK_0(\Dperf(\cX_s)) \ar[r] \ar[dr] & \Knum(\Dperf(\cX_s)) \ar[r] & \CH^{*}(\cX_s)_{\bQ} \ar[d] \ar[r, "\sim", "\pi_*"'] & \CH^{*}(X_s)_{\bQ} \ar[d] \\
			& \Ktop[0](\Dperf(\cX_s)) \ar[r] & \rH^{\ev}(\cX_s, \bQ) \ar[r, "\sim", "\pi_*"'] & \rH^{\ev}(X_s, \bQ).
		\end{tikzcd}
	\]
Consider the two homomorphisms (given by the same formula)
\begin{align*}
 \ch^{\omega, D} \colon \Knum(\Dperf(\cX_s)) & \to \bR^{n+1} \\
 \ch^{\omega, D, \topo} \colon \Ktop[0](\Dperf(\cX_s)) & \to \bR^{n + 1} \\
 E & \mapsto ( \omega^n \ch^D_0(E), \dots, \omega^{n-i} \ch^D_i(E) , \dots, \ch^D_n(E)), 
\end{align*}
where we have used the Chern character for a Deligne--Mumford stack as in \S\ref{section-chern-character}. Both homomorphisms are induced (via the above diagram) by the map
\begin{equation} 
	\rH^{\ev}(X_s, \bQ) \to \bR^{n + 1}, \quad x \mapsto \prod_{k = 0}^{n} \int_X \exp(D) \cdot x \cdot \omega^k, 
\end{equation}
which (as $s$ varies) assembles to a morphism of local systems
\begin{equation} \label{eq:twisting_chern}
	\rR^{\ev} f_* \bQ \to \underline{\bR}^{n + 1}.
\end{equation}
We write $\Lambda_{\omega, D}^{\topo}$ for the image of $\Ktop[0](\cX_s)$ in $\bR^{n + 1}$ under $\ch^{\omega, D, \topo}$, which does not depend on the point $s$.

Ranging over $s$, the maps $\ch^{\omega, D}$ form a homomorphism $\Knum(\Dperf(\cX)/S) \to \bQ^{n+1}$, 
whose image we denote by $\Lambda_{\omega,D}$, so that we have a Mukai homomorphism
\begin{equation*}
\ch^{\omega,D}_{\cX/S} \colon \Knum(\Dperf(\cX)/S) \to \Lambda_{\omega,D}. 
\end{equation*} 
On the other hand, the results of \cite{hotchkiss-pi} show that there is a relative Chern character
\[
	\ch^{\topo}_{\cX/S}:\Ktop[0](\Dperf(\cX)/S) \to \rR^{\ev} f_* \bQ,
\]
defined as the relative $\theta^k$-twisted Chern character on each summand $\Dperf(X, \alpha^k)$ of $\Dperf(\cX)$.

The composition of $\ch^{\topo}_{\cX/S}$ with \eqref{eq:twisting_chern} gives a homomorphism $\ch_{\cX/S}^{\omega, D, \topo}$ landing in the constant local system $\Lambda_{\omega, D}^{\topo}$, which at each fiber $\cX_s$ fits into a commutative diagram
\[
	\begin{tikzcd}
		\rK_0(\Dperf(\cX_s)) \ar[dr] \ar[r] & \Knum(\Dperf(\cX_s)) \ar[r, "\ch^{\omega, D}_{\cX/S}"] & \Lambda_{\omega, D} \ar[d, hook] \\
		& \Ktop[0](\Dperf(\cX_s)) \ar[r, "\ch^{\omega, D, \topo}_{\cX/S}"] &  \Lambda_{\omega, D}^{\topo}.
	\end{tikzcd}
\]
In particular, the Mukai homomorphism $\ch^{\omega, D}_{\cX/S}$ is topological.

Finally, when $\omega = D = H$, to simplify notation we write $\ch^H$ for $\ch^{\omega, D}$. 
\end{example}

\subsection{Moduli of semistable objects} 
\label{section-moduli-ss}
For a stability condition over a field, we have the following analog of Example~\ref{example-moduli-sheaves}. 

\begin{example}[Moduli of semistable objects]
\label{example-moduli-objects} 
Let $\cC \subset \Dperf(X)$ be $k$-linear 
semiorthogonal component where $X$ is a smooth proper $k$-scheme. 
Let $\sigma$ be a numerical stability condition on $\cC$ over $k$ in the sense of \cite{stability-families}, with respect to a Mukai homomorphism $\mathbf{v} \colon \Knum(\cC) \to \Lambda$ to a finite rank abelian group. 
Fix $v \in \Lambda$ and a compatible phase $\phi \in \bR$, i.e. $Z(v) \in \bR_{> 0} e^{i\pi\phi}$ where $Z$ is the central charge of $\sigma$. 
Let $\cM_{\sigma}(v, \phi)$ denote the moduli stack of $\sigma$-semistable objects in $\cC$ of class $v$ and phase $\phi$. 
Then $\cM_{\sigma}(v, \phi) \subset \cM_{\gl}(\cC/k)$ is an open substack by \cite[Lemma 21.12]{stability-families}, 
and $\cM_{\sigma}(v, \phi)$ is universally closed over $k$ by (the proof of) \cite[Theorem 21.24]{stability-families}. 
\end{example}

Similar to Example~\ref{example-relative-moduli-sheaves}, we can also consider a relative variant of Example~\ref{example-moduli-objects}. 

\begin{example}[Relative moduli of semistable objects]
\label{example-relative-stability}
Let \mbox{$\cC \subset \Dperf(X)$} be an $S$-linear semiorthogonal component where  
$f \colon X \to S$ a smooth proper morphism of complex varieties. 
Let $v \in \Gamma(S, \Ktop[0](\cC/S))$ be a section. 
Suppose $\cC$ is equipped with a stability condition $\sigma$ over a connected base $S$ 
with respect to a topological Mukai homomorphism $\bv \colon \Knum(\cC/S) \to \Lambda$ in the sense of Definition~\ref{definition-topological-v}. 
Let $\lambda \in \Lambda^{\rtop}$ be the image of $v$ under the map on global sections $\Gamma(\bv^{\rtop}) \colon \Gamma(\Ktop[0](\cC/S)) \to \Lambda^{\rtop}$. 
 
Then, on the one hand, we can consider the moduli stack $\cM_{\sigma}(\lambda, \phi)$ of $\sigma$-semistable objects in $\cC$ of class $\lambda$ and phase $\phi$, where $\phi$ is a phase compatible with $\lambda$ (we will also say $\phi$ is compatible with $v$ in this case); 
this is an open substack of $\cM_{\gl}(\cC/S)$  \cite[Lemma 21.12]{stability-families} 
which is finite type and universally closed over $S$ \cite[Theorem 21.24]{stability-families}. 
Note that by definition $\cM_{\sigma}(\lambda, \phi)$ is empty if $\lambda \notin \Lambda$. 
On the other hand, we may consider a topological variant: 
the moduli stack $\cM_{\sigma}(v, \phi)$ of $\sigma$-semistable objects in $\cC$ of class $v$ and phase $\phi$. 
This is a connected component of $\cM_{\sigma}(\lambda, \phi)$, and hence also open in $\cM_{\gl}(\cC/S)$ and finite type and universally closed over $S$. 

Often one requires the fibers $v_s \in \Ktop[0](\cC_s)$ of $v$ to be Hodge classes for all~$s \in S(\bC)$, 
as otherwise the above moduli spaces are empty. 
\end{example}

\subsection{Action of the identity component on stability conditions} 
\label{section-identity-component-stab}
We digress to explain an interesting consequence of Lemma~\ref{lemma-aut0-preserve-open} for 
the action of the identity component of the group of autoequivalences on stability conditions. 

Let $\cC$ and $\sigma$ be as in Example~\ref{example-moduli-objects}. 
The group of pairs 
\begin{equation*}
\Aut_{\Lambda}(\cC/k)(k) \coloneqq \set{ (g, a) \in \Aut(\cC/k)(k) \times \Aut(\Lambda) ~ \st ~ \mathbf{v} \circ \Phi_g^{\Knum} = a \circ \mathbf{v}  } 
\end{equation*} 
acts on the space $\Stab_{\Lambda}(\cC/k)$ of all $\sigma$, where $\Phi_g^{\Knum}$ denotes the automorphism of $\Knum(\cC)$ induced by the autoequivalence $\Phi_g \in \Aut_k(\cC)$ corresponding to $g$. Namely, if $\sigma = (\cP, Z)$, i.e. $\cP$ is the slicing and $Z$ the central charge for $\sigma$, then 
\begin{equation*}
(g, a) \cdot \sigma \coloneqq (\Phi_g(\cP), Z \circ a^{-1})
\end{equation*} 
where $\Phi_g(\cP)$ is the slicing $\Phi_g(\cP(\phi))$, $\phi \in \bR$. 
Let $\Aut^0_{\Lambda}(\cC/k)(k) \subset \Aut_{\Lambda}(\cC/k)(k)$ be the subgroup where $g \in \Aut^0(\cC/k)(k)$. 

\begin{proposition}
\label{proposition-stability-invariant} 
Let $\cC \subset \Dperf(X)$ be a semiorthogonal component where $X$ is a smooth proper scheme over an algebraically closed field $k$, and assume that $\cC$ is a connected as a $k$-linear category. 
Let $\sigma$ be a numerical stability condition on $\cC$ over $k$
with respect to a surjective homomorphism $\mathbf{v} \colon \Knum(\cC) \to \Lambda$. 
Then $\Aut^0_{\Lambda}(\cC/k)(k)$ acts trivially on $\Stab_{\Lambda}(\cC/k)$, i.e. 
for every $(g, a)  \in \Aut^0_{\Lambda}(\cC/k)(k)$ and $\sigma \in \Stab_{\Lambda}(\cC/k)$ we have 
$(g,a) \cdot \sigma = \sigma$. 
\end{proposition}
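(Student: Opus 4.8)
The plan is to show that any $(g,a) \in \Aut^0_{\Lambda}(\cC/k)(k)$ fixes a given $\sigma \in \Stab_{\Lambda}(\cC/k)$ by exhibiting $(g,a)\cdot\sigma$ and $\sigma$ as two points of the space of stability conditions that cannot be separated, then invoking the fact (part of the definition of a stability condition over $k$ and the deformation theory of \cite{stability-families}) that $\Stab_{\Lambda}(\cC/k)$ is a complex manifold, on which the connected group $\Aut^0_{\Lambda}(\cC/k)(k)$ acts continuously preserving the central charge up to the finite-index data $a$; since $a$ lies in a discrete group $\Aut(\Lambda)$ and is fixed by connectedness of $\Aut^0$, the action factors through the subgroup with $a = \id$, and it suffices to treat the case $a = \id$, i.e. $g \in \Aut^0(\cC/k)(k)$ with $\Phi_g^{\Knum} = \id$.

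With $a = \id$, the central charge $Z$ is unchanged, so $(g,\id)\cdot\sigma = (\Phi_g(\cP), Z)$, and the claim reduces to showing $\Phi_g(\cP(\phi)) = \cP(\phi)$ for every $\phi \in \bR$, equivalently that $\Phi_g$ preserves every semistable category. The key geometric input is Lemma~\ref{lemma-aut0-preserve-open}: for each $v \in \Lambda$ and compatible phase $\phi$, the moduli stack $\cM_{\sigma}(v,\phi) \subset \cM_{\gl}(\cC/k)$ is a Zariski open substack which is universally closed over $k$ (Example~\ref{example-moduli-objects}), hence the action of $\cAut^0(\cC/k)$ on $\cM_{\gl}(\cC/k)$ preserves $\cM_{\sigma}(v,\phi)$. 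Applied to $\Phi_g$, this says that for any $\sigma$-semistable object $E$ of class $v$ and phase $\phi$, the object $\Phi_g(E)$ is again $\sigma$-semistable of class $v$ (the class is unchanged since $\Phi_g^{\Knum}=\id$) and phase $\phi$ (the phase is determined by $Z(v)$, which is unchanged). Ranging over all $v$ and $\phi$, this shows $\Phi_g(\cP(\phi)) \subseteq \cP(\phi)$; applying the same to $\Phi_g^{-1} = \Phi_{g^{-1}}$, which also lies in $\Aut^0(\cC/k)(k)$ with trivial action on $\Knum(\cC)$, gives the reverse inclusion. Hence $\Phi_g(\cP) = \cP$ as slicings, so $(g,\id)\cdot\sigma = \sigma$.

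To complete the reduction in the first paragraph, one should spell out why $a$ is forced to be trivial. The assignment $(g,a) \mapsto a$ is a group homomorphism $\Aut^0_{\Lambda}(\cC/k)(k) \to \Aut(\Lambda)$; it suffices to check it is trivial on $k$-points coming from a connected family, i.e. that for a connected scheme $T$ with a $T$-point of $\Aut^0_{\Lambda}(\cC/k)$ the induced automorphism of $\Lambda = \Knum(\cC)$ is locally constant in $T$, hence constant and equal to its value at the identity, namely $\id$. Since $\mathbf v$ is surjective and the action on $\Knum(\cC)$ is determined by the action on $\rK_0$-classes of fixed objects, which deforms continuously, this follows; alternatively one can cite that $\Aut^0(\cC/k)$ acts trivially on $\Knum(\cC)$ because $\Knum(\cC)$ is discrete and $\Aut^0$ is connected — the action map $\Aut^0(\cC/k) \to \Aut(\Knum(\cC))$ is a morphism to a discrete group from a connected one, hence constant.

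The main obstacle is the first paragraph: making precise that the only data one must kill is the discrete part $a$, and that $\Aut^0(\cC/k)$ acts trivially on $\Knum(\cC)$. Once that is in hand, the heart of the argument — that $\Phi_g$ fixes each slice — is a direct consequence of Lemma~\ref{lemma-aut0-preserve-open} together with the openness and universal closedness of $\cM_{\sigma}(v,\phi)$ recorded in Example~\ref{example-moduli-objects}, which is why this Polishchuk-type statement falls out cleanly from the moduli-theoretic framework already developed. One mild subtlety worth a sentence: one must know that a slicing is recovered from its semistable objects, i.e. that $\cP(\phi)$ is exactly the extension-closure of the $\sigma$-semistable objects of phase $\phi$, so that preserving all the moduli stacks $\cM_\sigma(v,\phi)$ (equivalently, all semistable objects) indeed forces $\Phi_g(\cP(\phi)) = \cP(\phi)$; this is immediate from the definition of a stability condition.
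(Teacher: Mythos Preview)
Your proposal is correct and takes essentially the same approach as the paper: reduce to $a=\id$ by showing $\Aut^0(\cC/k)$ acts trivially on $\Knum(\cC)$, then use Lemma~\ref{lemma-aut0-preserve-open} together with Example~\ref{example-moduli-objects} to conclude that $\Phi_g$ preserves each slice $\cP(\phi)$. Two minor points: the first paragraph's talk of the manifold structure of $\Stab_\Lambda(\cC/k)$ and of points that ``cannot be separated'' is a red herring and is not used; and your justification that $\Aut^0$ acts trivially on $\Knum(\cC)$ is a bit soft---the paper makes this precise by observing that for $E,F\in\cC$ and a connected family $\Phi$ over $T$, the function $t\mapsto \chi(F_t,\Phi_t(E_t))$ equals the rank of $\cHom_T(\Phi(E_T),F_T)\in\Dperf(T)$ and is therefore locally constant, which is the content behind your ``deforms continuously.''
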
 

\begin{proof}
For a scheme $T$ over $k$, an autoequivalence $\Phi \in \Aut_T(\cC_T)$ over $T$, and objects 
\mbox{$E, F \in \cC$}, consider the function $t \mapsto \chi(F_{t}, \Phi_t(E_{t}))$ on $T$, where a subscript $t$ denotes base change to the residue field $\kappa(t)$. This function is locally constant, as it computes the rank of the object $\cHom_T(\Phi(E_T), F_T)$. 
It follows that any $g \in \Aut^0_{\Lambda}(\cC/k)(k)$ acts trivially on $\Knum(\cC)$. 
As $\mathbf{v} \colon \Knum(\cC) \to \Lambda$ is surjective, we then have 
$\Aut_{\Lambda}^0(\cC/k)(k) = \Aut^0(\cC/k)(k)$. 
Therefore, if $\sigma = (\cP, Z)$, we just need to show that $\Phi_g(\cP(\phi)) = \cP(\phi)$ for $g \in \Aut^0(\cC/k)(k)$. But the objects of $\cP(\phi)$ are precisely the $\sigma$-semistable objects of phase $\phi$, so we conclude by Lemma~\ref{lemma-aut0-preserve-open} and Example~\ref{example-moduli-objects}. 
\end{proof} 

\begin{remark}
Polishchuk proved a variant of Proposition~\ref{proposition-stability-invariant} in \cite[Theorem 3.5.1 and Corollary 3.5.2]{polishchuk-t-structure}, where $\cC = \Dperf(X)$ and the heart of the t-structure for $\sigma$ is assumed noetherian, but $\sigma$ is only required to be a stability condition in the sense of Bridgeland (as opposed to ``over $k$'' in the sense of \cite{stability-families}). 
\end{remark} 

%%%%%%%%%%%%%%%%%%%%%%%%%%%%%%%%%%%%%%%%%%%%%%%%%%%%%%

\section{Construction of stability conditions} 
\label{section-construction-stability} 

In general, it is a difficult problem to construct a stability condition on a given category. 
For a smooth projective complex variety $X$, stability conditions conjecturally always exist on $\Dperf(X)$. 
This is currently only known in general when $\dim X \leq 2$ \cite{bridgeland-K3, arcara-bertram}. 
When $\dim X = 3$, a construction is known for special classes of varieties, like Fano threefolds \cite{Chunyi, Macri-et-al-Fano}, abelian threefolds \cite{Piyaratne-abelian1, Piyaratne-abelian2, BMS}, and quintic threefolds \cite{chunyi-quintic}, while in higher dimensions much less is known. 
The strategy for proving these results was laid out in \cite{BMT}, which via a tilting construction reduced the problem to establishing a generalized Bogomolov--Gieseker inequality for certain complexes. 
Our goal is to explain how these results extend to the twisted setting. 
In most cases, the twisted result follows from the same proof as in the untwisted case, or can be deduced from the untwisted case by a covering argument. 

\subsection{Central charge} 
Let $X$ be a smooth projective variety equipped 
with a Brauer class $\alpha \in \Br(X)$. 
Let $\omega, D$ be real divisors on $X$ with $\omega$ ample. 
We define a central charge $Z_{\omega, D} \colon \rK_0(X, \alpha) \to \bC$ by 
\begin{equation*}
Z_{\omega,D}(E) = -\int_X e^{-i \omega} \cdot \ch^D(E) . 
\end{equation*} 
Explicitly, in low dimensions we have: 
\begin{alignat*}{2}
Z_{\omega,D} & = -\ch_1^D + i \omega \ch^D_0  & & \text{if } \dim X = 1 , \\
Z_{\omega,D} & = \left( -\ch_2^D + \frac{\omega^2}{2} \ch_0^D\right) +  i \omega \ch_1^D \qquad   & &  \text{if } \dim X = 2, \\ 
Z_{\omega,D} & = \left(-\ch_3^D + \frac{\omega^2}{2} \ch_1^D \right) + i \left( \omega \ch_2^D - \frac{\omega^3}{6} \ch_0^D \right) \qquad &&  \text{if } \dim X =3. 
\end{alignat*} 
Note that $Z_{\omega,D}$ factors via the Mukai homomorphism 
\begin{equation*}
\ch^{\omega,D} \colon \rK_\num(X, \alpha) \to \Lambda_{\omega,D}
\end{equation*} 
from Example~\ref{example-topological-v} (with $S$ a point). 

The following is a precise statement of the conjectural existence of stability conditions, extending \cite[Conjecture 2.1.2]{BMT} to the twisted case. 

\begin{conjecture}
\label{conjecture-Z-heart-stab}
There exists a bounded t-structure on $\Dperf(X, \alpha)$ with heart $\cA_{\omega, D}$ such that 
the pair $(Z_{\omega,D}, \cA_{\omega, D})$ is a full numerical stability condition. 
\end{conjecture}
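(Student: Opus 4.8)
The plan is to reduce Conjecture~\ref{conjecture-Z-heart-stab} to the already-established untwisted case \cite{BMT, BMS, etc.} via the two standard devices used throughout this section: a finite covering that kills $\alpha$, together with the tilting construction of \cite{BMT}. First I would treat the low-dimensional cases directly: for $\dim X \leq 2$, the twisted analogue of Bridgeland's construction on surfaces (resp.\ the one-dimensional case) goes through verbatim, using the twisted Bogomolov inequality of Theorem~\ref{theorem-twisted-bogomolov} in place of the classical one, since the double-tilt is formal once one has a slope function with HN filtrations (available by \S\ref{section-slope-stability-twisted-sheaves}) and the requisite quadratic inequality. So the substantive content is the threefold case, where one must produce the heart $\cA_{\omega,D}$ by a double tilt of $\Coh(X,\alpha)$ — first to a category $\Coh^{\beta}(X,\alpha)$ obtained by tilting at the $\mu_{\omega,D}$-slope, then tilting again using the ``reduced'' slope coming from $Z_{\omega,D}$ — and then verify the support property.

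Next I would reduce the key positivity input — the generalized Bogomolov--Gieseker inequality for tilt-semistable objects, in the form needed for $(Z_{\omega,D},\cA_{\omega,D})$ to define a stability condition — to the untwisted statement. By Lemma~\ref{lemma-finite-cover-kill} there is a finite flat surjection $f\colon Y \to X$ with $f^*\alpha = 0$, hence (Lemma~\ref{lemma-essentially-trivial}) an invertible twisted sheaf $L$ on a representative gerbe $\cY$, so that tensoring by $L^{\vee}$ identifies $\Dperf(Y, f^*\alpha)$ with $\Dperf(Y)$ while preserving Chern characters (as in the proof of Lemma~\ref{lemma-ch-numerical}). Pullback along $f$ preserves slope semistability (Lemma~\ref{lemma-pullback-ss}) and, more to the point, sends the twisted tilted heart to the untwisted one up to the $L^{\vee}$-twist; moreover $f^*\omega$ is ample and intersection numbers scale by $\deg f$. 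Thus a tilt-semistable twisted object $E$ on $X$ pulls back to a tilt-semistable object on $Y$, for which the generalized Bogomolov--Gieseker inequality is known (for abelian threefolds \cite{BMS, Piyaratne-abelian1, Piyaratne-abelian2}, noting that an \'etale cover of an abelian threefold is again an abelian threefold; similarly in the Fano or quintic cases one must check the cover stays in the relevant class, or argue directly), and the inequality descends because $f^*$ is injective on the relevant Chow/numerical groups rationally. This gives Conjecture~\ref{conjecture-Z-heart-stab} for $(X,\alpha)$ whenever it holds for the untwisted cover.

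Finally I would package this: given the numerical inequality, the standard formal arguments of \cite{BMT} (local-finiteness of the heart, the HN property, and the support property with respect to $\Lambda_{\omega,D}$, which is a lattice of rank $\dim X + 1$) show that $(Z_{\omega,D},\cA_{\omega,D})$ is a full numerical stability condition; fullness is exactly the statement that the support property can be taken with respect to $\Lambda_{\omega,D} = \ch^{\omega,D}(\rK_{\num}(X,\alpha))$, which is how $\Lambda_{\omega,D}$ was defined. I expect the main obstacle to be the descent of the generalized Bogomolov--Gieseker inequality through the covering $f$: unlike the classical Bogomolov inequality (Theorem~\ref{theorem-twisted-bogomolov}), the cubic/quartic inequality for $\ch_3$ mixes terms of different degrees, so one must check carefully that the twisted double-tilt on $X$ is compatible with the untwisted one on $Y$ (including the choice of tilt parameter $\beta$ and the behavior of $\ch_3^D$ under $f^*$ and under $-\otimes L^{\vee}$), and that no ramification correction term appears — this is precisely where the characteristic-$0$ hypothesis and the smoothness of $f$ enter, paralleling \cite[Proposition 3.2.3.13]{lieblich-moduli-twisted}. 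A fallback, in cases where a convenient cover is not available, is to rerun the proof of the untwisted inequality verbatim in the twisted setting, which is feasible because every ingredient (the twisted Bogomolov inequality, tilt-stability, the quadratic form estimates) has been set up twisted in \S\ref{section-stability-twisted-sheaves}.
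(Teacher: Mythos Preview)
The statement you are attempting to prove is labeled a \emph{Conjecture} in the paper, and the paper does not prove it; it is stated as the twisted analogue of the open problem \cite[Conjecture~2.1.2]{BMT}. The paper only establishes special cases: surfaces in characteristic $0$ (the unlabeled theorem immediately following the conjecture), and threefolds satisfying the generalized Bogomolov--Gieseker inequality (Theorem~\ref{theorem-stability-from-BG}), in particular twisted abelian threefolds via Theorem~\ref{theorem-BG-AV}. For general $X$ --- even for $\alpha = 0$ and $\dim X = 3$ --- the conjecture is open, and the generalized Bogomolov--Gieseker inequality, which is the numerical input your reduction relies on, is known to \emph{fail} in some cases \cite{schmidt-generalizedBG}. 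So there is no ``already-established untwisted case'' to reduce to.

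Even restricted to the cases the paper does treat, your covering argument has a gap. Lemma~\ref{lemma-finite-cover-kill} produces a finite flat surjection $f\colon Y \to X$, not an \'etale one. To transport the untwisted inequality from $Y$ back to $X$, you need that $E$ tilt-semistable on $X$ implies $f^*E$ tilt-semistable on $Y$; by Lemma~\ref{lemma-tilt-stability-cover}\eqref{tiltss-etalepullback} this direction requires $f$ \'etale, and without it ramification corrections enter (as you note in passing). This is exactly why the paper's proof of Theorem~\ref{theorem-BG-AV} uses the multiplication-by-$n$ isogeny on an abelian variety --- an \'etale cover that both kills $\alpha$ and stays in the class of abelian threefolds --- rather than an arbitrary finite cover. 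Your parenthetical ``one must check the cover stays in the relevant class'' is precisely the obstruction: for Fano or quintic threefolds, no such \'etale cover killing $\alpha$ need exist.
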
 

When $\dim X = 1$ the standard heart $\cA_{\omega,D} = \Coh(X, \alpha)$ works, but in higher dimensions the proposal is to take an $(\dim X -1)$-fold tilt of the category $\Coh(X, \alpha)$. 

\subsection{The first tilt} 
We consider $\mu_{\omega,D}$-stability for objects in $\Coh(X, \alpha)$, as reviewed in \S\ref{section-slope-stability-twisted-sheaves}. 
There is a torsion pair $(\cT_{\omega,D}, \cF_{\omega,D})$ in $\Coh(X, \alpha)$ defined by 
\begin{align*}
\cT_{\omega,D}  & \coloneqq \set{E \in \Coh(X,\alpha) \st \mu_{\omega,D}^{-}(E) > 0} , \\ 
\cF_{\omega,D}  & \coloneqq \set{E \in \Coh(X,\alpha) \st \mu_{\omega,D}^{+}(E) \leq 0} . 
\end{align*} 
Since $\Coh(X, \alpha)$ is the heart of the standard bounded t-structure on $\Dperf(X,\alpha) = \Db(X, \alpha)$, tilting at the above torsion pair produces a new bounded t-structure on $\Dperf(X, \alpha)$, with heart the extension closure 
\begin{equation*}
\Coh^{\omega, D}(X,\alpha) \coloneqq \langle \cT_{\omega, D}, \cF_{\omega,D}[1] \rangle.  
\end{equation*}
This t-structure gives a solution to Conjecture~\ref{conjecture-Z-heart-stab} when $\dim X = 2$ and the base field has characteristic $0$: 

\begin{theorem}
Let $X$ be a smooth projective surface defined over a field of characteristic $0$, equipped with a Brauer class $\alpha \in \Br(X)$, 
a real ample divisor $\omega$, and a real divisor $D$. 
Then the pair $(Z_{\omega, D}, \Coh^{\omega, D}(X, \alpha))$ is a full numerical stability condition. 
\end{theorem}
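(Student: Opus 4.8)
The plan is to deduce the twisted statement from the untwisted Bridgeland--Arcara--Bertram result by the covering trick that has already been used several times in the excerpt (Lemma~\ref{lemma-ch-numerical}, Theorem~\ref{theorem-twisted-bogomolov}). First I would recall the untwisted theorem: for a smooth projective surface $Y$ over a field of characteristic $0$, a real ample divisor $\omega'$, and a real divisor $D'$, the pair $(Z_{\omega',D'}, \Coh^{\omega',D'}(Y))$ is a full numerical stability condition \cite{bridgeland-K3, arcara-bertram}. Then, using Lemma~\ref{lemma-finite-cover-kill}, I would choose a finite flat surjection $f\colon Y \to X$ from a smooth projective surface with $f^*\alpha = 0$; fixing an $f^*\alpha$-twisted line bundle $L$ on a representing gerbe $\cY$ gives, as in Remark~\ref{remark:pullback_killing_brauer_class}, an exact (hence t-exact for the standard t-structures) and faithful functor $f^*_L \colon \Dperf(X,\alpha) \to \Dperf(Y)$, $E \mapsto f^*E \otimes L^\vee$, intertwining $\Coh(X,\alpha)$ with $\Coh(Y)$ and commuting with Chern characters (so $\ch^{f^*D}(f^*_L E) = f^*\ch^D(E)$).

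Next I would verify the three defining properties of a stability condition for $(Z_{\omega,D}, \Coh^{\omega,D}(X,\alpha))$ by transporting them across $f^*_L$. For the t-structure: by Lemma~\ref{lemma-pullback-ss}, $\mu_{f^*\omega, f^*D}$-semistability of torsion free twisted sheaves on $Y$ corresponds to $\mu_{\omega,D}$-semistability on $X$, so $f^*_L$ carries the torsion pair $(\cT_{\omega,D}, \cF_{\omega,D})$ into $(\cT_{f^*\omega, f^*D}, \cF_{f^*\omega, f^*D})$ (for torsion sheaves this is clear), whence $f^*_L\big(\Coh^{\omega,D}(X,\alpha)\big) \subseteq \Coh^{f^*\omega, f^*D}(Y)$. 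For positivity of the central charge (the condition $Z_{\omega,D}(E) \in \mathbb H \cup \mathbb R_{<0}$ for $0 \neq E \in \Coh^{\omega,D}(X,\alpha)$): one has $Z_{\omega,D}(E) = \frac{1}{\deg f} Z_{f^*\omega, f^*D}(f^*_L E)$ up to the standard rescaling of $\omega$ by $f^*$, and $f^*_L E$ is a nonzero object of $\Coh^{f^*\omega, f^*D}(Y)$, so the untwisted positivity applies. Strictly this requires comparing $f^*\omega$ to an honest ample class, which is fine since $f$ is finite. For the Harder--Narasimhan property and the support property with respect to $\Knum(\Dperf(X,\alpha))$: HN filtrations can be constructed directly as in the surface case (the tilted heart is noetherian and the central charge is a weak stability function), and the support property/quadratic form follows from the twisted Bogomolov inequality (Theorem~\ref{theorem-twisted-bogomolov}) exactly as in \cite{BMT, arcara-bertram} --- indeed $\Delta(E)$ pulls back to $\Delta(f^*_L E)$ under $f^*_L$, so the discriminant inequality for $\mu_{\omega,D}$-semistable twisted sheaves needed to run the argument is available.

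The main obstacle I anticipate is not any single one of these verifications but rather making the covering argument genuinely rigorous at the level of the \emph{tilted} category: $f^*_L$ is exact and faithful but not a priori an equivalence onto its image nor fully faithful, so one cannot simply ``pull back the stability condition.'' The clean way around this is to avoid the covering argument for the structural parts and instead run the BMT/Arcara--Bertram proof verbatim in the twisted setting --- every ingredient (existence of HN filtrations for $\mu_{\omega,D}$, the torsion pair, the weak seesaw property, and crucially the Bogomolov inequality) has a twisted counterpart already established in \S\ref{section-stability-twisted-sheaves}, and Vistoli's Chow theory for the gerbe $\cX$ (\S\ref{section-chern-character}) supplies the intersection-theoretic formalism needed. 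So the actual plan is: (i) state that $\Coh^{\omega,D}(X,\alpha)$ is the heart of a bounded t-structure (torsion-pair tilting, formal); (ii) check $Z_{\omega,D}$ is a stability function on this heart, the one nontrivial input being that $\mu_{\omega,D}$-semistable torsion free twisted sheaves $E$ with $\mu_{\omega,D}(E) = 0$ satisfy $\omega\cdot\ch_2^D(E) \leq$ the appropriate bound, which is immediate from Theorem~\ref{theorem-twisted-bogomolov}; (iii) check HN filtrations exist (noetherianity of the heart plus the discreteness afforded by $\Lambda_{\omega,D}$); (iv) check the support property with the quadratic form $\Delta$, again via Theorem~\ref{theorem-twisted-bogomolov}; and (v) invoke the covering argument only as a sanity check, or use it to shortcut step (iv) by writing $\Delta(E) = \frac{1}{\deg f}f_*\Delta(f^*_L E)$ and quoting the untwisted support property. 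With that reorganization the proof is a routine transcription, and I would present it as such rather than grinding through the inequalities.
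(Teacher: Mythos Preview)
Your reorganized plan (steps (i)--(v)) is exactly what the paper does: it simply observes that the untwisted Bridgeland/Arcara--Bertram argument goes through verbatim once the ordinary Bogomolov inequality is replaced by the twisted one (Theorem~\ref{theorem-twisted-bogomolov}). The covering-trick detour you explore first is not needed and is not in the paper --- you correctly diagnose its weakness (no fully faithful transport of the tilted heart) and arrive at the right approach, so you can delete that part and present the transcription directly.
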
 

\begin{proof}
When $\alpha = 0$, this result goes back to \cite{bridgeland-K3} in the case of K3 surfaces and \cite{arcara-bertram} in general; see also \cite[\S6]{macri-schmidt} for an exposition. 
The key input in the proof is the Bogomolov inequality for semistable sheaves. 
When $\alpha \in \Br(X)$ is arbitrary, it is straightforward to check that the same proof works, using instead the twisted Bogomolov inequality of Theorem~\ref{theorem-twisted-bogomolov}. 
\end{proof}

\subsection{Tilt stability on threefolds} 
Now we focus on the case of threefolds. To construct a tilt of $\Coh^{\omega, D}(X, \alpha)$, we will need a notion of stability which plays the role of slope stability in the construction of the first tilt. 
\begin{definition}
Let $X$ be a smooth projective threefold equipped with a Brauer class $\alpha \in \Br(X)$, 
a real ample divisor $\omega$, and a real divisor $D$. 
For $E \in \Coh^{\omega, D}(X, \alpha)$, the \emph{tilt slope} is 
\begin{equation*}
\nu_{\omega, D}(E) \coloneqq 
\begin{cases}
\frac{ \omega \ch_2^D(E) - \frac{\omega^3}{6} \ch_0^D(E) }{  \omega^2\ch_1^D(E) } & \text{if }  \omega^2\ch_1^D(E) \neq 0 , \\
+\infty & \text{if } \omega^2\ch_1^D(E) = 0. 
\end{cases}
\end{equation*} 
We say $E$ is \emph{$\nu_{\omega,D}$-semistable} (or \emph{tilt semistable} if $(\omega, D)$ are understood) if for all nontrivial subobjects $F \hookrightarrow E$ in $\Coh^{\omega, D}(X, \alpha)$, the inequality 
$\nu_{\omega,D}(F) \leq \nu_{\omega,D}(E/F)$ holds, and $E$ is \emph{$\nu_{\omega, D}$-stable} if for all $F$ the inequality is strict. 
\end{definition} 

By the argument of \cite[Lemma 3.2.4]{BMT} (which treats the untwisted case), Harder--Narasimhan filtrations exist with respect to $\nu_{\omega,D}$-stability when $D$ is rational. That is, for any $E \in \Coh(X, \alpha)$ there exists a filtration 
\begin{equation*}
0 = E_0 \subset E_1 \subset \cdots \subset E_m = E
\end{equation*} 
such that the factors $A_i = E_i/E_{i-1}$ are $\nu_{\omega,D}$-semistable and 
\begin{equation*}
\nu_{\omega,D}(A_1) > \nu_{\omega,D}(A_2) > \cdots > \nu_{\omega,D}(A_m). 
\end{equation*}  
We use the notation $\nu_{\omega,D}^+ \coloneqq \nu_{\omega,D}(A_1)$ and $\nu_{\omega,D}^{-}(E) = \nu_{\omega,D}(A_m)$. 

Tilt stability behaves well under finite covers. 
This is analogous to Lemma~\ref{lemma-pullback-ss} for slope stability, except that we require the finite cover to be \'{e}tale for one of the implications. 

\begin{lemma}
\label{lemma-tilt-stability-cover}
Let $X$ be a smooth projective threefold equipped with a Brauer class $\alpha \in \Br(X)$, 
a real ample divisor $\omega$, and a real divisor $D$. 
Let $f \colon Y \to X$ be a finite surjective morphism from a smooth variety $Y$. 
Let $E \in \Dperf(X, \alpha)$. 
\begin{enumerate}
\item \label{CohomegaD-pullback}
$E \in \Coh^{\omega, D}(X, \alpha)$ if and only if $f^*E \in \Coh^{f^*\omega, f^*D}(Y, f^*\alpha)$. 
\item \label{tiltss-pullback}   
If $f^*E$ is $\nu_{f^*\omega, f^*D}$-semistable, then $E$ is $\nu_{\omega, D}$-semistable. 
\item \label{tiltss-etalepullback} 
If $D$ is rational, $f$ is \'{e}tale, and $E \in \Coh^{\omega, D}(X, \alpha)$ is $\nu_{\omega, D}$-semistable, then $f^*E$ is $\nu_{f^*\omega, f^*D}$-semistable. 
\end{enumerate}
\end{lemma}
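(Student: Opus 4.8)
\textbf{Proof plan for Lemma~\ref{lemma-tilt-stability-cover}.}

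The overall strategy is to mimic the proof of Lemma~\ref{lemma-pullback-ss} (the analogous statement for slope stability), reducing each assertion to a statement about finite covers via pullback/pushforward adjunction and the behavior of the twisted Chern character under $f^*$ and $f_*$. First I would record the numerical input: since $f$ is finite surjective of some degree $d$, the projection formula gives $f_* f^* E \simeq E \otimes f_*\cO_Y$, and pulling back the twisting data yields $\ch^{f^*D}(f^*E) = f^*\ch^D(E)$ together with the compatibility $f_* \circ f^* = d \cdot \id$ on rational Chow groups (cf. the argument in Lemma~\ref{lemma-ch-numerical}); in particular, the ample classes $\omega$ and $f^*\omega$ pair compatibly, so $\mu_{f^*\omega, f^*D}(f^*F) = \mu_{\omega,D}(F)$ and likewise $\nu_{f^*\omega,f^*D}(f^*F) = \nu_{\omega,D}(F)$ for any twisted sheaf or complex $F$. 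These identities make the tilt slope a ``pullback-invariant'' quantity, which is the engine behind all three parts.

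For part~\eqref{CohomegaD-pullback}, I would argue that $f^*$ is $t$-exact for the pair of tilted $t$-structures. The key point is that pullback along a finite morphism from a smooth variety is exact on coherent sheaves, and by Lemma~\ref{lemma-pullback-ss} (applied to Harder--Narasimhan factors) it preserves the torsion pair $(\cT_{\omega,D},\cF_{\omega,D})$: an object lies in $\cT_{\omega,D}$ iff $\mu^-_{\omega,D} > 0$ iff its pullback has $\mu^-_{f^*\omega,f^*D} > 0$, using that HN filtrations pull back to HN filtrations by Lemma~\ref{lemma-pullback-ss}, and similarly for $\cF_{\omega,D}$. Since $\Coh^{\omega,D}(X,\alpha) = \langle \cT_{\omega,D}, \cF_{\omega,D}[1]\rangle$ is defined by tilting, $t$-exactness of $f^*$ on the underlying hearts together with preservation of the torsion pair gives that $f^*$ sends $\Coh^{\omega,D}(X,\alpha)$ into $\Coh^{f^*\omega,f^*D}(Y,f^*\alpha)$; for the converse direction one uses that $f_*$ (which is exact on coherent sheaves since $f$ is finite) detects membership in the heart because $f_* f^* E$ contains $E$ as a direct summand, so $E$ lies in the heart whenever $f^*E$ does.

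For part~\eqref{tiltss-pullback}, given a destabilizing subobject $F \hookrightarrow E$ in $\Coh^{\omega,D}(X,\alpha)$ with $\nu_{\omega,D}(F) > \nu_{\omega,D}(E/F)$, I would pull back the short exact sequence to get $f^*F \hookrightarrow f^*E$ in $\Coh^{f^*\omega,f^*D}(Y,f^*\alpha)$ (using part~\eqref{CohomegaD-pullback} and exactness of $f^*$ on the hearts), and the slope identity above forces $\nu_{f^*\omega,f^*D}(f^*F) > \nu_{f^*\omega,f^*D}(f^*E/f^*F)$, contradicting tilt semistability of $f^*E$. For part~\eqref{tiltss-etalepullback}, the implication runs the other way and is the most delicate: one needs that tilt semistability is preserved under \emph{étale} pullback. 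Here I expect the main obstacle — descent of semistability along $f^*$ is not formal, because a destabilizing subobject of $f^*E$ need not descend to $X$. The standard device (as in \cite[Lemma 3.2.2]{huybrechts-lehn} for slope stability, and used implicitly in the tilt setting in \cite{BMT, BMS}) is to pass to the Galois closure, average a maximally destabilizing subobject over the finite group of deck transformations, and use that the HN filtration with respect to $\nu_{f^*\omega,f^*D}$ is unique — hence Galois-invariant — to conclude it descends; étaleness is needed so that the group acts freely and the averaging/Galois-descent argument applies cleanly (and so that rationality of $D$, needed for existence of tilt HN filtrations, is preserved). Assembling the descent of the HN filtration of $f^*E$ to an HN filtration of $E$, the tilt semistability of $E$ then forces the filtration to be trivial, so $f^*E$ is tilt semistable.
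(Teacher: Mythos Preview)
Your approach matches the paper's. One correction in part~\eqref{CohomegaD-pullback}: the converse direction via $f_*$ does not work as stated, since exactness of $f_*$ on coherent sheaves does not imply that $f_*$ sends the tilted heart upstairs into the tilted heart downstairs (tensoring by $f_*\cO_Y$ need not preserve $\cT_{\omega,D}$, so $f_*f^*E$ being in $\Coh^{\omega,D}(X,\alpha)$ is unjustified). But this detour is unnecessary: you already established the two-way equivalences $F \in \cT_{\omega,D} \iff f^*F \in \cT_{f^*\omega,f^*D}$ and $F \in \cF_{\omega,D} \iff f^*F \in \cF_{f^*\omega,f^*D}$ in the preceding sentence, and combined with $\cH^i(f^*E) = f^*\cH^i(E)$ and faithfulness of $f^*$ on coherent sheaves, this gives both directions of \eqref{CohomegaD-pullback} directly --- which is exactly the paper's argument. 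Parts~\eqref{tiltss-pullback} and~\eqref{tiltss-etalepullback} agree with the paper; for \eqref{tiltss-etalepullback} the paper reduces to the Galois case via \eqref{tiltss-pullback} and then descends the first HN step by $G$-equivariance from uniqueness (rather than ``averaging''), but the content is the same.
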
  

\begin{proof}
This is essentially a twisted version of \cite[Proposition 6.1]{BMS}; 
the proof is similar but we include it for convenience. 

By definition, for $E \in \Dperf(X, \alpha)$ we have $E \in \Coh^{\omega, D}(X, \alpha)$ if and only if $\cH^{0}(E) \in \cT_{\omega, D}$, $\cH^{-1}(E)  \in \cF_{\omega,D}$, and $\cH^{i}(E) = 0$ for $i \notin \set{0,1}$, and there is an analogous criterion for $f^*E \in \Coh^{f^*\omega, f^*D}(Y, f^*\alpha)$. 
Therefore, to prove~\eqref{CohomegaD-pullback} it suffices to show that for $E \in \Coh(X, \alpha)$, we have 
\begin{align*}
E \in \cT_{\omega,D} & \iff f^*E \in \cT_{f^*\omega, f^*D} \\ 
E \in \cF_{\omega, D} & \iff f^*E \in \cF_{f^*\omega, f^*D}. 
\end{align*} 
But $\mu_{f^*\omega, f^*D}(f^*E) = \mu_{\omega,D}(E)$ for any $E \in \Coh(X, \alpha)$ and 
Lemma~\ref{lemma-pullback-ss} implies that $f^*$ preserves HN filtrations of objects in $\Coh(X, \alpha)$, 
so both claims follow. 

Similar to the usual slope, for $E \in \Coh^{\omega, D}(X, \alpha)$ the tilted slope satisfies
\begin{equation}
\label{pullback-tilt-slope}
\nu_{f^*\omega, f^*D}(f^*E) = \nu_{\omega, D}(E). 
\end{equation} 
Since by~\eqref{CohomegaD-pullback} the pullback functor $f^* \colon \Dperf(X, \alpha) \to \Dperf(X, \alpha)$ is t-exact with respect to the tilt t-structures, any destabilizing subobject of $E$ must therefore destabilize $f^*E$; 
that is, $E$ is tilt semistable if $f^*E$ is tilt semistable. This proves~\eqref{tiltss-pullback}. 

Now assume $f$ is \'{e}tale. 
By taking a finite \'{e}tale Galois cover of $X$ which dominates $f \colon Y \to X$ and using~\eqref{tiltss-pullback}, we may reduce to proving \eqref{tiltss-etalepullback} when $f$ itself is Galois, say with Galois group $G$. 
In this case, if $E \in \Coh^{\omega, D}(X, \alpha)$, then let $F \hookrightarrow f^*E$ be the first step of the HN filtration with respect to $\nu_{f^*\omega, f^*D}$. 
As $f^*E$ is $G$-equivariant and HN filtrations are unique and functorial, the morphism $F \to f^*E$ is $G$-equivariant (for a natural $G$-equivariant structure on~$F$), and thus descends to a morphism $F' \to E$ in $\Dperf(X, \alpha)$. 
It follows from~\eqref{CohomegaD-pullback} that $F' \to E$ is an injection in $\Coh^{\omega, D}(X, \alpha)$, 
which must destabilize $E$ in view of~\eqref{pullback-tilt-slope}.  
\end{proof}

\subsection{The generalized Bogomolov--Gieseker inequality}
In the construction of stability conditions on threefolds, the role of the Bogomolov inequality for sheaves is replaced by the following 
inequality involving $\ch_3$.  
\begin{definition}
Let $X$ be a smooth projective threefold 
equipped with a Brauer class $\alpha \in \Br(X)$, 
a real ample divisor $\omega$, and a real divisor $D$. 
Let $E \in \Coh^{\omega, D}(X, \alpha)$ be a $\nu_{\omega,D}$-semistable object with $\nu_{\omega,D}(E) = 0$. 
We say $E$ satisfies the \emph{generalized Bogomolov--Gieseker inequality} if 
\begin{equation*}
\ch_3^D(E) \leq \frac{\omega^2}{18} \ch_1^D(E). 
\end{equation*} 
If this holds for all such $E$, we say $(X, \alpha)$ satisfies the generalized Bogomolov--Gieseker inequality with respect to $\omega$ and $D$.
\end{definition} 

For $\alpha = 0$, the above inequality first appeared in \cite[Conjecture 1.3.1]{BMT}, where it was conjectured to hold in general in characteristic $0$. 
There are a number of threefolds, like abelian threefolds \cite{BMS} or Fano threefolds of Picard number 1 \cite{Chunyi}, for which the inequality has been proved, but in \cite{schmidt-generalizedBG} an example was given where it fails. 
A more flexible modified conjecture, which is expected to always hold, is stated in \cite[Conjecture 4.7]{bayer-macri-ICM} and would have similar consequences for the existence of stability conditions. 
We focus on the original form of the generalized Bogomolov--Gieseker inequality as it suffices for our case of interest: 

\begin{theorem}
\label{theorem-BG-AV}
Let $X$ be an abelian threefold over a field of characteristic $0$ equipped with a Brauer class $\alpha \in \Br(X)$. 
Then for any real ample divisor $\omega$ and rational divisor $D$ on $X$, 
$(X,\alpha)$ satisfies the generalized Bogomolov--Gieseker inequality with respect to $\omega$ and $D$.
\end{theorem}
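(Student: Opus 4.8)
The plan is to deduce the twisted statement from the untwisted generalized Bogomolov--Gieseker inequality for abelian threefolds proved in \cite{BMS}, via the covering trick already used repeatedly in this section (e.g. in the proofs of Lemma~\ref{lemma-ch-numerical} and Theorem~\ref{theorem-twisted-bogomolov}). First I would reduce to $D$ integral: replacing $D$ by $D - \lfloor D \rfloor$ in the sense of replacing the real divisor class by a translate does not change the tilt heart $\Coh^{\omega,D}(X,\alpha)$, the tilt slope $\nu_{\omega,D}$, nor the quantities $\ch_3^D - \tfrac{\omega^2}{18}\ch_1^D$ up to the standard manipulations in \cite{BMT, BMS}; alternatively one passes to the limit over rational $D$ by openness, so it is enough to treat the case where $D$ is a (rational, hence after scaling integral) divisor. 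Next, by Lemma~\ref{lemma-finite-cover-kill} choose a finite flat surjection $f\colon Y \to X$ with $f^*\alpha = 0$; since $X$ is an abelian threefold we may in fact take $Y$ to be an abelian threefold and $f$ an isogeny (a multiplication-by-$n$ map pulls back any $n$-torsion Brauer class to zero, as one sees from Lemma~\ref{lemma-Br-ses} and the fact that $[n]^*$ acts by $n^2$ on $\rH^2$ and $n^3$ on $\rH^3$; so $[n]$ kills $\Br(X)[n]$ for suitable $n$), so in particular $f$ is \'etale.

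With such an $f$ in hand, I would argue as follows. Let $E \in \Coh^{\omega,D}(X,\alpha)$ be $\nu_{\omega,D}$-semistable with $\nu_{\omega,D}(E) = 0$. Since $f$ is \'etale and $D$ is rational, Lemma~\ref{lemma-tilt-stability-cover}\eqref{tiltss-etalepullback} shows $f^*E \in \Coh^{f^*\omega,f^*D}(Y,f^*\alpha)$ is $\nu_{f^*\omega,f^*D}$-semistable, and by~\eqref{pullback-tilt-slope} it still has tilt slope $0$. Because $f^*\alpha = 0$, after choosing an invertible $f^*\alpha$-twisted line bundle $L$ on the representing gerbe (Lemma~\ref{lemma-essentially-trivial}) the object $f^*E \otimes L^\vee$ is an \emph{untwisted} complex on $Y$ with the same Chern character as $f^*E$ (twisting by $L^\vee$ shifts $\ch_1$ by a class pulled back from $Y$, which I would absorb into a further shift of $D$ exactly as in the standard reduction, or simply note that the whole discussion is invariant under tensoring by line bundles after adjusting $D$). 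Thus $f^*E\otimes L^\vee$ is a tilt-semistable object of tilt slope $0$ on the abelian threefold $Y$, and the untwisted generalized Bogomolov--Gieseker inequality of \cite{BMS} (Theorem~6.something there) gives
\begin{equation*}
\ch_3^{f^*D}(f^*E) \leq \frac{(f^*\omega)^2}{18}\,\ch_1^{f^*D}(f^*E).
\end{equation*}

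Finally I would descend this inequality to $X$. Using that $f^*$ multiplies the relevant intersection numbers by $\deg f$ — concretely $(f^*\omega)^2 \cdot f^*\xi = \deg(f)\,\omega^2\cdot\xi$ for any $\xi \in \CH^1(\cX)_{\bQ}$, and $\int_Y f^*\eta = \deg(f)\int_X \eta$ — and that $\ch^D$ is compatible with pullback (Definition~\ref{definition-twisted-ch} and the projection-formula computation in Lemma~\ref{lemma-ch-numerical}), the displayed inequality for $f^*E$ becomes $\deg(f)$ times the desired inequality for $E$; dividing by $\deg(f) > 0$ finishes the proof. The main obstacle is the bookkeeping in the second step: one must check carefully that tensoring $f^*E$ by the twisted line bundle $L^\vee$ to untwist it interacts correctly with the $D$-twisted Chern character and the tilt heart, i.e.\ that the inequality one extracts from \cite{BMS} is genuinely the one for $\ch^{f^*D}(f^*E)$ and not for some uncontrolled shift. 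This is why reducing first to $D$ rational (so that after pulling back and tensoring we land in a tilt heart of the shape treated in \cite{BMS}, possibly after an integral shift of the twisting divisor which does not affect the conjectural inequality) is the crucial preliminary normalization; the rest is the routine covering and projection-formula argument already deployed twice earlier in this section.
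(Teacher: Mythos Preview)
Your core argument is exactly the paper's: pull back along the multiplication-by-$n$ isogeny $[n]\colon X\to X$ (\'etale of degree $n^6$, killing $\alpha$), invoke Lemma~\ref{lemma-tilt-stability-cover}\eqref{tiltss-etalepullback} to preserve tilt semistability, apply \cite[Theorem~1.1]{BMS} on the cover, and divide the resulting inequality by the degree. Your opening reduction is superfluous since $D$ is rational by hypothesis, and the paper handles your $L^\vee$-bookkeeping worry implicitly by computing Chern characters on the gerbe throughout (where pullback is all that intervenes); if you insist on untwisting, note that tensoring by $L^\vee$ replaces $\ch^D$ by $\ch^{D'}$ for a rational shift $D'$ of $D$, so \cite{BMS} still applies.
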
 

\begin{proof}
The crucial result is \cite[Theorem 1.1]{BMS}, which handles the untwisted case, to which we will reduce. 
If $n = \per(\alpha)$ then the multiplication-by-$n$ map $[n] \colon A \to A$ is an \'{e}tale cover of degree $n^6$ such that $[n]^*\alpha = 0$. 
If $E \in \Coh^{\omega, D}(X, \alpha)$ is $\nu_{\omega,D}$-semistable, 
then by Lemma~\ref{lemma-tilt-stability-cover}\eqref{tiltss-etalepullback} the object $[n]^*E \in \Coh^{\omega, D}(X)$ is $\nu_{n^2\omega, n^2D}$-semistable. 
By \cite[Theorem 1.1]{BMS} we conclude that $[n]^*E$ satisfies the generalized Bogomolov--Gieseker inequality. 
But this is nothing but the generalized Bogomolov--Gieseker inequality for $E$ itself multiplied by $n^6 = \deg([n])$. 
\end{proof} 

\begin{remark}
In fact, \cite[Theorem 1.1]{BMS} shows that when $\alpha = 0$, Theorem~\ref{theorem-BG-AV} holds even when $D$ is a real divisor, not necessarily rational. 
Using deformation arguments as in \cite[\S7]{BMS}, it would be possible to extend Theorem~\ref{theorem-BG-AV} to the case of real divisors for arbitrary $\alpha$, but we will not need this. 
\end{remark} 

\subsection{The second tilt} 
Let $X$ be a smooth projective threefold equipped with a Brauer class $\alpha \in \Br(X)$, a real ample divisor $\omega$, and a rational divisor $D$. 
Then HN filtrations exist with respect to $\nu_{\omega,D}$-stability, so we obtain a torsion pair $(\cT^{'}_{\omega,D}, \cF'_{\omega,D})$ in $\Coh^{\omega,D}(X, \alpha)$ defined by 
\begin{align*}
\cT'_{\omega,D}  & \coloneqq \set{E \in \Coh^{\omega,D}(X,\alpha) \st \nu_{\omega,D}^{-}(E) > 0} , \\ 
\cF'_{\omega,D}  & \coloneqq \set{E \in \Coh^{\omega,D}(X,\alpha) \st \nu_{\omega,D}^{+}(E) \leq 0} . 
\end{align*} 
Tilting at this torsion pair produces a bounded t-structure on $\Dperf(X, \alpha)$ with heart the 
extension closure 
\begin{equation*}
\cA^{\omega, D}(X, \alpha) \coloneqq 
\langle \cT'_{\omega, D}, \cF'_{\omega, D}[1] \rangle. 
\end{equation*} 

In \cite[Corollary 5.2.4]{BMT} it is shown that if $(X, \alpha)$ satisfies the generalized Bogomolov--Gieseker inequality with respect to $\omega$ and $D$, then 
$(Z_{\omega, D}, \cA_{\omega,D})$ satisfies the requirements of being a stability condition, except possibly the support property. 
In \cite{BMS}, the support property is proved, at least when $\omega$ and $D$ are suitably proportional to an ample divisor. 

For ease of reference, we will follow the conventions of \cite{BMS}, where the stability condition is parameterized in a slightly different form. 
For an ample divisor $H$ on $X$ and rational numbers $a,b,c,d \in \bQ$ with $a > 0$, we write
\begin{equation*}
\cA^{a,b}(X, \alpha) \coloneqq \cA^{\sqrt{3}aH,bH}(X, \alpha), \quad \ch^{b} \coloneqq \ch^{bH}, 
\end{equation*} 
and consider the the central charge 
\begin{equation*}
Z_{a,b}^{c,d} \coloneqq \left( - \ch_3^{b} + d H \ch_2^{b} + c H^2 \ch_1^b \right) + i \left( H \ch_2^b - \frac{a^2}{2} H^3 \ch_0^b \right) 
\end{equation*} 
which factors through the Mukai homomorphism $\ch^H \colon \Knum(X, \alpha) \to \Lambda_H$ from Example~\ref{example-topological-v} (with $S$ a point). 

\begin{theorem}
\label{theorem-stability-from-BG}
Let $X$ be a smooth projective threefold with a Brauer class $\alpha \in \Br(X)$ and an ample divisor $H$. 
Let $a,b,c,d \in \bQ$ such that 
\begin{equation}
\label{abcd}
a > 0 \quad \text{and} \quad c > \frac{a^2}{6} + \frac{|d|a}{2}. 
\end{equation} 
Assume that for any real ample divisor $\omega$ and rational divisor $D$ on $X$ which are both proportional to $H$, 
the generalized Bogomolov--Gieseker inequality holds for $(X, \alpha)$ with respect to $\omega$ and $D$. 
Then $(\cA^{a,b}(X, \alpha), Z_{a,b}^{c,d})$ is a stability condition on $\Dperf(X, \alpha)$ with respect to 
$\ch^H$. 
\end{theorem}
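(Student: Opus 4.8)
The plan is to deduce Theorem~\ref{theorem-stability-from-BG} from the corresponding untwisted result of \cite{BMS} by the same covering argument used in the proof of Theorem~\ref{theorem-BG-AV}, together with the general tilting machinery of \cite{BMT}. First I would observe that the hypothesis on the generalized Bogomolov--Gieseker inequality is exactly the input needed for \cite[Corollary 5.2.4]{BMT} to apply verbatim in the twisted setting: the construction of the heart $\cA^{a,b}(X,\alpha) = \langle \cT'_{\omega,D}, \cF'_{\omega,D}[1]\rangle$ via the double tilt, the verification that $(Z_{a,b}^{c,d}, \cA^{a,b}(X,\alpha))$ satisfies the Harder--Narasimhan property and that $Z_{a,b}^{c,d}$ is a stability function on the heart, all go through word-for-word once one knows HN filtrations exist with respect to $\mu_{\omega,D}$ and $\nu_{\omega,D}$ (which we have recorded in \S\ref{section-slope-stability-twisted-sheaves} and \S\ref{section-construction-stability}) and once one has the generalized Bogomolov--Gieseker inequality. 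The numerical conditions~\eqref{abcd} on $a,b,c,d$ are precisely those isolated in \cite{BMS} that make $Z_{a,b}^{c,d}$ compatible with the tilt. So the only genuinely remaining point is the \emph{support property} with respect to the Mukai homomorphism $\ch^H \colon \Knum(X,\alpha) \to \Lambda_H$.

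\textbf{Reducing the support property to the untwisted case.} For the support property I would argue as in the proof of Theorem~\ref{theorem-BG-AV}. Let $n = \per(\alpha)$; by Lemma~\ref{lemma-finite-cover-kill} there is a finite flat cover $f\colon Y \to X$ with $f^*\alpha = 0$, and in fact for abelian threefolds one takes the étale multiplication-by-$n$ map, but for the support property a general finite flat cover suffices since we only need compatibility of $\ch^H$ with pullback. Choose a twisted line bundle $L$ on a gerbe representing $f^*\alpha$ (Lemma~\ref{lemma-essentially-trivial}), giving an exact functor $f^*_L\colon \Dperf(X,\alpha) \to \Dperf(Y)$ as in Remark~\ref{remark:pullback_killing_brauer_class}, which is t-exact with respect to the respective double-tilted hearts by Lemma~\ref{lemma-tilt-stability-cover}\eqref{CohomegaD-pullback} applied at both tilting steps. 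The key point is that $\ch^{f^*H}(f^*_L E)$ agrees with $f^*\ch^H(E)$ up to the twist by $\ch(L^\vee)$, which does not affect the quadratic form underlying the support property (the support property is a condition on the kernel of the central charge, and the twisting divisor has already been absorbed into $\ch^b$); concretely, the twisted Chern character numbers $\ch^b_i(f^*_L E)$ pull back those of $E$ up to multiplication by $\deg f$. Therefore the quadratic form $Q$ on $\Lambda_H$ witnessing the support property for $(\cA^{a,b}(X,\alpha), Z^{c,d}_{a,b})$ can be taken to be the pullback under $\ch^H$-functoriality of the quadratic form $Q'$ on $\Lambda_{f^*H}$ provided by \cite{BMS} for $(\cA^{a,b}(Y), Z^{c,d}_{a,b})$: if $E \in \Dperf(X,\alpha)$ is $\sigma$-semistable then $f^*_L E$ (or rather its HN factors) are $\sigma'$-semistable, so $Q'(\ch^{f^*H}(f^*_L E)) \geq 0$, which gives $Q(\ch^H(E)) \geq 0$ after dividing by an appropriate power of $\deg f$.

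\textbf{The main obstacle.} The step I expect to require the most care is verifying that $f^*_L$ genuinely takes $\sigma$-semistable objects to semistable (or at least to objects all of whose HN factors have the right phase) objects, so that the support-property inequality transfers. For \emph{slope} and \emph{tilt} stability this is Lemma~\ref{lemma-pullback-ss} and Lemma~\ref{lemma-tilt-stability-cover}, but the second tilt $\cA^{a,b}$ is a heart inside the derived category and the finite cover $f$ need not be étale — and Lemma~\ref{lemma-tilt-stability-cover}\eqref{tiltss-etalepullback} requires étaleness. The clean way around this is to use only the implication~\eqref{tiltss-pullback} together with t-exactness of $f^*_L$: one does not actually need $f^*_L E$ to be semistable, only that $\ch^H(E)$ lies in the non-negative locus of $Q$, and for that it suffices to run the support property for each HN factor of $f^*_L E$ in $\cA^{a,b}(Y)$ and add, using that the $Q'$-form is additive enough (it is a quadratic form, so one must be slightly careful, but the standard argument that the support property for a slicing is equivalent to it for semistable objects of definite phase handles this). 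Alternatively, and perhaps more simply, one can restrict attention to the abelian-threefold case actually needed later, where $f = [n]$ is étale and Lemma~\ref{lemma-tilt-stability-cover}\eqref{tiltss-etalepullback} applies directly; I would state and prove the theorem in the generality above but remark that in all cases of interest the étale reduction suffices, mirroring the structure of Theorem~\ref{theorem-BG-AV} and its proof.
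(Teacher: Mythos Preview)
The paper's proof is a one-liner: it simply observes that the untwisted proof from \cite{BMS} (specifically Theorems~4.2 and~8.2 there) goes through verbatim when $\alpha$ is arbitrary, because the only inputs to that argument are the generalized Bogomolov--Gieseker inequality (which is the hypothesis) and formal properties of the twisted Chern character and the tilting construction, all of which have already been set up. No covering argument is used at all.

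Your approach --- reducing the support property to the untwisted case via a finite cover --- is genuinely different, and it has real gaps that you yourself partly identify but do not resolve. First, the quadratic-form transfer: if $E$ is $\sigma$-semistable on $(X,\alpha)$ but $f^*_L E$ is not semistable on $Y$, knowing $Q' \geq 0$ on each HN factor of $f^*_L E$ does \emph{not} give $Q'(\ch(f^*_L E)) \geq 0$, since quadratic forms are not additive and the HN factors need not share a phase; the ``standard slicing argument'' you allude to does not handle sums across different phases. Second, even in the \'etale abelian-threefold case, Lemma~\ref{lemma-tilt-stability-cover}\eqref{tiltss-etalepullback} concerns $\nu_{\omega,D}$-semistability, i.e.\ tilt stability, not semistability for the full stability condition $(\cA^{a,b}, Z_{a,b}^{c,d})$ obtained after the \emph{second} tilt; you would need to prove an analogous preservation result one level up, which is an additional (Galois-descent) argument you have not supplied. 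Third, restricting to abelian threefolds does not prove the theorem as stated, which is for arbitrary smooth projective threefolds satisfying the BG hypothesis. Finally, the claim that the twist by $\ch(L^\vee)$ ``does not affect the quadratic form'' needs more care: this twist is an automorphism of the target lattice, so it conjugates $Q'$ rather than fixing it, and you must check that the conjugated form still witnesses the support property for the central charge on $Y$ with the correct parameters. The paper sidesteps all of this by simply re-running the BMS argument intrinsically on $(X,\alpha)$.
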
 

\begin{proof}
In the untwisted case when $\alpha = 0$, this is \cite[Theorem 8.2]{BMS} combined with \cite[Theorem 4.2]{BMS}. 
The same arguments work when $\alpha$ is general. 
\end{proof} 

\begin{remark}
In Theorem~\ref{theorem-stability-from-BG}, the assumption that the generalized Bogomolov--Gieseker inequality holds 
for all real $\omega$ and rational $D$ could be replaced with an a priori stronger inequality for a particular choice of $\omega$ and $D$ --- see \cite[Conjecture 4.1 and Theorem 4.2]{BMS}. 
\end{remark} 

\begin{remark}
\label{remark-distinguished-Stab}
In view of Theorem~\ref{theorem-BG-AV}, we in particular find that stability conditions exist on twisted abelian threefolds. 
By \cite{OPT}, in the case of an untwisted abelian threefold $X$, this construction gives full numerical stability conditions. In the space of all such, there is a distinguished connected component $\Stab^{\dagger}(X) \subset \Stab(X)$ which contains the stability conditions arising from this construction and is preserved by autoequivalences of $X$.  
\end{remark}

\subsection{Relative case} 
In \cite[Part V]{stability-families}, the untwisted versions of the above results were upgraded to produce relative stability conditions in the sense of \S\ref{section-stability-families}. 
The same arguments go through directly in the twisted setting. 
We state the result for threefolds, but a similar statement also holds for surfaces. 

\begin{theorem}
Let $f \colon X \to S$ be a smooth projective morphism of varieties of relative dimension $3$. 
Let $\alpha \in \Br(X)$ be a Brauer class, and let $H$ be a relatively ample divisor on $X$ over $S$. 
Assume that for every $s \in S$, $(X_s, \alpha_s)$ satisfies the generalized Bogomolv--Gieseker inequality with 
respect to $\omega$ and $D$, whenever $\omega$ is a real ample divisor and $D$ is a rational divisor, both of which are proportional to $H_s$. 
Then for any $a,b,c,d \in \bQ$ satisfying \eqref{abcd}, 
there is a stability condition $\sigma$ on $\Dperf(X, \alpha)$ over $S$ with respect to the homomorphism 
$\ch^H \colon \Knum(\Dperf(X, \alpha)/S) \to \Lambda_H$ from Example~\ref{example-topological-v} whose 
fiber $\sigma_s$ over $s \in S$ is the stability condition on $\Dperf(X_s, \alpha_s)$ given by Theorem~\ref{theorem-stability-from-BG}. 
\end{theorem}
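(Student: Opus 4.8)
The strategy is to combine the fibrewise construction (Theorem~\ref{theorem-stability-from-BG}) with the general machinery for spreading out stability conditions over a base, exactly as in the untwisted case treated in \cite[Part~V]{stability-families}. Concretely, I would proceed as follows. First, fix $a,b,c,d \in \bQ$ satisfying~\eqref{abcd}, and for each $s \in S$ let $\sigma_s = (\cA^{a,b}(X_s,\alpha_s), Z^{c,d}_{a,b})$ be the stability condition on $\Dperf(X_s,\alpha_s)$ produced by Theorem~\ref{theorem-stability-from-BG}; this applies by the standing hypothesis on the generalized Bogomolov--Gieseker inequality for each fibre $(X_s,\alpha_s)$. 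The central charge $Z^{c,d}_{a,b}$ factors through $\ch^H \colon \Knum(\Dperf(X,\alpha)/S) \to \Lambda_H$ from Example~\ref{example-topological-v}, so all the fibrewise central charges are simultaneously induced by a single group homomorphism $Z \colon \Lambda_H \to \bC$; this is the first half of the data of a relative stability condition and requires no work beyond the explicit formula.

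The substance is to verify that the collection $(\sigma_s)_{s \in S}$ satisfies the compatibility and tameness axioms of \cite[Definitions~20.5 and~20.15]{stability-families}, i.e.\ that it forms a genuine relative stability condition. I would handle this by reviewing the proof of the untwisted relative statement in \cite[Part~V]{stability-families} and checking that each input used there has a twisted counterpart already established in the present section or in \S\ref{section-stability-twisted-sheaves}. The key ingredients are: (i) the relative construction of the torsion pairs defining $\Coh^{\omega,D}(X,\alpha)$ and then $\cA^{\omega,D}(X,\alpha)$, which proceeds by the same tilting recipe as over a point, using existence of relative Harder--Narasimhan filtrations for $\mu_{\omega,D}$- and $\nu_{\omega,D}$-stability (available by the same arguments as in the absolute twisted case, cf.\ \S\ref{section-slope-stability-twisted-sheaves} and the discussion after Theorem~\ref{theorem-BG-AV}); (ii) openness of the tilted hearts in families and the local constancy needed for the tameness conditions, which follow from boundedness of semistable objects, i.e.\ from the twisted Bogomolov inequality (Theorem~\ref{theorem-twisted-bogomolov}) and the twisted generalized Bogomolov--Gieseker inequality in the hypothesis; and (iii) the support property in the relative sense, which over each fibre is Theorem~\ref{theorem-stability-from-BG} and is propagated over $S$ because the quadratic form witnessing it depends only on the numerical data through $\ch^H$, which is defined relatively. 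Finally, the existence of proper moduli spaces of $\sigma$-semistable objects over $S$ — which is part of what ``stability condition over $S$'' entails — follows from the general results quoted in Example~\ref{example-relative-stability} (see \cite[Lemma~21.12 and Theorem~21.24]{stability-families}) once the relative stability condition is in place.

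The main obstacle, and really the only point where one must do more than transcribe, is step~(ii)--(iii): checking that the boundedness and openness estimates used in \cite[Part~V]{stability-families} to pass from fibrewise data to a relative stability condition genuinely survive in the twisted setting. In practice this is not hard, because every place where the untwisted argument invokes the Bogomolov inequality or the $\ch_3$-inequality for sheaves or tilt-semistable complexes can be replaced verbatim by its twisted analogue (Theorems~\ref{theorem-twisted-bogomolov} and~\ref{theorem-BG-AV}/the hypothesis), and all the relevant categories $\Coh^{\omega,D}(X,\alpha)$, $\cA^{\omega,D}(X,\alpha)$ are $S$-linear semiorthogonal-type constructions on $\Dperf(X,\alpha)$, which is itself of geometric origin over $S$ by Lemma~\ref{lemma-D-SB}. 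So the proof reduces to the remark that ``the same arguments go through directly in the twisted setting,'' with the twisted inputs already assembled above; I would state this explicitly and point to the precise untwisted statements in \cite{stability-families} that are being quoted, rather than reproduce their proofs.
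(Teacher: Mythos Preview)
Your proposal is correct and matches the paper's approach exactly: the paper simply remarks that the untwisted relative construction in \cite[Part~V]{stability-families} goes through verbatim once the twisted Bogomolov and generalized Bogomolov--Gieseker inequalities are in place, without giving any further detail. Your write-up is in fact more explicit than the paper's one-sentence justification, but the underlying argument is the same.
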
 

Together with Theorem~\ref{theorem-BG-AV}, this has the following important consequence. 
\begin{corollary}
\label{cor:existence_of_relative_stability_ab_threefolds}
Let $f \colon X \to S$ be a smooth projective family of complex abelian threefolds. 
Let $\alpha \in \Br(X)$ be a Brauer class. 
Then there exists a stability condition $\sigma$ on $\Dperf(X, \alpha)$ over $S$ with respect to the 
topological Mukai homomorphism $\ch^H \colon \Knum(\Dperf(X, \alpha)/S) \to \Lambda_H$. 
\end{corollary}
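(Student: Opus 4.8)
The plan is to derive Corollary~\ref{cor:existence_of_relative_stability_ab_threefolds} as an immediate consequence of the preceding relative construction theorem together with the twisted generalized Bogomolov--Gieseker inequality for abelian threefolds. First I would invoke Theorem~\ref{theorem-BG-AV}, which asserts that for any abelian threefold $Y$ over a field of characteristic $0$ equipped with a Brauer class $\beta\in\Br(Y)$, and for any real ample divisor $\omega$ and rational divisor $D$ on $Y$, the pair $(Y,\beta)$ satisfies the generalized Bogomolov--Gieseker inequality with respect to $\omega$ and $D$. Applying this fiberwise to the smooth projective family $f\colon X\to S$ of complex abelian threefolds: for each $s\in S$, the fiber $X_s$ is an abelian threefold over $\kappa(s)$ (which has characteristic $0$), so $(X_s,\alpha_s)$ satisfies the hypothesis of the relative stability theorem --- the generalized Bogomolov--Gieseker inequality holds with respect to every real ample $\omega$ and rational $D$, in particular those proportional to $H_s$ for any chosen relatively ample divisor $H$ on $X$ over $S$.

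Next I would fix such a relatively ample divisor $H$ on $X$ over $S$ (one exists since $f$ is projective) and fix rational numbers $a,b,c,d$ satisfying the numerical constraint~\eqref{abcd}, e.g. $a=1$, $b=0$, $d=0$, $c=1$, so that $c=1>\tfrac16=\tfrac{a^2}{6}+\tfrac{|d|a}{2}$. With the fiberwise generalized Bogomolov--Gieseker inequality verified, the relative construction theorem (the unnumbered theorem immediately preceding the corollary, upgrading the threefold construction of~\cite{BMS} and Theorem~\ref{theorem-stability-from-BG} to the relative setting via~\cite[Part V]{stability-families}) produces a stability condition $\sigma$ on $\Dperf(X,\alpha)$ over $S$ with respect to the Mukai homomorphism $\ch^H\colon\Knum(\Dperf(X,\alpha)/S)\to\Lambda_H$, whose fiber $\sigma_s$ is the stability condition on $\Dperf(X_s,\alpha_s)$ of Theorem~\ref{theorem-stability-from-BG}. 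Finally, I would note that $\ch^H$ is a topological Mukai homomorphism in the sense of Definition~\ref{definition-topological-v}: this is exactly the content of Example~\ref{example-topological-v}, where the relative Chern character $\ch^{\topo}_{\cX/S}$ of~\cite{hotchkiss-pi} composed with the twisting map~\eqref{eq:twisting_chern} exhibits $\ch^H=\ch^{\omega,D}_{\cX/S}$ with $\omega=D=H$ as topological (using that a $\bmu_n$-gerbe representing $\alpha$ restricts to an essentially topologically trivial gerbe on the fibers, or passing to the decomposition $\Dperf(\cX)=\bigoplus_k\Dperf(X,\alpha^k)$ and arguing summand by summand).

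Since every ingredient is already in place, there is no genuine obstacle: the proof is a matter of correctly citing Theorem~\ref{theorem-BG-AV} fiberwise, checking that the chosen $(a,b,c,d)$ satisfy~\eqref{abcd}, applying the relative construction theorem, and recalling from Example~\ref{example-topological-v} that $\ch^H$ is topological. The only point requiring a small amount of care is the passage from ``$(X_s,\alpha_s)$ satisfies the generalized Bogomolov--Gieseker inequality for all real ample $\omega$ and rational $D$'' (which Theorem~\ref{theorem-BG-AV} gives unconditionally for abelian threefolds) to ``for $\omega,D$ proportional to $H_s$'' (which is all the relative theorem demands) --- this is a trivial specialization. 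One should also make sure the base $S$ satisfies whatever mild hypotheses the relative construction requires (reduced, of finite type over $\bC$); for a smooth projective family of abelian threefolds over a complex variety these are automatic.

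\begin{proof}
Fix a relatively ample divisor $H$ on $X$ over $S$, which exists since $f$ is projective, and fix $a = 1$, $b = 0$, $c = 1$, $d = 0$. Then $a > 0$ and $c = 1 > \tfrac{1}{6} = \tfrac{a^2}{6} + \tfrac{|d|a}{2}$, so $(a,b,c,d)$ satisfies~\eqref{abcd}. For each $s \in S$, the fiber $X_s$ is an abelian threefold over the characteristic zero field $\kappa(s)$ with Brauer class $\alpha_s \in \Br(X_s)$, so by Theorem~\ref{theorem-BG-AV}, $(X_s, \alpha_s)$ satisfies the generalized Bogomolov--Gieseker inequality with respect to every real ample divisor $\omega$ and rational divisor $D$ on $X_s$; in particular this holds whenever $\omega$ and $D$ are proportional to $H_s$.

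Therefore the hypotheses of the preceding relative construction theorem are met, and it produces a stability condition $\sigma$ on $\Dperf(X, \alpha)$ over $S$ with respect to the Mukai homomorphism $\ch^H \colon \Knum(\Dperf(X, \alpha)/S) \to \Lambda_H$ of Example~\ref{example-topological-v}, whose fiber over each $s \in S$ is the stability condition of Theorem~\ref{theorem-stability-from-BG}. Finally, by Example~\ref{example-topological-v} (applied with $\omega = D = H$), the Mukai homomorphism $\ch^H$ is topological in the sense of Definition~\ref{definition-topological-v}, as it is the composition of the relative topological Chern character $\ch^{\topo}_{\cX/S}$ of~\cite{hotchkiss-pi} with the morphism of local systems~\eqref{eq:twisting_chern}. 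This completes the proof.
\end{proof}
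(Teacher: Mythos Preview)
Your proposal is correct and follows exactly the approach the paper indicates: the paper simply states that the corollary is an immediate consequence of combining the preceding relative construction theorem with Theorem~\ref{theorem-BG-AV}, and you have correctly expanded this into a detailed argument, also supplying the reference to Example~\ref{example-topological-v} for the topologicality of $\ch^H$ (which the paper leaves implicit).
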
 

%%%%%%%%%%%%%%%%%%%%%%%%%%%%%%%%%%%%%%%%%%%%%%%%%%%%%%

\newpage 
\part{Donaldson--Thomas theory} 
\label{part-DT-theory} 

\section{Obstruction theories and virtual fundamental classes} 
\label{section-obstruction-theories-and-vfc}
In this section we cover some preliminaries on obstruction theories and their associated virtual fundamental classes. 

\subsection{Obstruction theories}
\label{section-obs-thy}

\begin{definition}
\label{definition-obstruction-theory}
Let $\cX$ be an algebraic stack over $S$. 
An \emph{obstruction theory} for $\cX$ over $S$ is a morphism 
$\phi \colon \cF \to \tau^{\geq -1} L_{\cX/S}$ in $\Dqc(\cX)$ such that 
$\cofib(\phi) \in \Dqc^{\leq -2}(\cX)$. 
We say $\phi$ is \emph{perfect} if 
$\cF$ is a perfect complex of Tor-amplitude $[-1, \infty]$. 
\end{definition} 

\begin{remark} 
\label{remark-tor-amp}
As $\cX$ is an algebraic stack, we have $L_{\cX/S} \in \Dqc^{\leq 1}(\cX)$. 
If $\phi \colon \cF \to \tau^{\geq -1} L_{\cX/S}$ is a relative obstruction theory, then it follows that 
$\cF$ has Tor-amplitude in $[-\infty, 1]$; 
in particular, if $\phi$ is perfect, then $\cF$ has Tor-amplitude in $[-1,1]$. 
If $\cX$ is Deligne--Mumford, then we have the stronger connectivity $L_{\cX/S} \in \Dqc^{\leq 0}(\cX)$, so if $\phi$ is an obstruction theory then $\cF$ has Tor-amplitude in $[-\infty, 0]$, and in $[-1,0]$ if $\phi$ is perfect.  
\end{remark} 

\begin{remark}
There are some slight variants of Definition~\ref{definition-obstruction-theory} in the literature. 
Let us compare with the original definition of Behrend and Fantechi \cite{BF-normal-cone}. 
There the target of $\phi$ is taken to be the full cotangent complex $L_{\cX/S}$ instead of its truncation $\tau^{\geq -1}L_{\cX/S}$. 
However, given such a $\cF \to L_{\cX/S}$, the composition 
$\cF \to L_{\cX/S} \to \tau^{\geq -1}L_{\cX/S}$ gives an obstruction theory in the sense of Definition~\ref{definition-obstruction-theory}, and this is sufficient for all of the constructions 
in \cite{BF-normal-cone}. 
In \cite{BF-normal-cone} Behrend and Fantechi also only consider obstruction theories for Deligne--Mumford stacks; in this case our definition recovers theirs, modulo the discrepancy about cotangent complexes just explained. 
More recently, others have studied obstruction theories in the setting of (higher) algebraic stacks \cite{normal-cone-artin-stacks, poma-vclass}. 
For our main applications, we will in fact only need the Deligne--Mumford case, but some intermediate results are more naturally formulated in the context of general algebraic stacks. 
\end{remark} 

\begin{remark}
\label{remark-enhancement-obs}
Obstruction theories can be thought of as shadows of derived enhancements: by Lemma~\ref{lemma-enhancement-obs}, if $\fX$ is a derived enhancement of an algebraic stack $\cX$ over $S$ and $i \colon \cX \to \fX$ is the canonical map, then 
$i^* L_{\fX/S} \to L_{\cX/S} \to \tau^{\geq -1}L_{\cX/S}$ is an obstruction theory. 
\end{remark} 

\begin{remark} 
\label{remark-bc-obs-thy}
Let $\phi \colon \cF \to \tau^{\geq -1} L_{\cX/S}$ be an obstruction theory for an algebraic stack $\cX$ over $S$. 
Let $u \colon S' \to S$ be a morphism, and consider the \emph{underived} base change diagram 
\begin{equation} 
\label{bc-obs-thy} 
\vcenter{
\xymatrix{
\cX' \ar[r]^{u'} \ar[d] & \cX \ar[d] \\ 
S' \ar[r]^{u} & S .
}}
\end{equation} 
We claim that there is a natural way to base change $\phi$ to obtain an obstruction theory for $\cX'$ over $S'$. 
To explain this, it is convenient to consider the derived fiber product 
\begin{equation*} 
\vcenter{
\xymatrix{
\fX' \ar[r]^{\mu'} \ar[d] & \cX \ar[d] \\ 
S' \ar[r]^{u} & S .
}}
\end{equation*} 
which is a derived enhancement of $\cX'$, i.e. $t_0(\fX') = \cX'$. 
By base change for the cotangent complex we have 
$(\mu')^*L_{\cX/S} \simeq L_{\fX'/S'}$; note that for this to hold, it is important that we use 
the derived base change $\fX'$ instead of $\cX'$. 
It follows that 
\begin{equation*}
\tau^{\geq -1}L_{\fX'/S'} \simeq \tau^{\geq -1}(\mu')^*\tau^{\geq -1}L_{\cX/S} .
\end{equation*} 
Therefore, pullback of $\phi$ along $\mu'$ followed by truncation in degrees $\geq -1$ gives a morphism 
\begin{equation}
\label{phi-fX}
(\mu')^*\cF \to  (\mu')^*\tau^{\geq -1}L_{\cX/S} \to \tau^{\geq -1}L_{\fX'/S'}
\end{equation}
whose cone is easily seen to lie in $\Dqc^{\leq -2}(\fX')$ using $\cofib(\phi) \in \Dqc^{\leq -2}(\cX)$. 
This can be considered as an obstruction theory for $\fX'$ over $S'$. 

To induce an obstruction theory for $\cX'$, let $i \colon \cX' \to \fX'$ be the canonical closed immersion and consider the composition $i^*L_{\fX'/S'} \to L_{\cX'/S'} \to \tau^{\geq -1} L_{\cX'/S'}$. 
For degree reasons, this factors via a morphism $i^* \tau^{\geq -1} L_{\fX'/S'} \to \tau^{\geq -1} L_{\cX'/S'}$, which by Lemma~\ref{lemma-enhancement-obs} has cone lying in $\Dqc^{\leq -2}(\cX')$. 
Composing this morphism with the pullback of~\eqref{phi-fX} along $i$ we obtain a morphism 
\begin{equation*}
\phi_{S'} \colon (u')^*\cF \to \tau^{\geq -1} L_{\cX'/S'}. 
\end{equation*} 
Using the connectivity estimates above, we see $\cofib(\phi_{S'}) \in \Dqc^{\leq -2}(\cX')$, i.e. 
$\phi_{S'}$ is an obstruction theory for $\cX'$ over $S'$ which we call the base change of $\phi$. 
Note that $\phi_S$ is perfect if $\phi$ is. 
For any point $s \in S$, we denote by $\phi_s$ the base change of $\phi$ along $\Spec(\kappa(s)) \to S$. 
\end{remark}

\subsection{Virtual fundamental classes} 
The main upshot of an obstruction theory is that it gives rise 
to a virtual fundamental class. 

\begin{theorem}
\label{theorem-vclass}
Let $\cX$ be a Deligne--Mumford stack over a pure-dimensional scheme $S$, 
with both $\cX$ and $S$ of finite type over a field. 
Let $\phi \colon \cF \to \tau^{\geq -1} L_{\cX/S}$ be a perfect obstruction theory for $\cX$ over $S$. 
Assume that the rank of $\cF$ (a priori a locally constant function on $\cX$) is constant, denoted $\rk \cF$. 
Then there is a canonically associated \emph{virtual fundamental class}  
\begin{equation*}
[\cX]_{\phi}^{\vir} \in \CH_{\dim S+\rk \cF}(\cX)
\end{equation*} 
in the Chow group of $\cX$, which is compatible with base change in the following sense: if $u \colon S' \to S$ is a regular immersion or flat morphism from a pure-dimensional scheme of finite type over the base field, and 
$u' \colon \cX' \to \cX$ is the (underived) base change as in~\eqref{bc-obs-thy}, then 
\begin{equation*}
(u')^*[\cX]_{\phi}^{\vir} = [\cX']_{\phi_{S'}}^{\vir}. 
\end{equation*} 
\end{theorem}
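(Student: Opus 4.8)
The statement is essentially a packaging of the Behrend--Fantechi construction of virtual fundamental classes, adapted to the relative setting over a pure-dimensional base $S$, together with its behavior under base change. The plan is to reduce everything to the existing literature rather than to reprove it from scratch. First I would recall that, by Remark~\ref{remark-tor-amp}, since $\cX$ is Deligne--Mumford and $\phi$ is a perfect obstruction theory, the complex $\cF$ has Tor-amplitude in $[-1,0]$, so it is (locally, and by the constant rank hypothesis globally) quasi-isomorphic to a two-term complex $[\cF^{-1}\to\cF^0]$ of vector bundles. This is precisely the data Behrend--Fantechi \cite{BF-normal-cone} require: a perfect obstruction theory $E^\bullet\to L_{\cX/S}$ concentrated in $[-1,0]$ (using here, as in Remark~\ref{remark-tor-amp}, that $\tau^{\ge -1}L_{\cX/S}$ already captures all that is needed, cf. the discussion following Definition~\ref{definition-obstruction-theory}). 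One point to address: \cite{BF-normal-cone} works over a smooth base or over $\Spec k$, whereas here $S$ is merely pure-dimensional; the fix is to observe that the intrinsic normal cone $\mathfrak{C}_{\cX/S}$ and intrinsic normal sheaf $\mathfrak{N}_{\cX/S}$ are defined for any morphism locally of finite type, the cone embeds in the vector bundle stack $h^1/h^0((\cF)^\vee)$, and intersecting with the zero section gives the class; the expected dimension bookkeeping produces $\dim S + \rk\cF$ because the relative cotangent complex contributes relative dimension $\mathrm{rk}\,\cF$ over $S$. Alternatively, and perhaps more cleanly, I would cite \cite{normal-cone-artin-stacks} or \cite{poma-vclass}, which set up exactly this relative formalism (obstruction theories relative to a base, not necessarily smooth), so that the construction of $[\cX]^{\vir}_\phi\in\CH_{\dim S+\rk\cF}(\cX)$ is immediate.

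For the base change compatibility, the key steps are: (i) unwind the definition of the base-changed obstruction theory $\phi_{S'}$ from Remark~\ref{remark-bc-obs-thy}, noting that it is built by pulling back $\phi$ along the \emph{derived} base change $\mu'\colon\fX'\to\cX$, using $(\mu')^*L_{\cX/S}\simeq L_{\fX'/S'}$, and then pushing the resulting obstruction theory on $\fX'$ down to $\cX' = t_0(\fX')$; (ii) identify the pulled-back complex $(u')^*\cF = (u')^*[\cF^{-1}\to\cF^0]$ as the two-term presentation of $\phi_{S'}$, so that the ambient vector bundle stack pulls back correctly; (iii) invoke the functoriality of the intrinsic normal cone under the two cases in question. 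Here one uses the standard fact (Behrend--Fantechi \cite{BF-normal-cone}, Vistoli \cite{vistoli-chow} for the Chow-theoretic pullbacks, or the refined statements in \cite{normal-cone-artin-stacks}) that for a regular immersion $u\colon S'\hookrightarrow S$ the Gysin pullback $u^!$ sends $\mathfrak{C}_{\cX/S}$ to (a closed substack of) $\mathfrak{C}_{\cX'/S'}$ — more precisely $u^![\mathfrak{C}_{\cX/S}] = [\mathfrak{C}_{\cX'/S'}]$ as cycles after the appropriate excess-intersection analysis — and compatibly for a flat $u$ by flat pullback of cycles. Combining (ii) and (iii), the zero-section intersection commutes with $u^!$ (respectively flat pullback), yielding $(u')^*[\cX]^{\vir}_\phi = [\cX']^{\vir}_{\phi_{S'}}$.

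The main obstacle I anticipate is item (iii): verifying that the \emph{underived} base-changed space $\cX'$ carries the obstruction theory whose cone intersection recovers the naive pullback of the virtual class, precisely because $\cX'$ may differ from the derived fiber product $\fX'$. This is exactly the subtlety flagged in Remark~\ref{remark-bc-obs-thy} (one must pull back along $\mu'$, not along the underived map), and it means the cycle-level identity $u^![\mathfrak{C}_{\cX/S}] = [\mathfrak{C}_{\cX'/S'}]$ is not completely formal: for a regular immersion $S'\hookrightarrow S$ of codimension $c$ there can be components of $\mathfrak{C}_{\cX'/S'}$ of dimension jumping by less than $c$ (embedded components supported on the non-flat locus), and one must check these do not affect the class obtained after intersecting with the zero section. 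The resolution is the excess-intersection / deformation-to-the-normal-cone argument in \cite{BF-normal-cone} (their Proposition~7.2 and the surrounding discussion), which shows the virtual class is insensitive to exactly this phenomenon because the obstruction theory controls the relevant normal bundle; for the flat case there is nothing to check beyond flat pullback of cycles and the flat base change $(\mu')^*L_{\cX/S}\simeq L_{\cX'/S'}$ (which holds on the nose since flatness implies the derived and underived fiber products agree). So in the write-up I would treat the flat case quickly and spend the bulk of the argument on the regular-immersion case, citing the excess-intersection formula of Behrend--Fantechi together with the cotangent-complex base change already recorded in Remark~\ref{remark-bc-obs-thy}.
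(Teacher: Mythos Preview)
Your approach is essentially the same as the paper's: reduce to the literature, specifically \cite{BF-normal-cone} for the construction and \cite[Proposition~7.2]{BF-normal-cone} for the base change compatibility. The paper's proof is exactly these two citations, with one crucial addition you are missing.

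The gap is your claim that $\cF$, having Tor-amplitude $[-1,0]$ and constant rank, is \emph{globally} quasi-isomorphic to a two-term complex of vector bundles. Constant rank does not imply the existence of a global two-term resolution on a Deligne--Mumford stack; it only tells you the Euler characteristic of the fibers is constant. Locally such a resolution exists, but patching these together requires more (e.g.\ the resolution property, or an ample line bundle, neither of which is assumed here). Behrend--Fantechi explicitly assume a global resolution, and this is precisely the hypothesis the paper flags as being removed by Kresch \cite{kresch-chow}, whose cycle theory on Artin stacks allows one to intersect with the zero section of the vector bundle \emph{stack} $h^1/h^0(\cF^\vee)$ directly, without ever choosing a global presentation. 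Your fallback to \cite{normal-cone-artin-stacks} or \cite{poma-vclass} does not obviously close this gap either, since those references generalize the base rather than address the resolution issue.

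Beyond this, your extended discussion of the base change property (derived vs.\ underived fiber products, excess intersection, the flat case vs.\ the regular immersion case) is correct in spirit and lands on the right citation, but is far more elaborate than needed: the paper simply invokes \cite[Proposition~7.2]{BF-normal-cone}, which already packages the deformation-to-the-normal-cone argument you sketch.
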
 

\begin{proof}
The construction of the virtual fundamental class is the main result of \cite{BF-normal-cone}, 
with the caveat that there $\cF$ is assumed to admit a global resolution, 
but this assumption was removed in \cite{kresch-chow}. 
The base change property is \cite[Proposition 7.2]{BF-normal-cone}. 
\end{proof} 

\begin{remark}
A virtual fundamental class can also be defined for algebraic stacks that are not necessarily Deligne--Mumford \cite{normal-cone-artin-stacks, poma-vclass}, but 
we will not need this. 
\end{remark} 

In the situation of Theorem~\ref{theorem-vclass}, 
the number $\rk \cF$ is called the \emph{virtual dimension} of $\cX$ over $S$ with respect to $\phi$. 
Note that the virtual dimension is preserved by base change on $S$ (see Remark~\ref{remark-bc-obs-thy}), 
and hence determined on any fiber of $\cX \to S$.  
In the case where $S$ is a point, we see that the virtual fundamental class is a cycle of (homological) degree given by the virtual dimension. 
A particularly nice situation is when the virtual dimension is $0$. 
In this case, if $S$ is a point and $\cX \to S$ is proper, the \emph{virtual count} of $\cX$ with 
respect to $\phi$ is the number 
\begin{equation*}
\#^{\vir}_{\phi}(\cX) = \int_{[\cX]_{\phi}^{\vir}} 1 ,  
\end{equation*} 
i.e. the degree of the virtual fundamental class. 
A fundamental property is that virtual counts of fibers are deformation invariant when 
$\cX$ is proper over $S$: 

\begin{corollary}
\label{corollary-vnumber-constant}
Let $\cX$ and $\phi$ be as in Theorem~\ref{theorem-vclass}. 
Assume that $\cX \to S$ is proper and the virtual dimension of $\cX$ over $S$ with respect to $\phi$ is $0$. 
Then for $s \in S$ a regular closed point, the virtual count 
$\#^{\vir}_{\phi_s}(\cX_s)$ of the (underived) fiber $\cX_s$ is independent of $s$. 
\end{corollary}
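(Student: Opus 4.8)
\textbf{Proof plan for Corollary~\ref{corollary-vnumber-constant}.}
The plan is to reduce the statement to the base-change compatibility already recorded in Theorem~\ref{theorem-vclass}, together with the standard principle that a proper pushforward of a zero-cycle to a point is a flat-local invariant. First I would note that, since $s \in S$ is a regular closed point, the inclusion $\iota_s \colon \Spec \kappa(s) \hookrightarrow S$ is a regular immersion into the pure-dimensional finite-type scheme $S$. Hence Theorem~\ref{theorem-vclass} applies with $u = \iota_s$, and writing $\cX_s$ for the underived fiber and $\iota_s' \colon \cX_s \to \cX$ for the base-changed map, we get
\begin{equation*}
(\iota_s')^{*}[\cX]_{\phi}^{\vir} = [\cX_s]_{\phi_s}^{\vir} \in \CH_{0}(\cX_s),
\end{equation*}
where the degree is $\dim \Spec\kappa(s) + \rk\cF = \rk\cF$; by hypothesis the virtual dimension of $\cX$ over $S$ is $0$, and since virtual dimension is preserved under base change on $S$, the fiber $\cX_s$ has virtual dimension $0$ as well, so indeed $[\cX_s]_{\phi_s}^{\vir} \in \CH_0(\cX_s)$.

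Next I would push forward to the base. Let $g \colon \cX \to S$ be the structure morphism, which is proper by assumption, and let $g_s \colon \cX_s \to \Spec\kappa(s)$ be its restriction, also proper. The virtual count is by definition
\begin{equation*}
\#^{\vir}_{\phi_s}(\cX_s) = \deg\big( (g_s)_{*}[\cX_s]_{\phi_s}^{\vir}\big) = \deg\big( (g_s)_{*} (\iota_s')^{*}[\cX]_{\phi}^{\vir}\big).
\end{equation*}
The key step is then the identity $(g_s)_{*}(\iota_s')^{*} = \iota_s^{*} g_{*}$ on Chow groups, i.e. compatibility of proper pushforward with the refined Gysin (or flat) pullback along $\iota_s$ in the fiber square formed by $g$ and $\iota_s$; this is the standard projection/base-change formula for Chow groups of finite-type schemes and algebraic stacks (Fulton's intersection theory, extended to Deligne--Mumford stacks via Vistoli's Chow groups with rational coefficients as used elsewhere in the paper). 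Applying it,
\begin{equation*}
(g_s)_{*}(\iota_s')^{*}[\cX]_{\phi}^{\vir} = \iota_s^{*}\, g_{*}[\cX]_{\phi}^{\vir}.
\end{equation*}
Now $g_{*}[\cX]_{\phi}^{\vir}$ is a class in $\CH_{\dim S}(S) = \CH_{\dim S}(S)$, i.e. (since $S$ is pure-dimensional of dimension $\dim S$) a $\bZ$-linear combination $\sum_i n_i [S_i]$ of the top-dimensional irreducible components $S_i$ of $S$. Restricting such a class to a regular closed point $s$ lying on a single component $S_i$ simply returns the integer $n_i$, which does not depend on which closed point of that component we chose; and if $S$ is irreducible (or connected, after noting $\#^{\vir}$ is locally constant) the coefficient is a single integer independent of $s$ entirely.

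The main obstacle I anticipate is purely bookkeeping rather than conceptual: one must make sure the refined Gysin pullback $\iota_s^{*}$ used to restrict the pushed-forward class $g_{*}[\cX]_{\phi}^{\vir}$ is the same operation as the base-change map appearing in Theorem~\ref{theorem-vclass} (both are the Gysin map for the regular immersion $\iota_s$), and that the commutation $(g_s)_{*}(\iota_s')^{*} = \iota_s^{*} g_{*}$ is valid in the stacky, rational-coefficient Chow theory being used here; this last point follows from \cite{vistoli-chow} together with the usual compatibilities of \cite{BF-normal-cone}, but it is worth citing explicitly. Once that is in place, the independence of $\#^{\vir}_{\phi_s}(\cX_s)$ on $s$ is immediate from the description of $g_{*}[\cX]_{\phi}^{\vir}$ as a combination of fundamental classes of components of $S$.
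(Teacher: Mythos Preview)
Your proposal is correct and follows essentially the same approach as the paper: the paper's proof is the one-liner ``the virtual fundamental classes $[\cX_s]^{\vir}_{\phi_s}$ are the fibers of the family of $0$-cycles $[\cX]_{\phi}^{\vir}$ on $\cX$ over $S$, and hence have the same degree,'' which is exactly your base-change plus push-pull argument in compressed form. Your explicit invocation of the commutation $(g_s)_*(\iota_s')^* = \iota_s^* g_*$ and the description of $g_*[\cX]_\phi^{\vir}$ as a top-dimensional cycle on $S$ simply unpacks what the paper leaves implicit.
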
 

\begin{proof}
This follows from Theorem~\ref{theorem-vclass}, as the virtual fundamental classes $[\cX_s]^{\vir}_{\phi_s}$ are the fibers of the family of $0$-cycles $[\cX]_{\phi}^{\vir}$ on $\cX$ over $S$, and hence have the same degree. 
\end{proof} 

\subsection{Symmetric obstruction theories} 
The vanishing of the virtual dimension can be guaranteed by imposing a 
symmetry on the obstruction theory.  

\begin{definition}
Let $\cX$ be an algebraic stack over $S$. 
A \emph{symmetric obstruction theory} for $\cX$ over $S$ is a 
triple $(\phi, L, \theta)$ where $\phi \colon \cF \to \tau^{\geq -1} L_{\cX/S}$ is a 
relative obstruction theory, $\cF$ is a perfect complex, $L$ is a line bundle on $S$, 
and $\theta \colon \cF \to \cF^{\vee} \otimes L[1]$ is an isomorphism 
satisfying $\theta^{\vee} \otimes L[1] = \theta$. 
\end{definition} 

\begin{remark}
The data of $\theta$ is equivalent to that of a degree $1$ nondegenerate symmetric bilinear form $\beta \colon \cF \otimes \cF \to \cO_{\cX} \otimes L[1]$; see \cite[Remark 1.2]{BF-symmetric}. 
\end{remark} 

\begin{remark} 
The virtual dimension of a Deligne--Mumford stack $\cX$ over $S$ with respect to a symmetric perfect obstruction theory is indeed $0$, because then 
$\rk \cF = \rk(\cF^{\vee} \otimes L[1]) = - \rk \cF$. 
The symmetry of an obstruction theory also has other important consequences by Behrend's work \cite{behrend}, 
cf. Remark~\ref{remark-behrend-function} below. 
\end{remark} 

\begin{remark}
Parallel to Remark~\ref{remark-bc-obs-thy}, 
if $(\phi, L, \theta)$ is a symmetric obstruction theory for an algebraic stack $\cX$ over $S$, 
then for any morphism $u \colon S' \to S$ 
we can form in an evident way the base change $(\phi_{S'}, L_{S'}, \theta_{S'})$ 
which is a symmetric obstruction theory for $\cX'$ over $S'$. 
\end{remark} 

In the Deligne--Mumford case, any symmetric obstruction theory is automatically perfect: 

\begin{lemma}
\label{lemma-DM-spos}
Let $(\phi \colon \cF \to \tau^{\geq -1} L_{\cX/S}, L, \theta)$ be a symmetric obstruction theory 
for an algebraic stack $\cX$ over $S$. 
Then $\cF$ has Tor-amplitude $[-2,1]$. 
If $\cX$ is Deligne--Mumford, then $\cF$ has Tor-amplitude $[-1,0]$, and hence 
$\phi$ is a perfect obstruction theory. 
\end{lemma}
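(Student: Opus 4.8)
\textbf{Proof plan for Lemma~\ref{lemma-DM-spos}.}

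The plan is to extract the Tor-amplitude bounds purely from the interplay of two facts: that $\cF \to \tau^{\geq -1}L_{\cX/S}$ is an obstruction theory (so its cofiber lies in $\Dqc^{\leq -2}$ and, by Remark~\ref{remark-tor-amp}, $\cF$ already has Tor-amplitude in $[-\infty,1]$, or in $[-\infty,0]$ in the Deligne--Mumford case), and that the symmetry isomorphism $\theta\colon \cF \xrightarrow{\sim} \cF^{\vee}\otimes L[1]$ converts an upper Tor-amplitude bound on $\cF$ into a lower one. The key observation is the standard duality fact: if a perfect complex $\cG$ has Tor-amplitude in $[a,b]$, then $\cG^{\vee}$ has Tor-amplitude in $[-b,-a]$. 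I would state this as a preliminary remark (it is immediate from the definition of Tor-amplitude via tensoring with residue fields, or via a bounded complex of finite free modules representing $\cG$ locally) and then simply run it through $\theta$.

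First I would handle the general algebraic stack case. Since $L_{\cX/S}\in\Dqc^{\leq 1}(\cX)$ for an algebraic stack, $\tau^{\geq -1}L_{\cX/S}\in\Dqc^{[-1,1]}(\cX)$, and the triangle $\cF \to \tau^{\geq -1}L_{\cX/S}\to\cofib(\phi)$ with $\cofib(\phi)\in\Dqc^{\leq -2}(\cX)$ forces $\cF\in\Dqc^{\leq 1}(\cX)$; combined with $\cF$ being perfect this gives Tor-amplitude $[-\infty,1]$. Now I apply the duality fact to $\theta$: $\cF\simeq\cF^{\vee}\otimes L[1]$, and since $\cF$ has Tor-amplitude $\leq 1$, $\cF^{\vee}$ has Tor-amplitude $\geq -1$, hence $\cF^{\vee}\otimes L[1]$ has Tor-amplitude $\geq -2$ (the shift by $1$ lowers the bound by $1$; tensoring by the line bundle $L$ does not affect Tor-amplitude). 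Therefore $\cF$ has Tor-amplitude in $[-2,1]$, which is the first assertion.

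Next, the Deligne--Mumford case. Here $L_{\cX/S}\in\Dqc^{\leq 0}(\cX)$, so by the same triangle argument $\cF\in\Dqc^{\leq 0}(\cX)$ and hence has Tor-amplitude in $[-\infty,0]$. Feeding this through $\theta$ exactly as before: $\cF^{\vee}$ has Tor-amplitude $\geq 0$, so $\cF^{\vee}\otimes L[1]\simeq\cF$ has Tor-amplitude $\geq -1$. Combining, $\cF$ has Tor-amplitude in $[-1,0]$, and since $\cF$ is already assumed perfect, this is precisely the condition (recorded in Definition~\ref{definition-obstruction-theory} and Remark~\ref{remark-tor-amp}) for $\phi$ to be a perfect obstruction theory.

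I do not anticipate a serious obstacle here; the only point requiring a little care is the bookkeeping on how $(-)^{\vee}$, the shift $[1]$, and $\otimes L$ each act on Tor-amplitude bounds, and making sure the ``algebraic stack'' versus ``Deligne--Mumford'' connectivity of $L_{\cX/S}$ is invoked correctly (this is exactly the distinction flagged in Remark~\ref{remark-tor-amp}). If one wanted to be maximally careful about the duality statement for perfect complexes on a stack, one can check Tor-amplitude smooth-locally on an atlas, reducing to the affine case where $\cF$ is represented by a bounded complex of finite free modules and the claim about $\cF^{\vee}$ is elementary.
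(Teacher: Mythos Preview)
Your proposal is correct and follows exactly the same approach as the paper's proof: use Remark~\ref{remark-tor-amp} to get the upper Tor-amplitude bound, then feed it through the symmetry isomorphism $\cF\simeq\cF^{\vee}\otimes L[1]$ to obtain the lower bound, and repeat with the sharper Deligne--Mumford connectivity. Your write-up is simply a more detailed version of the paper's two-line argument.
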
 

\begin{proof}
By Remark~\ref{remark-tor-amp}, $\cF$ has Tor-amplitude in $[-\infty, 1]$. 
Dualizing and using symmetry, we find that $\cF \cong \cF^{\vee} \otimes L[1]$ also has Tor-amplitude $[-2, \infty]$, proving the first claim. 
If $\cX$ is Deligne--Mumford, then by Remark~\ref{remark-tor-amp} again $\cF$ has Tor-amplitude in $[-\infty, 0]$, so 
the same symmetry argument shows that $\cF$ has Tor-amplitude $[-1,0]$.  
\end{proof} 

%%%%%%%%%%%%%%%%%%%%%%%%%%%%%%%%%%%%%%%%%%%%%%%%%%%%%%

\section{Obstruction theory for moduli of objects in a CY3 category} 
\label{section-obstruction-theory-CY3} 

In this section, we construct a reduced symmetric perfect obstruction theory on a suitable quotient of the moduli of objects in a CY3 category. 
This is the main technical ingredient in our definition of reduced DT invariants in \S\ref{section-DT-CY3}.

\subsection{Setup}
\label{section-setup}  
Now we describe the setup for the rest of this section. 
Let $\cC$ be a CY3 category of geometric origin over a smooth complex variety $S$. 
By Theorem~\ref{theorem-Ktop}, we have a local system $\Ktop[0](\cC/S)$ on $S$ whose fibers are $\Ktop[0](\cC_s)$ for $s \in S(\bC)$. 
Note that any object $E \in \cC$ defines a section \mbox{$v_{E} \in \Gamma(S, \Ktop[0](\cC/S))$} whose fiber over $s \in S(\bC)$ is the class of $E_s$ in $\Ktop[0](\cC_s)$, which is a Hodge class for the Hodge structure described in Theorem~\ref{theorem-Ktop} (see \cite[Lemma 5.10]{IHC-CY2}). 

Fix a section $v \in \Gamma(S, \Ktop[0](\cC/S))$ whose fibers $v_s \in \Ktop[0](\cC_s)$ are Hodge classes for all $s \in S(\bC)$. 
We define $s\cM_{\gl}(\cC/S, v) \subset s\cM_{\gl}(\cC/S)$ to be the open substack parameterizing objects with class $v$, 
where $\cM_{\gl}(\cC/S)$ is the moduli stack of simple gluable objects introduced in \S\ref{section-classical-moduli}. 
Note that as $\cC$ is connected over $S$, by Proposition~\ref{proposition-cAut} the stack of autoequivalences 
$\cAut(\cC/S)$ is an algebraic stack. 
Finally, 
let $\cM$ be an open substack of $s\cM_{\gl}(\cC/S, v)$, 
and let $\cG$ be an open subgroup stack of $\cAut(\cC/S)$ which is smooth over $S$ and whose action on $s\cM_{\gl}(\cC/S, v)$ preserves $\cM$. 

\begin{remark}
As explained in \S\ref{section-moduli-twisted-sheaves} and \S\ref{section-moduli-ss}, many such examples can be obtained by taking 
$\cM$ to be a moduli space of semistable objects and $\cG = \cAut^0(\cC/S)$ the identity component of $\cAut(\cC/S)$. 
We will spell this out more explicitly in Examples~\ref{example-moduli-sheaves-proper} and~\ref{example-moduli-objects-proper} below. 
\end{remark} 

\begin{remark}
\label{remark-quotient-algebraic}
Our goal is to construct a symmetric obstruction theory for the quotient stack $\cM/\cG$, which is perfect when this quotient is Deligne--Mumford. 
A priori the quotient $\cM/\cG$ is a higher algebraic stack, but as we explain now our assumptions 
actually imply it is $1$-truncated, i.e. an ordinary algebraic stack. 
Indeed, by Proposition~\ref{proposition-cAut} the stack $\cG$ is naturally a $\bG_m$-gerbe $\cG \to G$ over a group algebraic space $G$, and 
by Theorem~\ref{theorem-MC} the stack $\cM$ is naturally a $\bG_m$-gerbe 
$\cM \to M$ over an algebraic space $M$. 
Said differently, the group stack $B \bG_m$ is a subgroup of $\cG$ with 
quotient $G$, and acts on $\cM$ with quotient $M$. 
In particular, we find that $G$ acts on $M$, and there is an isomorphism 
\begin{equation*}
\cM/\cG \cong M/G. 
\end{equation*} 
As $M/G$ is an algebraic stack, this proves the above claim. 
\end{remark} 

\begin{remark}
Let $\cE \in \cC_{\cM}$ be the universal object. 
Before passing to the quotient, the stack $\cM$ has a natural obstruction theory over $S$, 
\begin{equation}
\label{phiM}
\phi_{\cM} \colon (\cHom_{\cM}(\cE, \cE)[1])^{\vee} \to \tau^{\geq -1} L_{\cM/S} , 
\end{equation} 
which arises via Remark~\ref{remark-enhancement-obs} from the derived enhancement 
$\fM \subset \fM(\cC/S)$ (see Remark~\ref{example-LM}) and the description of the cotangent 
complex of $\fM(\cC/S)$ (Theorem~\ref{theorem-LMder}). 
This can be thought of as the starting point for the obstruction theory we will construct for $\cM/\cG$ below. 
\end{remark} 

\subsection{Obstruction theory for $\cM/\cG$} 
\label{section-obs-thy-MG}
Now we can formulate the main result of this section. 
Let $\cE \in \cC_{\cM}$ be the universal object and 
let $L$ be the line bundle on $S$ such that \mbox{$\rS_{\cC/S} = (- \otimes L)[n]$}. 
Consider the morphism 
\begin{equation}
\label{alpha}
\alpha \colon \tau^{\leq 1} \cHH^*(\cC_{\cM}/\cM) \otimes L[2] \to \cHom_\cM(\cE,\cE) \otimes L[2]
\end{equation} 
induced by the canonical morphism $a_{\cE} \colon \cHH^*(\cC_{\cM}/\cM) \to \cHom_{\cM}(\cE,\cE)$, and the morphism 
\begin{equation}
\label{beta}
\beta \colon \cHom_\cM(\cE,\cE) \otimes L[2] \to \tau^{\geq 2}(\cHH_*(\cC_{\cM}/\cM)) \otimes L[2] 
\end{equation} 
induced by the Chern character $\ch_{\cE} \colon \cHom_\cM(\cE,\cE) \to \cHH_*(\cC_{\cM}/\cM)$. For degree reasons, the composition 
\begin{equation}
\label{complex-obs-theory}
\tau^{\leq 1} \cHH^*(\cC_{\cM}/\cM) \otimes L[2] \xrightarrow{\, \alpha \,} \cHom_\cM(\cE,\cE) \otimes L[2] \xrightarrow{\, \beta \,} \tau^{\geq 2}(\cHH_*(\cC_{\cM}/\cM)) \otimes L[2] 
\end{equation} 
vanishes, so we can consider the ``cohomology'' of this sequence defined by 
\begin{equation*}
\cF \coloneqq \cofib(  \tau^{\leq 1} \cHH^*(\cC_{\cM}/\cM) \otimes L[2] \xrightarrow{\, \alpha' \,} \fib( \beta)) 
\end{equation*} 
where $\alpha' \colon \tau^{\leq 1} \cHH^*(\cC_{\cM}/\cM)  \to \fib(\beta)$ is the morphism through which $\alpha$ factors. 

\begin{theorem}
\label{theorem-obs-theory}
In the setup of \S\ref{section-setup}, there is a canonical symmetric obstruction theory $(\phi \colon \overline{\cF} \to \tau^{\geq -1} L_{(\cM/\cG)/S}, L, \theta)$ for $\cM/\cG$ over $S$, such that if 
$q \colon \cM \to \cM/\cG$ is the quotient morphism then $q^* \overline{\cF} \simeq \cF$. 
Moreover, if $\cM/\cG$ is Deligne--Mumford, then this obstruction theory is perfect. 
\end{theorem}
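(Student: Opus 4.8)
The plan is to build the obstruction theory on $\cM$ first, descend it along $q$, and then verify symmetry and perfectness. The starting point is the obstruction theory $\phi_\cM$ of \eqref{phiM} coming from the derived enhancement $\fM \subset \fM(\cC/S)$ via Remark~\ref{remark-enhancement-obs} and Theorem~\ref{theorem-LMder}, which identifies $L_{\fM/S} \simeq (\cHom_\cM(\cE,\cE)[1])^\vee$. The correct object to descend, however, is not $L_{\fM/S}$ itself but the cotangent complex of the derived quotient $\fM/\cG$, which is computed in Proposition~\ref{proposition-quotient-enhancement}\eqref{quotient-L}: there is an exact triangle expressing $q^* L_{(\fM/\cG)/S}$ as the fiber of the map $\beta$ of \eqref{equation-beta}, namely $(\cHom_\cM(\cE,\cE)[1])^\vee \to \tau^{\geq 0}((\cHH^*(\cC/S)[1])^\vee)\otimes\cO_\fM$. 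Using $\rS_{\cC/S} = (-\otimes L)[3]$ (so $n=3$) together with Corollary~\ref{CY-HH}\eqref{CY-HH-co-ho}, the dual $(\cHH^*(\cC/S)[1])^\vee$ can be rewritten in terms of $\cHH_*(\cC/S)$ shifted by $L$; after taking the dual of the triangle and using Corollary~\ref{CY-HH}\eqref{CY-ch-dual} to identify the dual of $a_\cE$ with the Chern character $\ch_\cE$ (up to the Serre-duality twist), the fiber-of-$\beta$ description matches the three-term complex \eqref{complex-obs-theory}. This is the bookkeeping core of the argument: I would carefully track the shifts to show $q^*L_{(\fM/\cG)/S}^\vee[\text{shift}]$ is equivalent to the complex whose ``cohomology'' is $\cF$, i.e.\ that the obstruction theory $i^*L_{(\fM/\cG)/S} \to \tau^{\geq -1}L_{(\cM/\cG)/S}$ (Remark~\ref{remark-enhancement-obs}, applied to the derived enhancement $\fM/\cG$ of $\cM/\cG$ from Proposition~\ref{proposition-quotient-enhancement}\eqref{quotient-enhancement}) has the stated source $\overline{\cF}$ with $q^*\overline{\cF}\simeq\cF$.

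Next I would establish the symmetry isomorphism $\theta\colon \overline{\cF} \xrightarrow{\sim} \overline{\cF}^\vee\otimes L[1]$. The guiding principle is the same as in the classical Thomas construction \cite{thomas-DT, BF-symmetric}: the symmetry comes from Serre duality on the CY3 category. Concretely, on $\cM$ the complex $\cF$ is assembled from $\tau^{\leq 1}\cHH^*$, $\cHom_\cM(\cE,\cE)$, and $\tau^{\geq 2}\cHH_*$, all twisted by $L[2]$. Serre duality gives $\cHom_\cM(\cE,\cE)^\vee \simeq \cHom_\cM(\cE,\cE)\otimes L[3]$, while the Mukai pairing (Theorem~\ref{theorem-HH}\eqref{theorem-mukai-pairing}) together with Corollary~\ref{CY-HH}\eqref{CY-HH-co-ho} and \eqref{CY-HH-degrees} gives $\cHH_*(\cC/S) \in \Dqc^{[-3,3]}$ and self-duality, under which $\tau^{\leq 1}\cHH^*$ and $\tau^{\geq 2}\cHH_*$ are interchanged (after the appropriate $L$-twist and shift). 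Combining these, the three-term complex \eqref{complex-obs-theory} is carried to its own dual (twisted by $L[1]$), with the maps $\alpha$ and $\beta$ exchanged; here one uses Lemma~\ref{lemma-ch-dual}/Corollary~\ref{CY-HH}\eqref{CY-ch-dual} to see that the dual of $\alpha$ (built from $a_\cE$) is identified with $\beta$ (built from $\ch_\cE$) up to the Serre twist. Passing to cohomology of the complex yields the self-duality of $\cF$, and one checks the compatibility $\theta^\vee\otimes L[1]=\theta$ by the usual argument that the Serre pairing and Mukai pairing are (graded-)symmetric. Since everything is natural, $\theta$ descends to $\overline{\cF}$ on $\cM/\cG$.

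It remains to check that $\phi$ is indeed an obstruction theory (not merely a map from $\overline{\cF}$ to $\tau^{\geq -1}L_{(\cM/\cG)/S}$) and that it is perfect in the Deligne--Mumford case. The obstruction-theory property, i.e.\ $\cofib(\phi)\in\Dqc^{\leq -2}$, follows from Lemma~\ref{lemma-enhancement-obs} applied to the derived enhancement $i\colon \cM/\cG \to \fM/\cG$, once one knows $\overline{\cF}\simeq i^*\tau^{\geq -1}L_{(\fM/\cG)/S}$ (or more precisely that the truncation map has cone in degrees $\leq -2$); this is a formal consequence of the triangle in Proposition~\ref{proposition-quotient-enhancement}\eqref{quotient-L-descend} combined with Remark~\ref{remark-derived-obs-thy-rewritten}, which guarantees all the terms are perfect since $\cC$ is CY and $S$ is a $\bQ$-scheme. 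For perfectness: $\overline{\cF}$ is by construction a perfect complex (cohomology of a bounded complex of perfect complexes, using Remark~\ref{remark-derived-obs-thy-rewritten}), so by Lemma~\ref{lemma-DM-spos} a symmetric obstruction theory on a Deligne--Mumford stack automatically has $\cF$ of Tor-amplitude $[-1,0]$, hence $\phi$ is perfect. The main obstacle I anticipate is not conceptual but organizational: getting every shift and $L$-twist in \eqref{alpha}, \eqref{beta}, \eqref{complex-obs-theory} to line up precisely with the dual of the triangle \eqref{fM-mod-G-cotangent-descend}, and verifying that the symmetry $\theta$ built from Serre duality plus the Mukai pairing is genuinely an isomorphism of complexes (not just an abstract equivalence of the cohomology objects) compatible with $\alpha,\beta$ — this requires a careful compatibility check between Lemma~\ref{lemma-ch-dual}, Lemma~\ref{lemma-HH-Serre}, and the self-duality of $\cHH_*$ under the Mukai pairing, which is where the CY3 hypothesis (as opposed to general CY$n$) is essential for the degrees to be symmetric about the middle.
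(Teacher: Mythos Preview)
There is a genuine gap in your proposal: you conflate the obstruction theory coming from the derived enhancement $\fM/\cG$ with the symmetric one claimed in the theorem, and the passage between them is the substantive step you miss. The derived enhancement (via Proposition~\ref{proposition-quotient-enhancement} and Remark~\ref{remark-enhancement-obs}) produces an obstruction theory $\phi'\colon\overline{\cF}'\to\tau^{\geq -1}L_{(\cM/\cG)/S}$ whose source is the \emph{fiber} of $\overline{\beta}$, not the ``cohomology'' of the three-term complex~\eqref{complex-obs-theory}. Your sentence ``the fiber-of-$\beta$ description matches the three-term complex'' and the later claim ``$\overline{\cF}\simeq i^*\tau^{\geq -1}L_{(\fM/\cG)/S}$'' are both false: $\overline{\cF}$ is the cofiber of $\overline{\alpha}'\colon\tau^{\leq 1}\cHH^*\otimes L[2]\to\overline{\cF}'$, a further quotient of $\overline{\cF}'$. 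The complex $\overline{\cF}'$ is \emph{not} self-dual in the required sense, so the symmetry argument you sketch cannot be applied directly to it; symmetry only emerges after this quotient.

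The real work, which the paper devotes its Step~3 to and which you do not address, is showing that $\phi'$ descends to a map $\phi$ out of $\overline{\cF}$, i.e.\ that the composition $\tau^{\leq 1}\cHH^*\otimes L[2]\xrightarrow{\overline{\alpha}'}\overline{\cF}'\xrightarrow{\phi'}\tau^{\geq -1}L_{(\cM/\cG)/S}$ vanishes. This is not a shift-tracking exercise: it unwinds to the statement that the semiregularity map $\Ext^2(\cE_m,\cE_m)\to\HH_{-2}(\cC_m)$ annihilates obstruction classes, and the paper proves it by invoking Pridham's results~\cite{pridham} together with the standing hypothesis that the fibers $v_s$ are Hodge classes. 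This is precisely where the hypothesis $v\in\Hdg$ enters and is essential --- without it the reduction would fail, and your outline never uses this hypothesis. (The paper itself flags in Remark~\ref{remark-derived-enhancement-MG} that the obstruction theory does \emph{not} obviously arise from a derived enhancement, which is exactly the point your proposal elides.) Once this vanishing is established, your arguments for symmetry (Step~5) and for perfectness in the Deligne--Mumford case (Step~6, via Lemma~\ref{lemma-DM-spos}) are essentially those of the paper.
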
  

We will prove the theorem in several steps. 

\begin{step}{1}
\label{step-1} 
Description of an obstruction theory coming from the derived enhancement of $\cM/\cG$.  
\end{step} 

By Proposition~\ref{proposition-quotient-enhancement}, Remark~\ref{remark-derived-obs-thy-rewritten},  
and Remark~\ref{remark-enhancement-obs}, we obtain an obstruction theory 
\begin{equation*}
\phi' \colon \overline{\cF}' \to \tau^{\geq -1} L_{(\cM/\cG)/S} 
\end{equation*}  
where 
\begin{equation*}
\overline{\cF}' = \fib(  (\cHom_{\cM}(\cE, \cE)[1])^{\vee} \xrightarrow{\, \overline{\beta} \,} 
\tau^{\geq 0}((\cHH^*(\cC_{\cM}/\cM)[1])^{\vee})  )  
\end{equation*} 
and all terms are understood as objects in $\Dperf(\cM/\cG)$ (with their natural equivariant structure). 
Using Corollary~\ref{CY-HH}\eqref{CY-ch-dual}, we see that the pullback to $\cM$ of 
$\overline{\cF}'$ is given by  
\begin{equation*}
q^* \overline{\cF}' \simeq  
\fib(\cHom_\cM(\cE,\cE) \otimes L[2] \xrightarrow{\, \beta \,} \tau^{\geq 2}(\cHH_*(\cC_{\cM}/\cM)) \otimes L[2]) 
\end{equation*} 
where $\beta$ is the morphism~\eqref{beta} described above. 
The rest of the proof consists of ``making $\phi'$ symmetric''. 

\begin{step}{2}
\label{step-2}
Construction of $\overline{\cF} \in \Dperf(\cM/\cG)$. 
\end{step} 

By construction, taking the dual of the morphism $\overline{\beta}$ from Step~\ref{step-1} and tensoring by $L[1]$ gives a morphism 
\begin{equation*} 
\overline{\alpha} \colon \tau^{\leq 1} \cHH^*(\cC_{\cM}/\cM) \otimes L [2] \to \cHom_\cM(\cE,\cE) \otimes L[2]
\end{equation*} 
in $\Dperf(\cM/\cG)$ that descends the morphism $\alpha$ from~\eqref{alpha} above.  
Note that by Serre duality, we have 
\begin{equation*}
\cHom_{\cM}(\cE, \cE) \otimes L[2] \simeq (\cHom_{\cM}(\cE, \cE)[1])^{\vee}. 
\end{equation*} 
Using this, we can define $\overline{\cF}$ as the ``cohomology'' of the sequence 
formed by $\overline{\alpha}$ and $\overline{\beta}$ (whose composition vanishes for degree reasons), i.e. 
\begin{equation*}
\overline{\cF} \coloneqq \cofib( \tau^{\leq 1} \cHH^*(\cC_{\cM}/\cM) \otimes L [2] \xrightarrow{\, \overline{\alpha}' \,} \overline{\cF}') 
\end{equation*}  
where $\overline{\alpha}'$ is the morphism through which $\overline{\alpha}$ factors. 
Then $\overline{\cF} \in \Dperf(\cM/\cG)$ because all of the terms in the sequence are perfect, and $q^* \overline{\cF} \simeq \cF$ by construction. 

\begin{step}{3}
\label{step-construction-phi}
Construction of the morphism $\phi \colon \overline{\cF} \to \tau^{\geq -1} L_{(\cM/\cG)/S}$. 
\end{step} 

We claim that the composition 
\begin{equation}
\label{composition-vanish}
\tau^{\leq 1} \cHH^*(\cC_{\cM}/\cM) \otimes L [2] \xrightarrow{\, \overline{\alpha}' \,} \overline{\cF}' \xrightarrow{\, \phi' \,} \tau^{\geq -1} L_{(\cM/\cG)/S} 
\end{equation} 
vanishes. 
Since for degree reasons we also have 
\begin{equation*} 
\Hom(\tau^{\leq 1} \cHH^*(\cC_{\cM}/\cM) \otimes L [3], \tau^{\geq -1} L_{(\cM/\cG)/S})) = 0, 
\end{equation*} 
this implies that there is a unique morphism $\phi \colon \overline{\cF} \to \tau^{\geq -1} L_{(\cM/\cG)/S}$ such that $\phi' = \phi \circ \gamma$ where $\gamma \colon \overline{\cF}' \to \overline{\cF}$ is the canonical map. 

It remains to prove the claim. 
It suffices to prove that the pullback of the map~\eqref{composition-vanish} along the faithfully flat cover $q \colon \cM \to \cM/\cG$ vanishes. 
Consider the exact triangle 
\begin{equation*}
q^*L_{(\cM/\cG)/S} \to L_{\cM/S} \to L_{\cM/(\cM/\cG)}. 
\end{equation*} 
Combining Lemma~\ref{lemma-L-quotient} with Lemma~\ref{lemma-L-cAut}, 
we see that $L_{\cM/(\cM/\cG)} \in \Dqc^{[0,1]}(\cM)$. 
In particular, the map $\cH^{-1}(q^*L_{(\cM/\cG)/S}) \to \cH^{-1}(L_{\cM/S})$ on cohomology sheaves in 
degree $-1$ is an isomorphism. On the other hand, for degree reasons the map~\eqref{composition-vanish} factors via a map 
\begin{equation*}
\cHH^{1}(\cC_{\cM}/\cM) \otimes L[1] \to \cH^{-1}(L_{(\cM/\cG)/S}). 
\end{equation*} 
Altogether, these observations imply that~\eqref{composition-vanish} vanishes if and only if the composition 
\begin{equation}
\label{composition-vanish-2} 
\cHH^{1}(\cC_{\cM}/\cM) \otimes L[1] \to \tau^{\geq -1} q^*L_{(\cM/\cG)/S} \to \tau^{\geq -1} L_{\cM/S} 
\end{equation}  
vanishes. 
As the source and target have bounded coherent cohomology, 
by Nakayama's lemma it suffices to check that~\eqref{composition-vanish-2} vanishes after pullback along any 
locally of finite type $\bC$-point $m \colon \Spec(\bC) \to \cM$ 
(such a point is sometimes called a ``finite type point'' of $\cM$, see \cite[\href{https://stacks.math.columbia.edu/tag/06FW}{Tag 06FW}]{stacks-project}).
By Corollary~\ref{CY-HH}\eqref{CY-HH-co-ho} and Theorem~\ref{theorem-HH}\eqref{base-change-HH} 
we have the base change formula $m^*(\cHH^{1}(\cC_{\cM}/\cM) \otimes L[1]) = \cHH^1(\cC_{m}/\bC)[1]$. 
Therefore, it suffices to check the map 
\begin{equation}
\label{composition-vanish-3} 
\HH^1(\cC_m/\bC) \to \rH^{-1}(m^*\tau^{\geq -1} L_{\cM/S} ) = \rH^{-1}(m^* L_{\cM/S})
\end{equation} 
vanishes. 
By construction this map can also be described as $\rH^{-1}$ of the pullback along $m$ of 
the composition
\begin{equation*}
\cHH^*(\cC_{\cM}/\cM) \otimes L[2] \xrightarrow{\, a_{\cE} \otimes L[2] \,} 
\cHom_{\cM}(\cE, \cE) \otimes L[2] \simeq 
(\cHom_{\cM}(\cE, \cE)[1])^{\vee} \xrightarrow{\, \phi_{\cM} \,} \tau^{\geq -1} L_{\cM/S} ,
\end{equation*} 
where the equivalence is Serre duality and $\phi_{\cM}$ is the obstruction theory~\eqref{phiM} for $\cM$. 
Therefore, using base change,~\eqref{composition-vanish-3} identifies with 
\begin{equation*}
\HH^1(\cC_m/\bC) \xrightarrow{\, a_{\cE_{m}} \,} \Ext^1(\cE_m, \cE_m) 
\cong \Ext^2(\cE_m, \cE_m)^{\vee} 
\to \rH^{-1}(m^*L_{\cM/S}). 
\end{equation*} 
Dualizing and using Corollary~\ref{CY-HH}\eqref{CY-ch-dual}, this identifies with  
\begin{equation}
\label{composition-vanish-4}
\Ext^1(m^*L_{\cM/S}, \bC) \to \Ext^2(\cE_m, \cE_m) \xrightarrow{\, \ch_{\cE_m} \,} \HH_{-2}(\cC_m/\bC). 
\end{equation} 

To show that~\eqref{composition-vanish-4} vanishes, we will use a general result which allows us to interpret the 
source in terms of obstruction theory. 
Recall that for an algebraic stack $\cX$ over $S$, if $j \colon T \to T'$ is a square-zero thickening 
of schemes defined by an ideal $I$, then given any solid commutative diagram 
\begin{equation}
\label{obstruction-diagram}
\vcenter{
\xymatrix{
T \ar[d]_-{j} \ar[r]^-{g} & \cX \ar[d] \\
T' \ar[r]_{u} \ar@{-->}[ur]^{g'} & S 
}}
\end{equation} 
there is an obstruction $\omega(g,j,u) \in \Ext^1(g^*L_{\cX/S}, I)$ which vanishes if and only if a dotted arrow $g'$ making the diagram 
commute exists, in which case the set of such lifts $g'$ forms a torsor under $\Ext^0(g^*L_{\cX/S}, I)$. 
We say that a homomorphism $A' \to A$ of artinian local $\bC$-algebras with residue field $\bC$ is a 
\emph{small extension} if its kernel $I$ is annihilated by $\fm_{A'}$ and isomorphic to $\bC$. 
Note that if $T = \Spec(A)$ and $T' = \Spec(A')$ above and $x \colon \Spec(\bC) \to \cX$ is the restriction of $g \colon T \to \cX$ to the closed point, then $\Ext^1(g^*L_{\cX/S}, \bC) = \Ext^1(x^*L_{\cX/S}, \bC)$, so the obstruction space only depends on the $\bC$-point $x$. 

\begin{lemma}
\label{lemma-obstruction}
Let $\cX$ be an algebraic stack locally of finite type over $S$. 
Let $x \colon \Spec(\bC) \to \cX$ be a locally of finite type $\bC$-point. 
Then every element of $\Ext^1(x^*L_{\cX/S}, \bC)$ arises as $\omega(g,j,u)$ for a diagram~\eqref{obstruction-diagram} with $T \to T'$ the map induced by a small extension of artinian local $\bC$-algebras with residue field $\bC$ such that $x \colon \Spec(\bC) \to \cX$ is restriction of $g$ to the closed point. 
\end{lemma}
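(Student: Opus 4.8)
\textbf{Proof proposal for Lemma~\ref{lemma-obstruction}.}
The plan is to realize an arbitrary class $\xi \in \Ext^1(x^*L_{\cX/S},\bC)$ by a concrete small-extension obstruction. Since the obstruction space only depends on the $\bC$-point $x$, I would first reduce to a purely local deformation-theoretic statement: choose a smooth (affine, of finite type over $S$) chart $p\colon U \to \cX$ through a lift $\tilde x\colon \Spec(\bC)\to U$ of $x$, so that $p^*L_{\cX/S}$ receives $L_{U/S}$ via an exact triangle $p^*L_{\cX/S}\to L_{U/S}\to L_{U/\cX}$ with $L_{U/\cX}$ locally free (concentrated in degree $0$, $p$ being smooth). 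One then has to propagate obstructions along this triangle; the natural move is to work instead at the level of the groupoid presentation $U\times_\cX U \rightrightarrows U$ and use that $L_{\cX/S}$ restricted to $U$ is computed by the cotangent complex of this groupoid, so that deformation problems for $\cX$ along a thickening $T\hookrightarrow T'$ with a fixed map $T\to U$ lifting $g$ are controlled by exactly the two-term (or rather the cohomology in degrees $0,-1$ of the) complex $x^*L_{\cX/S}$.

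Next I would produce the small extension. Given $\xi$, pick a finite-type local Artinian $\bC$-algebra $A$ together with a map $g\colon \Spec(A)\to \cX$ (over $S$, via some $u$) restricting to $x$ — for instance $A=\bC$ itself if $\xi$ is to be realized as a ``first-order'' obstruction — and then build $A'$ as the trivial (or an appropriately twisted) square-zero extension of $A$ by $I=\bC$ whose class as an extension corresponds to $\xi$ under the identification of $\Ext^1(g^*L_{\cX/S},\bC)$ with obstruction classes. The cleanest route is: take $A'\to A$ to be a small extension chosen so that the associated obstruction element $\omega(g,j,u)$ is tautologically $\xi$. Concretely, write $x^*L_{\cX/S}$ in terms of a chart as $[\,\fg^\vee \to \Omega\,]$ in degrees $-1,0$ where $\fg$ is the Lie algebra of the stabilizer and $\Omega$ the Zariski cotangent space of a versal deformation; then $\Ext^1(x^*L_{\cX/S},\bC)$ is the cokernel of $\fg \to \mathrm{Hom}(\Omega,\bC)^\vee$-type map, and a versal hull (which exists by Artin's theorems since $\cX$ is locally of finite type over $S$, which is excellent) furnishes an honest finite-type Artinian thickening whose obstruction to lifting is precisely the chosen class. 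Surjectivity onto $\Ext^1$ is then the statement that the versal formal deformation space has the ``expected'' obstruction theory, i.e. its obstructions fill out all of $\Ext^1(x^*L_{\cX/S},\bC)$, which is standard (see Schlessinger/Artin, or \cite[\href{https://stacks.math.columbia.edu/tag/07XH}{Tag 07XH} and \href{https://stacks.math.columbia.edu/tag/06FW}{Tag 06FW}]{stacks-project}).

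Finally, I would check the bookkeeping: that the thickening $T\hookrightarrow T'$ produced is of the required form (induced by a small extension of Artinian local $\bC$-algebras with residue field $\bC$), that $g$ restricts to the prescribed $x$ on the closed point, and that the obstruction $\omega(g,j,u)\in \Ext^1(g^*L_{\cX/S},\bC)=\Ext^1(x^*L_{\cX/S},\bC)$ equals the given $\xi$ — all of which follow from the functoriality of the cotangent-complex obstruction theory (e.g. \cite{olsson-cotangent} or \cite[Ch.~III]{illusie}) once the versal object is in hand.

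\textbf{Main obstacle.} The genuine content — and the step I expect to require the most care — is the surjectivity: realizing \emph{every} class in $\Ext^1(x^*L_{\cX/S},\bC)$, including those ``coming from the stacky directions'' (the $\cH^{-1}$ of the cotangent complex contributed by automorphisms), by an honest finite-type small extension. For algebraic \emph{spaces} this is classical, but for stacks one must be sure that passing through a smooth chart and then descending does not lose part of $\Ext^1$; the resolution is to phrase obstruction theory directly for the presenting groupoid and invoke that a smooth presentation computes $L_{\cX/S}$ along with its obstruction-theoretic meaning, so that obstructions for $\cX$ and for the groupoid agree. Everything else is a routine translation between versal deformations and small-extension obstructions.
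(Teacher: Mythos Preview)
Your ``main obstacle'' is a phantom, and recognizing this collapses the whole argument to something much shorter than what you propose. You correctly write down the triangle $p^*L_{\cX/S}\to L_{U/S}\to L_{U/\cX}$ and note that $L_{U/\cX}$ is locally free in degree $0$ since $p$ is smooth. But then you should immediately conclude: $\Ext^i(u^*L_{U/\cX},\bC)=0$ for $i\geq 1$, so the induced map
\[
\Ext^1(u^*L_{U/S},\bC)\longrightarrow \Ext^1(x^*L_{\cX/S},\bC)
\]
is an \emph{isomorphism}. There is no ``stacky direction'' in $\Ext^1$: automorphisms contribute to $\cH^{1}$ of $L_{\cX/S}$, not $\cH^{-1}$, and $\Ext^1(x^*L_{\cX/S},\bC)$ only sees $\cH^{-1}$. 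So every obstruction class for $\cX$ is already hit by an obstruction class for the scheme $U$, and by functoriality (compose $\tilde g\colon T\to U$ with $p$) a small-extension diagram for $U$ yields one for $\cX$ with the corresponding obstruction. The groupoid-presentation and versal-hull machinery you sketch is unnecessary.

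This reduces the lemma to the case where $\cX$ is a scheme. The paper then does one more reduction you don't mention: using the triangle $\pi^*L_{S/\bC}\to L_{\cX/\bC}\to L_{\cX/S}$ and the smoothness of $S$ (so $\Ext^1(s^*L_{S/\bC},\bC)=0$ and lifts of the base map $u$ form a torsor under $\Ext^0(s^*L_{S/\bC},\bC)$), one reduces further to $S=\Spec(\bC)$, which is exactly \cite[Proposition~4.7]{BF-normal-cone}. Your proposal instead appeals to versal deformations and asserts that their obstructions ``fill out'' $\Ext^1$ as ``standard'', but that is essentially the content of the lemma itself, so citing it without a precise reference is circular; the paper avoids this by reducing to a case already in the literature.
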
 

\begin{remark}
We work with $\bC$ because that is the setup of this section, but the result holds more generally for any algebraically closed base field. 
\end{remark} 

\begin{proof}
Choose $p \colon U \to \cX$ a smooth surjection from a scheme $U$. 
As $x$ is a locally of finite type $\bC$-point, we can lift it along $p$ to a $\bC$-point $u \colon \Spec(\bC) \to U$ 
\cite[\href{https://stacks.math.columbia.edu/tag/06FX}{Tag 06FX}]{stacks-project}. 
The morphism $p^*L_{\cX/S} \to L_{U/S}$ induces a map 
\begin{equation}
\label{obstruction-U-X}
\Ext^1(u^*L_{U/S}, \bC) \to \Ext^1(x^*L_{\cX/S}, \bC)
\end{equation}
which is easily seen to be an isomorphism. 
Given a diagram
\begin{equation*} 
\label{obstruction-diagram-U}
\vcenter{
\xymatrix{
T \ar[d]_-{j} \ar[r]^-{\tilde{g}} & U \ar[d] \\
T' \ar[r]_{u}  & S 
}}
\end{equation*} 
with $T \to T'$ a square-zero thickening induced by a small extension, we obtain a diagram as in~\eqref{obstruction-diagram} by taking $g = p \circ \tilde{g}$, and by functoriality the map~\eqref{obstruction-U-X} takes the obstruction $\omega(\tilde{g},j,u)$ to $\omega(g,j,u)$. Therefore, to prove the result we may assume that $\cX$ is a scheme. 

Let $\pi \colon \cX \to S$ be the structure morphism, and let $s = \pi \circ x \colon \Spec(\bC) \to S$. 
The exact triangle $\pi^*L_{S/\bC} \to L_{\cX/\bC} \to L_{\cX/S}$ induces an exact sequence 
\begin{equation}
\label{obstruction-es-SX}
\Ext^0(s^*L_{S/\bC}, \bC) \to \Ext^1(x^*L_{\cX/S}, \bC) \to \Ext^1(x^*L_{\cX/\bC}, \bC) \to 0
\end{equation} 
where the final term is $0$ due to the smoothness of $S$ over $\bC$. 
Given a solid commutative diagram 
\begin{equation*}
\vcenter{
\xymatrix{
T \ar[dd]_-{j} \ar[r]^-{g} & \cX \ar[d]^{\pi} \\
 & S  \ar[d] \\  
T' \ar[r]_-{v} \ar@{-->}[ur]^{u} & \Spec(\bC)  
}}
\end{equation*} 
with $T \to T'$ a square-zero thickening induced by a small extension, 
by the smoothness of $S$ over $\bC$ a dotted arrow $u$ making the diagram commute exists and the set of such $u$ form a $\Ext^0(s^*L_{S/\bC}, \bC)$-torsor. 
For elements arising as obstructions from such a diagram, the sequence~\eqref{obstruction-es-SX} may be interpreted as follows: 
for varying $u$ the elements $\omega(g,j,u)$ form the fiber of 
the map $\Ext^1(x^*L_{\cX/S}, \bC) \to \Ext^1(x^*L_{\cX/\bC}, \bC)$ over $\omega(g,j,v)$. 
Therefore, by the sequence~\eqref{obstruction-es-SX}, to prove the result it suffices to 
treat the case $S = \Spec(\bC)$, which is handled in \cite[Proposition 4.7]{BF-normal-cone}. 
\end{proof} 

Finally, we can finish the proof that~\eqref{composition-vanish-4} vanishes. 
By Lemma~\ref{composition-vanish-4}, it suffices to prove the map vanishes 
when evaluated on $\omega(g,j,u) \in \Ext^1(m^*L_{\cM/S}, \bC)$ for a diagram 
\begin{equation*} 
\vcenter{
\xymatrix{
\Spec(A) \ar[d]_-{j} \ar[r]^-{g} & \cM \ar[d] \\
\Spec(A') \ar[r]_{u}  & S 
}}
\end{equation*} 
where $A' \to A$ is a small extension of artinian local $\bC$-algebras with residue field $\bC$ such that $m \colon \Spec(\bC) \to \cM$ is restriction of $g$ to the closed point. 
Note that the morphism $g$ is classified by the pullback $\cE_A \in \cC_{A}$ of the universal object $\cE \in \cC_{\cM}$.  
By \cite[\S2.5]{pridham}
(combined with the description from Theorem~\ref{theorem-Ktop} of the variation of Hodge structures $\Ktop[0](\cC/S)$ as a summand of that of $X \to S$), 
we find that the image of $\omega(g,f,u)$ under~\eqref{composition-vanish-4} vanishes due to our assumptions that $\cM \subset s\cM_{\gl}(\cC/S,v)$ and $v \in \Gamma(S, \Ktop[0](\cC/S))$ is of Hodge type along~$S$. 
This completes Step~\ref{step-construction-phi} of the proof.  

\begin{step}{4}
$\phi \colon \overline{\cF} \to \tau^{\geq -1} L_{(\cM/\cG)/S}$ is an obstruction theory. 
\end{step}

By construction, we have $\phi' = \phi \circ \gamma$ where $\phi'$ is the obstruction theory constructed in Step~\ref{step-1} and $\gamma \colon \overline{\cF}' \to \overline{\cF}$ is the canonical map. Therefore, we obtain an exact triangle 
\begin{equation*}
\cofib(\gamma) \to \cofib(\phi') \to \cofib(\phi) .
\end{equation*} 
As $\phi'$ is an obstruction theory we have $\cofib(\phi') \in \Dqc^{\leq -2}(\cX/\cG)$, 
and by construction we have $\cofib(\gamma) =  \tau^{\leq 1} \cHH^*(\cC_{\cM}/\cM) \otimes L [3]$, so also $\cofib(\gamma) \in \Dqc^{\leq -2}(\cX/\cG)$. 
We conclude that $\cofib(\phi) \in \Dqc^{\leq -2}(\cX/\cG)$, i.e. $\phi$ is an obstruction theory. 

\begin{step}{5}
\label{step-5} 
Construction of $\theta$ such that $(\phi, L, \theta)$ is a symmetric obstruction theory. 
\end{step} 

By the construction of Step~\ref{step-2}, $\overline{\cF}$ is the perfect complex given by the ``cohomology'' of the sequence 
\begin{equation*}
 \tau^{\leq 1} \cHH^*(\cC_{\cM}/\cM) \otimes L [2] \xrightarrow{\, \overline{\alpha} \,} \cHom_\cM(\cE,\cE) \otimes L[2] \simeq (\cHom_{\cM}(\cE, \cE)[1])^{\vee} \xrightarrow{\, \overline{\beta} \,} 
\tau^{\geq 0}((\cHH^*(\cC_{\cM}/\cM)[1])^{\vee}),  
\end{equation*} 
which is self-dual up to tensoring with $L[1]$. 
It follows formally from this that there is an induced isomorphism 
$\theta \colon \overline{\cF} \to \overline{\cF}^{\vee} \otimes L[1]$ 
satisfying $\theta^{\vee} \otimes L[1] = \theta$, i.e. 
$(\phi, L, \theta)$ is a symmetric obstruction theory. 

\begin{step}{6}
$\phi$ is perfect if $\cM/\cG$ is Deligne--Mumford. 
\end{step}

In view of Step~\ref{step-5}, the claim follows from Lemma~\ref{lemma-DM-spos}.
This completes the proof of Theorem~\ref{theorem-obs-theory}. \qed 

\begin{remark}
\label{remark-derived-enhancement-MG}
A natural question is whether the obstruction theory of Theorem~\ref{theorem-obs-theory} arises from a derived enhancement of $\cM/\cG$. 
Our proof goes halfway toward answering this question: 
in Step~\ref{step-1} we obtain an obstruction theory using the derived enhancement of $\cM/\cG$ from Proposition~\ref{proposition-quotient-enhancement}, and in Steps~\ref{step-2}-\ref{step-5} we ``symmetrize'' it by hand to obtain Theorem~\ref{theorem-obs-theory}. 
The question is whether the derived enhancement from Proposition~\ref{proposition-quotient-enhancement} can itself be modified to realize the symmetrized obstruction theory. 
We will address this in future work, where we will provide a positive answer using ideas from \cite{pridham} (already invoked in Step~\ref{step-construction-phi}) and derived symplectic geometry. 
\end{remark}

\section{Reduced DT invariants of CY3 categories } 
\label{section-DT-CY3} 

In this section, we define reduced DT invariants of CY3 categories that are preserved under deformations, using the obstruction theory constructed in~\S\ref{section-obstruction-theory-CY3}. 
In particular, 
we prove Theorems~\ref{theorem-intro-DT-definition} and \ref{theorem-intro-DT-defo-invariant} from the introduction. 

\subsection{DT invariant of $\cM/\cG$} 
\begin{definition}
\label{definition-DT}
In the setup of \S\ref{section-setup}, assume that $S = \Spec(\bC)$ is a point and $\cM/\cG$ is Deligne--Mumford and proper over $\bC$. 
Then the \emph{Donaldson-Thomas (DT) invariant} of $\cM/\cG$ is the number 
\begin{equation*}
\DT(\cM/\cG) \coloneqq \#_{\phi}^{\vir}(\cM/\cG)
\end{equation*} 
where $\phi$ is the obstruction theory constructed in Theorem~\ref{theorem-obs-theory}. 
\end{definition} 

\begin{corollary}
\label{corollary-DT-constant}
In the setup of \S\ref{section-setup}, assume that $\cM/\cG$ is Deligne--Mumford and proper over $S$. 
Then the DT invariants $\DT(\cM_s/\cG_s)$ of the fibers are independent of $s \in S(\bC)$. 
\end{corollary}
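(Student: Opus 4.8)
The plan is to reduce the statement to the deformation invariance of virtual counts (Corollary~\ref{corollary-vnumber-constant}), by first checking that the hypotheses of that result are satisfied. So first I would unwind what Theorem~\ref{theorem-obs-theory} produces in the relative setting of \S\ref{section-setup}: a \emph{canonical symmetric obstruction theory} $(\phi\colon \overline{\cF}\to\tau^{\geq -1}L_{(\cM/\cG)/S}, L, \theta)$ for $\cM/\cG$ over $S$, which is \emph{perfect} under the standing assumption that $\cM/\cG$ is Deligne--Mumford. The key point here is that Theorem~\ref{theorem-obs-theory} was stated and proved over an arbitrary smooth complex base $S$, not just over a point, so no extra work is needed to obtain the family-version obstruction theory; and by Lemma~\ref{lemma-DM-spos} symmetric plus Deligne--Mumford forces Tor-amplitude $[-1,0]$, hence perfectness.

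The next step is to verify the remaining hypotheses of Corollary~\ref{corollary-vnumber-constant}: that $\cM/\cG\to S$ is proper and that the virtual dimension is $0$. Properness over $S$ is part of the hypothesis of the corollary being proved. For the virtual dimension: since $(\phi,L,\theta)$ is a symmetric perfect obstruction theory, $\rk\overline{\cF} = \rk(\overline{\cF}^{\vee}\otimes L[1]) = -\rk\overline{\cF}$, so $\rk\overline{\cF}=0$, which is exactly the virtual dimension of $\cM/\cG$ over $S$ (see the discussion following Theorem~\ref{theorem-vclass} and the remark after the definition of symmetric obstruction theory). Moreover this rank is manifestly constant on $\cM/\cG$, so Theorem~\ref{theorem-vclass} applies and produces a virtual fundamental class of the expected degree.

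Finally I would invoke the base-change compatibility. For a closed point $s\in S(\bC)$, the inclusion $\Spec(\bC)\hookrightarrow S$ is a regular immersion (as $S$ is smooth), so Theorem~\ref{theorem-vclass} gives $[\cM_s/\cG_s]^{\vir}_{\phi_s} = (u')^*[\cM/\cG]^{\vir}_{\phi}$, where $\phi_s$ is the base change of $\phi$ in the sense of Remark~\ref{remark-bc-obs-thy}. One should check that this base-changed obstruction theory $\phi_s$ coincides with the absolute obstruction theory that Theorem~\ref{theorem-obs-theory} assigns to the fiber $\cM_s/\cG_s = \cM_{\sigma_s}(v_s)/\Aut^0(\cC_s)$ --- this is the only genuinely non-formal point, and it follows from the compatibility of all the constituent pieces (the universal object $\cE$, Hochschild (co)homology $\cHH^*$, $\cHH_*$, the Serre twist $L$, and the Chern character $\ch_{\cE}$) with base change, via Theorem~\ref{theorem-HH}\eqref{base-change-HH}, Lemma~\ref{lemma-HH-co-bc}, and Lemma~\ref{lemma-bc-Serre}, together with the naturality of the construction in \S\ref{section-obs-thy-MG}. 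Granting this identification, $\DT(\cM_s/\cG_s) = \#^{\vir}_{\phi_s}(\cM_s/\cG_s)$ is the degree of the fiber over $s$ of the family of $0$-cycles $[\cM/\cG]^{\vir}_{\phi}$ on $\cM/\cG$, hence independent of $s$ by Corollary~\ref{corollary-vnumber-constant}. The main obstacle is precisely this last compatibility check: verifying that ``form the relative obstruction theory, then restrict to a fiber'' agrees with ``form the absolute obstruction theory of the fiber,'' which requires tracing the base-change isomorphisms through the somewhat intricate six-step construction of Theorem~\ref{theorem-obs-theory}.
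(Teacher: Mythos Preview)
Your approach is correct and matches the paper's proof, which is the single sentence ``Combine Theorem~\ref{theorem-obs-theory} and Corollary~\ref{corollary-vnumber-constant}.'' You have simply unpacked this in more detail, verifying the hypotheses of Corollary~\ref{corollary-vnumber-constant} explicitly; the base-change compatibility you flag as the ``only genuinely non-formal point'' is left implicit in the paper, relying on the canonicity of the construction in Theorem~\ref{theorem-obs-theory}.
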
 

\begin{proof}
Combine Theorem~\ref{theorem-obs-theory} and Corollary~\ref{corollary-vnumber-constant}. 
\end{proof} 

\begin{remark}
\label{remark-behrend-function}
In the situation of Definition~\ref{definition-DT}, if $\cM/\cG$ is also quasi-projective in the sense of \cite[Definition 0.1]{behrend}, 
then by \cite[Proposition 4.16 and Theorem 4.18]{behrend} the DT invariant $\DT(\cM/\cG)$ can be computed in terms of the Behrend function, and in particular is an intrinsic invariant of $\cM/\cG$. 
\end{remark} 

The following observation is useful for checking the properness hypothesis of Corollary~\ref{corollary-DT-constant} in examples. 
We recall that $\cM \to M$ is a $\bG_m$-gerbe over an algebraic space $M$ over $S$ 
and $\cG \to G$ is a $\bG_m$-gerbe over a group algebraic space $G$ over $S$. 

\begin{lemma}
\label{lemma-MG-proper} 
In the setup of \S\ref{section-setup}, assume that $M \to S$ and $G \to S$ are proper. 
Then $\cM/\cG \to S$ is proper. 
\end{lemma}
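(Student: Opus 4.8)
The plan is to deduce the properness of $\cM/\cG \to S$ from the properness of $M \to S$ and $G \to S$ by passing through the rigidifications. As noted in Remark~\ref{remark-quotient-algebraic}, the stack $\cG$ is a $\bG_m$-gerbe over a group algebraic space $G$, the stack $\cM$ is a $\bG_m$-gerbe over an algebraic space $M$, and the $\bG_m$ inside $\cG$ acts trivially on $\cM$ (it is a subgroup of the inertia), so the $\cG$-action on $\cM$ descends to a $G$-action on $M$, yielding a canonical isomorphism of algebraic stacks $\cM/\cG \cong M/G$. Thus it suffices to prove that $M/G \to S$ is proper.

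For this I would use the cartesian square
\begin{equation*}
\xymatrix{
M \ar[r]^-{q} \ar[d] & M/G \ar[d] \\
M \times_S M \ar[r] & (M/G) \times_S (M/G)
}
\end{equation*}
expressing $q \colon M \to M/G$ as a $G$-torsor (here the bottom and right maps are the evident diagonal-type morphisms and the left map is $(\id, \text{action})$). Since $G \to S$ is proper by hypothesis, the quotient morphism $q$ is proper, and it is visibly surjective. I would then invoke the standard descent criterion for properness along a proper surjection --- precisely \cite[\href{https://stacks.math.columbia.edu/tag/08AJ}{Tag 08AJ}]{stacks-project}, which is already used in the proof of Lemma~\ref{lemma-Aut0-CYn-proper} --- to conclude that $M/G \to S$ is proper provided $M \to S$ is proper, which holds by hypothesis. (One also needs $M/G \to S$ to be of finite type and quasi-separated, which follows from the corresponding properties of $M$, $G$, and $\cC$ being locally of finite presentation over $S$, combined with Lemma~\ref{lemma-quotient-algebraic}.) Alternatively, one can argue directly with the valuative criterion: given a DVR $R$ with fraction field $K$ and an $R$-point of $S$ together with a $K$-point of $M/G$, lift the $K$-point to $M(K)$ after a finite extension of $K$ using surjectivity of $q$ together with properness (hence the valuative criterion) for the $G$-torsor $q$; then properness of $M \to S$ extends it to an $R$-point of $M$, whose image in $M/G$ gives the required extension, and uniqueness follows from separatedness of $M/G \to S$.

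The only mild subtlety is the identification $\cM/\cG \cong M/G$ and the verification that $q \colon M \to M/G$ is indeed a $G$-torsor (equivalently, that the displayed square is cartesian); this is where one must be careful that the $\bG_m$-gerbe structures on $\cM$ and $\cG$ are compatible in the sense that the $\bG_m$ acts trivially on $\cM$, which is exactly the content recalled in Remark~\ref{remark-quotient-algebraic}. Once this is in place, the rest is a formal application of descent of properness along a proper surjection, so I do not expect any real obstacle here; the statement is essentially bookkeeping built on results already assembled in the paper.
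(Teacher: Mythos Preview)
Your reduction to $M/G$ via Remark~\ref{remark-quotient-algebraic} matches the paper exactly, and the observation that $q\colon M\to M/G$ is a $G$-torsor (hence proper once $G\to S$ is proper) is correct. The gap is separatedness. The quotient $M/G$ is in general an Artin stack, not an algebraic space, so ``proper'' means the diagonal is proper; the Stacks Project tag you cite is for algebraic spaces and does not directly yield this. Even granting an analogue for stacks, the standard descent statements of this shape take separatedness of the target as a hypothesis and only supply universal closedness and finite type---your parenthetical checks ``finite type and quasi-separated'' but never the proper-diagonal condition, and your alternative valuative-criterion sketch is circular, since ``uniqueness follows from separatedness of $M/G\to S$'' assumes what must be shown.

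The paper's proof fills precisely this gap: it verifies universal closedness and finite type via Tags~0CQK and~050X as you effectively do, and then proves separatedness by computing the Isom-sheaf of two $T$-points of $M/G$ directly. After trivializing the torsors \'etale-locally, this Isom-sheaf sits in a cartesian square identifying it with a closed subscheme of $G_T$ (using that $M\to S$ is separated so the diagonal of $M_T$ is a closed immersion), hence it is proper over $T$ because $G\to S$ is. That computation is essentially the pullback of the diagonal of $M/G$ along the smooth surjection $M\times_S M\to (M/G)\times_S(M/G)$, which recovers the action map $M\times_S G\to M\times_S M$; once this is seen to be proper, fppf descent gives properness of the diagonal. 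Your argument is salvageable along exactly these lines, but as written it omits the step. (Minor point: the cartesian square you display is garbled---the left map ``$(\id,\text{action})$'' should have source $M\times_S G$, not $M$.)
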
 

\begin{proof}
By Remark~\ref{remark-quotient-algebraic} we have $\cM/\cG \cong M/G$, so it suffices to prove properness of the latter. 
As $M \to M/G$ is a smooth cover and $M \to S$ is proper, if follows that $M/G \to S$ is finite type (being locally finite type as $M \to S$ is and quasi-compact by \cite[\href{https://stacks.math.columbia.edu/tag/050X}{Tag 050X}]{stacks-project}) and universally closed by \cite[\href{https://stacks.math.columbia.edu/tag/0CQK}{Tag 0CQK}]{stacks-project}. 
For separatedness of $M/G \to S$, we must show that the diagonal is proper; equivalently, if $T$ is an $S$-scheme and $\rho_i \colon P_i \to M_T$, $i=1,2$, are two $T$-points of $M/G$ ($G_T$-torsors equipped with an equivariant map to $M_T$), we must show the Isom-space $I \coloneqq \mathrm{Isom}(P_1, P_2) \to T$ is proper. 
Working \'{e}tale locally on $T$, we may assume the $P_i$ are trivial, and then it is easy to see that $I$ is given by the (underived) fiber product diagram 
\begin{equation*}
\xymatrix{
I \ar[r] \ar[d] & G_T \ar[d]^{(\rho_1(1), \rho_2)} \\ 
M_T \ar[r]^-{\Delta} & M_T \times_T M_T
}
\end{equation*} 
where $\Delta$ is the diagonal and $1 \in G_T(T)$ is the identity section. 
The bottom arrow is a closed immersion as $M \to S$ is separated, 
so $I \to G_T$ is a closed immersion. 
As $G \to S$ is proper, we conclude that the composition $I \to G_T \to T$ is proper. 
\end{proof} 

\subsection{Reduced DT invariants for moduli of semistable objects}
In the rest of this section, we discuss the DT invariants defined above in the two main cases of interest: moduli of Giesker semistable twisted sheaves and Bridgeland semistable objects. 

\begin{example}[Moduli of semistable twisted sheaves]
\label{example-moduli-sheaves-proper}
Let $f \colon X \to S$ be a smooth proper morphism of relative dimension $3$ with geometrically connected fibers and 
$\omega_f = f^*L$ for a line bundle $L$ on $S$, where $S$ is a smooth complex variety.  
Let $\alpha \in \BrAz(X)$ be a Brauer class. 
Then $\cC = \Dperf(X, \alpha)$ is a CY3 category of geometric origin over $S$ by Lemma~\ref{lemma-twisted-CYn}. 

Let $v \in \Gamma(S, \Ktop[0](\cC/S))$ be a section whose fibers $v_s \in \Ktop[0](\cC_s)$ are Hodge classes for all $s \in S(\bC)$, and let $H$ be a relatively ample divisor on $X \to S$.
Let $\cM_H(v) \to S$ denote the moduli stack of Gieseker $H$-semistable $\alpha$-twisted sheaves of class $v$. 
Then, as noted in Example~\ref{example-relative-stability}, $\cM_H(v) \to S$ is an open substack of $\cM_{\gl}(\cC/S, v)$, universally closed over $S$.

By Lemma~\ref{lemma-Aut0-CYn-proper}, the identity component $\Aut^0(\cC/S)$ of $\Aut(\cC/S)$ exists and is smooth and proper over $S$. 
The identity component $\cAut^0(\cC/S)$ of the stack of autoequivalences (Definition~\ref{definition-identity-component}) is a $\bG_m$-gerbe over $\Aut^0(\cC/S)$, and in particular smooth over $S$. Moreover, it acts on the stack $\cM_H(v)$ by 
Lemma~\ref{lemma-aut0-preserve-open}. 

If for all $s \in S(\bC)$ there do not exist strictly $H_s$-semistable $\alpha_s$-twisted sheaves on $X_s$ of class $v_s$,  
then $\cM_H(v)$ parameterizes simple objects and hence is an open substack of $s\cM_{\gl}(\cC/S, v)$ 
with the structure of a $\bG_m$-gerbe 
$\cM_H(v) \to M_H(v)$ over an algebraic space; moreover,  
by standard results on (twisted) sheaves, $M_H(v)$ is proper over $S$ 
\cite{huybrechts-lehn, lieblich-moduli-twisted}. 
Altogether, when this holds we have shown that $\cM = \cM_H(v)$ and $\cG = \cAut^0(\cC/S)$ satisfy the hypotheses of the setup in \S\ref{section-setup}, as well as Lemma~\ref{lemma-MG-proper}. 
Therefore, we may make the following definition. 
\end{example} 

\begin{definition}
\label{definition-DT-twisted}
In the setup of Example~\ref{example-moduli-sheaves-proper}, assume that $S = \Spec(\bC)$ is a point, 
there do not exist strictly $H$-semistable $\alpha$-twisted sheaves of class $v$, 
and the quotient stack 
$\cM_H(v)/\cAut^0(\Dperf(X, \alpha)/\bC)$ is Deligne--Mumford. 
Then we define the \emph{reduced DT invariant} for $H$-semistable sheaves of class $v$ by 
\begin{equation*}
\DT_{H}(v) \coloneqq \DT(\cM_H(v)/\cAut^0(\Dperf(X, \alpha)/\bC)). 
\end{equation*} 
\end{definition} 

For simplicity, we have chosen the notation $\DT_{H}(v)$ without any adornment to indicate it is the reduced invariant. 
This should not cause any confusion as we will only consider reduced DT invariants. 

\begin{corollary}
\label{corollary-DT-twisted-constant} 
In the setup of Example~\ref{example-moduli-sheaves-proper}, assume that 
for all $s \in S(\bC)$ there do not exist strictly $H_s$-semistable $\alpha_s$-twisted sheaves of class $v_s$, 
and $\cM_H(v)/\cAut^0(\Dperf(X, \alpha)/S)$ is Deligne--Mumford. 
Then the reduced DT invariants $\DT_{H_s}(v_s)$ of the fibers are independent of $s \in S(\bC)$. 
\end{corollary}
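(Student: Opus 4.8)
The plan is to verify that we are in a position to apply Corollary~\ref{corollary-DT-constant}, whose hypotheses are exactly: (i) we are in the setup of \S\ref{section-setup} with a base $S$, a CY3 category $\cC$ of geometric origin over $S$, a section $v$ of $\Ktop[0](\cC/S)$ with Hodge fibers, an open substack $\cM \subset s\cM_{\gl}(\cC/S,v)$, and an open subgroup stack $\cG \subset \cAut(\cC/S)$ smooth over $S$ preserving $\cM$; and (ii) $\cM/\cG$ is Deligne--Mumford and proper over $S$. Most of this has already been checked in Example~\ref{example-moduli-sheaves-proper}: there it is shown that, under the present hypotheses (no strictly $H_s$-semistable $\alpha_s$-twisted sheaves of class $v_s$ for any $s \in S(\bC)$), the pair $\cM = \cM_H(v)$ and $\cG = \cAut^0(\Dperf(X,\alpha)/S)$ satisfies the setup of \S\ref{section-setup}, and moreover $M_H(v) \to S$ and $\Aut^0(\Dperf(X,\alpha)/S) \to S$ are both proper (the former by properness of moduli of semistable twisted sheaves, the latter by Lemma~\ref{lemma-Aut0-CYn-proper}). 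Hence by Lemma~\ref{lemma-MG-proper} the quotient $\cM_H(v)/\cAut^0(\Dperf(X,\alpha)/S) \to S$ is proper.

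First I would invoke the above to conclude that $\cM_H(v)/\cAut^0(\Dperf(X,\alpha)/S)$ is a proper algebraic stack over $S$ satisfying all the standing assumptions of \S\ref{section-setup}. Then, by the added hypothesis that this quotient is Deligne--Mumford, Theorem~\ref{theorem-obs-theory} furnishes a canonical symmetric \emph{perfect} obstruction theory $\phi$ on $\cM_H(v)/\cAut^0(\Dperf(X,\alpha)/S)$ over $S$ of virtual dimension $0$ (virtual dimension $0$ because the obstruction theory is symmetric, so $\rk \overline{\cF} = -\rk \overline{\cF}$). At this point Corollary~\ref{corollary-DT-constant} applies verbatim: the DT invariants $\DT(\cM_s/\cG_s)$ of the fibers are independent of $s \in S(\bC)$. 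Finally I would observe that by Definition~\ref{definition-DT-twisted}, applied fiberwise, $\DT(\cM_{H_s}(v_s)/\cAut^0(\Dperf(X_s,\alpha_s)/\bC))$ is precisely $\DT_{H_s}(v_s)$ --- here one uses that the formation of $\cAut^0$ commutes with base change (Definition~\ref{definition-identity-component} together with the remark following it) and that the fiber of $\cM_H(v) \to S$ over $s$ is $\cM_{H_s}(v_s)$. Combining, $\DT_{H_s}(v_s)$ is constant in $s$.

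The one point requiring a little care is the compatibility of the fiberwise reduced DT invariant with the relative one, i.e. that $\big(\cM_H(v)/\cAut^0(\Dperf(X,\alpha)/S)\big)_s \cong \cM_{H_s}(v_s)/\cAut^0(\Dperf(X_s,\alpha_s)/\bC)$ as Deligne--Mumford stacks, and that the obstruction theory of Theorem~\ref{theorem-obs-theory} base-changes to the fiberwise one. The former follows from base change for moduli of semistable twisted sheaves (the fiber of $\cM_H(v) \to S$ is $\cM_{H_s}(v_s)$) and base change for the identity component of the autoequivalence group; the latter is built into Theorem~\ref{theorem-obs-theory} via the compatibility of the construction with base change (the terms $\cHH^*$, $\cHH_*$, $\cHom$, $L$ all commute with base change by Theorem~\ref{theorem-HH}\eqref{base-change-HH} and Lemma~\ref{lemma-HH-co-bc}, and the cone construction is preserved), combined with the base-change statement for virtual fundamental classes in Theorem~\ref{theorem-vclass} and Corollary~\ref{corollary-vnumber-constant}. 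I do not expect any genuine obstacle here --- the corollary is essentially a bookkeeping consequence of Theorem~\ref{theorem-obs-theory}, Corollary~\ref{corollary-DT-constant}, Lemma~\ref{lemma-MG-proper}, and the analysis already carried out in Example~\ref{example-moduli-sheaves-proper} --- so the proof will be short.

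\begin{proof}
By Example~\ref{example-moduli-sheaves-proper}, under the hypothesis that for all $s \in S(\bC)$ there do not exist strictly $H_s$-semistable $\alpha_s$-twisted sheaves of class $v_s$, the stack $\cM = \cM_H(v)$ together with $\cG = \cAut^0(\Dperf(X, \alpha)/S)$ satisfies the hypotheses of the setup in \S\ref{section-setup}, and moreover the algebraic spaces $M_H(v) \to S$ and $\Aut^0(\Dperf(X,\alpha)/S) \to S$ are proper (the latter by Lemma~\ref{lemma-Aut0-CYn-proper}). Hence by Lemma~\ref{lemma-MG-proper} the quotient $\cM/\cG \to S$ is proper. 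Since by assumption $\cM/\cG$ is Deligne--Mumford, Corollary~\ref{corollary-DT-constant} applies and shows that the DT invariants $\DT(\cM_s/\cG_s)$ of the fibers are independent of $s \in S(\bC)$. By the compatibility of the formation of $\cAut^0$ with base change and the fact that the fiber of $\cM_H(v) \to S$ over $s$ is $\cM_{H_s}(v_s)$, we have $\cM_s/\cG_s \cong \cM_{H_s}(v_s)/\cAut^0(\Dperf(X_s, \alpha_s)/\bC)$, so that $\DT(\cM_s/\cG_s) = \DT_{H_s}(v_s)$ by Definition~\ref{definition-DT-twisted}. Therefore $\DT_{H_s}(v_s)$ is independent of $s \in S(\bC)$.
\end{proof}
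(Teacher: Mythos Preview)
Your proof is correct and follows exactly the approach the paper intends: the paper states this corollary without proof, treating it as an immediate consequence of the analysis in Example~\ref{example-moduli-sheaves-proper} (which verifies the setup of \S\ref{section-setup} and the hypotheses of Lemma~\ref{lemma-MG-proper}) together with Corollary~\ref{corollary-DT-constant}. Your write-up just makes explicit the bookkeeping the paper leaves to the reader.
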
 

\begin{remark}
In the absence of a Brauer class (i.e. when $\alpha = 0$), reduced DT invariants have been defined using 
virtual fundamental classes in special cases by various authors. 
Definition~\ref{definition-DT-twisted} recovers these previous ad hoc definitions in a uniform way. 
Let us spell out the comparison in two important cases: 
\begin{enumerate}
\item Suppose $X$ is strict Calabi--Yau threefold, i.e. a smooth projective complex variety $X$ with $\omega_X \cong \cO_X$ and $h^1(\cO_X) = 0$. 
Then by the HKR isomorphism $\HH^1(\Dperf(X)/\bC)$ vanishes, so by Lemma~\ref{lemma-Aut0} the group $\Aut^0(\Dperf(X)/\bC)$ is trivial. 
Thus, in this case Definition~\ref{definition-DT-twisted} recovers the classical DT invariant defined by Thomas \cite{thomas-DT, BF-symmetric}. 
\item Suppose $X$ is an abelian threefold. 
In this case, Gulbrandsen \cite{gulbrandsen} defined reduced DT invariants under certain assumptions which imply that the conditions in our Definition~\ref{definition-DT-twisted} are satisfied. 
His construction goes through a slice for the $\Aut^0(\Dperf(X)/\bC)$-action on $M_H(v)$. 
More precisely, he constructs a space $M'$ equipped with a symmetric perfect obstruction theory $\psi$ and an action by a finite group $G'$ such that 
\begin{equation*}
M'/G' \cong M_H(v)/\Aut^0(\Dperf(X)/\bC), 
\end{equation*} 
and then defines $\DT_H(v)$ as $\#^{\vir}_{\psi}(M')/|G'|$ \cite[Definition 3.2]{gulbrandsen}. 
It is easy to see that symmetric perfect obstruction theories pull back to symmetric perfect obstruction theories along \'{e}tale morphisms. 
Let $\phi'$ be the pullback of the obstruction theory $\phi$ from Theorem~\ref{theorem-obs-theory} along the \'{e}tale morphism 
\begin{equation*}
M' \to M'/G' \cong M_H(v)/\Aut^0(\Dperf(X)/\bC). 
\end{equation*}  
By Remark~\ref{remark-behrend-function} we have $\#^{\vir}_{\phi'}(M') = \#^{\vir}_{\psi}(M')$, 
so by functoriality of virtual fundamental classes \cite[Proposition 5.10]{BF-normal-cone} we conclude 
\begin{equation*}
\#^\vir_{\phi}(M_H(v)/\Aut^0(\Dperf(X)/\bC)) = \#^{\vir}_{\phi'}(M')/|G'| = \#^{\vir}_{\psi}(M')/|G'|. 
\end{equation*} 
This shows the agreement of Gulbrandsen's definition with ours. 
\end{enumerate}
\end{remark} 

\begin{remark}
Using Hall algebra techniques, in \cite{OPT} reduced generalized DT invariants are defined for Calabi--Yau threefolds. In the presence of strictly semistable objects, it is not clear that their invariants are deformation-invariant. However, when there are no strictly semistable objects, the invariants defined here coincide with theirs, and Corollary~\ref{corollary-DT-twisted-constant} implies deformation-invariance. 
\end{remark} 

Reduced DT invariants for moduli of semistable objects can be defined parallel to the case of sheaves: 

\begin{example}[Moduli of semistable objects] 
\label{example-moduli-objects-proper}
Let $\cC$ be a CY3 category of geometric origin over a smooth complex variety $S$. 
Let $v \in \Gamma(S, \Ktop[0](\cC/S))$ be a section whose fibers $v_s \in \Ktop[0](\cC_s)$ are Hodge classes for all $s \in S(\bC)$. 
Let $\sigma$ be a stability condition on $\cC$ over $S$ with respect to a topological Mukai homomorphism. 
Let $\phi \in \bR$ be a phase compatible with $v$. 
Then, as noted in Example~\ref{example-relative-stability}, 
the moduli space $\cM_{\sigma}(v, \phi)$ of $\sigma$-semistable objects in $\cC$ of class $v$ and phase $\phi$ is an open substack $\cM_{\gl}(\cC/S, v)$, universally closed over $S$. 

By Lemma~\ref{lemma-HH1-CY}, the identity component $\Aut^0(\cC/S)$ of $\Aut(\cC/S)$ exists, and both it and $\cAut^0(\cC/S)$ are smooth over $S$. 
Further, $\cAut^0(\cC/S)$ acts on the stack $\cM_{\sigma}(v, \phi)$ by 
Lemma~\ref{lemma-aut0-preserve-open}. 

If for all $s \in S(\bC)$ there do not exist strictly $\sigma_s$-stable objects of class $v_s$, 
then $\cM_{\sigma}(v, \phi)$ is an open substack of $s\cM_{\gl}(\cC/S, v)$ with the structure of a 
$\bG_m$-gerbe $\cM_{\sigma}(v, \phi) \to M_{\sigma}(v, \phi)$ over an algebraic space, 
which is proper over $S$ by \cite[Theorem 21.24]{stability-families}. 
Altogether, 
when this holds 
we have shown that $\cM = \cM_\sigma(v,\phi)$ and $\cG = \cAut^0(\cC/S)$ satisfy the hypotheses of the setup in \S\ref{section-setup}, and  $\cM$ satisfies the hypothesis of Lemma~\ref{lemma-MG-proper}. 
If $\cC = \Dperf(X, \alpha)$ is as in Example~\ref{example-moduli-sheaves-proper}, then as we discussed there $\Aut^0(\cC/S)$ also satisfies the hypothesis of Lemma~\ref{lemma-MG-proper}, i.e. is proper over $S$, by Lemma~\ref{lemma-Aut0-CYn-proper}; 
in general, we do not know whether this holds (cf. Remark~\ref{remark-Aut0-proper}), so we take it as an assumption in the definition and corollary below. 
\end{example}

The following definition and corollary complete the proofs of Theorems~\ref{theorem-intro-DT-definition} and \ref{theorem-intro-DT-defo-invariant} promised in the introduction, 
which we state here in terms of moduli stacks and automorphism stacks instead of their corresponding algebraic spaces, cf. Remark~\ref{remark-quotient-algebraic}. 

\begin{definition}
\label{definition-DT-CY3}
In the setup of Example~\ref{example-moduli-objects-proper}, assume that $S = \Spec(\bC)$ is a point, 
$\Aut^0(\cC/\bC)$ is proper, there do not exist strictly $\sigma$-semistable objects of class $v$, and 
the quotient stack $\cM_\sigma(v,\phi)/\cAut^0(\cC/\bC)$ is Deligne--Mumford. 
Then we define the \emph{reduced DT invariant} for $\sigma$-semistable objects of class $v$ by 
\begin{equation*}
\DT_{\sigma}(v,\phi) \coloneqq \DT(\cM_\sigma(v,\phi)/\cAut^0(\cC/\bC)). 
\end{equation*} 
\end{definition} 

\begin{corollary}
\label{corollary-DT-CY3-constant} 
In the setup of Example~\ref{example-moduli-objects-proper}, assume that 
$\Aut^0(\cC/S) \to S$ is proper, for all $s \in S(\bC)$ there do not exist strictly $\sigma_s$-semistable objects of class $v_s$, and 
the quotient stack $\cM_\sigma(v,\phi)/\cAut^0(\cC/S)$ is Deligne--Mumford. 
Then the reduced DT invariants $\DT_{\sigma_s}(v_s, \phi)$ of the fibers are independent of $s \in S(\bC)$. 
\end{corollary}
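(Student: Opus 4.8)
The plan is to reduce the statement to Corollary~\ref{corollary-DT-constant}, which in turn rests on Theorem~\ref{theorem-obs-theory} and Corollary~\ref{corollary-vnumber-constant}. The key observation is that Example~\ref{example-moduli-objects-proper} has already verified, under the hypotheses now imposed in Corollary~\ref{corollary-DT-CY3-constant}, that the pair $\cM = \cM_\sigma(v,\phi)$ and $\cG = \cAut^0(\cC/S)$ satisfies all the conditions of the setup of \S\ref{section-setup}: namely $\cC$ is CY3 of geometric origin over the smooth complex variety $S$; the section $v$ has Hodge fibers; $\cM$ is an open substack of $s\cM_{\gl}(\cC/S,v)$ (here using the hypothesis that no strictly $\sigma_s$-semistable objects of class $v_s$ exist, so all $\sigma_s$-semistable objects are stable, hence simple); and $\cG$ is an open subgroup stack of $\cAut(\cC/S)$, smooth over $S$ by Lemma~\ref{lemma-HH1-CY}, whose action preserves $\cM$ by Lemma~\ref{lemma-aut0-preserve-open}.

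First I would invoke Theorem~\ref{theorem-obs-theory} in this relative setting to obtain the canonical symmetric obstruction theory $(\phi \colon \overline{\cF} \to \tau^{\geq -1} L_{(\cM/\cG)/S}, L, \theta)$ for $\cM/\cG$ over $S$, which is perfect because $\cM/\cG$ is Deligne--Mumford by hypothesis. Here $S$ is smooth and in particular pure-dimensional, so the base-dependent hypotheses of Theorem~\ref{theorem-vclass} are in force. Next I would check that $\cM/\cG \to S$ is proper: by Example~\ref{example-moduli-objects-proper} the algebraic space $M_\sigma(v,\phi)$ underlying $\cM_\sigma(v,\phi)$ is proper over $S$ (by \cite[Theorem 21.24]{stability-families}), and $\Aut^0(\cC/S) \to S$ is proper by hypothesis, so Lemma~\ref{lemma-MG-proper} applies and gives properness of $\cM/\cG \to S$. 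The virtual dimension of $\cM/\cG$ over $S$ with respect to $\phi$ is $0$, since $\phi$ is symmetric (so $\rk\overline{\cF} = \rk(\overline{\cF}^\vee \otimes L[1]) = -\rk\overline{\cF}$).

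With these verifications in place, Corollary~\ref{corollary-DT-constant} applies directly and yields that the DT invariants $\DT(\cM_s/\cG_s)$ of the fibers are independent of $s \in S(\bC)$. It remains only to identify $\DT(\cM_s/\cG_s)$ with $\DT_{\sigma_s}(v_s,\phi)$. For this I would note that the formation of $\cM_\sigma(v,\phi)$ and of $\cAut^0(\cC/S)$ commutes with base change along $\Spec(\bC) \to S$ for $s \in S(\bC)$ — the former by the base-change compatibility of relative moduli of semistable objects, the latter because the identity component commutes with base change (\cite[Lemma 5.1]{kleiman-picard}) — so that $\cM_s/\cG_s \cong \cM_{\sigma_s}(v_s,\phi)/\cAut^0(\cC_s/\bC)$, and moreover the obstruction theory $\phi$ base changes to the one used in Definition~\ref{definition-DT-CY3} (by the base-change statement in Theorem~\ref{theorem-obs-theory}, built on Remark~\ref{remark-bc-obs-thy} and Theorem~\ref{theorem-HH}\eqref{base-change-HH}). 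Hence $\DT(\cM_s/\cG_s) = \DT_{\sigma_s}(v_s,\phi)$ for all $s$, completing the proof.

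\textbf{Main obstacle.} The substantive content is entirely carried by Theorem~\ref{theorem-obs-theory} and its base-change behavior, both already established in the excerpt; the only real care required here is checking that the hypotheses of Corollary~\ref{corollary-DT-constant} — especially properness of $\cM/\cG \to S$ — genuinely hold, which is where the assumptions that $\Aut^0(\cC/S) \to S$ is proper and that $\cM/\cG$ is Deligne--Mumford are consumed. Thus I expect the proof to be short, essentially a citation of Corollary~\ref{corollary-DT-constant} after recording that Example~\ref{example-moduli-objects-proper} plus the stated hypotheses put us in its setup; the subtlest point is the identification of the fibers' DT invariants with $\DT_{\sigma_s}(v_s,\phi)$, which needs the base-change compatibility of the moduli stack, the identity component, and the obstruction theory.
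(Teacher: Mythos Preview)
Your proposal is correct and follows the same approach as the paper, which in fact gives no explicit proof of this corollary: it is treated as immediate from Example~\ref{example-moduli-objects-proper} (which places $\cM = \cM_\sigma(v,\phi)$, $\cG = \cAut^0(\cC/S)$ in the setup of \S\ref{section-setup} and verifies the properness hypothesis of Lemma~\ref{lemma-MG-proper}) together with Corollary~\ref{corollary-DT-constant}. Your more detailed verification of the hypotheses and of the fiberwise identification $\DT(\cM_s/\cG_s) = \DT_{\sigma_s}(v_s,\phi)$ is entirely in line with this.
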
 

\begin{remark}
It is easy to see that $\DT_{\sigma}(v, \phi)$ is independent of the phase $\phi \in \bR$ compatible with $v$, so it is natural to define 
\begin{equation*}
\DT_{\sigma}(v) \coloneqq \DT_{\sigma}(v, \phi)
\end{equation*}
for any such $\phi$. 
\end{remark}

%%%%%%%%%%%%%%%%%%%%%%%%%%%%%%%%%%%%%%%%%%%%%%%%%%%%%%

\section{The variational integral Hodge conjecture for CY3 categories} 
\label{section-VIHC-CY3}

In this section we prove the following criterion for the validity of the noncommutative variational integral Hodge conjecture, and deduce Theorem~\ref{theorem-intro-VIHC-sigma} from the introduction. 

\begin{theorem}
\label{theorem-VIHC-general}
Let $\cC$ be a CY3 category of geometric origin over a smooth complex variety $S$. 
Let $v$ be a section of $\Ktop[0](\cC/S)$ whose fibers $v_s \in \Ktop[0](\cC_s)$ are Hodge classes for all $s \in S(\bC)$. 
Let $\cM \subset \cM_{\gl}(\cC/S, v)$ be an open substack which is universally closed over $S$. 
Let $U \subset S$ be a nonempty open subset such that $\cM_U$ is contained in the substack $s\cM_{\gl}(\cC/S, v)$ of simple objects. 
Let $\cG_U$ be an open subgroup stack of $\cAut(\cC_U/U)$ which is smooth over $U$ and whose action on $s\cM_{\gl}(\cC/S, v)$ preserves $\cM_U$, such that $\cM_U/\cG_U \to U$ is proper. 
Assume there exists a point $0 \in S(\bC)$ such that: 
\begin{enumerate}
\item $\cM_0/\cG_0$ is Deligne--Mumford. 
\item $\DT(\cM_0/\cG_0) \neq 0$. 
\end{enumerate}
Then for every $s \in S(\bC)$ the space $\cM_s$ is nonempty and, in particular, the class $v_s \in \Ktop[0](\cC_s)$ is algebraic. 
\end{theorem}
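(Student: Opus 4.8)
The strategy is to reduce the assertion to the constancy of reduced DT invariants, Corollary~\ref{corollary-DT-CY3-constant} (or rather its stacky version via Lemma~\ref{lemma-MG-proper} and Corollary~\ref{corollary-DT-constant}). First I would argue that it suffices to prove nonemptiness of $\cM_s$ for all $s \in S(\bC)$: indeed, if $E_s \in \cC_s$ is an object of class $v_s$, then by Theorem~\ref{theorem-Ktop} its class in $\Ktop[0](\cC_s)$ is algebraic (it lies in the image of $\rK_0(\cC_s)$), and by construction this class equals $v_s$. So the whole content is nonemptiness.

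The next step is to set up the deformation-invariance argument. The DT invariant $\DT(\cM_0/\cG_0)$ is defined using the obstruction theory of Theorem~\ref{theorem-obs-theory}; since $\cM_0/\cG_0$ is Deligne--Mumford and proper, this is legitimate. However, a subtlety is that $\cM_s/\cG_s$ need not be Deligne--Mumford for $s \neq 0$, so Corollary~\ref{corollary-DT-constant} cannot be applied verbatim over all of $S$. The standard fix — which I would carry out — is to pass to an \'etale (or analytic) neighborhood $V$ of $0$ in $S$ over which $\cM_V/\cG_V$ remains Deligne--Mumford: the Deligne--Mumford locus is open, and properness of $\cM_U/\cG_U \to U$ together with properness of $\cM \to S$ ensures (after shrinking) that $\cM_V/\cG_V \to V$ is proper and Deligne--Mumford. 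Over such a $V$, Corollary~\ref{corollary-DT-constant} gives $\DT(\cM_s/\cG_s) = \DT(\cM_0/\cG_0) \neq 0$ for all $s \in V(\bC)$, hence $\cM_s$ is nonempty for $s \in V$.

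To propagate nonemptiness from $V$ to all of $S$, I would use a connectedness/spreading-out argument combined with properness of $\cM \to S$. Let $Z \subset S$ be the set of points $s$ such that $\cM_s$ is nonempty. Since $\cM \to S$ is universally closed (even proper after restricting to the simple locus over $U$, but universal closedness of $\cM$ suffices here), the image of $\cM$ in $S$ is closed; and since $\cM \to S$ is locally of finite presentation, nonemptiness of fibers is a constructible condition, so $Z$ is a closed subset. We have just shown $V \subset Z$, so $Z$ contains a nonempty open subset of a neighborhood of $0$; as $S$ is a variety (irreducible), and $Z$ is closed and contains a set with nonempty interior, I would need $Z$ to actually be open as well to conclude $Z = S$. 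The cleanest route: on the open locus $U$ where objects are simple, $\cM_U \to U$ is a $\bG_m$-gerbe over an algebraic space, and $\cM_U/\cG_U \to U$ is proper with nonvanishing DT invariant on a neighborhood of points where it is Deligne--Mumford; but a more robust argument is to invoke that the section $v$ is of Hodge type everywhere (by hypothesis), and combine nonemptiness on the dense open $V \cap U$ with properness of $\cM \to S$ to get a section/multisection over the generic point, which then specializes (by properness, using the valuative criterion) to every point of $S$. Concretely: choosing a point of $\cM_\eta$ over the generic point $\eta$ and taking its closure in $\cM$, properness of $\cM \to S$ forces this closure to surject onto $S$, so $\cM_s \neq \emptyset$ for every $s \in S$.

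\textbf{Main obstacle.} The delicate point is the second half: ensuring that nonemptiness, established only near $0$ via the DT computation, actually spreads to \emph{all} of $S(\bC)$ and not merely to a neighborhood or a dense open. The DT-invariance argument is local on $S$ (it needs the Deligne--Mumford hypothesis, which may fail elsewhere), so it by itself only gives an open dense nonemptiness locus. Upgrading this to all points requires genuinely using the properness of the moduli space $\cM \to S$ — a fiber of a proper morphism over a point in the closure of a locus with nonempty fibers is again nonempty — and one must be careful that $\cM$ (not just the quotient $\cM/\cG$ on the simple locus) is the object to which properness/universal-closedness is applied. I expect verifying that $\cM \to S$ is proper, or at least universally closed with the right finiteness to run a valuative-criterion argument, together with checking the Deligne--Mumford locus is open and contains a neighborhood of $0$ over which properness persists, to be the technical heart of the proof.
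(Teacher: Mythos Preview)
Your approach is the paper's approach, and the first half of your plan is exactly right: shrink to an open neighborhood of $0$ inside $U$ where the quotient $\cM_U/\cG_U$ is Deligne--Mumford (this openness-on-the-base is the paper's Lemma~\ref{lemma-fiber-DM-open}, applied to the proper morphism $\cM_U/\cG_U \to U$), invoke constancy of DT invariants there via Corollary~\ref{corollary-DT-constant}, and conclude that $\cM_s \neq \emptyset$ on that nonempty open set.

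The ``main obstacle'' you flag is not one, and your propagation argument is over-engineered. You already observed that the image $Z$ of $\cM$ in $S$ is closed, because $\cM \to S$ is universally closed by hypothesis. You have just shown $Z$ contains a nonempty open subset of the irreducible variety $S$; hence $Z$ is dense, hence $Z$ (being closed) equals $S$. That is the entire argument: you do not need $Z$ to be open, you do not need the valuative criterion or closures of generic points, and you do not need properness of $\cM \to S$---only universal closedness, which is in the hypotheses. The paper's proof is three sentences long for exactly this reason: \emph{surjective} follows from \emph{dominant} plus \emph{universally closed}.
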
 

\begin{proof}
It suffices to show that the morphism $\cM \to S$ is surjective. 
As it is universally closed by assumption, it suffices to show that it is dominant. 
By Lemma~\ref{lemma-fiber-DM-open} below, up to shrinking $U$ we may assume that 
$\cM_U/\cG_U$ is Deligne--Mumford. 
Then by Corollary~\ref{corollary-DT-constant} the numbers $\DT(\cM_s/\cG_s)$ are constant for $s \in U(\bC)$, 
and hence nonzero as $\DT(\cM_0/\cG_0) \neq 0$. 
In particular, the space $\cM_s/\cG_s$, and hence also $\cM_s$, is nonempty for $s \in U(\bC)$. 
\end{proof} 

The next lemma, invoked above, says that 
for an algebraic stack universally closed over a base, the Deligne--Mumford locus is open on the base. 
\begin{lemma}
\label{lemma-fiber-DM-open}
Let $\pi \colon \cX \to S$ be a universally closed morphism from an algebraic stack $\cX$ to a scheme $S$. 
Let $0 \in S$ be a point such that the fiber $\cX_0$ is Deligne--Mumford. 
Then there exists an open neighborhood $0 \in U \subset S$ such that 
$\cX_U$ is Deligne--Mumford. 
\end{lemma}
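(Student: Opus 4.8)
\textbf{Proof plan for Lemma~\ref{lemma-fiber-DM-open}.}

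The plan is to reduce the statement to a purely local condition on the inertia stack and then argue by constructibility. Recall that an algebraic stack $\cX$ is Deligne--Mumford if and only if its diagonal $\Delta_{\cX} \colon \cX \to \cX \times \cX$ is unramified, equivalently the inertia stack $\cI_{\cX} = \cX \times_{\cX \times \cX} \cX \to \cX$ is unramified; concretely, this means that for every point the automorphism group scheme is \'{e}tale over the residue field, i.e. the stabilizers are finite and reduced (finite linearly reductive is not enough). So first I would form the relative inertia stack $\cI_{\cX/S} \to \cX$ and observe that $\cX_U \to U$ has Deligne--Mumford fibers precisely when $\cI_{\cX/S}|_{\cX_U} \to \cX_U$ is unramified, since the property of being Deligne--Mumford can be checked on the fibers $\cX_s$ and the inertia of $\cX_s$ over $\kappa(s)$ is the fiber of $\cI_{\cX/S}$.

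The locus in $\cX$ over which $\cI_{\cX/S} \to \cX$ is unramified is open (being unramified is an open condition on the source of a morphism locally of finite type, applied after base change along a smooth cover of $\cX$), so let $\cV \subset \cX$ denote this open substack and let $\cZ = \cX \setminus \cV$ be its closed complement, with the reduced structure. The hypothesis that $\cX_0$ is Deligne--Mumford says exactly that $\cZ_0 = \emptyset$, i.e. $0 \notin \pi(\cZ)$. Now the key point is that $\pi|_{\cZ} \colon \cZ \to S$ is universally closed: $\cZ \hookrightarrow \cX$ is a closed immersion, hence universally closed, and $\pi$ is universally closed by assumption, so the composite is universally closed. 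Therefore $\pi(\cZ) \subset S$ is a closed subset not containing $0$, and $U = S \setminus \pi(\cZ)$ is an open neighborhood of $0$. By construction $\cX_U \subset \cV$, so $\cI_{\cX/S}|_{\cX_U} \to \cX_U$ is unramified, and hence every fiber $\cX_s$ for $s \in U$ is Deligne--Mumford; equivalently (since this can be checked fiberwise) $\cX_U$ itself is Deligne--Mumford.

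I expect the only mild subtlety — and the main thing to get right — is the reduction to the inertia stack being unramified and the verification that this is genuinely an open condition on $\cX$ even though $\cX$ is only assumed to be an algebraic stack (not a scheme): one passes to a smooth presentation $p \colon U_0 \to \cX$ by a scheme and notes that $\cI_{\cX/S} \times_{\cX} U_0 \to U_0$ is a morphism of algebraic spaces locally of finite type, so the unramified locus is open in $U_0$ and descends to an open substack of $\cX$ because $p$ is smooth and surjective. The universal closedness of $\pi|_{\cZ}$ and the resulting closedness of $\pi(\cZ)$ — here using that a universally closed morphism has closed image — then finishes the argument with no further input. Note we do not even need finite-type or quasi-compactness hypotheses on $\pi$ beyond universal closedness, since closedness of the image is all that is used.
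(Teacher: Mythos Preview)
Your proof is correct and takes essentially the same approach as the paper: the paper phrases the open condition in terms of the relative diagonal $\Delta_\pi \colon \cX \to \cX \times_S \cX$ being unramified, while you use the equivalent formulation via the relative inertia $\cI_{\cX/S} \to \cX$, and both then take the closed complement $Z$ and set $U = S \setminus \pi(Z)$ using universal closedness. One small comment: your final sentence takes a slight detour through ``fibers DM $\Rightarrow$ $\cX_U$ DM,'' but you don't need this---once $\cI_{\cX/S}|_{\cX_U} \to \cX_U$ is unramified, $\cX_U$ is Deligne--Mumford directly (and the paper similarly concludes immediately from $\Delta_{\pi_U}$ unramified).
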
 

\begin{proof}
Consider the diagonal morphism of $\pi$, which fits into a diagram 
\begin{equation*}
\xymatrix{
\cX \ar[rr]^{\Delta_{\pi}} \ar[dr]_{\pi} && \cX \times_S \cX \ar[dl] \\ 
&  S & 
}
\end{equation*} 
The fiber over $0$ is $\Delta_{\pi_0} \colon \cX_0 \to \cX_0 \times_{\Spec(\kappa(0))} \cX_0$, 
the diagonal of $\cX_0$, which by assumption is unramified. 
As $\Delta_{\pi}$ is locally of finite type  \cite[\href{https://stacks.math.columbia.edu/tag/04XS}{Tag 04XS}]{stacks-project}, 
there is an open subset $V \subset \cX$ where $\Delta_{\pi}$ is unramified, which by the previous observation 
contains $\cX_0$. 
Consider the closed complement $Z = \cX \setminus V$ and set $U = S \setminus \pi(Z)$, which is open as $\pi$ is universally closed. 
By construction, $0 \in U$ and $\Delta_{\pi_U} \colon \cX_U \to \cX_U \times_U \cX_U$ is unramified, 
i.e. $\cX_U$ is Deligne--Mumford. 
\end{proof} 

Now we specialize Theorem~\ref{theorem-VIHC-general} to the case of moduli spaces of sheaves. 

\begin{corollary}
\label{corollary-VIHC-Gieseker}
Let $f \colon X \to S$ be a smooth proper morphism to a smooth variety $S$, 
such that $f$ has relative dimension $3$ with geometrically connected fibers 
and  $\omega_f = f^*L$ is the pullback of a line bundle on $S$.
Let $\alpha \in \BrAz(X)$ be a Brauer class. 
Let $v$ be a section of $\Ktop[0]((X, \alpha)/S)$ whose fibers $v_s \in \Ktop[0](X_s, \alpha_s)$ are Hodge classes for all $s \in S(\bC)$. 
Let $H$ be a relatively ample divisor on $X \to S$. 
Assume there exists a point $0 \in S(\bC)$ such that: 
\begin{enumerate}
\item There do not exist strictly $H_0$-semistable $\alpha_0$-twisted sheaves of class $v_0$. 
\item $\cM_{H_0}(v_0)/\cAut^0(\Dperf(X_0, \alpha_0)/\bC)$ is Deligne--Mumford. 
\item $\DT_{H_0}(v_0) \neq 0$. 
\end{enumerate} 
Then for every $s \in S(\bC)$ the moduli space $\cM_{H_s}(v_s)$ is nonempty, and in particular, the class 
$v_s \in \Ktop[0](X_s, \alpha_s)$ is algebraic. 
\end{corollary}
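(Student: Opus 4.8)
The plan is to deduce Corollary~\ref{corollary-VIHC-Gieseker} from Theorem~\ref{theorem-VIHC-general} by checking that the relevant hypotheses hold after taking $\cC = \Dperf(X, \alpha)$, $\cM = \cM_H(v)$ the relative moduli stack of Gieseker $H$-semistable twisted sheaves of class $v$, and $\cG_U = \cAut^0(\cC_U/U)$ the identity component of the stack of autoequivalences over a suitable open $U \subset S$. The bulk of the work is unpacking the setup already assembled in Example~\ref{example-moduli-sheaves-proper}.

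\medskip

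First I would verify that $\cC = \Dperf(X,\alpha)$ is a CY3 category of geometric origin over $S$: this is Lemma~\ref{lemma-twisted-CYn}, using $\omega_f = f^*L$ and that $\alpha \in \BrAz(X)$. Next, the section $v$ has Hodge fibers by hypothesis, and $\cM_H(v) \subset \cM_{\gl}(\cC/S)$ is an open substack, finite type and universally closed over $S$, by the results of \cite{lieblich-moduli-twisted} recalled in Example~\ref{example-relative-moduli-sheaves}; moreover the class $v$ pins down the geometric Hilbert polynomial so $\cM_H(v)$ really is the moduli stack of sheaves with class $v$. By upper semicontinuity of $\dim\Hom$, the locus of $s \in S$ at which there are no strictly $H_s$-semistable $\alpha_s$-twisted sheaves of class $v_s$ is open; it contains $0$ by hypothesis~(1), so shrinking $S$ around $0$ (which is harmless since we only need dominance of $\cM \to S$ near $0$, or rather we only need the conclusion on a neighborhood, and then run the argument again — more carefully, one restricts to the open $U$ and applies Theorem~\ref{theorem-VIHC-general} with $S$ replaced by the component of $U$ containing $0$, then notes that non-emptiness of a fiber is an open-and-closed-insensitive statement; actually the cleanest route is to take $U$ itself to be this semistability-free open locus) we may take $U$ to be the open subset over which $\cM_U$ lands in $s\cM_{\gl}(\cC/S,v)$, i.e. parameterizes simple objects. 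Over $U$, Lemma~\ref{lemma-Aut0-CYn-proper} gives that $\Aut^0(\cC_U/U)$ exists and is smooth and proper over $U$, hence $\cG_U := \cAut^0(\cC_U/U)$ is a $\bG_m$-gerbe over it, smooth over $U$; by Lemma~\ref{lemma-aut0-preserve-open} its action on $s\cM_{\gl}(\cC/S,v)$ preserves the universally closed open substack $\cM_U = \cM_H(v)_U$. Finally, $M_H(v) \to S$ is proper by \cite{huybrechts-lehn, lieblich-moduli-twisted} and $\Aut^0(\cC_U/U) \to U$ is proper by Lemma~\ref{lemma-Aut0-CYn-proper}, so Lemma~\ref{lemma-MG-proper} gives that $\cM_U/\cG_U \to U$ is proper.

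\medskip

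It remains to match the two numerical hypotheses. Over the point $0$, $\cM_{H_0}(v_0)/\cAut^0(\Dperf(X_0,\alpha_0)/\bC)$ is Deligne--Mumford by hypothesis~(2), and $\DT_{H_0}(v_0) = \DT(\cM_{H_0}(v_0)/\cAut^0(\Dperf(X_0,\alpha_0)/\bC)) \neq 0$ by Definition~\ref{definition-DT-twisted} and hypothesis~(3); note the identity components $\cAut^0(\cC_0)$ and $\cG_0$ agree since formation of the identity component commutes with base change. Thus all hypotheses of Theorem~\ref{theorem-VIHC-general} are met with this choice of $(\cC, v, \cM, U, \cG_U, 0)$, and the theorem yields that $\cM_s = \cM_{H_s}(v_s)$ is nonempty for all $s \in S(\bC)$; nonemptiness of a moduli space of $\alpha_s$-twisted sheaves of class $v_s$ exhibits $v_s$ as the class of an object of $\Dperf(X_s,\alpha_s)$, hence in the image of $\rK_0 \to \Ktop[0]$, i.e. algebraic.

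\medskip

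The one genuinely nontrivial point — and the step I expect to be the main obstacle to getting the bookkeeping right — is the passage from $U$ back to all of $S$. Theorem~\ref{theorem-VIHC-general}, applied with base $S$ but with $\cG$ only defined over $U$, concludes nonemptiness of $\cM_s$ for all $s \in S(\bC)$: its proof uses that $\cM \to S$ is universally closed hence (once shown dominant via the nonvanishing DT invariant on $U$) surjective, and dominance is checked on the open $U$ where the DT machinery runs. So in fact no shrinking of $S$ is needed in the statement — one needs $\cG_U$, $U$, and the Deligne--Mumford/nonvanishing conditions only over a single open, and the universal closedness of $\cM_H(v) \to S$ globally propagates the conclusion. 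I would double-check that $\cM_H(v) \to S$ is universally closed on all of $S$ (not merely over $U$), which is exactly Example~\ref{example-relative-moduli-sheaves}, and that the invocation of Lemma~\ref{lemma-fiber-DM-open} inside the proof of Theorem~\ref{theorem-VIHC-general} correctly shrinks $U$ (not $S$) to arrange $\cM_U/\cG_U$ Deligne--Mumford starting from the Deligne--Mumford condition at $0$; here one uses that $0$ may, without loss of generality after an initial shrinking, be taken to lie in $U$.
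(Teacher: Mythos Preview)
Your proposal is correct and follows the same route as the paper: set $\cM = \cM_H(v)$, $\cG_U = \cAut^0(\cC_U/U)$, verify the hypotheses via Example~\ref{example-moduli-sheaves-proper} and Lemma~\ref{lemma-MG-proper}, and invoke Theorem~\ref{theorem-VIHC-general}. The only imprecision is your justification that the locus $U \subset S$ with no strictly semistable objects is open: ``upper semicontinuity of $\dim\Hom$'' shows the stable locus is open in $\cM_H(v)$, but to descend openness to $S$ you need that the image of the closed complement $Z \subset \cM_H(v)$ is closed in $S$, which follows because $\cM_H(v) \to S$ is universally closed --- exactly the argument the paper gives.
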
 

\begin{proof}
We claim there exists a neighborhood $0 \in U \subset S$ such that 
for every $u \in U(\bC)$ there do not exist strictly $H_u$-semistable $\alpha_u$-twisted sheaves of class $v_u$. 
Indeed, let $\cM_H^{\mathrm{st}}(v) \subset \cM_{H}(v)$ denote the open subspace parameterizing $H$-stable sheaves, let $Z \subset \cM_H(v)$ be its complement, and 
let $Y \subset S$ be the image of $Z$, which is closed as $\cM_H(v) \to S$ is universally closed; 
then $U = S \setminus Y$ is the desired neighborhood. 

Set $\cM \coloneqq \cM_{H}(v)$ and $\cG \coloneqq \cAut^0(\Dperf(X,\alpha)/S)$.  
By the discussion in Example~\ref{example-moduli-sheaves-proper} and Lemma~\ref{lemma-MG-proper}, it follows that $\cM_U = \cM_{H_U}(v_U)$ and $\cG_U$ satisfy the hypotheses of Theorem~\ref{theorem-VIHC-general}. 
Thus Theorem~\ref{theorem-VIHC-general} applies and gives the result. 
\end{proof} 

By the same argument, we also obtain the following analogous result for moduli of stable objects in a category, 
which gives the promised Theorem~\ref{theorem-intro-VIHC-sigma} from the introduction; 
for consistency with the above, we restate the result here in terms of moduli stacks and automorphism stacks, cf. Remark~\ref{remark-quotient-algebraic}. 

\begin{corollary}
\label{corollary-VIHC-sigma}
Let $\cC$ be a CY3 category of geometric origin over a smooth complex variety $S$. 
Let $v$ be a section of $\Ktop[0](\cC/S)$ whose fibers $v_s \in \Ktop[0](\cC_s)$ are Hodge classes for all $s \in S(\bC)$. 
Let $\sigma$ be a stability condition on $\cC$ over $S$ with respect to a topological Mukai homomorphism. 
Let $\phi \in \bR$ be a phase compatible with $v$. 
Assume there exists a point $0 \in S(\bC)$ such that: 
\begin{enumerate}
\item \label{sigma0-v0-generic}
There do not exist strictly $\sigma_0$-semistable objects of class $v_0$. 
\item $\cM_{\sigma_0}(v_0, \phi)/\cAut^0(\cC_0/\bC)$ is Deligne--Mumford. 
\item $\DT_{\sigma_0}(v_0) \neq 0$. 
\end{enumerate} 
Then for every $s \in S(\bC)$ the moduli space $\cM_{\sigma_s}(v_s, \phi)$ is nonempty, and in particular, the class 
$v_s \in \Ktop[0](\cC_s)$ is algebraic. 
\end{corollary}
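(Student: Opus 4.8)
The plan is to deduce Corollary~\ref{corollary-VIHC-sigma} from the general criterion Theorem~\ref{theorem-VIHC-general} by choosing the data $\cM$, $U$, and $\cG_U$ appropriately, exactly as in the proof of Corollary~\ref{corollary-VIHC-Gieseker}. First I would set $\cM \coloneqq \cM_\sigma(v,\phi)$, the relative moduli stack of $\sigma$-semistable objects of class $v$ and phase $\phi$; by the discussion in Example~\ref{example-relative-stability} this is an open substack of $\cM_{\gl}(\cC/S,v)$ which is universally closed over $S$, so the ambient hypothesis of Theorem~\ref{theorem-VIHC-general} is met. Next I would choose the open set $U \subset S$ on which the stable and semistable loci agree: let $\cM_\sigma^{\mathrm{st}}(v,\phi) \subset \cM_\sigma(v,\phi)$ be the open locus of $\sigma$-stable objects, let $Z$ be its closed complement, and let $Y \subset S$ be the image of $Z$, which is closed since $\cM_\sigma(v,\phi) \to S$ is universally closed; then $U = S \setminus Y$ contains $0$ by hypothesis~\eqref{sigma0-v0-generic}, and over $U$ there are no strictly $\sigma_u$-semistable objects of class $v_u$, so $\cM_U$ is contained in $s\cM_{\gl}(\cC/S,v)$.

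For the group, I would take $\cG_U \coloneqq \cAut^0(\cC_U/U)$, the identity component of the stack of autoequivalences. By Lemma~\ref{lemma-HH1-CY} this exists and is smooth over $U$ (shrinking $U$ further if needed, or using that $\HH^1$ has locally constant fiber dimension, which for a CY$3$ category over a $\bQ$-scheme is automatic), and by Lemma~\ref{lemma-aut0-preserve-open} combined with the fact that $\cM_U$ is a moduli space of semistable objects it preserves $\cM_U$ inside $s\cM_{\gl}(\cC/S,v)$. Properness of $\cM_U/\cG_U \to U$ follows from Lemma~\ref{lemma-MG-proper}: the algebraic space $M_{\sigma}(v,\phi) \to S$ is proper by \cite[Theorem 21.24]{stability-families}, and $\Aut^0(\cC/S) \to S$ is proper either by Lemma~\ref{lemma-Aut0-CYn-proper} in the geometric case $\cC = \Dperf(X,\alpha)$ or by the standing assumption on $\Aut^0$; since every CY$3$ category of geometric origin here is assumed to arise this way (or the properness is imposed), this is exactly the hypothesis of Lemma~\ref{lemma-MG-proper}.

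With these choices the two remaining hypotheses of Theorem~\ref{theorem-VIHC-general} at the point $0$ translate directly into hypotheses (2) and (3) of the corollary: $\cM_0/\cG_0 = \cM_{\sigma_0}(v_0,\phi)/\cAut^0(\cC_0/\bC)$ is Deligne--Mumford, and $\DT(\cM_0/\cG_0) = \DT_{\sigma_0}(v_0,\phi) = \DT_{\sigma_0}(v_0) \neq 0$ by Definition~\ref{definition-DT-CY3} and the phase-independence remark. Applying Theorem~\ref{theorem-VIHC-general} then yields that $\cM_s = \cM_{\sigma_s}(v_s,\phi)$ is nonempty for every $s \in S(\bC)$, and the existence of a $\sigma_s$-semistable object of class $v_s$ in $\cC_s$ exhibits $v_s$ as the class of an object of $\cC_s$, hence as an element of the image of $\rK_0(\cC_s) \to \Ktop[0](\cC_s)$, i.e. $v_s$ is algebraic.

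The only point requiring genuine care — the main obstacle — is the reduction step internal to Theorem~\ref{theorem-VIHC-general}, namely that the Deligne--Mumford property spreads out from the single point $0$ to a neighborhood in $U$; this is handled by Lemma~\ref{lemma-fiber-DM-open}, which uses that the diagonal of $\cM_U/\cG_U$ is locally of finite type and that $\cM_U/\cG_U \to U$ is universally closed, so the locus over which the fibers are Deligne--Mumford is open. Given that lemma, the deformation-invariance of the reduced DT invariant (Corollary~\ref{corollary-DT-constant}, resting on Theorem~\ref{theorem-obs-theory} and Corollary~\ref{corollary-vnumber-constant}) forces $\DT(\cM_s/\cG_s) = \DT(\cM_0/\cG_0) \neq 0$ for all $s$ in a neighborhood, hence $\cM_s \neq \emptyset$ there, and universal closedness of $\cM \to S$ upgrades dominance to surjectivity. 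Everything else is bookkeeping already carried out for the Gieseker case in Corollary~\ref{corollary-VIHC-Gieseker}.
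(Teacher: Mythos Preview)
Your proposal is correct and follows essentially the same route as the paper, which simply says Corollary~\ref{corollary-VIHC-sigma} holds ``by the same argument'' as Corollary~\ref{corollary-VIHC-Gieseker}, replacing the inputs from Example~\ref{example-moduli-sheaves-proper} by those from Example~\ref{example-moduli-objects-proper}. You are in fact more careful than the paper about the properness of $\Aut^0(\cC/S) \to S$ needed to invoke Lemma~\ref{lemma-MG-proper}: the paper leaves this implicit in the statement of the corollary (cf.\ Example~\ref{example-moduli-objects-proper} and Remark~\ref{remark-Aut0-proper}), and your explicit flagging of it is appropriate.
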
 

Assumption~\eqref{sigma0-v0-generic} 
in Corollary~\ref{corollary-VIHC-sigma} is mild. 
Indeed, if the image of $v_0$ in the lattice for the stability condition is primitive, then the assumption can be achieved up to deforming $\sigma$, due to the following general observation. 

\begin{lemma}
\label{lem:v_generic_relative_stability}
Let $\cC \subset \Dperf(X)$ be an $S$-linear semiorthogonal component, where $X \to S$ is a smooth proper morphism of complex varieties. 
Let $\sigma$ be a stability condition on $\cC$ over $S$ with respect to a Mukai homomorphism 
$\bv \colon \Knum(\cC/S) \to \Lambda$. 
Let $\lambda \in \Lambda$ be a primitive class. 
Then for any $0 \in S(\bC)$, there exists a stability condition $\sigma'$ on $\cC$ over $S$ with respect to $\bv$ 
such that there do not exist strictly $\sigma'_0$-semistable objects of class $\lambda$. 
\end{lemma}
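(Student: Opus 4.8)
The plan is to exploit the stratification of the space of stability conditions by walls, exactly as in the classical untwisted theory (and as developed relatively in \cite{stability-families}). The point is that $\sigma_0'$-semistability of a class $\lambda$ can fail to be equivalent to $\sigma_0'$-stability only if there is a Jordan--H\"older filtration with factors of smaller mass and the same phase; since $\lambda$ is primitive, such a coincidence of phases is a codimension-one condition on the central charge. So the strategy is: (1) recall that the set of stability conditions $\sigma'$ over $S$ (with respect to the fixed $\bv$) forms, locally near $\sigma$, a topological space with a well-behaved notion of wall-and-chamber structure, and that the locus of $\sigma'$ for which there exists a strictly $\sigma_0'$-semistable object of class $\lambda$ is contained in a locally finite union of real-codimension-one ``walls''; (2) deduce that a generic small deformation $\sigma'$ of $\sigma$ avoids all these walls at the fiber $0$, hence has no strictly $\sigma_0'$-semistable objects of class $\lambda$; (3) invoke the deformation theorem for relative stability conditions from \cite{stability-families} to ensure that $\sigma'$ can indeed be taken to be a stability condition \emph{over $S$} (not merely fiberwise), lying in the connected component of $\sigma$.

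In more detail, first I would reduce to a statement purely about the fiber $\cC_0$ and the stability condition $\sigma_0$: by the deformation theorem \cite[Part IV]{stability-families}, a sufficiently small deformation $\tau$ of the numerical stability condition $\sigma_0$ on $\cC_0$ (with respect to $\bv_0$) extends to a stability condition over $S$ in the same connected component as $\sigma$, provided it still satisfies the support property with uniform constant. So it suffices to produce such a small deformation $\tau$ of $\sigma_0$ on $\cC_0$ for which $\lambda$ admits no strictly $\tau$-semistable objects. Next, the central charge $Z_0 = Z \circ \bv_0$ of $\sigma_0$ can be perturbed within the (open) domain of central charges satisfying the support property; perturbing $Z_0$ to a generic nearby $Z'$, I claim, removes all strictly semistable objects of class $\lambda$. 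The mechanism: if $E$ is strictly $\tau$-semistable of class $\lambda$, its Jordan--H\"older factors have classes $\lambda_1, \ldots, \lambda_m$ in $\Lambda$ with $\sum \lambda_i = \lambda$ and $Z'(\lambda_i) \in \bR_{>0} Z'(\lambda)$ for all $i$; since $\lambda$ is primitive, not all $\lambda_i$ are proportional to $\lambda$, so this forces $Z'$ to lie on one of the real hyperplanes $\{W : W(\lambda_i) \parallel W(\lambda)\}$. These classes $\lambda_i$ are constrained: by the support property (with the fixed constant), the possible classes of $\tau$-semistable subobjects of a class-$\lambda$ object lie in a bounded region of $\Lambda \otimes \bR$, hence there are only finitely many of them. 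Thus the ``bad'' locus of central charges is a finite union of hyperplanes, and a generic $Z'$ near $Z_0$ avoids it.

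I expect the main obstacle to be the bookkeeping around \emph{uniformity}: one must choose the perturbation $\tau$ of $\sigma_0$ small enough that (a) it still satisfies the support property with a constant comparable to that of $\sigma_0$ (so the set of relevant HN/JH classes stays finite and uniformly bounded), and simultaneously (b) it still lies in the open subset of the deformation space over which \cite{stability-families}'s deformation theorem applies, so that $\tau$ is the fiber at $0$ of a genuine relative stability condition $\sigma'$ over $S$. Both are essentially ``openness'' statements already packaged into the support-property formalism and the relative deformation theorem, but combining them requires care: the finiteness of walls at $0$ uses the support property at $\sigma_0$, while the extension to all of $S$ uses that $\sigma$ was a relative stability condition to begin with. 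I would handle this by first fixing a quadratic form $Q$ witnessing the support property for $\sigma$ (over $S$), noting that $Q$ continues to witness it for all sufficiently small deformations, then running the genericity argument above inside that fixed $Q$-neighborhood. A secondary point, worth a sentence, is that ``strictly $\sigma'_0$-semistable object of class $\lambda$'' should be interpreted with phases: one wants that for the compatible phase $\phi$, every $\sigma'_0$-semistable object of class $\lambda$ and phase $\phi$ is in fact stable; this follows immediately from the above since stability is equivalent to the nonexistence of a proper subobject of the same phase.
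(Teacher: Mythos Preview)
Your proposal is correct and follows essentially the same approach as the paper: reduce via the deformation theorem for relative stability conditions \cite[Theorem 1.2]{stability-families} to perturbing $\sigma_0$ on the fiber $\cC_0$, then invoke the wall-and-chamber structure to find a nearby $\sigma'_0$ off all walls for $\lambda$. The paper's proof is more terse---it simply cites \cite[Lemma 12.13]{stability-families} for the finiteness of walls rather than unpacking the Jordan--H\"older/support-property argument you sketch---and it phrases the reduction via the observation that $\Stab_\Lambda(\cC/S) \to \Stab_\Lambda(\cC_0/\bC) \to \Hom(\Lambda,\bC)$ are both local isomorphisms, which absorbs your ``bookkeeping around uniformity'' into a single clean statement.
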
 

\begin{proof}
Let $\Stab_{\Lambda}(\cC/S)$ denote the space of stability conditions over $S$ with respect to $\bv$. 
Let $\bv_0 \colon \Knum(\cC_0) \to \Lambda$ be the Mukai homomorphism obtained by composing the canonical map $\Knum(\cC_0) \to \Knum(\cC/S)$ with $\bv$, and 
let $\Stab_{\Lambda}(\cC_0/\bC)$ denote the space of stability conditions on $\cC_0$ over $\bC$ with respect to $\bv_0$. 
We have maps 
\begin{equation*}
\Stab_{\Lambda}(\cC/S) \to \Stab_{\Lambda}(\cC_0/\bC) \to \Hom(\Lambda, \bC), 
\end{equation*} 
where the first map sends a stability condition over $S$ to its fiber over $0$, 
and the second map sends a stability condition to its central charge. 
By the deformation theorem for relative stability conditions \cite[Theorem 1.2]{stability-families}, 
$\Stab_{\Lambda}(\cC/S)$ and $\Stab_{\Lambda}(\cC_0/\bC)$ are complex manifolds such that the above two maps are local isomorphisms. 
Therefore, 
it suffices to show there exists a small deformation $\sigma'_0$ of $\sigma_0$ 
such that there do not exist strictly $\sigma'_0$-semistable objects of class $\lambda$. 
But this follows from the wall and chamber structure for $\Stab_{\Lambda}(\cC_0/\bC)$ \cite[Lemma 12.13]{stability-families}: it suffices to take $\sigma'_0$ to be a nearby stability condition that does not lie on a wall for $\lambda$. 
\end{proof}

\begin{remark}
Even if $\lambda \in \Lambda$ is not primitive, one can 
often construct a stability condition over $S$ for which there do not exist strictly semistable objects of class $\lambda$, after possibly enlarging the lattice $\Lambda$, cf. \cite[Example 2.17 and Remark 30.7]{stability-families}. 
However, for our applications, we will not need such a result.
\end{remark}

%%%%%%%%%%%%%%%%%%%%%%%%%%%%%%%%%%%%%%%%%%%%%%%%%%%%%%

%%%%%%%%%%%%%%%%%%%%%%%%%%%%%%%%%%%%%%%%%%%%%%%%%%%%%%

\newpage 
\part{The period-index conjecture for abelian threefolds} 
\label{part-PI-abelian-3folds}

With the exception of Corollary~\ref{cor:over_other_fields}, throughout Part~\ref{part-PI-abelian-3folds} we work over the complex numbers. 

%%%%%%%%%%%%%%%%%%%%%%%%%%%%

\section{The main result} 
\label{section-main-result-abelian-3folds}

We now state our main result on the period-index conjecture. 

\begin{theorem}
\label{thm:period_index_conjecture}
    Let $X$ be a complex abelian threefold and let $\alpha \in \Br(X)$ be a Brauer class. Then
    \[
      \ind(\alpha) \mid \per(\alpha)^2.
    \]
\end{theorem}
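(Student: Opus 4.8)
The strategy is to combine the two-step framework from \S\ref{section-intro-hodge-theory} with the variational integral Hodge conjecture for CY3 categories (Corollary~\ref{corollary-VIHC-sigma}). By a lifting argument, we may and do work over $\bC$ (see Remark~\ref{remark-lifting-AV}). By Lemma~\ref{lemma-ind-as-rk}, it suffices to produce an \emph{algebraic} class $v_1 \in \Hdg(X_1, \alpha_1, \bZ)$ of rank $\per(\alpha_1)^2$, where $(X_1, \alpha_1) = (X, \alpha)$ is the given twisted abelian threefold. Step 1 (constructing such a Hodge class, without algebraicity) will be carried out in \S\ref{sec:hodge_classes_on_twisted_abelian_varieties} using the explicit description of $\Ktop[0](X, \alpha)$ for twisted abelian varieties in terms of twisted Mukai structures, following \cite{hotchkiss-pi}. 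The main work is therefore Step 2: proving that this particular $v_1$ is algebraic.

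To prove algebraicity, the plan is to construct a family $(X, \alpha) \to S$ of twisted abelian threefolds over a smooth connected base $S$, together with a relative stability condition $\sigma$ on $\cC = \Dperf(X, \alpha)$ over $S$ (which exists by Corollary~\ref{cor:existence_of_relative_stability_ab_threefolds}), a section $v$ of $\Ktop[0](\cC/S)$ with Hodge fibers, and two points $0, 1 \in S(\bC)$ with the following properties: (a) the fiber over $1$ is the given $(X_1, \alpha_1, v_1)$; (b) the fiber over $0$ has \emph{vanishing} Brauer class, i.e. $\alpha_0 = 0$, so that $\cC_0 \simeq \Dperf(X_0)$ is the derived category of an honest abelian threefold; (c) at the point $0$ there are no strictly $\sigma_0$-semistable objects of class $v_0$ (arranged by deforming $\sigma$ and using primitivity of the class in the relevant lattice, cf. Lemma~\ref{lem:v_generic_relative_stability}, together with the deformation theorem for relative stability conditions from \cite{stability-families}); (d) the quotient stack $\cM_{\sigma_0}(v_0)/\cAut^0(\cC_0)$ is Deligne--Mumford; and (e) $\DT_{\sigma_0}(v_0) \neq 0$. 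Given all this, Corollary~\ref{corollary-VIHC-sigma} immediately yields that $\cM_{\sigma_1}(v_1)$ is nonempty and hence $v_1$ is algebraic, completing the proof.

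The geometric input making (b) possible is to specialize $X_1$ within a suitable Hodge locus in the moduli of polarized abelian threefolds: $\alpha$ is topologically trivial, so it is governed by a class in $(\rH^2(X,\bZ)/\NS(X))\otimes\bQ/\bZ$, and moving into the locus where this class becomes algebraic (i.e.\ lies in the image of $\NS$) kills $\alpha_0$. The remaining difficulty is (e): the DT invariant $\DT_{\sigma_0}(v_0)$ of the higher-rank class $v_0$ on the abelian threefold $X_0$ is not directly computable. Here the plan is to invoke the results of \cite{OPT} on the behavior of reduced DT invariants under derived autoequivalences and wall-crossing — exploiting the invariance of Igusa's quartic discriminant (\S\ref{section-discriminant}) to track the class through these operations — to reduce the computation of $\DT_{\sigma_0}(v_0)$ to that of a \emph{curve-class} DT invariant on an abelian threefold. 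For a sufficiently careful choice of the family and of $v$, this curve-class invariant is computable and nonzero by the results of \cite{obshen, BOPR} on the nonvanishing of curve-counting invariants on abelian threefolds (see \S\ref{section-nonvanishing-curve-invariants}).

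\textbf{Main obstacle.} The hardest part is the simultaneous arrangement of conditions (b)--(e): one must choose the deformation family $(X,\alpha)\to S$, the section $v$, and the stability condition $\sigma$ so that $v_0$ becomes, after derived equivalences and wall-crossing, a curve class with nonzero invariant, while \emph{also} ensuring there are no strictly semistable objects of class $v_0$ and the quotient $\cM_{\sigma_0}(v_0)/\cAut^0(\cC_0)$ is Deligne--Mumford. These constraints interact subtly, and the bookkeeping — verifying that the wall-crossing and Fourier--Mukai transforms used to reduce to curves preserve the relevant numerical and Hodge-theoretic data, and that the exponent of the rank of $v_1$ is exactly $\per(\alpha_1)^2$ rather than $\per(\alpha_1)^3$ — requires the technical analysis of abelian threefolds deferred to \S\ref{section-proof-main-result-abelian-3folds} and Appendix~\ref{appendix-abelian-3folds}.
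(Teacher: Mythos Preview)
Your proposal is correct and follows essentially the same approach as the paper's own proof, as outlined in \S\ref{outline-of-PI-proof}: the same two-step Hodge-theoretic framework, the same specialization into a Hodge locus to kill the Brauer class, the same use of Corollary~\ref{corollary-VIHC-sigma} with conditions (a)--(e), and the same reduction of the DT computation via \cite{OPT} to curve-class invariants computable by \cite{obshen, BOPR}. One minor point: the lifting argument you mention is irrelevant here since the theorem is already stated over~$\bC$ (it is used in the converse direction for Corollary~\ref{cor:over_other_fields}); and you should be aware that the paper also performs a preliminary reduction (Lemma~\ref{lem:period_index_reduction}) to the case where $\theta$ has type $(1,1,1)$ with a positive lift, which is needed for the delicate choice of $v$ and the family.
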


Since the index of a Brauer class may only drop under specialization, 
we obtain an analogous result 
in positive characteristic by lifting, which was stated as Theorem~\ref{theorem-main} in the introduction.

\begin{corollary}  
\label{cor:over_other_fields}
    Let $X$ be an abelian threefold over an algebraically closed field $k$. 
    For any $\alpha \in \Br(X)$ such that $\per(\alpha)$ is prime to the characteristic of $k$, we have  
    \[
      \ind(\alpha) \mid \per(\alpha)^2.
    \]
\end{corollary}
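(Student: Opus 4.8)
The statement to prove is Corollary~\ref{cor:over_other_fields}: given an abelian threefold $X$ over an algebraically closed field $k$ and $\alpha \in \Br(X)$ with $\per(\alpha)$ prime to $\characteristic(k)$, we have $\ind(\alpha) \mid \per(\alpha)^2$. The characteristic-$0$ case is exactly Theorem~\ref{thm:period_index_conjecture}, so the whole content is to reduce the positive-characteristic case to characteristic $0$ by a lifting argument, using the fact that the index can only drop under specialization.

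\textbf{Step 1: Reduce to the case where $\alpha$ is represented by an Azumaya algebra.} Since $X$ is a smooth projective variety, it admits an ample line bundle, so by \cite{dJ-gabber} the natural map $\BrAz(X) \to \Br(X)$ is an isomorphism; thus $\alpha$ comes from an Azumaya algebra $\cA$ on $X$. Write $n = \per(\alpha)$, which is invertible in $k$ by hypothesis. It suffices to produce an abelian threefold $\widetilde{X}$ over a mixed-characteristic DVR specializing to $X$, together with a Brauer class $\widetilde{\alpha}$ specializing to $\alpha$ and with $\per(\widetilde{\alpha}) = n$; then $\ind(\alpha) \mid \ind(\widetilde{\alpha}_{\overline{K}})$ where $K$ is the fraction field, and $\ind(\widetilde{\alpha}_{\overline{K}}) \mid n^2$ by the characteristic-$0$ result applied to the geometric generic fiber.

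\textbf{Step 2: Lift the pair $(X, \alpha)$ to characteristic $0$.} First lift the abelian variety: by Grothendieck's theory (or Mumford--Norman--Oort), $X$ lifts to an abelian scheme $\widetilde{X} \to \Spec R$ over a complete DVR $R$ with residue field $k$ and fraction field $K$ of characteristic $0$; after base change we may assume $R$ is chosen so that a polarization lifts as well. Next lift the Brauer class. Since $n$ is invertible on $\Spec R$, the Kummer sequence gives, for the formal/algebraic scheme $\widetilde{X}$, a compatible system relating $\rH^2_{\et}(-, \bmu_n)$ and $\Br(-)[n]$; by the invariance of the étale site under nilpotent thickenings (as used in the proof of Lemma~\ref{lemma-hochschild-homology-twisted-variety}), together with Grothendieck existence / Artin approximation to pass from the formal lift to an algebraic one over a possibly finite extension of $R$, the class $\alpha \in \Br(X)[n]$ lifts to a class $\widetilde{\alpha} \in \Br(\widetilde{X})[n]$ whose restriction to the special fiber is $\alpha$. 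The period of the generic fiber $\widetilde{\alpha}_{\overline{K}}$ is then at least $n$ (it specializes to $\alpha$, which has order exactly $n$) and at most $n$ (since $n \widetilde{\alpha} = 0$), hence equals $n$.

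\textbf{Step 3: Conclude via specialization of the index.} Apply Theorem~\ref{thm:period_index_conjecture} to the complex abelian threefold obtained from $\widetilde{X}_{\overline{K}}$ (choosing an embedding $\overline{K} \hookrightarrow \bC$, or equivalently working over $\bC$ after base change along some isomorphism of fields): we get $\ind(\widetilde{\alpha}_{\bC}) \mid n^2$. Since $\ind(\widetilde{\alpha}_{\overline{K}}) = \ind(\widetilde{\alpha}_{\bC})$ (the index is insensitive to extension of algebraically closed base fields, by restriction to generic points and a standard spreading-out argument), and since the index can only decrease under specialization from the generic to the special fiber of $\widetilde{X} \to \Spec R$ — because a twisted sheaf of minimal positive rank over the generic point spreads out and specializes, giving a twisted sheaf on the special fiber of the same rank, and one invokes Lemma~\ref{lemma-ind-as-rk} to read off the index as a gcd of ranks — we conclude $\ind(\alpha) \mid \ind(\widetilde{\alpha}_{\overline{K}}) \mid n^2 = \per(\alpha)^2$.

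\textbf{Main obstacle.} The only real subtlety is Step~2: producing an \emph{algebraic} (as opposed to merely formal) lift of the pair $(X,\alpha)$, and ensuring the period of the lifted class is not smaller than $n$. The abelian-variety lift is classical; the Brauer-class lift uses that $\Br(\widetilde{X})[n] \cong \Br(\widehat{\widetilde{X}})[n] \cong \Br(X)[n]$ via nilpotent-thickening invariance of $\rH^2_{\et}(-,\bmu_n)$ and Grothendieck existence for the coherent avatar (an Azumaya algebra, i.e. a form of a matrix algebra) together with the fact that deformations of Azumaya algebras are unobstructed away from the characteristic. This is the point flagged in Remark~\ref{remark-lifting-AV} of the introduction; the argument sketched here is the standard one and is routine, but it is where all the work of the corollary (as opposed to the theorem) resides.
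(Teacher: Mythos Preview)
Your overall strategy matches the paper's: lift $X$ to an abelian scheme over a characteristic-$0$ local domain, lift the Brauer class, apply Theorem~\ref{thm:period_index_conjecture} (via the Lefschetz principle) to the geometric generic fiber, and conclude by semicontinuity of the index under specialization.

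The one place where the paper is substantially cleaner is your Step~2. Instead of going through formal thickenings, Grothendieck existence, and Artin approximation to lift an Azumaya algebra, the paper simply picks a representative $\theta \in \rH^2_{\et}(X, \bmu_n)$ for $\alpha$ and invokes smooth and proper base change: since $n$ is invertible on $R$, the restriction map $\rH^2_{\et}(X_R, \bmu_n) \to \rH^2_{\et}(X, \bmu_n)$ is an isomorphism, so $\theta$ lifts to $\theta_R$ for free, and its image in $\Br(X_{\bar\eta})[n]$ is the desired lift. This avoids all the delicate algebraization issues you flag. Incidentally, your reading of Remark~\ref{remark-lifting-AV} is inverted: that remark concerns the \emph{open} question of lifting when $\per(\alpha)$ is divisible by the characteristic, not the case at hand, which is routine precisely because of the base-change argument just described. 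Finally, your verification that $\per(\widetilde{\alpha}_{\overline{K}}) = n$ is unnecessary: one only needs $\per(\alpha_{\bar\eta}) \mid n$, which is immediate, and then $\ind(\alpha_{\bar\eta}) \mid \per(\alpha_{\bar\eta})^2 \mid n^2$.
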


\begin{proof}
The case of an algebraically closed field $k$ of characteristic $0$ follows from Theorem~\ref{thm:period_index_conjecture} and the Lefschetz principle. When $k$ may have positive characteristic, $X$ lifts to an abelian scheme $X_R$ over an integral local domain $R$ of characteristic $0$ with residue field $k$ \cite{oort_norman}. 

The class $\alpha$ lies in the image of a class $\theta \in \rH^2(X, \bmu_n)$, where $n$ is the period of $\alpha$. Since $n$ is invertible on $R$, smooth and proper base change implies that $\theta$ lifts to a class $\theta_R \in \rH^2(X_R, \bmu_n)$. Writing $\eta \in \Spec R$ for the generic point, the restriction of $\theta_R$ to the geometric generic fiber $X_{\bar \eta}$ gives a Brauer class $\alpha_{\bar \eta} \in \Br(X_{\bar \eta})[n]$. Since $\kappa(\bar{\eta})$ is algebraically closed of characteristic $0$, by the previous paragraph we have 
\[
        \ind(\alpha_{\bar \eta}) \mid \per(\alpha_{\bar \eta})^2,
\]
so $\ind(\alpha_{\bar \eta})$ divides $n^2$. Then observe that $\ind(\alpha) \mid \ind(\alpha_{\bar \eta})$, since the index of a Brauer class may only decrease under specialization \cite[Lemma 6.2]{dJ-period-index}.
\end{proof}

\begin{remark}
\label{remark-lifting-AV}
As the proof shows, when $\per(\alpha)$ is divisible by the characteristic of $k$, the result still holds as long as $\alpha \in \Br(X)$ admits a lift along with $X$ to characteristic $0$. It is plausible that such a lift always exists. 
\end{remark}

We make a few comments on Theorem~\ref{thm:period_index_conjecture}. 
First, for an abelian threefold  $X$ and a Brauer class $\alpha \in \Br(X)$, it is not difficult to see that
\[
        \ind(\alpha) \mid \per(\alpha)^3.
\]
This follows from symbol length considerations, and the analogous result holds more generally for an abelian variety of arbitrary dimension. Symbol length bounds are discussed in \S \ref{sec:symbol_length}, where we show that they do not suffice to prove Theorem~\ref{thm:period_index_conjecture}.

A different approach to proving Theorem~\ref{thm:period_index_conjecture} might be to show that for any twisted abelian threefold $(X, \alpha)$, the twisted derived category $\Dperf(X, \alpha)$ is equivalent to $\Dperf(X')$, for $X'$ an abelian threefold. Since the integral Hodge conjecture holds for $X'$ \cite{voisin-IHC, grabowski-IHC, totaro} and the cohomology of $X'$ is torsion free, the integral Hodge conjecture holds for $\Dperf(X')$ \cite[Corollary 5.18]{IHC-CY2} and thus also for $\Dperf(X, \alpha)$. From there, Theorem~\ref{thm:period_index_conjecture} would follow from the observation that there are Hodge classes in $\Ktop[0](X, \alpha)$ of rank $\per(\alpha)^2$. Unfortunately, we show in \S \ref{sec:fourier_mukai_partners} that $\Dperf(X, \alpha)$ is  not generally equivalent to $\Dperf(Y)$ for any smooth  proper variety $Y$.

\subsection{Outline of the proof} 
\label{outline-of-PI-proof}
The basic premise of the argument is the following. For each abelian threefold $X_\init$ and Brauer class $\alpha_\init \in \Br(X_\init)$, the goal is to construct a family of twisted abelian threefolds $(X, \alpha)$ over a base $S$, a section $v \in \Gamma(\Ktop[0]((X, \alpha) / S))$ whose fibers are Hodge classes, 
a stability condition $\sigma$ on $(X, \alpha)$ over $S$ (with respect to a topological Mukai homomorphism), and points $\fin,\init \in S(\bC)$ such that the following conditions hold:
\begin{enumerate} 
         \item \label{step:construct_hodge} The fiber over $\init \in S(\bC)$ is the original twisted abelian threefold $(X_\init, \alpha_\init)$, and the rank of $v_\init$ is equal to $\per(\alpha_\init)^2$. 
         \item \label{step:brauer_class_vanish} One has $\alpha_\fin = 0 \in \Br(X_\fin)$, where $X_\fin$ is the fiber over $\fin \in S(\bC)$.
         \item \label{step:v1_generic} At $\fin \in S(\bC)$, there do not exist strictly $\sigma_\fin$-semistable objects of class $v_\fin$. 
         \item \label{step:dm} The quotient stack $\cM_{\sigma_\fin}(v_\fin)/\cAut^0(\Dperf(X_\fin)/\bC)$ is Deligne--Mumford. 
         \item \label{step:compute_dt} $\DT_{\sigma_\fin}(v_\fin) \neq 0$.
 \end{enumerate} 
If all of these conditions are satisfied, then Corollary~\ref{corollary-VIHC-sigma} shows that $v_\init$ is algebraic. In particular, there exists an object in $\Dperf(X_\init, \alpha_\init)$ of rank $\per(\alpha_\init)^2$, so $\ind(\alpha_\init) \mid \per(\alpha_\init)^2$. The condition that $\alpha_\fin = 0$ is not necessary in order to apply Corollary~\ref{corollary-VIHC-sigma}; rather, we have included it here because it plays an important role in making the computation \eqref{step:compute_dt} possible.

Let us now address how to arrange for conditions \eqref{step:construct_hodge}--\eqref{step:compute_dt}. Condition~\eqref{step:construct_hodge} is handled through the theory of twisted Mukai structures, which computes explicitly the variation of Hodge structure $\Ktop[0]((X, \alpha)/S)$ and allows one to construct global sections of Hodge type. This is explained in \S\ref{sec:hodge_classes_on_twisted_abelian_varieties} below. The precise choice of $v$ is rather intricate, as will be explained when we address condition~\eqref{step:compute_dt}.

Condition~\eqref{step:brauer_class_vanish} is arranged as follows. The Brauer class $\alpha_\init$ on $X_\init$ lies in the image of a class
\[
        \theta_\init \in \rH^2(X_\init, \bmu_n),
\]
where $n = \per(\alpha)$. If $X_\fin$ is a deformation of $X_\init$, then $\alpha_\init$ deforms to the trivial Brauer class on $X_\fin$ if and only if a parallel transport $\theta_\fin$ of $\theta_\init$ lies in the image of $\Pic(X_\fin)/n$. Therefore, in order to find such a deformation of $X_\init$, we first lift $\theta_\init$ to a class $\tilde \theta_\init \in \rH^2(X_\init, \bZ)$, and then choose $X_\fin$ to lie in the Hodge locus of $\tilde \theta_\init$ in a suitable moduli space of polarized abelian varieties. 

Condition~\eqref{step:v1_generic} is responsible for the use of the theory of stability conditions over a base in the argument, as opposed to the more familiar stability of (twisted) sheaves with respect to a polarization. This is for a purely practical reason, which we now explain. Suppose we wished to run the argument using stability of sheaves with respect to a relative polarization $H$ on $X$ over $S$. Generically, $X_\init$ has Picard rank $1$, so we have no choice when it comes to the relative polarization itself. On the other hand, $X_{\fin}$ has Picard rank at least $2$ from the previous step, which means we have little control over whether strictly $H_\fin$-semistable twisted sheaves of class $v_\fin$ exist. 
Instead, we use Lemma~\ref{lem:v_generic_relative_stability} to construct a relative stability condition $\sigma$ such that strictly $\sigma_\fin$-semistable objects of class $v_\fin$ do not exist.

Condition~\eqref{step:dm} is the first of three appearances in the proof of a remarkable feature of the cohomology of abelian threefolds, which also plays an important role in \cite{OPT}. There is a $\bQ$-valued quartic form $\Delta$, called \emph{Igusa's discriminant}, on the rational even cohomology of an abelian threefold. It turns out that if $X_\fin$ is simple, then for $v_\fin \in \Ktop(X_\fin)$, the condition
\[
        \Delta(\ch(v_\fin)) \neq 0
\]
is enough to guarantee that the quotient stack in~\eqref{step:dm} is Deligne--Mumford (Lemma~\ref{lem:discr_no_stabilizer}). One can arrange that $X_\fin$ is simple from the beginning of the proof; this amounts to choosing $\tilde \theta_1$ in~\eqref{step:brauer_class_vanish} so that the general member of its Hodge locus has multiplication by a totally real cubic field.

Condition~\eqref{step:compute_dt} is the most intricate step, and impacts a number of choices made at the beginning of the proof. Very little is directly known about the DT invariants of higher rank classes on abelian threefolds, so the strategy is to transform the problem of computing $\DT_{\sigma_\fin}(v_\fin)$ into the problem of counting certain curves on $X_\fin$. 

To explain how this works, we first recall a result from \cite{OPT}: If $\Delta(v_\fin) \geq 0$, where $\Delta$ is Igusa's discriminant, then $\DT_{\sigma_\fin}(v_\fin)$ is invariant under wall-crossing of stability conditions and the action of autoequivalences of $\Dperf(X_\fin)$ on $v_\fin$. With this in mind, we arrange (through the choice of $v$ at the beginning of the proof) that $v_\fin$ can be transformed by autoequivalences into a class of Chern character $(1, 0, -\beta, -n)$, which is the shape of the Chern character of an ideal sheaf of a curve. By invariance under wall-crossing, it is enough to calculate $\DT_{H_\fin}(1,0, - \beta, -n)$, where $H_\fin$ is any polarization on $X_\fin$.

We have therefore reduced to a curve count. If $\beta \in \rH_2(X_\fin, \bZ)$ is a curve class and $H_0$ is any polarization, then a formula for the generating series
\begin{equation}
\label{eq:summary_generating_series}
        \sum_{n} \DT_{H_0}(1, 0, - \beta, -n) q^n
\end{equation}
is known by work of Oberdieck--Shen \cite{obshen} under the assumption that $\beta$ is of type $(1, 1, d)$ for some $d > 0$. This condition means that $\beta$ is of the form $e_1 \wedge e_2 + e_3 \wedge e_4 + d\cdot e_5 \wedge e_6$, for $\{e_i\}$ a basis of $\rH_1(X_\fin, \bZ)$. An analysis of the series \eqref{eq:summary_generating_series} leads to the observation that one has
\[
        \DT_{H_0}(1, 0, -\beta, -n) \neq 0
\]
whenever Igusa's discriminant is nonnegative: $\Delta(1, 0, -\beta, -n) \geq 0$. (Though we have no need of it, the converse is nearly true.)  
This curve counting argument is carried out in \S\ref{section-nonvanishing-curve-invariants}. 

Arranging that $\Delta(\ch(v_\fin)) \geq 0$ is not difficult, and since Igusa's discriminant is invariant under the action of autoequivalences, one gets that $\Delta(1, 0, - \beta, -n) \geq 0$. Unfortunately, choosing the initial class $v$ which guarantees that $\beta$ is ultimately of type $(1, 1, d)$ is subtle. At first glance, the only way to check that $\beta$ is of type $(1, 1, d)$ seems to be to verify that it has at least $4$ after reduction modulo every prime number $p$. Fortunately, it is possible to arrange the situation so that there is a \emph{finite} set of primes $\{\ell_1, \dots, \ell_n\}$, depending rather loosely on $v$, such that if $\beta$ has rank at least $4$ modulo each $\ell_i$, then $\beta$ is of type $(1, 1, d)$. This, along with some flexibility afforded by the choice of $v$, allows us to arrange for $\beta$ to have type $(1, 1, d)$. 
We have collected in Appendix~\ref{appendix-abelian-3folds} a number of auxiliary results on abelian varieties which are used in this argument and in arranging condition~\eqref{step:brauer_class_vanish}.

%%%%%%%%%%%%%%%%%%%%%%%%%%%%

\section{Hodge classes on twisted abelian varieties}
\label{sec:hodge_classes_on_twisted_abelian_varieties}

\begin{definition}
\label{def:twisted_mukai_structure}
    Let $X$ be an abelian variety. Given a class $B \in \rH^2(X, \bQ(1))$, the \emph{twisted Mukai structure} $\muk(X, B; \bZ)$ is the integral Hodge structure of weight $0$ on the even cohomology $\rH^{\ev}(X, \bZ)$ such that the homomorphism
    \[
            \muk(X, B; \bZ) \otimes \bQ \to \bigoplus \rH^{2i}(X, \bQ)(i), \quad v \mapsto \exp(B) \cdot v.
    \]
    is an isomorphism of $\bQ$-Hodge structures. 
    We also set $\muk(X, B; \bQ) = \muk(X, B; \bZ) \otimes \bQ$. 
\end{definition}

\begin{remark}
        In the K3 surfaces literature (e.g. \cite{Huy_stell}), it is standard to consider twisted Mukai structures as having weight $2$, which differs from our convention.
\end{remark}

\begin{remark}
        The case of abelian varieties is rather special, since the topological $K$-theory $\Ktop[0](X)$ coincides with the integral cohomology $\rH^{\ev}(X, \bZ)$ under the Chern character. For a general variety with a topologically trivial Brauer class, the two differ, and one should use the lattice of topological $K$-theory to define twisted Mukai structures \cite{hotchkiss-pi}.
\end{remark}

\begin{definition}
\label{def:theta_field}
    Let $X$ be an abelian variety. Given a class $\theta \in \rH^2(X, \bmu_n)$, a \emph{$\theta$-field} is a class $B \in \rH^2(X, \bQ(1))$ such that the $n$th multiple is integral, i.e., $n B \in \rH^2(X, \bZ(1))$, and the image of $nB$ under the map 
    \[
            \exp(-/n):\rH^2(X, \bZ(1)) \to \rH^2(X, \bmu_n)
    \]
    is $\theta$.
\end{definition}

\begin{lemma}
\label{lem:twisted_mukai_abelian-families}
    Let $f:X \to S$ be a flat family of abelian varieties, and let $\cX \to X$ be a $\bmu_n$-gerbe of class $\theta \in \rH^2(X, \bmu_n)$, with Brauer class $\alpha$. 
    \begin{enumerate} 
            \item There is an isomorphism of $\bQ$-variations of Hodge structure
            \[
                \ch:\Ktop[0]((X, \alpha)/S) \otimes \bQ \to \rR^{\ev} f_* \bQ \coloneqq \bigoplus_{k \geq 0} \rR^{2k}f_* \bQ(k). 
             \] 
             The formation of $\ch$ is compatible with base change $S' \to S$. Here, $\Dperf(X, \alpha)$ implicitly refers to the category of $1$-twisted sheaves on a $\bmu_n$-gerbe of class $\alpha$.
        \item \label{item:identify_with_tw_muk} For $s \in S(\bC)$ and any $\theta_s$-field $B_s$, there is an isomorphism of integral Hodge structures
        \[
                \varphi_s:\Ktop[0](X_s, \alpha_s) \to  \muk(X_s, B_s; \bZ) 
        \]
        such that the diagram
        \begin{equation} \label{eq:compatibility_in_family_tw_muk}
             \begin{tikzcd}
                     \Ktop[0](X_s, \alpha_s) \ar{d}[swap] {\varphi_s} \ar[r, "\ch_s"] & \rH^{\ev}(X_s, \bQ) \\
                     \rH^\ev(X_s, B_s; \bZ) \ar[ur, "\exp(B_s) \cdot"']
             \end{tikzcd}
        \end{equation}
        commutes.
        \item (Parallel transport) Given $s, s' \in S(\bC)$ and a continuous path $\gamma$ from $s$ to $s'$, the diagram \eqref{eq:compatibility_in_family_tw_muk} is compatible with parallel transport. More precisely, if $\Phi_{\gamma}$ is the parallel transport isomorphism from $\rH^{\ev}(X_s, \bQ)$ to $\rH^{\ev}(X_{s'}, \bQ)$, then  
        \[
                \Phi_{\gamma}\left(\exp(B_s) \cdot \varphi_s \right) = \exp(\Phi_{\gamma}(B_s)) \cdot \varphi_{s'}
        \]
        \item In the situation of \eqref{item:identify_with_tw_muk}, a class $v \in \Ktop[0](X_s, \alpha)$ lifts to a global section of the variation $\Ktop[0]((X, \alpha)/S)$ if and only if $\exp(B_s)\cdot \varphi_s(v)$ lifts to a global section of $\rR^{\ev} f_* \bQ$.
    \end{enumerate}
\end{lemma}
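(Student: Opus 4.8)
\textbf{Proof proposal for Lemma~\ref{lem:twisted_mukai_abelian-families}.}

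The plan is to build the statement in the four parts in which it is stated, each part being essentially a translation of the corresponding structure on one side of an isomorphism of $\bQ$-variations of Hodge structure to the other. The unifying idea is that all four assertions follow, more or less formally, once part (1) is established, since parts (2)--(4) are statements about a \emph{fixed} isomorphism of local systems underlying variations of Hodge structure and about global/parallel sections thereof. So I would treat part (1) as the real content and the rest as bookkeeping.

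For part (1): by Lemma~\ref{lemma-mun-gerbe} and Theorem~\ref{theorem-Ktop}(1) we have a direct-sum decomposition of $\Ktop[0]((X,\alpha)/S)\otimes\bQ$ according to the weight of the inertial action, and the $1$-twisted summand is what we want. The relative topological Chern character from \cite{hotchkiss-pi}, reviewed in Example~\ref{example-topological-v}, gives a morphism $\ch^{\rtop}_{\cX/S}\colon \Ktop[0](\Dperf(\cX)/S)\to \rR^{\ev}f_*\bQ$ on each summand $\Dperf(X,\alpha^k)$; I would restrict to $k=1$ and argue it is an isomorphism. Over a point this is the statement that the ($B$-twisted) Chern character identifies $\Ktop[0](X_s,\alpha_s)\otimes\bQ$ with $\rH^{\ev}(X_s,\bQ)$, which is essentially the content of \cite{hotchkiss-pi} for abelian varieties together with the fact noted in the second remark that $\Ktop[0](X)\cong\rH^{\ev}(X,\bZ)$ via $\ch$ for abelian varieties; that it is a morphism of Hodge structures uses Theorem~\ref{theorem-Ktop}(3), identifying the Hodge filtration on $\Ktop[0](\cC)\otimes\bC$ with $\cHH_*$, together with the computation (via Lemma~\ref{lemma-hochschild-homology-twisted-variety}) that the Hochschild homology of a Brauer-twisted abelian variety agrees with the untwisted one and hence with $\bigoplus\rH^{*}$; and it being an isomorphism of \emph{variations} follows since both sides are local systems underlying VHS, the map is a map of such, and it is an isomorphism fiberwise. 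Base-change compatibility is the base-change compatibility of $\ch^{\rtop}_{\cX/S}$ from \cite{hotchkiss-pi} and of the gerbe decomposition.

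For part (2): fix $s$ and a $\theta_s$-field $B_s$ (Definition~\ref{def:theta_field}); the existence of such a $B_s$ (after replacing $\theta$ by an integral lift, as in the proof outline in \S\ref{outline-of-PI-proof}) is where the hypothesis that $\theta$ comes from $\rH^2(X,\bmu_n)$ enters. By Definition~\ref{def:twisted_mukai_structure}, multiplication by $\exp(B_s)$ is an isomorphism of $\bQ$-Hodge structures $\muk(X_s,B_s;\bQ)\xrightarrow{\sim}\bigoplus\rH^{2i}(X_s,\bQ)(i)$, and I would define $\varphi_s$ as the composite $\Ktop[0](X_s,\alpha_s)\xrightarrow{\ch_s}\rH^{\ev}(X_s,\bQ)\xrightarrow{\exp(B_s)^{-1}}\rH^{\ev}(X_s,\bZ)$, with the point being that the image of the integral lattice $\Ktop[0](X_s,\alpha_s)$ under $\exp(-B_s)\cdot\ch_s$ lands in $\rH^{\ev}(X_s,\bZ)$ — this is exactly the statement, proved in \cite{hotchkiss-pi}, that the $B_s$-twisted Chern character of a twisted sheaf is integral (equivalently, $\varphi_s$ is an integral-lattice isomorphism), and this is the only mildly technical input. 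The commutativity of \eqref{eq:compatibility_in_family_tw_muk} is then tautological, and $\varphi_s$ is an isomorphism of \emph{integral} Hodge structures because $\ch_s$ identifies the weight-$0$ Hodge structure on $\Ktop[0](X_s,\alpha_s)$ with the twisted Mukai one on the nose (by part (1) at the point $s$, combined with Definition~\ref{def:twisted_mukai_structure}).

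For part (3): parallel transport $\Phi_\gamma$ along $\gamma$ on $\rR^{\ev}f_*\bQ$ corresponds, via the isomorphism $\ch$ of part (1), to the parallel transport of the local system $\Ktop[0]((X,\alpha)/S)\otimes\bQ$; since $\varphi_s=\exp(-B_s)\cdot\ch_s$ and $\varphi_{s'}=\exp(-B_{s'})\cdot\ch_{s'}$, and $\ch$ is a morphism of local systems (hence commutes with $\Phi_\gamma$), the claimed identity $\Phi_\gamma(\exp(B_s)\cdot\varphi_s)=\exp(\Phi_\gamma(B_s))\cdot\varphi_{s'}$ is the assertion that $\Phi_\gamma\circ\ch_s = \ch_{s'}\circ(\text{parallel transport on }\Ktop)$, which is again just functoriality of $\ch$ in part (1). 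For part (4): a class $v\in\Ktop[0](X_s,\alpha_s)$ lifts to a global section of $\Ktop[0]((X,\alpha)/S)$ iff its image under $\ch$ lifts to a global section of $\rR^{\ev}f_*\bQ$, because $\ch$ is an isomorphism of local systems (part (1)), and $\ch_s(v)=\exp(B_s)\cdot\varphi_s(v)$ by part (2); so this is immediate. The main obstacle is really concentrated in part (1)/part (2): proving that the relative twisted Chern character is a genuine isomorphism of variations of Hodge structure onto $\rR^{\ev}f_*\bQ$ with the Tate twists as stated, and that it carries the integral $K$-theoretic lattice precisely onto the twisted-Mukai integral structure — both of which I would pull from \cite{hotchkiss-pi} (in the absolute case) plus Theorem~\ref{theorem-Ktop} and Lemma~\ref{lemma-hochschild-homology-twisted-variety} (to handle the VHS structure and the twisted-to-untwisted comparison in families).
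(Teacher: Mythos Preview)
Your proposal is correct and follows the same approach as the paper, which simply states ``This is a reformulation of the results of \cite{hotchkiss-pi}.'' You have supplied a detailed unpacking of how those results (the relative twisted Chern character and the integrality of the $B$-twisted Chern character) combine with Theorem~\ref{theorem-Ktop} to yield parts (1)--(4); the paper leaves all of this implicit in the citation.
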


\begin{proof}
    This is a reformulation of the results of \cite{hotchkiss-pi}.
\end{proof}

\begin{remark}[Fibers with trivial Brauer class]
\label{remar:fibers_with_trivial_brauer}
    In the situation of Lemma~\ref{lem:twisted_mukai_abelian-families}, suppose that there is a point $0 \in S(\bC)$ and a $\theta_0$-field $B_0$ which is algebraic, i.e., lies in $\NS(X_0)$. Then we may choose an equivalence
    \begin{equation}
    \label{eq:twist_down_by_twisted_lb}
        \Dperf(X_0, \alpha_0) \to \Dperf(X_0)
    \end{equation}
    such that the induced isomorphism $\muk(X_0, B_0; \bZ) \simeq \muk(X_0, \bZ)$ is the identity on the abelian group $\rH^{\ev}(X_0, \bZ)$ which underlies both Hodge structures.

    The point is that if $\cX \to X$ is a $\bmu_n$-gerbe of class $\theta$, then $\cX_0 = \cX \times_X X_0$ is isomorphic to the gerbe of $n$th roots of a line bundle $L$ on $X_0$ with first Chern class $n \cdot B_0$ \cite[2.3.4.2]{lieblich-moduli-twisted}. Then $\cX_0$ carries an $n$-twisted line bundle $L^{1/n}$ such that $(L^{1/n})^n$ descends to $L$. The equivalence \eqref{eq:twist_down_by_twisted_lb} is given by twisting down by $L^{1/n}$, and the diagram
    \begin{equation}
        \begin{tikzcd}
            \Ktop[0](X_0, \alpha_0) \ar[d, "\cdot {(L^{1/n})^\vee}"] \ar[r, "\varphi_0"] & \muk(X_0, B_0; \bZ) = \rH^{\ev}(X_0, \bZ) \ar[d, "\id"] \\
            \Ktop[0](X_0) \ar[r, "\ch"] & \muk(X_0, \bZ) = \rH^\ev(X_0, \bZ).
        \end{tikzcd}
    \end{equation}
    commutes: $\varphi_0$ is given by twisting down by any topological $1$-twisted line bundle with twisted first Chern class $B_0$, and $L^{1/n}$ is an example of such \cite{hotchkiss-pi}.
    
 \end{remark}

We now specialize to the case of threefolds. 

\begin{lemma}
\label{lem:constructing_rational_hodge_class}
    Let $f:(X,H) \to S$ be a flat, polarized family of abelian threefolds over a connected base, with a class $\theta \in \rH^2(X, \bmu_n)$ of Brauer class $\alpha \in \Br(X)$. Fix a basepoint $\init \in S(\bC)$, and a $\theta_{\init}$-field $B_{\init}$. 
    \begin{enumerate} 
            \item \label{z-hodge} For any $x, y \in \bQ$, the class
            \[
                    \left( n^{2}, -n^2 B_{\init}, \frac{n^2}{2} B_\init^2 + x \cdot \frac{H^2}{2}, y \cdot [\pt] \right) \in \muk(X_\init, B_{\init}; \bQ)
            \]
            is equal to $\varphi_\init(v_{\init})$ for a global section $v$ of $\Ktop[0]((X, \alpha)/S) \otimes \bQ$ which is everywhere Hodge.
            \item Given an arbitrary point $s \in S(\bC)$ and the choice of a parallel transport $B_s$ of $B_\init$, one has
            \[
                    \varphi_s(v_s) = \left( n^{2}, -n^2 B_{s}, \frac{n^2}{2} B_s^2 + x \cdot \frac{H^2}{2}, y \cdot [\pt] \right) \in \muk(X_s, B_{s}; \bQ).
            \]
    \end{enumerate}
\end{lemma}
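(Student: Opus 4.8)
The statement is an application of the twisted Mukai structure machinery from Lemma~\ref{lem:twisted_mukai_abelian-families}, and the proof splits naturally into two parts corresponding to the two items. For item~\eqref{z-hodge}, the plan is to exhibit the displayed class as the fiber at $\init$ of a global section of $\rR^{\ev} f_* \bQ$ that is everywhere of Hodge type, and then transport this back through $\varphi_{\init}$. The candidate global section is built out of the three pieces that are manifestly global: the locally constant section $n^2$ of $\rR^0 f_* \bQ$, the section $x \cdot \frac{H^2}{2}$ of $\rR^4 f_* \bQ(2)$ coming from the relative polarization $H$, and the section $y \cdot [\pt]$ of $\rR^6 f_* \bQ(3)$ coming from the relative point class; these are each flat and of Hodge type on all of $S$ (the polarization class is a global $(1,1)$-class, hence $\exp$ of it times the constant section is a global Hodge section). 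The subtlety is the term $-n^2 B_{\init}$ in degree $2$ together with $\frac{n^2}{2} B_{\init}^2$ in degree $4$: these are precisely the components of $n^2 \exp(-B_{\init})$, and by Lemma~\ref{lem:twisted_mukai_abelian-families}\eqref{item:identify_with_tw_muk} (compatibility of $\varphi_s$ with $\exp(B_s)\cdot$), applying $\exp(B_{\init})\cdot$ to the whole displayed class yields $\bigl(n^2, 0, x\tfrac{H^2}{2}, y[\pt]\bigr)$ in $\rH^{\ev}(X_{\init}, \bQ)$ — the $B_{\init}$-dependence is exactly cancelled. Thus under the isomorphism $\ch_{\init}$ the displayed class goes to a class which visibly extends to the global section $n^2 + x \cdot \exp(B?)\cdots$ — more precisely to $n^2\cdot\mathbf{1} + x\cdot\tfrac{H^2}{2} + y\cdot[\pt]$ of $\rR^{\ev}f_*\bQ$ (note $\tfrac{H^2}{2}$ and $[\pt]$ are genuinely global because $H$ and the point class are). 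This section is everywhere Hodge since each summand is. Then Lemma~\ref{lem:twisted_mukai_abelian-families}(4) says a class lifts to a global section of $\Ktop[0]((X,\alpha)/S)\otimes\bQ$ of Hodge type if and only if $\exp(B_{\init})\cdot\varphi_{\init}$ of it does, which we have just checked; this produces the desired $v$.

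For item~(2), the plan is to use the parallel transport compatibility, Lemma~\ref{lem:twisted_mukai_abelian-families}(3). Fix a path $\gamma$ from $\init$ to $s$ and let $B_s = \Phi_\gamma(B_{\init})$ be the resulting parallel transport of $B_{\init}$ (this is the meaning of ``the choice of a parallel transport $B_s$''). Since $v$ is a global section, $\Phi_\gamma$ sends $v_{\init}$ to $v_s$, hence $\Phi_\gamma(\exp(B_{\init})\cdot\varphi_{\init}(v_{\init})) = \exp(B_s)\cdot\varphi_s(v_s)$ by item~(3) of that lemma. On the other hand $\Phi_\gamma$ is a ring isomorphism on $\rH^{\ev}$ carrying $H$ to $H$ (being the relative polarization class, which is flat) and the point class to the point class, and carrying $B_{\init}$ to $B_s$ by definition; so $\Phi_\gamma$ of $\bigl(n^2, 0, x\tfrac{H^2}{2}, y[\pt]\bigr)$ is again $\bigl(n^2, 0, x\tfrac{H_s^2}{2}, y[\pt]\bigr)$. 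Applying $\exp(-B_s)\cdot$ and using the compatibility of $\varphi_s$ with $\exp(B_s)\cdot$ (Lemma~\ref{lem:twisted_mukai_abelian-families}\eqref{item:identify_with_tw_muk} at the point $s$), we recover
\[
    \varphi_s(v_s) = \left( n^{2}, -n^2 B_{s}, \tfrac{n^2}{2} B_s^2 + x \cdot \tfrac{H_s^2}{2}, y \cdot [\pt] \right),
\]
which is the claim. One should note this is independent of the chosen path because any two choices differ by an element of the monodromy group, which is absorbed into the ambiguity in the phrase ``a choice of parallel transport $B_s$''.

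\textbf{Main obstacle.} The only genuinely delicate point is keeping the bookkeeping of the exponential twist straight: one must verify that the components of $n^2\exp(-B_{\init})$ in degrees $0,2,4$ are exactly $n^2$, $-n^2 B_{\init}$, $\tfrac{n^2}{2}B_{\init}^2$ (there is no degree-$6$ contribution because $B_{\init}^3$ would land in $\rH^6$ but the displayed degree-$6$ term is the independent parameter $y[\pt]$, so one is implicitly using that the degree-$6$ part of $n^2\exp(-B_{\init})$, namely $-\tfrac{n^2}{6}B_{\init}^3$, is a multiple of $[\pt]$ and can be absorbed by shifting $y$) — so a small compatibility check is needed to see that the parametrization by $(x,y)$ is the honest one and that everything is consistent after applying $\exp(B_{\init})\cdot$. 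This is routine but must be done carefully; everything else is a direct invocation of Lemma~\ref{lem:twisted_mukai_abelian-families}.
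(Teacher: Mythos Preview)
Your approach is correct and matches the paper's: rewrite the displayed class as $n^2\exp(-B_{\init})$ plus globally extendable Hodge terms, multiply by $\exp(B_{\init})$, and invoke Lemma~\ref{lem:twisted_mukai_abelian-families}. One small correction: the degree-$6$ component of $\exp(B_{\init})\cdot z$ is not $y[\pt]$ but $y[\pt] + \tfrac{n^2}{6}B_{\init}^3 + \tfrac{x}{2}B_{\init}H^2$ --- the paper writes this as $y''[\pt]$ for a new constant $y''$ and then appeals to the Theorem of the Fixed Part, whereas you argue each summand is manifestly Hodge; either way the argument goes through since everything in $\rH^6$ is a multiple of $[\pt]$ and the intersection numbers $B_{\init}^3$, $B_{\init}H^2$ are preserved under parallel transport (so part~(2) still yields exactly $y[\pt]$ in degree~$6$).
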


\begin{proof}
        For~\eqref{z-hodge}, let $z$ be the displayed class. Note that
                \[ 
                        z = n^2 \exp(-B_1) + x' H^2 + y'[\pt] \in \rH^{\ev}(X, \bQ)
                \]
        for appropriate constants $x', y' \in \bQ$, so
        \[
                \exp(B_1)\cdot z = n^2 + x' H^2 +  y''[\pt]
        \]
        for $y'' = y' + x' B_1 H^2$. Then it is clear that $\exp(B_1) \cdot z$ is Hodge in each degree and extends to a global section of $\rR^{\ev} f_* \bQ$; from the Theorem of the Fixed Part \cite[Th\'eor\`eme 4.1.1]{Deligne1971ThorieDH}, the latter must be everywhere Hodge, since it is Hodge at $\init \in S(\bC)$. Then everything follows from Lemma~\ref{lem:twisted_mukai_abelian-families}.
\end{proof}

%%%%%%%%%%%%%%%%%%%%%%%%%%%%%%%%%%%%%%%%%%%%%%%%%

\section{The discriminant}
\label{section-discriminant} 

\subsection{Spin representations}

We begin by recalling some basic facts about spin representations. We refer the reader to \cite[\S 20]{fulton_harris}. 

Let $\rH$ be a $\bQ$-vector space of dimension $n$. The vector space $V = \rH \oplus \rH^\vee$ is equipped with a natural quadratic form $q$ from the pairing between $\rH$ and $\rH^\vee$. The even part of the Clifford algebra $\Cliff^+(V, q)$ admits a decomposition of algebras
\begin{equation} \label{eq:cliff_split}
    \Cliff^+(V, q) = \End \left(\textstyle{\bigwedge}^{\mathrm{ev}} \rH\right) \oplus \End \left(\textstyle{\bigwedge}^{\mathrm{odd}} \rH\right).
\end{equation}
There is a natural action of the spin group $\Spin(V, q)$ on $\textstyle{\bigwedge}^{\ev} H$ and $\textstyle{\bigwedge}^{\mathrm{odd}} H$; these are the half-spin representations of $\Spin(V, q)$.

\subsection{Igusa's formula}

In this section, we specialize to the case when $\dim \rH = 6$.

\begin{theorem}[Igusa]
\label{thm:invariant_polynomial}
    Let $\rH$ be a $\bQ$-vector space of dimension $6$, and let $\omega \in \bigwedge^6 \rH$ be a nonzero volume element. There is a unique quartic form
    \[
        \Delta: \medwedge^{\ev} \rH \to \bQ,
    \]
    satisfying the following properties:
    \begin{enumerate} 
        \item $\Delta$ is invariant under the action of $\Spin(V, q)$.
        \item $\Delta(1 + \omega) = - \frac{1}{4}$.
    \end{enumerate}
\end{theorem}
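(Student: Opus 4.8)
The plan is to realize $\Delta$ as (a normalization of) the discriminant of the half-spin representation of $\Spin(V,q)$ on $W = \bigwedge^{\ev}\rH$. Concretely, $\Spin_{12}$ acting on its $32$-dimensional half-spin representation is one of the classical examples of a prehomogeneous vector space with an irreducible relative invariant: a single (up to scalar) $\Spin(V,q)$-semi-invariant polynomial on $W$, which is in fact invariant (the character is trivial because $\Spin_{12}$ has no nontrivial characters), and it has degree $4$. So the plan is: (i) quote or reprove that the ring of $\Spin(V,q)$-invariant polynomials on $W$ is a polynomial ring in one generator of degree $4$; (ii) deduce existence of a nonzero quartic invariant $\Delta_0$; (iii) pin down the scalar using the normalization $\Delta(1+\omega)=-\tfrac14$, which requires checking that $\Delta_0(1+\omega)\neq 0$; and (iv) observe that uniqueness is then automatic, since any other quartic invariant is a scalar multiple of $\Delta_0$ and the value at $1+\omega$ fixes that scalar.

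For step (i), one approach is to invoke the classification of prehomogeneous vector spaces (Sato--Kimura): the pair $(\GL_1 \times \Spin_{12}, \text{half-spin})$ appears on their list with a degree-$4$ fundamental relative invariant, and restricting to $\Spin_{12}$ gives the single invariant. A more self-contained route, and probably the one the paper will take, is to exhibit the quartic invariant explicitly. Decompose $W = \bigwedge^{\ev}\rH = \bigwedge^0 \oplus \bigwedge^2 \oplus \bigwedge^4 \oplus \bigwedge^6$ with coordinates $a_0 \in \bigwedge^0\rH = \bQ$, $a_2 \in \bigwedge^2\rH$, $a_4 \in \bigwedge^4\rH$, $a_6 \in \bigwedge^6\rH = \bQ\cdot\omega$. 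Using $\omega$ to identify $\bigwedge^4\rH \cong \bigwedge^2\rH^\vee$ and $\bigwedge^6\rH\cong\bQ$, one builds candidate invariants from the natural contractions: $a_0 a_6$, the pairing of $a_2$ with $a_4$, the Pfaffian-type expression $a_2 \wedge a_2 \wedge a_2 \in \bigwedge^6\rH$, $a_4\wedge a_4 \in \bigwedge^8 \rH = 0$ (so that one instead dualizes $a_4$ and forms $a_4^\vee\wedge a_4^\vee \wedge a_4^\vee$), etc. A suitable $\SL(\rH)$-invariant combination such as
\begin{equation*}
\Delta_0 = (a_0 a_6 - \langle a_2, a_4\rangle)^2 + (\text{cubic in }a_2)\cdot a_6 + a_0 \cdot(\text{cubic in }a_4) + \cdots
\end{equation*}
is then checked to be invariant under the remaining generators of $\mathfrak{spin}(V,q)$ (the "raising/lowering" operators $\rH^\vee \to$ contraction, $\rH \to$ wedging, which together with $\mathfrak{gl}(\rH)$ generate $\mathfrak{spin}_{12}$), giving $\Spin(V,q)$-invariance by connectedness. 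I would present this as: it suffices to check annihilation by $\mathfrak{gl}(\rH)$ (clear from the $\SL$-invariant packaging, plus homogeneity under the center) and by one wedging operator $v\wedge(-)$ for $v\in\rH$ (the contraction operators then follow by the symmetry $\rH\leftrightarrow\rH^\vee$, i.e. $\omega\mapsto$ dual volume).

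The main obstacle I anticipate is step (i) done honestly — namely verifying that the explicit quartic is actually invariant, and that it is (up to scalar) the \emph{only} invariant of degree $\le 4$ (so that uniqueness holds as stated, not merely among quartics normalized appropriately). The cleanest way around this is to cite Igusa's original paper (hence the name "Igusa's discriminant") for both the existence of the quartic invariant and the fact that it generates the invariant ring, reducing the work here to: decompose $W$ as above; write down $\Delta_0$; verify the normalization $\Delta_0(1+\omega)$ is a nonzero rational number by direct substitution (here $a_0=1$, $a_6=\omega$, $a_2=a_4=0$, so only the $a_0a_6$ term survives and $\Delta_0(1+\omega) = 1 \neq 0$, then rescale by $-\tfrac14$); and conclude. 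Uniqueness is then immediate: if $\Delta, \Delta'$ both satisfy (1)–(2), then $\Delta - \Delta'$ is a $\Spin(V,q)$-invariant quartic vanishing at $1+\omega$, hence — since the space of invariant quartics is one-dimensional spanned by $\Delta_0$ and $\Delta_0(1+\omega)\neq 0$ — identically zero.
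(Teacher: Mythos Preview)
Your proposal sketches a genuine proof strategy, but the paper takes a much shorter route: it simply cites \cite[Appendix A]{OPT} and moves on. So there is nothing to compare at the level of mathematical content here --- the paper treats this as a known result and defers entirely to the reference, whereas you outline how one might actually establish it (via the Sato--Kimura classification of prehomogeneous vector spaces, or by explicit construction plus a Lie-algebra invariance check, then normalization). Your plan is sound in outline and would make the paper more self-contained, though as you yourself note, the honest verification of invariance and of one-dimensionality of the space of invariant quartics is where the real work lies; both ultimately point back to Igusa's original computation. For the purposes of this paper, a citation is all that is needed, and that is what the authors do.
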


\begin{proof}
    See \cite[Appendix A]{OPT}. 
\end{proof}

Igusa \cite[\S 3, Proposition 3]{igusa} gives an explicit formula for $\Delta$, as follows. Let $x_1, \dots, x_6$ be a basis for $\rH$ such that 
\[
    \omega = x_1 \wedge x_2 \wedge \cdots \wedge x_6.
\]
Any element $v$ of $\bigwedge^{\mathrm{ev}} \rH$ may be written 
\[
    v = a + \sum_{i < j} b_{ij} x_{ij} + \sum_{i < j} c_{ij} x_{ij}^* + d \cdot \omega, \quad a, b_{ij}, c_{ij}, d \in \bQ
\]
where $x_{ij} = x_i \wedge x_j$, and $x^*_{ij}$ satisfies $x_{ij} \wedge x^*_{ij} = \omega$. We use the following notation:
\begin{itemize}
    \item $B$ is the $6 \times 6$ alternating matrix with $B_{ij} = b_{ij}$ for $i < j$.
    \item $C$ is the $6 \times 6$ alternating matrix with $C_{ij} = c_{ij}$ for $i < j$.
    \item For any pair of integers $1 \leq i, j \leq 6$, $B_{(i, j)}$ is the $4 \times 4$ alternating matrix obtained from $B$ by simultaneously deleting the $i$th row and column and the $j$th row and column from $B$.
    \item The $4 \times 4$ alternating matrix $C_{(i, j)}$ is defined analogously.
\end{itemize}

\begin{theorem}[Igusa]
\label{thm:igusa_explicit_formula}
    With notation as above, 
    \begin{equation}
    \begin{split}
        \Delta(v) = - a &\Pf(C) - d \Pf(B) \\
            &+ \sum_{i < j} \Pf(B_{(i, j)})\Pf(C_{(i,j)}) -  \frac{1}{4}\left( ad - \sum_{i < j} b_{ij}c_{ij} \right)^2, 
    \end{split}
    \end{equation}
    where $\Pf$ denotes the Pfaffian. 
\end{theorem}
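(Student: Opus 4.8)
\textbf{Proof proposal for Theorem~\ref{thm:igusa_explicit_formula} (Igusa's explicit formula).}

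The plan is to verify that the explicit quartic expression on the right-hand side satisfies the two characterizing properties of $\Delta$ from Theorem~\ref{thm:invariant_polynomial}, namely $\Spin(V,q)$-invariance and the normalization $\Delta(1+\omega) = -\tfrac14$. By the uniqueness clause of that theorem, these two checks suffice. First I would fix the basis $x_1,\dots,x_6$ of $\rH$ adapted to $\omega$ as in the statement, so that $\bigwedge^{\ev}\rH$ is coordinatized by $(a, (b_{ij})_{i<j}, (c_{ij})_{i<j}, d)$, and write $F(v)$ for the proposed formula. The normalization is immediate: for $v = 1 + \omega$ we have $a = d = 1$ and all $b_{ij} = c_{ij} = 0$, so every Pfaffian term vanishes and $F(1+\omega) = -\tfrac14(ad)^2 = -\tfrac14$, as required.

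The substance is the $\Spin(V,q)$-invariance. I would argue via the Lie algebra $\fso(V,q)$, which splits according to $V = \rH \oplus \rH^\vee$ into three pieces: the block $\fgl(\rH) \subset \fso(V,q)$ acting as $\End(\rH)$ together with its dual action on $\rH^\vee$; the ``lowering'' operators $\bigwedge^2 \rH^\vee \hookrightarrow \fso(V,q)$; and the ``raising'' operators $\bigwedge^2 \rH \hookrightarrow \fso(V,q)$. It is enough to check that $F$ is annihilated by the derivation induced by each of these three families acting on the half-spin module $\bigwedge^{\ev}\rH$. For the $\fgl(\rH)$-part, the point is that $\Delta$ must be a relative invariant for the $\SL(\rH) = \SL_6$ action and, since $\bigwedge^{\ev}\rH$ carries the standard spinorial weight structure, the $\gl_1$-scaling forces $F$ to be genuinely invariant (weight zero) under the full $\fgl(\rH)$; concretely one checks that the four summands of $F$ are each $\SL_6$-semi-invariants of the appropriate weight and that the weights cancel in the sum, using the classical fact that the Pfaffian of a $6\times 6$ (resp. $4\times 4$) alternating matrix transforms by $\det^{1}$ under congruence. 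For the raising operators $\xi \in \bigwedge^2\rH$, the action on $\bigwedge^{\ev}\rH$ is wedging by $\xi$, which in coordinates is a translation $b \mapsto b + (\text{linear in }\xi)$ mixing the $c$'s and $d$ forward into the $b$'s and $a$; for the lowering operators $\eta \in \bigwedge^2\rH^\vee$ one uses contraction. I would reduce to a single raising generator, say $\xi = x_5 \wedge x_6$, by $\SL_6$-equivariance (already established), and then verify $\tfrac{d}{dt}\big|_{0} F(e^{t\xi}\cdot v) = 0$ by a direct but finite computation: the Pfaffian identities $\Pf(B + t\, e_5\wedge e_6) $ expanded to first order produce exactly the derivative of $\sum_{i<j}\Pf(B_{(i,j)})\Pf(C_{(i,j)})$ and the cross term $\tfrac14(ad - \sum b_{ij}c_{ij})^2$, via the cofactor/Laplace expansion of the Pfaffian along the deleted rows and columns.

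Alternatively — and this is likely the cleaner route — one can \emph{define} $\Delta$ by the formula $F$ and check invariance by relating $F$ to an intrinsic quantity: writing $v = a + B + C' + d\omega$ with $B \in \bigwedge^2\rH$ and $C' \in \bigwedge^4\rH \cong \bigwedge^2\rH^\vee \otimes \bigwedge^6\rH$, one recognizes $ad - \langle B, C'\rangle$, $a\,\Pf(C')$, $d\,\Pf(B)$, and $\sum \Pf(B_{(i,j)})\Pf(C'_{(i,j)})$ as the coefficients appearing when one expands the ``norm'' $N(v) = v \cdot \bar v$ of a spinor in the Clifford algebra, where $\bar{\phantom{v}}$ is the principal (anti-)automorphism; this norm lands in the trivial summand of the appropriate tensor power and is manifestly $\Spin$-invariant, and matching it against $F$ in the chosen basis is the content of Igusa's computation in \cite[\S3, Proposition 3]{igusa}. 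I would present the proof in this second form, citing \cite{igusa} for the bookkeeping of the identification of $N(v)$ with $F$, and deducing uniqueness and the normalization from Theorem~\ref{thm:invariant_polynomial}. The main obstacle is purely computational: carefully tracking the $4\times 4$ sub-Pfaffians $\Pf(B_{(i,j)})$, $\Pf(C_{(i,j)})$ and the sign conventions in $x_{ij}^*$ so that the cofactor expansion of the Pfaffian matches the raising/lowering action on $\bigwedge^{\ev}\rH$ on the nose; there are no conceptual difficulties beyond this, since uniqueness of $\Delta$ has already been handed to us by Theorem~\ref{thm:invariant_polynomial}.
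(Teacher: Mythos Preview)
The paper does not give a proof of this theorem at all; it simply cites \cite[\S 3, Proposition 3]{igusa}, with a remark that Igusa's printed formula carries a sign error in the first two terms (detectable by checking that the formula should vanish on $v = \exp(x_{12} + x_{34} + x_{56})$, since wedging by a $2$-form is a spin transformation fixing $1$). Your proposal goes well beyond this: you outline a genuine verification strategy, using the uniqueness clause of Theorem~\ref{thm:invariant_polynomial} to reduce to checking $\Spin(V,q)$-invariance of the explicit expression $F$ together with the normalization $F(1+\omega) = -\tfrac14$. This is a correct and self-contained approach. Your second route --- recognizing $F$ as the spinor norm $v \mapsto v\bar v$ computed in the Clifford algebra and landing in the trivial summand --- is essentially Igusa's own argument, so when you say you would ``cite \cite{igusa} for the bookkeeping'' you are in effect reproducing the paper's proof. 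Your first route, via the Lie algebra decomposition $\fso(V,q) = \fgl(\rH) \oplus \bigwedge^2\rH \oplus \bigwedge^2\rH^\vee$ and a direct derivative check for a single raising generator, is also valid but computationally heavier; as you correctly identify, there is no conceptual obstacle beyond tracking the Pfaffian cofactor expansions and sign conventions. For the purposes of this paper a citation suffices, so your proposal is more than adequate; in a final write-up you would likely just cite Igusa as the paper does, perhaps supplementing with the sanity check on $\exp(x_{12}+x_{34}+x_{56})$ to pin down the signs.
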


\begin{proof}
    See \cite[\S 3, Proposition 3]{igusa}. We note, however, that the formula there appears to contain an inaccuracy in the sign of the first two terms (as one may see from the fact that it does not vanish for $v = \exp(x_{12} + x_{34} + x_{56})$, as it ought).
\end{proof}

\subsection{Discriminants for abelian threefolds}

\begin{definition}
\label{def:abelian_threefold}
Let $X$ be an abelian threefold. 
The \emph{discriminant} on $\Ktop[0](X)$ is the quartic form
\[
    \Delta:\Ktop[0](X) \otimes \bQ \simeq \medwedge^{\ev} \rH^1(X, \bQ)  \longrightarrow \bQ
\] 
associated to $[\pt] \in \rH^6(X, \bQ)$ as in Theorem~\ref{thm:invariant_polynomial}. 
\end{definition}

The group of autoequivalences $\Aut(\Dperf(X)/\bC)$ acts on $\Ktop[0](X)$ via Fourier--Mukai transforms. On the other hand, $\Spin(V, q)$ acts on $\Ktop[0](X) \otimes \bQ$ through the half-spin representation, where 
\[
    V = \rH^1(X, \bQ) \oplus \rH_1(X, \bQ).
\]

\begin{theorem}[Mukai]
\label{thm:mukai_thm}
    The group $\Aut(\Dperf(X)/\bC)$ acts on $\Ktop[0](X)$ via half-spin transformations. 
\end{theorem}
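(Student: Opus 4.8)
The plan is to reduce the statement to the case of Fourier--Mukai transforms with kernels coming from a spinor-geometry source, namely line bundles, automorphisms of $X$, and the Fourier--Mukai transform along the Poincar\'{e} bundle; together with shifts, these generate (up to finite-index subtleties) the group $\Aut(\Dperf(X)/\bC)$ by a theorem of Orlov (every Fourier--Mukai equivalence $\Dperf(X) \to \Dperf(X)$ is a composition of such). So the first step is to record this generating set and to reduce to checking the claim on each generator. For a shift $[1]$, the action on $\Ktop[0](X)$ is multiplication by $-1$, which lies in the image of $\Spin(V,q)$ (it is $-\id$ on each half-spin representation, realized e.g. by $-1 \in \Spin$). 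For an automorphism $g \in \Aut(X)$, the induced action on $\rH^1(X,\bZ)$ preserves the symplectic/duality pairing between $\rH^1$ and $\rH_1$, hence gives an element of $\mathrm{SO}(V,q)$, and since $g$ is connected to the identity in the relevant sense (or directly: the action lifts canonically) it lifts to $\Spin(V,q)$; its action on $\bigwedge^{\ev}\rH^1$ under this lift is exactly the functorial pullback action, which is the action of $g^*$ on $\Ktop[0](X)$.

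The substantive step is the tensor by a line bundle and the Fourier--Mukai transform along the Poincar\'{e} bundle. Writing $\rH = \rH^1(X,\bQ)$, the even Clifford algebra $\Cliff^+(V,q)$ acts on $\bigwedge^\ev \rH$, and one has the two ``unipotent'' subgroups: exterior multiplication $v \mapsto \exp(b) \cdot v$ for $b \in \bigwedge^2 \rH$, and contraction $v \mapsto \exp(\gamma) \lrcorner\, v$ for $\gamma \in \bigwedge^2 \rH^\vee$, both of which land in $\Spin(V,q)$ (they are the exponentials of nilpotent elements of the even Clifford algebra, i.e.\ of $\fso(V,q) \subset \Cliff^+(V,q)$). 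Tensoring by a line bundle $L$ with $c_1(L) = b \in \NS(X) \subset \bigwedge^2 \rH$ multiplies the Chern character by $\exp(b)$, hence acts on $\Ktop[0](X) \otimes \bQ \simeq \bigwedge^\ev \rH$ as $\exp(b)\cdot(-)$, which is in the spin group by the above. For the Poincar\'{e} Fourier--Mukai transform $\mathrm{FM}_{\cP} \colon \Dperf(X) \to \Dperf(\widehat{X})$, composed with a fixed identification $\widehat{X} \simeq X$ coming from a principal polarization (or kept as an isomorphism $\bigwedge^\ev \rH \to \bigwedge^\ev \rH^\vee$), the Grothendieck--Riemann--Roch computation shows that on cohomology it is, up to sign and the canonical identification $\rH_1(X,\bQ) \simeq \rH^1(\widehat{X},\bQ)$, the ``Fourier transform'' exchanging $\bigwedge^\ev \rH$ and $\bigwedge^\ev \rH^\vee$ induced by the volume form $[\pt]$; this is precisely the element of the Clifford group interchanging the two maximal isotropic subspaces, hence again lies in (the normalizer giving) $\Spin(V,q)$ acting by half-spin transformations.

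Assembling: each Orlov generator acts on $\Ktop[0](X)$ by an element of the image of $\Spin(V,q) \to \mathrm{GL}(\bigwedge^\ev \rH)$, so the whole group $\Aut(\Dperf(X)/\bC)$ does. The main obstacle I anticipate is bookkeeping the identifications and signs: the identification $\Ktop[0](X) \otimes \bQ \simeq \bigwedge^\ev \rH^1(X,\bQ)$ must be the one under which the Chern character corresponds to $v \mapsto \sqrt{\td_X} \cdot \ch(v)$ (or the naive $\ch$, since $\td_X = 1$ for abelian varieties, which is a genuine simplification here), and the Poincar\'{e} transform introduces a sign and a Tate twist that must be tracked so that the resulting operator genuinely lands in $\Spin$ rather than merely in the Clifford group $\mathrm{GSpin}$; pinning down that the spinor norm is trivial on each generator (equivalently, that one can choose lifts consistently) is the delicate point, but it follows because each generator is a product of exponentials of nilpotents (spinor norm $1$), automorphisms (lifting to $\Spin$ as they act orientation-preservingly and preserve the canonical half-spin structure), shifts, and the Poincar\'{e} involution (whose square is, up to shift, translation by a point / $(-1)_X$, forcing its spinor norm to be trivial). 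This is essentially the content of \cite[Appendix A]{OPT} and Mukai's original arguments in the K3 setting transported to abelian varieties, so I would cite those for the detailed compatibility and present here only the reduction to generators plus the identification of each generator's cohomological action.
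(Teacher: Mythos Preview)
The paper does not prove this; it simply cites Mukai's preprint \cite{mukai_spin}. Your proposal attempts to reconstruct an argument, and the individual computations you describe are correct and are indeed the heart of the matter: tensoring by a line bundle with $c_1(L) = b$ acts on $\bigwedge^{\ev}\rH$ as $\exp(b)\cdot(-)$, which is the exponential of a nilpotent element of $\fso(V,q) \subset \Cliff^+(V,q)$ and hence lies in $\Spin(V,q)$; the Poincar\'e transform acts as the Clifford-group element exchanging the two maximal isotropic subspaces; automorphisms of $X$ act through $\mathrm{SO}(V,q)$ and lift to $\Spin$.

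The gap is in your reduction step. What you attribute to Orlov is not what Orlov proves: there is no theorem asserting that every autoequivalence of $\Dperf(X)$ is a composition of shifts, automorphisms of $X$, line bundle twists, and the Poincar\'e transform. Orlov's result (together with Polishchuk's) identifies $\Aut(\Dperf(X)/\bC)$, modulo shifts and the action of $X(\bC) \times \hat{X}(\bC)$, with the group $U(X \times \hat{X})$ of isometric automorphisms of $X \times \hat{X}$. Two concrete problems with your generating set: first, the Poincar\'e transform is a functor $\Dperf(X) \to \Dperf(\hat{X})$, so it only becomes an autoequivalence after choosing an isomorphism $X \cong \hat{X}$, i.e.\ a principal polarization, which an arbitrary abelian threefold need not admit; second, even when a principal polarization exists, your four types need not generate $U(X \times \hat{X})$ once $X$ has nontrivial endomorphisms (the group is then strictly larger than the $\mathrm{SL}_2(\bZ)$ generated by the ``$S$'' and ``$T$'' moves). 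The correct route, which is what Mukai carries out, is to use Orlov's structural description directly: realize $U(X \times \hat{X})$ inside $\mathrm{GL}(\rH_1(X \times \hat{X},\bQ))$ and verify via the Clifford algebra that the induced action on $\bigwedge^{\ev}\rH^1(X,\bQ)$---computed from the Mukai vector of an arbitrary equivalence kernel---factors through the half-spin representation, rather than attempting to decompose into your proposed generators.
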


\begin{proof}
    See \cite{mukai_spin}.
\end{proof}

\begin{corollary}    
\label{cor:auteq_preserves_discr}
    The discriminant $\Delta$ is invariant under the action of $\Aut(\Dperf(X)/\bC)$.
\end{corollary}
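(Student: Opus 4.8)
The plan is to combine the two theorems immediately preceding the corollary: Mukai's theorem (Theorem~\ref{thm:mukai_thm}), which says that every element $\Phi \in \Aut(\Dperf(X)/\bC)$ acts on $\Ktop[0](X)$ via a half-spin transformation, and Igusa's characterization of the quartic form (Theorem~\ref{thm:invariant_polynomial}), which says that $\Delta$ is invariant under the action of $\Spin(V,q)$. More precisely, recall that under the Chern character $\Ktop[0](X)\otimes\bQ \simeq \medwedge^{\ev}\rH^1(X,\bQ)$, and this is the half-spin representation of $\Spin(V,q)$ with $V = \rH^1(X,\bQ)\oplus\rH_1(X,\bQ)$ equipped with the standard hyperbolic quadratic form $q$. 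By Theorem~\ref{thm:mukai_thm}, the automorphism $\Phi_*$ of $\Ktop[0](X)\otimes\bQ$ induced by $\Phi$ coincides with the image of some element $g \in \Spin(V,q)$ under this half-spin representation. Since $\Delta$ is $\Spin(V,q)$-invariant by Theorem~\ref{thm:invariant_polynomial}(1), we get $\Delta(\Phi_* v) = \Delta(g\cdot v) = \Delta(v)$ for all $v\in\Ktop[0](X)\otimes\bQ$, which is exactly the assertion.

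First I would fix notation: write $q$ for the natural hyperbolic quadratic form on $V = \rH^1(X,\bQ)\oplus\rH_1(X,\bQ)$ coming from the pairing between $\rH^1$ and its dual $\rH_1$, and recall the decomposition \eqref{eq:cliff_split} of $\Cliff^+(V,q)$, so that $\medwedge^{\ev}\rH^1(X,\bQ)$ is one of the two half-spin representations of $\Spin(V,q)$. Then I would identify, via the Chern character, the $\Ktop[0](X)\otimes\bQ$ on which $\Aut(\Dperf(X)/\bC)$ acts with this half-spin representation, noting that Theorem~\ref{thm:mukai_thm} is precisely the statement that the image of $\Aut(\Dperf(X)/\bC)$ in $\GL(\Ktop[0](X)\otimes\bQ)$ lands inside the image of $\Spin(V,q)$. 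Finally, invoking Theorem~\ref{thm:invariant_polynomial}(1) — the $\Spin(V,q)$-invariance of the quartic form $\Delta$ associated to the volume form $[\pt]\in\rH^6(X,\bQ)$ — I would conclude that $\Delta$ is invariant under all of $\Aut(\Dperf(X)/\bC)$.

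The only point requiring care, and the one I would spell out, is the compatibility of conventions: one must check that the quadratic form used to define $\Delta$ in Definition~\ref{def:abelian_threefold} (via $[\pt]\in\rH^6(X,\bQ)\cong\medwedge^6\rH^1(X,\bQ)$) is the same form, up to the identifications in play, as the one governing the half-spin action in Mukai's theorem. This is essentially bookkeeping — the volume element $\omega$ in Theorem~\ref{thm:invariant_polynomial} is the class of a point, the pairing $\rH^1\otimes\rH_1\to\bQ$ is Poincar\'e duality, and Mukai's spin transformations are by construction compatible with the Euler pairing on $\Ktop[0](X)$ — but it is where essentially all the content sits, the rest being a one-line deduction. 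Since Theorem~\ref{thm:mukai_thm} and Theorem~\ref{thm:invariant_polynomial} are both cited from the literature (\cite{mukai_spin} and \cite{OPT}/\cite{igusa} respectively), I expect the proof in the paper to be just the two-sentence combination above, possibly with a remark pinning down the sign/volume normalization; I do not anticipate any genuine obstacle.
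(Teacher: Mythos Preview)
Your proposal is correct and matches the paper's approach exactly: the paper gives no proof at all for this corollary, treating it as an immediate consequence of Theorem~\ref{thm:mukai_thm} (Mukai) and Theorem~\ref{thm:invariant_polynomial} (Igusa), which is precisely the combination you describe. Your discussion of the normalization bookkeeping is more detailed than anything the paper provides, but the logical content is the same one-line deduction.
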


\begin{lemma}
\label{lem:discr_no_stabilizer}
    Let $X$ be a simple abelian threefold over $\bC$, and let $v \in \Ktop[0](X)$ be a Hodge class with $\Delta(v) \neq 0$ and $\rk(v) \neq 0$. 
    Let $\sigma$ be a stability condition on $\Dperf(X)$ over $\bC$ such that there do not exist strictly $\sigma$-semistable objects of class $v$. 
    Let  $\phi \in \bR$ be a phase compatible with $v$. 
    Then the  stack
    \begin{equation*} 
        \cM_{\sigma}(v, \phi)/\cAut^0(\Dperf(X)/\bC)
    \end{equation*}
    is proper and Deligne--Mumford. 
\end{lemma}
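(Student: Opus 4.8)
The statement to prove is Lemma~\ref{lem:discr_no_stabilizer}: for a simple complex abelian threefold $X$, a Hodge class $v\in\Ktop[0](X)$ with $\Delta(v)\neq 0$ and $\rk(v)\neq 0$, and a stability condition $\sigma$ with no strictly $\sigma$-semistable objects of class $v$, the quotient stack $\cM_{\sigma}(v,\phi)/\cAut^0(\Dperf(X)/\bC)$ is proper and Deligne--Mumford. Properness is the easy half: by Example~\ref{example-moduli-objects-proper} (with $S=\Spec(\bC)$, and using Lemma~\ref{lemma-Aut0-CYn-proper} to see $\Aut^0(\Dperf(X)/\bC)$ is proper), the hypotheses of Lemma~\ref{lemma-MG-proper} are met --- $M_{\sigma}(v,\phi)\to\Spec(\bC)$ is proper since no strictly semistable objects exist, and $\Aut^0\to\Spec(\bC)$ is proper --- so the quotient is proper. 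The content is showing the quotient is Deligne--Mumford.

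For the Deligne--Mumford claim, I would argue that it suffices to show that every $\sigma$-stable object $E$ of class $v$ has finite stabilizer in $\Aut^0(\Dperf(X)/\bC)$, i.e.\ the group of autoequivalences $\Psi\in\Aut^0(\Dperf(X)/\bC)$ with $\Psi(E)\cong E$ is finite. Indeed $\cM_\sigma(v,\phi)$ is a $\bG_m$-gerbe over an algebraic space $M_\sigma(v,\phi)$ (since objects are simple), so $\cM_\sigma(v,\phi)/\cAut^0 \cong M_\sigma(v,\phi)/\Aut^0$; this quotient stack is Deligne--Mumford precisely when the stabilizers of the $\Aut^0$-action on points of $M_\sigma(v,\phi)$ are finite (and reduced, automatic in characteristic $0$), equivalently unramified, which by a standard tangent space computation reduces to the vanishing of $\mathrm{Lie}$ of the stabilizer at every point. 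So the key point is: for $E$ stable of class $v$, the Lie algebra of $\mathrm{Stab}_{\Aut^0}(E)$ is zero.

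The main obstacle --- and the heart of the lemma --- is this Lie algebra computation, and this is where the hypotheses $\Delta(v)\neq 0$ and $\rk(v)\neq 0$ enter. By Lemma~\ref{lemma-HH1-CY} the Lie algebra of $\Aut^0(\Dperf(X)/\bC)$ is $\HH^1(\Dperf(X)/\bC)$, which by HKR is $\rH^0(X,\rT_X)\oplus\rH^1(X,\cO_X) \cong \rH_1(X,\bC)\oplus \rH^1(X,\bC)$, i.e.\ the standard representation $V_{\bC}$ of $\fso(V)$ (here $V=\rH^1(X,\bQ)\oplus\rH_1(X,\bQ)$). The action of this Lie algebra on $\Ktop[0](X)\otimes\bC$ through Mukai's half-spin representation (Theorem~\ref{thm:mukai_thm}) is the Clifford multiplication $V\otimes\bigwedge^{\ev}\rH\to\bigwedge^{\ev}\rH$. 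The stabilizer of $E$ in $\Aut^0$ acts trivially on $\Ktop[0](X)$ --- more precisely, an element $\xi\in\mathrm{Lie}\,\mathrm{Stab}_{\Aut^0}(E)$ must in particular fix the class $v=[E]$, so $\xi\cdot v = 0$ in $\bigwedge^{\ev}\rH$ under Clifford multiplication. Now I would invoke (or prove directly) the key linear-algebra fact, already used implicitly in \cite{OPT}: a half-spinor $v\in\bigwedge^{\ev}\rH$ with $\Delta(v)\neq 0$ and $\rk(v)\neq 0$ has trivial annihilator in $V$ under Clifford multiplication, i.e.\ $\{\xi\in V_{\bC} : \xi\cdot v=0\}=0$. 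This can be checked using Igusa's explicit formula (Theorem~\ref{thm:igusa_explicit_formula}): write $v=a+\sum b_{ij}x_{ij}+\sum c_{ij}x_{ij}^*+d\,\omega$, note $\rk(v)=a\neq 0$ lets one normalize so that $a=1$ and (after a spin transformation, using invariance of $\Delta$) bring $v$ to the form $\exp(\text{something})$ modulo top degree, then observe that the annihilator is nonzero only when a certain subdiscriminant expression --- identifiable with $\Delta(v)$ up to the squared correction term --- vanishes. Concretely, $\xi=h+\ell$ with $h\in\rH$, $\ell\in\rH^\vee$ annihilates $v$ iff $h\wedge v = 0$ and $\iota_\ell v = 0$; combining with $a\neq 0$ forces $h=0$ and then $\ell$ lies in a space whose dimension is governed by the rank of an associated skew form, which is full exactly when $\Delta(v)\neq 0$. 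Hence $\xi=0$, so $\mathrm{Lie}\,\mathrm{Stab}_{\Aut^0}(E)=0$, the stabilizer is finite, and the quotient is Deligne--Mumford.

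Finally I would assemble: properness from Lemma~\ref{lemma-MG-proper} as above, Deligne--Mumford from the stabilizer computation via the criterion that an algebraic stack locally of finite type over $\bC$ with finite, reduced (automatic in char.\ $0$) stabilizers is Deligne--Mumford (equivalently the diagonal is unramified). One subtlety to address carefully: the phase $\phi$ plays no essential role (one may absorb it, cf.\ the remark after Definition~\ref{definition-DT-CY3}), and one should note $\Delta$ and $\rk$ are invariant under $\Aut^0$ (Corollary~\ref{cor:auteq_preserves_discr}) so the hypotheses are well-posed on the quotient; also simplicity of $X$ is used to ensure $\rH^0(X,\rT_X)$ acts through honest translations/Clifford operators with no extra algebra of endomorphisms --- more precisely it guarantees $\Aut^0(\Dperf(X)/\bC)$ is exactly $X\times\hat X$ (up to the shift/Picard factors) so that its Lie algebra is the standard spin module $V_{\bC}$ with no further summands, making the linear-algebra reduction above legitimate.
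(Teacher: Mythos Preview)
Your properness argument is fine and matches the paper's: by Example~\ref{example-moduli-objects-proper}, Lemma~\ref{lemma-Aut0-CYn-proper}, and Lemma~\ref{lemma-MG-proper}, the quotient is proper. Your reduction of the Deligne--Mumford claim to finiteness of stabilizers of stable objects under $\Aut^0(\Dperf(X)/\bC)\cong X\times X^{\vee}$ is also correct.

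The gap is in the stabilizer computation itself. You assert that the Lie algebra $\HH^1(\Dperf(X)/\bC)$ of $\Aut^0$ acts on $\Ktop[0](X)\otimes\bC$ by Clifford multiplication, and that a tangent vector $\xi$ to the stabilizer of $E$ must therefore satisfy $\xi\cdot v=0$. But the homomorphism $\Aut^0=X\times X^{\vee}\to\Spin(V)$ coming from Mukai's theorem is the \emph{trivial} homomorphism: translations by points of $X$ and twists by line bundles in $\Pic^0(X)$ both act trivially on cohomology, hence on $\Ktop[0](X)$. So the induced Lie algebra action of $\HH^1$ on $\Ktop[0](X)\otimes\bC$ is the zero map, not Clifford multiplication, and your implication ``$\xi$ stabilizes $E\Rightarrow\xi\cdot v=0$'' holds vacuously for \emph{every} $\xi\in\HH^1$, yielding no constraint on the stabilizer. (There is also a dimension mismatch: $\HH^1\cong\rH^0(X,\rT_X)\oplus\rH^1(X,\cO_X)$ is $6$-dimensional, while $V_{\bC}=\rH^1(X,\bC)\oplus\rH_1(X,\bC)$ is $12$-dimensional; and $V$ sits in the odd part of the Clifford algebra, not in $\fso(V)=\mathrm{Lie}\,\Spin(V)$, so there is no natural Lie-algebra action of $V$ through the half-spin representation in the first place.)

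The paper's route is geometric and quite different. Suppose a stable object $E$ with $\rk(E)\neq 0$ has positive-dimensional stabilizer $G_E\subset X\times X^{\vee}$. The fiber of the projection $G_E\to X$ over $0$ consists of $L\in X^{\vee}$ with $E\otimes L\cong E$, hence (comparing determinants) is contained in the finite group $X^{\vee}[\rk E]$; so $G_E\to X$ is finite over its image, and since $X$ is simple and $\dim G_E>0$ the image is all of $X$. Thus $E$ is \emph{semihomogeneous}: for every $x\in X$ there exists $L$ with $t_x^*E\otimes L\cong E$. The classification of semihomogeneous complexes \cite{dejong2022point} then gives $E\simeq\pi_*L'[s]$ for an isogeny $\pi\colon X'\to X$ and a line bundle $L'$ on $X'$, whence $\ch(E)=\pm(\deg\pi)\exp(w)$ for some $w\in\rH^2(X,\bQ)$. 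Since multiplication by $\exp(w)$ is a spin transformation, $\Delta(\ch(E))=(\deg\pi)^4\,\Delta(1,0,0,0)=0$, contradicting $\Delta(v)\neq 0$.
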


\begin{remark}
    If $H$ is a polarization on $X$ such that there do not exist strictly $H$-semistable sheaves of class $v$, then the same result holds with $\cM_H(v)$ in place of $\cM_{\sigma}(v, \phi)$. 
\end{remark}

\begin{proof}
Note that $\cM_{\sigma}(v, \phi)$ consists of stable (in particular simple) objects, and its good moduli space $M_{\sigma}(v,\phi)$ is proper \cite[Theorem 21.24]{stability-families}. Moreover, by Lemma~\ref{lemma-Aut0-CYn-proper} the identity component of the space of autoequivalences 
    $\Aut^0(\Dperf(X)/\bC)$ is proper; in fact, it is isomorphic to $X \times X^{\vee}$.  
By Remark~\ref{remark-quotient-algebraic}, we have an isomorphism 
\begin{equation*}
        \cM_{\sigma}(v, \phi)/\cAut^0(\Dperf(X)/\bC) \cong 
        M_{\sigma}(v,\phi)/\Aut^0(\Dperf(X)/\bC). 
\end{equation*}
Therefore the properness of the quotient stack follows from Lemma~\ref{lemma-MG-proper}. 

    To prove the quotient is also Deligne--Mumford, 
    we show that any stable object $E$ with nonzero rank and positive-dimensional stabilizer in $\Aut^0(\Dperf(X)/\bC)$ has $\Delta(\ch(E)) = 0$. This is essentially \cite[Lemma 4.7]{OPT}, which uses Fourier--Mukai transforms to reduce to the classification of semihomogeneous vector bundles due to Mukai \cite{mukai_semihom}, but we give a direct proof using the classification of semihomogeneous complexes from \cite{dejong2022point}.

    Let $G_E \subset X \times X^\vee = \Aut^0(\Dperf(X)/\bC)$ be the stabilizer of $E$. Concretely, $G_E$ is the subgroup of pairs $(x, L)$ such that there exists an isomorphism $t_x^* E \otimes L \cong E$. 
    We claim that the projection $G_E \to X$ is surjective. The fiber over $0 \in X$ consists of line bundles $L$ such that $E \otimes L \cong E$. Observe, however, that
    \begin{equation*}
        \det(E \otimes L) = \det(E) \otimes L^{\rk E},
    \end{equation*}
    so $L$ is contained in the finite subgroup $X^{\vee}[\rk E]$. It follows that the projection $G_E \to X$ is finite over its image; since $G_E$ is positive-dimensional and $X$ is simple, the image is $X$.
    
    It follows that $E$ is a \emph{semihomogeneous complex} on $X$ (cf. \cite{dejong2022point}). From the classification of semihomogeneous complexes (cf. \cite[Proposition 4.7]{dejong2022point}), there is an isogeny $\pi:X' \to X$ and a line bundle $L$ on $X'$ such that $E \simeq \pi_* L'[s]$, for some shift $s \in \bZ$. Writing $c_1(L') = \pi^* w$ for some rational class $w \in \rH^2(X, \bQ)$, one sees that $\ch(E) = \pm (\deg \pi)\exp(w)$. Then 
    \begin{equation*}
        \Delta(\ch(E)) = (\deg \pi)^{4}\Delta(1,0,0,0) = 0
    \end{equation*}
    because multiplication by $\exp(w)$ is a spin transformation (cf. \cite[Appendix A]{obPT}). 
\end{proof}

%%%%%%%%%%%%%%%%%%%%%%%%%%%%

\section{Nonvanishing of curve invariants}
\label{section-nonvanishing-curve-invariants}

Given an abelian threefold $X$, the type of a class in $\rH^4(X, \bC)$ is an associated tuple of integers $(d_1, d_2, d_3) \in \bZ_{\geq 0}^3$, whose definition is recalled in \S\ref{section-rank-type}. In what follows, by \emph{$\bQ$-effective curve class} $\beta \in \rH^4(X, \bZ)$, we mean that $\beta$ is a positive $\bQ$-linear combination of cycle classes $[C_i], C_i \subset X$ (cf. Lemma~\ref{lemma:poincare_dual_of_effective}).\footnote{We mention in passing that if $\beta \in \rH^4(X, \bZ)$ is $\bQ$-effective, it is not a priori clear that $\beta$ is $\bZ$-effective in the evident sense. However, when $\beta$ has type $(1,1,d$), $\bZ$-effectivity follows a posteriori from Proposition~\ref{prop:nonvanishing_theorem}.}

\begin{proposition}
\label{prop:nonvanishing_theorem}
    Let $X$ be an abelian threefold. Let $\beta \in \rH^4(X, \bZ)$ be a $\bQ$-effective curve class
    of type $(1, 1, d)$ for some $d > 0$, and let $n$ be an integer. Consider the class
    \[
        v = (1, 0, - \beta, -n),
    \]
    and assume that $\Delta(v) \geq 0$. Then, for any polarization $H$ on $X$, $\DT_H(v) > 0$.
\end{proposition}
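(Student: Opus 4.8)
The strategy is to reduce $\DT_H(1,0,-\beta,-n)$ to the known Oberdieck--Shen formula for the generating series of reduced DT invariants of abelian threefolds in curve classes of type $(1,1,d)$, and then to extract positivity of the individual coefficients from the shape of that series under the hypothesis $\Delta(v)\geq 0$. The first observation is that, by the results of \cite{OPT}, the reduced DT invariant $\DT_H(1,0,-\beta,-n)$ is independent of the choice of polarization $H$ (indeed, for $\Delta\geq 0$ it is invariant under wall-crossing and the action of autoequivalences, but here we only need the much weaker statement that it does not depend on $H$, which also follows from deformation invariance, Corollary~\ref{corollary-DT-twisted-constant}, combined with the fact that the moduli space of $H$-semistable sheaves of class $v$ consists of ideal sheaves of curves so there are no strictly semistable objects). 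Thus we are free to choose $H$ conveniently, or to pass to a convenient deformation of $X$ within the Hodge locus of $\beta$ and $n$.

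\textbf{Main step: invoke Oberdieck--Shen.} Write $\DT_{H}(1,0,-\beta,-n)$ as the coefficient of $q^n$ in the generating series
\[
\mathsf{Z}_{\beta}(q) \coloneqq \sum_{n\in\bZ} \DT_H(1,0,-\beta,-n)\, q^n .
\]
The class $v=(1,0,-\beta,-n)$ is the Chern character of an ideal sheaf $I_C$ of a $1$-dimensional subscheme $C\subset X$ of class $\beta$ and holomorphic Euler characteristic $\chi(\cO_C)=n$, so $\DT_H(v)$ is (after quotienting by the translation action $\Aut^0(\Dperf(X)/\bC)=X\times X^\vee$ as in our Definition~\ref{definition-DT-twisted}) exactly the reduced DT invariant of $X$ in curve class $\beta$. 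For $\beta$ of type $(1,1,d)$, work of Oberdieck--Shen \cite{obshen} gives a closed formula for $\mathsf{Z}_\beta(q)$: it is, up to an overall shift and sign, a product of a power of $q$ with an explicit modular-type expression whose $q$-expansion has coefficients of a single sign (governed by the Euler characteristic of the relevant Hilbert scheme / the Behrend-weighted count). Concretely one expects $\mathsf{Z}_\beta(q)$ to be expressible via the function $\prod_{k\geq 1}(1-q^k)^{-1}$-type factors and a polynomial factor, so that all nonzero coefficients are positive; the precise statement to be extracted from \cite{obshen} (and its refinements in \cite{BOPR}) is that the coefficient of $q^n$ is strictly positive precisely in the range where the associated discriminant-type quantity is nonnegative.

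\textbf{Converting $\Delta\geq 0$ into the effective range.} The final step is to check that the hypothesis $\Delta(v)=\Delta(1,0,-\beta,-n)\geq 0$ is exactly (or at least implies) the condition on $n$, $d$ under which the Oberdieck--Shen coefficient is nonzero. Using Igusa's explicit formula (Theorem~\ref{thm:igusa_explicit_formula}) with a symplectic basis adapted to the type $(1,1,d)$ decomposition $\beta = e_1\wedge e_2 + e_3\wedge e_4 + d\, e_5\wedge e_6$, the quartic $\Delta(1,0,-\beta,-n)$ becomes an explicit polynomial in $d$ and $n$ (a degree-$\leq 2$ polynomial in $n$ with leading term $-\tfrac14 n^2$ plus lower-order terms in $d$); solving $\Delta\geq 0$ gives an interval of allowed $n$, and one matches this interval with the support of the nonzero coefficients of $\mathsf{Z}_\beta(q)$. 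Since the sign of every nonzero coefficient of $\mathsf{Z}_\beta$ is positive, $\Delta(v)\geq 0$ forces $\DT_H(v)>0$.

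\textbf{Where the difficulty lies.} The genuinely substantive point is the precise identification of $\DT_H(1,0,-\beta,-n)$ with the reduced DT invariant computed in \cite{obshen}: one must match conventions (our Behrend-weighted reduced invariant on the quotient $\cM_H(v)/\cAut^0$ versus their invariant), confirm that the type $(1,1,d)$ hypothesis is preserved under whatever deformation of $X$ is used, and — the real heart — read off from the Oberdieck--Shen formula that all nonzero $q$-coefficients have the same (positive) sign and that their support is cut out exactly by the nonnegativity of Igusa's quartic. Granting the modularity/product formula of \cite{obshen} and the sign-positivity of its coefficients, the matching with $\Delta\geq 0$ is then a finite computation with Theorem~\ref{thm:igusa_explicit_formula}. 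I would carry out the proof in the order: (1) reduce to a single convenient $(X,H)$ via polarization-independence and deformation invariance; (2) identify $\mathsf{Z}_\beta(q)$ with the Oberdieck--Shen series; (3) compute $\Delta(1,0,-\beta,-n)$ explicitly and compare ranges; (4) conclude positivity. The main obstacle is step (2), the bookkeeping of conventions linking our categorical reduced DT invariant to the curve-counting invariant in the literature.
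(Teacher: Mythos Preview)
Your plan is correct and matches the paper's approach exactly: deformation-invariance reduces to a single $(X,\beta)$ of type $(1,1,d)$, the Oberdieck--Shen formula gives $\sum_{d,n}\DT_{d,n}q^n t^d = (q+2+q^{-1})\prod_{m\geq 1}(1+qt^m)^2(1+q^{-1}t^m)^2/(1-t^m)^4$, and Igusa's formula yields $\Delta(1,0,-\beta,-n)=d-n^2/4$. The positivity step is then an elementary partition-counting argument (via triangular numbers) showing that the coefficient of $q^{n-1}t^d$ in $\prod_{m\geq 1}(1+qt^m)^2$ is positive whenever $n^2\leq 4d$, so multiplying by $(q+2+q^{-1})$ produces a positive $q^n t^d$ coefficient.
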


In the above statement, we slightly abuse notation by identifying $(1,0,-\beta, -n) \in \rH^{\ev}(X, \bZ)$ with a class $v \in \Ktop[0](X)$ via the Chern character isomorphism $\Ktop[0](X) \cong \rH^{\ev}(X, \bZ)$. 
Moreover, implicit in Proposition~\ref{prop:nonvanishing_theorem} is the claim that the assumptions of Definition~\ref{definition-DT-twisted} are satisfied, so that $\DT_H(v)$ is well-defined:

\begin{remark}[Well-definedness]
\label{remark:well-defined}
    We explain why the invariant $\DT_H(v)$ is well-defined, which amounts to showing that the quotient stack 
    \begin{equation*}
        M_H(v)/X \times X^\vee
    \end{equation*}
    is Deligne--Mumford, so the assumptions of Definition~\ref{definition-DT-twisted} are met. In fact, the action of $X^\vee$ on $M_H$ is free, and the embedding $\mathrm{Hilb}_X(v) \to M_H(v)$ (sending $Z \subset X$ to its ideal sheaf) induces an isomorphism of quotient stacks
    \begin{equation}
        \mathrm{Hilb}_X(v)/X \to M_H(v)/X \times X^\vee.
    \end{equation}
    The left-hand side is Deligne--Mumford: this is due to Gulbrandsen \cite{gulbrandsen} when $n \neq 0$, while for $n = 0$, a case which is unimportant for our application, it follows from a variant of Gulbrandsen's argument due to Oberdieck (cf. \cite[\S 2.1]{obPT} for the analog for stable pairs, which adapts straightforwardly to Hilbert schemes).
\end{remark}

\begin{remark}[Deformation type of curve classes]
\label{remark:def_type_of_curve_class}
    Any two pairs $(X_1, \beta_1)$, $(X_2, \beta_2)$, where $X_1$, $X_2$ are abelian $g$-folds and $\beta_1$, $\beta_2$ are effective curve classes of rank $2g$ and the same type $(d_1, \dots, d_g)$, can be connected by a deformation which keeps $\beta_1, \beta_2$ of Hodge type and transports $\beta_1$ to $\beta_2$.

    Indeed, passing to dual abelian varieties and applying Lemma~\ref{lemma:poincare_dual_of_effective}, this is equivalent to the claim that the polarized abelian $g$-folds $(X_1^\vee, L_1)$ and $(X_2^\vee, L_2)$ (where $L_i$ is the dual to $\beta_i$) can be connected by a polarized deformation. Since $L_1$ and $L_2$ have the same type, the claim follows from the fact that the moduli space of abelian $g$-folds with fixed polarization type is connected (e.g., the classical analytic construction presents the moduli space as a quotient of Siegel's upper half-space \cite[\S 8]{birk_lange}).
\end{remark}

From Remark~\ref{remark:def_type_of_curve_class}, $\DT_H(v)$ depends only on the type of the effective class $\beta$ and the integer $n$.
For simplicity, in what follows we write
\[
    \DT_{d, n} = \DT_H(1, 0, - \beta, -n),
\]
where $\beta$ is an effective class of type $(1, 1, d)$ on an abelian threefold $X$. 

We prove Proposition~\ref{prop:nonvanishing_theorem} by analyzing the generating function for DT invariants of curve classes of type $(1, 1, d)$ on abelian threefolds. The generating function has recently been determined by Oberdieck--Shen \cite{obshen}, building on earlier work of Bryan--Oberdieck--Pandharipande--Yin \cite{BOPR} on the generating function for topological Euler characteristics:

\begin{theorem}[Oberdieck--Shen]
\label{thm:curve_count_on_abelian_threefold}
The generating series for the invariants $\DT_{d, n}$ is given as follows:
    \begin{equation}
    \label{eq:generating_series_dt}
        \sum_{d, n} \DT_{d, n}  q^n t^d = (q + 2 + q^{-1}) \prod_{m \geq 1} \frac{(1 + qt^m)^2(1 + q^{-1} t^m)^2}{(1 - t^m)^4}
    \end{equation}
\end{theorem}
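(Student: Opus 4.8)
The statement to be established is the closed formula \eqref{eq:generating_series_dt} for the two-variable generating series of the reduced Donaldson--Thomas invariants $\DT_{d,n}$ counting, for an abelian threefold $X$, the degree-zero-rank-one classes $(1,0,-\beta,-n)$ with $\beta$ an effective curve class of type $(1,1,d)$. As explained in Remark~\ref{remark:well-defined}, for such a class the stack $\cM_H(v)/X\times X^\vee$ is identified with $\mathrm{Hilb}_X(v)/X$ via the ideal sheaf embedding, so $\DT_{d,n}$ is an integral over the quotient of the Hilbert scheme of one-dimensional subschemes of $X$ of the given class by the translation action. The plan is \emph{not} to reprove this from scratch, but to deduce it from the work of Oberdieck--Shen \cite{obshen}, whose main theorem computes exactly this generating series; our task is to match conventions and package the result. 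First I would recall that our $\DT_{d,n}$ is the reduced DT invariant in the sense of Definition~\ref{definition-DT-twisted}, and check (using the comparison with Gulbrandsen's construction spelled out in the remark following Definition~\ref{definition-DT-twisted}, together with the fact that semistability is automatic and no strictly semistable sheaves of class $v$ exist when $\beta$ is an effective curve class) that this coincides with the reduced invariant appearing in \cite{obshen}. Then \eqref{eq:generating_series_dt} is precisely (a repackaging of) the main result of \cite{obshen}.

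\textbf{Key steps, in order.} (1) Unwind the definition: by Remark~\ref{remark:well-defined}, $\DT_{d,n}=\#^{\vir}_\phi(\mathrm{Hilb}_X(v)/X)$ for the symmetric obstruction theory $\phi$ of Theorem~\ref{theorem-obs-theory}, and by Remark~\ref{remark-behrend-function} this virtual count is computed by the Behrend function, hence is intrinsic and agrees with the reduced DT invariant studied in the literature. (2) Invoke Remark~\ref{remark:def_type_of_curve_class} to see that $\DT_{d,n}$ depends only on the type $(1,1,d)$ and on $n$, so the generating series is well-defined independently of the particular $(X,\beta,H)$; in particular one may compute on any convenient abelian threefold carrying such a $\beta$. (3) Cite the Oberdieck--Shen formula: in \cite{obshen} the reduced DT generating series for curve classes of type $(1,1,d)$ on abelian threefolds is determined, building on the Euler-characteristic computation of Bryan--Oberdieck--Pandharipande--Yin \cite{BOPR}; the right-hand side of \eqref{eq:generating_series_dt} is exactly their answer. (4) Translate notation: match the variables $q$ (tracking $n$, i.e. $\ch_3$) and $t$ (tracking the curve degree $d$) with those of \cite{obshen}, verify the sign and normalization conventions (e.g. whether their series records $(-1)^{\dim}$-weighted counts and whether their curve degree variable is shifted), and confirm the prefactor $q+2+q^{-1}=(q^{1/2}+q^{-1/2})^2$ and the infinite product with the stated exponents $2,2,-4$ appear with the same conventions. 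Concluding: \eqref{eq:generating_series_dt} then follows verbatim.

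\textbf{Main obstacle.} The only real subtlety is the conventions-matching in step (4): different sources normalize reduced DT invariants of abelian threefolds differently (e.g.\ by $|X^\vee[\,\cdot\,]|$ factors arising from the rigidification by $X^\vee$, or by the number of components, or by signs coming from the Behrend function on a nonreduced Hilbert scheme), and one must make sure that our $\DT_{d,n}$---defined as the degree of the virtual class on the rigidified quotient $\mathrm{Hilb}_X(v)/X$---is literally the quantity whose generating series is \eqref{eq:generating_series_dt} in \cite{obshen}, rather than a rational multiple of it. I would resolve this by checking a single low-degree case by hand: for $d$ minimal, $\mathrm{Hilb}_X(v)/X$ (or its smooth locus) can be described explicitly enough that one can directly read off the coefficient of $q^n t^d$ on both sides and confirm they agree, thereby pinning down the normalization once and for all. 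With that check in place, the equality of generating series holds coefficient-by-coefficient. No genuinely new geometric input is needed beyond \cite{obshen} and \cite{BOPR}; the proof is a verification that our formalism reproduces their setting.
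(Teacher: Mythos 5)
Your overall plan—reduce to Oberdieck--Shen via deformation invariance and then match conventions—is in the right spirit, but you have misidentified the main obstacle, and in doing so you've elided a genuine mathematical input. The generating series computed in \cite{obshen} is for \emph{reduced stable pair (PT) invariants} $\rP^{\mathrm{red}}_{n, H+dF}$ of $A\times E$, defined by integrating the Behrend function over a space of stable pairs modulo the $E$-translation. Your $\DT_{d,n}$, by contrast, is a virtual count on the quotient of the \emph{Hilbert scheme} $\mathrm{Hilb}_X(v)$ (equivalently, of $M_H(v)$ of ideal sheaves) by the full $X\times X^\vee$. These are a priori different enumerative invariants, and passing from one to the other requires a reduced DT/PT correspondence. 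This is precisely what the paper supplies: it invokes \cite[Theorem 3(i)]{obPT}, which proves the reduced DT/PT correspondence for the product of a K3 surface with an elliptic curve, and observes that the argument adapts to $A\times E$ once the $E$-action is everywhere replaced by the $A\times E$-action. That correspondence is a real geometric theorem, not a normalization bookkeeping step, and without it your step (4) ``translate notation'' has nothing to translate into.

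There is a secondary point your proposal also misses: Oberdieck--Shen's definition of $\rP^{\mathrm{red}}$ is an integral over a slice $\rP_n(A\times E,(C,d))/E$ (pairs supported over a fixed genus-$2$ curve $C\subset A$, modulo translation by $\{0\}\times E$ only), and one must first identify this with the rigidified quotient $\rP_n(A\times E,H+dF)/A\times E$ before the Behrend-function comparison with your Definition~\ref{definition-DT-twisted} even makes sense. Once you add (i) the slice-versus-quotient identification on the PT side, (ii) the adapted reduced DT/PT correspondence from \cite{obPT}, and (iii) the Remark~\ref{remark-behrend-function} identification of the virtual count of Theorem~\ref{theorem-obs-theory} with the Behrend-function integral, the chain closes and the remaining work is indeed only notation-matching; your step (2) (deformation invariance of the type, so one may compute on $A\times E$) is correct and is used in the paper in exactly the way you describe. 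But as written your proof would stall at step (3), because \cite{obshen} does not hand you the DT generating series—it hands you the PT one.
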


\begin{proof}
    The version of Theorem~\ref{thm:curve_count_on_abelian_threefold} for PT invariants is described in \cite[\S 5.7]{obshen}. In what follows, we merely compare their notation and setup with ours. 
    
    They consider a series
    \[
            \mathrm{PT}_2^{A \times E}(q, t) = \sum_{d = 0}^{\infty} \sum_{n \in \bZ} \mathrm{P}_{n, H + dF}^{\mathrm{red}}q^n t^d,
    \]
    where:
    \begin{itemize}
        \item $A$ is a simple, principally polarized abelian surface, and $E$ is an elliptic curve;
        \item $F$ is the cohomology class of any $\{\pt\} \times E$ embedded in $A \times E$;
        \item $H$ is the pushforward of the curve of type $(1,1)$ (also called $H$) on $A$ embedded via any $A \times \{\pt\}$ into $A \times E$, so that $H + dF$ is a curve class of type $(1,1,d)$ on $A \times E$;
        \item the subscript ``2'' refers to the genus of a curve $C$ in $A$ of class $H$.
    \end{itemize}
    The definition of $\rP_{n, H + dF}^{\mathrm{red}}$ is the following:
    Fix a curve $C$ in $A$ of class $(1,1)$; such a curve turns out to be smooth of genus $2$, uniquely determined up to translation, with Jacobian $A$. 
    Inside the moduli space of stable pairs $\rP_n(A \times E, H + dF)$, 
    there is a locus $\rP_n(A \times E, (C, d))$ consisting of pairs $(F, s)$ such that the set-theoretic support of $F$ pushes forward to $C$ along the projection $A \times E \to A$. (In \cite{obshen}, $C$ is written simply as $H$.) 
    Then one defines
    \begin{equation}
    \label{eq:pt_invariant_def}
       \rP_{n, H + dF}^{\mathrm{red}} = \int_{\rP_n(A \times E, (C, d))/E} \nu,
    \end{equation}
    where $\rP_n(A \times E, (C, d))/E$ is the quotient by the translation action of $\{0\} \times E$, and $\nu$ is the Behrend function. 

    On the other hand, the inclusion of $\rP_n(A \times E, (C, d))$ into $\rP_n(A \times E, H + dF)$ induces an isomorphism of Deligne--Mumford stacks
    \begin{equation*}
        \rP_n(A \times E, (C, d))/E \simeq \rP_n(A \times E, H + dF)/A \times E,
    \end{equation*}
    so \eqref{eq:pt_invariant_def} may be written
    \begin{equation*}
        \rP_{n, H + dF}^{\mathrm{red}} = \int_{\rP_n(A \times E, H + dF)/A \times E} \nu,
    \end{equation*}
    where $\nu$ is now the Behrend function on $\rP_n(A \times E, H + dF)/A \times E$.

    Next, we explain why $\rP_{n, H + dF}^{\mathrm{red}} = \DT_{d, n}$. 
    From Remark~\ref{remark:def_type_of_curve_class}, the class $\beta$ which appears in the definition of $\DT_{d, n}$ is deformation equivalent to $H + dF$, since both are effective and of type $(1,1,d)$.
    In other words, we may take $X = A \times E$ and $\beta = H + dF$ in the definition of $\DT_{d, n}$.
    We claim that
    \begin{equation*}
        \rP_{n, H + dF}^{\mathrm{red}} = \int_{\mathrm{Hilb}_{A \times E}(H + dF, n)/A \times E} \nu, 
    \end{equation*}
    where $\mathrm{Hilb}_{A \times E}(H + dF, n)$ is the Hilbert scheme of $1$-dimensional subschemes $Z \subset A \times E$ with $[Z] = H + dF$ and $\chi(\cO_Z) = n$. 
    This is essentially \cite[Theorem 3(i)]{obPT}, but the result is stated there for the product of a K3 surface with an elliptic curve $E$. Nonetheless, the proof goes through for $A \times E$ after replacing the action of $E$ with the action of $A \times E$ everywhere in sight. 
    As in Remark~\ref{remark:well-defined}, there is an isomorphism of Deligne--Mumford stacks 
    \begin{equation*}
        \mathrm{Hilb}_{A \times E}(H + dF, n)/A \times E \cong 
        M_H(1,0,-H-dF, -n)]/A \times E \times (A \times E)^{\vee}
    \end{equation*}
    induced by the embedding $\mathrm{Hilb}_{A \times E}(H+dF, n) \to M_H(1,0,-H,-dF,-n)$ sending $Z \subset X$ to its ideal sheaf. 
    Now a result of Behrend~\cite{behrend} asserts that 
    \begin{equation*}
        \DT_{d, n} = \int_{M_H(1,0,-H-dF, -n)]/A \times E \times (A \times E)^{\vee}} \nu,
    \end{equation*}
    where $\nu$ is the Behrend function on $M_H(1,0,-H-dF, -n)]/A \times E \times (A \times E)^{\vee}$. 
    Altogether, this proves the claimed equality $\rP_{n, H + dF}^{\mathrm{red}} = \DT_{d, n}$. 
     
    Finally, in \cite[\S 5.7, Proposition 5]{obshen}, it is shown that 
    \[
           \mathrm{PT}_2^{A \times E}(q, t) = \phi_{-2, 1}(q, t),
    \]
    where the right-hand side (as defined in \cite[\S 0.5]{obshen}) is precisely the right-hand side of \eqref{eq:generating_series_dt}.
\end{proof}

\begin{remark}
\label{rem:triangular_number}
    The coefficient of $q^a t^b$ in the expansion of the product
    \begin{equation} \label{eq:distinct_parts}
        \prod_{m \geq 1} (1 + qt^m)
    \end{equation}
    is the number of partitions of $b$ into $a$ distinct parts. For example, if $T_k$ is the $k$th triangular number, then the coefficient of $q^{n} t^{T_k}$ in \eqref{eq:distinct_parts} is nonzero for all $1 \leq n \leq k$.
\end{remark}

\begin{lemma}
\label{lem:cases_lemma}
    Let $d, n > 0$ be integers such that $n^2 \leq 4 d$. In the expansion of the product
    \begin{equation} \label{eq:squared_in_lemma}
        \prod_{m \geq 1} (1 + q t^m)^2,
    \end{equation}
    the coefficient of $q^{n - 1}t^d$ is positive.
\end{lemma}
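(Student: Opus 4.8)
The coefficient of $q^{n-1}t^d$ in $\prod_{m\geq 1}(1+qt^m)^2$ counts the number of ways to choose an ordered pair $(P_1,P_2)$ of partitions into distinct parts, with $|P_1|$ parts coming from the first factor and $|P_2|$ from the second, such that $|P_1|+|P_2|=n-1$ and the total sum of all chosen parts is $d$. Equivalently, by Remark~\ref{rem:triangular_number}, if $a_k$ denotes the coefficient of $q^k$ in $\prod_{m\geq 1}(1+qt^m)$ (a power series in $t$ with nonnegative coefficients), then the coefficient we want is the coefficient of $t^d$ in $\sum_{k=0}^{n-1} a_k a_{n-1-k}$. Since all the $a_k$ have nonnegative coefficients, it suffices to exhibit \emph{one} value of $k$ with $0\leq k\leq n-1$ and one way of writing $d = d_1+d_2$ with $[t^{d_1}]a_k>0$ and $[t^{d_2}]a_{n-1-k}>0$.

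\textbf{Main step.} The plan is to split $n-1 = k_1 + k_2$ as evenly as possible and then use Remark~\ref{rem:triangular_number}: the minimal sum achievable by a partition into $\ell$ distinct positive parts is the triangular number $T_\ell = \ell(\ell+1)/2$, and \emph{every} integer $\geq T_\ell$ is achievable (indeed the coefficient of $q^\ell t^b$ in $\prod(1+qt^m)$ is positive for all $b\geq T_\ell$, since one can take the parts $1,2,\dots,\ell-1,\ell+(b-T_\ell)$ — note $\ell + (b-T_\ell) > \ell-1$, so these are distinct). Therefore $[t^{d_i}]a_{k_i}>0$ as soon as $d_i\geq T_{k_i}$. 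So I need to find a decomposition $d = d_1 + d_2$ and $n-1 = k_1+k_2$ with $d_i \geq T_{k_i}$; this is possible precisely when $d \geq \min_{k_1+k_2=n-1}(T_{k_1}+T_{k_2})$. With the balanced choice $k_1 = \lceil (n-1)/2\rceil$, $k_2 = \lfloor (n-1)/2\rfloor$, one computes $T_{k_1}+T_{k_2} \leq T_{\lceil (n-1)/2\rceil} + T_{\lceil (n-1)/2\rceil} \leq 2\cdot \frac{1}{2}\cdot\frac{n+1}{2}\cdot\frac{n+3}{2}$... I'd rather bound it cleanly: a short calculation gives $T_{k_1}+T_{k_2} \leq \frac{(n-1)(n+3)}{4}$ when $n$ is odd and $\frac{n^2}{4}$-ish when $n$ is even; in all cases $T_{k_1}+T_{k_2}\leq n^2/4$ for... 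I need to check this bound is $\leq d$. By hypothesis $d \geq n^2/4$, so the plan is to verify the elementary inequality $T_{\lceil (n-1)/2\rceil}+T_{\lfloor (n-1)/2\rfloor} \leq n^2/4$ for all $n\geq 1$, which I expect to hold (for instance, $n=1$: both triangular numbers are $0$; $n=2$: $T_1+T_0 = 1 = 4/4$; $n=3$: $T_1+T_1 = 2 < 9/4$; $n=4$: $T_2+T_1 = 4 = 16/4$; $n=5$: $T_2+T_2 = 6 < 25/4$), and then conclude.

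\textbf{Where the work lies.} The only genuine content is verifying the inequality $T_{\lceil (n-1)/2\rceil}+T_{\lfloor (n-1)/2\rfloor} \leq n^2/4$, which is a routine induction or direct case split on the parity of $n$ — not a serious obstacle, just a careful computation. Having established it, I set $k_1 = \lceil (n-1)/2\rceil$, $d_1 = T_{k_1} + \max(0, \lceil d/2\rceil - T_{k_1})$ adjusted so that $d_1 + d_2 = d$ with $d_1\geq T_{k_1}$, $d_2\geq T_{k_2}$ (feasible by the inequality), and note that the coefficient of $q^{n-1}t^d$ in \eqref{eq:squared_in_lemma} is at least $([t^{d_1}]a_{k_1})\cdot([t^{d_2}]a_{k_2}) > 0$, since this is one of the (nonnegative) summands in the full coefficient. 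This completes the argument; I'd double-check that the edge cases $k_2=0$ (when $n=1$, forcing $d_2\geq 0$ automatically, and indeed $a_0 = 1$) and small $d$ are covered by the displayed inequality, which they are.
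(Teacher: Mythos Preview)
Your approach is essentially the same as the paper's: both split $n-1 = k_1 + k_2$ and $d = d_1 + d_2$ so that each $d_i$ admits a partition into $k_i$ distinct positive parts, then multiply the two contributions. The paper organizes the split starting from $d$ (setting $k = \lfloor\sqrt{d}\rfloor$ and writing $d = T_k + T_{k-1} + \epsilon$ or, in a boundary case, $d = T_k + T_k + \epsilon'$), and then fits $n-1 = n_1+n_2$ with $n_1 \leq k$, $n_2 \leq k-1$; you instead start from $n-1$ (the balanced split $k_1 = \lceil(n-1)/2\rceil$, $k_2 = \lfloor(n-1)/2\rfloor$) and check $T_{k_1}+T_{k_2} \leq n^2/4 \leq d$. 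Your organization is slightly cleaner, because you isolate and use directly the sharp criterion that $b$ can be partitioned into $\ell \geq 1$ distinct positive parts if and only if $b \geq T_\ell$; Remark~\ref{rem:triangular_number} only records a special case, and the paper's argument ends up relying on the general fact implicitly anyway.

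One genuine correction: your claim that the edge case $n=1$ is covered is wrong. There $k_1=k_2=0$, and $a_0$ is the constant power series $1$ in $t$, so $[t^{d_i}]a_0 > 0$ forces $d_i = 0$ for both $i$, hence $d=0$, contradicting $d>0$. In fact the coefficient of $q^0t^d$ in~\eqref{eq:squared_in_lemma} is $0$ for every $d>0$, so the lemma as stated simply fails at $n=1$; the paper's own proof has the identical gap (it applies Remark~\ref{rem:triangular_number} with $n_1 = n_2 = 0$, outside the remark's stated range $1\leq n\leq k$). This is harmless for the intended application to Proposition~\ref{prop:nonvanishing_theorem}, where the remaining factor $(q+2+q^{-1})\prod_m(1+q^{-1}t^m)^2(1-t^m)^{-4}$ already supplies a positive $q^1 t^d$ term, but your sketch should either assume $n\geq 2$ or flag this boundary case rather than asserting it is fine.
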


\begin{proof}
    Let $k = \lfloor \sqrt{d} \rfloor$, and write $d = k^2 + \epsilon$. Since $d < (k + 1)^2$, we have $\epsilon < 2k + 1$, which implies $n \leq 2k + 1$. 

    Consider the case $n \leq 2k$. Then $d = T_k + T_{k - 1} + \epsilon$, with notation as in Remark~\ref{rem:triangular_number}. Since $n - 1 \leq 2k - 1$, we may write $n - 1 = n_1 + n_2$, where $n_1 \leq k$ and $n_2 \leq k - 1$. In particular, the coefficients of $q^{n_1} t^{T_k}$ and $q^{n_2} t^{T_{k - 1} + \epsilon}$ in \eqref{eq:distinct_parts} are positive by Remark~\ref{rem:triangular_number}, so their multiple $q^{n - 1} t^d$ has a nonzero coefficient in \eqref{eq:squared_in_lemma}.

    Finally, consider the case $n = 2k + 1$. From the inequality
    \[
        4k^2 + 4k + 1 \leq 4k^2 + 4 \epsilon,
    \]
    we see that $k < \epsilon$. It follows that we may write $d = k^2 + k + \epsilon'$ for some $\epsilon' > 0$. Equivalently, $d = T_k + T_k + \epsilon'$. A similar argument as in the previous paragraph shows that the coefficient of $q^{n - 1} t^d$ in \eqref{eq:squared_in_lemma} is positive. 
\end{proof}

\begin{proof}[Proof of Proposition~\ref{prop:nonvanishing_theorem}]
    Let $d > 0$, $n$ be integers. We observe that $\Delta(v) = d - n^2/4$, so our assumption is that $4d - n^2 \geq 0$. Since the coefficients of \eqref{eq:generating_series_dt} are symmetric under $q \mapsto q^{-1}$, we may suppose that $n \geq 0$. The case $n = 0$ is straightforward to see, so we take $n \geq 1$. In that case, from Lemma~\ref{lem:cases_lemma}, the product \eqref{eq:squared_in_lemma} contributes a nonzero $q^{n - 1} t^d$ term, which we multiply with $(q + 1 + q^{-1})$ to obtain the desired $q^n t^d$ term. 
\end{proof}

%%%%%%%%%%%%%%%%%%%%%%%%%%%%%%%%%%%%%

\section{Proof of the main result}
\label{section-proof-main-result-abelian-3folds} 

\subsection{Preliminary reduction}
Given an abelian threefold $X$, the type of a class in $\rH^2(X, \bC)$ is an associated tuple of integers $(d_1, d_2, d_3) \in \bZ_{\geq 0}^3$, whose definition is recalled in \S\ref{section-rank-type}. 

\begin{definition}
\label{def:type_mod_n}
    A class $\theta \in \rH^2(X, \bmu_n)$ has \emph{type} $(d_1, d_2, d_3)$ if it admits a lift to an integral class $\class \in \rH^2(X, \bZ(1))$ of type $(d_1, d_2, d_3)$. In fact, the reduction map
    \begin{equation*}
        \SL_6(\bZ) \to \SL_6(\bZ/n)
    \end{equation*}
    is surjective (transvections, which generate the right-hand side, may be lifted \cite[XIII, \S 9]{Lang}), so $\theta$ has type $(d_1, d_2, d_3)$ if and only if there is an integral basis $x_1, \dots, x_6$ of $\rH^1(X, \bmu_n)$ such that 
    \begin{equation*}
        \theta = d_1 x_1 \wedge x_4 + d_2 x_2 \wedge x_5 + d_3 x_3 \wedge x_6.
    \end{equation*} 
    The integers $d_1, d_2, d_3$ are only well-defined up to multiplication by units in $\bZ/n$, and if $n = p^e$ is a prime power, then any class has type $(p^a, p^b, p^c)$ for some $a,b,c > 0$.
    
    If we may choose a lift $\class$ of type $(d_1, d_2, d_3)$ with $\class^3 > 0$, then we say that $\theta$ is of type $(d_1, d_2, d_3)$ and \emph{admits a positive lift}.
\end{definition}

Given an abelian threefold $X$ and a class $\theta \in \rH^2(X, \bmu_n)$, we say that the period-index conjecture holds for the pair $(X, \theta)$ if $\ind(\alpha) \mid n^2$, where $\alpha$ is the image of $\theta$ in $\Br(X)$.

\begin{remark}[Isogeny trick]
\label{remark:isogeny_trick}
    In the proof of Lemma~\ref{lem:period_index_reduction} below, we repeatedly use the following: for an integral class $u$ of type $(d_1, d_2, d_3)$ on an abelian threefold $X$ and nonzero integers $e_1, e_2, e_3$ with $e_1 \mid e_2 \mid e_3$, there is an isogeny $\pi:X' \to X$ of degree $e_1e_2e_3$ such that $\pi^* u$ is of type $(e_1d_2, e_2d_2, e_3d_3)$. The same holds for classes $\theta \in \rH^2(X, \bmu_n)$.

    For the proof, writing
    \begin{equation*}
        u = d_1 x_1 \wedge x_4 + d_2 x_2 \wedge x_5 + d_3 x_3 \wedge x_6,
    \end{equation*}
    we define the sublattice
    \begin{equation*}
        \Lambda' = \left\langle \frac{x_1}{e_1}, \frac{x_2}{e_2}, \frac{x_3}{e_3}, x_4, x_5, x_6 \right\rangle \subset \rH^1(X, \bQ).
    \end{equation*}
    Then $\pi:X' \to X$ is dual to the isogeny $\rH^{0,1}(X)/\Lambda \to \rH^{0,1}(X)/\Lambda'$, where $\Lambda = \rH^1(X, \bZ)$.
\end{remark}

\begin{lemma}
\label{lem:period_index_reduction}
    Suppose that the period-index conjecture holds for all pairs $(X, \theta)$ where $X$ is an abelian threefold and $\theta \in \rH^2(X, \bmu_n)$ such that: 
    \begin{enumerate} 
        \item \label{X-polarization} $X$ admits a polarization $H$ of type $(d_1, d_2, d_3)$ for integers $d_i$ dividing a power of $n$.
        \item \label{theta-reduction} The class $\theta \in \rH^2(X, \bmu_n)$ is of type $(1, 1, 1)$, and admits a positive lift.
    \end{enumerate}
    Then for an arbitrary abelian variety $X$ with a class $\theta \in \rH^2(X, \bmu_n)$, the period-index conjecture holds for $(X, \theta)$.
\end{lemma}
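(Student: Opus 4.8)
The goal is to reduce an arbitrary pair $(X,\theta)$ with $\theta \in \rH^2(X,\bmu_n)$ to one satisfying the two special hypotheses \eqref{X-polarization} and \eqref{theta-reduction}, using only two tools: the compatibility of the index with pullback along isogenies (if $\pi\colon X'\to X$ is an isogeny of degree $m$, then $\ind(\pi^*\alpha) \mid \ind(\alpha) \mid m \cdot \ind(\pi^*\alpha)$, and in particular $\ind(\alpha)$ divides $n^2$ once $\ind(\pi^*\alpha)$ does and $m$ is a power of the primes dividing $n$), and the isogeny trick of Remark~\ref{remark:isogeny_trick}, which lets us prescribe the type of $\theta$ (and of an auxiliary polarization) after pulling back along a suitable isogeny whose degree divides a power of $n$. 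The key point enabling the first tool is that $\ind(\pi^*\alpha) \mid \ind(\alpha)$ always, while $\ind(\alpha) \mid \deg(\pi)\cdot\ind(\pi^*\alpha)$ by a transfer/corestriction argument (push forward a twisted sheaf of minimal rank along $\pi$); so if $\deg(\pi)$ involves only primes dividing $n$ and $\ind(\pi^*\alpha)\mid n^2$, then $\ind(\alpha)\mid n^{2+c}$ for some $c$, but in fact one gets $\ind(\alpha)\mid n^2$ because index and period share the same prime factors and $\ind(\alpha) \mid n^3$ is already known (Lemma~\ref{lem:symbol_length_bounds}), together with $\ind(\pi^*\alpha)\mid n^2$ and compatibility of $p$-primary parts --- I will need to be slightly careful to phrase the final divisibility cleanly, but the prime-by-prime reduction makes this routine.

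\textbf{Step 1: reduce to $n$ a prime power.} Since $\ind(\alpha)$ is multiplicative over the primary decomposition of $\alpha$ and $\per(\alpha^{(p)}) \mid n^{(p)}$ (the $p$-primary part of $n$), it suffices to treat each primary component separately; so assume $n = p^e$.

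\textbf{Step 2: reduce to $\theta$ of type $(1,1,1)$.} By Definition~\ref{def:type_mod_n}, any $\theta \in \rH^2(X,\bmu_{p^e})$ has type $(p^a,p^b,p^c)$ with, say, $a \le b \le c$. Apply the isogeny trick (Remark~\ref{remark:isogeny_trick}) with $(e_1,e_2,e_3) = (p^{c-a}, p^{c-b}, 1)$ --- wait, I should instead choose the exponents so that after pullback the type becomes $(p^{a+(c-a)},\dots)$; more simply, pull back along an isogeny $\pi\colon X'\to X$ of degree a power of $p$ so that $\pi^*\theta$ has type $(p^a \cdot p^{\,?}, \dots)$ turning all three entries into the same power $p^c$, then divide the common power through --- concretely $\pi^*\theta$ of type $(p^c, p^c, p^c)$ means $\pi^*\theta = p^c\cdot(\text{type }(1,1,1)\text{ class})$, but $p^c \theta' $ with $c \ge e$ is zero in $\rH^2(\bmu_{p^e})$ unless $c < e$; the relevant range is $0 \le a,b,c < e$, and multiplying a type-$(1,1,1)$ class by the unit-part is harmless. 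Since $\deg(\pi)$ is a power of $p$, by Step's divisibility principle it suffices to prove period-index for $(X',\pi^*\theta)$, which now has type $(p^c,p^c,p^c) = p^c\cdot(\text{type }(1,1,1))$; writing $\pi^*\theta = p^c\cdot\theta_1$ with $\theta_1$ of type $(1,1,1)$ and noting $\theta_1$ has period $p^{e-c} \mid p^e$, and that $\ind$ of the class of $p^c\theta_1$ divides $\ind$ of the class of $\theta_1$, we reduce to $\theta_1$ of type $(1,1,1)$ and period dividing $n$. Here the only subtlety is bookkeeping the period after scaling; I will handle this by replacing $n$ with the new (smaller or equal) period throughout, which only makes the desired bound $\ind \mid \per^2$ sharper.

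\textbf{Step 3: arrange a positive lift and a polarization of controlled type.} Once $\theta$ has type $(1,1,1)$, choose an integral lift $\class \in \rH^2(X,\bZ(1))$ of type $(1,1,1)$; it may fail to satisfy $\class^3 > 0$. Modify $\class$ by adding $p^e$ times an integral class of type $(M,M,M)$ with $M$ a large power of $p$ (which does not change $\theta$ in $\rH^2(\bmu_{p^e})$), chosen so that the sum has positive cube; this is possible because the cone of classes with positive cube is open and the $p^e$-multiples of type-$(M,M,M)$ classes can be made to dominate. This gives the positive lift required in \eqref{theta-reduction}. For \eqref{X-polarization}: $X$ is a priori only a complex abelian threefold, hence carries \emph{some} polarization $H_0$ of some type $(d_1,d_2,d_3)$ with no constraint on the $d_i$; apply the isogeny trick once more (to $H_0$, pulling back along an isogeny of degree a power of $p$) to replace $H_0$ by a polarization whose type consists of powers of $p$ --- wait, one cannot in general change $\gcd$-type to make entries become $p$-powers by an isogeny of $p$-power degree unless the original entries are already supported at $p$; instead I will simply \emph{choose} a new polarization on $X$ directly: a generic abelian threefold has a polarization of any type $(1,1,m)$, but a special $X$ may not. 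The correct move: pull back $(X,\theta,H_0)$ along the isogeny $[\,?\,]$ --- actually the cleanest fix is to pull back along an isogeny so that the type of the polarization becomes $(d_1 e_1, d_2 e_2, d_3 e_3)$ with $e_i$ chosen to clear all primes $\ne p$ from the $d_i$, which requires $e_i$ coprime to $p$; such isogenies have degree coprime to $p$, and then $\ind(\pi^*\alpha)\mid n^2$ still gives $\ind(\alpha)\mid n^2$ because the degree of $\pi$ is coprime to $p = $ the only prime dividing $\per(\alpha)$, so $\pi^*$ is injective on the $p$-primary part of $\Br$ up to the index. I will phrase this via: $\ind(\alpha) \mid \deg(\pi)\ind(\pi^*\alpha)$ and $\gcd(\deg\pi, \per\alpha) = 1$ force $\ind(\alpha)_p \mid \ind(\pi^*\alpha)_p \mid n^2$, and $\ind(\alpha)$ is a $p$-power. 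Combining: after these isogenies the polarization has type supported at $p$, i.e.\ dividing a power of $n$.

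\textbf{Main obstacle.} The genuinely delicate point is Step 3's simultaneous control: after we have pulled $X$ back to make $\theta$ of type $(1,1,1)$, we still need a polarization whose type divides a power of $n$, and isogenies of $p$-power degree act on polarization types only by multiplying entries by $p$-powers --- they cannot remove a prime $\ell \ne p$ already present in the type of every polarization of $X$. Resolving this requires pulling back along an isogeny of degree coprime to $p$ to clear those primes, and then arguing (as sketched above, via $\gcd(\deg\pi,\per\alpha)=1$) that period-index descends along such isogenies with no loss. Verifying that one can always find a polarization of $X$ whose type, after an $\ell$-isogeny for each bad prime $\ell$, becomes supported at $p$ --- equivalently that every abelian threefold is isogenous to one with a polarization of $p$-power type --- is the crux, and follows from the structure theory of polarizations (any polarization becomes principal after a suitable isogeny, and then one re-introduces controlled $p$-power type by a $p$-isogeny); the bookkeeping of how $\ind$ and $\per$ transform under each isogeny is what the proof must do carefully. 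Everything else is a straightforward assembly of Remark~\ref{remark:isogeny_trick}, Definition~\ref{def:type_mod_n}, and the elementary divisibility $\per \mid \ind \mid \per^{\dim - 1 + \text{slack}}$.
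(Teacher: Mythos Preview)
Your overall strategy matches the paper's: reduce to prime-power $n = p^e$, then use the isogeny trick (Remark~\ref{remark:isogeny_trick}) together with $\ind(\alpha) \mid \deg(\pi)\cdot\ind(\pi^*\alpha)$ to normalize both $\theta$ and the polarization. The paper fixes the polarization first (via a prime-to-$p$ isogeny) and $\theta$ second (via a $p$-power isogeny); you reverse the order, which is harmless since a prime-to-$p$ isogeny induces an isomorphism on $\rH^2(-,\bmu_{p^e})$ and hence preserves the type of $\theta$, while a $p$-power isogeny keeps the polarization type supported at $p$. Your index bookkeeping in Step~2 is correct once the period drop is tracked: after the isogeny of degree $p^{2c-a-b}$, the Brauer class lifts to $\theta'_0 \in \rH^2(X',\bmu_{p^{e-c}})$ of type $(1,1,1)$, so the hypothesis gives $\ind(\alpha') \mid p^{2(e-c)}$ and hence $\ind(\alpha) \mid p^{2c-a-b}\cdot p^{2(e-c)} \mid p^{2e}$. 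The appeal in your Plan to the a~priori bound $\ind(\alpha)\mid n^3$ is unnecessary.

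The genuine gap is your positive-lift argument in Step~3. You propose to replace a type-$(1,1,1)$ integral lift $u$ having $u^3 < 0$ by $u + p^e v$ with $v$ chosen so that the cube becomes large and positive. But an integral class $w$ has type $(1,1,1)$ if and only if $\lvert w^3\rvert = 6$ (equivalently $\lvert\Pf(w)\rvert = 1$), so making $(u + p^e v)^3$ large destroys the type-$(1,1,1)$ condition you need; moreover $(u + p^e v)^3 \equiv u^3 = -6 \pmod{p^e}$, so for $p^e > 12$ one can never hit $+6$ this way. The paper's fix is the one-line observation that $-u$ has type $(1,1,1)$ with $(-u)^3 > 0$ and lifts $\theta^{-1}$, and $\ind(\alpha^{-1}) = \ind(\alpha)$; so one simply replaces $\theta$ by $\theta^{-1}$ if necessary, at the very end.
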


\begin{proof}
    Let $(X, \theta)$ be arbitrary. The class $\theta$ admits a prime decomposition
    \[
        \theta = \theta_1 + \theta_2 + \cdots + \theta_k
    \]
    into $p_i$-power torsion classes $\theta_i$, for a set of primes $p_i$ dividing $n$. The period-index conjecture for all $(X,{\theta}_i)$ is equivalent to the period-index conjecture for $(X, \theta)$, so we may suppose that $n = p^e$ is a prime power \cite[Proposition 4.5.16]{gillet_sza}. 

    Let $H$ be a polarization on $X$ of type $(d_1, d_2, d_3)$. There is an isogeny $X' \to X$ of degree prime to $p$ such that $X'$ admits a polarization $H'$ of type $(d_1', d_2', d_3')$, where each $d'_i$ divides a power of $n$. (Use the isogeny trick from Remark~\ref{remark:isogeny_trick} to make the pullback of $H$ a multiple of a class $H'$ of the desired form.) The period-index conjecture for $(X', \theta')$, where $\theta'$ is the pullback of $\theta$, implies the period-index conjecture for $(X, \theta)$. Replace $(X, H, \theta)$ with $(X', H', \theta')$.

Now, $\theta$ is of type $(p^a, p^b, p^c)$ for some $a,b,c > 0$, as explained in Definition~\ref{def:type_mod_n}. 
Then there is an isogeny $X' \to X$ of degree $p^{2c - a - b}$ so that the pullback $\theta'$ is of type $(p^c, p^c, p^c)$. If $\alpha'$ is the image of $\theta'$ in $\Br(X')$, then $\alpha'$ is the image of a class $\theta'_0 \in \rH^2(X', \bmu_{p^{e - c}})$ of type $(1, 1, 1)$. After replacing $\theta$ with $\theta^{-1}$ (which does not change the index), we may suppose that $\theta'_0$ is of type $(1,1,1)$ and admits a positive lift. Since the pullback $H'$ of $H$ remains of the required type, the triple $(X', H', \theta'_0)$ satisfies hypotheses \eqref{X-polarization} and \eqref{theta-reduction}, so we may assume that the period-index conjecture holds for $\theta_0'$. In particular, the index of $\alpha'$ is at most $p^{2e - 2c}$. 

    If $\alpha$ is the image of $\theta$ in $\Br(X)$, then
    \[
        \ind(\alpha) \mid  p^{2c - a - b} \cdot p^{2e - 2c} \mid p^{2e - a - b},
    \]
    which implies that the period-index conjecture holds for $(X, \theta)$.
\end{proof}

\subsection{Choosing initial data}

\begin{situation}
\label{sit:initial_data}
    Let $(X_\init, H_\init, \theta_\init)$ be a triple, where:
    \begin{enumerate}
        \item $X_\init$ is an abelian threefold.
        \item For an integer $n > 1$, $\theta_\init \in \rH^2(X_\init, \bmu_n)$ is of type $(1, 1, 1)$ and admits a positive lift.
        \item The polarization $H_\init$ is of type $(d_1, d_2, d_3)$, where each $d_i$ divides a power of $n$.
    \end{enumerate}
\end{situation}

\begin{lemma}
\label{lem:choosing_b_field}
    In Situation~\ref{sit:initial_data}, there exists a lift $
    \class_\init \in \rH^2(X_\init, \bZ(1))$ of $\theta_\init$ which satisfies the following properties:
    \begin{enumerate} 
        \item \label{lem:choosing_b_field-1} The class $\class_\init$ is of type $(1, 1, 1)$ with $\class_\init^3 > 0$.
        \item \label{lem:choosing_b_field-2} There exists a polarized deformation from $(X_\init, H_\init)$ to $(X_\fin, H_\fin)$ such that $X_\fin$ is a simple abelian threefold and a parallel transport $\class_\fin$ of $\class_\init$ lies in $\NS(X_0)$; moreover, $u_0$ is a principal polarization.
        \item \label{lem:choosing_b_field-3}
        There exists an integer $A > 0$, prime to $n$, and an integer $k > 0$, which is arbitrarily large with respect to $A$, such that the class
        \begin{equation}
        \label{eq:curve_class_choosing}
            \frac{1}{2} \left( A^2 \cdot \frac{1}{2}(\class_\init^2)^* - n^k \cdot \frac{1}{2} (H_\init^2)^* \right)^2  + n \cdot A \class^*_\init \in \rH^4((X_\init^{\vee}), \bZ)
        \end{equation}
    is of type $(1,1,d)$, for some integer $d > 0$, where $(-)^*$ denotes the duality isomorphism $\rH^{i}(X_\init, \bZ) \simeq \rH^{6 - i}((X_\init^{\vee}), \bZ)$.
    \end{enumerate}
\end{lemma}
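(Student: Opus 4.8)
The statement is really a series of arithmetic/Hodge-theoretic constructions, and the plan is to produce the lift $\class_\init$ in three stages, each stage tightening the requirements on the previous one. First I would observe that since $\theta_\init$ admits a positive lift and is of type $(1,1,1)$ (Definition~\ref{def:type_mod_n}), there is a basis $x_1,\dots,x_6$ of $\rH^1(X_\init,\bZ)$ with $\theta_\init \equiv x_1\wedge x_4 + x_2\wedge x_5 + x_3\wedge x_6 \pmod n$; the obstruction to lifting in a controlled way is that a naive lift $x_1\wedge x_4 + x_2\wedge x_5 + x_3\wedge x_6$ need not have positive cube, but by adding a multiple of $n$ times a suitable integral class of large positive cube (e.g.\ $n\cdot(x_1\wedge x_4 + x_2\wedge x_5 + M\, x_3\wedge x_6)$ for $M\gg 0$) one stays in the same $\bmu_n$-class, keeps type $(1,1,1)$ after a change of basis, and makes the cube positive. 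This gives \eqref{lem:choosing_b_field-1}. One should double-check that the change of basis needed to see type $(1,1,1)$ is realized by an element of $\SL_6(\bZ)$, which is exactly the point flagged in Definition~\ref{def:type_mod_n}.

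Next, for \eqref{lem:choosing_b_field-2} I would invoke the auxiliary results in Appendix~\ref{appendix-abelian-3folds} together with the classical Hodge-locus philosophy: a class $\class_\init\in\rH^2(X_\init,\bZ(1))$ of type $(1,1,1)$ with positive cube is, after a polarized deformation to a suitable point $X_\fin$ of the moduli space of polarized abelian threefolds, realized by a principal polarization $\class_\fin\in\NS(X_\fin)$. The key input is that the Hodge locus of $\class_\init$ (inside the period domain, or inside the relevant Siegel moduli space) is nonempty and positive-dimensional, and that a general member is simple and has $\class_\fin$ a principal polarization. Simplicity of the general member is arranged by choosing the Hodge locus so that the generic point has multiplication by a totally real cubic field (as remarked in \S\ref{outline-of-PI-proof}); concretely one picks the deformation inside the sublocus of abelian threefolds with such real multiplication, which is a nonempty intersection of Hodge loci, and simplicity follows since a nontrivial abelian subvariety would be incompatible with an action of a cubic field. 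This step will use the results collected in Appendix~\ref{appendix-abelian-3folds} on Hodge loci, polarization types, and simplicity; it is essentially soft but requires assembling several facts.

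The genuinely delicate part is \eqref{lem:choosing_b_field-3}. Here one must additionally arrange, after possibly modifying $\class_\init$ within its $\bmu_n$-class (and the deformation), that a certain explicit degree-$4$ expression in $\class_\init^*$ and $H_\init^*$ in $\rH^4(X_\init^\vee,\bZ)$ is a curve class of type $(1,1,d)$ with $d>0$. The obstacle is that \textit{a priori} checking type $(1,1,d)$ requires controlling the rank of the class modulo every prime $p$; I would use the finiteness trick sketched in \S\ref{outline-of-PI-proof} (and presumably proved as an auxiliary lemma in Appendix~\ref{appendix-abelian-3folds}): there is a \emph{finite} set of primes $\ell_1,\dots,\ell_r$, depending only loosely on the initial data, such that if the class has rank $\ge 4$ modulo each $\ell_i$, then it automatically has type $(1,1,d)$. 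Then the freedom in choosing the auxiliary integer $A$ (prime to $n$) and the large integer $k$ is used to force the reductions modulo the $\ell_i$ to have rank $\ge 4$: for $A$ ranging over residues coprime to $n$ and to the $\ell_i$, and for $k$ large, the class \eqref{eq:curve_class_choosing} reduces modulo $\ell_i$ to a universal non-degenerate expression whose rank one computes directly. One also needs $d>0$, i.e.\ the corresponding dual line bundle is a genuine polarization; positivity follows from $\class_\init^3>0$ together with the sign of the leading $A^4$ (or $A^2 n^k$) terms, choosing $A$ and then $k\gg 0$ so the dominant term has the right sign. I expect balancing the two competing requirements — making the class of type $(1,1,d)$ (which wants $A$ in a prescribed nonempty union of residue classes) and simultaneously keeping all the earlier constraints \eqref{lem:choosing_b_field-1}--\eqref{lem:choosing_b_field-2} intact — to be the main technical obstacle, and this is precisely where the auxiliary results of Appendix~\ref{appendix-abelian-3folds} on abelian varieties (rank of cohomology classes modulo primes, deformation types, effectivity) are doing the real work.
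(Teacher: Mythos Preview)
Your outline captures the general architecture correctly, but there is a genuine gap in how parts \eqref{lem:choosing_b_field-1}--\eqref{lem:choosing_b_field-2} feed into part \eqref{lem:choosing_b_field-3}, and it is exactly the gap you flagged as ``the main technical obstacle'' without resolving.

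The finiteness-of-bad-primes trick you invoke (Proposition~\ref{prop:prime_avoidance}) has a hypothesis you have not addressed: the complex span $\langle (\class_\init^2)^*, (H_\init^2)^* \rangle_{\bC}$ must contain no rank-$2$ elements. Under the duality $(-)^*$ this is equivalent to asking that $\langle \class_\init, H_\init \rangle_{\bC}$ contains no rank-$2$ elements, i.e.\ that the line $\bP(\langle \class_\init, H_\init\rangle_{\bC})$ in $\bP(\rH^2(X_\init,\bC))$ misses the Grassmannian $\Gr(2,\rH^1(X_\init,\bC))$. Without this, the ``bad primes'' set need not be finite and the whole argument for type $(1,1,d)$ collapses. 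Your construction of $\class_\init$ in step~\eqref{lem:choosing_b_field-1} is a direct hands-on modification of a single lift, which gives you essentially one class; it does not give you the freedom to move $\class_\init$ off the join of $\Gr(2,\rH^1)$ with the point $[H_\init]$ while preserving (1) and (2).

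The paper's fix is to prove much more than ``a lift exists'': Proposition~\ref{prop:level_subgroup} shows that the set of lifts $\class_\init$ satisfying \eqref{lem:choosing_b_field-1} and \eqref{lem:choosing_b_field-2} has \emph{Zariski-dense} image in $\bP(\rH^2(X_\init,\bC))$. This density is what allows one to choose $\class_\init$ off the join variety and thereby satisfy the missing hypothesis. A second point you glossed over: to invoke Proposition~\ref{prop:prime_avoidance} one needs $\class_\init, H_\init \in \NS$ of a \emph{simple} abelian threefold, so the paper first observes that the statement of \eqref{lem:choosing_b_field-3} is deformation-invariant and then passes to $(X_\fin,H_\fin)$ from \eqref{lem:choosing_b_field-2} before checking anything. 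Your proposal works entirely on $X_\init$, where neither simplicity nor algebraicity of $\class_\init$ is available.
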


\begin{proof}
    From Proposition~\ref{prop:level_subgroup}, we see that the set of lifts $\class_\init$ satisfying~\eqref{lem:choosing_b_field-1} and~\eqref{lem:choosing_b_field-2} is nonempty, and its image in $\bP = \bP(\rH^2(X_\init, \bC))$ is Zariski dense. 
    In particular, if $C \subset \bP$ is the join of $\Gr(2, \rH^1(X, \bC))$ with the point $[H_\init]$, then we may choose $\class_\init$ to lie in the complement of $C$. For such a choice of $\class_\init$, the vector space $\langle \class_\init, H_\init \rangle_{\bC}$ does not contain any rank $2$ elements.  

    We show that $\class_\init$ satisfies~\eqref{lem:choosing_b_field-3}. First, the statement of~\eqref{lem:choosing_b_field-3} is deformation invariant, so by~\eqref{lem:choosing_b_field-2} we may assume that $\class_\init, H_\init \in \NS(X_\init)$, where $X_\init$ is simple. Let $\class = (\class_\init^2)^*/2$ and $H = (H_\init^2)^*/2$. We must show that Proposition~\ref{prop:prime_avoidance} applies to \eqref{eq:curve_class_choosing}, which amounts to checking that the following hold: 
    \begin{enumerate}
        \item $\class$ is of type $(1,1,1)$: This one computes from the fact that $\class_{\init}$ is of type $(1,1,1)$.
        \item $H$ is of type $(d_1', d_2', d_3')$ where each $d_i'$ divides a power of $n$: This one computes from the fact that $H$ is of type $(d_1, d_2, d_3)$, where each $d_i$ divides a power of $n$.
        \item $\langle \class^2, H^2 \rangle_{\bC}$ does not contain rank $2$ elements: This is because $\class^2$ and $H^2$ are identified with $\class_{\init}$ and $H_{\init}$ under the given duality isomorphism, up to scaling. We chose $\class_{\init}$ above so that $\langle \class_{\init}, H_{\init} \rangle_{\bC}$ does not contain rank $2$ elements.
        \item $\class^2/2 = \class_\fin^*$: As above, one chooses a basis and computes, noting that $\class^3 > 0$. \qedhere
    \end{enumerate}
\end{proof}

\subsection{The deformation step}

Let $(X_\init, H_\init, {\theta}_\init)$ be as in Situation~\ref{sit:initial_data}. Choose a class $\class_\init$ satisfying the conclusions of Lemma~\ref{lem:choosing_b_field}, and integers $A, k$ as in Lemma~\ref{lem:choosing_b_field}\eqref{lem:choosing_b_field-3}. By Igusa's formula (Theorem~\ref{thm:igusa_explicit_formula}), we may take $k$ to be sufficiently large so that the class
\begin{equation} \label{eq:hodge_class_at_zero}
    v_\init = \left(n^2, -n \cdot A \class_\init, \frac{1}{2}(A^2 \class_\init^2 - n^k H_\init^2), 1\right) \in \rH^{\ev}(X_\init, \bZ)
\end{equation}
has positive discriminant. In fact, we ultimately take $k$ to be sufficiently large so that, in the distant future, Remark~\ref{remark:effectivity_of_the_curve} below applies.

From Lemma~\ref{lem:choosing_b_field}\eqref{lem:choosing_b_field-2}, we may choose a flat, polarized family $f \colon (X, H) \to S$ of abelian threefolds over a connected affine curve $S$, with points $\init, \fin \in S(\bC)$, satisfying the following properties:
\begin{itemize} 
    \item The fiber over $\init \in S(\bC)$ is $(X_\init, H_\init)$.
    \item The fiber over $\fin \in S(\bC)$ is $(X_\fin, H_\fin)$, where $X_\fin$ is a simple abelian threefold.
    \item A parallel transport $\class_\fin \in \rH^2(X_\fin, \bZ(1))$ of $\class_\init$ is algebraic. 
\end{itemize}
After finite base change, we may suppose that $\theta_\init$ lifts to a global section of $\rR^2 f_* \bmu_n$. From the Leray spectral sequence, it lifts to a class $\theta \in \rH^2(X, \bmu_n)$.

\begin{remark}
\label{rem:making deformation}
    Let $B_\init = A \class_\init / n$. Since $A$ is prime to $n$, we may regard $B_\init$ as a $\theta$-field for a power $\theta^t$ of $\theta$, where $t$ is prime to $n$. Observe that $v_\init$ is of the form described in  Lemma~\ref{lem:constructing_rational_hodge_class}. After applying Lemma~\ref{lem:constructing_rational_hodge_class}, we conclude that there is a global section $v$ of $\Ktop[0](\Dperf(X, \alpha^t)/S)$ (here $\alpha \in \Br(X)$ is the image of $\theta$), such that the following hold:
    \begin{enumerate} 
        \item The section $v$ is everywhere of Hodge type. 
        \item The restriction of $v$ to the stalk at $\init$ corresponds (under the isomorphism from Lemma~\ref{lem:twisted_mukai_abelian-families}) to the class $v_1 \in \muk(X_\init, B_\init; \bZ)$ described above.
        \item The restriction of $v$ to the stalk at $\fin$ corresponds to the class $v_\fin \in \muk(X_\fin, B_\fin; \bZ)$, where $B_\fin = A \class_\fin / n$ and 
        \[
            v_\fin = \left(n^2, -n \cdot A \class_\fin, \frac{1}{2}(A^2 \class_\fin^2 - n^k H_\fin^2), 1\right).
        \]
    \end{enumerate}
\end{remark}

\begin{remark}
    In the situation above, the class $B_0 = Au_0/n$ is algebraic, so the restriction $\alpha_0^t \in \Br(X_0)$ is trivial, where $\alpha^t$ is the image of $\theta^t$ in $\Br(X)$. 
    The upshot is that the twisted derived category $\Dperf(X_0, \alpha^t_0)$ is non-uniquely equivalent to $\Dperf(X_0)$. 
    From Remark~\ref{remar:fibers_with_trivial_brauer}, we may choose an equivalence between categories such that the induced isomorphism between Mukai structures $\muk(X_0, B_0; \bZ)$ and $\muk(X_0, \bZ)$ is the identity on the abelian group $\rH^{\ev}(X, \bZ)$ which underlies both Hodge structures. The chosen equivalence is used implicitly throughout the remainder of the proof, e.g., to identify Bridgeland-stable objects of class $v \in \muk(X_0, B_0; \bZ)$ in $\Dperf(X_0, \alpha_0^t)$ with Bridgeland-stable objects of class $v \in \muk(X_0, \bZ)$ in $\Dperf(X_0)$.
\end{remark}

We have finally arrived at a situation where we may verify the hypotheses of Corollary~\ref{corollary-VIHC-sigma}. 

\begin{proposition}
\label{prop:tying_it_together}
    In the situation of Remark~\ref{rem:making deformation}, there exists a stability condition $\sigma$ on $\Dperf(X, \alpha)$ over $S$ with respect to a topological Mukai homomorphism, such that the following conditions hold:
    \begin{enumerate} 
            \item There do not exist strictly $\sigma_\fin$-semistable objects of class $v_\fin$. 
            \item $\cM_{\sigma_\fin}(v_\fin, \phi) / \cAut(\Dperf(X_\fin, \alpha_\fin^t)/\bC)$ is Deligne--Mumford, where $\phi$ is a phase compatible with $v_\fin$.
            \item $\DT_{\sigma_\fin}(v_\fin) \neq 0$.
    \end{enumerate}
\end{proposition}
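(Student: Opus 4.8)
The plan is to verify the three conditions of Proposition~\ref{prop:tying_it_together} one at a time, exploiting the fact that the Brauer class $\alpha_\fin^t$ vanishes so that $\Dperf(X_\fin, \alpha_\fin^t) \simeq \Dperf(X_\fin)$ (via the chosen equivalence fixing the underlying abelian group of the Mukai structure).

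First I would produce the stability condition. By Corollary~\ref{cor:existence_of_relative_stability_ab_threefolds}, there is a stability condition $\sigma_0$ on $\Dperf(X, \alpha)$ over $S$ with respect to the topological Mukai homomorphism $\ch^H \colon \Knum(\Dperf(X, \alpha)/S) \to \Lambda_H$. To arrange that no strictly $\sigma_\fin$-semistable objects of class $v_\fin$ exist, I would apply Lemma~\ref{lem:v_generic_relative_stability}: the image of $v_\fin$ in $\Lambda_H$ is primitive because $\rk v_\fin = n^2$ and the component of $\ch^H(v_\fin)$ coming from $[\pt]$ is $1$, so after deforming $\sigma_0$ to a nearby relative stability condition $\sigma$ (using the deformation theorem for relative stability conditions \cite{stability-families}), strictly $\sigma_\fin$-semistable objects of class $v_\fin$ do not exist. (One should check that $v_\fin$ maps to a primitive element of the relevant lattice; this is where the $1$ in the last coordinate of $v_\fin$ from \eqref{eq:hodge_class_at_zero} is used.) This gives condition~(1), and in particular $\cM_{\sigma_\fin}(v_\fin, \phi)$ consists of stable objects.

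For condition~(2), I would invoke Lemma~\ref{lem:discr_no_stabilizer}. We need $X_\fin$ simple (true by construction), $v_\fin$ a Hodge class with $\rk(v_\fin) = n^2 \neq 0$ (true), and $\Delta(\ch(v_\fin)) \neq 0$. The discriminant was arranged to be positive by taking $k$ sufficiently large in \eqref{eq:hodge_class_at_zero}; since $\Delta$ is deformation-invariant (being $\Spin$-invariant, hence computed by Igusa's formula on cohomology classes of Hodge type that extend over $S$), $\Delta(\ch(v_\fin)) = \Delta(v_\init) > 0$. Hence Lemma~\ref{lem:discr_no_stabilizer} applies (under the identification $\Dperf(X_\fin, \alpha_\fin^t) \simeq \Dperf(X_\fin)$) and the quotient stack is proper and Deligne--Mumford.

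Condition~(3) is the main obstacle, and is where the bulk of the work lies. The strategy is: since $\Delta(v_\fin) \geq 0$, by the wall-crossing and derived-autoequivalence invariance of reduced DT invariants for abelian threefolds from \cite{OPT}, $\DT_{\sigma_\fin}(v_\fin)$ equals $\DT_{\sigma'}(v')$ for any stability condition $\sigma'$ in the distinguished component $\Stab^\dagger(X_\fin)$ (Remark~\ref{remark-distinguished-Stab}) and any class $v'$ in the $\Aut(\Dperf(X_\fin)/\bC)$-orbit of $v_\fin$. Using Theorem~\ref{thm:mukai_thm} (Mukai's half-spin action) together with the explicit shape of $v_\fin$ and the choice of $A, k$ governed by Lemma~\ref{lem:choosing_b_field}\eqref{lem:choosing_b_field-3}, I would produce an autoequivalence (a composition of Fourier--Mukai transforms and line-bundle twists, realizing a suitable spin transformation) carrying $v_\fin$ to a class of Chern character $(1, 0, -\beta, -n)$, where $\beta \in \rH^4(X_\fin^\vee, \bZ)$ is (a parallel transport of) the class \eqref{eq:curve_class_choosing}, hence of type $(1,1,d)$ with $d > 0$ (this is exactly what Lemma~\ref{lem:choosing_b_field}\eqref{lem:choosing_b_field-3} and Proposition~\ref{prop:prime_avoidance} were set up to guarantee). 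One must also check $\beta$ is $\bQ$-effective, which follows from the construction via the duality isomorphism (cf. Remark~\ref{remark:effectivity_of_the_curve}). Then, by wall-crossing invariance again, $\DT_{\sigma_\fin}(v_\fin) = \DT_{H_\fin}(1,0,-\beta,-n) = \DT_{d,n}$ for any polarization $H_\fin$. Finally, since $\Delta(1,0,-\beta,-n) = d - n^2/4 \geq 0$ (the discriminant is autoequivalence-invariant by Corollary~\ref{cor:auteq_preserves_discr}, so it is preserved under the transformation above), Proposition~\ref{prop:nonvanishing_theorem} yields $\DT_{d,n} = \DT_{H_\fin}(1,0,-\beta,-n) > 0$, so $\DT_{\sigma_\fin}(v_\fin) \neq 0$. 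The delicate points to be careful about are: checking that the autoequivalence genuinely exists as a half-spin transformation with $\bZ$-coefficients (not just rationally), that it preserves Hodge type, and that the output curve class has the required type $(1,1,d)$ and is effective — all of which rely on the careful numerical bookkeeping fixed in Situation~\ref{sit:initial_data} and Lemma~\ref{lem:choosing_b_field}.
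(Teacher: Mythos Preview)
Your proposal is correct and follows the same approach as the paper's proof in \S\ref{ssec:proof_of_nonemptiness}. The paper resolves the ``delicate points'' you flag at the end by making the autoequivalence explicit---the standard Fourier--Mukai transform $\Phi \colon \Dperf(X_\fin) \to \Dperf(X_\fin^\vee)$ (composed with $\phi^*$ coming from the principal polarization $u_\fin$, which is why Lemma~\ref{lem:choosing_b_field}\eqref{lem:choosing_b_field-2} arranges $u_\fin$ to be principal) followed by a single line-bundle twist---so integrality and Hodge-compatibility are automatic, and the passage from the Bridgeland invariant to $\DT_{H_\fin}(1,0,-\beta,-n)$ goes via the Gieseker chamber \cite[Proposition~3.26]{OPT}.
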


The proof of Proposition~\ref{prop:tying_it_together} will be given in the next section. Using Corollary~\ref{corollary-VIHC-sigma}, we are now able to prove Theorem~\ref{thm:period_index_conjecture}:

\begin{proof}[Proof of Theorem~\ref{thm:period_index_conjecture}]
For the convenience of the reader, we trace the thread of the argument. By Lemma~\ref{lem:period_index_reduction}, we reduce to proving the desired period-index bound for $(X_\init, \theta_\init)$ as in Situation~\ref{sit:initial_data}. In fact, it is enough to prove the desired period-index bound for $\theta_\init^t$, since $t$ is prime to $n$. (The period-index problem for $\alpha$ is equivalent to the period-index problem for $\alpha^t$, whenever $t$ is prime to the period.)

We have constructed a Hodge class $v_\init$ of rank $n^2$ in the twisted Mukai structure for $(X_\init, \theta_\init^t)$, and it suffices to show that $v_\init$ is algebraic. Proposition~\ref{prop:tying_it_together} shows that the hypotheses of Corollary~\ref{corollary-VIHC-sigma} are satisfied, so $v_\init$ is algebraic.
\end{proof}

\subsection{Proof of Proposition~\ref{prop:tying_it_together}}
\label{ssec:proof_of_nonemptiness}

First, the existence of a relative stability condition is granted by Corollary~\ref{cor:existence_of_relative_stability_ab_threefolds}. Once the existence of a single stability condition for $\Dperf(X, \alpha)$ over $S$ is known, we can find by Lemma~\ref{lem:v_generic_relative_stability} a (potentially different) stability condition $\sigma$ over $S$ such that strictly $\sigma_\fin$-semistable objects of class $v_\fin$ do not exist. 

Recall from Lemma~\ref{lem:choosing_b_field} that the class $u_0 \in \rH^2(X_{\fin}, \bZ)$ is a principal polarization on $X_\fin$. Let $\Phi:\Dperf(X_\fin) \to \Dperf(X_\fin^{\vee})$ be the standard Fourier--Mukai transform, and let $\phi:X \to X^{\vee}$ be the isomorphism provided by the principal polarization. We write $\Psi = \phi^* \circ \Phi$, which is an autoequivalence of $\Dperf(X_\fin)$.

By our assumptions on $A$ and $k$, the class $v_\fin$ has positive discriminant, so the moduli space $\cM_{\sigma_\fin}(v_\fin, \phi)/\cAut^0(\Dperf(X_\fin)/\bC)$ is Deligne--Mumford by Lemma~\ref{lem:discr_no_stabilizer}, and we may define the DT invariant $\DT_{\sigma_\fin}(v_\fin)$. In fact, by \cite[Theorem 4.9 and Lemma 4.12]{OPT}, $\DT_{\sigma'}(v_\fin)$ does not depend on the stability condition $\sigma' \in \Stab^\dagger(X_\fin)$, where $\Stab^\dagger(X_\fin)$ is the distinguished component of the stability manifold (see Remark~\ref{remark-distinguished-Stab}), so we may simply write $\DT(v_\fin)$. 

According to \cite[Theorem 1.1 and Proposition 1.2]{OPT}, we have the identity
\[
    \DT(v_\fin) =  \DT(\Psi_* v_\fin).
\]
In addition, by Corollary~\ref{cor:auteq_preserves_discr}, $\Delta(\Psi_* v_\fin) = \Delta(v_\fin) > 0$. On the other hand, since $\phi^*$ comes from an isomorphism of the underlying varieties $X$ and $X^{\vee}$, it preserves DT invariants and the discriminant $\Delta$, so one has
\[
        \DT(v_\fin) = \DT(w_\fin), \quad \Delta(v_\fin) = \Delta(w_\fin),
\]
where $w_\fin = \Phi_* v_\fin$. Therefore, it suffices to show that $\DT(w_\fin) \neq 0$. (We work with $w_\fin$ over $\Psi_* v$ simply to avoid additional notational clutter.)

From \cite[Lemma 9.23]{huybrechts_fm}, one computes that
\begin{align*}
    w_\fin &= \left(1, - \frac{1}{2}(A^2 (\class_\fin^2)^* - n^k (H_\fin^2)^*), -n \cdot A \class_\fin^*, -n^2  \right),
\end{align*}
where $(-)^*$ denotes the duality isomorphism 
$\rH^{i}(X_\init, \bZ) \simeq \rH^{6 - i}((X_\init^{\vee}), \bZ)$. 
Again by \cite[Theorem 1.1 and Proposition 1.2]{OPT}, we have $\DT(w_0) = \DT(\Theta_*w_0)$ for any autoequivalence $\Theta$ of $\Dperf(X_0^{\vee})$; taking $\Theta = - \otimes L$, where $L$ is a line bundle on $X_0^{\vee}$ of class $c_1(L) \in \rH^2(X_0^{\vee}, \bZ)$ equal to  the degree $2$ term of $w_0$, we see that 
we may replace $w_\fin$ with $\exp(-c_1(L)) \cdot w_\fin$. Computing, we find 
\begin{equation*}
    \exp(-c_1(L)) \cdot w_\fin 
        = \left( 1, 0,  -\frac{1}{2} \left( A^2 \cdot \frac{1}{2}(\class_\fin^2)^* - n^k \cdot \frac{1}{2} (H_\fin^2)^* \right)^2  - n \cdot A \class^*_\fin, \dots \right) . 
\end{equation*}
We write it simply as $(1, 0, -\beta, -n)$. 

In  Lemma~\ref{lem:choosing_b_field}, we arranged it so that $\beta$ is of type $(1, 1, d)$ for some $d > 1$, and as above,
\[
    \Delta(1, 0, - \beta, -n) = \Delta(v_\fin) > 0.
\]
From Remark~\ref{remark:effectivity_of_the_curve} below, the class $\beta$ is $\bQ$-effective for $k$ sufficiently large, which we may arrange from the beginning.
From Proposition~\ref{prop:nonvanishing_theorem} and the existence of a Gieseker chamber, \cite[Proposition 3.26]{OPT} (see Remark~\ref{rem:opt} below), we have
\[
    \DT(v_\fin) = \DT(1, 0, - \beta, -n) > 0,
\]
as needed. \qed

\begin{remark}[Effectivity of the curve class]
\label{remark:effectivity_of_the_curve}
    For $k \gg 0$, the curve class
    \begin{equation*}
        \beta = \frac{1}{2}\left( A^2 \cdot \frac{1}{2}(\class_\fin^2)^* - n^k \cdot \frac{1}{2} (H_\fin^2)^* \right)^2  + n \cdot A \class^*_\fin
    \end{equation*}
    is $\bQ$-effective in the sense of Lemma~\ref{lemma:poincare_dual_of_effective}. 
    
    Indeed, since $u_0$ and $H_0$ are ample, Lemma~\ref{lemma:poincare_dual_of_effective} shows that $u_0^*$ is $\bQ$-effective and the classes $x := (u_0^2)^*/2$ and $y:= (H_0^2)^*/2$ are ample. Expanding the first term, we need to show that 
    \begin{equation*}
        n^{2k} y^2 - 2A^2 n^k xy + A^4 x^2 = n^k(n^k y^2 - 2A^2xy) + A^4x^2
    \end{equation*}
    is effective for $k \gg 0$. This reduces to showing that $n^ky^2 - 2A^2xy$ is effective for $k \gg 0$, which follows from a claim: $y^2$ lies in the interior of the cone of $\bQ$-effective curve classes. 
    
    To prove the claim, observe that since $y$ is ample, 
    multiplication by $y$ sends the ample cone in $\NS(X)_{\bQ}$ isomorphically onto an open cone in $\Hdg^4(X, \bQ)$ contained in the cone of $\bQ$-effective curve classes. Since $y$ lies in the interior of the ample cone, $y^2$ lies in the interior of the cone of $\bQ$-effective curve classes.
\end{remark}

\begin{remark}
\label{rem:opt}
        In \cite{OPT}, there is a sign convention in play (cf. \cite[Remark 2.16]{OPT}), which we do not adopt here. As a result, one cannot na\"ively invoke \cite[Proposition 4.2]{OPT} to ensure the existence of a Gieseker chamber. Instead, \cite[Proposition 3.26]{OPT} ensures its existence when the leading term of the Chern character is an effective algebraic class. 
\end{remark}

%%%%%%%%%%%%%%%%%%%%%%%%%%%%%%%%%%%%%%%%%%%%%%%%%%%%%%

\begin{appendix}
    
\section{Auxiliary results on abelian threefolds}
\label{appendix-abelian-3folds}

\subsection{Rank and type}
\label{section-rank-type}
Let $X$ be an abelian variety of dimension $g$, and let $u \in \rH^2(X, \bC)$. The \emph{rank} of $u$ is the rank of the corresponding map
\begin{equation}
\label{eq:alternating_hom}
    \varphi_u:\rH_1(X, \bC) \to \rH^1(X, \bC)
\end{equation}
given by the skew-symmetric form associated to $u \in \bigwedge^2 \rH_1(X, \bC)^\vee$. In particular, the rank belongs to $\{0, 2, 4, \dots, 2g\}$. An element $u$ has rank $\leq 2r$ if and only if $u^{r + 1} = 0$.

If $u$ lies in $\rH^2(X, \bZ)$, then there is a unique sequence of nonnegative integers $d_1, \dots, d_g$ such that $d_1 \mid \cdots \mid d_g$ and 
\[
    u = d_1 e_1 \wedge e_{g + 1} + \cdots + d_g e_g \wedge e_{2g}
\]
for some integral basis $e_1, \dots, e_{2g}$ of $\rH^1(X, \bZ)$. The sequence $(d_1, \dots, d_g)$ is the \emph{type} of $u$. 

The rank of an element of $\rH_2(X, \bC) \cong \bigwedge^2 \rH_1(X, \bC)$ and type of an element of $\rH_2(X, \bZ)$ are defined similarly.  

A class $\beta \in \rH_2(X, \bQ)$ is \emph{$\bQ$-effective} if it is a positive $\bQ$-linear combination of cycle classes $[C_i]$, for curves $C_i \subset X$. 

\begin{lemma}
\label{lemma:poincare_dual_of_effective}
    Let $\beta \in \rH_2(X, \bQ)$ be a class of rank $2g$ on an abelian $g$-fold $X$. 
    The duality $\rH_2(X, \bQ) \simeq \rH^2(X^\vee, \bQ)$ induced by the natural isomorphism $\rH_1(X, \bQ) \simeq \rH^1(X^\vee, \bQ)$ exchanges the following two cones:
    \begin{enumerate}
        \item The cone of $\bQ$-effective curve classes of rank $2g$ in $\rH_2(X, \bQ)$.
        \item The ample cone in $\NS(X^\vee)_{\bQ}$.
    \end{enumerate}
\end{lemma}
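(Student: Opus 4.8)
The statement is purely about the duality between $\bigwedge^2 \rH_1(X,\bQ)$ and $\bigwedge^2 \rH^1(X^\vee,\bQ) = \rH^2(X^\vee,\bQ)$, so I would first pin down the dictionary carefully. Using $\rH_1(X,\bQ) \cong \rH^1(X^\vee,\bQ)$ and the resulting isomorphism $\rH_2(X,\bQ) \cong \bigwedge^2 \rH_1(X,\bQ) \cong \rH^2(X^\vee,\bQ)$, I would observe that a class $\beta$ of rank $2g$ corresponds, up to the choice of a volume form, to a nondegenerate alternating form on $\rH^1(X^\vee,\bQ)^\vee = \rH_1(X^\vee,\bQ)$, i.e.\ to an element of $\NS(X^\vee)_\bQ$ that is a rational multiple of a polarization or an ``anti-polarization''. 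The first reduction is therefore to identify the two cones in question with cones inside this ambient space of rank-$2g$ alternating forms.

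The key geometric input is the classical description of curve classes on an abelian variety. For a curve $C \subset X$, the class $[C] \in \rH_2(X,\bQ) = \bigwedge^2 \rH_1(X,\bQ)$ is, up to Poincar\'e duality and the standard identification $\rH^{2g-2}(X) \cong \bigwedge^{g-1}\rH^2(X)$ via a polarization, the $(g-1)$-st power of an ample class; equivalently, under the duality $\rH_2(X,\bQ) \cong \rH^2(X^\vee,\bQ)$ the class of the image of the Abel--Jacobi/Albanese map attached to a polarization goes to that polarization. Concretely I would argue: given an ample line bundle $L$ on $X^\vee$ of type $(d_1,\dots,d_g)$, it defines an isogeny $\phi_L \colon X^\vee \to X$, and the curve class $\beta_L := [\phi_L(\text{a suitable curve})]$ — or more functorially, the class corresponding to $L$ under the standard isomorphism $\NS(X^\vee)_\bQ \hookrightarrow \rH^2(X^\vee,\bQ) \xrightarrow{\sim} \rH_2(X,\bQ)$ — is $\bQ$-effective of rank $2g$, and conversely every $\bQ$-effective curve class of rank $2g$ arises this way up to positive scaling. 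This is the content that makes the two cones coincide: effectivity on the $X$-side matches ampleness on the $X^\vee$-side because both are detected by the same positivity of the associated Hermitian form (positive definiteness of the imaginary part of the period matrix). I would phrase this via: $\beta \in \rH_2(X,\bQ)$ of rank $2g$ is $\bQ$-effective $\iff$ the associated alternating form on $\rH_1(X^\vee,\bQ)$ is the first Chern class of an ample class on $X^\vee$ $\iff$ the dual class lies in the ample cone of $\NS(X^\vee)_\bQ$, invoking the Appell--Humbert / Riemann relations characterization of ample classes and the fact that $\bQ$-effective rank-$2g$ classes are exactly positive rational combinations of Albanese curves of polarizations (so the cone they span is the image of the ample cone).

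I expect the main obstacle to be the ``only if'' direction: showing that \emph{every} $\bQ$-effective curve class of rank $2g$ is (a positive rational combination mapping to) an ample class, rather than something merely nef or effective-but-degenerate. The rank-$2g$ hypothesis is exactly what rules out the boundary; the point to nail down is that if $C \subset X$ is an irreducible curve whose class $[C]$ has rank $2g$, then $[C]$ pairs strictly positively with every nonzero nef class on the complementary side, which forces the dual class to be ample (by the numerical characterization of ampleness on abelian varieties, where nef equals the closure of ample). Once that strict-positivity is in hand, convexity finishes both inclusions. A secondary, more bookkeeping issue is keeping the various identifications $\rH_1(X) \cong \rH^1(X^\vee)$, Poincar\'e duality on $X$, and $\NS(X^\vee) \hookrightarrow \rH^2(X^\vee)$ consistently oriented so that ``positive'' matches ``effective'' and not its negative; I would fix a polarization on $X$ at the outset to make every arrow explicit and check signs on a product of elliptic curves.
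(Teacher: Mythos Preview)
Your plan for ``ample on $X^\vee$ $\Rightarrow$ $\bQ$-effective on $X$'' matches the paper (reduce by isogeny to a principal polarization $P$ and use that $P^{g-1}/(g-1)!$ is an effective curve), but the converse has a genuine gap: both mechanisms you propose are circular. The assertion that ``$\bQ$-effective rank-$2g$ classes are exactly positive rational combinations of Albanese curves of polarizations'' \emph{is} the statement, since the Albanese curve attached to a polarization $L$ has dual class proportional to $L$. And the pairing argument conflates two different dualities: strict positivity of $L \cdot C$ for every nonzero nef $L$ on $X$ places $[C]$ in the interior of the Mori cone of $X$, but tells you nothing directly about the ample cone of $X^\vee$ under the isomorphism $\rH_2(X,\bQ)\cong\rH^2(X^\vee,\bQ)$. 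To translate you would need to know that effective curves on $X^\vee$ correspond to nef divisors on $X$, which is the lemma again with $X$ and $X^\vee$ swapped.

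The paper supplies the missing input via Jacobians. Write $\beta=\sum n_i[C_i]$ with $n_i>0$ and $C_i\subset X$ integral; each normalization map $C_i^\nu\to X$ dualizes to $X^\vee\to\mathrm{Jac}(C_i^\nu)$, and the functoriality $(\pi_*\gamma)^*=(\pi^\vee)^*(\gamma^*)$ shows that $\beta^*$ is the pullback of $\sum n_i[\Theta_i]$ along the product map $\pi\colon X^\vee\to\prod_i\mathrm{Jac}(C_i^\nu)$. The rank-$2g$ hypothesis says precisely that $\sum C_i$ is nondegenerate in $X$, so $\pi^\vee$ is surjective, $\pi$ has finite kernel, and the pullback of the polarization $\sum n_i[\Theta_i]$ stays ample. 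Your Appell--Humbert intuition is this argument in disguise --- the positive-definiteness of the Hermitian form you want for $\beta^*$ is inherited from the theta polarizations --- but the Jacobian map is the concrete source of positivity your sketch is missing.
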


\begin{proof}
    Before embarking on the proof, we note that if $\pi:X \to Y$ is an isogeny between abelian varieties, and $\beta \in \rH_2(X, \bQ)$ is a class, then 
    \begin{equation}
    \label{eq:pd_equation}
        (\pi_* \beta)^* = (\pi^{\vee})^* (\beta^*),
    \end{equation}
    where $(-)^{*}$ denotes the duality isomorphism and $\pi^\vee:Y^\vee \to X^\vee$ is the dual isogeny. 

    Let $\beta$ be a $\bQ$-effective curve class. After scaling, we may assume that $\beta$ is a positive $\bZ$-linear combination $\sum n_i [C_i]$, where $C_i \subset X$ are integral curves. The fact that $\beta$ has rank $2g$ implies that $\sum C_i$ is nondegenerate, i.e., is not contained in any abelian subvariety of $X$.
    From \eqref{eq:pd_equation}, the dual of $\beta$ is the pullback of $\sum n_i [\Theta_i]$ along the induced map 
    \begin{equation}
    \label{eq:map_to_prod_of_jac}
        \pi:X^\vee \to \prod \mathrm{Jac}(C_i^{\nu}).
    \end{equation}
    where $C_i^{\nu} \to C_i$ is the normalization, and $\Theta_i$ is the theta divisor on $\mathrm{Jac}(C_i^{\nu})$, which is dual to $[C_i^{\nu}] \in \rH_2(\mathrm{Alb}(C_i^\nu), \bZ)$. Since $\sum C_i$ is nondegenerate, $\pi^\vee$ is surjective, so $\pi$ has a finite kernel. In particular, the pullback of a polarization along $\pi$ remains a polarization.

    Let $L \in \NS(X)_{\bQ}$ be an ample class, and write $\beta = L^*$. From \eqref{lemma:poincare_dual_of_effective}, we are allowed to work up to finite isogeny, so we may assume that $L$ is a multiple of a principal polarization $P$. Then the dual of $P$ (hence $L$) is effective, since it is identified with $P^{g - 1}/(g - 1)!$ under the isomorphism $\phi_P:X \to X^\vee$. 
\end{proof}

\subsection{The characteristic Pfaffian}

    Let $(X, H)$ be a polarized abelian variety of dimension $g$. Throughout, we abuse notation and write $H$ for both the line bundle and its first Chern class. Given an element $u \in \rH^2(X, \bC)$, the \emph{characteristic Pfaffian} of $u$ is 
    \[
        p_u(t) = \Pf(tH - u) \in \bC[t].
    \]
    Here, the right-hand side is the Pfaffian of a complex-valued skew-symmetric matrix associated to $tH - u$; our sign convention in forming the Pfaffian is that $p_u(t)$ has positive leading coefficient. The roots of $p_u(t)$ correspond to values of $t$ for which $tH - u$ is a degenerate form. Although it is not reflected in the notation, $p_u(t)$ depends on the choice of polarization.

    \begin{lemma}
    \label{lem:alternative_formula_for_pfaffian}
        For $u \in \rH^2(X, \bC)$,
        \begin{equation}
        \label{eq:alternative_formula}
            p_u(t) = \sum_{i = 0}^g (-1)^{g - i} \frac{(H^i u^{g - i})}{(g - i)!i!} t^i.
        \end{equation}
    \end{lemma}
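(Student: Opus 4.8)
The statement is a purely linear-algebraic identity expressing the Pfaffian of the pencil $tH - u$ in terms of the mixed top intersection numbers $H^i u^{g-i}$. The plan is to prove it by reducing to a normal form for the pencil and then matching both sides coefficient by coefficient. First I would fix an integral basis $e_1,\dots,e_{2g}$ of $\mathrm{H}^1(X,\bC)$ in which $H = \sum_{k=1}^g e_k \wedge e_{g+k}$ (possible after rescaling, since $H$ is a polarization so $\varphi_H$ is nondegenerate; alternatively work over $\bC$ where any nondegenerate alternating form is standard). In such a basis, the Pfaffian $\Pf(tH - u)$ is, by definition, the coefficient extracting the top exterior power: $(tH - u)^g / g! = \Pf(tH-u)\, e_1 \wedge \cdots \wedge e_{2g}$, where we regard $tH - u \in \bigwedge^2 \mathrm{H}^1(X,\bC)$. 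This is the standard identification of the Pfaffian of an alternating $2g \times 2g$ matrix with the coefficient of the volume form in the $g$-th exterior power of the associated $2$-form — I would cite or briefly recall this.

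From there the computation is essentially a binomial expansion. Expanding $(tH-u)^g = \sum_{i=0}^g \binom{g}{i} t^i H^i (-u)^{g-i}$ in $\bigwedge^\bullet \mathrm{H}^1(X,\bC)$ — valid since $H$ and $u$ are both degree-$2$ elements of this graded-commutative algebra, hence commute — and dividing by $g!$ gives
\begin{equation*}
\frac{(tH-u)^g}{g!} = \sum_{i=0}^g (-1)^{g-i} \frac{H^i u^{g-i}}{i!\,(g-i)!}\, t^i.
\end{equation*}
Here $H^i u^{g-i}$ is a top-degree element of $\bigwedge^\bullet \mathrm{H}^1(X,\bC)$, which under the identification with $\mathrm{H}^{2g}(X,\bC)$ and integration over $X$ becomes the intersection number $(H^i u^{g-i})$. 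Matching the coefficient of $e_1 \wedge \cdots \wedge e_{2g}$ on both sides — i.e., applying $\int_X$ — yields the claimed formula, with the leading term ($i=g$) giving $\Pf(tH) \cdot t^g$, whose coefficient $H^g/g! = \int_X H^g / g! > 0$ confirms the sign convention is the one stated.

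The one point requiring care — and the main (mild) obstacle — is bookkeeping the sign convention: the paper declares that $p_u(t)$ is normalized to have positive leading coefficient, and I must check this is consistent with the formula, which is exactly what the $i=g$ term verifies since $H$ is ample so $H^g > 0$. I would also remark that the formula manifestly does not depend on the chosen normalizing basis (the right-hand side is basis-independent), which retroactively justifies that "the" Pfaffian $\Pf(tH - u)$ with the positive-leading-coefficient convention is well-defined; any two standard-form bases for $H$ differ by an element of the symplectic group, which has determinant $1$, so the Pfaffian is unchanged. This completes the proof; no deformation-theoretic or Hodge-theoretic input is needed, only the graded-commutativity of the cohomology ring and the definition of the Pfaffian.
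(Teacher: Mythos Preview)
Your proof is correct and close in spirit to the paper's: both reduce to the binomial expansion identifying the right-hand side with $\frac{1}{g!}\int_X(tH-u)^g$. The paper then finishes by the slightly indirect observation that this polynomial and $p_u(t)$ have the same roots (namely the values of $t$ at which $tH-u$ is degenerate) and the same leading coefficient, whereas you invoke the identity $\omega^g/g! = \Pf(\omega)\cdot e_1\wedge\cdots\wedge e_{2g}$ directly. Your route is cleaner and avoids any worry about root multiplicities.

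One small simplification: there is no need to put $H$ in standard form. Just take any positively-oriented integral basis $e_1,\dots,e_{2g}$ of $\rH^1(X,\bZ)$, so that $\int_X e_1\wedge\cdots\wedge e_{2g}=1$; the Pfaffian identity holds in this basis regardless of the shape of $H$, and integrating both sides gives the lemma on the nose. Passing instead to a complex basis in which $H$ is standard risks introducing a scaling factor $\int_X e_1\wedge\cdots\wedge e_{2g}\neq 1$, which your final basis-independence remark does not quite dispose of (the right-hand side is basis-independent, but the Pfaffian is not).
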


    \begin{proof}
        Let $q_u(t)$ be the right-hand side of \eqref{eq:alternative_formula}. From the binomial theorem,
        \[
            q_u(t) = \frac{1}{g!} \int_X (tH - u)^g.
        \]
        The roots of $q_u(t)$ are the values of $t$ where $(tH - u)^g = 0$, which occurs precisely when $\rk (tH - u) < 2g$. It follows that $p_u(t)$ and $q_u(t)$ have the same roots; since the leading coefficients match, the lemma follows. 
    \end{proof}

    The following is an analogue of the fact that a matrix with distinct eigenvalues which belong to the base field can be diagonalized:

    \begin{lemma}
    \label{lem:symplectic_orbit}
        Let $k$ be a field, and let $(V, H)$ be a symplectic vector space of dimension $2g$ over $k$. Let $u \in \bigwedge^2 V^\vee$ be an alternating form such that $p_u(t) = \Pf(tH - u)$ has distinct roots $a_1, \dots, a_g \in k$. 
        Then there is a symplectic basis $x_1, \dots, x_{2g}$ of $V$ such that
        \begin{align*}
            H &= x_1^\vee \wedge x^\vee_{g + 1} + \cdots + x^\vee_g \wedge x^\vee_{2g} \\
            u &= a_1 x^\vee_1 \wedge x^\vee_{g + 1} + \cdots + a_g x^\vee_g \wedge x^\vee_{2g},
        \end{align*}
        where $x_1^\vee, \dots, x_{2g}^\vee$ is the dual basis of $x_1, \dots, x_{2g}$. 
    \end{lemma}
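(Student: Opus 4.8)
The statement is an analogue of simultaneous reduction of a symmetric bilinear form and a skew form to normal shape, and the natural approach is by induction on $g$ via eigenvector extraction. The key point is that an alternating form $u$ with characteristic Pfaffian having a simple root $a_1 \in k$ gives rise to a canonical ``$a_1$-eigenplane'' in $V$ that is symplectic, nondegenerate for $u$, and $H$-orthogonal to its complement; restricting to the complement reduces $g$ by one.

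\textbf{Step 1: set up the eigenspace decomposition.} The form $tH - u$, for $t$ a formal parameter, can be viewed through the $k$-linear map $\varphi \colon V \to V$ defined by $H(\varphi(v), w) = u(v,w)$ for all $v,w$ — this uses that $H$ is nondegenerate to identify $V \cong V^\vee$. Then $tH - u$ is degenerate exactly when $t$ is an eigenvalue of $\varphi$, and $p_u(t)$ is (up to the sign normalization) $\Pf$ of $tH - u$, whose square is $\det(tH - u) = \det(H)\det(t\,\mathrm{id} - \varphi)$, i.e. $p_u(t)^2 = \det(H)\cdot \mathrm{char}_\varphi(t)$. Hence the roots $a_1, \dots, a_g$ of $p_u(t)$ are precisely the eigenvalues of $\varphi$, each appearing once in $p_u$, so with multiplicity $2$ in $\mathrm{char}_\varphi$. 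Since they are distinct and lie in $k$, the map $\varphi$ is diagonalizable over $k$ and $V = \bigoplus_{i=1}^g W_i$ where $W_i = \ker(\varphi - a_i)$ is the $2$-dimensional eigenspace.

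\textbf{Step 2: orthogonality and nondegeneracy of the eigenplanes.} A standard computation shows that for $i \neq j$, the subspaces $W_i$ and $W_j$ are orthogonal both for $H$ and for $u$: if $v \in W_i$, $w \in W_j$, then $a_i H(v,w) = H(\varphi v, w) = u(v,w) = -u(w,v) = -H(\varphi w, v) = -a_j H(w,v) = a_j H(v,w)$, so $(a_i - a_j) H(v,w) = 0$, forcing $H(v,w) = 0$ and then $u(v,w) = a_i H(v,w) = 0$. Since $H$ is nondegenerate and $V = \bigoplus W_i$ is $H$-orthogonal, each $W_i$ is a nondegenerate symplectic subspace of dimension $2$, and $u|_{W_i} = a_i H|_{W_i}$.

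\textbf{Step 3: assemble the symplectic basis.} On each $W_i$ pick a symplectic basis: vectors $y_i, z_i \in W_i$ with $H(y_i, z_i) = 1$ (possible since $H|_{W_i}$ is a nondegenerate alternating form on a $2$-dimensional space, so it has a hyperbolic basis). Relabel $x_i = y_i$ and $x_{g+i} = z_i$ for $i = 1, \dots, g$. By Step 2 these $2g$ vectors form a symplectic basis of $(V, H)$, and in the dual basis $x_1^\vee, \dots, x_{2g}^\vee$ one reads off $H = \sum_i x_i^\vee \wedge x_{g+i}^\vee$ and $u = \sum_i a_i\, x_i^\vee \wedge x_{g+i}^\vee$, using $u|_{W_i} = a_i H|_{W_i}$ and the vanishing of the cross terms. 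This is exactly the desired normal form.

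\textbf{Main obstacle.} The only genuinely substantive point is Step 1 — correctly relating $p_u(t)$ to the characteristic polynomial of $\varphi$ with the right multiplicities, so that the hypothesis ``$p_u$ has $g$ distinct roots in $k$'' translates into ``$\varphi$ has $g$ distinct eigenvalues in $k$, each of geometric multiplicity $2$.'' The identity $p_u(t)^2 = \det(H)\,\mathrm{char}_\varphi(t)$ handles this, but one must be slightly careful that simple roots of $p_u$ yield exactly $2$-dimensional $\varphi$-eigenspaces (rather than, say, a single Jordan block of size $2$): this follows because distinctness of the $a_i$ forces $\sum_i \dim \ker(\varphi - a_i) = 2g = \dim V$, as each $\ker(\varphi - a_i)$ has dimension at least $1$ and the characteristic polynomial has degree $2g$ with these as its only roots — combined with $\dim\ker(\varphi-a_i) \le \mathrm{mult}_{a_i}\mathrm{char}_\varphi = 2$, equality holds everywhere. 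Everything after that is linear algebra bookkeeping. Note also the hypothesis that the roots lie in $k$ is essential precisely to guarantee that the eigenplanes, and the hyperbolic bases within them, are defined over $k$.
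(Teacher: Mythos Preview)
Your approach is essentially the paper's: your $W_i = \ker(\varphi - a_i)$ coincides with the paper's $V_i = \{v : (u - a_iH)(v,\,\cdot\,) = 0\}$, the orthogonality calculation in Step 2 matches the paper's almost verbatim, and Step 3 is the same conclusion. Introducing the endomorphism $\varphi$ explicitly is a nice touch that makes the link to eigenvalue theory transparent.

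There is, however, a real gap in your ``Main obstacle'' paragraph. You assert that $\sum_i \dim\ker(\varphi - a_i) = 2g$, but your justification (``each kernel has dimension $\ge 1$, the characteristic polynomial has degree $2g$ with only these roots, each geometric multiplicity is $\le 2$'') only yields $g \le \sum_i \dim\ker(\varphi - a_i) \le 2g$; it does not exclude the possibility of a size-$2$ Jordan block at some $a_i$. The quickest fix is to observe that $\ker(\varphi - a_i)$ is the radical of the \emph{alternating} form $u - a_iH$, and an alternating form on an even-dimensional space has even rank, so its radical has even dimension; hence $\dim\ker(\varphi - a_i) \ge 2$, and with $g$ such summands bounded above by $2g$ you get equality everywhere. (Alternatively: check from $H(\varphi v,w) = u(v,w) = -u(w,v) = H(v,\varphi w)$ that $\varphi$ is $H$-self-adjoint; then the \emph{generalized} eigenspaces $\tilde W_i$ are $H$-orthogonal and exhaust $V$, so each $H|_{\tilde W_i}$ is nondegenerate; a size-$2$ Jordan block on $\tilde W_i$ would force its eigenvector into the radical of $H|_{\tilde W_i}$, a contradiction.) With either patch, the rest of your argument goes through unchanged.
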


    \begin{proof}
        For each $i$, let $V_i$ be the kernel of the alternating form $u - a_i H$. For each $x \in V_i$ and any $y \in V$,
        \[
            \langle x, y \rangle_{u} = a_i \langle x, y \rangle_{H}.
         \] 
        For $x \in V_i$ and $y \in V_j$,
        \[
            \langle x, y \rangle_{H} = a_i^{-1} \langle x, y \rangle_{u} = a_j^{-1} \langle x, y \rangle_{u}.
        \]
        If $i \neq j$ (so that $a_i \neq a_j$, by assumption), then $\langle x, y \rangle_{H} = \langle x, y \rangle_{u} = 0$. In particular, the subspaces $V_i$ and $V_j$ are $H$-orthogonal for $i \neq j$. On the other hand, since $H$ is nondegenerate, $H|_{V_i}$ is nondegenerate for each $i$. It follows that the map 
        \[
             \bigoplus_{i = 1}^g V_i \to V
         \] 
        is an isomorphism, since for each $i$, $ \bigoplus_{j \neq i} V_j$ is $H$-orthogonal to $V_i$.

        Putting everything together, there is an isomorphism of symplectic vector spaces
        \begin{equation}
            (V, H) = \bigoplus_{i = 1}^{g} (V_i, H|_{V_i}).
        \end{equation} 
        To conclude, one takes $x_i, x_{g + i} \in V_i$ to be any symplectic basis for $H|_{V_i} = a_i^{-1} \cdot u|_{V_i}$.
    \end{proof}

    \subsection{Endomorphisms}

    Let $(X, H)$ be a polarized abelian $g$-fold. 
    Consider the first homology $\rH_1 = \rH_1(X, \bZ)$, regarded as a Hodge structure of weight $1$, with a Hodge decomposition
    \[
        \rH_{1, \bC} = \rH_{1, 0} \oplus \rH_{0, 1}.
    \]
    Given an element $\class \in \rH^{1,1}(X)$, consider the homomorphism
    \[
        \varphi_\class:\rH_{1, \bC} \to \rH^1_{\bC} = \rH^1(X, \bC)
    \] 
    induced by the alternating form associated to $\class$. The composition
    \begin{equation} \label{eq:endom_of_complex_hs}
        \begin{tikzcd}
            \rho(\class):\rH_{1, \bC} \ar[r, "\varphi_\class"] &  \rH^1_{\bC} \ar[r, "\varphi_{H}^{-1}"] & \rH_{1, \bC},
        \end{tikzcd}
    \end{equation}
    preserves $\rH_{1, 0}$, and from the Hodge decomposition one sees that the induced map
    \begin{equation}
    \label{eq:map_to_end}
        \rho:\rH^{1, 1}(X) \to \End(\rH_{1, 0})
    \end{equation}
    is an isomorphism which sends $H$ to $\id$. Although it is not reflected in the notation, $\rho$ depends on the choice of polarization $H$.

    Let $\End^s(X)$ be the subring of $\End(X)$ fixed by the Rosati involution 
    \cite[\S 5.1]{birk_lange}. There is a commutative diagram
        \begin{equation} \label{eq:ns_big_diagram}
        \begin{tikzcd}[column sep=huge]
            \NS(X)_{\bQ} \ar[d] \ar[r, "\tilde \rho"', "\sim"] & \End^s(X)_{\bQ} \ar[d] \\
            \NS(X)_{\bC} \ar[r, "\sim"] \ar[d] & \End^s(X)_{\bC} \ar[d] \\
            \rH^{1, 1}(X) \ar[r, "\rho"', "\sim"] & \End(\rH_{1, 0}),
        \end{tikzcd}
    \end{equation}
    where $\tilde \rho$ is given by sending $\class$ to the rational endomorphism 
    of $X$ induced by the Hodge-structure endomorphism $\varphi_{\class} \circ \varphi_{H}^{-1}$ of $\rH_{1,{\bQ}}$ from \eqref{eq:endom_of_complex_hs}. Moreover, the right vertical arrows are ring homomorphisms.

    \begin{lemma}
    \label{lem:characteristic_polynomial}
        Let $u \in \rH^{1,1}(X)$. The characteristic polynomial of $\rho(u) \in \End(\rH_{1,0})$ is given by $p_u(t)$, up to a nonzero scalar. 
    \end{lemma}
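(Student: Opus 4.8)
The statement claims that for $u \in \rH^{1,1}(X)$, the characteristic polynomial of $\rho(u) \in \End(\rH_{1,0})$ equals $p_u(t) = \Pf(tH - u)$ up to a nonzero scalar. The plan is to work over $\bC$ and reduce everything to linear algebra on $\rH_{1,\bC}$, exploiting the isomorphism $\rho \colon \rH^{1,1}(X) \xrightarrow{\sim} \End(\rH_{1,0})$ and its description via $\varphi_u \circ \varphi_H^{-1}$. The key observation is that the characteristic polynomial of $\rho(u)$ acting on $\rH_{1,0}$ should be compared with the characteristic polynomial of the endomorphism $\varphi_H^{-1} \circ \varphi_u$ acting on the full space $\rH_{1,\bC}$, and that the latter is (up to a scalar) $\Pf(tH-u)/\Pf(H)$ by a standard determinantal identity for alternating forms.

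First I would recall the basic linear-algebra fact: if $H$ and $u$ are alternating forms on a $2g$-dimensional space $V$ with $H$ nondegenerate, then $\det(\varphi_H^{-1}\varphi_u - t\cdot\id_V) = \det(\varphi_H)^{-1}\det(\varphi_u - tH) = \Pf(H)^{-2}\Pf(u-tH)^2$. Hence the characteristic polynomial of the operator $T := \varphi_H^{-1}\circ\varphi_u$ on $V = \rH_{1,\bC}$ is $\Pf(H)^{-2}p_u(t)^2$ up to sign, i.e. it is $p_u(t)^2$ up to a nonzero scalar. Next, from the commutative diagram \eqref{eq:ns_big_diagram} and the definition of $\rho$ via \eqref{eq:endom_of_complex_hs}, the operator $T = \rho(u)$ (extended to $\rH_{1,\bC}$) preserves the Hodge decomposition $\rH_{1,\bC} = \rH_{1,0}\oplus\rH_{0,1}$, and its restriction to $\rH_{1,0}$ is exactly $\rho(u)$ as defined in the statement. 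I would then argue that the restriction of $T$ to $\rH_{0,1}$ is the complex conjugate (equivalently, the transpose with respect to the polarization pairing) of the restriction to $\rH_{1,0}$: since $u$ and $H$ are real classes, $T$ commutes with complex conjugation, which swaps $\rH_{1,0}$ and $\rH_{0,1}$; therefore the characteristic polynomial of $T|_{\rH_{0,1}}$ is the complex conjugate of that of $T|_{\rH_{1,0}}$, and since $p_u(t)$ has real (indeed rational, when $u$ is) coefficients up to scalar, both factors coincide up to scalar. Putting these together, $\mathrm{char}(T) = \mathrm{char}(T|_{\rH_{1,0}})\cdot\mathrm{char}(T|_{\rH_{0,1}}) = \mathrm{char}(\rho(u))^2$ up to a scalar, so comparing with $\mathrm{char}(T) = p_u(t)^2$ up to a scalar and taking (monic normalizations of) square roots in $\bC[t]$ gives $\mathrm{char}(\rho(u)) = p_u(t)$ up to a nonzero scalar.

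The main obstacle I expect is making the ``take the square root'' step rigorous and sign-clean: $\mathrm{char}(\rho(u))^2$ and $p_u(t)^2$ agreeing up to a nonzero scalar forces $\mathrm{char}(\rho(u)) = \pm c\, p_u(t)$, and one must rule out nothing here since the statement only asks for equality up to a nonzero scalar anyway — so in fact this is harmless, and the genuine content is the clean identification of $T|_{\rH_{1,0}}$ with $\rho(u)$ together with the conjugation symmetry. An alternative, perhaps cleaner route that avoids square roots entirely: diagonalize in the generic case. By Lemma~\ref{lem:symplectic_orbit} (applied over a field where $p_u$ splits with distinct roots), one may choose a symplectic basis in which $H = \sum x_i^\vee\wedge x_{g+i}^\vee$ and $u = \sum a_i x_i^\vee\wedge x_{g+i}^\vee$; then one computes directly that $\rho(u)$ is diagonalizable with eigenvalues $a_1,\dots,a_g$ on $\rH_{1,0}$, while $p_u(t) = c\prod_i(t - a_i)$ by Lemma~\ref{lem:alternative_formula_for_pfaffian} or by inspection, giving the result on the Zariski-dense locus of $u$ with distinct Pfaffian roots; since both sides of the claimed identity are polynomial (continuous) in $u$ and the normalization is by a fixed nonzero scalar (the leading coefficient of $p_u$ is a fixed polynomial in the coordinates of $u$, positive by the sign convention), the identity extends to all $u \in \rH^{1,1}(X)$ by continuity/Zariski density. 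I would write up this second approach as the main argument, using the first as motivation, since it sidesteps the square-root ambiguity cleanly.
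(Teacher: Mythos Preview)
Your first approach is essentially sound, but there is a gap: you invoke complex conjugation to match the characteristic polynomials of $T|_{\rH_{1,0}}$ and $T|_{\rH_{0,1}}$, which requires $u$ to be real, whereas the lemma is stated for arbitrary $u \in \rH^{1,1}(X)$. This is easily repaired --- either reduce to real $u$ by density (both sides depend polynomially on $u$), or observe directly that skew-symmetry of $u$ and $H$ forces $T|_{\rH_{0,1}}$ to be conjugate to the transpose of $T|_{\rH_{1,0}}$, hence to have the same characteristic polynomial. Your second approach has a more serious gap: the symplectic basis produced by Lemma~\ref{lem:symplectic_orbit} lives on $\rH_{1,\bC}$ and has no reason to be compatible with the Hodge decomposition $\rH_{1,0}\oplus\rH_{0,1}$, so you cannot directly read off the eigenvalues of $\rho(u) = T|_{\rH_{1,0}}$ from it; you only see the eigenvalues of $T$ on all of $\rH_{1,\bC}$, which returns you to the square-root situation of the first approach.

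The paper takes a shorter route. It simply shows that $\chi_u(t) = \det(t\cdot\id - \rho(u))$ and $p_u(t)$ have the same roots: a complex number $a$ is a root of $p_u$ exactly when $v := aH - u \in \rH^{1,1}(X)$ is a degenerate form, and a root of $\chi_u$ exactly when $\rho(v) = a\cdot\id - \rho(u)$ is singular; these two conditions coincide because for $v\in\rH^{1,1}$ the map $\varphi_v$ respects the Hodge decomposition, so $\varphi_v$ is degenerate if and only if its restriction to $\rH_{1,0}$ is. Since both polynomials have degree $g$, this gives the claim. Your route makes the block structure of $T$ and the identity $\det(T - t\cdot\id) \sim p_u(t)^2$ explicit, which is informative; the paper trades that for brevity by working one root at a time.
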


    \begin{proof}
        Let $\chi_u(t) = \det(t \cdot \id - \rho(u))$ be the characteristic polynomial of $\rho(u)$. 
        It is enough to show that $\chi_u(t)$ and $p_u(t)$ have the same roots. 

        For any $v \in \rH^{1,1}(X)$, the compatibility of $v$ with the Hodge decomposition implies that $v$ is a degenerate form if and only if $\rho(v)$ has rank $< g$. Therefore, $\Pf(v)$ vanishes if and only if $\det(\rho(v)) = 0$. Taking $v = aH - u \in \rH^{1,1}(X)$ for each $a \in \bC$, we see that $\chi_u(t)$ and $p_u(t)$ have the same roots. 
    \end{proof}

    \begin{lemma}
\label{lem:char_poly_embeds_into_end}
    Let $X$ be a polarized abelian $g$-fold, and let $u \in \NS(X)_{\bQ}$. Then there is a ring homomorphism
    \[
        \bQ[t]/p_u(t) \to \End^s(X)_{\bQ}
    \]
    sending $t$ to $\tilde \rho(u)$. In particular, if $p_u(t)$ is irreducible over $\bQ$, then $\End^s(X)_{\bQ}$ contains the field $\bQ[t]/p_u(t)$.
\end{lemma}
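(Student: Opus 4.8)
The statement to be proved, Lemma~\ref{lem:char_poly_embeds_into_end}, asks us to produce a ring homomorphism $\bQ[t]/p_u(t) \to \End^s(X)_{\bQ}$ sending $t \mapsto \tilde\rho(u)$. The plan is to use the Cayley--Hamilton theorem together with the already-established compatibility diagram \eqref{eq:ns_big_diagram}. The one subtlety is that $p_u(t)$ may not literally be the characteristic polynomial of $\rho(u)$, but only equal to it up to a nonzero scalar (Lemma~\ref{lem:characteristic_polynomial}); so we must be slightly careful about which polynomial we use.

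First I would recall that by Lemma~\ref{lem:characteristic_polynomial}, the characteristic polynomial $\chi_u(t) = \det(t\cdot\id - \rho(u))$ of the endomorphism $\rho(u) \in \End(\rH_{1,0})$ equals $\lambda \cdot p_u(t)$ for some nonzero scalar $\lambda \in \bC$; comparing leading coefficients (both $\chi_u$ and $p_u$ are monic up to our sign conventions, or at least have matching leading behavior), one sees $\chi_u(t)$ and $p_u(t)$ are proportional, and since $\chi_u$ is monic of degree $g$ while $p_u$ has degree $g$, the scalar relating them is a nonzero rational number; in particular $p_u(t)$ and $\chi_u(t)$ generate the same ideal in $\bQ[t]$, so $\bQ[t]/p_u(t) \cong \bQ[t]/\chi_u(t)$ canonically (the identity on the class of $t$). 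Next, by the Cayley--Hamilton theorem applied to $\rho(u) \in \End(\rH_{1,0})$, we have $\chi_u(\rho(u)) = 0$ in $\End(\rH_{1,0})$. Therefore the evaluation homomorphism $\bC[t] \to \End(\rH_{1,0})$, $t \mapsto \rho(u)$, factors through $\bC[t]/\chi_u(t)$, and restricting to the $\bQ$-subalgebra generated by $\rho(u)$ — which, via diagram \eqref{eq:ns_big_diagram}, is the image of $\End^s(X)_{\bQ}$ — we get a ring homomorphism $\bQ[t]/\chi_u(t) \to \End^s(X)_{\bQ}$ sending $t \mapsto \tilde\rho(u)$. (Here one uses that the right vertical arrows in \eqref{eq:ns_big_diagram} are ring homomorphisms and that $\tilde\rho(u) \mapsto \rho(u)$ under $\End^s(X)_{\bQ} \to \End(\rH_{1,0})$, which is injective after tensoring with $\bC$, so the relation $\chi_u(\tilde\rho(u)) = 0$ already holds in $\End^s(X)_{\bQ}$.) Combining with the identification $\bQ[t]/p_u(t) \cong \bQ[t]/\chi_u(t)$ from the previous sentence gives the desired homomorphism $\bQ[t]/p_u(t) \to \End^s(X)_{\bQ}$.

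For the final sentence of the lemma: if $p_u(t)$ is irreducible over $\bQ$, then $\bQ[t]/p_u(t)$ is a field, and any ring homomorphism from a field is injective (its kernel is a proper ideal, hence zero); thus $\End^s(X)_{\bQ}$ contains a copy of the field $\bQ[t]/p_u(t)$.

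The main obstacle, such as it is, is purely bookkeeping: one must make sure that the relation $p_u(\tilde\rho(u)) = 0$ is valid in $\End^s(X)_{\bQ}$ and not merely in $\End(\rH_{1,0})$, which requires invoking the injectivity of $\End^s(X)_{\bQ} \hookrightarrow \End^s(X)_{\bC} \cong \End(\rH_{1,0})$ (the bottom-right isomorphism in \eqref{eq:ns_big_diagram}), and that the scalar ambiguity between $p_u$ and the true characteristic polynomial $\chi_u$ is rational so that passing between $\bQ[t]/p_u(t)$ and $\bQ[t]/\chi_u(t)$ is harmless. Neither point is deep; the real content has already been done in Lemmas~\ref{lem:characteristic_polynomial} and the construction of \eqref{eq:ns_big_diagram}.
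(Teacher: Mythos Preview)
Your proposal is correct and follows essentially the same route as the paper: apply Cayley--Hamilton via Lemma~\ref{lem:characteristic_polynomial} to get $p_u(\rho(u))=0$ in $\End(\rH_{1,0})$, then use the injectivity of $\End^s(X)_{\bQ}\hookrightarrow\End(\rH_{1,0})$ coming from diagram~\eqref{eq:ns_big_diagram} to conclude $p_u(\tilde\rho(u))=0$ in $\End^s(X)_{\bQ}$. Two small comments: the worry about the rationality of the scalar $\lambda$ is unnecessary (since $p_u\in\bQ[t]$ and $\lambda\neq 0$, the vanishing $p_u(\rho(u))=0$ follows immediately from $\chi_u(\rho(u))=0$), and the map $\End^s(X)_{\bC}\to\End(\rH_{1,0})$ is an injection rather than an isomorphism---but neither point affects the argument.
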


\begin{proof}
    From Lemma~\ref{lem:characteristic_polynomial}, the map $\bQ[t] \to \End(\rH_{1,0})$ factors through $\bQ[t]/p_u(t)$. There is a commutative diagram of ring homomorphisms
    \[
        \begin{tikzcd}
            \bQ[t] \ar[d, "t \mapsto \tilde \rho(\class)"] \ar[r] & \bQ[t]/p_u(t) \ar[d, "t \mapsto \rho(\class)"] \ar[dl, dotted] \\
            \End^s(X)_{\bC} \ar[r, hook] & \End(\rH_{1, 0}).
        \end{tikzcd}
    \]
    The injectivity of the bottom arrow, and hence the existence of the dotted arrow, follows from chasing the diagram \eqref{eq:ns_big_diagram}.
\end{proof}

\begin{remark}
\label{rem:end_of_simple}
    If $X$ is a simple abelian threefold, then $\End(X)_{\bQ}/\bQ$ is either the trivial extension, an imaginary quadratic extension, a totally real cubic extension, or a CM extension of degree $6$. It follows that the rank of $\NS(X)_{\bQ} \simeq \End^s(X)_{\bQ}$ is either $1$ or $3$, and in the latter case, $\End^s(X)_{\bQ}$ is a totally real cubic field \cite[Proposition 5.5.7]{birk_lange}.
\end{remark}

    \begin{lemma}
\label{lem:idempotent_image}
    Let $X$ be a simple abelian $g$-fold such that $\End^s(X)_{\bQ}$ is a product of fields (e.g., a simple abelian threefold). If $u \in \NS(X)_{\bC}$ has rank $2$, then there exists a constant $c \in \bC$ so that $c \cdot \tilde \rho(\class)$ is idempotent.
\end{lemma}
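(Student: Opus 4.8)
The statement claims that if $X$ is a simple abelian $g$-fold with $\End^s(X)_{\bQ}$ a product of fields, and $u \in \NS(X)_{\bC}$ has rank $2$, then some complex scalar multiple of $\tilde\rho(u)$ is idempotent. The strategy is to pass through the isomorphism $\rho : \rH^{1,1}(X) \xrightarrow{\sim} \End(\rH_{1,0})$ of \eqref{eq:map_to_end} and analyze the characteristic polynomial. By Lemma~\ref{lem:characteristic_polynomial}, the characteristic polynomial $\chi_u(t)$ of $\rho(u)$ agrees (up to a nonzero scalar) with $p_u(t) = \Pf(tH - u)$, and by Lemma~\ref{lem:alternative_formula_for_pfaffian} this equals $\frac{1}{g!}\int_X (tH-u)^g$ up to scalar. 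The rank $2$ hypothesis means $u^2 = 0$ in $\rH^4(X,\bC)$ (an element of $\bigwedge^2$ of the relevant space has rank $\le 2r$ iff its $(r+1)$-st power vanishes, here $r=1$). Expanding $(tH-u)^g$ by the binomial theorem and using $u^2 = 0$, all terms with $u^j$, $j \ge 2$ drop out, so $p_u(t)$ is a scalar multiple of $t^{g-1}(\alpha t - \beta)$ for appropriate constants $\alpha = H^g/g!$ (nonzero, since $H$ is ample) and $\beta = H^{g-1}u/(g-1)!$.

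\textbf{Key steps.} First I would establish that $\rho(u)^2$ is a scalar multiple of $\rho(u)$, i.e. $\rho(u)$ satisfies $x^2 = \lambda x$ for some $\lambda \in \bC$. This is where the product-of-fields hypothesis enters. From the previous paragraph the minimal polynomial of $\rho(u)$ divides $t(\alpha t - \beta)$ unless $\rho(u)$ is nilpotent (which happens only if $\beta = 0$); so it suffices to rule out that $\rho(u)$ has a nontrivial nilpotent part, equivalently that $\rho(u)$ is a semisimple element of $\End(\rH_{1,0})$. Here I would use that $\rho(u) \in \End^s(X)_{\bC} \subset \End(\rH_{1,0})$ via diagram \eqref{eq:ns_big_diagram}, and that $\End^s(X)_{\bQ}$ — hence $\End^s(X)_{\bC}$, being a product of fields tensored with $\bC$ — is a product of copies of $\bC$ and therefore is a reduced (in fact étale) $\bC$-algebra with no nilpotents. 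Thus $\rho(u)$, being an element of a nilpotent-free commutative $\bC$-algebra, cannot be nilpotent unless it is zero; combined with the factorization $p_u(t) = \text{(scalar)} \cdot t^{g-1}(\alpha t - \beta)$ of its characteristic polynomial, this forces the minimal polynomial of $\rho(u)$ to be $t(t - \beta/\alpha)$ (if $\beta \neq 0$) or $t$ (if $\beta = 0$, but then $u$ is a degenerate form of rank $0$, contradicting rank $2$). So $\rho(u)^2 = (\beta/\alpha)\rho(u)$ with $\beta/\alpha \neq 0$.

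\textbf{Conclusion.} Setting $c = \alpha/\beta = \frac{H^g/g!}{H^{g-1}u/(g-1)!} = \frac{H^g}{g \cdot H^{g-1}u}$, which is a well-defined nonzero complex number, we get $(c\,\rho(u))^2 = c\,\rho(u)$, i.e. $c\,\rho(u)$ is idempotent in $\End(\rH_{1,0})$. Since $\rho$ restricted to $\NS(X)_{\bC} \cong \End^s(X)_{\bC}$ is a ring isomorphism onto its image (the top two rows of \eqref{eq:ns_big_diagram} and the fact that the right vertical arrows are ring homomorphisms), $c\,\tilde\rho(u)$ is idempotent in $\End^s(X)_{\bC}$, and being a $\bQ$-linear combination pushed forward, it lies in $\End^s(X)_{\bC}$ as required. (One should note the hypothesis that $X$ is simple guarantees, via Remark~\ref{rem:end_of_simple}, that $\End^s(X)_{\bQ}$ is indeed a field or a product thereof in the cases of interest — in particular a totally real cubic field for a simple abelian threefold of Picard rank $3$ — so the product-of-fields assumption is automatic there.)

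\textbf{Main obstacle.} The delicate point is justifying that $\rho(u)$ is semisimple, i.e. has no nilpotent part. The cleanest route is the observation that $\rho(u)$ lands in $\End^s(X)_{\bC}$, and $\End^s(X)_{\bC} = \End^s(X)_{\bQ} \otimes_{\bQ} \bC$ is nilpotent-free precisely because $\End^s(X)_{\bQ}$ is assumed to be a product of (number) fields: for a number field $K$, $K \otimes_{\bQ} \bC \cong \bC^{[K:\bQ]}$ is reduced, and a finite product of reduced rings is reduced. I expect this to be the one place where a careful argument is needed rather than a routine computation; everything else is bookkeeping with the formula for $p_u(t)$ and chasing \eqref{eq:ns_big_diagram}.
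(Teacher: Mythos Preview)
Your proposal is correct and follows essentially the same route as the paper: use $u^2=0$ from the rank-$2$ hypothesis to factor $p_u(t)$ as a scalar times $t^{g-1}(t-\lambda)$, then invoke that $\End^s(X)_{\bC}$ is reduced (being a product of number fields tensored with $\bC$) to force $\tilde\rho(u)^2 = \lambda\,\tilde\rho(u)$ with $\lambda\neq 0$. The paper phrases the key step as ``$t(t-\lambda)$ is nilpotent in $\bC[t]/p_u(t)$, hence maps to zero in the nilpotent-free target,'' whereas you phrase it as ``$\rho(u)$ is semisimple because it lies in a reduced commutative algebra''; these are equivalent observations, and your explicit identification $c = H^g/(g\cdot H^{g-1}u)$ is a pleasant bonus.
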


\begin{proof}
    Since the rank of $\class$ is $2$, we have $\class^2 = 0$. Thus, it follows from Lemma~\ref{eq:alternative_formula} that 
    \begin{equation*}
        \bC[t]/p_u(t) \simeq \bC[t]/(t^{g-1}(t - \lambda))
    \end{equation*} 
    for some constant $\lambda$. Since $\End^s(X)_{\bC}$ is nilpotent-free and $\tilde \rho(\class)$ is nonzero, $\lambda$ is nonzero. Similarly, since $t(t - \lambda)$ is nilpotent in $R_\class$, $\tilde \rho(\class)^2 - \lambda \cdot \tilde \rho(\class) = 0$. It follows that $1/\lambda \cdot \tilde \rho(\class)$ is idempotent.
\end{proof}

    \begin{proposition}
    \label{prop:rank_two_points}
    Let $X$ be an abelian $g$-fold such that $\End^s(X)_{\bQ}$ is a product of fields (e.g., a simple abelian threefold). Then the intersection 
    \begin{equation} \label{eq:intersect_with_gr}
        \bP(\NS(X)_{\bC}) \cap \Gr(2, \rH^1_{\bC}) \subset \bP(\rH^2(X, \bC))
    \end{equation}
    is finite.
    \end{proposition}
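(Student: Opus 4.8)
The plan is to show that the set \eqref{eq:intersect_with_gr} of rank-$2$ classes in $\bP(\NS(X)_\bC)$ is contained in a finite union of linear subspaces, each of dimension too small to meet $\Gr(2,\rH^1_\bC)$ in a positive-dimensional set; then a dimension count in each piece forces finiteness. Concretely, using the isomorphism $\rho \colon \rH^{1,1}(X) \to \End(\rH_{1,0})$ of \eqref{eq:map_to_end} which restricts to $\tilde\rho \colon \NS(X)_\bC \to \End^s(X)_\bC$ (the bottom rows of \eqref{eq:ns_big_diagram} after complexification), the condition that $u \in \NS(X)_\bC$ has rank $2$ is equivalent, by Lemma~\ref{lem:idempotent_image}, to $c\cdot\tilde\rho(u)$ being idempotent for some $c \in \bC$, i.e. $\tilde\rho(u)$ is (a scalar multiple of) an idempotent of the $\bC$-algebra $R \coloneqq \End^s(X)_\bQ \otimes \bC$. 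Since $\End^s(X)_\bQ$ is a product of fields by hypothesis, $R$ is a product of copies of $\bC$, so its idempotents are finite in number — exactly the $2^r$ sums of the primitive idempotents $e_1,\dots,e_r$, where $r$ is the number of field factors (equivalently $r = \rho(X)$, the Picard number, using Remark~\ref{rem:end_of_simple} in the threefold case).

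So the first step is: enumerate the idempotents $\varepsilon_1,\dots,\varepsilon_{2^r}$ of $R$, and observe that every rank-$2$ class $u \in \NS(X)_\bC$ satisfies $\tilde\rho(u) \in \bC\cdot\varepsilon_j$ for some $j$; hence $[u] \in \bP(\NS(X)_\bC)$ is one of the (at most $2^r$) points $[\tilde\rho^{-1}(\varepsilon_j)]$. This already shows the intersection \eqref{eq:intersect_with_gr} is finite — in fact it has at most $2^{\rho(X)}$ points — and one does not even need to intersect with $\Gr(2,\rH^1_\bC)$; being rank $2$ is by itself enough to pin $u$ down projectively. The second (minor) step is to justify the equivalence "rank $2$ $\iff$ $c\cdot\tilde\rho(u)$ idempotent for some $c$'' at the level of $\NS(X)_\bC$ rather than $\NS(X)_\bQ$: Lemma~\ref{lem:idempotent_image} is stated over $\bC$ already (its hypothesis is $u \in \NS(X)_\bC$ of rank $2$), so this is immediate; one should note that $u \ne 0$, hence $\tilde\rho(u) \ne 0$, hence the idempotent $c\cdot\tilde\rho(u)$ is nonzero, so it really is one of the $\varepsilon_j$ with $j$ not indexing the zero idempotent.

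I expect no serious obstacle: the entire content is packaged in Lemma~\ref{lem:idempotent_image} together with the structure theory of commutative semisimple $\bC$-algebras. The one point requiring a line of care is making sure the argument genuinely produces \emph{finitely many points} and not merely finitely many lines: this works because $\tilde\rho$ is injective (it is an isomorphism onto $\End^s(X)_\bC$ by \eqref{eq:ns_big_diagram}), so each one-dimensional subspace $\bC\cdot\varepsilon_j \subset R$ pulls back to a one-dimensional subspace of $\NS(X)_\bC$, giving a single point of the projective space. If one wanted a softer argument avoiding the explicit idempotent count, one could instead argue that the rank-$2$ locus in $\NS(X)_\bC$ is the zero locus of the quadratic equations $u^2 = 0$ (i.e. $\tilde\rho(u)^2 \in \bC\cdot\tilde\rho(u)$ up to the nilpotent-free structure), cut out a subvariety, and show each of its irreducible components is a point; but the idempotent argument is cleaner and gives the explicit bound. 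The final write-up will be: recall $\tilde\rho$, invoke Lemma~\ref{lem:idempotent_image}, list the $2^r$ idempotents of the product-of-fields algebra $R = \End^s(X)_\bQ\otimes\bC$, conclude the rank-$2$ locus in $\bP(\NS(X)_\bC)$ is finite, hence so is its intersection with $\Gr(2,\rH^1_\bC)$. $\qed$
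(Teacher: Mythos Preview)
Your proposal is correct and follows essentially the same route as the paper: identify the intersection with the rank-$2$ locus in $\bP(\NS(X)_\bC)$, apply Lemma~\ref{lem:idempotent_image} to map rank-$2$ classes to scalar multiples of idempotents in $\End^s(X)_\bC$, and conclude by noting that a product of fields over $\bQ$ tensored with $\bC$ has only finitely many idempotents. The paper's write-up is terser (phrasing the last step via clopen subsets of $\Spec \End^s(X)_\bC$), but the argument is the same; your added remarks on the explicit bound $2^{\rho(X)}$ and on injectivity of $\tilde\rho$ are correct but not needed.
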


    \begin{proof}
        The intersection \eqref{eq:intersect_with_gr} is the set of rank $2$ points in $\bP(\NS(X)_{\bC})$. We choose a polarization on $X$. From Lemma~\ref{lem:idempotent_image}, the image of \eqref{eq:intersect_with_gr} in $\bP(\End^s(X)_{\bC})$ under $\bP(\tilde \rho)$ is contained in the image in $\bP(\End^s(X)_{\bC})$ of the set of nonzero idempotents in $\End^s(X)_{\bC}$. The set of idempotents of $\End^s(X)_{\bC}$ is in bijection with the set of clopen subsets of $\Spec \End^s(X)_{\bC}$, hence is finite.
    \end{proof}

    \subsection{Hodge loci}

    Let $f\colon Y \to S$ be a smooth projective morphism of complex varieties, and let $u \in \rH^{2k}(Y_s, \bQ)$ for some $s \in S(\bC)$. The \emph{Hodge locus} $\Hdg(u) \subset S$ is the set of points $t$ such that there exists a path $\gamma$ from $s$ to $t$ which transports $u$ to a Hodge class in $\rH^{2k}(Y_t, \bQ)$. 

    Given a polarized abelian $g$-fold $(X, H)$ and a class $u \in \rH^2(X, \bQ)$, there is an equation for the Hodge locus of $u$ in the moduli space of polarized abelian $g$-folds, or rather its universal cover $\bH_g$. Recall that Siegel's upper half-space $\bH_g$ is the space of symmetric complex matrices $Z$ such that $\mathrm{Im} \ Z$ is positive-definite. 

    For simplicity, we treat only the case of principally polarized abelian varieties. On the other hand, it is convenient to allow real classes $u \in \rH^2(X, \bR)$. Each $Z \in \bH_g$ determines a principally polarized abelian variety $X$ is follows: The $g \times 2g$ matrix $(Z, 1_g)$ induces an $\bR$-linear map $\bR^{2g} \to \bC^g$. Then $X = \bC^g/\Lambda$, where $\Lambda$ is the image of $\bZ^{2g}$. 

    Let $M$ be the real skew-symmetric matrix corresponding to $u \in \rH^2(X, \bR)$ in an $H$-symplectic basis of $\bZ^{2g}$. We write $M$ as matrix of $g \times g$ blocks
    \[
        M = \begin{pmatrix}
            A & B \\
            -B^t & C
        \end{pmatrix},
    \]
    where $A$ and $C$ are skew-symmetric.

     \begin{lemma}
    \label{lem:birkenhake--lange}
        In the above situation, the class $u$ lies in $\rH^{1,1}(X)$ if and only if  
        \begin{equation}
        \label{eq:hodge-locus-equation}
            A - BZ + ZB^t + ZCZ = 0
        \end{equation}
    \end{lemma}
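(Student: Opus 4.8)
The plan is to unwind the Hodge-theoretic condition $u \in \rH^{1,1}(X)$ into the vanishing of the $(0,2)$-component of $u$, and then compute that component explicitly in coordinates dictated by the period matrix $(Z, 1_g)$. First I would fix the standard description of the Hodge decomposition: for $X = \bC^g/\Lambda$ with $\Lambda$ the image of $\bZ^{2g}$ under the $\bR$-linear map $\Pi \colon \bR^{2g} \to \bC^g$ given by the matrix $(Z, 1_g)$, the space $\rH^{0,1}(X) = \overline{\rH^{1,0}(X)}$ is spanned by the antiholomorphic differentials, and correspondingly $\rH^{0,2}(X) = \bigwedge^2 \rH^{0,1}(X)$. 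A real $2$-form $u \in \rH^2(X,\bR)$, viewed via the skew-symmetric matrix $M$ on $\bR^{2g}$ as an alternating form $E_u(\,\cdot\,,\,\cdot\,)$, lies in $\rH^{1,1}(X)$ if and only if its $\bC$-bilinear extension vanishes on $\rH^{1,0}(X) \times \rH^{1,0}(X)$ (equivalently on $\rH^{0,1} \times \rH^{0,1}$, by reality). This is the classical Riemann relation criterion (see e.g.\ \cite[\S 8]{birk_lange}).

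Next I would carry out the coordinate computation. The subspace of $\bC^{2g}$ (complexification of $\bR^{2g}$) that maps to $\rH^{1,0}(X)$ under $\Pi$, or more precisely the subspace on which one tests the $(2,0)$-vanishing, is the column span of the $2g \times g$ matrix $\begin{pmatrix} Z \\ 1_g \end{pmatrix}$: writing a holomorphic vector as $\begin{pmatrix} Z \\ 1_g\end{pmatrix} w$ for $w \in \bC^g$, the condition that $E_u$ vanish on this subspace becomes
\[
\begin{pmatrix} Z^t & 1_g \end{pmatrix} M \begin{pmatrix} Z \\ 1_g \end{pmatrix} = 0
\]
as a $g \times g$ matrix identity. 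Expanding the block product with $M = \begin{pmatrix} A & B \\ -B^t & C\end{pmatrix}$ gives $Z^t A Z + Z^t B - B^t Z + C = 0$, which after transposing (and using $A^t = -A$, $C^t = -C$) and relabelling signs is exactly equation~\eqref{eq:hodge-locus-equation}: $A - BZ + ZB^t + ZCZ = 0$. I would double-check the placement of $Z$ versus $Z^t$ against the convention in the excerpt (where the period matrix is written $(Z,1_g)$ and $Z$ is symmetric, so $Z^t = Z$, which collapses the transposes and makes the two forms of the equation literally identical); the symmetry $Z = Z^t$ is what lets me write the clean form stated.

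The only genuine subtlety — and the step I expect to be the main obstacle — is pinning down the precise sign conventions so that the two sides match without spurious factors or sign errors: which of $\rH^{1,0}$ or $\rH^{0,1}$ the period matrix $(Z,1_g)$ cuts out, whether the alternating form associated to $u$ is $M$ or $-M$ or $M$ transposed, and how the Rosati/symplectic normalization of the basis interacts with the block decomposition of $M$. I would resolve this by tracking everything through the rank-one case $g=1$, where $\bH_1$ is the usual upper half-plane, $M$ corresponds to an integer (or real) multiple of the principal polarization plus a correction, and equation~\eqref{eq:hodge-locus-equation} must reduce to the familiar statement that $az^2 + bz + c = 0$ with real $a,b,c$ cuts out the CM (or real-multiplication) points — this anchors all the signs. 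Once the sign conventions are fixed, the forward implication (Hodge $\Rightarrow$ the equation) and its converse (the equation $\Rightarrow$ the $(0,2)$-part vanishes $\Rightarrow$ $u$ is of type $(1,1)$, since for a real class vanishing of the $(0,2)$-part forces vanishing of the $(2,0)$-part by conjugation) are both immediate from the bilinear-form reformulation, and the proof is complete.
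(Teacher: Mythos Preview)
Your strategy is correct and is precisely the computation behind the cited result; the paper itself does not reproduce it but simply refers to \cite[\S1, Proposition~3.4]{bl_complex_tori}, noting that $Z$ is symmetric in the present setup and that the argument there, stated for integral classes in $\NS(X)$, goes through unchanged for real $u$. So you are supplying the details the paper suppresses.

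There is, however, a concrete error in the execution, and since you flag the subspace identification as the main obstacle it is worth naming. The columns of $\begin{pmatrix} Z \\ 1_g \end{pmatrix}$ do not span either eigenspace of the complex structure on $\bC^{2g}$: the period matrix $\Pi = (Z,1_g)$ is the map from lattice coordinates to $\bC^g$, and it is its \emph{kernel} in $\bC^{2g}$, spanned by the columns of $\begin{pmatrix} 1_g \\ -Z \end{pmatrix}$, that gives $\rH_{0,1}$. With this subspace one gets
\[
\begin{pmatrix} 1_g & -Z \end{pmatrix} \begin{pmatrix} A & B \\ -B^t & C \end{pmatrix} \begin{pmatrix} 1_g \\ -Z \end{pmatrix} = A - BZ + ZB^t + ZCZ
\]
directly (using $Z^t = Z$). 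Your expression $Z^t A Z + Z^t B - B^t Z + C$ has $A$ and $C$ in swapped positions relative to the target, and transposing a skew-symmetric matrix only produces its negative, so your claim that transposition reconciles the two is incorrect. Your proposed $g=1$ sanity check would catch this, so the gap is easily closed once you replace the subspace.
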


    \begin{proof}
        This is \cite[\S 1, Proposition 3.4]{bl_complex_tori}. Note, however, that our matrix $Z$ is symmetric, and also that the cited result is a criterion for an integral class $u \in \rH^2(X, \bZ)$ to belong to the N\'eron--Severi group  $\NS(X) = \rH^2(X, \bZ) \cap \rH^{1,1}(X)$. The argument applies to real classes $u \in \rH^2(X, \bR)$ without change. 
    \end{proof}

    \begin{remark}
    \label{remark:codimension_of_hodge_locus}
        Lemma~\ref{lem:birkenhake--lange} shows that the codimension of $\Hdg(u)$ inside of $\bH_g$ is at most $g(g - 1)/2$. Indeed, the expression \eqref{eq:hodge-locus-equation} is skew-symmetric in the entries of $Z$, so it imposes at most $g(g - 1)/2$ conditions on $\bH_g$. The same holds in the (coarse) moduli space $\cA_g$ of principally polarized abelian varieties: the Hodge locus $\Hdg(u) \subset \cA_g$, which is simply the image of $\Hdg(u) \subset \bH_g$, is a subvariety of codimension $\leq g(g - 1)/2$. By working up to polarized isogeny, one gets the same bound for Hodge loci in a moduli space of abelian varieties with any fixed polarization type.
    \end{remark}

    \begin{proposition}
    \label{prop:main_prop}
        Let $(X, H)$ be a polarized abelian variety, and let $u \in \rH^2(X, \bZ)$ be an element such that $p_u(t)$ is irreducible over $\bQ$ and has real roots. Then there is a polarized deformation from $(X, H)$ to $(X', H')$ such that the following hold:
        \begin{enumerate} 
            \item $X'$ is simple.
            \item A parallel transport $u'$ of $u$ from $X$ to $X'$ lies in $\NS(X')$. If, in addition, the roots of $p_u(t)$ are positive, then $u'$ is a polarization. 
         \end{enumerate} 
    \end{proposition}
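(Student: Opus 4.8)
The plan is to use the characteristic Pfaffian to reduce the problem to a Hodge-theoretic deformation argument inside Siegel upper half-space, and to combine the irreducibility of $p_u(t)$ with the codimension estimate from Remark~\ref{remark:codimension_of_hodge_locus} to show that the generic member of the relevant Hodge locus is simple. First I would reduce to the principally polarized case: working up to polarized isogeny (cf.\ Remark~\ref{remark:isogeny_trick} and the end of Remark~\ref{remark:codimension_of_hodge_locus}), I may assume $(X,H)$ is principally polarized, at the cost of replacing $u$ by its pullback, which has the same roots of the characteristic Pfaffian up to a scalar (Lemma~\ref{lem:alternative_formula_for_pfaffian} and the behaviour of $p_u$ under isogeny). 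Then $\Hdg(u) \subset \cA_g$ is a subvariety of codimension at most $g(g-1)/2$ passing through $[X,H]$, and by Lemma~\ref{lem:birkenhake--lange} it is cut out (on $\bH_g$) by the skew-symmetric matrix equation \eqref{eq:hodge-locus-equation}. For $g=3$ this codimension bound is $3$, while $\dim \cA_3 = 6$, so $\Hdg(u)$ is positive-dimensional. (In fact the statement is phrased for general $g$, but the application is $g=3$; I would carry out the argument in general, noting that $\Hdg(u)$ is nonempty of positive dimension precisely because $g(g-1)/2 < g(g+1)/2 = \dim \cA_g$ for all $g \geq 1$.)

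Next I would produce a simple abelian variety in $\Hdg(u)$. The key point is Lemma~\ref{lem:char_poly_embeds_into_end}: any $X'$ with $u' \in \NS(X')_{\bQ}$ has $\End^s(X')_{\bQ} \supseteq \bQ[t]/p_u(t)$, which by hypothesis is a field $K$ of degree $g$ over $\bQ$. So every member of $\Hdg(u)$ has endomorphism algebra containing the fixed degree-$g$ field $K$. Now I claim the very general member of $\Hdg(u)$ is simple: if not, then every member decomposes up to isogeny, and since the isogeny decomposition type is constant on a suitable open dense subset (the abelian varieties with a given set of Hodge classes stratify by isogeny type, and the generic stratum is open), $\Hdg(u)$ would be contained in the Hodge locus imposing a further splitting, which is a proper closed subvariety — but I must rule out that $\Hdg(u)$ itself lies entirely in such a sublocus. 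The cleanest way is a dimension count: the locus in $\cA_g$ of abelian varieties admitting a nontrivial idempotent in $\End^s_{\bQ}$ compatible with a given polarization is a countable union of Shimura subvarieties of $\cA_g$, each of dimension strictly less than $\dim \Hdg(u)$ once I know $\dim \Hdg(u) \geq g(g+1)/2 - g(g-1)/2 = g$; comparing $g$ with the dimensions of these sub-Shimura-varieties (products of smaller $\cA_{g_i}$'s, which have dimension $\sum g_i(g_i+1)/2 < g(g+1)/2$ but this must be compared against $\dim\Hdg(u)$, not $\dim\cA_g$) requires care. I expect the honest argument is: the Mumford--Tate group of the generic point of $\Hdg(u)$ is, by construction, the centralizer of $K$ in $\mathrm{GSp}_{2g}$, and this centralizer is $\mathrm{GL}_1$ over $K$ together with the symplectic similitude structure, whose associated Shimura datum has a simple generic abelian variety precisely when $K$ is a field (no product decomposition is forced). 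This is a standard fact about abelian varieties with real (or CM) multiplication by a field, and I would cite the structure theory in \cite{birk_lange} (cf.\ Remark~\ref{rem:end_of_simple}) rather than reprove it.

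Finally, for the polarization statement: suppose the roots $a_1,\dots,a_g$ of $p_u(t)$ are positive. By Lemma~\ref{lem:symplectic_orbit} applied over $\bC$ (or over $\bR$ after noting the roots are real), in a suitable symplectic basis $H$ and $u'$ are simultaneously ``diagonalized'' with $u' = \sum a_i x_i^\vee \wedge x_{g+i}^\vee$. A class in $\rH^2(X',\bZ) \cap \rH^{1,1}(X')$ of this shape with all $a_i > 0$ is a polarization: indeed $\tilde\rho(u') \in \End^s(X')_{\bQ}$ has all eigenvalues positive (Lemma~\ref{lem:characteristic_polynomial}), hence under the correspondence between symmetric endomorphisms and line bundles, the positivity of the eigenvalues is exactly the condition that the associated Hermitian form be positive-definite — this is the Hodge-index/positivity criterion for polarizations recorded in \cite{birk_lange}. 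So $u'$ is a polarization on $X'$.

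The main obstacle I anticipate is the simplicity step: one must be genuinely careful that passing to the Hodge locus of $u$ does not secretly force a splitting of the abelian variety, and that the ``very general'' member — not merely some member — is simple, so that one can also arrange $X'$ simple while keeping $u'$ algebraic. I expect this to come down to identifying the generic Mumford--Tate group on $\Hdg(u)$ with the $K$-centralizer and invoking the classification of such abelian varieties; the codimension bound of Remark~\ref{remark:codimension_of_hodge_locus} is what guarantees $\Hdg(u)$ is large enough (dimension $\geq g$) that it cannot be swallowed by the lower-dimensional non-simple loci.
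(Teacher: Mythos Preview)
Your proposal has a genuine gap at the very start: you assert that $\Hdg(u) \subset \cA_g$ ``passes through $[X,H]$'', but nothing in the hypotheses says $u$ is a Hodge class on $X$. Indeed, the whole point of the proposition is to deform $(X,H)$ to some $(X',H')$ where a parallel transport of $u$ \emph{becomes} Hodge, so one cannot assume $[X,H] \in \Hdg(u)$. Consequently, the codimension bound from Remark~\ref{remark:codimension_of_hodge_locus} does not by itself establish that $\Hdg(u)$ is nonempty: that remark only says the Hodge-locus equation \eqref{eq:hodge-locus-equation} imposes at most $g(g-1)/2$ conditions, but a system of holomorphic equations on $\bH_g$ can perfectly well be inconsistent. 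You never prove there exists \emph{any} polarized abelian variety on which a transport of $u$ is Hodge.

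This is exactly where the hypothesis that $p_u(t)$ has real roots enters, and it is the step the paper treats most carefully. After passing to the principally polarized case, the paper applies Lemma~\ref{lem:symplectic_orbit} over $\bR$ (valid because the roots are real and distinct, the latter by irreducibility) to find a \emph{real} symplectic transformation $g \in \Sp_{2g}(\bR)$ diagonalizing $u$. Via the action of $\Sp_{2g}(\bR)$ on $\bH_g$ (Remark~\ref{rem:real_symplectic}), this replaces $X$ by $X^g$ and $u$ by a diagonal form, and for a diagonal $u$ the Hodge-locus equation \eqref{eq:hodge-locus-equation} reads $BZ = ZB$ with $B$ diagonal, which is visibly satisfied by any diagonal $Z \in \bH_g$. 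This produces explicit points in $\Hdg(u)$. You invoke Lemma~\ref{lem:symplectic_orbit} only in the polarization step, not for nonemptiness, so the role of the real-roots hypothesis is obscured in your argument. Once nonemptiness is in hand, your simplicity argument via $\End^s(X')_{\bQ} \supseteq \bQ[t]/p_u(t)$ and a dimension comparison is essentially the paper's (the paper cites \cite{debarre_laszlo} for the fact that the locus with multiplication by the totally real field $F$ consists of codimension-$g(g-1)/2$ components with simple generic member, and then matches this against the codimension bound on $\Hdg(u)$), and your polarization argument is close to Remark~\ref{remark:polarizations}.
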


   \begin{remark}
       \label{remark:polarizations}
       Let $(X,H)$ be a polarized abelian $g$-fold, and let $u \in \NS(X)$ be a class such that $p_u(t)$ has positive real roots. Then $u$ is a polarization. Indeed, if $p_u(t)$ has positive real roots, then the coefficients of $t^i$ in $p_u(t)$ are nonzero and alternate in sign. By Lemma~\ref{eq:alternative_formula}, 
       \begin{equation*}
           H^i u^{g - i} > 0,
       \end{equation*}
       for all $i$, so it follows from \cite[4.3.3]{birk_lange} that $u$ is a polarization.
    \end{remark}

     \begin{remark}[The real symplectic group]
    \label{rem:real_symplectic}
        Given a principally polarized abelian variety $(X, H)$ and a real symplectic transformation $g \in \Sp(\rH_1(X, \bR), H)$,  
        one may construct a new principally polarized abelian variety $(X^g, H)$, given as follows: 
        the weight-one Hodge structure corresponding to $X^g$ is determined by the real Hodge structure $\rH_1(X, \bR)$ with lattice $g(\rH_1(X, \bZ))$. 
        Since $g$ is symplectic, $X^g$ is principally polarized; moreover, 
        an element $u \in \rH^2(X, \bQ)$ lies in $\NS(X)_{\bQ}$ if and only if $g^*u$ lies in $\rH^{1,1}(X^g)$. 
        The association $(g, X) \mapsto X^g$ corresponds to the natural action of $\Sp_{2g}(\bR)$ on Siegel's upper half-space $\bH_g$. 
    \end{remark}

    \begin{proof}[Proof of Proposition~\ref{prop:main_prop}]
        Assume for the moment that we have found a parallel transport $u'$ of $u$ which is Hodge. Then we may take $X'$ to be simple. Indeed, $p_u(t) = p_{u'}(t)$, so $\End_{\bQ}(X')$ will contain the totally real field $F = \bQ[t]/p_u(t)$ by Lemma~\ref{lem:char_poly_embeds_into_end}. In the moduli space of abelian $g$-folds with fixed polarization type, the locus of abelian varieties with multiplication by $F$ is a union of codimension $g(g - 1)/2$ subvarieties whose generic elements are simple abelian varieties of Picard rank $g$ \cite{debarre_laszlo}. 
        On the other hand, the Hodge locus of $u$ is contained in this locus and has codimension at most $g(g - 1)/2$ (Remark~\ref{remark:codimension_of_hodge_locus}), so we may indeed take $X'$ to be simple.  
        If, in addition, the roots of $p_u(t)$ are positive, then $u'$ is a polarization by Remark~\ref{remark:polarizations}.

        Working up to polarized isogeny, we may assume that $H$ is a principal polarization. By Lemma~\ref{lem:symplectic_orbit} and the assumption that $p_u(t)$ has real roots (which are in fact distinct, since $p_u(t)$ is assumed to be irreducible over $\bQ$), there is a real symplectic transformation $g \in \Sp_{2g}(\bR)$ which diagonalizes $u$. Replacing $X$ by $X^g$ and $u$ by $g^* u$ as in Remark~\ref{rem:real_symplectic}, it is enough to show that if $u \in \rH^2(X, \bR)$ is a diagonal form, then then there is a polarized deformation $(X', H')$ such that $u$ is Hodge. Suppose that the skew-symmetric matrix corresponding to $u$ is of the form
        \begin{equation}
            M = \begin{pmatrix}
                0 & B \\
                -B & 0
            \end{pmatrix} \quad \text{where} \quad B = \begin{pmatrix}
                b_1 \\
                & b_2 \\
                & & b_3
            \end{pmatrix}.
        \end{equation}
        Let $X_Z$ be a principally polarized abelian variety associated to an element $Z \in \bH_g$. From Lemma~\ref{lem:birkenhake--lange},  a parallel transport of $u$ to $X_Z$ belongs to $\rH^{1,1}(X_Z)$ if and only if 
        \[
            -BZ + Z B = 0.
         \] 
        The relation is satisfied for any diagonal matrix $Z \in \bH_g$. 
    \end{proof}

    \subsection{Forms with positive discriminant}

    In this section, we specialize to the case of abelian threefolds. Roughly, the goal is to show that a class $\bar u \in \rH^2(X, \bZ/n)$ can be lifted to a large number of classes $u \in \rH^2(X, \bZ)$ such that the cubic polynomial $p_u(t)$ has positive real roots. 

    For an integer $N > 0$, we write $\Gamma_6(N)$ for the finite-index subgroup of $\SL_6(\bZ)$ consisting of matrices which reduce to the identity modulo $N$. For $u \in \rH^2(X, \bZ)$ and $g \in \SL_6(\bZ)$ (regarded as $\SL_6(\rH_1(X, \bZ))$), we write $g^* u$ for the action of $g$.

    \begin{lemma}
    \label{lem:orbit_trick}
        Let $(X, H)$ be a polarized abelian $3$-fold, $u \in \rH^2(X, \bZ)$ be a class with $u^3 > 0$, and $N > 0$ be an integer. 
        \begin{enumerate} 
            \item \label{item:density} Let $\rO(u)$ be the $\Gamma_6(N)$-orbit of $u$. Then the image of $\rO(u)$ in $\bP(\rH^2(X, \bC))$ is Zariski dense.
            \item \label{item:positivity_of_disc} Let $\rO^+(u) \subset \rO(u)$ be the subset of classes $v$ such that $p_v(t)$ has positive real roots 
            Then $\rO^+(u)$ is nonempty and has Zariski dense image in $\bP(\rH^2(X, \bC))$.
        \end{enumerate}
    \end{lemma}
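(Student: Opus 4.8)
\textbf{Plan for the proof of Lemma~\ref{lem:orbit_trick}.}

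The key to both parts is that the map $v \mapsto p_v(t)$ is a \emph{polynomial} map $\rH^2(X,\bC) \to \bC[t]_{\leq 3}$ (by Lemma~\ref{lem:alternative_formula_for_pfaffian}), and that the condition ``$p_v(t)$ has three distinct positive real roots'' is a nonempty open condition in the real topology on the set of $v$ with $v^3 > 0$. The strategy is to first understand the Zariski closure of the orbit, then to locate a single good point inside it and spread positivity around by a small perturbation within the orbit.

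First I would prove~\eqref{item:density}. The orbit $\rO(u) = \Gamma_6(N) \cdot u$ sits inside the single $\SL_6(\bZ)$-orbit of $u$, whose Zariski closure in $\rH^2(X,\bC) = \bigwedge^2 \rH^1(X,\bC)^\vee$ is a closed cone: since $\SL_6(\bZ)$ is Zariski dense in $\SL_6(\bC)$, the closure is the $\SL_6(\bC)$-orbit closure of $u$, which for a rank-$6$ form $u$ (i.e. $u^3 \neq 0$) is all of the locus of rank-$6$ forms, hence has the same projective dimension as $\bP(\rH^2(X,\bC))$ — so the $\SL_6(\bC)$-orbit is Zariski dense. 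The point is then that $\Gamma_6(N)$ is \emph{also} Zariski dense in $\SL_6(\bC)$: it is a finite-index subgroup of $\SL_6(\bZ)$, and a finite-index subgroup of a Zariski-dense subgroup of a connected group is again Zariski dense (its Zariski closure is a finite-index closed subgroup, hence all of $\SL_6(\bC)$ by connectedness). Therefore the Zariski closure of $\Gamma_6(N) \cdot u$ equals the Zariski closure of $\SL_6(\bC) \cdot u$, which is all of $\rH^2(X,\bC)$ projectively; this gives~\eqref{item:density}.

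For~\eqref{item:positivity_of_disc}, I would argue as follows. Since $u^3 > 0$, I claim there is \emph{some} $g_0 \in \SL_6(\bR)$ with $p_{g_0^* u}(t)$ having three distinct positive real roots: indeed, $u^3 > 0$ means (after choosing an $H$-symplectic real basis and using Lemma~\ref{lem:symplectic_orbit} style normalization, or directly diagonalizing the skew form $u$ relative to $H$ over $\bR$) one can put $u$ into a diagonal shape $a_1 x_1^\vee\wedge x_4^\vee + a_2 x_2^\vee \wedge x_5^\vee + a_3 x_3^\vee\wedge x_6^\vee$ with the $a_i$ of the correct signs to force positivity of the roots (using the formula of Lemma~\ref{lem:alternative_formula_for_pfaffian}: the roots of $p_u$ are exactly the ratios that measure degeneracy of $tH - u$), so the set $W \subset \SL_6(\bR)$ of such $g_0$ is nonempty. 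The condition ``$p_{g^*u}(t)$ has three distinct positive real roots'' defines an open subset $W \subset \SL_6(\bR)$ (distinctness is openness of nonvanishing discriminant of the cubic; positivity and reality are open in the coefficients). Now $\Gamma_6(N)$ is Zariski dense in $\SL_6(\bC)$, hence dense in $\SL_6(\bR)$ in the real (analytic) topology — this uses that $\SL_6(\bZ)$ is a lattice in $\SL_6(\bR)$ with $\Gamma_6(N)$ of finite index, so its real closure is still all of $\SL_6(\bR)$ by the Borel density theorem (or, more elementarily, strong approximation for $\SL_6$). Thus there is $g \in \Gamma_6(N) \cap W$, giving a point $v = g^* u \in \rO^+(u)$, so $\rO^+(u) \neq \emptyset$. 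For Zariski density of the image of $\rO^+(u)$: the set $W' = \{g \in \SL_6(\bR) : p_{g^*u}(t) \text{ has distinct positive real roots}\}$ is a nonempty open subset of $\SL_6(\bR)$, and $\Gamma_6(N) \cap W'$ is dense in $W'$; since $W'$ already has Zariski-dense image in $\bP(\rH^2(X,\bC))$ under $g \mapsto [g^* u]$ (being a nonempty open subset of $\SL_6(\bR)$, whose Zariski closure is $\SL_6(\bC)$, and whose orbit map image is projectively dense as in part~\eqref{item:density}), the countable dense subset $\Gamma_6(N)\cap W'$ has the same Zariski closure, which is all of $\bP(\rH^2(X,\bC))$.

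\textbf{Main obstacle.} The subtle point is not any single density statement but rather pinning down the first claim in~\eqref{item:positivity_of_disc}: that $u^3 > 0$ genuinely forces the \emph{existence} of some real symplectic (or merely $\SL_6(\bR)$) transformation making the cubic $p_{g^*u}(t)$ split into three distinct positive real roots, with the correct sign conventions (Remark~\ref{remark:polarizations} is relevant here, relating positivity of the intersection numbers $H^i u^{3-i}$ to $u$ being a polarization). One must be careful: a priori $u^3 > 0$ only controls one coefficient of $p_u$, so the diagonalization via Lemma~\ref{lem:symplectic_orbit} must be combined with the freedom to rescale the diagonal entries by a symplectic transformation — and one should check this can be done while staying inside $\SL_6(\bR)$ (not needing the symplectic group), which is exactly why working over $\bR$ and only asking for density, rather than hitting a rational point directly, is the right level of generality. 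Once the existence of one good $g_0$ is established, everything else is softer density and openness bookkeeping.
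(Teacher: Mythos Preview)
Your proof of part~\eqref{item:density} is correct and matches the paper's argument.

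Your proof of part~\eqref{item:positivity_of_disc} contains a genuine error: the claim that $\Gamma_6(N)$ is dense in $\SL_6(\bR)$ for the real analytic topology is false. The subgroup $\SL_6(\bZ)$ (and hence any finite-index subgroup) is a \emph{lattice} in $\SL_6(\bR)$, which means it is discrete; it is certainly not dense in the real topology. The Borel density theorem only gives \emph{Zariski} density, and strong approximation gives density in $\SL_6(\widehat{\bZ})$, not in $\SL_6(\bR)$. Since the set $W$ of $g$ with $p_{g^*u}(t)$ having three distinct positive real roots is only semialgebraic (a sign condition on the discriminant and on the middle coefficients), not Zariski open, Zariski density of $\Gamma_6(N)$ does not force $\Gamma_6(N) \cap W \neq \emptyset$. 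A toy counterexample: $\bZ \subset \bC = \bG_a(\bC)$ is Zariski dense, but misses the real-open interval $(0,1/2)$. The same issue undermines your Zariski-density argument for $\rO^+(u)$, which again presupposes that $\Gamma_6(N) \cap W'$ is large.

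The paper avoids this obstacle by an explicit construction rather than a soft density argument. Starting from any $v \in \rO(u)$ avoiding a prescribed closed set and satisfying mild genericity, it acts by concrete upper-triangular unipotent matrices $E(x,y,z) \in \Gamma_6(N)$ with $x,y,z \in N\bZ$, and computes the coefficients of $p_{E^*v}(t)$ as polynomials in $x,y,z$. An asymptotic analysis of the leading terms shows that for suitable large integer specializations one can force the coefficients to alternate in sign and the discriminant to be positive, yielding positive distinct real roots. Because the set of such ``good'' integer triples is itself Zariski dense in $\bC^3$, one can also arrange that the resulting $E^*v$ avoids any prescribed proper closed subset of $\bP(\rH^2(X,\bC))$, giving Zariski density of $\rO^+(u)$.
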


    \begin{proof}
        For \eqref{item:density}, $\Gamma_6(N)$ is Zariski dense in $\SL_6(\bC)$ (since $\SL_6(\bZ)$ is), so it is enough to show that the $\SL_6(\bC)$-orbit of $u$ is Zariski dense in $\bP(\rH^2(X, \bC))$. Since $u$ has rank $6$, the orbit is the complement of the Pfaffian hypersurface $\Pf = 0$.

        For \eqref{item:positivity_of_disc}, we first let $V \subset \bP(\rH^2(X, \bC))$ be an arbitrary closed proper subset. To prove Zariski-density of $\rO^+(u)$, we will show below that there is an element of $\rO^+(u)$ whose image in $\bP(\rH^2(X, \bC))$ does not lie in $V$. 
        From \eqref{item:density}, we may choose $v \in \rO(u)$ whose image does not lie in $V$. 
        Let $A = (a_{ij})$ be the alternating matrix associated to $v$ in an integral $H$-symplectic basis. 
        Applying \eqref{item:density} once more, we may assume that 
        \begin{equation}
        \label{eq:genericity_on_v}
            a_{16} \neq 0, \quad a_{13}a_{26} - a_{16}a_{23} - a_{12}a_{36} \neq 0.
        \end{equation}

        In our chosen integral $H$-symplectic basis, the matrix for $H$ (also denoted by $H$) is 
        \begin{equation*}
            H = \begin{pmatrix}
                0 & D \\
                -D & 0
            \end{pmatrix} \quad \text{where} \quad D = \begin{pmatrix}
                d_1 \\
                & d_2 \\
                & & d_3
            \end{pmatrix},
        \end{equation*}
        and $(d_1, d_2, d_3)$ is the type of $H$. 
        To conclude the setup, consider the upper-triangular matrix
        \begin{equation}
            E = \begin{pmatrix}
            1 & x & xy & 0 & 0 & 0 \\
            & 1 & y & 0 & 0 & 0  \\
            & & 1 & z & 0 & 0  \\
            & & & 1 & 0 & 0 \\
            & & & & 1 & 0 \\
            & & & & & 1
        \end{pmatrix},
        \end{equation}
        where $x,y,z$ are variables. If we specialize $x,y,z$ to integer multiples of $N$, then $E$ specializes to an element of $\Gamma_6(N)$.

        Consider the characteristic Pfaffian
        \[
            \Pf(tH - E^t A E) = at^3 + bt^2 + ct + d.
        \]
        The coefficients are polynomials in $x,y,z$. 
        A computation shows that:
        \begin{align*}
            a &= d_1d_2d_3, \\
            b &= -d_1d_2a_{16}xy + \cdots, \\ 
            c &= d_2(a_{13}a_{26} - a_{16}a_{23} - a_{12}a_{36})yz + \cdots, \\
            d &= -\Pf(A) = -u^3/6,
        \end{align*}
        where the omitted terms are of lower degree in $x,y,z$.  
        Note that $b$ and $c$ are polynomials in $x,y,z$, whereas $a$ and $d$ are constants. 
        Writing $\Delta$ for the discriminant of the cubic polynomial $\Pf(tH - E^tAE)$, we get that 
        \[
            \Delta = b^2c^2 + \cdots,
        \]
        where the omitted terms have degree $< 8$ in $x,y,z$.

        By our assumption \eqref{eq:genericity_on_v} on $v$ above, the leading coefficients of $b$ and $c$ are nonzero. 
        From the formulas above, it is easy to see that we may specialize $(x,y,z)$ to $(x_0, y_0, z_0) \in N \cdot \bZ^3$ so that 
        \begin{equation}
        \label{eq:sign_conditions}
             b < 0, \quad c > 0, \quad  \Delta > 0.
        \end{equation}
        In fact, if $(x'_0, y'_0, z'_0) \in N \cdot \bZ^3$ is any element so that
        \[
            -d_1d_2a_{16}x'_0y'_0 < 0, \quad d_2(a_{13}a_{26} - a_{16}a_{23} - a_{12}a_{36})y'_0z'_0 > 0,
        \]
        then we can take $(x_0,y_0,z_0) = m \cdot (x'_0, y'_0, z'_0)$ for $m \gg 0$. We refer to a choice of element $(x_0, y_0, z_0) \in N \cdot \bZ^3$ such that \eqref{eq:sign_conditions} holds as a \emph{good choice}.

        A good choice $(x_0, y_0, z_0)$ determines an element $g \in \Gamma_6(N)$ by specializing the coefficients of $E$. The cubic $\Pf(tH - E^t AE)$ specializes to the characteristic Pfaffian $p_{g^* v}(t)$. From \eqref{eq:sign_conditions}, $p_{g^* v}(t)$ has positive discriminant $\Delta > 0$, hence real roots. Moreover, $p_{g^* v}(t)$ has alternating coefficients $a > 0, b < 0, c > 0, d < 0$, which implies that the roots of $p_{g^* v}(t)$ are positive. It follows that $g^* v$ lies in $\rO^+(u)$.

        We show that $\rO^+(u)$ is Zariski dense. The set of good choices $(x_0, y_0, z_0)$ is Zariski dense in $\bC^3$: as explained above, it contains every element of $\bZ^3$ with nonzero coordinates, modulo flipping signs in each coordinate and scaling by a sufficiently large positive integer. Given  $(x_0, y_0, z_0) \in \bC^3$ with associated $g \in \SL_6(\bC)$, it is a Zariski closed condition on the entries of $(x_0, y_0, z_0)$ for $[g^* v] \in \bP(\rH^2(X, \bC))$ to lie in the closed subset $V$ chosen above. By assumption, $[v] \notin V$, so the Zariski closed condition is nontrivial. By Zariski density of the set of good choices, there is a good choice $(x_0, y_0, z_0)$ such that $[g^* v]$ does not lie in $V$.
    \end{proof}

    \subsection{Generic lifting}

    \begin{proposition}[Generic lifting]
    \label{prop:level_subgroup}
        Let $(X, H)$ be a polarized abelian threefold. Fix a class $\bar \class \in \rH^2(X, \bZ/n)$ for an integer $n > 1$, and suppose that $\bar u$ admits a lift to a class $u_0 \in \rH^2(X, \bZ)$ of type $(1,1,1)$ with $u_0^3 > 0$. Let $\cL$ be the set of $\class \in \rH^2(X, \bZ)$ satisfying the following properties:
        \begin{enumerate} [label = (\alph*)]
            \item \label{item:oneoneone} $\class$ is of type $(1,1,1)$,
            \item \label{item:lift} $\class \equiv \bar \class \mod n$,
            \item \label{item:hodgelocus} There exists a polarized deformation from $(X, H)$ to $(X', H')$, where $X'$ is simple and some parallel transport $\class'$ of $\class$ to $X$ lies in $\NS(X')$ and is a polarization. 
        \end{enumerate}
        Then the image of $\cL$ in $\bP(\rH^2(X, \bC))$ is Zariski dense.
    \end{proposition}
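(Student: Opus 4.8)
The plan is to combine two ingredients: the orbit/density technique of Lemma~\ref{lem:orbit_trick}, which produces integral classes of type $(1,1,1)$ with prescribed reduction mod $n$ and with characteristic Pfaffian having positive real roots, and Proposition~\ref{prop:main_prop}, which turns such a class into a polarized deformation to a simple abelian threefold with the class becoming an algebraic polarization. The only technical gap between the two is that Proposition~\ref{prop:main_prop} requires $p_u(t)$ to be irreducible over $\bQ$, whereas Lemma~\ref{lem:orbit_trick}\eqref{item:positivity_of_disc} only guarantees positive real roots. So the heart of the argument will be showing that after intersecting with the (Zariski-dense, by Lemma~\ref{lem:orbit_trick}) orbit $\rO^+(u_0)$ under $\Gamma_6(n)$, the locus where $p_u(t)$ is irreducible is still Zariski dense in $\bP(\rH^2(X,\bC))$.

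First I would fix a lift: by hypothesis $\bar u$ lifts to some $u_0 \in \rH^2(X,\bZ)$ of type $(1,1,1)$ with $u_0^3 > 0$. Since $\Gamma_6(n)$ acts on $\rH_1(X,\bZ)$ fixing the reduction mod $n$, every element $g \in \Gamma_6(n)$ satisfies $g^*u_0 \equiv u_0 \equiv \bar u \pmod n$, and $g^*u_0$ remains of type $(1,1,1)$ because the type is an $\SL_6(\bZ)$-invariant; so conditions \ref{item:oneoneone} and \ref{item:lift} of $\cL$ hold automatically for every element of the $\Gamma_6(n)$-orbit of $u_0$. Applying Lemma~\ref{lem:orbit_trick}\eqref{item:positivity_of_disc} with $N = n$, the subset $\rO^+(u_0)$ of orbit elements $v$ with $p_v(t)$ having positive real roots is Zariski dense in $\bP(\rH^2(X,\bC))$. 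For any $v \in \rO^+(u_0)$ with $p_v(t)$ \emph{irreducible} over $\bQ$, Proposition~\ref{prop:main_prop} (its second clause, since the roots are positive) produces the required polarized deformation to a simple $X'$ with $v'$ an algebraic polarization — exactly condition \ref{item:hodgelocus}. So such $v$ lie in $\cL$, and it remains to check density of this subset.

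The main obstacle, then, is excluding the reducible locus without losing Zariski density. Here I would argue as follows. The characteristic Pfaffian $p_v(t)$ is a cubic with coefficients that are polynomial (in fact homogeneous) functions of $v \in \rH^2(X,\bR)$ — explicitly given by Lemma~\ref{lem:alternative_formula_for_pfaffian} — and its leading coefficient is the nonzero constant $H^3/6$. A monic (after normalizing) rational cubic is reducible over $\bQ$ iff it has a rational root; the set of $v$ for which $p_v$ has a \emph{real} root $r$ lying in a given congruence class or of bounded height is not Zariski closed, but one can instead observe that over the $\Gamma_6(n)$-orbit the set of $v$ with $p_v(t)$ reducible over $\bQ$ is a countable union of proper subvarieties. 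Indeed, inside the full $\SL_6(\bC)$-orbit of $u_0$ (a single irreducible variety, being the complement of the Pfaffian hypersurface), the condition ``$p_v$ has a root equal to the fixed rational number $q$'' cuts out a proper closed subset for each $q \in \bQ$ (proper because $\rO^+(u_0)$ already contains points with irreducible $p_v$ — one can exhibit one concretely by the explicit computation in the proof of Lemma~\ref{lem:orbit_trick}, choosing $x_0,y_0,z_0$ so that the cubic $at^3+bt^2+ct+d$ has, say, no rational root, which is a nonempty Zariski-open condition). A variety over $\bC$ is never a countable union of proper closed subvarieties (by Baire category, $\bC$ being uncountable), so the irreducible-$p_v$ locus is Zariski dense in the $\SL_6(\bC)$-orbit; intersecting with the Zariski-dense subgroup $\Gamma_6(n) \subset \SL_6(\bC)$ and with $\rO^+(u_0)$, and using that all these density statements are compatible via the orbit map $\SL_6 \to \bP(\rH^2(X,\bC))$, $g \mapsto [g^*u_0]$, yields a Zariski-dense subset of $\bP(\rH^2(X,\bC))$ consisting of classes in $\cL$. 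This completes the proof. I expect the bookkeeping — keeping straight which density is over $\SL_6(\bC)$, which over $\Gamma_6(n)$, and which downstairs in $\bP(\rH^2(X,\bC))$, and invoking the uncountability argument cleanly — to be the only delicate point; each individual step is either one of the cited lemmas or a standard fact.
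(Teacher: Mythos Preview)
Your overall architecture matches the paper's: use Lemma~\ref{lem:orbit_trick} to produce a Zariski-dense supply of type-$(1,1,1)$ lifts of $\bar u$ whose characteristic Pfaffian has positive real roots, then feed each into Proposition~\ref{prop:main_prop}. The only missing hypothesis for Proposition~\ref{prop:main_prop} is irreducibility of $p_v(t)$ over $\bQ$, and that is exactly where your argument breaks.

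The gap is in the sentence ``intersecting with the Zariski-dense subgroup $\Gamma_6(n) \subset \SL_6(\bC)$ and with $\rO^+(u_0)$ \ldots\ yields a Zariski-dense subset.'' The complement of the reducible locus is indeed dense in the $\SL_6(\bC)$-orbit by your countability argument, but $\rO^+(u_0)$ is itself a \emph{countable discrete} set, and the intersection of two Zariski-dense subsets need not be Zariski dense (or even nonempty). A toy model: in $\bA^1_\bC$ the set $\bQ$ is Zariski dense, and $\bC\setminus\bQ$ is the complement of a countable union of points, yet their intersection is empty. Concretely here, nothing you have said rules out the possibility that every $v\in\rO^+(u_0)$ outside your chosen closed $V$ lands in some $Z_q=\{p_v(q)=0\}$; the Baire/uncountability argument only controls complex points, not the countable integral orbit. (Relatedly, ``having no rational root'' is not a Zariski-open or even constructible condition, so your parenthetical about exhibiting one concretely doesn't propagate.)

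The paper closes this gap with a clean arithmetic trick rather than a density argument: it first finds a prime $\ell\nmid n$ and a class $\bar v\in\rH^2(X,\bZ/\ell)$ whose mod-$\ell$ characteristic Pfaffian is irreducible over $\bF_\ell$ (such $\ell$ exist by Chebotarev, cf.\ Remark~\ref{rem:inert}). Using the surjection $\Gamma_6(n)\to\SL_6(\bZ/\ell)$, it replaces $u_0$ by a new $u$ of type $(1,1,1)$ with $u^3>0$ that reduces to $\bar u$ mod $n$ \emph{and} to $\bar v$ mod $\ell$. Then it runs Lemma~\ref{lem:orbit_trick} at level $N=n\ell$: every $v\in\rO^+(u)$ has $p_v(t)\equiv p_{\bar v}(t)\pmod\ell$ irreducible, hence $p_v(t)$ is irreducible over $\bQ$ automatically. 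This makes the entire $\rO^+(u)$ land in $\cL$, with no need to excise a reducible locus.
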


    \begin{remark}
    \label{rem:inert}
    Let $(X, H)$ be a simple, polarized abelian threefold with multiplication by a cyclic cubic field $K/\bQ$. In particular, $K$ is totally real, and the construction of such $(X, H)$ can be found in \cite[Ch. 9]{birk_lange}. Let $v \in \NS(X)$ be a class which does not lie in $\langle H \rangle$. 
    Then $\bZ[t]/p_v(t)$ is an order in $K$, and there exists an infinite set $\cP$ of primes $\ell$ such that the reduction of $p_v(t)$ modulo $\ell$ is irreducible.
    Indeed, up to a finite set, $\cP$ coincides with the inert primes for the cyclic extension $K/\bQ$, and the latter is an infinite set of primes with density $2/3$. This is a consequence of Chebotarev's density theorem \cite[Ch. IV, Corollary 5.3]{janusz}.
    \end{remark}

    \begin{proof}
        We begin by choosing a prime $\ell$ which does not divide $n$ and a class $\bar v \in \rH^2(X, \bZ/\ell)$ such that the polynomial $p_{\bar v}(t) = \Pf(t\bar H - \bar v)$ is irreducible. The fact that $\bar v$ may be found for an infinite set of primes follows, for instance, from Remark~\ref{rem:inert}.

        Let $u \in \rH^2(X, \bZ)$ be a class of type $(1,1,1)$ with $u^3 > 0$ lifting both $\bar u$ and $\bar v$. One way to construct $u$ is through the observation that the reduction-mod-$\ell$ map
        \[
            \Gamma_6(n) \to \SL_6(\bZ/\ell)
        \]
        is surjective, since the right-hand side is generated by transvections, which can be lifted \cite[XIII, \S 9]{Lang}. To construct $u$, recall that we assumed the existence of a class $u_0$ of type $(1,1,1)$ with $u_0^3 > 0$ lifting $\bar u$. Since $\SL_6(\bZ/\ell)$ acts transitively on the set of rank $6$ classes in $\rH^2(X, \bZ/\ell)$, we can find an element $g \in \Gamma_6(n)$ such that
        \[
            g^* u_0 \equiv \bar v \mod \ell.
        \]
        Take $u = g^* u_0$. 

        We observe that $p_u(t)$ is irreducible, and the same is true for any $v$ in the $\Gamma_6(N)$-orbit $\rO(u)$ of $u$, where $N = n \cdot \ell$. Indeed, irreducibility may be checked after reduction mod $\ell$, and the reduction of $p_{v}(t)$ modulo $\ell$ is $p_{\bar v}(t)$. Moreover, for any $v \in \rO(u)$, we have $v^3 = u^3 = u_0^3 > 0$, since the Pfaffian $\Pf(u) = u^3/6$ is preserved under orientation-preserving change of basis.

        Finally, let $\rO^+(u)$ be the set considered in Lemma~\ref{lem:orbit_trick}. The image of $\rO^+(u)$ in $\bP(\rH^2(X, \bC))$ is Zariski dense by Lemma~\ref{lem:orbit_trick}, so if we can show that $\rO^+(u)$ is a subset of $\cL$, i.e., any element $v \in \rO^+(u)$ satisfies \ref{item:oneoneone}--\ref{item:hodgelocus}, then the image of $\cL$ in $\bP(\rH^2(X, \bC))$ is Zariski dense and we are done. 

        Item~\ref{item:oneoneone} holds because any element of the $\Gamma_6(N)$-orbit (or, indeed, the $\SL_6(\bZ)$-orbit) of $u$ is of type $(1,1,1)$. Item~\ref{item:lift} holds because the action of $\Gamma_6(N)$ on $\rH^2(X, \bZ)$ preserves the reduction modulo $N = n\cdot \ell$, hence also the reduction modulo $n$. Finally, for any $v \in \rO^+(u)$, $p_{v}(t)$ is irreducible as explained above, and has positive real roots by assumption. Then Proposition~\ref{prop:main_prop} gives \ref{item:hodgelocus}.
    \end{proof}

\subsection{Prime avoidance}

\begin{proposition}
\label{prop:prime_avoidance}
    Let $X$ be a simple abelian threefold, and let $n > 1$ be an integer. Let $\class, H \in \NS(X)$ be classes satisfying the following properties:
    \begin{enumerate}
        \item The class $\class \in \NS(X)$ is of type $(1,1,1)$.
        \item The class $H \in \NS(X)$ is of type $(d_1, d_2, d_3)$, where each $d_i$ is nonzero and divides a power of $n$.
        \item The $\bC$-linear span $\langle \class^2, H^2 \rangle_{\bC} \subset \rH^4(X, \bC)$ has dimension $2$, and does not contain any rank $2$ elements.
    \end{enumerate}
    There exists an integer $A > 0$, relatively prime to $n$, and an integer $k > 0$, which is arbitrarily large with respect to $A$, such that the class
    \begin{equation} \label{eq:beta_class_prop}
        \frac{1}{2}(A^2 \class - n^k H)^2 + \frac{1}{2}  A n \cdot \class^2
    \end{equation}
    is of type $(1,1,d)$, for some integer $d > 0$.
\end{proposition}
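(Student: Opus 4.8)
The strategy is to reduce the statement about the integral quadratic (in cohomology, quartic in the Chern character variables) form to a statement about the characteristic Pfaffian of a single N\'eron--Severi class, and then exploit the fact that type is detected, prime by prime, via ranks of reductions. Write $\beta = \frac{1}{2}(A^2 u - n^k H)^2 + \frac{1}{2}An\cdot u^2 \in \rH^4(X,\bZ)$ (identifying curve classes with $\rH^4$). The class $\beta$ is of type $(1,1,d)$ for some $d>0$ if and only if, after passing to the dual abelian variety and applying the duality isomorphism $(-)^* \colon \rH^4(X,\bQ) \simeq \rH^2(X^\vee,\bQ)$, the dual class $\beta^*$ has rank $6$ and has type $(1,1,d)$ there, and moreover is a polarization; the positivity of $d$ will follow from showing $p_{\beta^*}(t)$ has positive real roots on $X^\vee$, via Remark~\ref{remark:polarizations} and Lemma~\ref{lem:alternative_formula_for_pfaffian}. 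Since the type of an integral class of full rank is $(1,1,d)$ precisely when, modulo every prime $\ell$, the reduction has rank $\geq 4$ (equivalently, $d_1$ and $d_2$ are units modulo $\ell$ while only $d_3$ may be divisible by $\ell$), it suffices to (i) control the primes dividing $d_3$ — i.e.\ the determinant of $\beta^*$ — and (ii) verify that modulo each of the finitely many relevant primes the reduction of $\beta^*$ has rank $\geq 4$.

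First I would compute the characteristic Pfaffian $p_{\beta^*}(t)$ of $\beta^*$ with respect to a chosen principal polarization on $X^\vee$. Using Lemma~\ref{lem:alternative_formula_for_pfaffian} and the commutative algebra package of \S\ref{section-discriminant} (the isomorphism $\rho$ of \eqref{eq:map_to_end} and Lemma~\ref{lem:characteristic_polynomial}), the cubic $p_{\beta^*}(t)$ is, up to scalar, the characteristic polynomial of $\rho(\beta^*) \in \End(\rH_{1,0})$. Because $X$ is simple, $\End^s(X)_\bQ$ is a field, $\bQ$, an imaginary quadratic field, a totally real cubic field, or a CM sextic field (Remark~\ref{rem:end_of_simple}); the hypothesis that $\langle u^2, H^2\rangle_\bC$ has dimension $2$ and contains no rank-$2$ elements forces $\End^s(X)_\bQ$ to have rank $\geq 2$, hence it is the totally real cubic field $F=\bQ[t]/p_u(t)$ (Lemma~\ref{lem:char_poly_embeds_into_end}, Lemma~\ref{lem:idempotent_image} ruling out rank $2$). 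Inside this cubic field, $\rho(\beta^*)$ is the element $\frac12(A^2 \rho(u^*) - n^k \rho(H^*))^2 + \frac12 An\,\rho(u^*)^2$, which is a \emph{polynomial} in the two algebraic numbers $\rho(u^*)$ and $\rho(H^*)$, both of which generate $F$ over $\bQ$. The key point is that for a \emph{fixed} $A$ coprime to $n$, and for $k$ large, $\rho(\beta^*)$ still generates $F$ (it cannot be rational, because its leading behaviour in $n^k$ is dominated by a power of $\rho(H^*)$ which is irrational), so $p_{\beta^*}(t)$ is irreducible over $\bQ$; in particular $\beta^*$ has rank $6$, and the discriminant of the cubic can be arranged positive. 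That $p_{\beta^*}(t)$ has \emph{positive} roots — hence $\beta^*$ is a polarization and $d>0$ — follows by choosing $A$, $k$ so that the coefficients alternate in sign, exactly as in the proof of Lemma~\ref{lem:orbit_trick}\eqref{item:positivity_of_disc}: the constant term is (up to sign) $u^3>0$ and $\det$ is controlled, while the intermediate coefficients are dominated by the $n^k H$-terms once $k\gg A$.

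The heart of the matter — and the step I expect to be the main obstacle — is controlling the type of $\beta^*$ at every prime $\ell$ simultaneously, i.e.\ showing $\beta^*$ is of type $(1,1,d)$ rather than, say, $(1,\ell,\ell d')$ for some bad prime $\ell$. The slogan is that $\beta^*$ has type $(1,1,d)$ iff its reduction mod $\ell$ has rank $\geq 4$ for every prime $\ell$. For primes $\ell \mid n$: since each $d_i$ of $H$ divides a power of $n$, modulo such an $\ell$ the class $n^k H$ vanishes (for $k$ large), so $\beta^* \equiv \frac12 A^4 (u^*)^2 + \frac12 An\,(u^2)^* \equiv \frac12 A^4 (u^*)^2 \pmod{\ell}$ when $\ell\mid n$; since $A$ is a unit mod $\ell$ and $u$ has type $(1,1,1)$, $(u^*)^2$ has full rank mod $\ell$, so the reduction has rank $6 \geq 4$. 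For primes $\ell \nmid n$: here one uses that reduction mod $\ell$ of $p_{\beta^*}(t)$ has rank $<4$ only if $p_{\beta^*}(t) \bmod \ell$ is divisible by $t^2$, i.e.\ $\ell$ divides a certain resultant/discriminant-type quantity that is a fixed polynomial in $A$ and $n^k$. The idea is that the set of primes $\ell$ at which $\beta^*$ can fail to have type $(1,1,d)$ is contained in a finite set $\{\ell_1,\dots,\ell_r\}$ depending only loosely on $u$, $H$, $n$ (not on $A$, $k$) — this is the ``finite set of primes'' phenomenon flagged in the proof outline of \S\ref{outline-of-PI-proof}. One then has a two-parameter family in $(A,k)$ (with $A$ ranging over integers coprime to $n$ and $k$ arbitrarily large), and a finite list of congruence conditions to satisfy modulo the $\ell_i$; by the same transvection-lifting / transitivity argument used in Proposition~\ref{prop:level_subgroup} and Lemma~\ref{lem:orbit_trick} (the reduction $\Gamma_6(N)\to \SL_6(\bZ/\ell)$ is surjective), together with CRT, one can pick $A$ and then $k\gg A$ so that all conditions hold, giving $\beta^*$ of type $(1,1,d)$, $d>0$, with positive-rooted characteristic Pfaffian. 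Translating back through $(-)^*$ yields that $\beta$ itself is of type $(1,1,d)$, completing the proof. The delicate bookkeeping is precisely in isolating the finite bad-prime set independently of $(A,k)$ and in checking that the rank-$\geq 4$ condition at each bad prime is an open/nonvacuous congruence condition compatible with the sign conditions forcing positive roots.
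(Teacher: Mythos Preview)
Your proposal contains a genuine gap at exactly the point you flag as ``the step I expect to be the main obstacle.'' You assert that the set of primes $\ell$ at which $\beta$ can fail to be of type $(1,1,d)$ is contained in a finite set depending only on $u,H,n$ and \emph{not} on $(A,k)$, but your own heuristic --- that the bad primes are those dividing ``a certain resultant/discriminant-type quantity that is a fixed polynomial in $A$ and $n^k$'' --- says the opposite: such a resultant varies with $A$ and $k$, so the primes dividing it do too. This circularity is the whole difficulty: fixing $A$ determines a finite bad set, but checking those primes may force you to change $k$, and changing $k$ changes the resultant, potentially introducing new bad primes, ad infinitum. Your invocation of CRT and the transvection-lifting arguments from Lemma~\ref{lem:orbit_trick} and Proposition~\ref{prop:level_subgroup} does not break this loop, because those arguments produce a single class satisfying finitely many congruences, whereas here the congruence conditions themselves move with the parameters.

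The paper's proof resolves this by a geometric trick you have not found. Working directly in $\bP(V)$ where $V = \Hdg^4(X,\bZ)$, the class~\eqref{eq:beta_class_prop} lies simultaneously on a conic $C_A$ (depending on $A$ only) and a quartic $Q_{n^k}$ (depending on $k$ only), each defined over $\bZ$. The point is that the intersection $(C_A)_\bC \cap \Gr(2,6)$ is empty for all but finitely many $A$, and likewise for $Q_{n^k}$; this is proved by computing that the basepoints of each pencil are $[u^2/2]$ and $[H^2/2]$, which are rank-$6$ by hypothesis. Once $(C_A)_\bC \cap \Gr(2,6) = \emptyset$, the primes where $C_A$ meets $\Gr(2,6)$ form a finite set $\cP(C_A)$ that is genuinely independent of $k$. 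One first fixes $k_0$ so that $\cP(Q_{n^{k_0}})$ is finite, chooses $A$ to handle those primes, and then --- crucially --- increases $k$ to any $k \equiv k_0$ modulo the order of $n$ in $(\bZ/\prod \ell_i)^\times$ for $\ell_i \in \cP(C_A)$, so the mod-$\ell_i$ reduction of the class is unchanged. This interleaved choice is what makes ``$k$ arbitrarily large with respect to $A$'' possible. Your characteristic-Pfaffian and dualization framework is an unnecessary detour and does not supply any analogue of this decoupling.
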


\begin{notation} With the assumptions of Proposition~\ref{prop:prime_avoidance}, we write $V \subset \rH^4(X, \bZ)$ for the sublattice of integral Hodge classes. We observe that $\rk_{\bZ}V = 3$, by Remark~\ref{rem:end_of_simple}.
\end{notation}

\begin{notation}
\begin{itemize} 
    \item We write $\cP_0$ for the set of primes which divide the order of the torsion part of $V/\langle \class^2/2, \class \cdot H, H^2/2 \rangle$.
    \item Let $Z \subset \bP(V)$ be a closed subscheme. We write $\cP(Z)$ for the (possibly infinite) set of primes $p \in \bZ$ such that the $\bZ$-scheme
    \begin{equation} \label{eq:intersect_with_gr_z}
        Z \cap \Gr(2, 6) \subset \bP(\rH^4(X, \bZ)).
    \end{equation}
    has a nonempty fiber over $\bF_p$, where $\Gr(2, 6) = \Gr(2, \rH_1(X, \bZ))$ is embedded via Poincar\'e duality.
\end{itemize}
\end{notation}

\begin{remark}
\label{rem:finiteness_of_bad_primes}
    Let $Z \subset \bP(V)$ be a closed subscheme such that the complex fiber $Z_{\bC} \cap \Gr(2, 6)$ of the intersection above is empty. Then $\cP(Z)$ is finite.
\end{remark}

\begin{lemma}
\label{lem:checking_lemma}
    Let $a, b$, and $c$ be integers, and consider the class $z = a \class^2/2 + b \class H + c H^2/2$. Assume the following:
    \begin{enumerate} 
        \item $\gcd(a, b, c) = 1$.
        \item For each $p \in \cP_0$, the reduction of $z$ modulo $p$ is nonzero.
        \item The point $[z] \in \bP(V)(\bZ)$ is contained in $Z(\bZ)$, for some closed subscheme $Z \subset \bP(V)$, and for each $p \in \cP(Z)$, the reduction of $z$ modulo $p$ has rank at least $4$. 
    \end{enumerate}
    Then $z$ is of type $(1, 1, d)$, for some $d > 0$.
\end{lemma}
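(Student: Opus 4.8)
The goal is to show that the class $z = a u^2/2 + b\, u H + c\, H^2/2 \in V$ is of type $(1,1,d)$ for some $d>0$, given the three hypotheses. Recall that an integral Hodge class of rank $6$ has type $(1,1,d)$ if and only if it is \emph{primitive} in the lattice $\rH^4(X,\bZ)$ (equivalently, not divisible by any prime $p$), since the elementary divisors of a rank-$6$ alternating form with first two divisors equal to $1$ are exactly $(1,1,d)$. So after checking rank $6$, the task reduces to a primitivity statement, which must be verified prime by prime: for each prime $p$, I must show that the reduction $\bar z \in \rH^4(X,\bZ/p) \cong \medwedge^2 \rH^1(X,\bZ/p)^\vee$ has rank $6$ (equivalently, is nonzero after one exterior power is too small, i.e. $\bar z^{\,3} \neq 0$ in the appropriate sense; more precisely, the rank of $\bar z$ as an alternating form over $\bF_p$ equals $6$).

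\textbf{Step 1: rank $6$ over $\bC$, hence over all but finitely many primes.} First I would observe that $z$ has rank $6$ over $\bC$. This follows because, by hypothesis (3) of the Proposition this lemma serves (namely $\langle u^2, H^2\rangle_\bC$ contains no rank-$2$ elements) together with the fact that $u$ and $H$ themselves have full rank $6$ — but more to the point, $z^3/6 = \Pf(z)$ must be shown nonzero. Actually the cleanest route: $z^3$ is a polynomial of degree $3$ in the coefficients $a,b,c$, and one computes $z^3 = (\text{const})\cdot p(a,b,c)$ where $p$ is, up to scaling, related to the characteristic Pfaffian; since $X$ is simple, $p_u(t)$ and the relevant forms are irreducible/nonvanishing on the relevant loci, and $\gcd(a,b,c)=1$ ensures $z \neq 0$. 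The key input is that the locus $\{z : \Pf(z) = 0\} = \{z : \rk z \le 4\}$ inside $\bP(V)$ is a proper closed subscheme (a cubic hypersurface, the restriction of the Pfaffian), and $[z]$ must be shown to avoid it — but this is precisely what hypothesis (3) of the Lemma is designed to handle via the scheme $Z$. I would take $Z \subset \bP(V)$ to be (the reduced structure on) the Pfaffian hypersurface $\{\Pf = 0\}$, so that $\cP(Z)$ is the set of primes $p$ where some $\bF_p$-point of $Z \cap \Gr(2,6)$ exists. Since $Z_\bC \cap \Gr(2,6) = \emptyset$ by hypothesis (3) (no rank-$2$ Hodge classes in the span, and more generally the intersection is empty because… here I need the simplicity of $X$ and Proposition~\ref{prop:rank_two_points} to ensure only finitely many rank-$2$ points, none of which lie in $\bP(V)$ unless the span is special), Remark~\ref{rem:finiteness_of_bad_primes} gives $\cP(Z)$ finite. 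For $p \notin \cP(Z)$, the reduction $\bar z$ has rank $\ge 6$ automatically? No — I need: if $[z] \in Z(\bZ)$ fails, i.e. $[z] \notin Z$, then $\bar z$ has rank $6$ for \emph{every} prime. So the structure is: hypothesis (3) of the Lemma says $[z] \in Z(\bZ)$ for the \emph{specific} $Z$ chosen, and then rank $\ge 4$ at primes in $\cP(Z)$ is separately assumed; at primes outside $\cP(Z)$ the reduction lands outside $Z \bmod p$ hence has rank $\ge 6$... wait, rank $\ge 4$ is what rank-$2$-locus avoidance gives. I would instead stratify: let $Z_2 = \{\rk \le 2\}$ and $Z_4 = \{\rk \le 4\}$ inside $\bP(V)$, two nested closed subschemes. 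The plan is to apply the Lemma's hypothesis (3) with $Z = Z_4$ (so $\cP(Z_4)$ controls where rank can drop to $\le 4$) — actually, re-reading, the Lemma as stated uses a single $Z$ supplied by the caller, and the caller (proof of Proposition~\ref{prop:prime_avoidance}) will choose $Z = \{\rk \le 4\}$ or similar. So within the Lemma I just take $Z$ as given.

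\textbf{Step 2: reduce primitivity to a finite check.} Given rank-$6$ over $\bF_p$ for all relevant $p$, I must show $z$ is primitive, i.e. for every prime $p$, $p \nmid z$ in the lattice $V$ — but actually type $(1,1,d)$ is about the elementary divisors in the \emph{full} lattice $\rH^4(X,\bZ)$, not $V$. Write $M_0 = \langle u^2/2,\, uH,\, H^2/2\rangle$, a finite-index sublattice of $V$. Hypothesis (1) $\gcd(a,b,c) = 1$ says $z$ is primitive in $M_0$. Hypothesis (2) says $\bar z \ne 0$ in $V/p$ for $p \in \cP_0$ = primes dividing the torsion of $V/M_0$; combined with primitivity in $M_0$, this upgrades to: $z$ is primitive in $V$ (for $p \notin \cP_0$, $V/M_0$ has no $p$-torsion so primitivity in $M_0$ descends; for $p \in \cP_0$, hypothesis (2) handles it directly). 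Finally, primitivity in $V = \Hdg^4(X,\bZ)$ versus primitivity in $\rH^4(X,\bZ)$: a Hodge class divisible by $p$ in $\rH^4(X,\bZ)$ is divisible by $p$ in $V$ (integrality of Hodge classes / $V$ is saturated in $\rH^4$ up to torsion — in fact $\rH^4(X,\bZ)$ is torsion-free for an abelian variety, so $V$ is a primitive sublattice). Hence $z$ primitive in $V$ $\Rightarrow$ $z$ primitive in $\rH^4(X,\bZ)$.

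\textbf{Step 3: conclude.} Combining: $z \in \rH^4(X,\bZ)$ is a primitive Hodge class of rank $6$. Its elementary divisors are $(d_1, d_2, d_3)$ with $d_1 \mid d_2 \mid d_3$ and $d_1 d_2 d_3 \ne 0$; primitivity forces $d_1 = 1$. It remains to show $d_2 = 1$. This is the one genuinely delicate point and is where I expect the main obstacle to lie: primitivity alone only gives $d_1 = 1$, not $d_2 = 1$. I would resolve it by the rank condition at primes in $\cP(Z)$: if $d_2 = p > 1$ for some prime $p$, then reducing mod $p$, the form $\bar z$ has rank $\le 2$ (since $d_2, d_3$ become divisible by $p$), so $[\bar z] \in Z_2(\bF_p) \subset Z(\bF_p) \cap \Gr(2,6)(\bF_p)$ — wait, more carefully, $\bar z$ of rank $2$ means its projectivization lies on $\Gr(2,6) \bmod p$; but we need it to also lie in $Z \bmod p$. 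Here is where the caller's choice of $Z$ matters: $Z$ must be chosen so that $[z] \in Z(\bZ)$ (hypothesis (3) first part) \emph{and} $Z_\bC \cap \Gr(2,6) = \emptyset$, so that $p \in \cP(Z)$ for every $p$ that could be a value of $d_2$; then the assumed rank-$\ge 4$ at such $p$ contradicts $d_2 = p$. Thus $d_2 = 1$ and $z$ is of type $(1,1,d)$ with $d = d_3 > 0$. The main work, then, is Step 1 (establishing rank $6$, which feeds into showing the finitely many "bad primes" are exactly $\cP(Z) \cup \cP_0$) and the bookkeeping in Step 3 linking the rank-drop of $\bar z$ to membership in $Z \bmod p$; both rely crucially on $X$ simple so that $V$ has rank $3$ and the Pfaffian/$\Gr(2,6)$ geometry is as expected. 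I would write the proof as: (i) $\gcd$ condition $\Rightarrow$ $z$ primitive in $M_0$; (ii) $\cP_0$ condition $\Rightarrow$ $z$ primitive in $V$ $\Rightarrow$ $d_1 = 1$; (iii) suppose $d_2 = p$; then $\rk(\bar z_p) \le 2$, so $[z] \bmod p \in \Gr(2,6)(\bF_p)$, and since $[z] \in Z(\bZ)$ we get $[z]\bmod p \in (Z \cap \Gr(2,6))(\bF_p)$, so $p \in \cP(Z)$, so by hypothesis $\rk(\bar z_p) \ge 4$, contradiction; hence $d_2 = 1$.
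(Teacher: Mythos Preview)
Your final distilled argument (i)--(iii) is exactly the paper's proof: conditions (1) and (2) yield primitivity (hence $d_1 = 1$), and if some prime $p$ divides $d_2$ then $\bar z_p$ has rank $\le 2$, forcing $[z]_p \in (Z \cap \Gr(2,6))(\bF_p)$, whence $p \in \cP(Z)$ and the rank-$\ge 4$ hypothesis gives a contradiction. Your Step~2 spells out the primitivity argument (primitive in $M_0$, upgrade to $V$ via $\cP_0$, then to $\rH^4(X,\bZ)$ by saturation) more carefully than the paper, which simply asserts it; this is correct and helpful. Your Step~1 is an unnecessary detour --- you spend time trying to \emph{choose} $Z$ and verify rank $6$ over $\bC$, but the lemma takes $Z$ as given by the caller, as you eventually realize --- so that material should be cut. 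One small imprecision: in (iii) you write ``suppose $d_2 = p$'' where you mean ``suppose $p \mid d_2$''.
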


\begin{proof}
    Conditions (1) and (2) imply that $z$ is primitive, i.e., that it is of type $(1, d, e)$ for some $d > 0$. If $p$ is a prime which divides $d$, then the reduction $[z]_p$ of $[z]$ modulo $p$ lies in the intersection $Z_p \cap \Gr(2, 6)$, so $p$ is contained in $\cP(Z)$. On the other hand, $[z]_p$ has rank at most $2$, contradicting (3).
\end{proof}

\begin{lemma}
\label{lem:rank_two_case}
    Proposition~\ref{prop:prime_avoidance} holds if $\rk_{\bZ}\langle \class^2/2, \class \cdot H, H^2 /2 \rangle = 2$.
\end{lemma}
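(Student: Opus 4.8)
We are in the situation where $V = \langle \class^2/2, \class\cdot H, H^2/2\rangle$ has rank $2$ over $\bZ$, i.e.\ the lattice of integral Hodge classes in $\rH^4(X,\bZ)$ is spanned (up to finite index) by $\class^2/2$ and $H^2/2$, with $\class\cdot H$ already in their span. The plan is to reduce the whole problem to Lemma~\ref{lem:checking_lemma}: we must produce integers $A>0$ prime to $n$ and $k>0$ arbitrarily large such that the class in \eqref{eq:beta_class_prop}, which we write as $z = z(A,k)$, is primitive, nonzero modulo every prime in $\cP_0$, and (lying in some cleverly chosen closed $Z \subset \bP(V)$) has rank $\ge 4$ modulo every prime in the finite set $\cP(Z)$. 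Since $V_\bC \cap \Gr(2,6) = \bP(V)_\bC \cap \Gr(2,6)$ has no rank-$2$ elements by hypothesis, taking $Z = \bP(V)$ itself already has $\cP(Z)$ finite by Remark~\ref{rem:finiteness_of_bad_primes}; so the only primes we need to worry about are those in the \emph{finite} set $\cP_0 \cup \cP(\bP(V))$.

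\textbf{Key steps.} First I would expand $z(A,k)$ in the basis $\class^2/2,\ \class\cdot H,\ H^2/2$: since $\langle \class^2,H^2\rangle_\bC$ is $2$-dimensional, $\class^2$, $\class\cdot H$, $H^2$ all lie in a rank-$2$ lattice, and one computes
\[
z(A,k) = \tfrac12(A^2\class - n^k H)^2 + \tfrac12 An\,\class^2
= \big(A^4 + An\big)\tfrac{\class^2}{2} - 2A^2 n^k\,(\class\cdot H) + n^{2k}\tfrac{H^2}{2},
\]
and then rewrite $\class\cdot H$ as an integral combination of $\class^2/2$ and $H^2/2$ (clearing a bounded denominator $m$, the index of $\langle \class^2/2, H^2/2\rangle$ in $V$, whose prime divisors lie in a fixed finite set that I absorb into $\cP_0$). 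This expresses $z(A,k)$ as $P(A,k)\cdot\class^2/2 + Q(A,k)\cdot H^2/2$ for explicit polynomial-in-$(A^j, n^{k})$ coefficients. Second, I would analyze primitivity and the mod-$p$ behaviour for $p$ in the finite bad set: for each such prime $p$, the reduction of $z(A,k)$ should not become too divisible and should not degenerate to a rank-$2$ class, and the point is that choosing $A$ in a suitable residue class mod $p$ (and using the freedom to increase $k$) one can force, for each $p$ separately, that the reduction lies in the rank-$\ge 4$ locus or is simply nonzero/primitive. By the Chinese remainder theorem one then picks a single $A>0$, coprime to $n$, satisfying all these finitely many congruences simultaneously; the condition ``$k$ arbitrarily large with respect to $A$'' is used to dominate the lower-order terms in the coefficients $P,Q$. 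Third, with such $A$ and $k$ fixed, I apply Lemma~\ref{lem:checking_lemma} with $Z = \bP(V)$ to conclude that $z(A,k)$ is of type $(1,1,d)$ for some $d>0$; positivity $d>0$ follows from the fact that the leading term (in $n^k$) is $n^{2k}H^2/2$ which, $H$ being ample, is $\bQ$-effective and the correction terms are dominated, so $d$ is forced positive for $k$ large (this is the same mechanism as Remark~\ref{remark:effectivity_of_the_curve}).

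\textbf{Main obstacle.} The delicate point is the analysis modulo the finitely many bad primes $p \in \cP_0 \cup \cP(\bP(V))$: one must show that for each such $p$ there is a choice of residue of $A$ mod $p$ (compatible with $A$ being coprime to $n$) making the reduction of $z(A,k)$ either primitive-and-nonzero or of rank $\ge 4$, and that these local conditions are simultaneously satisfiable. The subtlety is that $z(A,k)$ is constrained to lie in the $2$-dimensional space $\bP(V)$, and a priori its mod-$p$ reduction could be pinned into the low-rank locus regardless of $A$; the hypothesis that $\langle \class^2, H^2\rangle_\bC$ contains no rank-$2$ elements rules this out over $\bC$, hence for all but finitely many $p$, and for the remaining finitely many $p$ one exploits that the coefficient of $\class^2/2$ is a genuinely non-constant polynomial in $A$ (namely $A^4+An$ together with the contribution from rewriting $\class\cdot H$), so varying $A$ mod $p$ moves $[z(A,k)]$ through enough of $\bP(V)(\bF_p)$ to escape the rank-$2$ locus. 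Organizing this ``escape by prime avoidance'' cleanly — in particular handling the primes dividing $n$ via the coprimality of $A$, and those in $\cP_0$ via primitivity — is the technical heart of the lemma; everything else is the bookkeeping of the explicit expansion above.
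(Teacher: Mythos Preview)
Your high-level strategy is right --- in the rank-$2$ case $\langle \class^2, H^2\rangle_\bC$ contains no rank-$2$ elements by hypothesis (3), so the line $Z = \bP\langle \class^2/2,\class H, H^2/2\rangle \subset \bP(V)$ misses $\Gr(2,6)$ over $\bC$, whence $\cP(Z)$ is finite, and Lemma~\ref{lem:checking_lemma} applies. (Be careful: in the paper $V$ denotes the full rank-$3$ lattice of integral Hodge classes, not your rank-$2$ sublattice; and your expansion has a stray factor of $2$ --- the coefficient of $\class\cdot H$ is $-A^2 n^k$, not $-2A^2 n^k$.)

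However, you make the local analysis far harder than it is, and two of your steps are unnecessary. First, there is no need to rewrite $\class\cdot H$ in terms of $\class^2/2$ and $H^2/2$: Lemma~\ref{lem:checking_lemma} is stated for coefficients $(a,b,c)$ with respect to all three generators, regardless of linear relations among them. Second, the CRT-based ``escape'' argument you sketch as the main obstacle is not needed. The paper simply takes $A$ to be the product of the primes in $\cP_0\cup\cP(Z)$ that do not divide $n$. Then every bad prime $p$ divides either $n$ or $A$, and
\[
z \equiv \begin{cases} A^4\cdot \class^2/2 & p\mid n,\\ n^{2k}\cdot H^2/2 & p\mid A.\end{cases}
\]
Now assumption (1) says $\class$ has type $(1,1,1)$, so $\class^2/2$ has rank $6$ integrally, hence modulo every prime; assumption (2) says $H$ has type $(d_1,d_2,d_3)$ with each $d_i$ dividing a power of $n$, so $H^2/2$ has rank $6$ modulo any $p\nmid n$. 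Thus conditions (2) and (3) of Lemma~\ref{lem:checking_lemma} hold automatically, and $\gcd(A^4+nA,\,n^{2k})=1$ gives condition (1). No escaping or moving around in $\bP^1(\bF_p)$ is required; the endpoints $[\class^2/2]$ and $[H^2/2]$ are already safe. This also makes it transparent that $k$ may be taken arbitrarily large.
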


\begin{proof}
    By assumption (3) of Proposition~\ref{prop:prime_avoidance}, $\bP(V)$ satisfies the conditions of Remark~\ref{rem:finiteness_of_bad_primes}, so $\cP(\bP(V))$ is finite.

    Let $A$ be the product of the primes in $\cP_0 \cup \cP(\bP(V))$ which do not divide $n$, and consider the class
    \begin{align*}
        z &= \frac{1}{2}(A^2  \class - n^k  H)^2 + \frac{1}{2}   n A \cdot \class^2 \\
            &= \frac{1}{2}(A^4 + n A) \class^2 - n^k A^2 \cdot \class H + \frac{1}{2} n^{2k} \cdot H^2
    \end{align*}
    for any $k > 0$. 

    Given a prime $p$ in $\cP_0 \cup \cP(\bP(V))$, $p$ either divides $n$ or $A$, and the reduction of $z$ modulo $p$ is nonzero by assumptions (1) or (2) of Proposition~\ref{prop:prime_avoidance}. Since $A$ is prime to $n$, $z$ satisfies the assumptions of Lemma~\ref{lem:checking_lemma}, and $k$ may be taken to be arbitrarily large. 
\end{proof}

The case when $\rk_{\bZ}\langle \class^2/2, \class \cdot H, H^2 /2 \rangle = 3$ is more complicated, because the complex fiber $\Gamma$ of its intersection with $\Gr(2, 6)$ is equal to $\bP(V)_{\bC} \cap \Gr(2, 6)$, which is not empty. However, by Proposition~\ref{prop:rank_two_points}, $\Gamma$ is supported on a finite set. Therefore, our strategy in the rank $3$ case is to produce subvarieties of $\bP(V)$ which avoid $\Gamma$ and contain the classes \eqref{eq:beta_class_prop} from the proposition.

\begin{situation}
Throughout the remainder of the section, we work in the context of Proposition~\ref{prop:prime_avoidance}. In particular, $n > 1, \class$, and $H$ are fixed. We write
\[
    \pi:\bP_{\bZ}^2 \to \bP(V)
\]
for the morphism over $\Spec(\bZ)$ induced by the map $\bZ^3 \to V$ sending the standard basis to $\class^2/2, \class \cdot H, H^2/2$. 
\end{situation}

\begin{example}[Conics]
For any integer $t$, consider the map
\begin{align*} 
    f_t: \bP^1_{\bZ} \to \bP^2_{\bZ}, \hspace{5mm} [x, y] \mapsto [(t^4 + nt)x^2, -t^2 xy, y^2].
\end{align*}
Let $C_t'$ be the scheme-theoretic image of $f_t$ in $\bP^2_{\bZ}$, and let $C_t \subset \bP(V)$ be the image of $C'_t$ under $\pi$.
\end{example}

\begin{example}[Quartics]
Let $t$ be an integer, and consider the map
\[
    g_t:\bP^1_{\bZ} \to \bP^2_{\bZ}, \hspace{5mm} [x, y] \mapsto [x^4 + n x y^3, -t \cdot x^2 y^2, t^2 \cdot y^4].
\]
Let $Q'_t$ be the scheme-theoretic image of $g$ in $\bP_{\bZ}^2$, and let $Q_t$ be the image of $Q'_t$ under $\pi$.
\end{example}

\begin{remark}
    In the context of Proposition~\ref{prop:prime_avoidance}, the point \eqref{eq:beta_class_prop} is contained in the intersection $C_A \cap Q_{n^k}(\bZ)$.
\end{remark}

\begin{lemma}
\label{lem:bpf_conics_quartics}
    Suppose that $\rk_{\bZ}\langle \class^2/2, \class \cdot H, H^2 /2 \rangle = 3$. For all but finitely many integers $t$, the intersections
    \[
        (C_t)_{\bC} \cap \Gr(2, 6), \hspace{5mm} (Q_t)_{\bC} \cap \Gr(2, 6)
    \]
    are empty.
\end{lemma}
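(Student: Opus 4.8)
The plan is to exploit the fact, recorded in Proposition~\ref{prop:rank_two_points}, that the locus $\Gamma := \bP(V)_{\bC} \cap \Gr(2,6)$ is a \emph{finite} set of points in $\bP(V)_{\bC}$ (here we use that $X$ is simple and $\End^s(X)_{\bQ}$ is a product of fields, by Remark~\ref{rem:end_of_simple}). Since $(C_t)_{\bC}$ and $(Q_t)_{\bC}$ are each an irreducible curve (or a point) in $\bP(V)_{\bC}$, to show that $(C_t)_{\bC} \cap \Gr(2,6)$ and $(Q_t)_{\bC} \cap \Gr(2,6)$ are empty for all but finitely many $t$, it suffices to show that no fixed point $[z_0] \in \Gamma$ lies on $(C_t)_{\bC}$ for infinitely many $t$, and similarly for $(Q_t)_{\bC}$; then take the union over the finitely many points of $\Gamma$. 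So the proof reduces to the following elementary claim: for each $[z_0] \in \Gamma$, the set of integers $t$ with $[z_0] \in (C_t)_{\bC}$ is finite, and likewise for $Q_t$.

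The second step is to make this claim concrete using the explicit parametrizations. Write $[z_0] = \pi([a_0 : b_0 : c_0])$ for some $(a_0, b_0, c_0) \in \bC^3 \setminus \{0\}$ (using that $\pi$ is a finite morphism onto $\bP(V)$, as $\rk_\bZ \langle \class^2/2, \class H, H^2/2\rangle = 3$, so $[z_0]$ has finitely many preimages under $\pi$; it is enough to treat each preimage). Then $[z_0] \in (C_t)_{\bC}$ means $[a_0 : b_0 : c_0]$ lies in the image of $f_t$, i.e.\ there is $[x:y] \in \bP^1_{\bC}$ with
\begin{equation*}
[a_0 : b_0 : c_0] = [(t^4 + nt) x^2 : -t^2 x y : y^2].
\end{equation*}
Eliminating $[x:y]$ gives a single polynomial relation between $t$, $a_0$, $b_0$, $c_0$: for instance, if $c_0 \neq 0$ one may normalize $y = 1$, solve $x = -b_0/(t^2 c_0)$ from the middle coordinate, and substitute into the first coordinate to get $a_0 t^4 c_0 = (t^4 + nt) b_0^2 / (t^2 c_0)$, i.e.\ a nonzero polynomial equation in $t$ (one checks the top-degree coefficients do not cancel, using $c_0 \ne 0$; the degenerate cases $c_0 = 0$, or where the leading terms cancel, are handled by inspecting the other coordinates and force $[z_0]$ into a fixed lower-dimensional locus, hence only finitely many $t$ at worst). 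This polynomial is not identically zero in $t$, so it has finitely many roots, giving the claim for $C_t$. The same elimination argument applies to $Q_t$ via $g_t$: the relation $[a_0 : b_0 : c_0] = [x^4 + n x y^3 : -t x^2 y^2 : t^2 y^4]$ eliminates to a nonzero polynomial identity in $t$ (again after checking that the leading coefficient in $t$, which comes from the ratio of the first and third coordinates, is genuinely nonzero for $[z_0]$ in the relevant range).

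The main obstacle — really the only nontrivial point — is verifying that the polynomial in $t$ obtained by elimination is \emph{not identically zero} for each of the finitely many points $[z_0] \in \Gamma$. A priori one could worry that some point of $\Gamma$ lies on $(C_t)_{\bC}$ for \emph{all} $t$, which would happen precisely if the one-parameter family $\{C_t\}$ had a common base point. This is where the specific shape of the parametrizations matters: the coefficient $t^4 + nt$ (resp.\ $x^4 + nxy^3$) is designed so that as $t$ varies the curves $C_t$ sweep out, rather than fix, the relevant locus; concretely, the claim is that $\bigcap_t (C_t)_{\bC}$ and $\bigcap_t (Q_t)_{\bC}$ are contained in a fixed finite set disjoint from $\Gamma$ — or, more robustly, that for each $[z_0]$ the elimination polynomial has a nonzero leading coefficient, which is a finite explicit check. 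I would carry this out by writing down the two elimination polynomials symbolically in $t$ (with $a_0, b_0, c_0$ as parameters), extracting the leading coefficient in $t$, and observing that its vanishing cuts out a proper closed subset of $\bP(V)_{\bC}$; then one notes that this proper subset, intersected with the already-finite set $\Gamma$, at worst enlarges the finite bad set of $t$ by finitely much, or — if one prefers a cleaner statement — checks directly that no point of $\Gamma$ lies in that subset using the rank-$2$ characterization of $\Gamma$. Either way the bookkeeping is routine; the conceptual content is entirely in Proposition~\ref{prop:rank_two_points}, which makes $\Gamma$ finite.
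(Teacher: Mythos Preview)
Your overall strategy matches the paper's: use Proposition~\ref{prop:rank_two_points} (finiteness of $\Gamma = \bP(V)_{\bC} \cap \Gr(2,6)$) to reduce to showing that no point of $\Gamma$ is a basepoint of the families $\{(C_t)_{\bC}\}$ and $\{(Q_t)_{\bC}\}$. But the step you label ``routine bookkeeping'' is the actual content of the lemma, and your alternative (a) is flawed as written: if the leading coefficient of the elimination polynomial vanishes at some $[z_0] \in \Gamma$, nothing you have said rules out that \emph{all} coefficients vanish there, in which case $[z_0]$ lies on $(C_t)_{\bC}$ for every $t$. The observation that such $[z_0]$ lie in a proper closed subset of $\bP(V)_{\bC}$, intersected with the finite set $\Gamma$, does not ``enlarge the finite bad set of $t$ by finitely much''; it makes that set all of $\bZ$.

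What must actually be done --- your alternative (b), carried out rather than gestured at --- is to identify the basepoint loci explicitly as $\{[1,0,0],[0,0,1]\}$ and then check that these two points, corresponding to $\class^2/2$ and $H^2/2$, have rank $6$ (not rank $2$) as elements of $\rH^4(X,\bC)$, hence lie outside $\Gamma$. The rank-$6$ check is not free: it uses the standing hypotheses of Proposition~\ref{prop:prime_avoidance} that $\class$ has type $(1,1,1)$ and $H$ has type $(d_1,d_2,d_3)$ with all $d_i\neq 0$. Your elimination would in fact pin down the basepoints correctly (for $C_t$ with $c_0\neq 0$ one gets $t\bigl((a_0c_0-b_0^2)t^3-nb_0^2\bigr)=0$, identically zero only at $[0,0,1]$; the case $c_0=0$ forces $[1,0,0]$), so your route is viable once completed. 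The paper takes a slightly different tack for the basepoint identification: it uses a one-parameter family of linear coordinate changes $L_t$ carrying $C_t$ (resp.\ $Q_t$) onto a fixed curve and analyzes the Zariski closure of $\{L_t([a,b,c])\}_t$; the quartic case then requires a genuine case analysis ruling out that the fixed quartic $Q_{-1}$ contains a line or a conic of the relevant shape.
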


\begin{proof}
    By Proposition~\ref{prop:rank_two_points}, the intersection $\Gamma = \Gr(2, 6) \cap \NS(X)_{\bC}$ is supported on a finite set of points, and it suffices to show that the basepoints of the families $\{C_t\}$ and $\{Q_t\}$ do not intersect $\Gamma$ over $\bC$. We claim that (in each case) the only basepoints are $[1, 0, 0]$ and $[0, 0, 1]$. The lemma follows, since both points correspond to forms of rank $6$. For the duration of the proof, we write $C_t$ and $Q_t$ to refer to the complex fibers $(C_t)_{\bC}$ and $(Q_t)_{\bC}$, respectively.

    We begin with the conic case. Suppose that $[a, b, c]$ is a point which is contained in $C_t$ for all $t \in \bZ$. Let $D \subset \bP^2_{\bC}$ be the standard conic, defined as the image of the map $\bP^1_{\bC} \to \bP^2_{\bC}$ given by 
    \[
        [x,y] \mapsto [x^2, xy, y^2] .
    \]
    For a general choice of $t \in \bC$, there is a linear change of coordinates $L_t:\bP^2_{\bC} \to \bP^2_{\bC}$ carrying $C_t$ to $D$, given by 
    \[
        [x, y, z] \mapsto \left[ (t^4 + nt)^{-1} \cdot x, -t^{-2} y, z \right].
    \]
    It follows that $L_t([a, b, c])$ lies on $D$ for generic $t$, so we obtain a family of points 
    \[
        p_t = [(t^4 + nt)^{-1}a, -t^{-2}b, c]
    \]
    contained in $D$, for almost all $t \in \bC$. 

    If both $a, b \neq 0$, then the Zariski closure of $\{p_t\}$ is positive-dimensional, contained in $D$, and is given by the set
    \[
        Z = \{[s^4ta, -(s^2t^3 + ns^5)b, t(t^4 + ns^3t)c]\}, \quad [s, t] \in \bP^1.
    \]
    Setting $t = 0$, we see that $Z$ contains the point $[0, nb, 0]$. But $[0,nb, 0]$ is not contained in $D$, contradicting our assumption that both $a, b \neq 0$. Therefore, either $a$ or $b$ is $0$; from the formula for the curves $C_t$ which contain $[a,b,c]$ by assumption, only $[1,0,0]$ and $[0,0,1]$ are possible. This concludes the conic case.

    We now turn to the quartic case. Arguing as above, let $[a, b, c]$ be a basepoint, and for generic $t$ let $L_t:\bP^2_{\bC} \to \bP^2_{\bC}$ be the linear change of coordinates 
    \[
        [x, y, z] \mapsto [x, -t^{-1} y, t^{-2} z],
    \]
    which carries $Q_t$ onto $Q_{-1} = \{[x^4 + nxy^3, x^2y^2, y^4]\}$. Clearing denominators, one sees that the Zariski closure of the points $\{q_t = L_t([a, b, c])\}$ is given by 
    \[
        W = \{[at^2, -bst, cs^2]\}, \quad [s, t] \in \bP^1.
    \]
    There are three possible cases:
    \begin{enumerate} 
        \item If $a, b, c \neq 0$, then $W$ is a conic which is not contained in $Q_{-1}$, so this case cannot occur.
        \item If exactly one of $a,b,c$ is $0$, then $W$ is a line. But we claim that $Q_{-1}$ does not contain a line, so this case cannot occur. Since $Q_{-1}$ is irreducible (as it is the image of a map from $\bP^1$), it is enough to show that $Q_{-1}$ is not supported on a line. But the intersection $Q_{-1} \cap \{y = z\}$ contains at least the points $[1 + n, 1, 1]$ and $[1 - n, 1, 1]$, which are distinct since we have assumed that $n > 1$. 
        \item The case $[a,b,c] = [0,1,0]$ cannot occur, since $[0,1,0]$ is not contained in $Q_t$ for any $t$.
    \end{enumerate}
    The only remaining possibilities are $[a,b,c] = [1,0,0]$ or $[0,0,1]$.
\end{proof}

We are now in a position to complete the proof of Proposition~\ref{prop:prime_avoidance}. Some care is needed in the proof to ensure that one may take $k$ to be arbitrarily large with respect to $A$.

\begin{proof}[Proof of Proposition~\ref{prop:prime_avoidance}]
    By Lemma~\ref{lem:rank_two_case}, we may suppose that $\rk_{\bZ}\langle \class^2/2, \class \cdot H, H^2 /2 \rangle = 3$. From Lemma~\ref{lem:bpf_conics_quartics}, we may choose an integer $k_0$ such the intersection
    \[
        (Q_{n^{k_0 }})_{\bC} \cap \Gr(2, 6)
    \]
    is empty. From Remark~\ref{rem:finiteness_of_bad_primes}, the set $\cP(Q_{n^{k_0}})$ is finite. 

    Let $A_0$ be the product of the primes in $\cP(Q_{n^{k_0}}) \cup \cP_0$ which do not divide $n$. By Lemma~\ref{lem:bpf_conics_quartics}, we may choose a power $A$ of $A_0$ such that the intersection
    \[
        (C_A)_{\bC} \cap \Gr(2, 6)
    \]
    is empty. Again from Remark~\ref{rem:finiteness_of_bad_primes}, $\cP(C_A)$ is finite. 

    The class
    \begin{align*}
        z &= \frac{1}{2}(A^2  \class - n^{k_0}  H)^2 + \frac{1}{2}   n A \cdot \class^2 \\
            &= (A^4 + n A) \cdot \frac{\class^2}{2} - n^{k_0} A^2 \cdot \class H +  n^{2k_0} \cdot \frac{H^2}{2}
    \end{align*}
    lies in the intersection of $C_A(\bZ)$ and $Q_{n^{k_0}}(\bZ)$. An identical calculation as in Lemma~\ref{lem:rank_two_case} shows that $z$ is of type $(1, 1, d)$ for some $d > 1$. However, it is not clear that we can take $k_0$ to be sufficiently large with respect to $A$.

    To remedy this, let $\cP'$ be the set of primes in $\cP_0 \cup \cP(C_A)$ which do not divide $n$, and let $\epsilon$ be any integer such that
    \[
        n^\epsilon \equiv 1 \mod p
    \]
    for any $p \in \cP'$. Then the class
    \[
        z' = \frac{1}{2}(A^4 + n A) \class^2 - n^{k_0 + \epsilon} A^2 \cdot \class H + \frac{1}{2} n^{2k_0 + 2 \epsilon} \cdot H^2
    \]
    lies in $C_A(\bZ)$, and
    \[
        z' \equiv z \mod p
    \]
    for any $p \in \cP'$. In particular, $z'$ satisfies the assumptions of Lemma~\ref{lem:checking_lemma}, so $z'$ is of type $(1, 1, d')$ for some $d' > 0$. Since $\epsilon$ may be arbitrarily large, we can arrange for $k = k_0 + \epsilon$ to be arbitrarily large with respect to $A$.
\end{proof}

\end{appendix}

\newpage

\setcounter{section}{21}

\part{Complements}
\label{part-complements}

Throughout Part~\ref{part-complements}, we work over the complex numbers. 

\section{Applications to the integral Hodge conjecture}
\label{section-applications-IHC}

In this section, we prove Theorem~\ref{theorem-voisin-group-twisted-CY3} from the introduction, concerning the Voisin group of a twisted Calabi--Yau threefold.  
Using Corollary~\ref{corollary-voisin-group-abelian3fold} from the introduction, we also construct an example of a Severi--Brauer variety $P$ for which the integral Hodge conjecture holds for $\Dperf(P)$ but fails for $P$. 

\subsection{The twisted Atiyah--Hirzebruch spectral sequence}
\label{section-twisted-AH}
To prove Theorem~\ref{theorem-voisin-group-twisted-CY3}, we need to compute $\Ktop[0](X, \alpha)$ when $\alpha$ is not necessarily topologically trivial. This is done using the twisted Atiyah--Hirzebruch spectral sequence \cite{atiyah_segal}. 

In the following lemma, we write
    \begin{equation}
    \label{eq:odd_even}
        \bZ(q/2) = \begin{cases}
            \bZ & q \equiv 0 \mod 2 \\
            0 & q \equiv 1 \mod 2.
        \end{cases}
    \end{equation}
Succinctly, $\bZ(q/2) = \Ktop[-q](\pt)$. 

\begin{lemma}[Twisted Atiyah--Hirzebruch]
\label{lem:ahss_analysis}
    Let $X$ be a smooth quasiprojective complex variety, with $\alpha \in \Br(X)$. There is a strongly convergent spectral sequence
    \begin{equation}
    \label{eq:ahss}
        \rE_2^{p, q} = \rH^p(X, \bZ(q/2)) \implies \Ktop[-p - q](X, \alpha),
    \end{equation}
    satisfying the following properties:
    \begin{enumerate}
        \item The differentials are torsion.
        \item If $X$ is projective, the resulting filtration of $\Ktop[-i](X, \alpha)$ is a filtration by pure Hodge substructures of weight $i$. 
    \end{enumerate}
\end{lemma}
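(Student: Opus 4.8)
\textbf{Proof strategy for Lemma~\ref{lem:ahss_analysis}.}

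The plan is to obtain the spectral sequence by applying the Atiyah--Segal theory of twisted $K$-theory \cite{atiyah_segal} to the topological twist of $X$ associated to $\alpha$. Recall that a class $\alpha \in \Br(X) \subset \rH^3(X,\bZ)_{\tors} \hookrightarrow \rH^3(X, \bZ)$ determines, via its image in $\rH^3(X,\bZ) \cong [X, K(\bZ,3)]$, a twist of complex topological $K$-theory, and by \cite{blanc} (together with the comparison results recalled in Theorem~\ref{theorem-Ktop}) Blanc's topological $K$-theory $\Ktop[*](X,\alpha)$ of $\Dperf(X,\alpha)$ agrees with this twisted topological $K$-theory $\KU^*(X;\alpha)$. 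First I would invoke the Atiyah--Hirzebruch-type filtration on $\KU^*(X;\alpha)$ coming from the skeletal filtration of (a CW model of) $X$; Atiyah--Segal show \cite{atiyah_segal} that this yields a spectral sequence with $\rE_2^{p,q} = \rH^p(X; \KU^{-q}(\pt)) = \rH^p(X,\bZ(q/2))$ converging to $\KU^{-p-q}(X;\alpha)$, strongly convergent since $X$ is a finite complex. The first nonzero differential $d_3$ is cup product with $\alpha \in \rH^3(X,\bZ)$ (possibly up to a universal sign/multiple), and all higher differentials are given by twisted cohomology operations.

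For property (1), I would argue that every differential is torsion. This is where the hypothesis $\alpha \in \Br(X) = \rH^3(X,\bZ)_{\tors}$ is used: since $d_3 = \alpha \cup (-)$ and $\alpha$ is torsion, $d_3$ is torsion; for the higher differentials, after tensoring with $\bQ$ the twist becomes trivial (the rationalized twist is classified by the image of $\alpha$ in $\rH^3(X,\bQ) = 0$ for $\alpha$ torsion), so the rationalized spectral sequence is the untwisted rational Atiyah--Hirzebruch spectral sequence, which degenerates at $\rE_2$ by the standard Chern-character argument. Hence $d_r \otimes \bQ = 0$ for all $r \geq 2$, i.e. every $d_r$ is torsion. (Alternatively one can cite the degeneration of the rational twisted spectral sequence directly from \cite{atiyah_segal}.)

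For property (2), assume $X$ is smooth projective. The filtration on $\KU^{-i}(X;\alpha) = \Ktop[-i](X,\alpha)$ has associated graded subquotients computed by the $\rE_\infty$-page, which are subquotients of $\rH^p(X,\bZ(q/2))$ with $p+q = i$, and these carry pure Hodge structures of weight $p$. The point to check is that the filtration steps $F^p$ are sub-Hodge-structures of the weight-$i$ Hodge structure on $\Ktop[-i](X,\alpha)$ described in Theorem~\ref{theorem-Ktop}(3) --- equivalently, that the Hodge filtration (the $p$-th graded piece being $\HH_{\bullet}$ as in Theorem~\ref{theorem-Ktop}(3)) is compatible with the skeletal/AHSS filtration after shifting. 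I would deduce this from the fact, established in \cite{blanc, IHC-CY2}, that the Chern character $\Ktop[-i](X,\alpha) \otimes \bQ \xrightarrow{\sim} \bigoplus_j \rH^{2j-i}(X,\bQ)(j)$ identifies the rationalized spectral-sequence filtration with the (shifted) coniveau/Hodge filtration; since the latter is by sub-Hodge-structures (each $\rH^{2j-i}(X,\bQ)(j)$ being pure of weight $-i$, i.e. after the overall weight-$i$ normalization pure of weight $i$ in our conventions), the integral filtration $F^p$ is cut out as the preimage of a sub-Hodge-structure under a morphism of Hodge structures, hence is itself a sub-Hodge-structure.

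\textbf{The main obstacle.} The delicate point is (2): matching the topologically-defined skeletal filtration with the Hodge-theoretic data of Theorem~\ref{theorem-Ktop}, since a priori these are constructed by completely different means (one via CW skeleta, the other via the noncommutative Hodge-to-de Rham degeneration). The cleanest route is probably to reduce everything to the untwisted statement --- for which the compatibility of the AHSS filtration with the Hodge filtration on $\rK^0_{\topo}(X)$ is classical (Atiyah--Hirzebruch, refined in the Hodge-theoretic literature) --- via the semiorthogonal decomposition of $\Dperf(P)$ for a Severi--Brauer variety $P$ of class $\alpha$ (Lemma~\ref{lemma-D-SB}) and the functoriality in Theorem~\ref{theorem-Ktop}(1), or alternatively by working with a $\bmu_n$-gerbe and its associated finite cover as in Lemma~\ref{lemma-finite-cover-kill}. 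I expect the bookkeeping of weights and Tate twists under these reductions, and checking that the comparison isomorphisms are filtered, to be the part requiring the most care.
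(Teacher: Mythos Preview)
Your proposal is essentially correct, and in fact converges to the paper's argument in your final paragraph. The paper, however, commits to the Severi--Brauer route from the outset rather than treating it as a fallback: it realizes the twisted spectral sequence as a direct summand of the descent (Leray-type) spectral sequence for $\Ktop(P/X)$, where $P \to X$ is a Severi--Brauer variety of class $\alpha$, using Bernardara's decomposition. Both (1) and (2) then follow simultaneously from the corresponding facts for the \emph{geometric} Leray spectral sequence of a smooth projective morphism: torsion differentials come from Deligne's degeneration of rational Leray, and the filtration being by Hodge substructures is Arapura's theorem on the motivic nature of the Leray filtration. Your direct argument for (1) via rational triviality of the twist is a valid alternative and arguably more elementary; the paper's approach buys uniformity and, crucially, gives (2) essentially for free once one cites Arapura, avoiding the bookkeeping you anticipate in matching the skeletal filtration to noncommutative Hodge data.

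One minor slip: you write $\Br(X) \subset \rH^3(X,\bZ)_{\tors}$, but the map $\Br(X) \to \rH^3(X,\bZ)_{\tors}$ is only a surjection, not an inclusion (the kernel consists of topologically trivial classes). This does not affect your argument, since all you need is that the image of $\alpha$ in $\rH^3(X,\bZ)$ is torsion.
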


\begin{proof}
    Since $X$ is quasi-projective, we may choose a Severi--Brauer variety $\pi:P \to X$ of class $\alpha$. 
    The spectral sequence \eqref{eq:ahss} is constructed in \cite{ant_will} as the descent spectral sequence for the sheaf of spectra $\Ktop((X, \alpha)/X)$, which is the relative topological $K$-theory discussed in \S \ref{section-Hodge-theory}.
    
    From Bernardara's decomposition (Lemma~\ref{lemma-D-SB}), there is a direct sum decomposition
    \[
        \Ktop(P/X) = \Ktop(X/X) \oplus \Ktop((X, \alpha)/X) \oplus \cdots
    \]  
    for the sheaf of spectra $\Ktop(P/X)$ \cite[Theorem 1.3]{moulinos}.
    Then \eqref{eq:ahss} is a summand of the descent spectral sequence
    \begin{equation}
    \label{eq:descent_ss_for_sb}
        \rE_2^{p, q} = \rH^p(X, \Ktop[-q](P/X)) \implies \Ktop[-p-q](P)
    \end{equation}
    for $\Ktop(P/X)$, so it suffices to show that \eqref{eq:descent_ss_for_sb} has torsion differentials and is a spectral sequence of Hodge structures. This is a general fact, as discussed in Remark~\ref{remark:ahss_hodge_structures} below.
\end{proof}

\begin{remark}
\label{remark:ahss_hodge_structures}
    Let $\pi:Y \to X$ be a smooth, projective morphism of complex varieties. The descent spectral sequence for $\Ktop(Y/X)$ takes the form  
    \begin{equation}
    \label{eq:descent_ss_for_morphism}
        \rE_2^{p, q} = \rH^p(X, \Ktop[-q](Y/X)) \implies \Ktop[-p-q](Y).
    \end{equation}
    Then \eqref{eq:descent_ss_for_morphism} has torsion differentials and is a spectral sequence of Hodge structures.

    Indeed, the Chern character induces an isomorphism of generalized cohomology theories 
    \begin{equation}
    \label{eq:chern_coh_theory}
        \ch: \Ktop[-q](-) \otimes \bQ \simeq \rH^{2* + q}(-, \bQ),
    \end{equation}
    as in \cite[\S 2.4]{atiyah-hirzebruch}.
    Then \eqref{eq:chern_coh_theory} induces an isomorphism between \eqref{eq:descent_ss_for_morphism} tensored with $\bQ$ and the descent spectral sequence for $\rH^{2* + q}(-, \bQ)$:
    \begin{equation}
    \label{eq:descent_ss_for_sum}
        \rE_2^{p, q} = \rH^p(X, \rR^{2* + q} f_* \bQ) \implies \rH^{2*+p+q}(Y, \bQ).
    \end{equation}
    In fact, \eqref{eq:descent_ss_for_sum} is a direct sum of vertical shifts of the usual Leray spectral sequence
    \begin{equation}
    \label{eq:std_leray}
        \rE_2^{p, q} = \rH^p(X, \rR^{q} f_* \bQ) \implies \rH^{p + q}(Y, \bQ),
    \end{equation}
    Then \eqref{eq:std_leray} degenerates by a classical result of Deligne \cite{degenerescence}, and the filtration from the Leray spectral sequence \eqref{eq:std_leray} is a filtration by mixed Hodge substructures \cite{leray_motivic}, which are pure if $X$ is proper. Unwinding the identifications, we get that the differentials of \eqref{eq:descent_ss_for_morphism} are torsion and the resulting filtration is by Hodge substructures (as the Hodge structures on $\Ktop[-q](Y)$ correspond to those on $\rH^{2* + q}(Y, \bQ)$ under the Chern character).
\end{remark}

\begin{lemma}
\label{lemma-leading-terms}
    Let $X$ be a smooth projective complex variety over $\bC$ with $\alpha \in \Br(X)$. Let $E$ be a torsion-free $\alpha|_Z$-twisted coherent sheaf on an integral closed subscheme $i:Z \to X$ of codimension $c$. Then the leading term of $[i_*E] \in \Ktop[0](X)$ is the image of $\rk(E)[Z]$ in $\rE^{2c, 0}_\infty$.
\end{lemma}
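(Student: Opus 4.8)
The statement is about the behavior of the class $[i_*E] \in \Ktop[0](X)$ with respect to the filtration induced by the twisted Atiyah--Hirzebruch spectral sequence \eqref{eq:ahss}. The claim is that $[i_*E]$ lies in the $2c$-th step $F^{2c}\Ktop[0](X)$ of the topological filtration, and its image in the associated graded piece $\gr^{2c}_F \Ktop[0](X)$, which injects into $\rE^{2c,0}_\infty$, equals the class of $\rk(E)[Z]$. The plan is to reduce to the untwisted Atiyah--Hirzebruch filtration, where such statements are classical, by using a finite cover that kills $\alpha$.

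First I would recall the comparison with the untwisted situation: since the differentials in \eqref{eq:ahss} are torsion (Lemma~\ref{lem:ahss_analysis}), rationally the filtration on $\Ktop[0](X) \otimes \bQ$ induced by the spectral sequence agrees (under the Chern character) with the filtration on $\rH^{\ev}(X,\bQ)$ by $\bigoplus_{j \geq p} \rH^{2j}(X,\bQ)$, i.e. the filtration by codimension of support. So the content of the lemma is really the \emph{integral} refinement of the elementary fact that $\ch(i_*E) = \rk(E)[Z] + (\text{higher codimension terms})$, which in turn follows from Grothendieck--Riemann--Roch for the closed immersion $i$ applied to $E$ (or, for the twisted $E$, to the pushforward along the gerbe). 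The key step is therefore to show that $[i_*E]$ actually lies in $F^{2c}$ \emph{integrally}, not just rationally, and that its image in $\gr^{2c}_F$ is the honest integral class $\rk(E)[Z]$.

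For this I would argue as follows. Choose, by Lemma~\ref{lemma-finite-cover-kill}, a finite flat surjection $f \colon Y \to X$ with $f^*\alpha = 0$, so that $\Dperf(X,\alpha) \to \Dperf(Y)$ twisting down by an invertible twisted sheaf identifies $\Ktop[0](X,\alpha)$ with a summand of... no — more precisely, I would use that the topological filtration is functorial for pullback and that $f_* f^*$ is multiplication by $\deg(f)$ on $\Ktop[0](X)$ (as in the proof of Lemma~\ref{lemma-ch-numerical}), together with the fact that $f^*[i_*E]$ can be computed via the base change $Z_Y = Z \times_X Y \to Y$, where the class becomes untwisted after tensoring by an invertible twisted sheaf $L$ on (a gerbe over) $Z_Y$. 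On $Y$ the object $f^*(i_*E) \otimes L^\vee$ is the pushforward of an honest torsion-free sheaf of rank $\rk(E)$ supported in codimension $c$, so by the classical theory of the Atiyah--Hirzebruch filtration for coherent sheaves — the filtration by codimension of support is contained in the topological filtration, and the leading term of the pushforward of a rank-$r$ sheaf on an integral codimension-$c$ subvariety is $r$ times the fundamental class (see e.g. \cite[\S2.4]{atiyah-hirzebruch}) — we conclude that $f^*[i_*E]$ lies in $F^{2c}\Ktop[0](Y)$ with leading term the image of $\rk(E)[Z_Y]$. The main obstacle is then purely a descent/injectivity issue: I would invoke that $f_*$ preserves the topological filtration, that $f_* [Z_Y] = \deg(f|_Z)[Z] \cdot (\text{something})$ — more carefully, $f_* f^* [Z] = \deg(f)[Z]$ in $\rH^{2c}(X,\bZ)$ — and that $\rE^{2c,0}_\infty = \rH^{2c}(X,\bZ)/(\text{torsion differentials})$ has no $\deg(f)$-torsion obstruction once we observe that we are free to choose $f$ of degree prime to any given prime, or simply that the rational computation pins down the class up to torsion and the integral leading term is then forced by comparing with the untwisted pullback. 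I expect the delicate point — and the step I would spend the most care on — to be verifying cleanly that membership in $F^{2c}$ descends from $Y$ to $X$ despite $f_* f^*$ being multiplication by $\deg(f)$, which I would handle either by the prime-to-$p$ trick applied one prime at a time, or by directly checking that the class $[i_*E] - (\text{lift of } \rk(E)[Z])$ maps to zero in $\gr^j_F$ for $j < 2c$ using the rational comparison plus torsion-freeness of the target in the relevant range.
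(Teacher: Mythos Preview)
Your approach has a genuine gap at the descent step, and neither proposed workaround closes it. Any finite flat cover $f\colon Y\to X$ with $Y$ integral and $f^*\alpha=0$ has degree divisible by $\ind(\alpha)$, hence by $\per(\alpha)$, since restricting to generic points exhibits a degree-$\deg(f)$ splitting field for $\alpha_{k(X)}$; so the prime-to-$p$ trick cannot handle primes dividing $\per(\alpha)$. And the graded pieces $\gr^j_F$ for $j<2c$ are subquotients of $\rH^j(X,\bZ)$ with no reason to be torsion-free, so the second workaround fails too. Consequently, knowing $f^*[i_*E]\in F^{2c}\Ktop[0](Y)$ and pushing forward does not force $[i_*E]\in F^{2c}\Ktop[0](X,\alpha)$.

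The paper bypasses descent entirely by using the support of $i_*E$ \emph{directly on $X$}. Since $i_*E$ restricts to zero on $X\setminus Z$, functoriality of the twisted spectral sequence for the open inclusion (equivalently, lifting $[i_*E]$ to the K-theory of the pair $(X,X\setminus Z)$, whose spectral sequence has $\rE_2^{p,q}=\rH^p(X,X\setminus Z;\bZ(q/2))=0$ for $p<2c$) shows at once that $[i_*E]\in F^{2c}$, with image in $\rE^{2c,0}_\infty$ coming from $\ker\bigl(\rH^{2c}(X,\bZ)\to\rH^{2c}(X\setminus Z,\bZ)\bigr)=\bZ\cdot[Z]$. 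So the leading term is $m[Z]$ for some integer $m$, and only the value of $m$ remains --- a rational question, which one may settle by pulling back along \emph{any} map killing $\alpha$ (the paper uses a Severi--Brauer variety rather than a finite cover) and reading off $m=\rk(E)$ from the Chern character. The codimension-of-support fact you invoke on $Y$ holds just as well for the twisted spectral sequence on $X$; using it there is the missing idea.
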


\begin{proof}
    The leading term of $i_* E$ lies in the kernel of the pullback map
    \begin{equation}
        \rH^{2c}(X, \bZ) \to \rH^{2c}(X - Z, \bZ),
    \end{equation}
    which is generated by the cycle class $[Z]$ \cite[\S 11.1.2]{voisin-hodge-theory-i}. Therefore, the leading term of $i_* E$ is of the form $m[Z]$ for some $m \in \bZ$.
    To check that $m = \rk(E)$, we reduce to the case $\alpha = 0$ by pulling back along a morphism $f:Y \to X$ from a smooth projective variety $Y$ such that $f^* \alpha = 0$, e.g., a Severi--Brauer variety of class $\alpha$. 
    
    When $\alpha = 0$, the desired claim is that the leading term of $[i_* E] \in \Ktop[0](X)$ is $\rk(E)[Z]$, for the filtration on $\Ktop[0](X)$ obtained from the Atiyah--Hirzebruch spectral sequence
    \begin{equation*}
        \rH^p(X, \bZ(q/2)) \implies \Ktop[-p-q](X).
    \end{equation*}
    After tensoring with $\bQ$, the Chern character induces an isomorphism between the spectral sequence for $\Ktop[*](-) \otimes \bQ$ and the spectral sequence for $\rH^*(-, \bQ)$ 
    (\cite[\S 2.4]{atiyah-hirzebruch}, or Remark~\ref{remark:ahss_hodge_structures} for the relative situation). 
    The isomorphism identifies the leading term of $[i_* E]$ with the leading term of $\ch(E)$.
\end{proof}

\begin{lemma}
    \label{lemma-describe-ahss}
    Let $X$ be a smooth projective complex threefold, and let $\rF^{\bullet}$ be the double-speed filtration on $\Ktop[0](X, \alpha)$ from the Atiyah--Hirzebruch spectral sequence~\eqref{eq:ahss}. Then
        \begin{align*}
                \gr_{\rF}^0 &= \ind(\bar \alpha) \cdot \bZ \subset \rH^0(X, \bZ) \\
                \gr_{\rF}^1 &= \ker(- \cup \bar \alpha: \rH^2(X, \bZ) \to \rH^5(X, \bZ))  \\
                \gr_{\rF}^2 &= \rH^4(X, \bZ)/\langle \rH^1(X, \bZ) \cup \bar{\alpha} \rangle \\
                \gr_{\rF}^3 &= \rH^6(X, \bZ).
        \end{align*}
        Here, $\bar \alpha \in \rH^3(X, \bZ)_{\tors}$ is the topological Brauer class associated to $\alpha$, and $\ind(\bar \alpha)$ is its (topological) index. 
\end{lemma}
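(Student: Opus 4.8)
The plan is to analyze the spectral sequence~\eqref{eq:ahss} page by page, using the structure results established in Lemma~\ref{lem:ahss_analysis}: the $\rE_2$ page is concentrated in even $q$, the differentials are torsion, and (since $X$ is projective) the induced filtration is by pure Hodge substructures. Since $X$ is a threefold, $\rH^p(X,\bZ)=0$ for $p>6$, so the only differentials that can survive are the $d_3$-differentials $d_3 \colon \rE_3^{p,q} \to \rE_3^{p+3,q-2}$. In the total degree $0$ diagonal, these are
\begin{equation*}
d_3 \colon \rH^0(X,\bZ) \to \rH^3(X,\bZ), \qquad
d_3 \colon \rH^3(X,\bZ) \to \rH^6(X,\bZ),
\end{equation*}
together with the ones landing in or emanating from neighboring diagonals that affect $\gr^1_\rF$ and $\gr^2_\rF$, namely $d_3 \colon \rH^2(X,\bZ) \to \rH^5(X,\bZ)$ and $d_3 \colon \rH^1(X,\bZ) \to \rH^4(X,\bZ)$. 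The key input, which I would cite from Atiyah--Segal \cite{atiyah_segal}, is that these $d_3$-differentials are given (up to sign) by cup product with the topological Brauer class $\bar\alpha \in \rH^3(X,\bZ)_{\tors}$. Granting this, $\gr^1_\rF$ and $\gr^2_\rF$ come out immediately as the stated kernel and cokernel of $-\cup\bar\alpha$, and $\gr^3_\rF = \rH^6(X,\bZ)$ since nothing maps into $\rH^6$ in total degree $0$ except the $d_3$ from $\rH^3$, whose source is torsion while $\rH^6(X,\bZ)\cong\bZ$ is torsion-free — so that differential vanishes and the $\rE_4 = \rE_\infty$ term in position $(6,-6)$ is all of $\rH^6(X,\bZ)$.

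\textbf{Identifying $\gr^0_\rF$.} The interesting term is $\gr^0_\rF = \rE_\infty^{0,0} \subset \rH^0(X,\bZ) = \bZ$, which is the image of the edge map $\Ktop[0](X,\alpha) \to \rH^0(X,\bZ)$, i.e.\ the group of possible ``ranks'' of classes in $\Ktop[0](X,\alpha)$. By Lemma~\ref{lemma-ind-as-rk} (applied via the Chern character / $\rK$-theory comparison), the positive generator of this image is exactly $\ind(\alpha)$, the index of the generic Brauer class. Now since $\alpha \in \Br(X)$ is unramified and $X$ is a smooth projective complex threefold, the topological index $\ind(\bar\alpha)$ coincides with $\ind(\alpha)$; this is the statement that the minimal positive rank of a topological $\alpha$-twisted $\rK$-theory class equals the minimal positive rank of an algebraic one, which for complex varieties with torsion-free... — more carefully, I would argue that $\gr^0_\rF$ is killed by the $d_3$-differential $\rH^0 \to \rH^3(X,\bZ)_{\tors}$ sending $1 \mapsto \bar\alpha$, so $\rE_4^{0,0} = m\bZ$ where $m$ is the order of $\bar\alpha$; but then higher structure (the group $\Ktop[0](X,\alpha)$ of genuine twisted sheaves, not just the $\rK$-theory of the Severi--Brauer variety) forces $\gr^0_\rF = \ind(\bar\alpha)\cdot\bZ$. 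The cleanest route is: Lemma~\ref{lemma-leading-terms} shows algebraic twisted sheaves of rank $r$ produce classes with leading term $r\cdot[X] \in \rE_\infty^{0,0}$, so $\ind(\alpha) \in \gr^0_\rF$; conversely any class in $\gr^0_\rF$ lifts to a topological twisted $\rK$-class of that rank, and one checks the minimal such rank is $\ind(\bar\alpha)$ by a direct Severi--Brauer computation (the relative topological $\rK$-theory of $\bP(\bar\alpha) \to X$). Since $\ind(\alpha) = \ind(\bar\alpha)$ for unramified classes over $\bC$ (both equal the minimal rank), we conclude $\gr^0_\rF = \ind(\bar\alpha)\cdot\bZ$.

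\textbf{Expected main obstacle.} The routine part is the $d_3 = -\cup\bar\alpha$ identification and the vanishing of the remaining differentials for degree/torsion reasons; these follow from \cite{atiyah_segal} and the fact that $\rH^{\mathrm{odd}}(X,\bZ)$ being the only possible torsion-loaded targets while $\rH^{\mathrm{even}}$ can be torsion-free. The genuine difficulty is pinning down $\gr^0_\rF$ exactly as $\ind(\bar\alpha)\cdot\bZ$ rather than merely as (order of $\bar\alpha)\cdot\bZ$: the naive $\rE_\infty$ computation of the spectral sequence for $\Ktop$ of the \emph{total space} of a Severi--Brauer variety only sees the order of $\bar\alpha$, and one must instead work with the twisted theory $\Ktop[0](X,\alpha)$ directly, showing its rank edge map has image generated by $\ind(\bar\alpha)$. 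I would handle this by invoking the identification of $\Ktop[0](X,\alpha)$ with a summand of $\Ktop[0](P)$ for $P$ a Severi--Brauer variety of class $\alpha$ (Lemma~\ref{lemma-D-SB} and \cite{moulinos}), and then computing the rank of the relevant idempotent projector on the $(0,0)$-term — this is where the factor $\ind(\bar\alpha)$ as opposed to $\per(\bar\alpha)$ emerges, exactly parallel to the algebraic computation of $\ind$ via $\rK_0$ of a Severi--Brauer variety. Once this is settled, the other three graded pieces are immediate bookkeeping from the identified $d_3$.
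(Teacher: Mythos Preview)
Your overall architecture is right, but there are two genuine gaps.

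\textbf{The $d_3$ formula.} You assert that Atiyah--Segal gives $d_3 = -\cup\bar\alpha$ up to sign. That is not what their formula says: for the twisted spectral sequence one has
\[
d_3(x) = (\beta\circ\mathrm{Sq}^2)(\bar x) - \bar\alpha\cup x,
\]
with $\bar x$ the mod-$2$ reduction and $\beta$ the Bockstein. The Steenrod correction term does not vanish in general, and you must argue it away in each of the two degrees you use. For $x\in\rH^1$ one has $\mathrm{Sq}^2=0$ on $\rH^{<2}$, so the correction vanishes and $d_3^{1,0}$ is cup product with $\bar\alpha$, giving $\gr^2_\rF$ as claimed. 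For $x\in\rH^2$, $\mathrm{Sq}^2(\bar x)=\bar x\cup\bar x$ is the mod-$2$ reduction of the integral class $x\cup x$, hence lies in the kernel of $\beta$; so again $d_3^{2,0}=-\cup\bar\alpha$ and $\gr^1_\rF$ is the stated kernel. These are not difficult checks, but they are not optional: without them the identifications of $\gr^1_\rF$ and $\gr^2_\rF$ are unjustified. This is exactly how the paper proceeds.

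\textbf{The $\gr^0_\rF$ term and $d_5$.} Your claim that only $d_3$ can survive is false: $d_5\colon\rE_5^{0,0}\to\rE_5^{5,-4}$ can be nonzero (column $5\le 6$). After $d_3$ one has $\rE_4^{0,0}=\per(\bar\alpha)\cdot\bZ$, not $\ind(\bar\alpha)\cdot\bZ$; the passage from period to index is precisely the contribution of $d_5$. Your attempt to recover this via the assertion $\ind(\alpha)=\ind(\bar\alpha)$ is unfounded --- equality of the algebraic and topological index is not known in general and is irrelevant here. The correct argument is that $\gr^0_\rF=\rE_\infty^{0,0}$ is by construction the image of the rank map $\Ktop[0](X,\alpha)\to\bZ$, whose positive generator is by definition the topological index $\ind(\bar\alpha)$; this is \cite[Lemma~2.23]{ant_will}, which the paper simply cites. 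You gesture at this in your ``cleanest route'', but the detour through the algebraic index obscures that the statement is essentially definitional once one has the spectral sequence. (Your claim that the source of $d_3\colon\rH^3\to\rH^6$ is torsion is also wrong --- $\rH^3$ of a threefold need not be torsion --- but the conclusion is saved by the fact that the \emph{differential} is torsion and the target is torsion-free.)
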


\begin{proof}
    Up to periodicity (i.e., vertical shift), the only nonzero differentials which contribute to $\Ktop[0](X, \alpha)$ are:
    \begin{itemize}
        \item $d_3^{0,0}$ and $d_5^{0, 0}$, which determine $\gr^0_{\rF}$;
        \item $d_{3}^{2,0}$, which determines $\gr^1_{\rF}$;
        \item $d_3^{1, 0}$, which determines $\gr^2_{\rF}$.
    \end{itemize}
    The description of $\gr^0_{\rF}$ is \cite[Lemma 2.23]{ant_will}, which says that $\ind(\bar \alpha)$ is the generator of $\rE_{\infty}^{0,0} \subset \rH^0(X, \bZ)$, i.e., the product of the orders of the differentials originating at $\rE^{0,0}$.
    
    For $\gr^1_{\rF}$, we invoke a formula for $d_3$ due to Atiyah and Segal \cite[\S 4]{atiyah-segal-2}: 
        \begin{equation}
        \label{eq:atiyah_segal_formula}
            d_3(x) = (\beta \circ \mathrm{Sq}^2)(\bar x) - \bar{\alpha} \cup x,
        \end{equation}
        Here, $\bar x$ is reduction of $x$ mod $2$, $\mathrm{Sq}^2$ is the Steenrod square (whose definition and basic properties may be found in \cite[\S 4.L]{hatcher}, for instance), and $\beta:\rH^*(X,\bZ/2) \to \rH^{* + 1}(X, \bZ)$ is the mod $2$ Bockstein, i.e., the connecting homomorphism in cohomology coming from the sequence
        \begin{equation*}
            \begin{tikzcd}
                0 \ar[r] & \bZ \ar[r, "\cdot 2"] & \bZ \ar[r] & \bZ/2 \ar[r] & 0.
            \end{tikzcd}
        \end{equation*}
        
        In degree $2$ cohomology, $\mathrm{Sq}^2$ sends $\bar x \in \rH^2(X, \bZ/2)$ to $\bar x \cup \bar x \in \rH^4(X, \bZ/n)$. 
        If $\bar x$ is the reduction mod $2$ of some integral class $x \in \rH^2(X, \bZ)$, then $\bar x \cup \bar x$ is the reduction mod $2$ of $x \cup x \in \rH^4(X, \bZ)$, hence is killed by the Bockstein. 
        In other words, 
        \begin{equation*}
            d_3(x) = \beta(\bar x \cup \bar x) - \bar{\alpha} \cup x = 0 - \bar{\alpha} \cup x,
        \end{equation*}
        as needed. 

        We using the same description of $d_3$ to calculate the image of $d_3:\rH^1(X, \bZ) \to \rH^3(X, \bZ)$. It is a general fact that $\mathrm{Sq}^2$ vanishes on $\rH^{< 2}(X, \bZ/2)$, so $d_3^{0,1}$ is again given by cup-product with $\bar \alpha$.
\end{proof}

Given a nonzero class $v \in \Ktop[0](X, \alpha)$, the \emph{leading term} of $v$ is the image of $v$ in $\gr^{i}_{\rF}$, for $i$ the least integer such that $v$ does not lie in $\rF^{i + 1}$. If $v$ is nonzero, then the leading term of $v$ is nonzero. 
From Lemma~\ref{lem:ahss_analysis},  $\rF^{\bullet}$ is a filtration by Hodge substructures, so if $v$ is Hodge, then the leading term of $v$ is Hodge.

\subsection{Proof of Theorem~\ref{theorem-voisin-group-twisted-CY3}}
\label{section-proof-theorem-voisin-group}

    We freely use Lemma~\ref{lemma-describe-ahss} and Lemma~\ref{lemma-leading-terms} throughout.
    Theorem~\ref{theorem-voisin-group-twisted-CY3}  is equivalent to the statement that any Hodge class $v$ in $\rF^1\Ktop[0](X, \alpha)$, i.e. any class $v \in \Hdg(X, \alpha, \bZ)$ of rank $0$, is algebraic. Indeed, the rank map induces a surjection 
    \begin{equation*}
    \rV(X, \alpha) = \frac{\Hdg(X, \alpha, \bZ)}{\rK_0(X, \alpha)} 
    \to \frac{\ind_{\Hdg}(\alpha) \cdot \bZ}{\ind(\alpha) \cdot \bZ} 
    \end{equation*}
    whose kernel is precisely the subgroup of rank $0$ Hodge classes in $\Ktop[0](X,\alpha)$ modulo the algebraic classes of rank $0$. 

    We proceed by induction on the degree of the leading term of $v$. If the leading term lies in $\gr^3_{\rF}$, then $v$ is a multiple of the class of a (twisted) skyscraper sheaf, so $v$ is algebraic. 

    Suppose that the leading term, say $y$, lies in $\gr^2_{\rF} \simeq \rH^4(X, \bZ)/\langle \rH^1(X, \bZ) \cup \bar \alpha \rangle$. From the integral Hodge conjecture for $X$ \cite{totaro}, $y$ is the image of a $\bZ$-linear combination of cycle classes of curves in $\rH^4(X, \bZ)$. It suffices to show that for each integral curve $C \subset X$, there is an algebraic class in $\Ktop[0](X, \alpha)$ with leading term $[C]$. The pullback of the Brauer class $\alpha$ to the function field of $C$ is trivial. If $M$ is the coherent extension to $X$ of an $\alpha|_{C}$-twisted line bundle on an open subscheme of $C$, then the leading term of $[M]$ is $[C]$.

    The only remaining case is when the leading term, say $x$, lies in $\gr^1_{\rF} \subset \rH^2(X, \bZ)$. First, let $|H|$ be a very ample linear system on $X$, and consider the case when $x = n[H]$, where $n = \per(\alpha)$. If $S \in |H|$ is a smooth surface, then the restriction of $\alpha$ to $S$ has period at most $n$, so the index of $\alpha|_S$ is at most $n$ by de Jong's theorem \cite{dJ-period-index}. Therefore, there exists an $\alpha|_S$-twisted sheaf $E$ on $S$ of rank $n$, and the pushforward of $E$ to $X$ gives an algebraic class of leading term $n[H]$.

    Now we treat the general case when $x \in \gr^1_{\rF}$. From the previous paragraph, we may replace $x$ with $x + n [H']$, for $H'$ a very ample divisor. For instance, we may assume that $x = [H]$, for $H$ an ample divisor. Again from the previous paragraph, it suffices to show that there is an integer $m$, relatively prime to $n$, and an algebraic class of leading term $m[H]$. By Lemma~\ref{lem:totaro} below, we may choose $m$ so that the following holds: 

    \begin{lemma}[{\cite[Proposition 5.3]{totaro}}]
\label{lem:totaro}
    For $m \gg 0$ and $S_0 \in |mH|$ a very general smooth surface, there is a nonempty open cone $\rC \subset \rH^2(S_0, \bR)_{\mathrm{van}}$ such that the following holds: 
    every element of $\rH^2(S_0, \bZ)_{\mathrm{van}}$ whose image in $\rH^2(S_0, \bR)_{\mathrm{van}}$ lies in $\rC$
    becomes a Hodge class after parallel transport to a smooth surface $S_t \in |mH|$. 
    Here, $\rH^2(S_0, \bZ)_{\mathrm{van}}$ denotes the kernel of the Gysin map $\rH^2(S_0, \bZ) \to \rH^4(X, \bZ)$ (and similarly for $\bR$-coefficients).  
    
\end{lemma}

        We choose $m$ and $S_0$ such that $m$ is relatively prime to $n$ and Lemma~\ref{lem:totaro} applies. We may assume further that $S_0$ has minimal Picard rank among surfaces in $|mH|$. Let $\alpha_0$ be the restriction of $\alpha$ to $S_0$. 

        We claim that $\alpha_0$ is topologically trivial. To prove the claim, let $\bar \alpha \in \rH^3(X, \bZ)$ be the topological part of $\alpha$. Observe that $\bar \alpha \cup [S_0] = 0$, by our description of $\gr_{\rF}^1$ as the kernel of $- \cup \bar \alpha$ in $\rH^2(X, \bZ)$. On the other hand, the Gysin map
    \[
            \iota_*:\rH^3(S_0, \bZ) \to \rH^5(X, \bZ), \quad \iota:S_0 \to X
    \]
    is an isomorphism by Lefschetz's hyperplane theorem, and $\iota_* \iota^* \bar \alpha = \bar \alpha \cup [S_0]$.

    \begin{lemma}\label{lem:exact_sequence_ktop}
        There is a short exact sequence
        \begin{equation} \label{eq:ses_hodge}
            \begin{tikzcd}
                  0 \ar[r] & \rH^2(S_0, \bZ)_{\mathrm{van}}   \ar[r] & \Ktop[0](S_0, \alpha_0) \ar[r, "\iota_*"] & M \ar[r] & 0,
            \end{tikzcd}
        \end{equation}
        where $M \subset \Ktop[0](X, \alpha)$ is the subgroup of elements whose leading term is a multiple of $[S_0]$.
    \end{lemma}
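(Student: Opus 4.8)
The plan is to produce the short exact sequence by comparing the Atiyah--Hirzebruch filtrations on $\Ktop[0](S_0,\alpha_0)$ and on the subquotient $M$ of $\Ktop[0](X,\alpha)$, using the Gysin pushforward $\iota_*$ along the closed immersion $\iota\colon S_0 \to X$. First I would recall that $\iota_*\colon \Ktop[*](S_0,\alpha_0) \to \Ktop[*+2](X,\alpha)$ is defined (and compatible with the twisted AHSS, shifting filtration degree by $2$) since $S_0 \subset X$ is a smooth divisor and $\alpha_0 = \iota^*\alpha$; concretely $\iota_*$ sends a twisted sheaf to its pushforward. By Lemma~\ref{lemma-leading-terms}, for a twisted sheaf $E$ on $S_0$ the leading term of $\iota_*E$ is the image of $\rk(E)\,[S_0]$, so the image of $\iota_*$ lands in $M$, and in fact $\iota_*$ is surjective onto $M$: any class in $M$ has leading term a multiple $m[S_0]$ (in $\gr^2_{\rF}\Ktop[0](X,\alpha) = \rH^4(X,\bZ)/\langle \rH^1(X,\bZ)\cup\bar\alpha\rangle$, using $\dim S_0 = 2$), and by the argument already used in \S\ref{section-proof-theorem-voisin-group} — restricting $\alpha$ to curves where the Brauer class dies and pushing forward twisted line bundles, together with pushforward of twisted sheaves realizing $m[S_0]$ — one lifts any element of $M$ to the image of $\iota_*$; more cleanly, surjectivity follows because $\iota_*$ of the AHSS for $S_0$ surjects onto each graded piece of the filtration on $M$.

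Next I would identify the kernel. An element $v \in \Ktop[0](S_0,\alpha_0)$ with $\iota_*v = 0$ in particular has $\rk(v) = 0$ (since the leading term of $\iota_*v$ detects $\rk(v)[S_0]$, and $[S_0]$ is nonzero by Lefschetz), so $v \in \rF^1\Ktop[0](S_0,\alpha_0)$. I would analyze the filtration on $S_0$ via Lemma~\ref{lemma-describe-ahss} adapted to a surface: $\gr^1_{\rF}\Ktop[0](S_0,\alpha_0) \subset \rH^2(S_0,\bZ)$ and $\gr^2_{\rF}\Ktop[0](S_0,\alpha_0) = \rH^4(S_0,\bZ) \cong \bZ$, generated by the class of a point. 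The point class pushes forward nontrivially (a skyscraper on $S_0$ is a skyscraper on $X$), so $\iota_*$ is injective on $\rF^2\Ktop[0](S_0,\alpha_0)$; hence $\ker(\iota_*) \cap \rF^2 = 0$ and $\ker(\iota_*)$ injects into $\gr^1_{\rF}\Ktop[0](S_0,\alpha_0) \subset \rH^2(S_0,\bZ)$. On $\gr^1$, the induced map is $\iota_*\colon \rH^2(S_0,\bZ) \to \rH^4(X,\bZ)$ (the Gysin map on cohomology, up to the AHSS identifications), whose kernel is by definition $\rH^2(S_0,\bZ)_{\mathrm{van}}$. Therefore $\ker(\iota_*) = \gr^1_{\rF}\Ktop[0](S_0,\alpha_0) \cap \rH^2(S_0,\bZ)_{\mathrm{van}}$, and I would argue this equals all of $\rH^2(S_0,\bZ)_{\mathrm{van}}$: the obstruction to a class $x \in \rH^2(S_0,\bZ)$ lying in $\gr^1_{\rF}$ is $\bar\alpha_0 \cup x \in \rH^5(S_0,\bZ)$, but $S_0$ has (complex) dimension $2$ so $\rH^5(S_0,\bZ) = 0$, whence every class in $\rH^2(S_0,\bZ)$ survives to $\gr^1_{\rF}$ after the relevant differential — more precisely, the only differential out of $\rE^{2,0}$ in the surface AHSS is $d_3$, landing in $\rH^5 = 0$, so $\rE^{2,0}_\infty = \rE^{2,0}_2 = \rH^2(S_0,\bZ)$ and $\gr^1_{\rF}\Ktop[0](S_0,\alpha_0) = \rH^2(S_0,\bZ)$ in full (there is no incoming differential either, as $\rH^{-1} = 0$). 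This gives $\ker(\iota_*) = \rH^2(S_0,\bZ)_{\mathrm{van}}$, completing the sequence.

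I expect the main obstacle to be the bookkeeping around compatibility of $\iota_*$ with the twisted Atiyah--Hirzebruch filtrations — i.e., that pushforward along the codimension-one closed immersion shifts $\rF^i$ into $\rF^{i+1}$ and induces the cohomological Gysin map on associated gradeds — which requires knowing that $\iota_*$ is a morphism of AHSS's (this is standard for the untwisted AHSS and extends to the twisted one constructed in \cite{ant_will} since one can realize $\iota_*$ on the level of the relevant sheaves of spectra, or deduce it after $\otimes\bQ$ from the analogous statement for the Leray spectral sequence via the Chern character as in Remark~\ref{remark:ahss_hodge_structures}, and then the integral statement by a careful diagram chase). A secondary point requiring care is the surjectivity of $\iota_*$ onto $M$: one must check that the AHSS differentials for $S_0$ that could obstruct lifting elements of $M$ to $\Ktop[0](S_0,\alpha_0)$ are accounted for, but since $M$ is defined via its leading term behavior and only $\gr^1$, $\gr^2$ contribute (the $\gr^0$ piece is rank, which is $0$ on $M$'s relevant part, and $\gr^3 = \rH^6(S_0) = 0$), this reduces to the two cases just handled. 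None of these steps involves a genuinely new idea — they are a filtration-comparison argument — so I anticipate the write-up to be a somewhat lengthy but routine verification.
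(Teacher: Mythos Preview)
Your approach is essentially the paper's: use that $\iota_*$ shifts the twisted Atiyah--Hirzebruch filtration by one and compare the induced maps on associated graded pieces. The paper's proof is exactly this, displayed as the three Gysin maps $\rH^{2i}(S_0,\bZ) \to \rH^{2i+2}(X,\bZ)$ for $i=0,1,2$, noting the first and last are isomorphisms and the middle has kernel $\rH^2(S_0,\bZ)_{\mathrm{van}}$. Your kernel computation is fine (and in fact more detailed than the paper's).

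There is, however, one genuine gap in your surjectivity argument. You need $\gr^0_{\rF}\Ktop[0](S_0,\alpha_0) \to \bZ\cdot[S_0]$ to be an isomorphism, but by the surface analogue of Lemma~\ref{lemma-describe-ahss} one has $\gr^0_{\rF}\Ktop[0](S_0,\alpha_0) = \ind(\bar\alpha_0)\cdot\bZ$, which equals $\bZ$ only when $\alpha_0$ is \emph{topologically trivial}. This is precisely what the paper invokes (``the first line is an isomorphism since $\alpha_0$ is topologically trivial''), and it was established in the paragraph immediately preceding the lemma via $\iota_*\iota^*\bar\alpha = \bar\alpha\cup[S_0] = 0$ combined with the Lefschetz isomorphism $\iota_*\colon\rH^3(S_0,\bZ)\xrightarrow{\sim}\rH^5(X,\bZ)$. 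Without this input, your claim that ``$\iota_*$ surjects onto each graded piece of $M$'' is unjustified at the top graded piece: there need not exist any rank-one topological class in $\Ktop[0](S_0,\alpha_0)$ to push forward. (A minor slip along the way: you place $[S_0]$ in $\gr^2_{\rF}\Ktop[0](X,\alpha)$, but as a divisor class it lives in $\gr^1_{\rF}$.)
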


    \begin{proof}
        The map $\iota_*:\Ktop[0](S_0, \alpha_0) \to M$ sends $\rF^i$ to $\rF^{i + 1}$, where $\rF$ denotes the twisted Atiyah--Hirzebruch filtration on both sides. The lemma follows from considering the induced morphisms of associated graded pieces:
        \begin{align*}
                \rH^0(S_0, \bZ) &\to \bZ \cdot [S_0] \\
                \rH^2(S_0, \bZ) &\to \rH^4(X, \bZ) \\
                \rH^4(S_0, \bZ) &\to \rH^6(X, \bZ)
        \end{align*}
        Note that the first line is an isomorphism since $\alpha_0$ is topologically trivial. 
    \end{proof}

    Let $w$ be a Hodge class with leading term $[S_0]$. Since the integral Hodge conjecture holds for twisted surfaces such as $(S_0, \alpha_0)$ \cite{hotchkiss-pi}, it suffices to show that there is a Hodge class in $\Ktop[0](S_0, \alpha_0)$ mapping to $w$, up to deforming $S_0$ in $|mH|$. We may choose: 
    \begin{itemize}
            \item A Hodge class $w' \in \Ktop[0](S_0, \alpha_0) \otimes \bQ$ mapping to $w$. 
            Indeed, the pushforward map $\iota_* \colon \Ktop[0](S_0, \alpha_0) \to \Ktop[0](X, \alpha)$ is a morphism of Hodge structures, hence (its complexification) is strict for the Hodge filtrations. 
            Since $S_0$ has minimal Picard rank among surfaces in $|mH|$ by assumption, any parallel transport of $w'$ is of Hodge type. 
            \item An integral class $w'' \in \Ktop[0](S_0, \alpha_0)$ mapping to $w$. Indeed, $w''$ exists by Lemma~\ref{lem:exact_sequence_ktop}. 
    \end{itemize}
    Then the leading term of $w' - w''$ lies in $\rH^2(S_0, \bQ)_{\mathrm{van}}$. By Lemma~\ref{lem:totaro}, there exists a surface $S_t$ in $|mH|$ such that a parallel transport of $w' - w''$ is of Hodge type. It follows that a parallel transport of $w''$ is a Hodge class in $\Ktop[0](S_t, \alpha_t)$ of rank $1$, as needed. \qed

\subsection{Computing the Hodge-theoretic index}

In the case of abelian varieties, it is not difficult to compute the Hodge-theoretic index explicitly. For simplicity, we stick to the case of abelian threefolds.

\begin{lemma}
\label{lem:computing_the_hodge_theoretic_index}
    Let $X$ be an abelian threefold, and let $B \in \rH^2(X, \bQ(1))$. Then the Hodge-theoretic index of $\alpha = \exp(B) \in \Br(X)$ is the least positive integer $N$ such that there exist $H_1 \in \NS(X)_{\bQ}$ and $H_2 \in \Hdg^4(X, \bQ)$ with
    \begin{align}
            N \cdot B - H_1 &\equiv 0 \mod \rH^2(X, \bZ) , \\
            N \cdot \frac{1}{2}B^2 - B \cdot H_1 + H_2 &\equiv 0 \mod \rH^4(X, \bZ).
    \end{align}
\end{lemma}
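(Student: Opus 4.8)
The Hodge-theoretic index $\ind_{\Hdg}(\alpha)$ is by definition the minimal positive rank of an element of $\Hdg(X,\alpha,\bZ) \subset \Ktop[0](X,\alpha)$. The plan is to combine the identification of $\Ktop[0](X,\alpha)$ with the twisted Mukai structure $\muk(X,B;\bZ)$ (Lemma~\ref{lem:twisted_mukai_abelian-families}, in the absolute case $S = \Spec(\bC)$) with the explicit description of which classes in $\muk(X,B;\bZ)$ are of Hodge type. Under the isomorphism $\varphi \colon \Ktop[0](X,\alpha) \xrightarrow{\sim} \muk(X,B;\bZ)$, the rank of a class is preserved (it is $\ch_0$ on both sides, and the diagram~\eqref{eq:compatibility_in_family_tw_muk} respects the degree-$0$ part since $\exp(B)$ acts unipotently), so $\ind_{\Hdg}(\alpha)$ equals the least positive $N$ such that there is a Hodge class in $\muk(X,B;\bZ)$ whose leading (rank) term is $N$.

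\textbf{Key steps.} First, unwind the definition: a class $v \in \rH^{\ev}(X,\bZ) = \muk(X,B;\bZ)$ (as abelian group) is of Hodge type for the twisted Mukai Hodge structure precisely when $\exp(B)\cdot v \in \rH^{\ev}(X,\bQ)$ is a rational Hodge class in each degree, i.e. lies in $\bigoplus_k \Hdg^{2k}(X,\bQ)$. Second, write $v = (N, v_1, v_2, v_3)$ with $v_i \in \rH^{2i}(X,\bZ)$ and expand $\exp(B)\cdot v$ degree by degree: the degree-$0$ part is $N$ (automatically Hodge), the degree-$2$ part is $v_1 + N B$, the degree-$4$ part is $v_2 + B\cdot v_1 + \tfrac{N}{2}B^2$, and the degree-$6$ part is $v_3 + B\cdot v_2 + \tfrac{1}{2}B^2\cdot v_1 + \tfrac{N}{6}B^3$ (which is automatically a rational Hodge class since $\rH^6(X,\bQ)$ is one-dimensional of type $(3,3)$). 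Thus the Hodge condition on $v$ of rank $N$ is exactly that $v_1 + NB \in \Hdg^2(X,\bQ) = \NS(X)_\bQ$ and $v_2 + B\cdot v_1 + \tfrac{N}{2}B^2 \in \Hdg^4(X,\bQ)$. Third, reparametrize: setting $H_1 = -(v_1 + NB) \in \NS(X)_\bQ$ (so $v_1 = -NB - H_1$, and the condition $v_1 \in \rH^2(X,\bZ)$ becomes $NB - H_1 \equiv -NB + (NB - H_1) = $ — more directly, $v_1 \in \rH^2(X,\bZ) \iff NB + H_1 \in \rH^2(X,\bZ) \iff NB \equiv H_1 \bmod \rH^2(X,\bZ)$ after absorbing a sign into $H_1$), and $H_2 = v_2 + B\cdot v_1 + \tfrac{N}{2}B^2 \in \Hdg^4(X,\bQ)$; substituting $v_1 = -NB - H_1$ gives $H_2 = v_2 - NB^2 - B H_1 + \tfrac{N}{2}B^2 = v_2 - \tfrac{N}{2}B^2 - B H_1$, so the integrality $v_2 \in \rH^4(X,\bZ)$ becomes $N\cdot\tfrac12 B^2 + B H_1 - H_2 \equiv 0 \bmod \rH^4(X,\bZ)$; after adjusting the signs of $H_1, H_2$ (which range over all of $\NS(X)_\bQ$ and $\Hdg^4(X,\bQ)$ respectively) this matches the stated congruences. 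Fourth, conversely, given $N$, $H_1$, $H_2$ satisfying the two congruences, run the construction backwards: define $v_1, v_2 \in \rH^2(X,\bZ), \rH^4(X,\bZ)$ by the above formulas and pick $v_3 \in \rH^6(X,\bZ)$ arbitrarily (the degree-$6$ Hodge condition is vacuous since $\rH^6(X,\bQ)$ is purely of Hodge type); then $v = (N,v_1,v_2,v_3)$ is a Hodge class of rank $N$, so $\ind_{\Hdg}(\alpha) \le N$. Taking the minimum over all valid $N$ on both sides gives equality.

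\textbf{Main obstacle.} The only genuine subtlety is bookkeeping of signs and the precise meaning of "$\equiv 0 \bmod \rH^{2i}(X,\bZ)$" — one must be careful that $B$ and $B^2$ have denominators (bounded by $n = \per(\alpha)$ and its powers), that $H_1$ and $H_2$ are allowed to be rational, and that the two integrality constraints on $v_1$ and $v_2$ translate cleanly into the two displayed congruences. One should also note at the outset that the rank being nonzero is exactly the condition that $v$ contributes to $\ind_{\Hdg}(\alpha)$ rather than to $\rV$ (cf. the discussion after Definition of $\ind_\Hdg$), so the minimum is over $N \ge 1$. There is essentially no deep input beyond Lemma~\ref{lem:twisted_mukai_abelian-families}; the proof is a direct translation, and the write-up should simply present the forward and backward directions of the equivalence.
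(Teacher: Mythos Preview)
Your proposal is correct and follows essentially the same approach as the paper: both identify $\Ktop[0](X,\alpha)$ with the twisted Mukai structure via Lemma~\ref{lem:twisted_mukai_abelian-families}, expand $\exp(B)\cdot(N,x,y,z)$ degree by degree, note that the degree-$0$ and degree-$6$ conditions are automatic, and then translate the degree-$2$ and degree-$4$ Hodge conditions into the two congruences by the substitution $H_1 = NB + x$, $H_2 = -\tfrac{N}{2}B^2 + B H_1 + y$ (the paper's choice, which differs from yours only by the sign of $H_1$). The only difference is cosmetic: the paper fixes signs from the outset rather than adjusting them at the end, so your write-up would be cleaner if you set $H_1 = v_1 + NB$ directly.
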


\begin{proof}
    According to Lemma~\ref{lem:twisted_mukai_abelian-families}, a rank $r$ Hodge class in $\Ktop[0](X, \alpha)$ corresponds to an element $(r, x, y, z) \in \rH^{\ev}(X, \bZ)$ with 
    \[
            \exp(B) \cdot (r, x, y, z) = (r, r \cdot B + x, r \cdot \frac{1}{2} B^2 + B \cdot x + y, \dots)
    \]
    of Hodge type in each degree. We set $H_1 = r \cdot B + x$ and 
    \begin{align*}
            H_2 &= r \cdot \frac{1}{2}B^2 + B \cdot x + y \\
                &= r \cdot \frac{1}{2}B^2 + B (H_1 - r\cdot B) + y \\
                &= - r \cdot \frac{1}{2} B^2 + B \cdot H_1 + y.
    \end{align*}
    We see that $r$ satisfies the assumptions of $N$ in the lemma. 
    This implies that $N \leq \ind_{\Hdg}(\alpha)$. 

    On the other hand, given $N, H_1$, and $H_2$ as in the statement, we set
    \[
            x = -N \cdot B + H_1, \quad y = N \cdot \frac{1}{2} B^2 - B \cdot H_1 + H_2.
    \]
    Then it is easy to see that $(N, x, y, 0) \in \rH^{\ev}(X, \bZ)$ has the property that $\exp(B) \cdot (N, x, y, 0)$ is of Hodge type in each degree, and hence corresponds to a rank $N$ Hodge class in $\Ktop[0](X, \alpha)$. 
    We conclude $\ind_{\Hdg}(\alpha) \leq N$.
\end{proof}

\subsection{A pathology} 
\label{section-pathology} 

In this section, we observe that there are Severi--Brauer varieties $P$ over the product of three elliptic curves such that the integral Hodge conjecture holds in topological $K$-theory, but fails in integral cohomology.

\begin{example}[Gabber]
\label{ex:gabber}
        Let $E_1, E_2, E_3$ be pairwise non-isogenous elliptic curves. Choose integral bases
        \begin{align*}
                \rH^1(E_1, \bZ) &= \langle x_1, x_2 \rangle \\
                \rH^1(E_2, \bZ) &= \langle y_1, y_2 \rangle\\
                \rH^1(E_3, \bZ) &= \langle z_1, z_2 \rangle.
        \end{align*}
        Let $X = E_1 \times E_2 \times E_3$, for any prime $\ell$ let 
        \[
                B = \frac{1}{\ell} \left( x_1 \wedge z_1 + y_1 \wedge z_2 \right) \in \rH^2(X, \bQ),  
        \]
        and let $\alpha = \exp(2 \pi i \cdot B) \in \Br(X)[\ell]$. 
        Then it is straightforward to compute using Lemma~\ref{lem:computing_the_hodge_theoretic_index} that $\ind_{\Hdg}(\alpha) = \ell^2$. Since $\ind_{\Hdg}(\alpha)$ divides $\ind(\alpha)$ and it is easy to see that $\ind(\alpha)$ divides $\ell^2$  (Lemma~\ref{lem:symbol_length_bounds}), we find that $\ind(\alpha) = \ell^2$, which recovers a result of Gabber \cite{CT-examples}. 
        Let us also note that there exists a Severi--Brauer variety $P \to X$ of class $\alpha$ and relative dimension $\ell^2 - 1$, since $\alpha$ is the class of the product of two degree $\ell$ cyclic algebras; cf. \cite[Proposition~5.19]{dJP-pi}, which also gives a slightly different argument for the equality $\ind(\alpha) = \ell^2$. 
\end{example}

\begin{corollary}        
\label{cor:existence}
        There exists a Severi--Brauer variety $P \to X$ of relative dimension $3$ over a product of elliptic curves $X = E_1 \times E_2 \times E_3$ such that the following hold:
        \begin{enumerate} 
                \item The integral Hodge conjecture fails in $\rH^6(P, \bZ)$.
                \item The integral Hodge conjecture holds in $\Ktop[0](P)$.
        \end{enumerate}
\end{corollary}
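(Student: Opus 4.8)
The plan is to take the Severi--Brauer variety $\pi \colon P \to X$ of relative dimension $\ell^2-1$ from Example~\ref{ex:gabber}, with $X = E_1 \times E_2 \times E_3$ and $\alpha \in \Br(X)[\ell]$ satisfying $\ind(\alpha) = \ind_{\Hdg}(\alpha) = \ell^2$; after possibly reducing $\ell$ we may take $\ell = 2$ so that the relative dimension is $3$ (or, if the relative dimension is not important, keep $\ell$ general and note the statement as written only asks for ``relative dimension $3$'' which forces $\ell=2$ — either way we fix $\ell$). For part (2), the integral Hodge conjecture for $\Dperf(P)$ follows from our results: by Lemma~\ref{lemma-D-SB} there is an $X$-linear semiorthogonal decomposition $\Dperf(P) = \langle \cD_0, \dots, \cD_{\ell^2-2}\rangle$ with $\cD_i \simeq \Dperf(X, \alpha^i)$, so by Theorem~\ref{theorem-Ktop}(1) we have $\Ktop[0](P) \cong \bigoplus_i \Ktop[0](X, \alpha^i)$ as Hodge structures, compatibly with the maps from $\rK_0$; hence it suffices to prove the integral Hodge conjecture for each $\Dperf(X, \alpha^i)$. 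Since $X$ is an abelian threefold, Theorem~\ref{theorem-voisin-group-twisted-CY3} applies and gives $\#\rV(X, \alpha^i) = \ind(\alpha^i)/\ind_{\Hdg}(\alpha^i)$.

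Thus part (2) reduces to checking that $\ind(\alpha^i) = \ind_{\Hdg}(\alpha^i)$ for every $i$. For $i$ with $\gcd(i,\ell) = 1$, $\alpha^i$ generates the same cyclic subgroup of $\Br(X)$ as $\alpha$, so $\ind(\alpha^i) = \ind(\alpha)$ and $\ind_{\Hdg}(\alpha^i) = \ind_{\Hdg}(\alpha)$ (indices depend only on the class, and the Hodge-theoretic index is likewise invariant under multiplication by a unit since the $B$-field scales and the formula in Lemma~\ref{lem:computing_the_hodge_theoretic_index} is unaffected up to relabelling), and these are equal by construction of Example~\ref{ex:gabber}. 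For $i \equiv 0$, $\alpha^i = 0$ and both indices are $1$. So all the relevant equalities hold, and the integral Hodge conjecture holds for $\Dperf(P)$. Note it is cleanest to simply record that $\rV(X, \alpha^i) = 0$ for all $i$ via Theorem~\ref{theorem-voisin-group-twisted-CY3}, which is exactly the statement that $\rK_0(X, \alpha^i) \to \Hdg(X, \alpha^i, \bZ)$ is surjective, and then invoke the $\rK$-theory decomposition to conclude surjectivity of $\rK_0(P) \to \Hdg(P, \bZ) = \Hdg(\Dperf(P), \bZ)$; since the Hodge conjecture in $K$-theory for $\Dperf(P)$ is the integral Hodge conjecture for $\Dperf(P)$, and one checks (as in \cite[\S5]{IHC-CY2}) that this is equivalent to the integral Hodge conjecture in $\Ktop[0](P)$ in the sense asked.

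For part (1), I would argue that the integral Hodge conjecture must fail in $\rH^6(P, \bZ)$ precisely because $\ind(\alpha) = \ell^2 > \ell = \per(\alpha)$. The mechanism is standard: if every degree-$6$ integral Hodge class on $P$ were algebraic, then in particular the generic fiber $\bP^{\ell^2-1}_{k(X)}$ of $\pi$ would admit a zero-cycle of degree equal to the gcd of degrees of closed points, and by a specialization/norm argument this would force the index of $\alpha$ at the generic point to divide something smaller than $\ell^2$ — more precisely, one uses that a rank-one algebraic class in $\Ktop[0]$ would exist contradicting $\ind_{\Hdg}(\alpha) = \ell^2$, OR one directly invokes \cite[Lemma~5.8]{hotchkiss-pi} together with the observation that $\ind_{\Hdg}(\alpha) = \ell^2$ while $\ind(\alpha)/\ind_{\Hdg}(\alpha) = 1$, so that the failure is detected on $X$ rather than $P$. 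The most transparent route: apply Theorem~\ref{theorem-voisin-group-twisted-CY3}-type reasoning in reverse — since the usual integral Hodge conjecture in $\rH^6$ for $P$ would, via the projective bundle formula and the relationship between $\CH^*(P)$ and twisted Chow groups of $X$, imply $\ind(\alpha) \mid \ind_{\Hdg}(\alpha)$, but in Example~\ref{ex:gabber} we can arrange (by choosing the $E_i$ pairwise non-isogenous, so $\NS(X)$ and $\Hdg^4(X)$ are as small as possible) that the ``naive'' cohomological index — the gcd of degrees of integral Hodge classes in $\rH^6$ realizing curve classes twisted appropriately — equals $\per(\alpha) = \ell$, strictly less than $\ind(\alpha) = \ell^2$. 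The main obstacle is pinning down this last cohomological computation cleanly: one must show that the relevant cycle-class group in $\rH^6(P,\bZ)$ (equivalently, the rank-one part of $\Hdg(X,\alpha,\bZ)$ detected cohomologically via the twisted Atiyah--Hirzebruch spectral sequence of \S\ref{section-twisted-AH}) sees a class of ``rank'' $\ell$ even though no actual twisted sheaf of rank $\ell$ exists. Concretely, I expect to invoke Lemma~\ref{lemma-describe-ahss} to identify $\gr^0_{\rF}\Ktop[0](X,\alpha)$ with $\ind(\bar\alpha)\cdot\bZ$ and to observe that $\bar\alpha = 0$ topologically in this example (so the topological index is $1$), which is the source of the discrepancy between the topological/cohomological picture and the algebraic one; this is exactly what makes the integral Hodge conjecture fail in cohomology while holding in $K$-theory, and is the crux of the argument.
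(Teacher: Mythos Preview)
Your argument for part~(2) is correct and essentially matches the paper's: decompose $\Dperf(P)$ via Bernardara (Lemma~\ref{lemma-D-SB}), reduce to the integral Hodge conjecture for each $\Dperf(X,\alpha^i)$, and invoke the results on twisted abelian threefolds. The paper writes this slightly more cleanly by fixing $\ell=2$ from the outset, so the four components are $\Dperf(X), \Dperf(X,\alpha), \Dperf(X), \Dperf(X,\alpha)$ and one only needs the integral Hodge conjecture for $X$ and for $(X,\alpha)$, the latter coming from Corollary~\ref{corollary-voisin-group-abelian3fold} since $\ind_{\Hdg}(\alpha)=\per(\alpha)^2$. Your detour through general $i$ coprime to $\ell$ is unnecessary once $\ell=2$ is fixed.

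Your argument for part~(1) has a genuine gap. You correctly identify the underlying mechanism---$\alpha$ is topologically trivial (since $\rH^3(X,\bZ)$ is torsion-free), so there is a mismatch between cohomological and algebraic data---but none of your sketched routes actually produces a non-algebraic integral Hodge class in $\rH^6(P,\bZ)$. Several of the attempts conflate $\Ktop[0](X,\alpha)$ with $\rH^*(P,\bZ)$: for instance, Lemma~\ref{lemma-describe-ahss} describes the Atiyah--Hirzebruch filtration on the former and says nothing directly about codimension-$3$ cycles on $P$; and the claim that the integral Hodge conjecture in $\rH^6(P,\bZ)$ would force $\ind(\alpha)\mid\ind_{\Hdg}(\alpha)$ is not something you have established. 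You yourself flag that ``the main obstacle is pinning down this last cohomological computation cleanly''---and indeed this computation is the entire content of part~(1). The paper does not attempt it from scratch: it simply invokes \cite[Theorem~6.1]{hotchkiss-pi}, which proves exactly that the integral Hodge conjecture fails in $\rH^6(P,\bZ)$ whenever $\ind(\alpha)>\per(\alpha)$, via an explicit analysis of the cohomology of the Severi--Brauer variety. Absent that citation, you would need to carry out that analysis yourself, which you have not done.
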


\begin{proof}
    We adopt the situation of Example~\ref{ex:gabber} for $\ell = 2$, and choose a Severi--Brauer variety $P \to X$ of class $\alpha$ and relative dimension $3$. 
    Since $\ind_{\Hdg}(\alpha) = \ell^2$, we conclude that the integral Hodge conjecture holds for $(X, \alpha)$ by Corollary~\ref{corollary-voisin-group-abelian3fold}. 
    Bernardara's decomposition \cite{bernardara-BS} has the shape
    \[
            \Dperf(P) = \langle \Dperf(X), \Dperf(X, \alpha), \Dperf(X), \Dperf(X, \alpha) \rangle,
    \]
    where we have omitted the functors giving the admissible embeddings. It follows that the integral Hodge conjecture holds for $\Dperf(P)$, since the integral Hodge conjecture for categories is compatible with semiorthogonal decompositions \cite[Lemma 5.20]{IHC-CY2}. On the other hand, by \cite[Theorem 6.1]{hotchkiss-pi}, the integral Hodge conjecture fails in $\rH^6(P, \bZ)$.
\end{proof}

\section{Symbol length}
\label{sec:symbol_length}

In this section, we describe symbol length bounds for Brauer classes on abelian varieties, and give an example (Example~\ref{ex:symbol_length_insufficient}) to show that they are not sufficient to prove Theorem~\ref{thm:period_index_conjecture}.

\begin{definition}
\label{def:symbol_length}
    Let $X$ be an abelian variety of dimension $g$.
    \begin{enumerate} 
            \item Given $\theta \in \rH^2(X, \bmu_n)$, we define $\ell(\theta)$ to be the least integer such that $\theta$ may be written
            \begin{equation} \label{eq:symbol_length}
                    \theta = x_1 \wedge y_1 + \cdots + x_{\ell(\theta)} \wedge y_{\ell(\theta)} 
            \end{equation}
            in $\rH^2(X, \bmu_n)$, with $x_i, y_i \in \rH^1(X, \bmu_n)$.
            \item Let $\alpha \in \Br(X)$ be a Brauer class of period $n$. We define the \emph{symbol length} of $\alpha$ to be 
    \[
            \length(\alpha) = \min\{\ell(\theta): \theta \in \rH^2(X, \bmu_n), \theta \mapsto \alpha\}.
    \]
    \end{enumerate}
\end{definition}

\begin{remark}
    Unlike the index, the symbol length of $\alpha \in \Br(X)$ defined above does not necessarily agree with the symbol length of the restriction of $\alpha$ to the function field $\bC(X)$. For instance, if $X$ is a general abelian surface, then by direct calculation as in Example~\ref{ex:symbol_length_insufficient} below, one may write down Brauer classes $\alpha \in \Br(X)$ with symbol length $2$ in the sense of Definition~\ref{def:symbol_length}. According to a folklore conjecture, however, any Brauer class on the function field of a surface over an algebraically closed field has symbol length $1$. 
\end{remark}

\begin{lemma}[Symbol length bound]
\label{lem:symbol_length_bounds}
    Let $X$ be an abelian variety of dimension $g$. For each $\alpha \in \Br(X)$ of period $n$, $\ind(\alpha) \mid n^{\ell(\alpha)}$. In particular, $\ind(\alpha) \mid n^g$.
\end{lemma}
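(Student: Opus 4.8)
\textbf{Proof plan for Lemma~\ref{lem:symbol_length_bounds}.} The strategy is to reduce the index bound to an explicit construction of a twisted sheaf whose rank realizes the claimed power of $n$, using Lemma~\ref{lemma-ind-as-rk} which identifies $\ind(\alpha)$ with the positive generator of the image of the rank homomorphism $\rk \colon \rG_0(X, \alpha) \to \bZ$. So it suffices to produce a coherent $\alpha$-twisted sheaf of rank exactly $n^{\ell(\alpha)}$ (or a perfect complex whose rank is a unit times $n^{\ell(\alpha)}$, which suffices since we only need divisibility of $\ind(\alpha)$).

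First I would fix $\theta \in \rH^2(X, \bmu_n)$ mapping to $\alpha$ and realizing the minimal symbol length, so that $\theta = x_1 \wedge y_1 + \cdots + x_{\ell} \wedge y_{\ell}$ with $\ell = \ell(\alpha)$ and $x_i, y_i \in \rH^1(X, \bmu_n)$. Each term $x_i \wedge y_i$ is the cup product of two $\bmu_n$-torsion classes in $\rH^1$, hence corresponds under Kummer theory to a symbol, i.e.\ a cyclic algebra of degree $n$; concretely, the classes $x_i, y_i$ determine homomorphisms $\pi_1(X) \to \bmu_n$, and hence characters and $\bmu_n$-torsors on $X$, from which one builds a degree-$n$ Azumaya algebra $\cA_i$ whose Brauer class is $x_i \wedge y_i$ (this is the standard cyclic-algebra construction, available because $\bC$ contains the $n$-th roots of unity). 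Then $\alpha = \sum_i [\cA_i]$ in $\Br(X)$, so $\alpha$ is represented by the Azumaya algebra $\cA_1 \otimes_{\cO_X} \cdots \otimes_{\cO_X} \cA_{\ell}$, which has degree $n^{\ell}$ as an $\cO_X$-algebra, i.e.\ rank $n^{2\ell}$ over $\cO_X$. Viewing $\cA := \cA_1 \otimes \cdots \otimes \cA_{\ell}$ as a module over itself, it corresponds (via the Morita-type equivalence between $\cA$-modules and $\alpha$-twisted sheaves, as recalled in \S\ref{section-categories-of-twisted-sheaves}) to an $\alpha$-twisted locally free sheaf of rank $n^{\ell}$. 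Hence $n^{\ell}$ lies in the image of the rank homomorphism, and by Lemma~\ref{lemma-ind-as-rk} we conclude $\ind(\alpha) \mid n^{\ell(\alpha)} = n^{\ell}$.

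For the final assertion $\ind(\alpha) \mid n^g$, it then suffices to show $\ell(\alpha) \leq g$ for any $\alpha \in \Br(X)$ of period $n$, or more precisely that $\theta$ may be chosen with $\ell(\theta) \leq g$. This is a linear-algebra statement over $\bZ/n$: given any class $\theta \in \rH^2(X, \bmu_n) = \bigwedge^2 (\bZ/n)^{2g}$ (using $\bmu_n \cong \bZ/n$ over $\bC$), one wants to write it as a sum of at most $g$ decomposable $2$-vectors $x_i \wedge y_i$. When $n$ is prime this is the standard symplectic normal form for alternating forms over a field, giving $\ell(\theta) \leq g$. For general $n$ I would argue prime by prime: decompose $n = \prod p^{e_p}$, split $\theta$ into $p$-primary parts $\theta_p$, handle each $\theta_p \in \bigwedge^2 (\bZ/p^{e_p})^{2g}$ by the structure theory of alternating forms over $\bZ/p^{e_p}$ (which again admits a symplectic-type normal form with at most $g$ hyperbolic blocks, using that transvections generate the relevant symplectic group and lift, cf.\ the discussion in Definition~\ref{def:type_mod_n}), and then reassemble via the Chinese Remainder Theorem; one checks that a uniform symbol-length bound of $g$ survives the reassembly since each $x_i \wedge y_i$ can be built coordinate-wise.

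\textbf{Main obstacle.} The delicate point is not the cyclic-algebra construction (which is classical) but making the reduction to symbol length $\leq g$ fully rigorous over $\bZ/n$ rather than over a field: alternating forms over $\bZ/p^{e}$ are not as simply classified as over $\bF_p$, and one must be careful that the normal form still uses only $g$ decomposable terms and that reducing to a smaller-rank sublattice does not increase the count. I expect this to be the step requiring the most care, though it is a standard piece of the theory of alternating forms over $\bZ/n$ and is already implicitly used elsewhere in the paper.
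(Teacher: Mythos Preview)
Your argument for $\ind(\alpha) \mid n^{\ell(\alpha)}$ via cyclic algebras is correct but takes a different route from the paper. The paper instead constructs a finite \'etale cover $\pi \colon X' \to X$ of degree $n^{\ell(\alpha)}$ that kills $\alpha$: writing $\theta = \sum_{i=1}^{\ell} x_i \wedge y_i$, one takes $X'$ to be the fiber product over $X$ of the $\bmu_n$-covers corresponding to $x_1, \dots, x_\ell$, so that each $\pi^* x_i = 0$ and hence $\pi^* \theta = 0$; then $\pi^* \alpha = 0$, and pushing forward a $\pi^*\alpha$-twisted line bundle gives an $\alpha$-twisted sheaf of rank $n^{\ell}$. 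Both arguments are short and classical; the cover argument avoids the bookkeeping of identifying cup products with explicit cyclic algebras, while yours produces an explicit Azumaya representative.

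Your ``main obstacle'' --- the bound $\ell(\theta) \leq g$ over $\bZ/n$ --- is easier than you suggest, and no prime-by-prime argument or CRT reassembly is needed. Any $\theta \in \rH^2(X, \bmu_n)$ lifts to an integral class $u \in \rH^2(X, \bZ)$ (since $\rH^2(X,\bZ)$ is free), and by the Smith normal form for alternating forms over $\bZ$ (the ``type'' recalled in \S\ref{section-rank-type}) there is an integral basis $e_1, \dots, e_{2g}$ of $\rH^1(X, \bZ)$ with $u = \sum_{i=1}^{g} d_i\, e_i \wedge e_{g+i}$. Reducing mod $n$ exhibits $\theta$ as a sum of at most $g$ decomposable $2$-vectors, giving $\ell(\theta) \leq g$ immediately.
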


\begin{proof}
    One can kill $\alpha$ on an isogeny $\pi:X' \to X$ of degree $n^{\ell(\alpha)}$ as follows: Take $\theta \in \rH^2(X, \bmu_n)$ with $\ell(\alpha) = \ell(\theta)$. Then take $X'$ to be the fiber product over $X$ of the $\bmu_n$-covers corresponding to $x_1, \dots, x_{\ell(\theta)}$ in \eqref{eq:symbol_length}. Since $\pi^* \theta = 0$, $\pi^* \alpha = 0$.
\end{proof}

The following example shows that symbol length bounds are not sufficient to prove Theorem~\ref{thm:period_index_conjecture}.

\begin{example}
\label{ex:symbol_length_insufficient}
        Let $(X, H)$ be a principally polarized abelian threefold of Picard rank $1$. We write
        \[
                H = x_1 \wedge y_1 + x_2 \wedge y_2 + x_3 \wedge y_3
        \]
        for $x_i, y_i \in \rH^1(X, \bZ)$. 
        Consider the Brauer class $\alpha$ of period $2$ corresponding to the reduction mod $2$ of
        \[
                b = x_1 \wedge (y_1 + y_2) + x_2 \wedge (y_1 + y_3) + x_3 \wedge (y_1 + y_2 + y_3).
        \]
        Note that
        \begin{align*}
                b + H = x_1 \wedge (2y_1+ y_2) + x_2 \wedge (y_1 + y_2 + y_3) + x_3 \wedge (y_1 + y_2 + 2y_3).
        \end{align*}
        Then, writing $\bar b$ and $\bar H$ for the reductions mod $2$, we have 
        \[
                \ell(\bar b) = \ell(\bar b + \bar H) = 3.
        \]
        Note that, since $X$ has Picard rank $1$, the exact sequence~\eqref{Br-kummer-seq} takes the form 
        \begin{equation*}
        0 \to \bZ/2 \to \rH^2(X, \bmu_2) \to \Br(X)[2] \to 0 , 
        \end{equation*}
        where $1 \in \bZ/2$ maps to $\bar{H} \in \rH^2(X, \bmu_2)$ and $\bar{b} \in \rH^2(X, \bmu_2)$ maps to $\alpha$. 
        It follows that $\ell(\alpha) = 3$.
\end{example}

\section{Twisted Fourier--Mukai partners}
\label{sec:fourier_mukai_partners}

In this section, we show that there are twisted abelian varieties such that $\Dperf(X, \alpha)$ is not equivalent to $\Dperf(Y)$ for a smooth proper variety $Y$.

To do so, we will consider a natural pairing on topological K-theory. 
In general, we recall from \cite[Lemma 5.2]{NCHPD} that for any proper $\bC$-linear category $\cC$, there is a canonical a bilinear form 
\begin{equation*}
\chi(-,-) \colon \Ktop[0](\cC) \otimes \Ktop[0](\cC) \to \bZ, 
\end{equation*}
called the (topological) \emph{Euler pairing}. 
On algebraic K-theory, there is also an Euler pairing $\chi(-,-) \colon \rK_0(\cC) \otimes \rK_0(\cC) \to \bZ$ given on objects $E, F \in \cC$ by 
\begin{equation*}
\chi(E,F) = \sum_i (-1)^i \dim \Ext^i_{\bC}(E,F), 
\end{equation*}
and the natural map $\rK_0(\cC) \to \Ktop[0](\cC)$ preserves the Euler pairings. 
When $\cC = \Dperf(X, \alpha)$ for a twisted variety with $\alpha$ topologically trivial, the Euler pairing can be described in terms of pairings of Chern classes in singular cohomology. 
In the case of an abelian variety, this result takes the following form.

\begin{lemma}
\label{lemma-abelian-variety-euler}
Let $X$ be an abelian variety 
and $\alpha = \exp(2 \pi i \cdot B) \in \Br(X)$ for $B \in \rH^2(X, \bQ)$. 
Then under the isomorphism 
$\varphi \colon \Ktop(X, \alpha) \to \muk(X, B; \bZ)$ of Lemma~\ref{lem:twisted_mukai_abelian-families}\eqref{item:identify_with_tw_muk}, we have 
\begin{equation*}
\chi(v,w) = \int_X \varphi(v)^{\vee} \varphi(w), 
\end{equation*}
where we have used the following notation: given $\gamma  = (\gamma_i) \in  \muk(X, B; \bZ)$ with $\gamma_i \in \rH^{2i}(X, \bZ)$ we write $\gamma^{\vee} = ((-1)^i\gamma_i) \in \muk(X, B; \bZ)$. 
\end{lemma}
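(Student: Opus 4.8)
The plan is to reduce the twisted statement to the well-known untwisted Mukai pairing on an abelian variety, using the compatibility of the Euler pairing with passage to a Severi--Brauer cover (or, more directly, with pullback along an isogeny killing $\alpha$), together with the explicit identification of $\varphi$ via twisting down by a twisted line bundle. First I would recall the two basic inputs that are already available in the paper: (i) the natural map $\rK_0(\cC) \to \Ktop[0](\cC)$ preserves the Euler pairings (cited from \cite{NCHPD} just above), so it suffices to verify the identity for classes coming from $\rK_0$, and by continuity/density it then propagates to all topological classes — but in fact for abelian varieties $\Ktop[0](X,\alpha)$ is generated by algebraic classes after the standard reductions below, so one may even work entirely with objects; and (ii) Lemma~\ref{lemma-hochschild-homology-twisted-variety} (Vologodsky) together with Example~\ref{lemma-duality-twisted-scheme}, which identify the ambient structures of the twisted and untwisted derived categories. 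The cleanest route, however, is the covering argument already used in Lemma~\ref{lemma-ch-numerical} and Theorem~\ref{theorem-twisted-bogomolov}: choose a finite flat cover $f \colon Y \to X$ with $f^*\alpha = 0$ (Lemma~\ref{lemma-finite-cover-kill}), e.g.\ an isogeny in the abelian variety case, and use that $f^*$ multiplies the Euler pairing by $\deg f$ after composing with $f_*$, while the untwisted Euler pairing on $Y$ is computed by Hirzebruch--Riemann--Roch in terms of $\int_Y \ch(-)^{\vee}\ch(-)\td_Y$.

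Concretely, the steps would be: (1) Reduce to $\alpha$ represented by $\exp(2\pi i B)$ with $B \in \rH^2(X,\bQ)$, which is the hypothesis; fix a $\bmu_n$-gerbe $\pi \colon \cX \to X$ and recall from Lemma~\ref{lem:twisted_mukai_abelian-families}\eqref{item:identify_with_tw_muk} and \cite{hotchkiss-pi} that $\varphi$ is realized by choosing a topological $1$-twisted line bundle $L$ with twisted first Chern class $B$ and setting $\varphi(v) = \ch(v\cdot L^{\vee})$, i.e.\ $\varphi(E) = \ch(\pi_*(E\otimes L^{\vee}))$ read on $X$ via $\pi_*\colon \CH^*(\cX)_{\bQ}\xrightarrow{\sim}\CH^*(X)_{\bQ}$ (here $L^{\vee}$ is a $(-1)$-twisted object so $E\otimes L^{\vee}$ is untwisted). (2) Observe that $\chi_{\Dperf(X,\alpha)}(E,F) = \chi(E\otimes L^{\vee}, F\otimes L^{\vee})$ computed in a (topological) category of untwisted sheaves, since tensoring by the topological line bundle $L^{\vee}$ is an equivalence of the twisted topological $K$-theory with the untwisted one preserving $\chi$; this is the key ``twisting down'' step, and it is exactly the topological shadow of the equivalence \eqref{eq:twist_down_by_twisted_lb} in Remark~\ref{remar:fibers_with_trivial_brauer} applied fibrewise (one does not need $B$ to be algebraic for the \emph{topological} twist to exist, only for the holomorphic one). (3) Now the right-hand side becomes the untwisted Euler pairing on $\Ktop[0](X)$, which by the isomorphism $\ch\colon \Ktop[0](X)\xrightarrow{\sim}\rH^{\ev}(X,\bZ)$ and Riemann--Roch equals $\int_X \ch(-)^{\vee}\ch(-)\td_X$; since $\td_X = 1$ for an abelian variety, this is $\int_X \varphi(v)^{\vee}\varphi(w)$, which is the claimed formula. (4) Finally, spell out why the sign convention $\gamma^{\vee} = ((-1)^i\gamma_i)$ matches: the HRR dual of a $K$-theory class $a$ has $\ch(a^{\vee})_i = (-1)^i\ch(a)_i$, and the bilinear form $(a,b)\mapsto \int_X \ch(a^{\vee})\ch(b)\td_X$ is precisely $\chi(a,b)$ up to the usual $(-1)^{\dim X}$ which vanishes against $\td_X=1$ and the parity bookkeeping on an abelian variety; this is routine.

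The main obstacle — and the only place requiring genuine care rather than citation — is step (2): making precise that the topological $1$-twisted line bundle $L$ exists and that tensoring by $L^{\vee}$ gives an equivalence of topological $K$-theory spectra respecting the Euler pairing, \emph{without} assuming $B\in\NS(X)$. One should argue that the $\bmu_n$-gerbe $\cX\to X$ becomes topologically trivial (its Brauer class lies in $\rH^3(X,\bZ)_{\tors}$ and is killed because $B$ lifts to a rational, hence topologically integral after scaling, class — this is the content of Lemma~\ref{lemma-Br-ses} and Definition~\ref{def:topologically_trivial}), so that the twisted topological $K$-theory sheaf of spectra $\Ktop((X,\alpha)/X)$ is abstractly equivalent to $\Ktop(X/X)$, the equivalence implemented by a choice of topological twisted line bundle; the Euler pairing is defined via $\ev_{\cC}$ and $\coev_{\cC}$ (Lemma~\ref{lemma-smooth-proper-dualizable}), and these transform compatibly under tensoring by an invertible object, so $\chi$ is preserved. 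Alternatively, if one prefers to avoid topological subtleties entirely, run the argument of step (2) instead via the isogeny $f\colon Y\to X$ with $f^*\alpha=0$ as in Lemma~\ref{lemma-ch-numerical}: there $f^*\alpha$ is trivial \emph{holomorphically}, Remark~\ref{remar:fibers_with_trivial_brauer} applies on the nose, one computes the pairing on $Y$ by HRR, and descends using that $\chi_{Y}(f^*v,f^*w) = \deg(f)\,\chi_X(v,w)$ together with $f^*\colon \rH^{\ev}(X)\to\rH^{\ev}(Y)$ being injective and compatible with $\int$. Either way the computation after this reduction is immediate.
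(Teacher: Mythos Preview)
Your proposal is correct and follows essentially the same path as the paper's proof: the paper reduces to the untwisted case by citing \cite[Corollary~4.14]{hotchkiss-pi} (which packages precisely your ``twisting down by a topological $1$-twisted line bundle'' step~(2)), and then applies Hirzebruch--Riemann--Roch with $\td_X = 1$, exactly as in your step~(3). Your initial remarks about reducing to $\rK_0$ via ``continuity/density'' or generation by algebraic classes are incorrect and unnecessary---there is no such density statement, and generation would amount to the integral Hodge conjecture---but you rightly abandon them in favor of the direct topological argument, which is the one that works.
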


\begin{proof}
By \cite[Corollary 4.14]{hotchkiss-pi}, this reduces to the case where $B = \alpha = 0$. In this case, the formula reads 
\begin{equation*}
\chi(v,w) = \int_X \ch(v)^{\vee}\ch(w), 
\end{equation*}
which holds by Hirzebruch--Riemann--Roch on $\Ktop[0](X)$ since the Todd class of $X$ is trivial.  
\end{proof}

\begin{lemma}
\label{lem:euler_pairing}
    Let $X$ be a very general principally polarized abelian threefold. There exists a class $b \in \rH^2(X, \bZ)$ and a prime $\ell$ such that for 
$\alpha = \exp\left(\frac{2 \pi i}{\ell} \cdot b\right) \in \Br(X)$, 
    the restriction of the Euler pairing $\chi(-, -)$ to the subgroup $\Hdg(X, \alpha, \bZ) \subset \Ktop[0](X, \alpha)$ is divisible by $\ell$.  
\end{lemma}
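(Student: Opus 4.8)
The plan is to reduce the computation of the Euler pairing on Hodge classes to an explicit bilinear-algebra computation via Lemma~\ref{lemma-abelian-variety-euler} and the description of $\Hdg(X,\alpha,\bZ)$ coming from the twisted Atiyah--Hirzebruch filtration (Lemma~\ref{lemma-describe-ahss}) together with Lemma~\ref{lem:twisted_mukai_abelian-families}. First I would choose $X$ very general principally polarized of Picard number $1$, with generator $H$, and pick $b \in \rH^2(X,\bZ)$ of rank $6$ (so its reduction $\bar b$ is not proportional to $\bar H$ mod $\ell$, which is possible for a suitable $\ell$ exactly as in Example~\ref{ex:symbol_length_insufficient}); set $B = b/\ell$ and $\alpha = \exp(2\pi i B)$. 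Then by Lemma~\ref{lemma-abelian-variety-euler} the Euler pairing on $\muk(X,B;\bZ)$ is $\chi(v,w) = \int_X \varphi(v)^\vee \varphi(w)$, where $\varphi(v) = \exp(B)\cdot v$ is expanded in $\rH^{\ev}(X,\bQ)$.

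Next I would identify $\Hdg(X,\alpha,\bZ)$ explicitly. Since $X$ is very general of Picard number $1$, the Hodge classes in $\rH^2(X,\bZ)$ are $\bZ\cdot H$, those in $\rH^4(X,\bZ)$ are $\bZ\cdot (H^2/2)$ (i.e.\ the dual of $H$, up to the principal polarization), and $\rH^0,\rH^6$ are of Hodge type. Using the twisted Mukai description: a class $v = (r,x,y,z)$ lies in $\Hdg(X,\alpha,\bZ)$ iff $\exp(B)\cdot(r,x,y,z)$ is Hodge in every degree, i.e.\ $rB + x \in \bZ H$, $\tfrac12 r B^2 + B\cdot x + y \in \bZ\cdot(H^2/2)$, and $z$ arbitrary. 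The rank-$0$ part (where the leading term is forced to lie in $\ker(-\cup\bar\alpha)$) is then generated by a few explicit classes, and a positive-rank class must have rank divisible by $\ind_{\Hdg}(\alpha) = \ell^2$ (computed via Lemma~\ref{lem:computing_the_hodge_theoretic_index}, as in Example~\ref{ex:gabber}). I would then write down an explicit $\bZ$-basis of $\Hdg(X,\alpha,\bZ)$: say $e_1$ of rank $\ell^2$, together with rank-$0$ generators in degrees $\geq 2$ (a class with $\rH^2$-leading term $H$ --- note $\bar H\cup\bar\alpha = 0$ since $H$ is a polarization and $\bar\alpha$ is pulled back appropriately, or rather one works with $\ell H$ --- a class with $\rH^4$-leading term $H^2/2$, and the point class).

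Finally I would compute the Gram matrix of $\chi$ on this basis using $\chi(v,w) = \int_X \varphi(v)^\vee\varphi(w)$ and check every entry is divisible by $\ell$. The key arithmetic input is that $\varphi(e_1) = \ell^2\exp(B) + (\text{correction})$ has all its components in $\ell\cdot\rH^{\ev}(X,\bZ) + (\text{terms with an explicit }\ell)$: the rank is $\ell^2$, the $\rH^2$-component is $\ell^2 B + x = \ell(\ell b/\ell\cdot\ell^{-1}\cdots)$ --- more precisely $\ell^2 B = \ell b$ is $\ell$ times integral, and $x$ is chosen so $\ell^2 B + x \in \bZ H$ forces $x \equiv -\ell b \pmod{\ell}$, hence $\rH^2$-component is divisible by $\ell$; similarly the $\rH^4$-component involves $\tfrac{\ell^2}{2}B^2 = \tfrac{1}{2\ell}b^2$ which combined with the integrality constraint is again $\ell$-divisible. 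Pairing any two such classes, at least one factor in each monomial $\int_X \gamma_i^\vee\delta_{3-i}$ carries a factor of $\ell$. \emph{The main obstacle} will be verifying the $\ell$-divisibility of the $\rH^4$-component of $\varphi(e_1)$: this requires tracking the quadratic term $\tfrac12 B^2$ carefully (it lives in $\tfrac{1}{\ell^2}\rH^4$ a priori), and one must use both the rank being $\ell^2$ \emph{and} the precise integrality constraint defining membership in $\Hdg(X,\alpha,\bZ)$ to see the denominators cancel down to a single factor of $\ell$ rather than $\ell^2$. I would handle this by choosing $b$ so that $b^2 \in 2\ell\cdot\rH^4(X,\bZ)$ (arrangeable by picking $b$ appropriately, e.g.\ with $b^2$ already divisible by a suitable integer, or by replacing $b$ by $b + \ell(\text{something})$), which makes the computation transparent; alternatively one verifies the divisibility abstractly by noting $\chi$ reduces mod $\ell$ to a pairing that factors through the (smaller) graded pieces of the filtration on which it vanishes for rank reasons.
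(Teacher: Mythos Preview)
Your overall framework is right — reduce via Lemma~\ref{lemma-abelian-variety-euler} to the pairing $(\gamma,\delta) = \int_X \gamma^\vee \delta$ on $\Hdg(\muk(X,B;\bZ))$, and use that every Hodge class $\gamma = (r,x,y,z)$ has $\ell^2 \mid r$. But there is a genuine gap, and you have also misidentified where the difficulty lies.

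First, note that the $\exp(B)$ factors cancel in the pairing: $(\exp(B)\gamma)^\vee = \exp(-B)\gamma^\vee$, so $\int_X \varphi(v)^\vee\varphi(w) = \int_X \gamma^\vee\delta$ where $\gamma,\delta$ are already the images in $\muk(X,B;\bZ) = \rH^{\ev}(X,\bZ)$. So you should be analyzing the components of $\gamma$ itself, not of $\exp(B)\gamma$. Expanding $\int_X \gamma^\vee\delta = rz' - \int_X x\cdot y' + \int_X y\cdot x' - zr'$, you see that only the rank and the $\rH^2$-component matter: once $\ell^2 \mid r,r'$ and $\ell \mid x,x'$, every term is divisible by $\ell$ regardless of $y,y',z,z'$. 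So your ``main obstacle'' (the $\rH^4$-component) is a red herring, and the proposed fix $b^2 \in 2\ell\cdot\rH^4$ is irrelevant.

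The real gap is your claim that there is a rank-$0$ Hodge class with $\rH^2$-leading term $H$. If $\gamma = (0,H,y,z)$ is Hodge in $\muk(X,B;\bZ)$, then $B\cdot H + y$ must be an integral Hodge class, i.e.\ $b\cdot H \equiv \text{(integral Hodge class)} \pmod{\ell}$. If such a class exists, pairing it against $(0,0,H^2/2,0)$ gives $-H^3/2 = -3$, which is not divisible by $\ell$ for $\ell > 3$, and the lemma would fail. The paper's proof handles exactly this point: it imposes the additional condition that $b\cdot H$ is \emph{not} congruent to an integral Hodge class modulo $\ell$. This is arranged by first fixing $(X,H,b)$ so that $\NS(X) = \bZ H$ and $\ind_{\Hdg}(\alpha) = \ell^2$ for every prime $\ell$ (via a deformation of the Gabber example), noting that $b\cdot H$ is not Hodge (since $b$ is not and $\cdot H$ is a Hodge isomorphism), and then choosing $\ell$ so that the nonzero image of $b\cdot H$ in the torsion-free group $\rH^4(X,\bZ)/\Hdg^4(X,\bZ)$ is not $\ell$-divisible. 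With this condition, one shows that every Hodge class has $\rH^2$-component in $\ell\cdot\bZ H$: after subtracting a multiple of the explicit class $(\ell^2,-\ell b,\tfrac12 b^2,0)$ to reduce to $r=0$, the Hodge condition on $(0,mH,y,z)$ reads $mb\cdot H + \ell y \in \ell\cdot\Hdg^4(X,\bZ)$, forcing $\ell \mid m$. Your hedge ``or rather one works with $\ell H$'' is correct in spirit but you have not supplied the condition on $(b,\ell)$ that makes it true.
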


\begin{proof}
	We choose $(X, H)$, $b$, and $\ell$ such that the following hold: 
	\begin{enumerate}
	\item \label{XPic1} $(X,H)$ is a polarized abelian threefold with $\NS(X) = \bZ H$. 
	\item \label{HTIell} The class $\alpha = \exp\left(\frac{2 \pi i}{\ell} \cdot b\right)$ has Hodge-theoretic index $\ell^2$. 
	\item \label{bH-not-Hodge} $b \cdot H$ is not congruent to an integral Hodge class modulo $\ell$. 
	\end{enumerate} 
We can choose $(X,H)$ and $b$ so that~\eqref{XPic1} holds and~\eqref{HTIell} holds for any $\ell$ by using the construction of Gabber described in Example~\ref{ex:gabber} for a product of elliptic curves, and then passing to a general deformation of it to ensure that $X$ has Picard rank $1$; alternatively, one can bypass considering the product of elliptic curves and argue directly as in Example~\ref{ex:gabber} on a Picard rank $1$ abelian threefold. 
For condition~\eqref{bH-not-Hodge}, note that $b \cdot H$ is not a Hodge class, since by the hard Lefschetz theorem $\cdot H  \colon \rH^2(X, \bQ) \to \rH^4(X, \bQ)$ is an isomorphism of Hodge structures and $b$ is not Hodge since $\alpha$ is nontrivial. 
Thus $b \cdot H$ has nonzero image $b'$ in the torsion-free group $\rH^4(X, \bZ)/\Hdg^4(X, \bZ)$, and choosing any $\ell$ such that $b'$ is not $\ell$-divisible suffices. 

We will show that with the above choices, the restriction of the pairing $\chi(-, -)$ to the subgroup $\Hdg(X, \alpha, \bZ)$ is divisible by $\ell$. 
By Lemma~\ref{lemma-abelian-variety-euler}, it is equivalent to prove that the pairing $(\gamma, \delta) = \int_X \gamma^{\vee} \delta$ on $\Hdg(\muk(X, B; \bZ))$ is divisible by $\ell$, where $B = b/\ell$. 
Let $\gamma = (r, x, y, z) \in \Hdg(\muk(X, B; \bZ))$. 
By condition~\eqref{HTIell}, we have $\ell^2 \mid r$. 
We claim that furthermore $\ell$ divides $x$ in $\rH^2(X, \bZ)$, 
which directly implies the desired divisibility of the pairing on $\Hdg(\muk(X, B; \bZ))$. 

Note that $(\ell^2, \ell \cdot b, \frac{1}{2}b^2, 0) \in \Hdg(\muk(X, B; \bZ))$ is a Hodge class with the claimed property, so by subtracting a suitable multiple of it from $\gamma$, we may suppose that $r = 0$. 
Then the condition that $\gamma$ is Hodge shows that $x$ and $B \cdot x + y$ are Hodge. This implies $x = mH$ for an integer $m$ due to condition~\eqref{XPic1}, and 
\begin{equation*}
m b \cdot H + \ell y \in \Hdg^4(X, \bZ). 
\end{equation*} 
By condition~\eqref{bH-not-Hodge}, this implies that $\ell \mid m$, as claimed. 
\end{proof}

\begin{corollary}        
\label{cor:not_derived_equivalent}
    Let $(X, \alpha)$ be as in the statement of Lemma~\ref{lem:euler_pairing}. Then $\Dperf(X, \alpha)$ is not equivalent to $\Dperf(Y)$ for any smooth proper variety $Y$. 
\end{corollary}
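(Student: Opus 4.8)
The plan is to derive a contradiction from a derived equivalence $\Dperf(X,\alpha)\simeq\Dperf(Y)$ by comparing the Euler pairing on the integral Hodge classes of the two categories. The key point, supplied by Lemma~\ref{lem:euler_pairing}, is that the restriction of $\chi(-,-)$ to $\Hdg(X,\alpha,\bZ)\subset\Ktop[0](X,\alpha)$ is divisible by $\ell$, whereas for an \emph{untwisted} smooth proper variety $Y$ the pairing on $\Hdg(Y,\bZ)$ always contains a unit. Indeed, the structure sheaf $\cO_Y$ and the skyscraper sheaf $\cO_y$ at a point both give algebraic classes in $\Ktop[0](Y)$, hence integral Hodge classes, and $\chi(\cO_Y,\cO_y)=\dim_{\bC}\rH^0(Y,\cO_y)=1$. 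So the Gram matrix of the Euler pairing restricted to $\Hdg(Y,\bZ)$ has a coefficient equal to $1$, and in particular is not divisible by $\ell$.

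First I would recall that an exact $\bC$-linear equivalence $\Phi\colon\Dperf(X,\alpha)\xrightarrow{\sim}\Dperf(Y)$ induces, functorially, an isomorphism of finitely generated abelian groups $\Ktop[0](\Phi)\colon\Ktop[0](X,\alpha)\xrightarrow{\sim}\Ktop[0](Y)$ compatible with the weight $0$ Hodge structures of Theorem~\ref{theorem-Ktop}; this uses that $\Dperf(X,\alpha)$ is of geometric origin (Lemma~\ref{lemma-D-SB}) and that the construction of $\Ktop[0]$ and its Hodge structure is functorial in the category. In particular $\Ktop[0](\Phi)$ carries $\Hdg(X,\alpha,\bZ)$ isomorphically onto $\Hdg(Y,\bZ)$. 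Moreover, as recalled just before Lemma~\ref{lemma-abelian-variety-euler}, the Euler pairing on topological K-theory is a canonical invariant of a proper $\bC$-linear category, so $\Ktop[0](\Phi)$ is an isometry for $\chi(-,-)$. Combining these, an isometry of lattices $\big(\Hdg(X,\alpha,\bZ),\chi\big)\cong\big(\Hdg(Y,\bZ),\chi\big)$ would exist.

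Then I would observe that divisibility of the Gram form of a lattice by $\ell$ is an isometry invariant: if $\chi$ takes all its values in $\ell\bZ$ on $\Hdg(X,\alpha,\bZ)$, the same must hold on $\Hdg(Y,\bZ)$. But the previous paragraph exhibits a pair of integral Hodge classes on $Y$ pairing to $1$, a contradiction. (One should note the minor points that $\Hdg(Y,\bZ)$ really contains $[\cO_Y]$ and $[\cO_y]$: these lie in the image of $\rK_0(Y)\to\Ktop[0](Y)$, which by definition consists of algebraic, hence integral Hodge, classes, and $\chi([\cO_Y],[\cO_y])=1$ by Serre duality / direct computation since $\rR\Hom(\cO_Y,\cO_y)=\bC$ in degree $0$.)

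The only real subtlety — and the step I expect to require the most care — is the functoriality and compatibility statement in the first step: namely that an arbitrary exact $\bC$-linear equivalence induces an isomorphism on $\Ktop[0]$ respecting both the Hodge structures and the Euler pairings. The Euler-pairing compatibility is essentially formal from \cite[Lemma 5.2]{NCHPD} once one knows the map on $\Ktop[0]$ is induced by the equivalence; the Hodge-structure compatibility is the content of the constructions of Blanc and Moulinos underlying Theorem~\ref{theorem-Ktop}, and for $\Dperf(X,\alpha)$ one uses geometric origin to reduce to Blanc's comparison with singular cohomology. For the purposes of this proof we only need the isometry of the \emph{integral} lattices $\Hdg(-,\bZ)$, so even the rational part of the Hodge structure (which identifies $\Hdg$ with honest Hodge classes) suffices, and no finer information about $\Phi$ is needed. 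Once that is in place the contradiction is immediate from Lemma~\ref{lem:euler_pairing}.
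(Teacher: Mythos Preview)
Your proof is correct and follows essentially the same approach as the paper: the paper's proof is the one-sentence observation that the Euler form on $\Hdg(Y,\bZ)$ is primitive because $\chi(\cO_Y,\kappa(y))=1$ for any skyscraper sheaf, which contradicts the $\ell$-divisibility from Lemma~\ref{lem:euler_pairing}. You have simply made explicit the (routine) compatibility of $\Ktop[0]$, the Hodge structure, and the Euler pairing under a $\bC$-linear equivalence that the paper takes for granted.
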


\begin{proof}
    If $Y$ is a smooth proper variety, then the Euler form on the subgroup of Hodge classes $\Hdg(Y, \bZ) \subset \Ktop[0](Y)$ is primitive, since e.g. $\chi(\cO_Y, \kappa(y)) = 1$ for any skyscraper sheaf $\kappa(y)$. 
\end{proof}

%%%%%%%%%%%%%%%%%%%%%%%%%%%%%%%%%%%%%%%%%%%%%%%%%%%%%%

\newpage
\addtocontents{toc}{\vspace{\normalbaselineskip}}

\newcommand{\etalchar}[1]{$^{#1}$}
\providecommand{\bysame}{\leavevmode\hbox to3em{\hrulefill}\thinspace}
\providecommand{\MR}{\relax\ifhmode\unskip\space\fi MR }
% \MRhref is called by the amsart/book/proc definition of \MR.
\providecommand{\MRhref}[2]{%
  \href{http://www.ams.org/mathscinet-getitem?mr=#1}{#2}
}
\providecommand{\href}[2]{#2}

%%%%%%%%%%%%%%%%%%%%%%%%%%%%%%%%%%%%%%%%%%%%%%%%%%%%%%


\begin{thebibliography}{BLM{\etalchar{+}}21}

\bibitem[AB15]{arcara-bertram}
Daniele Arcara and Aaron Bertram, \emph{Bridgeland-stable moduli spaces for
  {$K$}-trivial surfaces}, J. Eur. Math. Soc. (JEMS) \textbf{2013} (15), no.~1,
  1--38, with an appendix by Max Lieblich.

\bibitem[ABGV11]{open-problems-Br}
Asher Auel, Eric Brussel, Skip Garibaldi, and Uzi Vishne, \emph{Open problems
  on central simple algebras}, Transform. Groups \textbf{16} (2011), no.~1,
  219--264.

\bibitem[ABM21]{HKR-charp}
Benjamin Antieau, Bhargav Bhatt, and Akhil Mathew, \emph{Counterexamples to
  {H}ochschild-{K}ostant-{R}osenberg in characteristic {$p$}}, Forum Math.
  Sigma \textbf{9} (2021), Paper No. e49, 26.

\bibitem[AG14]{antieau-gepner}
Benjamin Antieau and David Gepner, \emph{Brauer groups and \'{e}tale cohomology
  in derived algebraic geometry}, Geom. Topol. \textbf{18} (2014), no.~2,
  1149--1244.

\bibitem[AH61]{atiyah-hirzebruch}
M.~F. Atiyah and F.~Hirzebruch, \emph{Vector bundles and homogeneous spaces},
  Proc. {S}ympos. {P}ure {M}ath., {V}ol. {III}, Amer. Math. Soc., Providence,
  RI, 1961, pp.~7--38.

\bibitem[AP23]{normal-cone-artin-stacks}
Dhyan Aranha and Piotr Pstragowski, \emph{The intrinsic normal cone for {A}rtin
  stacks}, Annales de l'Institut Fourier (2023) (en), Online first.

\bibitem[Ara05]{leray_motivic}
Donu Arapura, \emph{The {L}eray spectral sequence is motivic}, Invent. Math.
  \textbf{160} (2005), no.~3, 567--589.

\bibitem[AS04]{atiyah_segal}
Michael Atiyah and Graeme Segal, \emph{Twisted {$K$}-theory}, Ukr. Mat. Visn.
  \textbf{1} (2004), no.~3, 287--330.

\bibitem[AS06]{atiyah-segal-2}
\bysame, \emph{Twisted {$K$}-theory and cohomology}, Inspired by {S}. {S}.
  {C}hern, Nankai Tracts Math., vol.~11, World Sci. Publ., Hackensack, NJ,
  2006, pp.~5--43.

\bibitem[AT09]{anel-toen}
Mathieu Anel and Bertrand To\"{e}n, \emph{D\'{e}nombrabilit\'{e} des classes
  d'\'{e}quivalences d\'{e}riv\'{e}es de vari\'{e}t\'{e}s alg\'{e}briques}, J.
  Algebraic Geom. \textbf{18} (2009), no.~2, 257--277.

\bibitem[AW14]{ant_will}
Benjamin Antieau and Ben Williams, \emph{The period-index problem for twisted
  topological {$K$}-theory}, Geom. Topol. \textbf{18} (2014), no.~2,
  1115--1148.

\bibitem[Beh09]{behrend}
Kai Behrend, \emph{Donaldson-{T}homas type invariants via microlocal geometry},
  Ann. of Math. (2) \textbf{170} (2009), no.~3, 1307--1338.

\bibitem[Ber09]{bernardara-BS}
Marcello Bernardara, \emph{A semiorthogonal decomposition for {B}rauer-{S}everi
  schemes}, Math. Nachr. \textbf{282} (2009), no.~10, 1406--1413.

\bibitem[BF97]{BF-normal-cone}
Kai Behrend and Barbara Fantechi, \emph{The intrinsic normal cone}, Invent.
  Math. \textbf{128} (1997), no.~1, 45--88.

\bibitem[BF08]{BF-symmetric}
\bysame, \emph{Symmetric obstruction theories and {H}ilbert schemes of points
  on threefolds}, Algebra Number Theory \textbf{2} (2008), no.~3, 313--345.

\bibitem[BJ17]{BJ}
Dennis Borisov and Dominic Joyce, \emph{Virtual fundamental classes for moduli
  spaces of sheaves on {C}alabi-{Y}au four-folds}, Geom. Topol. \textbf{21}
  (2017), no.~6, 3231--3311.

\bibitem[BKP22]{BKP}
Younghan Bae, Martijn Kool, and Hyeonjun Park, \emph{Counting surfaces on
  {C}alabi--{Y}au 4-folds {I}: {F}oundations}, arXiv:2208.09474 (2022).

\bibitem[BKRS22]{milnor-squares}
Tom Bachmann, Adeel~A. Khan, Charanya Ravi, and Vladimir Sosnilo,
  \emph{Categorical {M}ilnor squares and {K}-theory of algebraic stacks},
  Selecta Math. (N.S.) \textbf{28} (2022), no.~5, Paper No. 85, 72.

\bibitem[BL99]{bl_complex_tori}
Christina Birkenhake and Herbert Lange, \emph{Complex tori}, Progress in
  Mathematics, vol. 177, Birkh\"{a}user Boston, Inc., Boston, MA, 1999.

\bibitem[BL04]{birk_lange}
\bysame, \emph{Complex abelian varieties}, second ed., Grundlehren der
  mathematischen Wissenschaften [Fundamental Principles of Mathematical
  Sciences], vol. 302, Springer-Verlag, Berlin, 2004.

\bibitem[Bla16]{blanc}
Anthony Blanc, \emph{Topological {K}-theory of complex noncommutative spaces},
  Compos. Math. \textbf{152} (2016), no.~3, 489--555.

\bibitem[BLM{\etalchar{+}}21]{stability-families}
Arend Bayer, Mart\'i Lahoz, Emanuele Macr\`\i, Howard Nuer, Alexander Perry,
  and Paolo Stellari, \emph{Stability conditions in families}, Publ. Math.
  Inst. Hautes \'{E}tudes Sci. \textbf{133} (2021), 157--325.

\bibitem[BM23]{bayer-macri-ICM}
Arend Bayer and Emanuele Macr\`{i}, \emph{The unreasonable effectiveness of
  wall-crossing in algebraic geometry}, I{CM}---{I}nternational {C}ongress of
  {M}athematicians. {V}ol. 3. {S}ections 1--4, EMS Press, Berlin, 2023,
  pp.~2172--2195.

\bibitem[BMS16]{BMS}
Arend Bayer, Emanuele Macr\`\i, and Paolo Stellari, \emph{The space of
  stability conditions on abelian threefolds, and on some {C}alabi-{Y}au
  threefolds}, Invent. Math. \textbf{206} (2016), no.~3, 869--933.

\bibitem[BMSZ17]{Macri-et-al-Fano}
Marcello Bernardara, Emanuele Macr\`\i, Benjamin Schmidt, and Xiaolei Zhao,
  \emph{Bridgeland stability conditions on {F}ano threefolds}, \'{E}pijournal
  Geom. Alg\'{e}brique \textbf{1} (2017), Art. 2, 24.

\bibitem[BMT14]{BMT}
Arend Bayer, Emanuele Macr\`{i}, and Yukinobu Toda, \emph{Bridgeland stability
  conditions on threefolds {I}: {B}ogomolov-{G}ieseker type inequalities}, J.
  Algebraic Geom. \textbf{23} (2014), no.~1, 117--163.

\bibitem[BOPY18]{BOPR}
Jim Bryan, Georg Oberdieck, Rahul Pandharipande, and Qizheng Yin, \emph{Curve
  counting on abelian surfaces and threefolds}, Algebr. Geom. \textbf{5}
  (2018), no.~4, 398--463.

\bibitem[BP23]{Fano3fold}
Arend Bayer and Alexander Perry, \emph{Kuznetsov's {F}ano threefold conjecture
  via {K}3 categories and enhanced group actions}, J. Reine Angew. Math.
  \textbf{800} (2023), 107--153.

\bibitem[Bri07]{bridgeland-stability}
Tom Bridgeland, \emph{Stability conditions on triangulated categories}, Ann. of
  Math. (2) \textbf{166} (2007), no.~2, 317--345.

\bibitem[Bri08]{bridgeland-K3}
\bysame, \emph{Stability conditions on {$K3$} surfaces}, Duke Math. J.
  \textbf{141} (2008), no.~2, 241--291.

\bibitem[BS21]{bergh-BS}
Daniel Bergh and Olaf Schn\"{u}rer, \emph{Decompositions of derived categories
  of gerbes and of families of {B}rauer-{S}everi varieties}, Doc. Math.
  \textbf{26} (2021), 1465--1500.

\bibitem[BZFN10]{bzfn}
David Ben-Zvi, John Francis, and David Nadler, \emph{Integral transforms and
  {D}rinfeld centers in derived algebraic geometry}, J. Amer. Math. Soc.
  \textbf{23} (2010), no.~4, 909--966.

\bibitem[{\v{C}}S24]{KS-purity}
K\polhk{e}stutis {\v{C}}esnavi\v{c}ius and Peter Scholze, \emph{Purity for flat
  cohomology}, Ann. of Math. (2) \textbf{199} (2024), no.~1, 51--180.

\bibitem[CT01]{CT-PI}
Jean-Louis Colliot-Th\'{e}l\`ene, \emph{Die {B}rauersche {G}ruppe; ihre
  {V}erallgemeinerungen und {A}nwendungen in der {A}rithmetischen {G}eometrie},
  arXiv:2311.02437 (2001).

\bibitem[CT02]{CT-examples}
\bysame, \emph{Exposant et indice d'alg\`ebres simples centrales non
  ramifi\'{e}es}, Enseign. Math. (2) \textbf{48} (2002), no.~1-2, 127--146,
  With an appendix by Ofer Gabber.

\bibitem[CT06]{CT-bourbaki}
\bysame, \emph{Alg\`ebres simples centrales sur les corps de fonctions de deux
  variables (d'apr\`es {A}. {J}. de {J}ong)}, no. 307, 2006, S\'{e}minaire
  Bourbaki. Vol. 2004/2005, pp.~Exp. No. 949, ix, 379--413.

\bibitem[C\u00]{caldararu}
Andrei C\u{a}ld\u{a}raru, \emph{Derived categories of twisted sheaves on
  {C}alabi-{Y}au manifolds}, ProQuest LLC, Ann Arbor, MI, 2000, Thesis
  (Ph.D.)--Cornell University.

\bibitem[Del68]{degenerescence}
Pierre Deligne, \emph{Th\'{e}or\`eme de {L}efschetz et crit\`eres de
  d\'{e}g\'{e}n\'{e}rescence de suites spectrales}, Inst. Hautes \'{E}tudes
  Sci. Publ. Math. (1968), no.~35, 259--278.

\bibitem[Del71]{Deligne1971ThorieDH}
\bysame, \emph{Th\'{e}orie de {H}odge. {II}}, Inst. Hautes \'{E}tudes Sci.
  Publ. Math. (1971), no.~40, 5--57.

\bibitem[dJ]{dJ-gabber}
Aise~Johan de~Jong, \emph{A result of {G}abber}, available at
  \url{http://www.math.columbia.edu/~dejong/}.

\bibitem[dJ04]{dJ-period-index}
\bysame, \emph{The period-index problem for the {B}rauer group of an algebraic
  surface}, Duke Math. J. \textbf{123} (2004), no.~1, 71--94.

\bibitem[dJO22]{dejong2022point}
Aise~Johan de~Jong and Martin Olsson, \emph{Point objects on abelian
  varieties}, 2022.

\bibitem[dJP22]{dJP-pi}
Aise~Johan de~Jong and Alexander Perry, \emph{The period-index problem and
  {H}odge theory}, arXiv:2212.12971 (2022).

\bibitem[DL90]{debarre_laszlo}
Olivier Debarre and Yves Laszlo, \emph{Le lieu de {N}oether-{L}efschetz pour
  les vari\'{e}t\'{e}s ab\'{e}liennes}, C. R. Acad. Sci. Paris S\'{e}r. I Math.
  \textbf{311} (1990), no.~6, 337--340.

\bibitem[FH91]{fulton_harris}
William Fulton and Joe Harris, \emph{Representation theory}, Graduate Texts in
  Mathematics, vol. 129, Springer-Verlag, New York, 1991.

\bibitem[GR17]{DAG-gaitsgory-1}
Dennis Gaitsgory and Nick Rozenblyum, \emph{A study in derived algebraic
  geometry. {V}ol. {I}. {C}orrespondences and duality}, Mathematical Surveys
  and Monographs, vol. 221, American Mathematical Society, Providence, RI,
  2017.

\bibitem[Gra04]{grabowski-IHC}
Craig Grabowski, \emph{On the integral {H}odge conjecture for 3-folds},
  ProQuest LLC, Ann Arbor, MI, 2004, Thesis (Ph.D.)--Duke University.

\bibitem[Gro66]{EGAIV3}
Alexander Grothendieck, \emph{\'{E}l\'{e}ments de g\'{e}om\'{e}trie
  alg\'{e}brique. {IV}. \'{E}tude locale des sch\'{e}mas et des morphismes de
  sch\'{e}mas. {T}roisi\`{e}me partie}, Inst. Hautes \'{E}tudes Sci. Publ.
  Math. (1966), no.~28, 255.

\bibitem[Gro67]{EGAIV4}
\bysame, \emph{\'{E}l\'{e}ments de g\'{e}om\'{e}trie alg\'{e}brique. {IV}.
  \'{E}tude locale des sch\'{e}mas et des morphismes de sch\'{e}mas.
  {Q}uatri\`{e}me partie}, Inst. Hautes \'{E}tudes Sci. Publ. Math. (1967),
  no.~32, 361.

\bibitem[Gro68]{grothendieck-brauer}
\bysame, \emph{Le groupe de {B}rauer. {I--III}}, Dix expos\'{e}s sur la
  cohomologie des sch\'{e}mas, Adv. Stud. Pure Math., vol.~3, North-Holland,
  Amsterdam, 1968, pp.~46--188.

\bibitem[GS06]{gillet_sza}
Philippe Gille and Tam\'{a}s Szamuely, \emph{Central simple algebras and
  {G}alois cohomology}, Cambridge Studies in Advanced Mathematics, vol. 101,
  Cambridge University Press, Cambridge, 2006.

\bibitem[Gul13]{gulbrandsen}
Martin Gulbrandsen, \emph{Donaldson-{T}homas invariants for complexes on
  abelian threefolds}, Math. Z. \textbf{273} (2013), no.~1-2, 219--236.

\bibitem[Hat02]{hatcher}
Allen Hatcher, \emph{Algebraic topology}, Cambridge University Press,
  Cambridge, 2002.

\bibitem[HL10]{huybrechts-lehn}
Daniel Huybrechts and Manfred Lehn, \emph{The geometry of moduli spaces of
  sheaves}, second ed., Cambridge Mathematical Library, Cambridge University
  Press, Cambridge, 2010.

\bibitem[HM23]{huybrechts-mattei}
Daniel Huybrechts and Dominique Mattei, \emph{Splitting unramified {B}rauer
  classes by abelian torsors and the period-index problem}, arXiv:2310.04029
  (2023).

\bibitem[Hot22]{hotchkiss-pi}
James Hotchkiss, \emph{Hodge theory of twisted derived categories and the
  period-index problem}, arXiv:2212.10638 (2022).

\bibitem[Hot23]{hotchkiss-tori}
\bysame, \emph{The period-index problem for complex tori}, arXiv:2301.09293
  (2023).

\bibitem[HS05]{Huy_stell}
Daniel Huybrechts and Paolo Stellari, \emph{Equivalences of twisted {$K3$}
  surfaces}, Math. Ann. \textbf{332} (2005), no.~4, 901--936.

\bibitem[Huy06]{huybrechts_fm}
Daniel Huybrechts, \emph{Fourier-{M}ukai transforms in algebraic geometry},
  Oxford Mathematical Monographs, The Clarendon Press, Oxford University Press,
  Oxford, 2006.

\bibitem[Igu70]{igusa}
Jun-ichi Igusa, \emph{A classification of spinors up to dimension twelve},
  Amer. J. Math. \textbf{92} (1970), 997--1028.

\bibitem[Jan73]{janusz}
Gerald Janusz, \emph{Algebraic number fields}, Pure and Applied Mathematics,
  vol. Vol. 55, Academic Press [Harcourt Brace Jovanovich, Publishers], New
  York-London, 1973.

\bibitem[JS12]{joyce-song}
Dominic Joyce and Yinan Song, \emph{A theory of generalized
  {D}onaldson-{T}homas invariants}, Mem. Amer. Math. Soc. \textbf{217} (2012),
  no.~1020, iv+199.

\bibitem[Kal08]{kaledin1}
Dmitry Kaledin, \emph{Non-commutative {H}odge-to-de {R}ham degeneration via the
  method of {D}eligne-{I}llusie}, Pure Appl. Math. Q. \textbf{4} (2008), no.~3,
  Special Issue: In honor of Fedor Bogomolov. Part 2, 785--875.

\bibitem[Kal17]{kaledin2}
\bysame, \emph{Spectral sequences for cyclic homology}, Algebra, geometry, and
  physics in the 21st century, Progr. Math., vol. 324, Birkh\"{a}user/Springer,
  Cham, 2017, pp.~99--129.

\bibitem[Kle05]{kleiman-picard}
Steven~L. Kleiman, \emph{The {P}icard scheme}, Fundamental algebraic geometry,
  Math. Surveys Monogr., vol. 123, Amer. Math. Soc., Providence, RI, 2005,
  pp.~235--321.

\bibitem[KO18]{okawa-nonexistence}
Kotaro Kawatani and Shinnosuke Okawa, \emph{Nonexistence of semiorthogonal
  decompositions and sections of the canonical bundle}, arXiv:1508.00682
  (2018).

\bibitem[Kre99]{kresch-chow}
Andrew Kresch, \emph{Cycle groups for {A}rtin stacks}, Invent. Math.
  \textbf{138} (1999), no.~3, 495--536.

\bibitem[KS08]{kontsevich-soibelman}
Maxim Kontsevich and Yan Soibelman, \emph{Stability structures, motivic
  {D}onaldson-{T}homas invariants and cluster transformations}, arXiv:0811.2435
  (2008).

\bibitem[Kuz15]{kuznetsov-heights}
Alexander Kuznetsov, \emph{Height of exceptional collections and {H}ochschild
  cohomology of quasiphantom categories}, J. Reine Angew. Math. \textbf{708}
  (2015), 213--243.

\bibitem[Kuz19]{kuznetsov-CY}
\bysame, \emph{Calabi--{Y}au and fractional {C}alabi--{Y}au categories}, J.
  Reine Angew. Math. \textbf{753} (2019), 239--267.

\bibitem[Lan02]{Lang}
Serge Lang, \emph{Algebra}, third ed., Graduate Texts in Mathematics, vol. 211,
  Springer-Verlag, New York, 2002.

\bibitem[Li19a]{chunyi-quintic}
Chunyi Li, \emph{On stability conditions for the quintic threefold}, Invent.
  Math. \textbf{218} (2019), no.~1, 301--340.

\bibitem[Li19b]{Chunyi}
\bysame, \emph{Stability conditions on {F}ano threefolds of {P}icard number 1},
  J. Eur. Math. Soc. (JEMS) \textbf{21} (2019), no.~3, 709--726.

\bibitem[Lie06]{lieblich-moduli}
Max Lieblich, \emph{Moduli of complexes on a proper morphism}, J. Algebraic
  Geom. \textbf{15} (2006), no.~1, 175--206.

\bibitem[Lie07]{lieblich-moduli-twisted}
\bysame, \emph{Moduli of twisted sheaves}, Duke Math. J. \textbf{138} (2007),
  no.~1, 23--118.

\bibitem[Lie08]{lieblich-period-index}
\bysame, \emph{Twisted sheaves and the period-index problem}, Compos. Math.
  \textbf{144} (2008), no.~1, 1--31.

\bibitem[Lur04]{DAG}
Jacob Lurie, \emph{Derived algebraic geometry}, ProQuest LLC, Ann Arbor, MI,
  2004, Thesis (Ph.D.)--Massachusetts Institute of Technology.

\bibitem[Lur17]{HA}
\bysame, \emph{Higher algebra}, available at
  \url{https://www.math.ias.edu/~lurie/}, 2017.

\bibitem[Lur18]{SAG}
\bysame, \emph{Spectral algebraic geometry}, available at
  \url{https://www.math.ias.edu/~lurie/}, 2018.

\bibitem[Mat16]{matzri}
Eliyahu Matzri, \emph{Symbol length in the {B}rauer group of a field}, Trans.
  Amer. Math. Soc. \textbf{368} (2016), no.~1, 413--427.

\bibitem[Mat20]{akhil-degeneration}
Akhil Mathew, \emph{Kaledin's degeneration theorem and topological {H}ochschild
  homology}, Geom. Topol. \textbf{24} (2020), no.~6, 2675--2708.

\bibitem[Mou19]{moulinos}
Tasos Moulinos, \emph{Derived {A}zumaya algebras and twisted {$K$}-theory},
  Adv. Math. \textbf{351} (2019), 761--803.

\bibitem[MP15]{Piyaratne-abelian1}
Antony Maciocia and Dulip Piyaratne, \emph{Fourier-{M}ukai transforms and
  {B}ridgeland stability conditions on abelian threefolds}, Algebr. Geom.
  \textbf{2} (2015), no.~3, 270--297.

\bibitem[MP16]{Piyaratne-abelian2}
\bysame, \emph{Fourier-{M}ukai transforms and {B}ridgeland stability conditions
  on abelian threefolds {II}}, Internat. J. Math. \textbf{27} (2016), no.~1,
  1650007, 27.

\bibitem[MS17]{macri-schmidt}
Emanuele Macr\`{i} and Benjamin Schmidt, \emph{Lectures on {B}ridgeland
  stability}, Moduli of curves, Lect. Notes Unione Mat. Ital., vol.~21,
  Springer, Cham, 2017, pp.~139--211.

\bibitem[Muk78]{mukai_semihom}
Shigeru Mukai, \emph{Semi-homogeneous vector bundles on an {A}belian variety},
  J. Math. Kyoto Univ. \textbf{18} (1978), no.~2, 239--272.

\bibitem[Muk98]{mukai_spin}
\bysame, \emph{Abelian variety and spin representation}, available at
  \url{https://www.kurims.kyoto-u.ac.jp/~mukai/paper/warwick13.pdf}.

\bibitem[Nak35]{nakayama}
Tadasi Nakayama, \emph{\"{U}ber die direkte {Z}erlegung einer
  {D}ivisionsalgebra}, Jpn. J. Math. \textbf{12} (1935), 65--70.

\bibitem[NO80]{oort_norman}
Peter Norman and Frans Oort, \emph{Moduli of abelian varieties}, Annals of
  Mathematics \textbf{112} (1980), no.~2, 413--439.

\bibitem[Obe18]{obPT}
Georg Oberdieck, \emph{On reduced stable pair invariants}, Math. Z.
  \textbf{289} (2018), no.~1-2, 323--353.

\bibitem[OPT22]{OPT}
Georg Oberdieck, Dulip Piyaratne, and Yukinobu Toda, \emph{Donaldson-{T}homas
  invariants of abelian threefolds and {B}ridgeland stability conditions}, J.
  Algebraic Geom. \textbf{31} (2022), no.~1, 13--73.

\bibitem[Orl16]{orlov-gluing}
Dmitri Orlov, \emph{Smooth and proper noncommutative schemes and gluing of {DG}
  categories}, Adv. Math. \textbf{302} (2016), 59--105.

\bibitem[OS20]{obshen}
Georg Oberdieck and Junliang Shen, \emph{Curve counting on elliptic
  {C}alabi-{Y}au threefolds via derived categories}, J. Eur. Math. Soc. (JEMS)
  \textbf{22} (2020), no.~3, 967--1002.

\bibitem[OT23]{OT}
Jeongseok Oh and Richard~P. Thomas, \emph{Counting sheaves on {C}alabi-{Y}au
  4-folds, {I}}, Duke Math. J. \textbf{172} (2023), no.~7, 1333--1409.

\bibitem[Per19]{NCHPD}
Alexander Perry, \emph{Noncommutative homological projective duality}, Adv.
  Math. \textbf{350} (2019), 877--972.

\bibitem[Per22]{IHC-CY2}
\bysame, \emph{The integral {H}odge conjecture for two-dimensional
  {C}alabi--{Y}au categories}, Compos. Math. \textbf{158} (2022), no.~2,
  287--333.

\bibitem[Pol07]{polishchuk-t-structure}
Alexander Polishchuk, \emph{Constant families of {$t$}-structures on derived
  categories of coherent sheaves}, Mosc. Math. J. \textbf{7} (2007), no.~1,
  109--134, 167.

\bibitem[Pom15]{poma-vclass}
Flavia Poma, \emph{Virtual classes of {A}rtin stacks}, Manuscripta Math.
  \textbf{146} (2015), no.~1-2, 107--123.

\bibitem[Pri22]{pridham}
J.~P. Pridham, \emph{Semiregularity as a consequence of {G}oodwillie's
  theorem}, arXiv:1208.3111 (2022).

\bibitem[Rom11]{romagny}
Matthieu Romagny, \emph{Composantes connexes et irr\'{e}ductibles en familles},
  Manuscripta Math. \textbf{136} (2011), no.~1-2, 1--32.

\bibitem[Ros56]{algebraic-groups}
Maxwell Rosenlicht, \emph{Some basic theorems on algebraic groups}, Amer. J.
  Math. \textbf{78} (1956), 401--443.

\bibitem[Sch17]{schmidt-generalizedBG}
Benjamin Schmidt, \emph{Counterexample to the generalized
  {B}ogomolov-{G}ieseker inequality for threefolds}, Int. Math. Res. Not. IMRN
  (2017), no.~8, 2562--2566.

\bibitem[SdJ10]{dJ-starr}
Jason Starr and Johan de~Jong, \emph{Almost proper {GIT}-stacks and
  discriminant avoidance}, Doc. Math. \textbf{15} (2010), 957--972.

\bibitem[{Sta}24]{stacks-project}
The {Stacks Project Authors}, \emph{\textit{Stacks Project}},
  \url{https://stacks.math.columbia.edu}, 2024.

\bibitem[STV15]{toen-obstruction}
Timo Sch\"{u}rg, Bertrand To\"{e}n, and Gabriele Vezzosi, \emph{Derived
  algebraic geometry, determinants of perfect complexes, and applications to
  obstruction theories for maps and complexes}, J. Reine Angew. Math.
  \textbf{702} (2015), 1--40.

\bibitem[Tho00]{thomas-DT}
Richard Thomas, \emph{A holomorphic {C}asson invariant for {C}alabi-{Y}au
  3-folds, and bundles on {$K3$} fibrations}, J. Differential Geom. \textbf{54}
  (2000), no.~2, 367--438.

\bibitem[To{\"{e}}09]{toen-higher-derived}
Bertrand To{\"{e}}n, \emph{Higher and derived stacks: a global overview},
  Algebraic geometry---{S}eattle 2005. {P}art 1, Proc. Sympos. Pure Math.,
  vol.~80, Amer. Math. Soc., Providence, RI, 2009, pp.~435--487.

\bibitem[To{\"{e}}10]{toen-simplicial}
\bysame, \emph{Simplicial presheaves and derived algebraic geometry},
  Simplicial methods for operads and algebraic geometry, Adv. Courses Math. CRM
  Barcelona, Birkh\"{a}user/Springer Basel AG, Basel, 2010, pp.~119--186.

\bibitem[To{\"{e}}11]{toen-descent}
\bysame, \emph{Descente fid\`element plate pour les {$n$}-champs d'{A}rtin},
  Compos. Math. \textbf{147} (2011), no.~5, 1382--1412.

\bibitem[To{\"{e}}12]{toen-generator}
\bysame, \emph{Derived {A}zumaya algebras and generators for twisted derived
  categories}, Invent. Math. \textbf{189} (2012), no.~3, 581--652.

\bibitem[To{\"{e}}14]{toen-DAG}
\bysame, \emph{Derived algebraic geometry}, EMS Surv. Math. Sci. \textbf{1}
  (2014), no.~2, 153--240.

\bibitem[Tot21]{totaro}
Burt Totaro, \emph{The integral {H}odge conjecture for 3-folds of {K}odaira
  dimension zero}, J. Inst. Math. Jussieu \textbf{20} (2021), no.~5,
  1697--1717.

\bibitem[TV05]{HAG1}
Bertrand To\"{e}n and Gabriele Vezzosi, \emph{Homotopical algebraic geometry.
  {I}. {T}opos theory}, Adv. Math. \textbf{193} (2005), no.~2, 257--372.

\bibitem[TV07]{toen-moduli}
Bertrand To\"{e}n and Michel Vaqui\'{e}, \emph{Moduli of objects in
  dg-categories}, Ann. Sci. \'{E}cole Norm. Sup. (4) \textbf{40} (2007), no.~3,
  387--444.

\bibitem[TV08]{HAG2}
Bertrand To\"{e}n and Gabriele Vezzosi, \emph{Homotopical algebraic geometry.
  {II}. {G}eometric stacks and applications}, Mem. Amer. Math. Soc.
  \textbf{193} (2008), no.~902, x+224.

\bibitem[TVdB15]{HH-TVdB}
Gon\c{c}alo Tabuada and Michel Van~den Bergh, \emph{Noncommutative motives of
  {A}zumaya algebras}, J. Inst. Math. Jussieu \textbf{14} (2015), no.~2,
  379--403.

\bibitem[Vis89]{vistoli-chow}
Angelo Vistoli, \emph{Intersection theory on algebraic stacks and on their
  moduli spaces}, Invent. Math. \textbf{97} (1989), no.~3, 613--670.

\bibitem[Voi02]{voisin-hodge-theory-i}
Claire Voisin, \emph{Hodge theory and complex algebraic geometry. {I}},
  Cambridge Studies in Advanced Mathematics, vol.~76, Cambridge University
  Press, Cambridge, 2002.

\bibitem[Voi06]{voisin-IHC}
\bysame, \emph{On integral {H}odge classes on uniruled or {C}alabi-{Y}au
  threefolds}, Moduli spaces and arithmetic geometry, Adv. Stud. Pure Math.,
  vol.~45, Math. Soc. Japan, Tokyo, 2006, pp.~43--73.

\bibitem[Yos06]{yoshioka-twisted-sheaves}
Kota Yoshioka, \emph{Moduli spaces of twisted sheaves on a projective variety},
  Moduli spaces and arithmetic geometry, Adv. Stud. Pure Math., vol.~45, Math.
  Soc. Japan, Tokyo, 2006, pp.~1--30.

\end{thebibliography}
\end{document}